\newtheorem*{theorem*}{Theorem}
\newtheorem{theorem}{Theorem}[section]
\newtheorem{lemma}[theorem]{Lemma}
\newtheorem{corollary}[theorem]{Corollary}
\newtheorem{proposition}[theorem]{Proposition}
\newtheorem{conjecture}[theorem]{Conjecture}
\theoremstyle{definition}
\newtheorem{definition}[theorem]{Definition}
\theoremstyle{remark}
\newtheorem{remark}[theorem]{Remark}
\newtheorem*{remark*}{Remark}
\newtheorem{question}[theorem]{Question}
\newtheorem*{example*}{Example}
\newtheorem{example}[theorem]{Example}
\newtheorem{convention}[theorem]{Convention}
\numberwithin{equation}{section}
\renewcommand{\top}{\mathrm{top}}
\renewcommand{\H}{\mathcal H}
\renewcommand{\Re}{\operatorname{Re}}
\renewcommand{\Im}{\operatorname{Im}}
\newcommand{\geo}{\mathrm{geo}}
\newcommand{\HJ}{\mathcal{HJ}}
\newcommand{\1}{\mathbf 1}
\newcommand{\J}{\mathcal J}
\newcommand{\A}{\mathcal A}
\newcommand{\B}{\mathcal B}
\newcommand{\C}{\mathcal C}
\newcommand{\E}{\mathcal E}
\newcommand{\ainf}{A_\infty}
\newcommand{\SSS}{\mathcal S}
\newcommand{\LL}{\mathbb L}
\newcommand{\sL}{\mathcal L}
\newcommand{\ZZ}{\mathbb Z}
\newcommand{\CC}{\mathbb C}
\newcommand{\RRR}{\mathcal R}
\newcommand{\RR}{\mathbb R}
\newcommand{\R}{\mathbf R}
\newcommand{\PP}{\mathcal P}
\newcommand{\Q}{\mathcal Q}
\newcommand{\M}{\mathcal M}
\newcommand{\N}{\mathcal N}
\newcommand{\Mbar}{\overline{\mathcal M}}
\newcommand{\Cbar}{\overline{\mathcal C}}
\newcommand{\Rbar}{\overline{\mathcal R}}
\newcommand{\Sbar}{\overline{\mathcal S}}
\newcommand{\oo}{\mathfrak o}
\newcommand{\BM}{\mathrm{BM}}
\newcommand{\OO}{\mathcal O}
\newcommand{\OC}{{\mathcal O\mathcal C}}
\newcommand{\D}{\mathcal D}
\newcommand{\W}{\mathcal W}
\newcommand{\Ham}{\operatorname{Ham}}
\newcommand{\im}{\operatorname{im}}
\newcommand{\Nbd}{\operatorname{\mathcal{N}bd}}
\newcommand{\Ndg}{\operatorname{\mathsf N_{\mathsf{dg}}}}
\newcommand{\Ch}{\operatorname{\mathsf{Ch}}}
\newcommand{\id}{\operatorname{id}}
\newcommand{\Tw}{\operatorname{\mathsf{Tw}}}
\newcommand{\F}{\mathcal F}
\newcommand{\Ainftycat}{\mathsf{A_\infty\text{-}cat}}
\newcommand{\Pro}{\operatorname{\mathsf{Pro}}}
\newcommand{\cones}{\operatorname{\mathsf{cones}}}
\newcommand{\coker}{\operatorname{coker}}
\newcommand{\const}{\mathrm{const}}
\newcommand{\std}{\mathrm{std}}
\newcommand{\dR}{\mathrm{dR}}
\newcommand{\op}{\mathrm{op}}
\newcommand{\pt}{\mathrm{pt}}
\newcommand{\lf}{\mathrm{lf}}
\newcommand{\reg}{\mathrm{reg}}
\newcommand{\Hom}{\operatorname{Hom}}
\newcommand{\Spin}{\operatorname{Spin}}
\newcommand{\colim}{\operatornamewithlimits{colim}}
\newcommand{\hocolim}{\operatornamewithlimits{hocolim}}
\newcommand{\charfol}{\mathrm{char.fol.}}
\newcommand{\morse}{\mathrm{morse}}
\newcommand{\Pos}{\mathsf{Pos}}
\begin{document}

\title{Covariantly functorial wrapped Floer theory on Liouville sectors}

\author{
Sheel Ganatra\thanks{S.G.\ was partially supported by the National Science Foundation through a postdoctoral fellowship with grant number DMS--1204393 and under agreement number DMS--1128155.  Any opinions, findings and conclusions or recommendations expressed in this material are those of the authors and do not necessarily reflect the views of the National Science Foundation.},
John Pardon\thanks{This research was conducted during the period J.P.\ served as a Clay Research Fellow and was partially supported by a Packard Fellowship and by the National Science Foundation under the Alan T.\ Waterman Award, Grant No.\ 1747553.},
and
Vivek Shende\thanks{V.S.\ is partially supported by the NSF grant DMS--1406871 and a Sloan fellowship.}
}

\date{July 31, 2019}

\maketitle

\begin{abstract}
We introduce a class of Liouville manifolds with boundary which we call Liouville sectors.
We define the wrapped Fukaya category, symplectic cohomology, and the open-closed map for Liouville sectors, and we show that these invariants are covariantly functorial with respect to inclusions of Liouville sectors.
From this foundational setup, a local-to-global principle for Abouzaid's generation criterion follows.
\end{abstract}

\section{Introduction}

The goal of this paper is to introduce local methods in the study of Floer theory on Liouville manifolds.

We introduce a class of Liouville manifolds with boundary which we call \emph{Liouville sectors} (Definition \ref{sectordefintro}).
We define the wrapped Fukaya category, symplectic cohomology, and the open-closed map for Liouville sectors.
Moreover, we show that these invariants are \emph{covariantly functorial} with respect to (proper, cylindrical at infinity) inclusions of Liouville sectors.
From this foundational setup, a local-to-global principle for Abouzaid's generation criterion \cite{abouzaidcriterion} follows more or less immediately (Theorem \ref{localtoglobalnondegenerate}).

We now introduce the main results of the paper in more detail.

\subsection{Liouville sectors}

To do Floer theory on symplectic manifolds with boundary, one must establish sufficient control on when holomorphic curves may touch the boundary.
One particularly nice setting in which this is possible is given by the following definition, studied in \S\ref{secliouvillesector}.

\begin{definition}\label{sectordefintro}
A \emph{Liouville sector} is a Liouville manifold-with-boundary $X$ for which there exists a function $I:\partial X\to\RR$ such that:
\begin{itemize}
\item$I$ is \emph{linear at infinity}, meaning $ZI=I$ outside a compact set, where $Z$ denotes the Liouville vector field.
\item$dI|_\charfol>0$, where the characteristic foliation $C$ of $\partial X$ is oriented so that $\omega(N,C)>0$ for any inward pointing vector $N$.
\end{itemize}
On any Liouville sector, there is a canonical (up to contractible choice) symplectic fibration $\pi:X\to\CC_{\Re\geq 0}$ defined near $\partial X$.
For almost complex structures on $X$ making $\pi$ holomorphic (of which there is a plentiful supply), the projection $\pi$ imposes strong control on holomorphic curves near $\partial X$.
\end{definition}

\begin{example*}
For any compact manifold-with-boundary $Q$ (for instance a ball), its cotangent bundle $T^\ast Q$ is a Liouville sector.
\end{example*}

\begin{example*}
A punctured bordered Riemann surface $S$ is a Liouville sector iff every component of $\partial S$ is homeomorphic to $\RR$ (i.e.\ none is homeomorphic to $S^1$).
\end{example*}

\begin{example*}
Given a Liouville domain $\bar X_0$ and
a closed Legendrian $\Lambda\subseteq\partial\bar X_0$,
one may define a Liouville sector $X$ 
by taking the Liouville completion of $\bar X_0$ away from a standard neighborhood of $\Lambda$.
\end{example*}

\begin{example*}
More generally, given a Liouville domain $(\bar X_0,\lambda)$ and a hypersurface-with-boundary $F_0\subseteq\partial\bar X_0$ such that $(F_0, \lambda)$ is again a Liouville domain,
one may define a Liouville sector $X$ by
completing $\bar X_0$ away from a standard neighborhood of $F_0$.
In fact, every Liouville sector is of this form, uniquely so up to a contractible choice.
\end{example*}

\begin{example*}
To every Liouville Landau--Ginzburg model $\pi:E\to\CC$, one can associate a Liouville sector which, morally speaking, is defined by removing from $E$ the inverse image of a neighborhood of a ray (or half-plane) disjoint from the critical locus of $\pi$.
There are various ways of formalizing the notion of a Liouville
Landau--Ginzburg model (see \cite[\S 2]{maydanskiyseidel} for Lefschetz
fibrations); for us $E$ should be a Liouville manifold and $\pi$
should induce an embedding into $\partial_\infty E$ of the contact mapping torus of the monodromy action on the fiber $F$ (i.e.\ $(S^1\times F,dt+\lambda_F)$ if the monodromy is trivial).
This embedding furthermore extends to an open book decomposition if $\pi$ has
compact critical locus.  The associated Liouville sector is defined by
applying the previous example to $E$ and a fiber $\{t\}\times F_0$ inside $\partial_\infty E$.
\end{example*}

\begin{remark*}
The notion of a Liouville sector is essentially equivalent to Sylvan's notion of a \emph{stop} on a Liouville manifold \cite{sylvanthesis} (for every Liouville manifold $\bar X$ with stop $\sigma$, there is a Liouville sector $X=\bar X\setminus\sigma$, and every Liouville sector is of this form, uniquely in a homotopical sense).
The language of Liouville sectors has two advantages relevant for our work in this paper: (1) inclusions of Liouville sectors $X\hookrightarrow X'$ (which play a central role in this paper) are easier to talk about, and (2) being a Liouville sector is a \emph{property} rather than extra data, which makes geometric operations simpler and more clearly canonical.
\end{remark*}

\subsection{Wrapped Floer theory on Liouville sectors}

We generalize many basic objects of wrapped Floer theory from Liouville manifolds to Liouville sectors.
Specifically, we define the wrapped Fukaya category, symplectic cohomology, and the open-closed map for Liouville sectors.
The ``wrapping'' in these definitions takes place on the boundary at infinity $\partial_\infty X$ and is ``stopped'' when it hits $\partial X$.
An important feature in this setting is that these invariants are all \emph{covariantly functorial} for (proper, cylindrical at infinity) inclusions of Liouville sectors.
The key ingredient underlying these Floer theoretic constructions is the projection $\pi:X\to\CC_{\Re\geq 0}$ defined near $\partial X$ and the resulting control on holomorphic curves near $\partial X$.

In \S\ref{secwrapped}, we define the wrapped Fukaya category $\W(X)$ of a Liouville sector $X$, and we show that an inclusion of Liouville sectors $X\hookrightarrow X'$ induces a functor $\W(X)\to\W(X')$.
The wrapped Fukaya category of Liouville sectors generalizes the wrapped Fukaya category of Liouville manifolds as introduced by Abouzaid--Seidel \cite{abouzaidseidel}.
Note that our pushforward maps $\W(X)\to\W(X')$ for inclusions of Liouville sectors $X\hookrightarrow X'$ are distinct from (though related to) the Viterbo restriction functors $\W(X')\to\W(X)$ induced by inclusions of Liouville manifolds $X\hookrightarrow X'$ defined by Abouzaid--Seidel \cite{abouzaidseidel}.

To define $\W(X)$, we adopt the later techniques of Abouzaid--Seidel \cite{abouzaidseidelunpublished} in which $\W(X)$ is defined as the localization of a 
corresponding directed category $\OO(X)$ at a collection of continuation morphisms.
The key new ingredient needed to define $\W(X)$ for Liouville sectors
is the fact that holomorphic disks with boundary on
Lagrangians inside $X$ remain disjoint from a
neighborhood of $\partial X$.  This can be seen
from the holomorphic projection $\pi$.

\begin{example*}
For the Liouville sector $X$ associated to an exact symplectic Landau--Ginzburg model $\pi:E\to\CC$, the wrapped Fukaya category $\W(X)$ should be regarded as a definition of the Fukaya--Seidel category of $(E,\pi)$.
\end{example*}

\begin{example*}
The infinitesimally wrapped Fukaya category of Lagrangians in a Liouville manifold $\bar X$ asymptotic to a fixed Legendrian $\Lambda\subseteq\partial_\infty\bar X$ is a full subcategory of the wrapped Fukaya category $\W(X)$ of the Liouville sector $X$ obtained from $\bar X$ by removing a standard neighborhood of $\Lambda$ near infinity.
Namely, Lagrangians in $\bar X$ asymptotic to $\Lambda$ can be perturbed by the negative Reeb flow to define objects of $\W(X)$.
The positive Reeb flow from such objects falls immediately into the deleted neighborhood of $\Lambda$, so wrapping inside $\partial_\infty X$ exactly realizes ``infinitesimal wrapping''.
\end{example*}

\begin{example*}
The Legendrian contact homology algebra of $\Lambda\subseteq\partial_\infty\bar X$ with respect to the filling $\bar X$ is also expected to admit a description in terms of $\W(X)$.
Namely, near any point of $\Lambda$, there is a small Legendrian sphere linking $\Lambda$ which further bounds a small exact Lagrangian disk, whose endomorphism algebra in $\W(X)$ should be (by the philosophy of Bourgeois--Ekholm--Eliashberg \cite{bourgeoisekholmeliashberg}) the Legendrian contact homology of $\Lambda$ with loop space coefficients (see the argument sketched in Ekholm--Ng--Shende \cite[\S 6]{ekholmngshende} and in Ekholm--Lekili \cite[\S B]{ekholm-lekili}).
\end{example*}

\begin{remark*}
The wrapped Fukaya category $\W(X)$ of the Liouville sector $X=\bar X\setminus\sigma$ associated to a Liouville manifold $\bar X$ with stop $\sigma$ should coincide with the partially wrapped Fukaya category $\W_\sigma(\bar X)$ defined by Sylvan \cite{sylvanthesis}.
\end{remark*}

In \S\ref{secsymplecticcohomology}, we define the symplectic cohomology of a Liouville sector as the direct limit
\begin{equation}\label{shdefeqintro}
SH^\bullet(X,\partial X):=\varinjlim_{\begin{smallmatrix}H:X\to\RR\\H|_{\Nbd\partial X}=\Re\pi\end{smallmatrix}}HF^\bullet(X;H)
\end{equation}
(generalizing symplectic cohomology $SH^\bullet(X)$ of Liouville manifolds as introduced in Floer--Hofer \cite{floerhofersh}, Cieliebak--Floer--Hofer \cite{cieliebakfloerhofersh}, and Viterbo \cite{viterbosh}; additional references include Seidel \cite{seidelbiased}, Abouzaid \cite{abouzaidviterbo}, and Oancea \cite{oanceasurvey}).
We also define a map $H^\bullet(X,\partial X)\to SH^\bullet(X,\partial X)$ and show that an inclusion of Liouville sectors $X\hookrightarrow X'$ induces a map $SH^\bullet(X,\partial X)\to SH^\bullet(X',\partial X')$ (this map, which is compatible with the map $H^\bullet(X, \partial X) \to H^\bullet(X', \partial X')$, is distinct from, though related to, the restriction map introduced by Viterbo \cite{viterbosh}).
The condition that $H|_{\Nbd\partial X}=\Re\pi$ prevents Floer trajectories for $HF^\bullet(X;H)$ from passing through a neighborhood of $\partial X$.
More generally, for an inclusion $X\hookrightarrow X'$, if $H|_{\Nbd\partial X}=\Re\pi$ and $H|_{\Nbd\partial X'}=\Re\pi'$, then Floer trajectories with positive end inside $X$ must stay entirely inside $X$ (on the other hand, Floer trajectories with positive end inside $X'\setminus X$ can pass through $X$), and this is the key to defining the map $SH^\bullet(X,\partial X)\to SH^\bullet(X',\partial X')$.

To make the definition \eqref{shdefeqintro} rigorous is delicate for two reasons.
First, the function $\Re\pi$ is not linear at infinity, and so we must splice it together with a linear Hamiltonian in such a way that there are no periodic orbits near infinity.
Second, to bound Floer trajectories away from infinity (to prove compactness), we adopt the techniques of Groman \cite{groman} based on monotonicity and bounded geometry, and to apply these methods to Floer equations for continuation maps, we must choose families of Hamiltonians which are \emph{dissipative} in the sense of Groman \cite{groman}.
We do not know how to use the maximum principle (as usually used to construct symplectic cohomology on Liouville manifolds) to prove the needed compactness results.
Finally, let us remark that there should be another version of symplectic cohomology for Liouville sectors, say denoted $SH^\bullet(X)$, where we instead require $H|_{\Nbd\partial X}=-\Re\pi$ (though it would be nontrivial to extend our methods to make this definition precise).

\begin{remark*}
Sylvan has defined the partially wrapped symplectic cohomology $SH^\bullet_\sigma(\bar X)$ \cite{sylvanthesis} of a Liouville manifold $\bar X$ equipped with a stop $\sigma$, and we expect that it is isomorphic to $SH^\bullet(X,\partial X)$ for the Liouville sector $X=\bar X\setminus\sigma$.
\end{remark*}

In \S\ref{secopenclosed}, we define an open-closed map
\begin{equation}\label{ocintro}
\OC:HH_\bullet(\W(X))\to SH^{\bullet+n}(X,\partial X)
\end{equation}
for Liouville sectors, where $n=\frac 12\dim X$ (generalizing definitions given by Fukaya--Oh--Ohta--Ono \cite{foooI}, Seidel \cite{seideldeformations}, and Abouzaid \cite{abouzaidcriterion}) and we show that $\OC$ is a natural transformation of functors, meaning it commutes with the pushforward maps induced by inclusions of Liouville sectors (we adopt the convention whereby the subscript on Hochschild homology $HH_\bullet$ is a \emph{cohomological} grading).

To define the open-closed map, we adopt the methods of Abouzaid--Ganatra \cite{abouzaidganatra} in which the domain of $\OC$ is taken to be $HH_\bullet(\OO(X),\B(X))$, where $\OO(X)$ is the directed category whose localization is $\W(X)$, and $\B(X)$ is a certain geometrically defined $\OO(X)$-bimodule quasi-isomorphic as $\OO(X)$-bimodules to $\W(X)$ (properties of localization give a canonical isomorphism $HH_\bullet(\OO(X),\W(X))=HH_\bullet(\W(X))$; see Lemma \ref{HHquotientisomorphism}).

Functoriality of the open-closed map has an immediate application towards the verification of Abouzaid's generation criterion, which we turn to next.
The idea of localizing open-closed maps has appeared earlier in unpublished work of Abouzaid \cite{abouzaidcosheaves}, and Abouzaid's proof \cite{abouzaidviterbo} of Viterbo's theorem also served as an inspiration for our work.

\subsection{Local-to-global principle for Abouzaid's criterion}

Recall that a Liouville manifold $X$ is called \emph{non-degenerate} iff the unit $1\in SH^\bullet(X)$ lies in the image of $\OC$.
Recall also that a collection of objects $F\subseteq\W(X)$ is said to satisfy \emph{Abouzaid's criterion} \cite{abouzaidcriterion} iff the unit lies in the image of the restriction of $\OC$ to $HH_\bullet(\F)$ (where $\F\subseteq\W(X)$ denotes the full subcategory with objects $F$).
Abouzaid's criterion and non-degeneracy have many important consequences.
The main result of \cite{abouzaidcriterion} is that if $F$ satisfies Abouzaid's criterion, then $F$ split-generates $\W(X)$
(i.e.\ $\Pi\Tw\F\twoheadrightarrow\Pi\Tw\W(X)$ is essentially surjective; see Seidel \cite[(3j), (4c)]{seidelbook}).
Non-degeneracy of $X$ implies that the open-closed map and the closed-open map are both isomorphisms \cite[Theorem 1.1]{ganatrathesis} as conjectured by Kontsevich \cite{kontsevichhms}
(and the same for their $S^1$-equivariant versions \cite{ganatraS1action}).
Non-degeneracy of $X$ also implies that the category $\W(X)$ is homologically smooth \cite[Theorem 1.2]{ganatrathesis} and Calabi--Yau \cite{ganatraS1action}.

To state our local-to-global argument for verifying Abouzaid's criterion, we need the following definition.
Let $X$ be a manifold.
A family of codimension zero submanifolds-with-boundary $X_\sigma\subseteq X$ indexed by a poset $\Sigma$ (so $X_\sigma\subseteq X_{\sigma'}$ for $\sigma\leq\sigma'$) is called a \emph{homology hypercover} iff the map
\begin{equation}
\hocolim_{\sigma\in\Sigma}C_\bullet^\BM(X_\sigma)\to C_\bullet^\BM(X)
\end{equation}
hits the fundamental class $[X]\in H_\bullet^\BM(X)$.
Here $H_\bullet^\BM$ (Borel--Moore homology, also written $H_\bullet^\lf$) denotes the homology of locally finite singular chains.
Concretely, the homotopy colimit means
\begin{equation}
\hocolim_{\sigma\in\Sigma}C_\bullet^\BM(X_\sigma):=\bigoplus_{p\geq 0}\bigoplus_{\sigma_0\leq\cdots\leq\sigma_p\in\Sigma}C_{\bullet-p}^\BM(X_{\sigma_0})
\end{equation}
namely simplicial chains on the nerve of $\Sigma$ with coefficients given by $\sigma\mapsto C_\bullet^\BM(X_\sigma)$ (the differential on the right is the internal differential plus the sum over all ways of forgetting some $\sigma_i$).
By Poincar\'e duality, it is equivalent to ask that the map
\begin{equation}
\hocolim_{\sigma\in\Sigma}C^\bullet(X_\sigma,\partial X_\sigma)\to C^\bullet(X)
\end{equation}
hit the unit $1\in H^\bullet(X)$.

\begin{example*}
For every \emph{finite} cover of $X$ by $\{X_i\}_{i\in I}$, the family of all finite intersections $X_{i_0}\cap\cdots\cap X_{i_k}$ indexed by $\Sigma:=2^I\setminus\{\varnothing\}$ is a homology hypercover of $X$.
\end{example*}

\begin{remark*}
Instead of considering a family of submanifolds-with-boundary $X_\sigma\subseteq X$ indexed by a poset $\Sigma$, one could also consider a simplicial submanifold-with-boundary $X_\bullet\to X$.
The latter perspective is somewhat more standard (and essentially equivalent), though we have avoided it for reasons of exposition.
\end{remark*}

\begin{theorem}\label{localtoglobalnondegenerate}
Let $X$ be a Liouville manifold with a homology hypercover by Liouville sectors $\{X_\sigma\}_{\sigma\in\Sigma}$.
Let $F_\sigma\subseteq\W(X_\sigma)$ be collections of objects with $F_\sigma\subseteq F_{\sigma'}$ for $\sigma\leq\sigma'$, and
$ \F_\sigma $ the corresponding full subcategories.  Assume
\begin{equation}\label{localoc}
\OC_\sigma:HH_\bullet( \F_\sigma )\to SH^{\bullet+n}(X_\sigma,\partial X_\sigma)
\end{equation}
is an isomorphism for all $\sigma\in\Sigma$.
Then $F:=\bigcup_{\sigma\in\Sigma}F_\sigma\subseteq\W(X)$ satisfies Abouzaid's criterion.
\end{theorem}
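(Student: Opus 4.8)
The plan is to transport the hypothesis that each local open-closed map $\OC_\sigma$ hits the unit of $SH^\bullet(X_\sigma,\partial X_\sigma)$ up to a statement about the global $X$, using the covariant functoriality of $\W$, $SH^\bullet(-,\partial-)$, and $\OC$ established in \S\S\ref{secwrapped}--\ref{secopenclosed}, together with the homology hypercover condition. First I would assemble, over the poset $\Sigma$, the commuting squares
\begin{equation*}
\begin{CD}
HH_{\bullet-n}(\F_\sigma) @>{\OC_\sigma}>> SH^\bullet(X_\sigma,\partial X_\sigma)\\
@VVV @VVV\\
HH_{\bullet-n}(\F) @>{\OC}>> SH^\bullet(X,\partial X)
\end{CD}
\end{equation*}
for $\sigma\leq\sigma'$, where the left vertical maps are induced by $F_\sigma\subseteq F$ and the right ones by the inclusions $X_\sigma\hookrightarrow X$; naturality of $\OC$ as a transformation of functors makes all of these commute (and compatibly over $\Sigma$). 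Passing to the homotopy colimit over the nerve of $\Sigma$, I would obtain a map
\begin{equation*}
\hocolim_{\sigma\in\Sigma}SH^\bullet(X_\sigma,\partial X_\sigma)\to SH^\bullet(X,\partial X)
\end{equation*}
and a compatible map out of $\hocolim_\sigma HH_{\bullet-n}(\F_\sigma)$, which in turn maps to $HH_{\bullet-n}(\F)$ since $\F=\colim_\sigma\F_\sigma$ (as each object of $F$ lies in some $F_\sigma$, and Hochschild homology commutes with the relevant colimit of categories).

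Next I would identify the unit $1\in SH^\bullet(X,\partial X)$ as coming from $H^\bullet(X)$ via the canonical map $H^\bullet(X)\cong H^\bullet(X,\partial X)\to SH^\bullet(X,\partial X)$, which is a ring map sending $1$ to $1$; the analogous statement holds for each $X_\sigma$. The homology hypercover hypothesis, in its Poincar\'e-dual form, says exactly that the unit $1\in H^\bullet(X)$ lies in the image of $\hocolim_\sigma H^\bullet(X_\sigma,\partial X_\sigma)\to H^\bullet(X)$. Chasing this through the commuting square relating the $H^\bullet(-,\partial-)\to SH^\bullet(-,\partial-)$ maps to the hocolim maps above, I conclude that $1\in SH^\bullet(X,\partial X)$ lies in the image of $\hocolim_\sigma SH^\bullet(X_\sigma,\partial X_\sigma)\to SH^\bullet(X,\partial X)$; concretely, $1$ is a sum of classes pushed forward from finitely many $SH^\bullet(X_\sigma,\partial X_\sigma)$. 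Since each $\OC_\sigma$ is surjective (indeed an isomorphism), each such class is $\OC_\sigma$ of something in $HH_{\bullet-n}(\F_\sigma)$, and pushing those forward and using the commuting squares shows $1\in\im(\OC\colon HH_{\bullet-n}(\F)\to SH^\bullet(X,\partial X))$ — that is, $F$ satisfies Abouzaid's criterion.

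I expect the main obstacle to be the first paragraph: making precise the passage to homotopy colimits and checking that the maps $\OC_\sigma$ fit together into an honest map of homotopy-coherent diagrams over the nerve of $\Sigma$, rather than merely a collection of squares commuting up to homotopy. Concretely one wants a zig-zag of chain-level quasi-isomorphisms, functorial in $\sigma$, realizing $HH_\bullet(\F_\sigma)\simeq HH_\bullet(\OO(X_\sigma),\B(X_\sigma))$ and $\OC_\sigma$ at the chain level, compatibly with the continuation/inclusion data; this is where the Abouzaid--Ganatra model and the directed-category-plus-localization description of \S\ref{secwrapped} do the work, but verifying the coherences (as opposed to invoking them) is the delicate point. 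By contrast, the second paragraph is essentially formal: it is a diagram chase combined with the definition of homology hypercover and the elementary fact that the comparison map $H^\bullet(X,\partial X)\to SH^\bullet(X,\partial X)$ is unital and natural. A minor additional point to address is that the hocolim in the definition of homology hypercover is over the nerve of $\Sigma$ with the given poset maps, so one must use exactly the same indexing and face/degeneracy structure on the Floer-theoretic side; this is why it is convenient to phrase everything in terms of $\Sigma$ as in the statement.
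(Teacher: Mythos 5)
There is a genuine gap in your second paragraph, at the step ``concretely, $1$ is a sum of classes pushed forward from finitely many $SH^\bullet(X_\sigma,\partial X_\sigma)$.'' The homology hypercover hypothesis only says that the unit is hit by the map from the \emph{chain-level} homotopy colimit $\hocolim_\sigma C^\bullet(X_\sigma,\partial X_\sigma)$ (and hence, after composing with $C^\bullet\to SC^\bullet$, from $\hocolim_\sigma SC^\bullet(X_\sigma,\partial X_\sigma)$). A cycle in such a homotopy colimit has components over all simplices $\sigma_0\leq\cdots\leq\sigma_p$ of the nerve of $\Sigma$, and its components over the vertices need not themselves be cycles: already for a two-element cover the unit arises Mayer--Vietoris style from relative cochains that are not closed individually, with a correction term over the edge of the nerve. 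So the unit of $SH^\bullet(X,\partial X)$ need not decompose as a sum of pushforwards of honest classes in the groups $SH^\bullet(X_\sigma,\partial X_\sigma)$, and the subsequent step ``apply surjectivity of each $\OC_\sigma$ to each summand'' has nothing to apply to. Relatedly, objectwise surjectivity of the $\OC_\sigma$ would in any case not imply surjectivity on the homology of the homotopy colimit; the full isomorphism hypothesis is needed.

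The repair is exactly the paper's argument, and it must be run at the chain level, which is why the chain-level functoriality you flag in your first paragraph is not just a technical convenience but the substance of the proof. Since every $\OC_\sigma$ is a quasi-isomorphism, the map of chain-level diagrams induces a quasi-isomorphism $\hocolim_\sigma CC_{\bullet-n}(\F_\sigma)\to\hocolim_\sigma SC^\bullet(X_\sigma,\partial X_\sigma)$ (e.g.\ by the filtration of the hocolim by simplices), so the hypercover class lifts to $\hocolim_\sigma CC_{\bullet-n}(\F_\sigma)$ and then maps to $CC_{\bullet-n}(\F)$; commutativity of the chain-level diagram \eqref{keydiagram} then shows the unit lies in the image of $\OC$ on $HH_{\bullet-n}(\F)$. (Your side remark that $HH_\bullet$ commutes with the colimit $\F=\colim_\sigma\F_\sigma$ is not needed: one only needs the map $\hocolim_\sigma CC_{\bullet-n}(\F_\sigma)\to CC_{\bullet-n}(\F)$, not that it is an equivalence.) With this replacement your outline matches the paper's proof; without it, the final diagram chase does not go through.
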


Theorem \ref{localtoglobalnondegenerate} follows immediately from the functoriality of $\OC$; to be precise, it follows from the following commutative diagram:
\begin{equation}\label{keydiagram}
\begin{tikzcd}[column sep = small]
\vphantom{C}\smash{\hocolim\limits_{\sigma\in\Sigma}CC_{\bullet-n}(\F_\sigma)}\ar{r}\ar{d}&\smash{\hocolim\limits_{\sigma\in\Sigma}SC^\bullet(X_\sigma,\partial X_\sigma)}\ar{d}&\ar{l}\smash{\hocolim\limits_{\sigma\in\Sigma}C^\bullet(X_\sigma,\partial X_\sigma)}\ar{d}\\
\vphantom{C}\smash{CC_{\bullet-n}(\F)}\ar{r}&SC^\bullet(X)&\ar{l}C^\bullet(X).
\end{tikzcd}
\end{equation}
Indeed, if each local open-closed map \eqref{localoc} is an isomorphism, then the top left horizontal arrow in \eqref{keydiagram} is a quasi-isomorphism.
This implies that the image of the map $HH_{\bullet-n}(\F)\to SH^\bullet(X)$ contains the image of the composition $\hocolim_{\sigma\in\Sigma}C^\bullet(X_\sigma,\partial X_\sigma)\to C^\bullet(X)\to SC^\bullet(X)$, which in turn hits the unit since $\{X_\sigma\}_{\sigma\in\Sigma}$ is a homology hypercover of $X$.
Note that for this proof to make sense, we must provide a definitions of $\W$, $SC^\bullet$, and $\OC$ which are functorial on the chain level, up to coherent higher homotopy.

Note that to apply Theorem \ref{localtoglobalnondegenerate} (which is valid over any commutative coefficient ring), we do not need to know anything about $SH^\bullet(X)$ or the morphism spaces in $\W(X)$, both of which can be difficult to compute for general $X$.
In contrast, if $\partial_\infty X_\sigma$ is the contactization of a Liouville domain (which occurs often in practice for ``small'' sectors $X_\sigma$), then the map $H^\bullet(X_\sigma,\partial X_\sigma)\to SH^\bullet(X_\sigma,\partial X_\sigma)$ is an isomorphism (Lemma \ref{cylinderstopped} and Proposition \ref{hhshiso}) and it is often easy to compute the left hand side of \eqref{localoc} as well and see that this map is an isomorphism.
We now give some examples of interesting $X$ covered by such $X_\sigma$ (conjecturally any Weinstein manifold $X$ admits such a cover).

\begin{example}\label{cotangent}
Abouzaid \cite{abouzaidcotangent}  showed that the collection of cotangent fibers $T^\ast_qQ\subseteq T^\ast Q$ satisfies Abouzaid's criterion for any closed manifold $Q$.
This result can be deduced from Theorem \ref{localtoglobalnondegenerate} as follows.
Since $T^\ast Q$ admits a homology hypercover by copies of $T^\ast\!B$ ($B$ is the ball), it is enough to show that
\begin{equation}
\OC:HH_\bullet(CW^\bullet(T^\ast_0\!B))\to SH^{\bullet+n}(T^\ast\!B,\partial T^\ast\!B)
\end{equation}
is an isomorphism, where $T^\ast_0\!B\subseteq T^\ast\!B$ denotes the fiber.
The maps
\begin{align}
H^\bullet(T^\ast_0\!B)&\to HW^\bullet(T^\ast_0\!B)\\
H^\bullet(T^\ast\!B,\partial T^\ast\!B)&\to SH^\bullet(T^\ast\!B,\partial T^\ast\!B)
\end{align}
are both isomorphisms, since for certain nice choices of contact form, there are no Reeb orbits/chords (for the second map, combine Example \ref{ballfullystopped} and Proposition \ref{hhshiso}).
We conclude that the open-closed map for $T^\ast\!B$ is an isomorphism, using the general property that $\OC(\1_L)=i_![L]$ (see Proposition \ref{ocmorse}).

Note that the above argument does not say anything about whether or not the fiber (split-)generates $\W(T^\ast\!B)$.
Presumably split-generation could be shown by following through on Remark \ref{sectorcriterion}, and generation could be shown by generalizing Abouzaid's work \cite{abouzaidtwisted}.
\end{example}

\begin{figure}[hbt]
\centering
\includegraphics{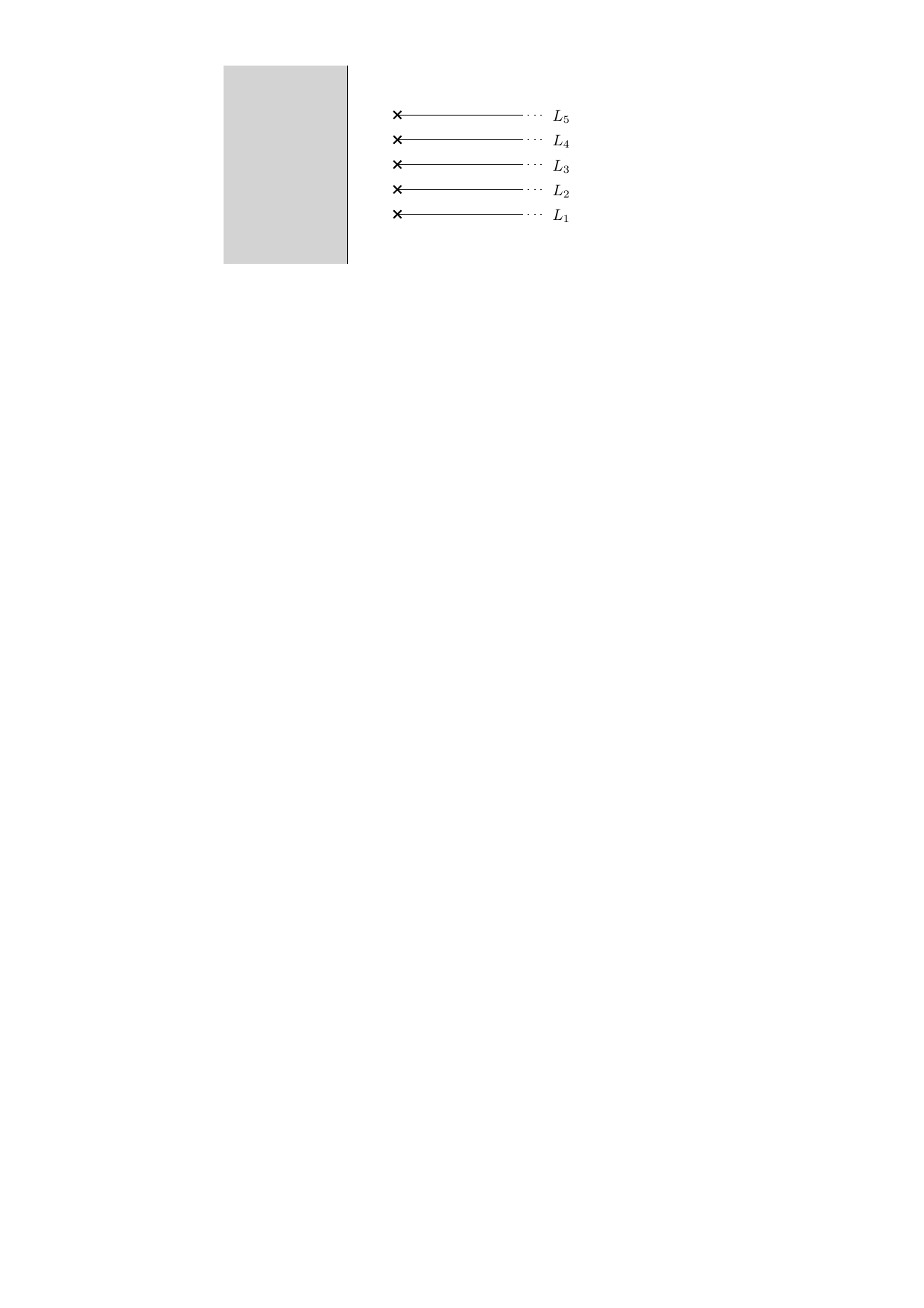}
\caption{A Lefschetz fibration.}\label{lefschetzfibration}
\end{figure}

\begin{example}\label{lefschetzexample}
Let $X$ be the Liouville sector associated to an exact symplectic (Liouville) Lefschetz fibration $\pi:E\to\CC$ (morally speaking, defined by removing $\pi^{-1}(\CC_{\Re<-N})$ from $E$).
The collection of Lefschetz thimbles $L_1,\ldots,L_m\subseteq X$ (as illustrated in Figure \ref{lefschetzfibration}) is an \emph{exceptional collection} inside $\W(X)$, meaning that
\begin{align}
HF^\bullet(L_i,L_i)&=\ZZ\\
HF^\bullet(L_i,L_j)&=0\quad\text{for }i>j.
\end{align}
Furthermore, $\partial_\infty X$ has no Reeb orbits, so the map $H^\bullet(X,\partial X)\xrightarrow\sim SH^\bullet(X,\partial X)$ is an isomorphism.
Using the identity $\OC(\1_L)=i_![L]$, we conclude that the open-closed map
\begin{equation}
HH_\bullet(\F^\to(\pi))\to SH^{\bullet+n}(X)
\end{equation}
is an isomorphism, where $\F^\to(\pi)\subseteq\W(X)$ denotes the full subcategory spanned by the Lefschetz thimbles $L_1,\ldots,L_m$ (originally introduced by Seidel \cite{seidelbook}).
(Justification for these assertions is provided in the body of the paper).
\end{example}

\begin{remark}\label{sectorstoo} \label{sectorcriterion}
The diagram \eqref{keydiagram} remains valid when $X$ is itself a Liouville sector.
However, to take advantage of it, one needs to first formulate the correct analogue of Abouzaid's criterion for Liouville sectors and their wrapped Fukaya categories.

The most naive generalization of Abouzaid's criterion to Liouville sectors, using the open-closed map \eqref{ocintro}, does not make sense since $SH^\bullet(X,\partial X)$ usually does not have a unit.
Rather, we suspect that the correct generalization of Abouzaid's criterion should involve the map
\begin{equation}
\OC:\Bigl[CC_{\bullet-n}(\W(X_0))\to CC_{\bullet-n}(\W(X))\Bigr]\to\Bigl[SC^\bullet(X_0,\partial X_0)\to SC^\bullet(X,\partial X)\Bigr]
\end{equation}
where the brackets indicate taking the cone of the map inside, and $X_0\subseteq X$ denotes a small closed regular neighborhood of $\partial X$ (cylindrical at infinity).
Note that $C^\bullet(X)$ is naturally quasi-isomorphic to $[C^\bullet(X_0,\partial X_0)\to C^\bullet(X,\partial X)]$, which naturally maps to the right side above, so there is a unit to speak of hitting.

A version of Abouzaid's criterion for the Fukaya--Seidel category \cite{seidelbook} of a symplectic Landau--Ginzburg model has been given earlier by Abouzaid--Ganatra \cite{abouzaidganatra}.
Their criterion is distinct from (though related to) the version proposed just above.
\end{remark}

\begin{remark}
We expect that Theorem \ref{localtoglobalnondegenerate} can be used to show that for any Weinstein manifold $X^{2n}$ admitting a singular Lagrangian spine $\LL^n\subseteq X^{2n}$ (see Remark \ref{spinecoreskeletonrmk}) with arboreal singularities in the sense of Nadler \cite{nadlerarboreal}, the ``fibers of the projection $X\to\LL$'' satisfy Abouzaid's criterion (generalizing Example \ref{cotangent}).
Such a Weinstein manifold should admit a homology hypercover by ``arboreal Liouville sectors'' $X_T^{2n}$ associated to rooted trees $T$ (defined in terms of the corresponding arboreal singularities).
We expect the arboreal sector $X_T$ to correspond to the Lefschetz fibration over $\CC$ whose fiber is a plumbing of copies of $T^\ast S^{n-1}$ according to $T$ and whose vanishing cycles are the zero sections, ordered according to the rooting of $T$ (this has now been proven by Shende \cite{shendearborlefschetz}).
Furthermore, the Lefschetz thimbles should correspond to the ``fibers of the projection $X\to\LL$'' (more precisely, they should generate the same full subcategory).
Example \ref{lefschetzexample} would then imply that the open-closed map for each $X_T$ is an isomorphism, so by Theorem \ref{localtoglobalnondegenerate} we would conclude that the ``fibers of the projection $X\to\LL$'' satisfy Abouzaid's criterion.
\end{remark}

\subsection{Acknowledgements}

This collaboration began while V.\,S.\ and J.\,P.\ visited the Institut Mittag--Leffler during the 2015 fall program on ``Symplectic geometry and topology'', and the authors thank the Institut for its hospitality.
We are grateful to Mohammed Abouzaid for useful discussions, for sharing
\cite{abouzaidcosheaves}, and for telling us why the Floer complexes in this
paper are cofibrant.  We also thank David Nadler and Zack Sylvan for useful
conversations and Thomas Massoni for comments on an earlier version.
Finally, we thank the referee for their careful reading of this long and technical paper.

\section{Liouville sectors}\label{secliouvillesector}

\subsection{Notation}

The notation $\Nbd K$ shall mean ``some neighborhood of $K$''.
``A neighborhood of infinity'' means ``the complement of a pre-compact set'' (i.e.\ $\Nbd{\{\infty\}}$ in the one-point compactification).
``At infinity'' shall mean ``over some neighborhood of infinity''.  We write $M^\circ$ for the interior of $M$.
The notation $s\gg 0$ shall mean ``$s$ sufficiently large'', and $s\ll 0$ means $-s\gg 0$.

We work in the smooth category unless otherwise stated.
A function on a closed subset of a smooth manifold is called smooth iff it can be extended to a smooth function defined in a neighborhood (however such an extension is not specified).

\subsection{Liouville manifolds}

References for Liouville manifolds include \cite{eliashberggromovconvexsymplectic,eliashbergplurisubharmonic,cieliebakeliashberg,seidelbiased}.

A \emph{Liouville vector field} $Z$ on a symplectic manifold $(X, \omega)$ is a vector field satisfying
$\sL_Z\omega=\omega$, or, equivalently, $\omega=d\lambda$ for $\lambda := \omega(Z, -)$.
Such $\lambda$ is called a \emph{Liouville form} and determines both $\omega$ and $Z$.
An \emph{exact symplectic manifold} is a manifold equipped with a Liouville form (equivalently, it is a symplectic manifold equipped with a Liouville vector field).

To any co-oriented contact manifold $(Y,\xi)$ one associates an exact symplectic manifold $(SY,\lambda)$ called the \emph{symplectization} of $Y$, defined as the total space of the bundle of positive contact forms, equipped with the restriction of the tautological Liouville $1$-form $\lambda$ on $T^\ast Y$.
Equipping $Y$ with a positive contact form $\alpha$ induces a trivialization $(SY,\lambda)=(\RR_s\times Y,e^s\alpha)$ in which $Z=\frac\partial{\partial s}$.
Henceforth, we omit the adjectives ``co-oriented'' and ``positive'' for contact manifolds/forms, though they should be understood as always present.
An exact symplectic manifold $X$ is the symplectization of a contact manifold iff there is a diffeomorphism $X=\RR_s\times Y$ identifying $Z$ with $\frac\partial{\partial s}$.

A \emph{Liouville domain} is a compact exact symplectic manifold-with-boundary whose Liouville vector field is outward pointing along the boundary; the restriction of $\lambda$ to the boundary of a Liouville domain is a contact form.
A \emph{Liouville manifold} is an exact symplectic manifold which is ``cylindrical and convex at infinity'', meaning that the following two equivalent conditions are satisfied:
\begin{itemize}
\item There is a Liouville domain $X_0\subseteq X$ such that the positive Liouville flow of $\partial X_0$ is defined for all time and the resulting map $X_0\cup_{\partial X_0}(\RR_{\geq 0}\times\partial X_0)\to X$ is a diffeomorphism (equivalently, is surjective).
\item There is a map from the ``positive half'' of a symplectization $(\RR_{s\geq 0}\times Y,e^s\alpha)\to(X,\lambda)$ respecting Liouville forms and which is a diffeomorphism onto its image, covering a neighborhood of infinity.
\end{itemize}
The Liouville flow defines contactomorphisms between different choices of $\partial X_0$ and/or $Y$, so there is a well-defined contact manifold $(\partial_\infty X,\xi)$ which we regard as the ``boundary at infinity of $X$'' (not to be confused with the actual boundary $\partial X$, which is not present now but will be later).
There is a canonical embedding of the (full) symplectization of $(\partial_\infty X,\xi)$ into $X$ as an open subset, and there is a canonical bijection between Liouville domains $X_0\subseteq X$ whose completion is $X$ and contact forms on $\partial_\infty X$.

An object living on a Liouville manifold is called \emph{cylindrical} iff it is invariant under the Liouville flow near infinity.

\begin{remark}\label{spinecoreskeletonrmk}
It is natural to view a Liouville domain/manifold $X$ as a ``thickening'' of the locus $\LL\subseteq X$ of points which do not escape to infinity under the Liouville flow (e.g.\ regarding a punctured Riemann surface as a thickening of a ribbon graph is a special case of this).
Under certain assumptions on the Liouville flow (e.g.\ if $X$ is Weinstein), this $\LL\subseteq X$ is a \emph{singular isotropic spine} for $X$ (``spine'' carries its usual meaning, e.g.\ as in Zeeman \cite{zeeman}, namely that $X$ deforms down to a small regular neighborhood of $\LL$).
It is also common to call $\LL$ the \emph{core} or \emph{skeleton} of $X$.
\end{remark}

\begin{example}
The manifold $X = \RR^{2n}$ equipped with the standard symplectic form $\omega:=\sum_{i=1}^ndx_i\wedge dy_i$ can
be given the structure of a Liouville manifold by taking the vector field $Z$ to be the 
generator of radial expansion $Z:=\frac 12\sum_{i=1}^n(x_i\frac\partial{\partial x_i}+y_i\frac\partial{\partial y_i})$.  In this case, $X_0$ can be chosen as the unit ball, and the core is the origin.
\end{example}

\begin{example}
The cotangent bundle of a compact manifold $X = T^\ast Q$, equipped
with the tautological Liouville $1$-form $\lambda$,
is a Liouville manifold in which $Z$ is the generator of fiberwise radial
dilation.  We can choose $X_0$ as the unit codisk bundle, and the core is the zero section.

If $Q$ is non-compact, then $T^\ast Q$ is not a Liouville manifold when equipped with the tautological Liouville form.
However, if $Q$ is the interior of a compact manifold with boundary $\overline Q$, then $T^\ast Q$ may be given the structure of a Liouville manifold by modifying the Liouville form appropriately near $\partial\overline Q$ to make it convex.
\end{example}

\subsection{Hamiltonian flows}\label{hamiltonians}

To a function $H:X\to\RR$ on a symplectic manifold $X$, one associates the Hamiltonian vector field $X_H$ defined by
\begin{equation}
\omega(X_H,\cdot)=-dH.
\end{equation}
When $X=SY$ is the symplectization of a contact manifold $Y$, we say $H$ is \emph{linear} iff $ZH=H$, in which case $X_H$ commutes with $Z$.
The following spaces are in canonical bijection:
\begin{itemize}
\item The space of functions $H$ on $X$ satisfying $ZH=H$.
\item The space of symplectic vector fields on $X$ commuting with $Z$.
\item The space of sections of $TY/\xi$.
\item The space of contact vector fields on $Y$.
\end{itemize}
(More generally, this holds for $X$ Liouville with $Y=\partial_\infty X$ and $H$ defined near infinity.)
The contact vector field associated to a section $f$ of $TY/\xi$ is denoted $V_f$.
In the presence of a contact form $\alpha$, sections of $TY/\xi$ are identified with real valued functions via pairing with $\alpha$, so we may write $V_f$ for functions $f$.
The Reeb vector field $\R_\alpha:=V_1$ is the contact vector field associated to the constant function $1$ (equivalently, $\R_\alpha$ is defined by the properties $\alpha(\R_\alpha)=1$ and $d\alpha(\R_\alpha,\cdot)=0$); more generally $V_f=\R_{f^{-1}\alpha}$ for $f>0$.

\subsection{Liouville sectors}\label{sectordefsec}

A Liouville manifold-with-boundary $X$ is defined analogously to a Liouville manifold: it is an exact symplectic manifold-with-boundary for which a neighborhood of infinity is given by the positive half of the symplectization of a contact manifold-with-boundary $\partial_\infty X$.
The Liouville vector field is allowed to be non-tangent to $\partial X$ over a compact set (and so, in particular, it is not required to be complete, except at infinity).
Because of this, there may be no embedding of the full symplectization of $\partial_\infty X$ into $X$.

Floer theory on Liouville manifolds-with-boundary is not well-behaved in general, since holomorphic curves can touch the boundary.
This situation can be remedied by introducing appropriate assumptions on the characteristic foliation of the boundary.
This leads to the notion of a \emph{Liouville sector}, which we now introduce and proceed to study from a purely symplectic geometric viewpoint.

In order to state the definition, let us recall that a hypersurface in a symplectic manifold carries a canonical one-dimensional foliation, called the \emph{characteristic foliation}, whose tangent space is the kernel of the restriction of the symplectic form to the hypersurface (as is standard, we shall abuse terminology and use the words ``characteristic foliation'' to refer to this kernel as well).
Recall also that a hypersurface in a contact manifold is said to be \emph{convex} iff there is a contact vector field defined in its neighborhood which is transverse to it.

\begin{definition}\label{sectordef}
A \emph{Liouville sector} is Liouville manifold-with-boundary
satisfying the following equivalent conditions:
\begin{itemize}
\item For some $\alpha>0$, there exists $I:\partial X\to\RR$ with $ZI=\alpha I$ near infinity and $dI|_\charfol>0$.
\item For every $\alpha>0$, there exists $I:\partial X\to\RR$ with $ZI=\alpha I$ near infinity and $dI|_\charfol>0$.
\item The boundary of $\partial_\infty X$ is convex
and there is a diffeomorphism $\partial X=\RR\times F$ sending the characteristic foliation of $\partial X$ 
to the foliation of $\RR\times F$ by leaves $\RR\times\{p\}$.
\end{itemize}
In the first two conditions, the characteristic foliation $C$ is oriented so that $\omega(N,C)>0$ for any inwarding pointing vector $N$.
Note that $dI|_\charfol>0$ is equivalent to the Hamiltonian vector field $X_I$ being outward pointing along $\partial X$.
We call such $I$ ``$\alpha$-defining functions'' for $\partial X$ (with the convention that $\alpha=1$ if omitted).
Note that the space of $\alpha$-defining functions is convex, and thus either empty or contractible.

Observe that being a Liouville sector is an open condition, i.e.\ it is preserved under small deformations within the class of Liouville manifolds-with-boundary.

An ``inclusion of Liouville sectors'' $i:X\hookrightarrow X'$ shall mean a proper map which is a diffeomorphism onto its image, satisfying $i^\ast\lambda'=\lambda+df$ for compactly supported $f$.
A ``trivial inclusion of Liouville sectors'' is one for which $i(X)$ may be deformed to $X'$ through Liouville sectors included into $X'$.
\end{definition}

\begin{lemma} \label{lem:sectordef}
The conditions in Definition \ref{sectordef} are equivalent.
\end{lemma}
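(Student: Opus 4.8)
The plan is to establish the cycle of implications $(iii)\Rightarrow(ii)\Rightarrow(i)\Rightarrow(iii)$, the middle one being trivial. Throughout I will work over a neighborhood of infinity, where $X$ is the positive half of the symplectization $(\RR_{s\geq 0}\times\partial_\infty X, e^s\alpha_0)$ for some contact form $\alpha_0$ on $\partial_\infty X$, and $Z=\partial_s$; the behavior over the compact part is unconstrained by $(i)$--$(ii)$ and will be handled by an interpolation at the end.

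First I would prove $(ii)\Rightarrow(i)$, which is immediate (specialize $\alpha$, say to $1$). The substantive content is $(i)\Rightarrow(iii)$ and $(iii)\Rightarrow(ii)$. For $(i)\Rightarrow(iii)$: given an $\alpha$-defining function $I$, the condition $dI|_{\charfol}>0$ says exactly that $X_I$ is outward-pointing along $\partial X$ (as noted in the definition), and since $X_I$ is tangent to $\partial X$ away from where we must check nothing, I will use the flow of $X_I$ to produce the product structure. More precisely: the hypersurface $\partial X$ is coisotropic, its characteristic foliation $\charfol$ is spanned by a vector field $R$ with $dI(R)>0$; I claim $R$ can be taken to be (a positive multiple of) the restriction of $X_I$ to $\partial X$, because $X_I$ preserves $\partial X$ up to the outward component and its symplectic-orthogonal complement inside $T\partial X$ is the characteristic direction. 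Then $I:\partial X\to\RR$ together with the flow of $X_I$ (suitably normalized to unit speed in $I$) identifies $\partial X\cong\RR_I\times F$ where $F:=I^{-1}(0)$, carrying $\charfol$ to the $\RR$-foliation. Convexity of $\partial(\partial_\infty X)$ at infinity follows because, near infinity, $ZI=\alpha I$ forces $X_I$ to be (after rescaling by a power of $e^s$) the lift of a contact vector field on $\partial_\infty X$ transverse to $\partial(\partial_\infty X)$; here I would invoke the dictionary in \S\ref{hamiltonians} between linear Hamiltonians and contact vector fields. The main technical care is checking that $X_I$, a priori only defined on $\partial X$ via some extension of $I$, restricts to a well-defined nonvanishing vector field on $\partial X$ independent of the extension — this is true because the characteristic direction is intrinsic and $dI$ pairs nontrivially with it.

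For $(iii)\Rightarrow(ii)$: given the diffeomorphism $\partial X\cong\RR\times F$ with convex boundary at infinity, fix any $\alpha>0$; I want to build $I$ with $ZI=\alpha I$ near infinity and $dI|_{\charfol}>0$. Near infinity, convexity of $\partial(\partial_\infty X)$ gives a contact vector field $V$ transverse to it; let $I_\infty:\partial_\infty X\to\RR$ (defined near $\partial(\partial_\infty X)$) be such that $V_{I_\infty}=V$ in the notation of \S\ref{hamiltonians}, pulled back and scaled by $e^{\alpha s}$ (using $e^{\alpha s}\alpha_0$ as contact form to get the $ZI=\alpha I$ scaling) to define $I$ near infinity on $\partial X$; transversality of $V$ to the characteristic foliation direction gives $dI|_{\charfol}>0$ there. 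Over the compact part, the product structure $\partial X=\RR\times F$ gives a globally defined coordinate function $t:\partial X\to\RR$ whose differential is positive on the leaves $\RR\times\{p\}$, i.e.\ $dt|_{\charfol}>0$; I then interpolate between $t$ (compact part) and $I$ (near infinity) using a cutoff in $s$. The interpolation preserves $dI|_{\charfol}>0$ provided the two functions are compatibly oriented, which I arrange by choosing signs so that both increase along the (coherently oriented) characteristic leaves; convexity of the space of such functions — already observed in Definition \ref{sectordef} — guarantees the interpolation stays in the valid class. The resulting $I$ satisfies $(ii)$ for the chosen $\alpha$, and since $\alpha$ was arbitrary we get the ``for every'' form.

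The step I expect to be the main obstacle is the gluing/interpolation in $(iii)\Rightarrow(ii)$: one must simultaneously (a) match the near-infinity linear-at-infinity behavior $ZI=\alpha I$ with the compact-part product coordinate, and (b) keep $dI|_{\charfol}>0$ throughout the transition region. The cleanest route is to first replace the compact-part coordinate $t$ by a function that already agrees with $I_\infty$ in an overlap annulus $\{s\in[s_0,s_0+1]\}$ — possible because on that annulus both $t$ and (the unscaled) $I_\infty$ are coordinates along the $\RR\times\{p\}$-leaves, so they differ by a leafwise-positive reparametrization which can be isotoped to the identity — and then extend $I$ by $e^{\alpha(s-s_0)}$ times its annulus value for $s>s_0+1$. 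Verifying that ``leafwise-positive reparametrization'' can be absorbed without destroying the convexity/contact-vector-field description at infinity is the one place where one genuinely uses that $\partial(\partial_\infty X)$ is convex rather than merely that $\partial X$ has a product characteristic foliation; I would make this precise by working in the symplectization coordinates and tracking the contact Hamiltonian directly.
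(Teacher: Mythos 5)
There is a genuine gap in your implication (i)$\Rightarrow$(iii), and it sits at the heart of the construction. You correctly record that $dI|_{\charfol}>0$ means $X_I$ is outward pointing along $\partial X$, but two sentences later you claim that the characteristic line field is spanned by (a positive multiple of) $X_I|_{\partial X}$ and propose to obtain $\partial X\cong\RR\times I^{-1}(0)$ by flowing along $X_I$. These two statements contradict each other: the characteristic direction is tangent to $\partial X$, whereas $X_I$ is transverse to it (its flow immediately leaves $\partial X$), so the proposed flow cannot produce the product structure. You appear to be conflating the paper's ``$\alpha$-defining function'' $I$ (a submersion $\partial X\to\RR$, linear at infinity) with a defining equation of the hypersurface $\partial X$ in the classical sense, whose Hamiltonian vector field does span $\charfol$. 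The repair is the paper's route: since $dI$ is positive on $\charfol$, the function $I$ is a submersion, and one flows along a section of the characteristic line field normalized so that $dI\equiv 1$ on it, using the control on $I$ near infinity to see each leaf maps onto all of $\RR$; that is what yields $\partial X=\RR\times I^{-1}(0)$.

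Second, your handling of general $\alpha$ fails, and your cycle (iii)$\Rightarrow$(ii)$\Rightarrow$(i)$\Rightarrow$(iii) forces you to face it. Writing $I=e^{\alpha s}f$ near infinity, one computes (up to sign conventions) $X_I=e^{(\alpha-1)s}\bigl(-(\R_{\alpha_0}f)\partial_s+V_f+(\alpha-1)f\,\R_{\alpha_0}\bigr)$, so the $e^s$-rescaled field descends to $V_f+(\alpha-1)f\,\R_{\alpha_0}$, which is \emph{not} a contact vector field unless $\alpha=1$. Hence your deduction of convexity of $\partial(\partial_\infty X)$ from an $\alpha$-defining function with arbitrary $\alpha$ (which is all that (i) gives you) does not go through, and symmetrically, in (iii)$\Rightarrow$(ii) transversality of the contact field $V=V_f$ does not imply $dI|_{\charfol}>0$ for your $e^{\alpha s}$-scaled Hamiltonian, because of the same Reeb correction term. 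The paper avoids this by first proving the ``some $\alpha$'' and ``every $\alpha$'' conditions equivalent via the substitution $I\mapsto \frac{I}{\left|I\right|}\left|I\right|^{\alpha'/\alpha}$ (suitably smoothed near $I=0$), reducing everything to $\alpha=1$; you need this step, or a substitute, somewhere. Finally, in (iii)$\Rightarrow$(ii) your cutoff interpolation produces an uncontrolled cross term $\chi'(s)(I_\infty-t)\,ds(C)$ and the ``leafwise reparametrization'' fix is not justified; the paper's cleaner point is that an extension over $(-N,N)\times U$ with $dI|_{\charfol}>0$ exists iff $I(N,p)>I(-N,p)$ for all $p$, which one arranges by taking $N$ large.
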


\begin{proof}
To prove the equivalence of the first two conditions, suppose we have an $\alpha$-defining function $I:\partial X\to\RR$ and let us produce an $\alpha'$-defining function $I'$ by smoothing $\frac I{\left|I\right|}\left|I\right|^{\alpha'/\alpha}$ as follows.
Let $Y$ be a contact type hypersurface in $X$, far out near infinity, projecting diffeomorphically onto $\partial_\infty X$ via the forward Liouville flow ($Y$ meets $\partial X$ transversely).
Since $Z$ is tangent to $\{I=0\}$ near infinity, we conclude that $Y\cap\partial X$ and $\{I=0\}$ are transverse submanifolds of $\partial X$.
Now we also know that the characteristic foliation of $\partial X$ is transverse to $\{I=0\}$, so combining these two facts we can modify $Y$ locally near $Y\cap\{I=0\}$ so that $Y\cap\partial X$ is tangent to the characteristic foliation of $\partial X$ in a neighborhood of $Y\cap\{I=0\}$.
Since the characteristic foliation of $\partial X$ is now tangent to $Y\cap\partial X$ near $Y\cap\{I=0\}$, we can smooth the restriction of $\frac I{\left|I\right|}\left|I\right|^{\alpha'/\alpha}$ to $Y\cap\partial X$ near $Y\cap\{I=0\}$ so as to make its differential positive on the characteristic foliation, and then extend it to the positive half of the symplectization $\RR_{\geq 0}\times Y\subseteq X$ by the scaling property $ZI'=\alpha'I'$ (this extension remains positive on the characteristic foliation since $Z$ preserves the characteristic foliation).
It is straightforward to extend this smoothing of $\frac I{\left|I\right|}\left|I\right|^{\alpha'/\alpha}$ over $\RR_{\geq 0}\times Y$ to all of $X$ since the characteristic foliation (on which $dI$ is positive) is transverse to the non-smooth locus $\{I=0\}$.

To see that the first two conditions imply the third, observe that for a defining function $I:\partial X\to\RR$ (meaning $\alpha=1$), its Hamiltonian vector field $X_I$ gives a contact vector field on $\partial_\infty X$, which is outward pointing since $CI>0$.
By assumption, $dI$ is positive on the characteristic foliation, thus $I$ is in particular a submersion.  Along with the control in $I$ near infinity, it follows
that there is a diffeomorphism $\partial X=\RR\times I^{-1}(0)$ as desired.

Finally, suppose that the third condition is satisfied, and let us construct a defining function $I$.
Since $\partial_\infty X$ has convex boundary, there exists a function $I:\partial X\to\RR$ defined near infinity satisfying $ZI=I$ and $CI>0$.
Using the diffeomorphism $\partial X=\RR\times F$, suppose $I$ is defined over a neighborhood of the complement of $(-N,N)\times U$ for some pre-compact open $U\subseteq F$.
Now $I$ can be (re)defined on $(-N,N)\times U$ so that that $CI>0$ iff $I(N,p)>I(-N,p)$ for all $p\in U$, and this can be achieved by taking $N<\infty$ sufficiently large.
\end{proof}

\begin{question}\label{convexredundant}
Suppose $X$ is a Liouville manifold-with-boundary and there is a diffeomorphism $\partial X=\RR\times F$ sending the characteristic foliation to the foliation by leaves $\RR\times\{p\}$.
Is $X$ a Liouville sector?
\end{question}

\begin{example}
If $Q$ is a compact manifold-with-boundary, then $T^\ast Q$ is a Liouville sector.
Indeed, any vector field on $Q$ lifts to a Hamiltonian vector field on $T^\ast
Q$ (with linear Hamiltonian), and the lift of a vector field transverse to
$\partial Q$ thus certifies that $T^\ast Q$ is a Liouville sector.
Furthermore, if $Q_0 \hookrightarrow Q_1$ is a codimension zero embedding of a compact
manifolds-with-boundary, then $T^\ast Q_0 \hookrightarrow T^\ast Q_1$ is an inclusion
of Liouville sectors.
\end{example}

\begin{remark}[Open Liouville sectors]\label{opensector}
An \emph{open Liouville sector} (or perhaps an ``ind-(Liouville sector)'') is a pair $(X,\partial_\infty X)$ where $X$ is an exact symplectic manifold and $\partial_\infty X$ is a contact manifold (both without boundary), together with a germ near $+\infty$ of a (codimension zero) embedding of the symplectization $S\partial_\infty X$ into $X$ (strictly respecting Liouville forms), such that the pair $(X,\partial_\infty X)$ is exhausted by Liouville sectors.
Being exhausted by Liouville sectors means that every subset of $X$ which away from a compact subset of $X$ equals the cone over a compact subset of $\partial_\infty X$, is contained in a Liouville sector $X_0\subseteq X$ with $\partial_\infty X_0\subseteq\partial_\infty X$.

For example, $T^*\RR^n$ is an open Liouville sector, as is more generally $T^*M$ for
any (not necessarily compact) manifold $M$; an exhaustion is given by the family of Liouville subsectors $T^*M_0$ for compact codimension zero submanifolds-with-boundary $M_0\subseteq M$ (which obviously exhaust $M$).
For any Liouville sector $X$, its interior $X\setminus\partial X$ is an open Liouville sector.

A (codimension zero) inclusion of open Liouville sectors $i:(X,\partial_\infty X)\hookrightarrow(Y,\partial_\infty Y)$ is simply a map of pairs in the obvious sense (i.e.\ compatible with the embeddings $S\partial_\infty X\hookrightarrow X$ and $S\partial_\infty Y\hookrightarrow Y$ defined near infinity) satisfying $i^\ast\lambda_Y=\lambda_X+df$ where the support of $f$ does not approach $\partial_\infty X$.
For example, for any open
inclusion of manifolds $M\hookrightarrow N$, the inclusion $T^*M\hookrightarrow
T^*N$ is an inclusion of open Liouville sectors.
For any inclusion of Liouville sectors $X\hookrightarrow X'$, the associated inclusion of their interiors $X\setminus\partial X\hookrightarrow X'\setminus\partial X'$ is an inclusion of open Liouville sectors.
\end{remark}

\begin{lemma}\label{deformboundary}
Let $X$ be a Liouville sector, and let $Y_t$ be a $1$-parameter family of contact manifolds with convex boundary, where $Y_0=\partial_\infty X$.
There exists a corresponding $1$-parameter family of Liouville sectors $X_t$ and isomorphisms $\partial_\infty X_t=Y_t$, specializing to $X_0=X$ and $Y_0=\partial_\infty X$.
\end{lemma}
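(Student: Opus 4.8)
The plan is to build $X_t$ by surgery at the cylindrical end of $X$: I would keep a fixed compact piece and glue on the symplectization of $Y_t$, interpolating the Liouville form across a collar. As a preliminary reduction, since smooth fibre bundles over $[0,1]$ are trivial I may assume all the $Y_t$ share one underlying manifold-with-boundary $Y$, carrying a smooth family of contact structures $\xi_t$ each making $\partial Y$ convex, with $(Y,\xi_0)=\partial_\infty X$. I would then fix once and for all a decomposition $X=W\cup_{\{0\}\times Y}\bigl([0,\infty)_s\times Y,\ e^s\beta_0\bigr)$, with $W$ a compact exact symplectic manifold-with-corners and $\beta_0$ a contact form for $\xi_0$, obtained by cutting $X$ along a slice of its end, and choose a smooth family of contact forms $\beta_\tau$ for $\xi_\tau$ extending $\beta_0$.

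For $t$ near $0$ I would set
\[
X_t:=W\cup_{\{0\}\times Y}\Bigl(\bigl([0,1]_s\times Y,\ e^s\beta_{\rho(t,s)}\bigr)\cup_{\{1\}\times Y}\bigl([1,\infty)_s\times Y,\ e^s\beta_t\bigr)\Bigr),
\]
where $\rho$ is a smooth reparametrization with $\rho(t,0)=0$, $\rho(t,1)=t$, and $\rho(t,\cdot)$ constant in $s$ near $s=0$ and $s=1$ so the pieces glue smoothly. For $t$ close to $0$ the $1$-form $e^s\beta_{\rho(t,s)}$ is $C^\infty$-close to $e^s\beta_0$, hence its differential is nondegenerate, so this is a smooth family of exact symplectic manifolds-with-corners with $X_0=X$, cylindrical at infinity, with $\partial_\infty X_t=(Y,\ker\beta_t)=Y_t$; and since being a Liouville sector is an open condition (as observed following Definition~\ref{sectordef}), $X_t$ is a Liouville sector for $t$ near $0$. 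An alternative at this point is to invoke a version of Gray's stability theorem for contact manifolds with convex boundary to replace $\xi_t$ by $\xi_0$ outright — one could then even take $X_t\equiv X$ with twisted identifications $\partial_\infty X_t\cong Y_t$ — but I find the direct interpolation cleaner and would present that.

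To reach all $t\in[0,1]$ I would run a ladder argument: pick $0=t_0<t_1<\cdots<t_N=1$, and, having produced $X_{t_k}$ as a Liouville sector together with a decomposition of the shape above (with $W,\beta_0$ replaced by the data of $X_{t_k}$), apply the construction of the previous paragraph over $[t_k,t_{k+1}]$, arranging the reparametrizations to be locally constant in $t$ near $t_k$ and $t_{k+1}$ so that successive families concatenate into a single smooth family with the desired endpoints and isomorphisms $\partial_\infty X_t=Y_t$. The hard part — really the only nontrivial point — is keeping $X_t$ a Liouville sector throughout: a single global interpolation between $e^s\beta_0$ and $e^s\beta_t$ need not be symplectic once $\beta_t$ is far from $\beta_0$, which is exactly why one is forced into the ladder, and the steps $t_{k+1}-t_k$ must be taken small enough that openness of the Liouville-sector condition about each $X_{t_k}$ applies. (Since $\partial_\infty X$ is compact in the situations considered, this openness is uniform and the ladder terminates; allowing non-compact $\partial_\infty X$ would require a correspondingly uniform-at-infinity hypothesis on the family $Y_t$.)
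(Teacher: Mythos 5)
Your construction is a legitimately different route from the paper's. The paper first applies Gray's theorem to view $Y_t$ (for $t$ near $0$) as a fixed contact manifold whose \emph{boundary} moves, then deforms $\partial X$ correspondingly inside the symplectization and explicitly splices the old defining function $I_0$ (for $s\ll 0$) with the linear defining functions $I_t$ coming from convexity of $\partial Y_t$ (for $s\gg 0$); the general $t$ is handled by a compactness argument. You instead keep the boundary fixed, interpolate the contact \emph{form} across a collar $[0,1]_s\times Y$ in the cylindrical end, and appeal to openness of the Liouville-sector condition. This avoids Gray's theorem (which for manifolds with convex boundary itself needs a word), at the cost of having to check nondegeneracy of $d(e^s\beta_{\rho(t,s)})$ (fine on the compact collar for small $t$, or for any $t$ if the interpolation is slow) and of re-establishing the sector condition for the new structure at infinity.

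The soft spot is exactly that last step. The remark after Definition \ref{sectordef} is about small deformations, and its obvious justification is that one can reuse the given defining function; your deformation changes the Liouville form \emph{at infinity}, so the old $I$ no longer satisfies $ZI=\alpha I$ there and cannot simply be reused. What is actually needed is to build a new defining function by splicing the old one (on the untouched compact part) with a linear one near infinity obtained from a contact vector field transverse to $\partial Y_t$ --- i.e.\ precisely the paper's argument, and precisely where the convexity hypothesis on $\partial Y_t$ enters. Relatedly, your ladder terminates not because $\partial_\infty X$ is compact, but because convexity is assumed for \emph{every} $t$: choosing a smooth family of transverse contact vector fields (equivalently, of linear functions $I_t$ near $\partial Y_t$) over the compact interval $[0,1]$ gives a step size $\epsilon$ depending only on the family $\{(Y,\beta_t)\}$ and these choices, not on the accumulated sectors $X_{t_k}$; without that, openness about each $X_{t_k}$ separately does not rule out the steps shrinking to zero (and indeed the lemma would be false if convexity were only assumed at $t=0$). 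With the splicing of defining functions made explicit and the uniformity attributed to this parametric choice, your proof goes through.
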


\begin{proof}
By Gray's theorem, $Y_t$ for $t$ close to $0$ may be viewed simply as a deformation of the boundary of $Y_0$ (i.e.\ the contact structure is fixed).
Now consider an arbitrary deformation of the boundary of the symplectization of $Y_0$, which is fixed for $s\ll 0$ and which follows $\partial Y_t$ for $s\gg0$.
On this deformation, for $t$ sufficiently small, we may splice together the defining function $I_0:\partial(SY_0)\to\RR$ for $s\ll 0$ with the defining functions $I_t:\partial(SY_t)\to\RR$ for $s\gg0$.
This proves the result for $t$ sufficiently small.
Now the general case follows from a compactness argument.
\end{proof}

\begin{definition}\label{lsfiber}
Let $X$ be a Liouville sector.  The symplectic reduction $F:=(\partial X)/C$ (quotient by the characteristic foliation) is a smooth manifold, and there is a diffeomorphism $\partial X=\RR\times F$ in which the leaves of the characteristic foliation are $\RR\times\{p\}$ (see Definition \ref{sectordef}).
By Cartan's formula, the restriction of the symplectic form $\omega|_{T\partial X}$ is pulled back from the projection $\partial X\to F$; moreover, the restriction of the Liouville form $\lambda|_{T\partial X}$ is (locally) pulled back from $F$ near infinity (more precisely, over the locus where $Z$ is tangent to $\partial X$).
Choosing a section of the projection $\partial X\to F$ thus defines a Liouville form on $F$ which is well-defined up to adding $df$ for compactly supported $f$.
Note that $F$ is a Liouville manifold when equipped with any/all of these $\lambda$; convexity at infinity may be seen by using the embedding $F=I^{-1}(0)\subseteq\partial X$ for any $\alpha$-defining function $I$.

In particular, there are two Liouville forms on $F$, denoted $\lambda_\infty$ and $\lambda_{-\infty}$, obtained by embedding $F=I^{-1}(a)\subseteq\partial X$ for $a\to\pm\infty$ and any $\alpha$-defining function $I$.
We have
\begin{equation}\label{Cintegraldifference}
\lambda_\infty-\lambda_{-\infty}=d\int_C\lambda
\end{equation}
where $\int_C\lambda$ denotes the compactly supported function $F\to\RR$ obtained by integrating $\lambda$ over the leaves of the characteristic foliation (i.e.\ the fibers of the projection $\partial X\to F$).
We say that \emph{$\partial X$ is exact} or \emph{$X$ has exact boundary} iff $\int_C\lambda\equiv 0$, which implies $\lambda_\infty=\lambda_{-\infty}$ 
(and for $\dim X\geq 4$, the converse implication holds as well).
We will see in Proposition \ref{deformexact} that every Liouville sector can be deformed so that its boundary becomes exact.
\end{definition}

\begin{lemma}\label{sectorboundarytangent}
Let $X$ be a Liouville sector with exact boundary.
There exists a compactly supported function $f$ such that $Z_{\lambda+df}$
(the Liouville vector field associated to the Liouville form $\lambda+df$)
is everywhere tangent to $\partial X$.
\end{lemma}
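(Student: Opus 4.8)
The plan is to work near $\partial X$ using the structure established so far, and construct $f$ so that $Z_{\lambda+df} = Z + X_f$ becomes tangent to $\partial X$ along all of $\partial X$. Since $X$ is a Liouville sector with exact boundary, we know from Definition \ref{lsfiber} that $\lambda|_{T\partial X}$ is pulled back from a Liouville form on $F = (\partial X)/C$, globally and not just near infinity (this is precisely what exactness buys us: $\int_C\lambda \equiv 0$ forces the two candidate Liouville forms $\lambda_\infty$ and $\lambda_{-\infty}$ on $F$ to agree, and more relevantly the restriction $\lambda|_{T\partial X}$ to be pulled back from $F$ everywhere, up to an exact correction which we may absorb). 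First I would fix a collar neighborhood $\Op\partial X \cong \partial X \times [0,\epsilon)$ and choose a defining function $I : \partial X \to \RR$; then I would extend $I$ to a function (still called $I$, or rather its pullback $\tilde I$) on this collar in a way compatible with the symplectic fibration $\pi : X \to \CC_{\Re\geq 0}$ near $\partial X$.

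The key step is a vector-field-level reformulation. Tangency of $Z + X_f$ to $\partial X$ is equivalent to the condition $(Z + X_f)\tilde I = 0$ along $\partial X$ for a boundary-defining function $\tilde I$ (meaning $\tilde I^{-1}(0) = \partial X$ with $d\tilde I \neq 0$ there — note this is a different use of the letter than Definition \ref{sectordef}, so I would use a distinct name such as $\rho$). So I want $X_f \rho = -Z\rho$ along $\partial X$, i.e. $\{f, \rho\} = -Z\rho = -\lambda(\cdot)\text{-type expression}$ on $\partial X$. Since $\rho$ is a boundary-defining function, $X_\rho$ spans (together with vectors tangent to $\partial X$) and in fact the Hamiltonian flow of $\rho$ moves transversally to $\partial X$; the equation $\{f,\rho\} = g$ along $\partial X$ for prescribed $g$ can be solved for $f$ because it amounts to prescribing the derivative of $f$ in the $X_\rho$-direction, and $X_\rho$ is transverse to $\partial X$ (using that $\partial X$ is coisotropic — actually a hypersurface — so $X_\rho$ is transverse to it and the flow identifies a neighborhood with $\partial X \times \RR$). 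Concretely: near $\partial X$ the flow of $X_\rho$ gives coordinates in which one coordinate is $\rho$ itself; define $f$ on $\partial X$ to be $0$, and extend by integrating the ODE $\partial_\rho f = (\text{the required } g)$ along flow lines; cutting off $f$ to have compact support (legitimate near infinity since there $Z$ is already tangent to $\partial X$, as $\lambda|_{T\partial X}$ is pulled back from $F$ at infinity, so $g \equiv 0$ there and no modification is needed) yields the desired compactly supported function.

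The main obstacle, and the reason the hypothesis of \emph{exact} boundary is essential, is ensuring that the correction $f$ can genuinely be taken compactly supported and that the construction globalizes over all of $\partial X$ (not merely locally). At infinity one must verify $Z$ is already tangent, which is where exactness enters: if $\int_C\lambda$ were not identically zero, the obstruction to $\lambda|_{T\partial X}$ being pulled back from $F$ over the non-infinite region would interact with the region where $Z$ fails to be tangent, and there would be a genuine cohomological obstruction (a class in $H^1$ of $\partial X$ relative to its end, roughly) to killing it by an exact modification of $\lambda$. With exactness this obstruction vanishes, and the only remaining issue is the routine one of patching the locally-defined primitives; I would handle this by first solving globally on the collar via the $X_\rho$-flow coordinates (which are global once $\rho$ and the collar are fixed), so no patching across charts is actually needed, and then extending $f$ from $\Op\partial X$ to all of $X$ by a cutoff supported in the collar. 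The verification that the resulting $Z_{\lambda+df} = Z + X_f$ is tangent everywhere along $\partial X$ is then immediate from the defining equation $X_f\rho = -Z\rho$.
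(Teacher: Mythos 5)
Your reformulation of tangency as $\{f,\rho\}=-Z\rho$ along $\partial X$ is fine, but the way you propose to solve it rests on a false statement: you assert that $X_\rho$ is transverse to $\partial X$ and that the equation prescribes the derivative of $f$ in a direction transverse to the boundary. In fact the Hamiltonian vector field of a boundary-defining function is \emph{tangent} to $\partial X$ (since $d\rho(X_\rho)=-\omega(X_\rho,X_\rho)=0$), and indeed it spans the characteristic foliation $C=\ker(\omega|_{T\partial X})$. Rewriting your equation, using $Z\rho=\omega(Z,X_\rho)=\lambda(X_\rho)$, it becomes $df(X_\rho)=\lambda(X_\rho)$ along $\partial X$, i.e.\ the condition $df|_C=\lambda|_C$ on $f|_{\partial X}$: an ODE along the non-compact leaves $\RR\times\{p\}$ of the characteristic foliation \emph{inside} $\partial X$, not a normal-derivative prescription. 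If your reading were correct, the lemma would be trivially true with no hypothesis (take $f=\rho\cdot g$ near $\partial X$ and cut off), which should have been a red flag, since the statement explicitly requires exact boundary.

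This misidentification also leads you to misplace where exactness is used. The fact that $Z$ is already tangent to $\partial X$ near infinity (equivalently, that $\lambda|_C$ has compact support) is automatic for any Liouville sector, by cylindricity at infinity; exactness is not needed for it. Exactness enters precisely in solving the leafwise ODE with compact support: on a leaf $\RR\times\{p\}$, a compactly supported primitive of the compactly supported $1$-form $\lambda|_C$ exists iff $\int_{\RR\times\{p\}}\lambda=0$, and the vanishing of these leaf integrals is exactly the definition of $\partial X$ being exact. This is the paper's argument: tangency of $Z_{\lambda+df}$ (which, with the convention $\omega(X_H,\cdot)=-dH$, equals $Z_\lambda-X_f$, a harmless sign discrepancy with your $Z+X_f$) is equivalent to $df|_C=\lambda|_C$, a condition only on $f|_{\partial X}$, which one solves uniquely with compact support using the normal form $\partial X=\RR\times F$; any compactly supported extension of $f$ to $X$ then finishes the proof. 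Your proposal, as written, never confronts the leafwise integrals and so does not prove the lemma.
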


\begin{proof}
We have $Z_{\lambda+df}=Z_\lambda-X_f$, which is tangent to $\partial X$ if and only if
\begin{equation}\label{hameqn}
df|_C=-\lambda|_C
\end{equation}
where $C$ denotes the characteristic foliation of $\partial X$.
Note that $\lambda|_C$ has compact support since $Z_\lambda$ is already tangent to $\partial X$ near infinity.

Since $X$ is a Liouville sector, there is a diffeomorphism $\partial X=\RR\times F$ sending the characteristic foliation to the foliation by $\RR\times\{p\}$.
It follows from this normal form that there is at most one function $f:\partial X\to\RR$ of compact support satisfying \eqref{hameqn}.
The existence of such an $f$ is equivalent to the vanishing of the integral of $\lambda$ over every leaf of $C$, which is precisely the definition of $\partial X$ being exact.
\end{proof}

\subsection{Constructions of Liouville sectors}

We now develop tools for constructing Liouville sectors, and we use these tools to give more examples of Liouville sectors.

\begin{remark}\label{sectorsmoothcorners}
Constructions of Liouville sectors sometimes involve ``smoothing corners'' to convert a Liouville manifold-with-corners into a Liouville manifold-with-boundary.
We therefore record here the convenient fact that, to show that the result is a Liouville sector, it is enough to check the existence of a defining function $I$ before smoothing the corners (in which case the condition $dI|_\charfol>0$ is imposed over every closed face).
In fact, (any smooth extension of) the same function $I$ will do the job.
To see this, simply note that (the positive ray of) the characteristic foliation at a point of the smoothed boundary lies in the convex hull of (the positive rays of) the characteristic foliations of the faces of the nearby cornered boundary; hence positivity of $dI|_\charfol$ is preserved by the smoothing process.
\end{remark}

\begin{lemma}\label{suturedliouvillemanifold}
Let $\bar X_0$ be a Liouville domain, and let $A\subseteq\partial\bar X_0$ be a codimension zero submanifold-with-boundary such that there exists a function $I:A\to\RR$ with $\R_\lambda I>0$ such that the contact vector field $V_I$ is outward pointing along $\partial A$.
Then
\begin{equation}\label{suturedliouvillemanifoldeq}
X:=\bar X\setminus(\RR_{>0}\times A^\circ)
\end{equation}
is a Liouville sector, where $\bar X$ denotes the Liouville completion of $\bar X_0$.
\end{lemma}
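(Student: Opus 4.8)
The plan is to verify the third (equivalently, by Lemma \ref{lem:sectordef}, any) condition in Definition \ref{sectordef} for the manifold-with-boundary $X = \bar X\setminus(\RR_{\geq 0}\times A)$. First I would make precise what $X$ is: near the contact boundary $\partial_\infty\bar X$, the completion $\bar X$ looks like the symplectization $\RR_{s\geq 0}\times\partial\bar X_0$ with Liouville form $e^s\lambda|_{\partial\bar X_0}$, and removing $\RR_{\geq 0}\times A$ means removing the region $\{s\geq 0\}\times A \subseteq \RR_{s\geq 0}\times\partial\bar X_0$. The new boundary $\partial X$ is thus (a corner-smoothing of) the union of the old piece $\{s = 0\}$ sitting over $A$ together with the ``vertical wall'' $\RR_{s\geq 0}\times\partial A$; away from a neighborhood of infinity $X$ agrees with $\bar X_0$ itself, so the only boundary at infinity is the wall $\RR_{s\geq 0}\times\partial A$. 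I would invoke Remark \ref{sectorsmoothcorners} so that it suffices to exhibit a defining function $I$ on the cornered boundary whose restriction to the characteristic foliation is positive over each closed face, and the same $I$ (suitably extended) then works after smoothing.

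The key step is to build the defining function on the wall $W := \RR_{s\geq 0}\times\partial A$, since on the old face (over $A$) no condition at infinity is needed and a defining function can be produced by the argument of Lemma \ref{lem:sectordef} once it is matched with the wall along the corner $\{s=0\}\times\partial A$. On the wall, I would use the hypothesis: we are given $I\colon A\to\RR$ with $\R_\lambda I>0$ and $V_I$ outward-pointing along $\partial A$. Pull $I|_{\partial A}$ back along the symplectization coordinate and rescale by $e^{s}$, i.e. set $\tilde I(s,p) := e^{s} I(p)$ on $W = \RR_{s\geq0}\times\partial A$; this is linear at infinity ($Z\tilde I = \tilde I$) by construction. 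The point is that the characteristic foliation of $W$ inside $\bar X$ is generated, up to positive scaling, by the contact vector field $V_I$ lifted to the symplectization together with the Liouville direction, and the condition that $V_I$ be outward-pointing along $\partial A$ translates exactly into $d\tilde I$ being positive on (the correctly oriented) characteristic foliation of $W$. I would spell out the orientation bookkeeping: $\partial A\subseteq \partial\bar X_0$ is cooriented by ``outward from $A$'', the characteristic foliation of $W$ is oriented by $\omega(N,\cdot)>0$ for $N$ inward to $X$ (equivalently inward to $A$ along $\partial A$), and $d\tilde I|_{\charfol}>0$ is then the statement $dI(V_I\text{-direction})>0$, i.e. $\R_\lambda I>0$ after using $V_I = \R_{I^{-1}\lambda}$ for $I>0$ — this is where the hypothesis $\R_\lambda I>0$ is consumed. (One may harmlessly assume $I>0$ on a neighborhood of $\partial A$ after adding a constant, since only the behavior near $\partial A$ and near infinity matters; the global sign of $I$ on $A$ is irrelevant.)

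Having a defining function on each closed face that agrees along the corner, I would splice: choose the wall function $\tilde I$ near infinity and the (compactly supported modification of the) old-face function near the non-cylindrical part, interpolating in a neighborhood of the corner $\{s=0\}\times\partial A$. Because both pieces restrict positively to the relevant characteristic foliations and the condition $dI|_{\charfol}>0$ is convex, a convex interpolation keeps it positive; away from the corner the interpolation is trivial. Finally, by Remark \ref{sectorsmoothcorners}, any smooth extension of this $I$ across the smoothed corner still satisfies $dI|_{\charfol}>0$ since the smoothed characteristic ray lies in the convex hull of the two cornered characteristic rays. This produces a $1$-defining function for $\partial X$, so $X$ is a Liouville sector.

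I expect the main obstacle to be the orientation and normal-form computation on the wall $W$: identifying the characteristic foliation of $\RR_{s\geq0}\times\partial A$ inside the symplectization explicitly, checking that the ``$V_I$ outward-pointing'' hypothesis is exactly the positivity of $d(e^sI)$ on the correctly-oriented characteristic line, and ensuring the interpolation near the corner $\{s=0\}\times\partial A$ is done compatibly with both the linearity-at-infinity requirement and the matching to the old-face defining function. Everything else — completion normal forms, the convexity/contractibility of the space of defining functions, and the corner-smoothing step — is routine given the lemmas already available.
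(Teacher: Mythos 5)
Your overall strategy (exhibit a defining function face-by-face on the cornered boundary and invoke Remark \ref{sectorsmoothcorners}) is the right one, and is essentially what the paper does, but the execution contains a genuine error at exactly the point where the content of the lemma lives. The defining function you propose on the wall has the wrong sign: the correct choice is the linear extension of $-I$, i.e.\ $-e^{s}I$, not $e^{s}I$ (this is precisely the paper's one-line proof). The quickest way to see this is the equivalence recorded in Definition \ref{sectordef}: $dG|_\charfol>0$ along a boundary face is equivalent to $X_G$ being outward pointing there. For $G=-e^{s}I$ one computes $X_G=(\R_\lambda I)\,\partial_s-\widetilde{V_I}$ (where $\widetilde{V_I}$ is the lift of $V_I$ to the symplectization, up to its irrelevant $\partial_s$-component): along the face $\{0\}\times A$ the outward direction is $+\partial_s$ (into the removed cylinder), and outwardness is exactly $\R_\lambda I>0$; along the wall $\RR_{\geq 0}\times\partial A$ the outward direction points \emph{into} $A$ (toward the removed region $\RR_{\geq 0}\times A$), and outwardness is exactly the hypothesis that $V_I$ points \emph{out of} $A$. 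With your choice $+e^{s}I$ both signs flip and $dG|_\charfol<0$; the negative of a defining function is not a defining function.

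The error is not merely a stray sign, because your justification for wall-positivity rests on a false identification: you assert that the characteristic foliation of $W=\RR_{\geq 0}\times\partial A$ is generated by the lift of $V_I$ together with the Liouville direction. But the characteristic foliation of $W$ is tangent to $W$ and depends only on $(W,\omega)$, not on $I$, whereas $V_I$ is \emph{transverse} to $\partial A$ (that is the hypothesis), so its lift is transverse to $W$; moreover the characteristic direction lies in $\xi\cap T\partial A$ while $V_I$ generally does not lie in $\xi$. Relatedly, you consume the hypotheses in the wrong places: $\R_\lambda I>0$ is not what makes the wall work (your appeal to $V_I=\R_{I^{-1}\lambda}$ and ``$dI(V_I)>0$'' does not compute the right quantity), it is what makes the compact face $A$ work, since there the oriented characteristic foliation is the \emph{negative} Reeb direction and the condition on $G=-I$ reads $d(-I)(-\R_\lambda)=\R_\lambda I>0$; your treatment of that face (``produced by the argument of Lemma \ref{lem:sectordef}'') leaves this unverified. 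Once you use the single function $-e^{s}I$ on both faces (checked via the outward-pointing criterion as above), no splicing or interpolation near the corner is needed, and Remark \ref{sectorsmoothcorners} finishes the proof exactly as you intended.
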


\begin{proof}
The linear extension of $-I$ is a defining function for $X$.
\end{proof}

\begin{definition}\label{liouvillesectorcreation}
A \emph{sutured Liouville domain} $(\bar X_0,F_0)$ is a Liouville domain $\bar X_0$ together with a codimension one submanifold-with-boundary $F_0\subseteq\partial\bar X_0$ such that $(F_0,\lambda)$ is a Liouville domain.
Similarly, a \emph{sutured Liouville manifold} is a Liouville manifold $\bar X$ together with a codimension one submanifold-with-boundary $F_0\subseteq\partial_\infty\bar X$ and a contact form $\lambda$ defined over $\Nbd F_0$ such that $(F_0,\lambda)$ is a Liouville domain.
(Compare with the notion of a ``Weinstein pair'' from \cite{eliashbergweinsteinrevisited}.)

Given a sutured Liouville domain $(\bar X_0,F_0)$, the Reeb vector field of $\lambda$ is transverse to $F_0$ since $d\lambda|_{F_0}$ is symplectic, and thus determines a local coordinate chart $F_0\times\RR_{\left|t\right|\leq\varepsilon}\hookrightarrow\partial\bar X_0$ in which the contact form $\lambda$ equals $dt+\lambda|_{F_0}$.
The contact vector field associated to the function $t$ is given by $t\frac\partial{\partial t}+Z_{\lambda|_{F_0}}$ which is outward pointing along $\partial(F_0\times\RR_{\left|t\right|\leq\varepsilon})$.
We conclude that a sutured Liouville domain $(\bar X_0,F_0)$ in the present
sense determines a codimension zero submanifold $A=F_0\times\RR_{\left|t\right|\leq\varepsilon}$ of $\partial\bar X_0$, which satisfies the
hypotheses of Lemma \ref{suturedliouvillemanifold} (witnessed by the function
$I=t$). In particular, (the conclusion of Lemma \ref{suturedliouvillemanifold}
implies) a sutured Liouville domain $(\bar X_0,F_0)$ gives rise to a Liouville
sector.

We will see in Lemma \ref{suturedequivalence} that every Liouville sector arises from a unique (in the homotopical sense) sutured Liouville domain.
\end{definition}

\begin{example}\label{removelegendrian}
If $\Lambda\subseteq\partial_\infty\bar X$ is a Legendrian, by the Weinstein neighborhood theorem, there are (homotopically unique) coordinates near $\Lambda$ given by $\RR_t\times T^\ast\Lambda$ with contact form $dt+\lambda$.
Choosing $F_0=D^\ast\Lambda$ gives a sutured Liouville domain and thus a Liouville sector $X$, which we think of informally as being obtained from $\bar X$ by removing a small regular neighborhood of $\Lambda$.

It would be of interest to generalize this construction to sufficiently nice (e.g.\ sub-analytic) singular Legendrian $\Lambda$, however this requires constructing a convex neighborhood of such $\Lambda$.
\end{example}

\begin{remark}
The notion of the skeleton $\LL\subseteq X$ of a Liouville domain/manifold (see Remark \ref{spinecoreskeletonrmk}) admits a natural generalization to sutured Liouville domains/manifolds.
Namely, given a sutured Liouville domain $(\bar X_0,F_0)$, we consider the loci $\LL_0\subseteq\bar X_0$ and $\LL\subseteq\bar X$ of points which do not escape to the complement of the skeleton of $F_0$ at infinity.
Note that $\LL$ is necessarily non-compact unless $F_0$ is empty.
As before, under certain assumptions on the Liouville flow on $\bar X$ and $F_0$ (e.g.\ if both are Weinstein), then $\LL_0\subseteq\bar X_0$ (and $\LL\subseteq\bar X$) is a singular isotropic spine.
We will call $\LL_0\subseteq\bar X_0$ the \emph{skeleton of $\bar X_0$ relative to $F_0$} or simply the \emph{relative skeleton} of the sutured Liouville domain $(\bar X_0,F_0)$; analogous terminology applies to $\LL\subseteq\bar X$.
It is reasonable to regard such a skeleton as also being associated to the corresponding Liouville sector.
\end{remark}

\begin{definition}
An \emph{open book decomposition} of a contact manifold $Y$ consists of a \emph{binding} $B\subseteq Y$ (a codimension two submanifold), a tubular neighborhood $B\times D^2\subseteq Y$,
a submersion $\pi:Y\setminus B\to S^1$ standard over $B\times D^2$, and a contact form $\alpha$ on $Y$ such that the \emph{pages} of the open book $(\pi^{-1}(a),d\alpha)$ are symplectic, and $\alpha=(1+\frac 12r^2)^{-1}(\alpha|_B+\lambda_{D^2})$ over $B\times D^2$, where $\lambda_{D^2}:=\frac 12(x\,dy-y\,dx)=\frac 12r^2d\theta$.
Experts will note that we could equivalently use any smooth radial function with negative radial derivative (for $r>0$) in place of $(1+\frac 12r^2)^{-1}$.
The particular choice $(1+\frac 12r^2)^{-1}$ has the nice property that the Reeb vector field of $\alpha$ is given by $\R_\alpha=\R_{\alpha|_B}+\frac\partial{\partial\theta}$ over $B\times D^2$ (see \eqref{bindingflow}).
\end{definition}

\begin{lemma}\label{openbookconvex}
Let $Y$ be a contact manifold equipped with an open book decomposition $(B,\pi,\alpha)$.
Let $Q\subseteq Y$ be a hypersurface which outside $B\times D^2$ coincides with $\pi^{-1}(\{\theta_1\cup\theta_2\})$ and which inside $B\times D^2$ is given by $B\times\gamma$ where $\gamma$ is a simple arc in $D^2$ connecting $\theta_1,\theta_2\in\partial D^2$.
Then $Q$ is convex (i.e.\ there is a contact vector field transverse to $Q$).
\end{lemma}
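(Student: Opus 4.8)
The plan is to exhibit a contact vector field, defined near $Q$, transverse to it. Since contact vector fields on $Y$ are in bijection with their contact Hamiltonians via $V\mapsto\alpha(V)$, it is enough to find $f\colon Y\to\RR$ for which $V_f$ — characterized by $\alpha(V_f)=f$ and $\iota_{V_f}d\alpha=(\R_\alpha f)\,\alpha-df$ — is nowhere tangent to $Q$. I will build $f$ on $Y\setminus B$ and on a neighborhood of $B$ separately, making the two agree on the overlap $B\times(D^2\setminus\{0\})$.

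On $Y\setminus B$ the relevant part of $Q$ is $\pi^{-1}(\theta_1)\sqcup\pi^{-1}(\theta_2)$. I would first note that $\R_\alpha$ is transverse to every page: if $\R_\alpha$ were tangent to $\pi^{-1}(a)$ at a point, then $\iota_{\R_\alpha}d\alpha$ would vanish on the symplectic (hence nondegenerate) subspace $T\pi^{-1}(a)$, impossible as $\R_\alpha\ne0$. Take $f=\psi\circ\pi$ near each page, with $\psi\colon S^1\to\RR$ constant near $\theta_1$ and near $\theta_2$. For $h:=\pi-\theta_i$ one has $df=\psi'(\pi)\,d\pi$, which is proportional to $dh$; writing $X_h\in\ker\alpha$ for the vector with $d\alpha(X_h,\cdot)|_{\ker\alpha}=dh|_{\ker\alpha}$, the $\ker\alpha$-part of $V_f$ contributes $\psi'(\pi)\,d\alpha(X_h,X_h)=0$ to $dh(V_f)$, so
\[
d\pi(V_f)\big|_{\pi^{-1}(\theta_i)}=\psi(\theta_i)\cdot d\pi(\R_\alpha)\ ,
\]
which is nonzero exactly when $\psi(\theta_i)\ne0$. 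I will choose $\psi$ with $\psi(\theta_1)$ and $\psi(\theta_2)$ nonzero of \emph{opposite} signs.

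Near $B$, transversality of $V_f$ to $B\times\gamma\subseteq B\times D^2$ depends only on the $D^2$-component of $V_f$, the $B$-directions being tangent to $B\times\gamma$. Using $\alpha=(1+\tfrac12 r^2)^{-1}(\alpha|_B+\lambda_{D^2})$, one computes that for $f$ pulled back from $D^2$ this $D^2$-component is $W_f=f\,\partial_\theta+(1+\tfrac12 r^2)\,X_f$, where $X_f$ is the Hamiltonian vector field of $f$ on $(D^2,dx\wedge dy)$. After a preliminary isotopy we may assume $\gamma$ meets $\partial D^2$ transversally and is radial near its endpoints; there $f=\psi\circ\pi$ is a nonzero constant, so $X_f=0$ and $W_f=\psi(\theta_i)\,\partial_\theta$ is transverse to the radial arc. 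In the interior of $\gamma$, $f$ is free; parametrizing $\gamma$ by $s\in[0,1]$, fixing a local defining function $h$ of $\gamma$, and setting $p(s):=f|_\gamma$, the condition $dh(W_f)\ne0$ along $\gamma$ reduces to a scalar inequality
\[
A(s)\,p(s)+C(s)\,p'(s)\ne0\qquad(s\in[0,1]),
\]
with $C$ nowhere vanishing and $A(0),A(1)\ne0$. Such $p$ exists iff, with the appropriate integrating factor $\nu>0$, the product $\nu p$ can be made strictly monotone; near $s=0,1$ the function $p$ equals the constants $\psi(\theta_1),\psi(\theta_2)$, so the forced monotonicity direction there is governed by the signs of $A(0),A(1)$. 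The key point is that $A(0)$ and $A(1)$ have opposite signs — because $\gamma$ enters $\partial D^2$ at $\theta_1$ and leaves at $\theta_2$ — which is precisely matched by the opposite signs of $\psi(\theta_1),\psi(\theta_2)$; a suitable $p$ then exists. Extending it to a function on $D^2$ and patching with the construction on $Y\setminus B$ produces the desired $f$.

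The main obstacle is the analysis inside $B\times D^2$: computing $W_f$ correctly, and then checking that the sign/monodromy constraint for the scalar ODE can be met under the rigidly prescribed boundary behavior (near $\partial D^2$ the field $W_f$ is forced to resemble the rotation $\psi\cdot\partial_\theta$). Everything on $Y\setminus B$, together with the final patching, is routine. Conceptually, $Q$ is the double $P\cup_B P$ of a page $P$, which is the underlying reason it is convex.
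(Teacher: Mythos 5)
Your construction is correct and is essentially the paper's proof: a contact vector field equal to $\pm$(a nonzero multiple of) $\R_\alpha$ near the two pages, interpolated across $B\times D^2$ by the contact vector field of a Hamiltonian pulled back from $D^2$, with transversality along $\gamma$ reduced to a monotonicity condition that is achievable precisely because the two end values have opposite signs. The paper streamlines your interior analysis by using contact Hamiltonians with respect to the unscaled form $\lambda_B+\tfrac12 r^2\,d\theta$, for which the criterion is literally that the Hamiltonian restricted to $\gamma$ have no critical points (your inequality $Ap+Cp'\neq 0$ is this criterion applied to $(1+\tfrac12 r^2)f$); also, your preliminary isotopy of $\gamma$ should be dropped --- convexity is not preserved by isotoping $Q$, and it is unneeded since smoothness of $Q$ already forces $\gamma$ to meet $\partial D^2$ radially at $\theta_1,\theta_2$.
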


\begin{proof}
With respect to the contact form $\lambda_B+\frac 12r^2d\theta$ on $B\times D^2$, the contact vector field for a contact Hamiltonian $f:D^2\to\RR$ is given by
\begin{equation}\label{bindingflow}
(f-Z_{D^2}f)\R_B+X_f
\end{equation}
where $Z_{D^2}=\frac 12r\frac\partial{\partial r}$ is the Liouville vector field of $\lambda_{D^2}=\frac 12r^2d\theta$ and $X_f$ denotes the Hamiltonian vector field of $f$ with respect to the area form $\omega_{D^2}=r\,dr\,d\theta=dx\,dy$.
This contact vector field is transverse to $Q\cap(B\times D^2)$ exactly when the restriction of $f$ to $\gamma$ has no critical points.
We can thus arrange that $f=\pm(1+\frac 12r^2)^{-1}$ near $\theta_1$ and $\theta_2$ respectively, and hence it extends to the rest of $Q$ as plus/minus the Reeb vector field of $\alpha$, which is transverse to $Q$ as desired.
\end{proof}

\begin{example}\label{singlepageremoval}
Let $\bar X$ be a Liouville manifold, and suppose $\partial_\infty\bar X$ is equipped with an open book decomposition $(B,\pi,\alpha)$.
A choice of page $F_0:=\pi^{-1}(t)\setminus(B\times D^2_\delta)$ 
determines a sutured Liouville domain and thus a Liouville sector $X$.
Note that for any other page $F_0':=\pi^{-1}(t')\setminus(B\times D^2_\delta)$, Lemma \ref{openbookconvex} implies that $\partial_\infty X=\partial_\infty\bar X\setminus N_\varepsilon F_0$ can be deformed to $N_\varepsilon F_0'$ through codimension zero submanifolds-with-boundary of $\partial_\infty\bar X$ with convex boundary (namely, one deforms the complement of a neighborhood of $t\in S^1$ to a neighborhood of $t'\in S^1$ and takes the inverse image under $\pi$, smoothing the boundary appropriately near the binding $B$).
Using Lemma \ref{deformboundary}, this deformation can be lifted to a deformation of $X$, so $\partial_\infty X$ is, up to deformation, a regular neighborhood of a complementary page.
\end{example}

\begin{example}\label{landauginzburgsector}
Let $E$ be a Liouville manifold equipped with a ``superpotential'' $\pi:E\to\CC$ (the pair $(E,\pi)$ is called an exact symplectic (Liouville) Landau--Ginzburg model).
The map $\pi$ determines an embedding of $(S^1\times F_0,dt+\lambda)$ into $\partial_\infty E$, where $(F_0,\lambda)$ is a Liouville domain whose completion is the generic fiber of $\pi$ and the $S^1$ factor corresponds to the angular coordinate of $\CC$.
Applying the construction of Example \ref{removelegendrian} to a fiber $\{t\}\times F_0$ gives rise to a Liouville sector $X$ associated to $\pi:E\to\CC$.
One should think of $X$ as being obtained from $E$ by removing the inverse image of a neighborhood of a ray at angle $t$ in $\CC$.
When the critical locus of $\pi$ is compact, the embedding $S^1\times F_0\subseteq\partial_\infty E$ extends to an open book decomposition of $\partial_\infty E$, and hence the conclusion of Example \ref{singlepageremoval} applies.
\end{example}

\begin{lemma}
Let $X$ and $Y$ be Liouville sectors whose Liouville vector fields are everywhere tangent to the boundary.
The product $X\times Y$ is also a Liouville sector. 
\end{lemma}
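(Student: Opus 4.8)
The plan is to put the evident Liouville structure on $X\times Y$, reduce to producing a defining function for its boundary, and build that function by splicing the defining functions of $X$ and $Y$ near infinity.

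First I would equip $X\times Y$ with $\lambda:=\lambda_X\oplus\lambda_Y$, so that $\omega=\omega_X\oplus\omega_Y$ and the Liouville vector field is $Z=Z_X\oplus Z_Y$. By hypothesis $Z_X$ is tangent to $\partial X$ and $Z_Y$ to $\partial Y$, hence $Z$ is tangent to both boundary faces $\partial X\times Y$ and $X\times\partial Y$ of the manifold-with-corners $X\times Y$, whose corner is $\partial X\times\partial Y$. A short computation using nondegeneracy of $\omega_Y$ on $TY$ identifies the characteristic foliation of $\partial X\times Y$ with that of $\partial X$, extended trivially in the $Y$-directions (and symmetrically for $X\times\partial Y$), with matching orientations. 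Rounding the convex-at-infinity corner of $X_0\times Y_0$ — the standard operation making a product of Liouville manifolds a Liouville manifold, under which $Z$ stays tangent to the still-cornered sector boundary — makes $X\times Y$ a Liouville manifold-with-boundary. By Remark \ref{sectorsmoothcorners} it then suffices to produce one function $I$ on the cornered boundary $(\partial X\times Y)\cup(X\times\partial Y)$ that is linear at infinity and has $dI|_{\charfol}>0$ over each closed face; the convexity of the boundary of $\partial_\infty(X\times Y)$ demanded by the third condition of Definition \ref{sectordef} is then automatic, as near infinity such an $I$ supplies a transverse contact vector field.

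To build $I$, fix $\alpha>0$ and $\alpha$-defining functions $I_X:\partial X\to\RR$, $I_Y:\partial Y\to\RR$ (Lemma \ref{lem:sectordef}). Over a large compact set I would take $I=\tilde I_X+\tilde I_Y$ for smooth extensions of $I_X$, $I_Y$ to neighborhoods of the boundaries that agree along the corner; there positivity along each characteristic foliation is immediate, since each foliation is tangent to only one factor. The delicate point is at infinity: a neighborhood of infinity in $X\times Y$ is not a product of neighborhoods of infinity — it also contains, say, the compact part of $X$ crossed with the end of $Y$ — so $\tilde I_X+\tilde I_Y$ is not linear at infinity there and must be interpolated. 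In symplectization coordinates $X=\RR_{\sigma\geq 0}\times\partial_\infty X$, $Y=\RR_{\tau\geq 0}\times\partial_\infty Y$ with $Z_X=\partial_\sigma$, $Z_Y=\partial_\tau$, $I_X=e^{\alpha\sigma}g_X$, $I_Y=e^{\alpha\tau}g_Y$, one has $Z=\partial_\sigma+\partial_\tau$ where both coordinates are large, and there $e^{\alpha\sigma}g_X+e^{\alpha\tau}g_Y$ is already $\alpha$-homogeneous for $Z$; where only $\tau$ is large one uses instead that $e^{\alpha\tau}\,\Phi(\phi^{Z_X}_{-\tau}(x),\eta)$ solves $ZI=\alpha I$ for any $\Phi$, choosing $\Phi$ so that this matches $\tilde I_X+e^{\alpha\tau}g_Y$ where the patches overlap, and symmetrically where only $\sigma$ is large. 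Splicing these descriptions with cutoffs in $\sigma$ and $\tau$, chosen so as not to spoil $dI|_{\charfol}>0$ (which survives because the two characteristic foliations lie in disjoint coordinate blocks), produces $I$ on the cornered boundary, and Remark \ref{sectorsmoothcorners} then concludes.

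The main obstacle is precisely this near-infinity construction: producing a single function that is $\alpha$-homogeneous for $Z=Z_X\oplus Z_Y$ over the whole non-product neighborhood of infinity while restricting to a defining function on each face, reconciled along the overlap and along the corner, with $dI|_{\charfol}$ positive throughout — and, in particular, controlling the interpolating function $\Phi$ as the $Z_X$-flow carries points toward the relative skeleton of $\partial X$. This is an elaboration of the splicing already used to prove Lemma \ref{lem:sectordef} and Lemma \ref{deformboundary}; the corner-rounding and positivity bookkeeping are routine once the homogeneous interpolation near infinity is in hand.
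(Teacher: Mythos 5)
You follow the paper's route: reduce via Remark \ref{sectorsmoothcorners} to exhibiting one defining function on the cornered boundary, start from $I_X+I_Y$, and repair its failure of linearity at infinity over the two non-product regions by a $Z$-homogeneous extension; indeed your ansatz $e^{\alpha\tau}\Phi(\phi^{Z_X}_{-\tau}(x),\eta)$ with $\Phi=I_X+g_Y$ is precisely the linear extension of $I_X+I_Y$ from the slice $X\times\{\tau=T\}$, which is in effect what the paper does. The gap is your justification of positivity: it is not true that $dI|_\charfol>0$ survives ``because the two characteristic foliations lie in disjoint coordinate blocks.'' On the face $X\times\partial Y$ the characteristic direction $0\oplus C_Y$ is tangent to the $Y$-factor but in general not to the level sets of $\tau$ (for $Y=\CC_{\Re\geq 0}$ it is spanned near infinity by $Z_Y$ itself, so $d\tau(C_Y)\neq 0$), while your interpolated function now depends on $\tau$ at fixed $x$ in the compact part of $X$. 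Concretely, over (compact part of $X$) $\times$ (end of $Y$) the candidate is $e^{\alpha(\tau-T)}I_X(\phi^{Z_X}_{-(\tau-T)}(x))+I_Y$, whose derivative along $C_Y$ is
\begin{equation*}
e^{\alpha(\tau-T)}\bigl(\alpha I_X-Z_XI_X\bigr)\bigl(\phi^{Z_X}_{-(\tau-T)}(x)\bigr)\,d\tau(C_Y)\;+\;dI_Y(C_Y).
\end{equation*}
Normalizing $C_Y$ to be invariant under the $Z_Y$-flow, $d\tau(C_Y)$ is $\tau$-independent and $dI_Y(C_Y)\asymp e^{\alpha\tau}$, so for a fixed splicing location $T$ the two terms are of the same order, and the first has no sign: $\alpha I_X-Z_XI_X$ is an uncontrolled function on the compact region where the extension of $I_X$ fails to be linear. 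A global rescaling of $I_X$ or $I_Y$ cannot fix this, since the two problem regions exchange roles.

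What is missing is the quantitative choice the paper makes at exactly this point: perform the extension from a contact-type slice chosen very close to infinity in $Y$, equivalently from a large contact form $\alpha_Y$ on $\partial_\infty Y$. The correction to the Hamiltonian vector field of the extended function is then $(Z_XI_X-I_X)\R_{\alpha_Y}$, and $\R_{\alpha_Y}\to 0$ as $\alpha_Y$ is taken large; in the display above this appears as the extra factor $e^{\alpha T}$ carried by $dI_Y(C_Y)$ relative to the correction, so taking $T$ large (in terms of $\sup\left|\alpha I_X-Z_XI_X\right|$ and of $d\tau(C_Y)/dI_Y(C_Y)$ along a fixed slice of $\partial Y$) restores $dI|_\charfol>0$ on this face, and symmetrically in the other problem region. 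With that choice inserted, your argument closes. By contrast, the difficulty you do flag --- controlling $\Phi$ as the backward $Z_X$-flow approaches the relative skeleton --- is not a real one: $I_X\circ\phi^{Z_X}_{-t}$ stays smooth and bounded there, and its derivative along $C_X$ remains positive because the Liouville flow of $X$ preserves the characteristic foliation of $\partial X$ up to positive scale.
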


Recall that Lemma \ref{sectorboundarytangent} provides Liouville vector fields which are everywhere tangent to the boundary on any Liouville sector with exact boundary, and we will see later in Proposition \ref{deformexact} that every Liouville sector can be canonically deformed to have exact boundary.  Because of this, we may abuse notation and write $X\times Y$ for the Liouville sector obtained by performing such a deformation on $X$ and $Y$ and then taking their product.

\begin{remark}
The ``stabilization operation'' of passing from a Liouville sector $X$ to $X\times T^\ast[0,1]$ is of particular interest, and should induce an equivalence on Floer theoretic invariants (as a consequence of a K\"unneth formula).
The stabilization operation for Landau--Ginzburg models, namely passing from $\pi:E\to\CC$ to $\pi+z^2:E\times\CC\to\CC$, should be a special case of this.
More generally, the sum of Landau--Ginzburg models $\pi+\pi':E\times E'\to\CC$ should be a special case of the product of Liouville sectors.
\end{remark}

\begin{proof}
The product $X\times Y$ is a Liouville manifold-with-corners.
By Remark \ref{sectorsmoothcorners}, it is enough to verify the existence of a defining function on $X\times Y$ before smoothing the corners.

Fix defining functions $I_X:\partial X\to\RR$ and $I_Y$, and extend them to all of $X$ and $Y$ maintaining linearity at infinity (we could cut them off so they are supported in neighborhoods of the respective boundaries, though this is irrelevant for the present argument).

We now consider the function $I_X+I_Y$ on $X\times Y$.
Its differential is clearly positive on the characteristic foliation of $\partial(X\times Y)$, since the characteristic foliation of $\partial X\times Y$ 
is simply $C_X\oplus\{0\}\subseteq T \partial X \oplus TY = T(\partial X \times Y)$, and similarly for $X\times\partial Y$.
However, $I_X+I_Y$ may not be linear at infinity for the Liouville vector field $Z_{X\times Y}=Z_X+Z_Y$.
There are two disjoint ``problem'' regions, namely a compact locus in $X$ times a neighborhood of infinity in $Y$, and vice versa.
We will deal with these separately, and by symmetry it suffices to deal with the first one.

Fix a contact type hypersurface in $Y$ close to infinity mapping diffeomorphically onto $\partial_\infty Y$ (equivalently, fix a large contact form on $\partial_\infty Y$).
Consider the restriction of $I_X+I_Y$ to $X\times\partial_\infty Y$ viewed as the corresponding contact type hypersurface in $X\times Y$.
Define a new function $I_{X\times Y}:X\times Y\to\RR$ by extending $I_X+I_Y$ to be linear outside this contact type hypersurface (and smoothing the result).
Note that $I_{X\times Y}$ agrees with $I_X+I_Y$ except over the bad locus where $I_X+I_Y$ is not linear at infinity.
It is enough to show that the Hamiltonian vector field of $I_{X\times Y}$ is outward pointing along the boundary, and it is enough to check this before doing the smoothing.

So, let us calculate the Hamiltonian vector field of $I_{X\times Y}$.
Initially, we have coordinates $(X\times\RR\times\partial_\infty Y,\lambda_X+e^s\alpha_Y)$ for $(X\times Y,\lambda_X+\lambda_Y)$; in these coordinates $I_X+I_Y$ equals $I_X+e^sA_Y$ for a function $A_Y:\partial_\infty Y\to\RR$.
We change coordinates to $(X\times\RR\times\partial_\infty Y,e^s(\lambda_X+\alpha_Y))$; note that these describe the same exact symplectic manifold in view of the common contact type hypersurface $\{s=0\}$ in both manifolds and the completeness of their respective Liouville vector fields (note that this argument uses crucially the fact that $Z_X$ and $Z_Y$ are tangent to $\partial X$ and $\partial Y$, respectively).
In the latter coordinates, the function $I_{X\times Y}$ is given by $e^s(I_X+A_Y)$ (for $s\geq 0$), assuming that $\{s=0\}$ is the contact type hypersurface chosen to define $I_{X\times Y}$ from $I_X+I_Y$.
Let the Hamiltonian vector field of $I_Y=e^sA_Y$ on $(\RR\times\partial_\infty Y,e^s\alpha_Y)\subseteq(Y,\lambda_Y)$ be given by $V_Y+f\frac\partial{\partial s}$ for a contact vector field $V_Y$ on $Y$ and a function $f:Y\to\RR$.
Now a calculation shows that the Hamiltonian vector field of $e^s(I_X+A_Y)$ on $(X\times\RR\times\partial_\infty Y,e^s(\lambda_X+\alpha_Y))$ is given by
\begin{equation}\label{linearizedhamsum}
X_{I_X}+V_Y+f\frac\partial{\partial s}-fZ_X+(I_X-Z_XI_X)\R_{\alpha_Y}
\end{equation}
where $\R_{\alpha_Y}$ denotes the Reeb vector field of the contact form $\alpha_Y$ on $\partial_\infty Y$.
We know that $X_{I_X}$ is outward pointing along $\partial X$, and $V_{I_Y}$ is outward pointing along $\partial(\partial_\infty Y)$ by assumption.
The next two terms are both tangent to the boundary.
The third term converges to zero as $\alpha_Y$ becomes large, and hence we conclude that, for sufficiently large $\alpha_Y$, the vector field \eqref{linearizedhamsum} is outward pointing along the boundary, as desired (note that the other terms in \eqref{linearizedhamsum} are unchanged by scaling $\alpha_Y$, and that we only need to check the property of being outward pointing over the compact set $\{0\}\times\partial_\infty Y$ times a large compact subset of $X$ outside which $Z_XI_X=I_X$).
\end{proof}

\begin{lemma}\label{twosutures}
Every pair $(\bar X_0,A)$ satisfying the hypotheses of Lemma \ref{suturedliouvillemanifold} arises, up to deformation, from a unique (in the homotopical sense) sutured Liouville domain.
\end{lemma}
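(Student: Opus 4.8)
The plan is to recover the sutured Liouville domain as $(\bar X_0,F_0)$, where $F_0$ is a suitable regular level set of the function $I$ furnished by Lemma~\ref{suturedliouvillemanifold}, to verify that $F_0$ is a Liouville domain, and then to deform $A$ onto the standard Reeb collar of $F_0$ through pairs satisfying the hypotheses of Lemma~\ref{suturedliouvillemanifold}. The first point to record is that, for a fixed pair $(\bar X_0,A)$, the set $\mathcal I$ of functions $I\colon A\to\RR$ with $\R_\lambda I>0$ and $V_I$ outward pointing along $\partial A$ is convex (since $I\mapsto V_I$ is linear and both constraints are pointwise convex), hence contractible, and nonempty by hypothesis; the uniqueness clause of the lemma will rely on the evident parametrized refinement, namely that the corresponding space of sections is contractible over any compact family of pairs.

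Fix $I\in\mathcal I$. Since $\R_\lambda I>0$, $I$ is a submersion, so it attains its extrema along $\partial A$, and a direct computation gives $V_I I=I\cdot(\R_\lambda I)$ (using $\lambda(V_I)=I$ and the fact that $dI$ annihilates the $\xi$-component of $V_I$). If $\min_A I>0$, then flowing $\partial A$ inward along $-V_I$ from a minimizing point would drop $I$ below $\min_A I$, which is impossible; symmetrically $\max_A I\ge 0$, so $0\in[\min_A I,\max_A I]$, and after replacing $I$ by $I-c$ for a small $c$ (still in $\mathcal I$, by openness) we may assume $0\in(\min_A I,\max_A I)$. Set $F_0:=I^{-1}(0)$. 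Because $V_I$ is tangent to $F_0$ (where $V_I I$ vanishes) and outward pointing along $\partial A$, it is transverse to $\partial A$ along $F_0\cap\partial A$, so $F_0$ is a compact codimension-one submanifold-with-boundary with $\partial F_0=F_0\cap\partial A$. The Reeb field $\R_\lambda$ spans the kernel of $d\lambda|_{\partial\bar X_0}$ and is transverse to $F_0$, so $d(\lambda|_{F_0})$ is nondegenerate and $(F_0,\lambda|_{F_0})$ is exact symplectic; passing to Reeb coordinates near $F_0$ (in which $\lambda=dt+\lambda|_{F_0}$ and $\R_\lambda=\partial_t$) a short computation gives $V_I|_{F_0}=(\R_\lambda I)\,Z_{\lambda|_{F_0}}$, so $Z_{\lambda|_{F_0}}$ is outward pointing along $\partial F_0$ and $(F_0,\lambda|_{F_0})$ is a Liouville domain. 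Thus $(\bar X_0,F_0)$ is a sutured Liouville domain, and by Definition~\ref{liouvillesectorcreation} it gives rise to the standard collar $A_{F_0}:=F_0\times\RR_{|t|\le\varepsilon}$ (witnessed by the function $t$), which for $\varepsilon$ small is contained in $A$.

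It remains to deform $A$ onto $A_{F_0}$ through pairs satisfying the hypotheses of Lemma~\ref{suturedliouvillemanifold} (smoothing corners as in Remark~\ref{sectorsmoothcorners}). My approach would be to first put $A$ into a normal form near $\partial A$ using the contact flow of $V_I$, so that $A$, $\partial A$ and $I$ become standard in a collar of $\partial A$ — an argument parallel to the proof of Lemma~\ref{lem:sectordef} and to the equivalence with the third condition of Definition~\ref{sectordef} — and then, in this normal form, to sweep $\partial A$ inward onto $\partial A_{F_0}$ through hypersurfaces each bounding a valid pair, passing to the limit. For the uniqueness clause, running this construction over the parameter interval of a given deformation $(\bar X_0,A)\sim(\bar X_0,A')$, using the parametrized contractibility of $\mathcal I$ to choose defining functions coherently, should produce a deformation of sutured Liouville domains from $(\bar X_0,F_0)$ to $(\bar X_0,F_0')$, with the space of such comparisons itself contractible. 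The routine parts are the construction of $F_0$ and the verification that it is a Liouville domain; the hard part will be this final deformation — carrying out the normal form and the subsequent shrinking of $A$ onto $A_{F_0}$ without ever leaving the class of pairs of Lemma~\ref{suturedliouvillemanifold}, and doing so coherently in families.
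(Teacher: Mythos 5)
Your construction of $F_0:=I^{-1}(0)$ and the verification that it is a Liouville domain are correct (the identity $V_I|_{F_0}=(\R_\lambda I)\,Z_{\lambda|_{F_0}}$ is exactly the right computation), but the proof stops where the actual content of the lemma begins, and the one concrete claim you make about the remaining step is false. Since $\partial F_0=F_0\cap\partial A$ lies on $\partial A$, the two-sided Reeb collar $F_0\times\RR_{\left|t\right|\le\varepsilon}$ is \emph{never} contained in $A$ near $\partial F_0$, no matter how small $\varepsilon$ is: already in the model $A=\{g_-\le t\le g_+\}\subseteq F_0\times\RR_t$ with $\pm g_\pm>0$ on the interior and vanishing on $\partial F_0$, the collar pokes out of $A$ wherever $g_+<\varepsilon$ or $g_->-\varepsilon$. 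So the picture of ``sweeping $\partial A$ inward onto $\partial A_{F_0}$'' through nested valid pairs cannot be correct as stated, and you give no mechanism for producing the interpolating regions $A_r$ \emph{together with} defining functions $I_r$ (equivalently, outward pointing contact vector fields) along the way --- which is precisely the hard part, as you yourself acknowledge. Note also that $I^{-1}(0)$ is far from canonical: for $A=\{\left|t\right|\le 1-r^2\}$ in the contactization of the standard disk, both $I=t$ and $I=t-c$ with $\left|c\right|<1$ satisfy the hypotheses, and their zero level sets are the full page and an arbitrarily small subdisk, respectively; hence the homotopical uniqueness clause cannot be reduced to convexity of the space of admissible $I$ alone --- one must in addition connect the resulting sutured domains by deformations, which again hinges on the missing deformation argument.

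For comparison, the paper goes in the opposite direction: rather than shrinking $A$ onto a level set of $I$, it \emph{enlarges} $A$. It introduces the class of ``matched'' regions (two boundary faces transverse to the characteristic foliation, identified by following it, each a Liouville domain), shows that matched pairs correspond to sutured Liouville domains with contractible ambiguity, and then deforms an arbitrary valid pair to a matched one by passing to the associated Liouville sector $X$ of Lemma \ref{suturedliouvillemanifold}, extending $I$ linearly over $\partial X$, and taking $A':=\{f_-\le I\le f_+\}$ between the Liouville manifolds $F_\pm$ obtained as $I^{-1}(s)$ for $s\to\pm\infty$; choosing $f_\pm$ so that the Liouville vector field is outward pointing along $\partial A'$ exhibits $A'\setminus A$ as a cylinder, which is what produces the family $A_r$ of valid intermediate regions (followed by a corresponding deformation of $\bar X_0$), and a second run of the same construction through matched regions gives the homotopical uniqueness. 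Some device of this kind --- controlling the interpolating boundaries by the Liouville/contact dynamics rather than by hand --- is what your sketch is missing.
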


\begin{proof}
Let $A$ be an odd-dimensional manifold-with-corners equipped with a $1$-form $\lambda$ with $d\lambda$ maximally non-degenerate.
We say that $(A,\lambda)$ is \emph{matched} iff the following are satisfied:
\begin{itemize}
\item $\partial A$ is the union of two faces $(\partial A)_\pm$ meeting transversely along the corner locus.
\item The characteristic foliation of $d\lambda$ is positively/negatively transverse to $(\partial A)_\pm$, respectively.
\item Following the characteristic foliation defines a diffeomorphism $(\partial A)_+\xrightarrow\sim(\partial A)_-$.
\item $((\partial A)_\pm,\lambda|_{(\partial A)_\pm})$ are Liouville domains.
\end{itemize}
Being matched is clearly an open condition.

If $(A,\lambda)$ is matched, then the image $F_0\subseteq A$ of \emph{any} section of $A\to A/C$ (quotient by the characteristic foliation) is a Liouville domain (when equipped with the restriction of $\lambda$).
Conversely, to check that $((\partial A)_\pm,\lambda|_{(\partial A)_\pm})$ are Liouville domains, it is enough to check that any such $F_0$ is a Liouville domain.
If $\lambda$ is a contact form, then choosing an $F_0$ provides a unique embedding $A\subseteq F_0\times\RR_t$ in which $\lambda=\lambda|_{F_0}+dt$ and
\begin{equation}\label{Agineq}
A=\{g_-\leq t\leq g_+\}
\end{equation}
for $g_\pm:F_0\to\RR$ where $\pm g_\pm$ are positive on the interior and vanish transversely on the boundary.
Conversely, \eqref{Agineq} is matched for any Liouville domain $F_0$ and any such $g_\pm$.

If $(A,\lambda)$ is matched and $\lambda$ is a contact form, then there exists a function $I:A\to\RR$ with $\R_\lambda I>0$ whose contact vector field $V_I$ is outward pointing along $\partial A$ (as in the hypothesis of Lemma \ref{suturedliouvillemanifold}).
Indeed, let $I=f(t)$ in contactization coordinates $A\subseteq F_0\times\RR_t$.
Since $\R_\lambda=\frac\partial{\partial t}$, we must have $f'(t)>0$.
The contact vector field associated to $I$ is given by $V_I=f(t)\frac\partial{\partial t}+f'(t)Z_{\lambda|_{F_0}}$, which is outward pointing for, say, $f(t):=\tan^{-1}(Nt)$ for sufficiently large $N<\infty$ (more precisely, we just need $f(0)=0$, $f'(t)>0$, and $f'(t)/f(t)$ decaying sufficiently rapidly away from $t=0$).

A sutured Liouville manifold is ``the same'' as a pair $(\bar X_0,A)$ satisfying the hypothesis of Lemma \ref{suturedliouvillemanifold} for which, in addition,  $A$ is matched.
Indeed, the above discussion shows that the space of allowable $F_0$ inside a matched $A$ is contractible, and so is the space of matched $A\subseteq\partial\bar X_0$ containing a given fixed $F_0\subseteq\partial\bar X_0$.
We conclude that it is enough to show that every pair $(\bar X_0,A)$ satisfying the hypothesis of Lemma \ref{suturedliouvillemanifold} may be canonically deformed to make $A$ matched.

Let $(\bar X_0,A)$ be given, and let us specify a canonical deformation which makes $A$ matched.
Let $X$ denote the Liouville sector \eqref{suturedliouvillemanifoldeq} associated to $(\bar X_0,A)$, and fix a defining function $I:\partial X\to\RR$ which is the linear extension of a defining function $I|_A:A\to\RR$.
There are Liouville manifolds $F_\pm:=I^{-1}(s)$ for $s\to\pm\infty$, which are identified via the characteristic foliation of $\partial X$ (with the caveat that, over a compact set, this identification depends on how we smooth the corners of $\partial X$).
We choose large Liouville domains $(F_0)_\pm\subseteq F_\pm$ (identified under $F_+\xrightarrow\sim F_-$) and functions $f_\pm:(F_0)_\pm\to\RR$ such that $\pm f_\pm$ are positive on the interior and vanish transversely on the boundary.
Now the locus
\begin{equation}
A':=\{f_-\leq I\leq f_+\}\subseteq\partial X
\end{equation}
is well-defined once $(F_0)_\pm$ and $\pm f_\pm$ are taken sufficiently large.
It is clear from the structure of the characteristic foliation of $\partial X$ that $A'$ is matched.
We claim that, for suitable $f_\pm$, the Liouville vector field is outward pointing along $\partial A'$.
The Liouville vector field is given by $Z_{F_\pm}+I\frac\partial{\partial I}$ (outside a compact set), which is outward pointing along $\partial A'$ iff $\pm(f_\pm-Z_{F_\pm}f_\pm)>0$, which is easy to achieve.
We conclude that the Liouville vector field demonstrates that the region $A'\setminus A^\circ$ is a cylinder, and in particular there is a deformation $A\subseteq A_r\subseteq A'$ from $A_0=A$ to $A_1=A'$ such that the Liouville vector field is outward pointing along $\partial A_r$ for all $r\in[0,1]$.
We would like to follow this deformation $A_r$ of $A$ with a corresponding deformation of Liouville domains $(\bar X_0)_r\subseteq\bar X$ with $A_r\subseteq\partial(\bar X_0)_r$.
This is, of course, not possible on the nose since the Liouville vector field is tangent to the finite cylindrical region $A_r\setminus A^\circ$ rather than being outward pointing.
This is easily remedied, however, simply by perturbing the cylindrical region $A_r\setminus A^\circ$ keeping its upper boundary $\partial A_r$ fixed and pushing its lower boundary $\partial A$ inwards inside $A$.
We have thus defined a deformation of $(\bar X_0,A)$ which makes $A$ matched (let us also point out that $\bar X$ and $A\subseteq\partial_\infty\bar X$ remain fixed throughout the deformation, i.e.\ we are deforming only the contact form).

Now it remains to show that if $A$ is already matched, then the deformation described above can be taken so that $A_r$ is matched for all $r\in[0,1]$.
Suppose $A$ is presented as in \eqref{Agineq}, and fix $I:=e^st$.
We consider the deformation
\begin{equation}
A_r:=A\cup_{\partial A}(\partial A\times\RR_{0\leq s\leq r})\subseteq\partial X
\end{equation}
for $r\geq 0$.
Using the fact that the characteristic foliation of $(\partial A)_\pm$ is spanned by $\frac\partial{\partial s}-Z_{\lambda|_{(\partial A)_\pm}}$, we see that each $A_r$ is matched.
Now $F_\pm$ is the Liouville completion of $(\partial A)_\pm$, and the deformation $A_r$ is of the form specified earlier with $f_\pm:F_\pm\to\RR$ given by $f_\pm(x)=e^rg_\pm(\Phi^{-r}_{Z_{F_\pm}}(x))$.
Note that we may assume without loss of generality that $\pm(g_\pm-Z_{F_\pm}g_\pm)>0$, which implies the same for $f_\pm$.
\end{proof}

\subsection{Product decomposition near the boundary}

We now show that for every Liouville sector $X$, there is a canonical (up to contractible choice) identification near the boundary between $X$ and a product $F\times\CC_{\Re\geq 0}$.
More precisely, every $\alpha$-defining function, extended to a cylindrical (i.e.\ $Z$-invariant near infinity) neighborhood of $\partial X$, determines uniquely such coordinates.
Of particular interest is the resulting projection $\pi:\Nbd^Z\partial X\to\CC_{\Re\geq 0}$ for $\alpha=\frac 12$.
As we will see in \S\ref{bdryescape}, this function $\pi$ gives strong control on holomorphic curves near $\partial X$.

Equip $\CC$ with its standard symplectic form $\omega_\CC:=dx\,dy$ for $z=x+iy$ and the family of Liouville vector fields 
\begin{equation}\label{alphaLiouvillefield}
Z_\CC^\alpha:=(1-\alpha)\cdot x\frac\partial{\partial x}+\alpha\cdot y\frac\partial{\partial y}
\end{equation}
with associated Liouville forms $\lambda_\CC^\alpha$, parameterized by $\alpha\in\RR$.
When $\alpha=\frac 12$, we will simply write $Z_\CC:=\frac 12(x\frac\partial{\partial x}+y\frac\partial{\partial y})$ and $\lambda_\CC$ for the standard radial Liouville structure on $\CC$.
When $\alpha=1$, we will write $T^\ast\RR$ (equipped with its standard Liouville structure $\lambda_{T^\ast\RR}=p\,dq$ with $q=x$ and $p=-y$) in place of $\CC$.

\begin{remark}
The significance of the particular value $\alpha=\frac 12$ in the definition of $\pi$ is that the complex structure $J_\CC$ is invariant under the radial Liouville vector field $Z_\CC$ on $\CC$ (which is \eqref{alphaLiouvillefield} with $\alpha = \frac 12$).
This allows us to find an abundance of \emph{cylindrical} almost complex structures $J$ on $X$ for which $\pi$ is $(J,J_\CC)$-holomorphic.
The usefulness of such almost complex structures will be made clear in \S\ref{compactnessdiscussion} (in a word, cylindricity prevents holomorphic curves from escaping to infinity, and holomorphicity of $\pi$ prevents holomorphic curves from escaping to $\partial X$).
\end{remark}

Though we will only need the cases $\alpha=1$ and $\alpha=\frac 12$ of the next result, we state it for general $\alpha>0$.

\begin{proposition}\label{productnbhd}
Let $X$ be a Liouville sector.
Every $\alpha$-defining function $I:\Nbd^Z\partial X\to\RR$ extends to a unique identification (valid over a cylindrical neighborhood of the respective boundaries):
\begin{equation}
(X,\lambda_X)=(F\times\CC_{\Re\geq 0},\lambda_F+\lambda_\CC^\alpha+df)
\end{equation}
in which $I=y$ is the imaginary part of the $\CC_{\Re\geq 0}$-coordinate, $(F,\lambda_F)$ is a Liouville manifold, and $f:F\times\CC_{\Re\geq 0}\to\RR$ satisfies the following properties:
\begin{itemize}
\item $f$ is supported inside $F_0\times\CC$ for some Liouville domain $F_0\subseteq F$.
\item $f$ coincides with some $f_{\pm\infty}:F\to\RR$ for $\left|I\right|$ sufficiently large.
\end{itemize}
\end{proposition}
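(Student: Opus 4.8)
The plan is to build the splitting in three stages: first construct the complex coordinate near $\partial X$ using $I$ and its Hamiltonian flow; second, identify the symplectic normal form transverse to $\partial X$; and third, reconcile the Liouville forms by an exact $1$-form $df$ and check the support/asymptotic properties of $f$.

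First I would exploit the hypothesis that $I$ is an $\alpha$-defining function. Its Hamiltonian vector field $X_I$ is outward pointing along $\partial X$ (Definition \ref{sectordef}), so the flow of $X_I$ gives a collar; parametrizing the flow time by a coordinate $x \geq 0$ produces a map $\partial X \times [0,\varepsilon) \to X$, and on the image we set $y := I$ (extended off $\partial X$ by, say, making it flow-invariant, or simply using the given extension of $I$ to $\Op^Z\partial X$). The pair $(x,y)$ is the $\CC_{\Re\geq 0}$-coordinate. Since $\omega(X_I, \cdot) = -dI = -dy$, the flow of $X_I$ preserves $\omega$ and the coordinate functions satisfy $\omega(\partial_x, \partial_y) = 1$ on the nose, which is exactly $\omega_\CC = dx\,dy$. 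Transverse to the $\CC$-directions, $\omega$ restricts to $\partial X$ as a form pulled back from $F := (\partial X)/C = I^{-1}(0)$ (this is the content of Definition \ref{lsfiber}), and this $2$-form is symplectic on $F$; call it $\omega_F$. A Moser-type argument, using that $X_I$ is a symplectic vector field whose flow straightens the $x$-direction, then upgrades the splitting to $(X,\omega) = (F \times \CC_{\Re\geq 0}, \omega_F + \omega_\CC)$ near $\partial X$, cylindrically at infinity because $ZI = \alpha I$ there forces $X_I$ to commute with $Z$ up to the expected scaling.

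Next I would match the Liouville forms. On the product $F \times \CC_{\Re\geq 0}$ we have two primitives for $\omega_F + \omega_\CC$: the pullback $\lambda_X$ of the ambient Liouville form, and $\lambda_F + \lambda_\CC^\alpha$, where $\lambda_F$ is the Liouville form on $F$ coming from a chosen section of $\partial X \to F$ (well-defined up to $df$ for compactly supported $f$, per Definition \ref{lsfiber}), and $\lambda_\CC^\alpha$ is \eqref{alphaLiouvillefield}'s primitive. The choice $\alpha$-defining, i.e. $ZI = \alpha I$, is precisely what pins down the $\CC$-factor Liouville form to be $\lambda_\CC^\alpha$ rather than some other primitive: the Liouville vector field $Z_X$ restricted to the $\CC$-directions must scale $y = I$ with weight $\alpha$, hence scale $x$ with weight $1-\alpha$ by the symplectic pairing, which is exactly $Z_\CC^\alpha$. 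Therefore $\lambda_X - (\lambda_F + \lambda_\CC^\alpha)$ is a closed $1$-form; since the relevant neighborhood retracts onto $F$ (which may have its own topology, but the $\CC_{\Re\geq 0}$ factor is contractible), the closed $1$-form is exact on the region where we need it, $= df$, after absorbing any $H^1(F)$-ambiguity into the freedom in choosing $\lambda_F$. This defines $f$.

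Finally I would verify the two bulleted properties of $f$. Near infinity (over $\Op^Z\partial X$), both $\lambda_X$ and $\lambda_F + \lambda_\CC^\alpha$ are genuinely cylindrical for $Z_X = Z_F + Z_\CC^\alpha$ by construction, so their difference $df$ is cylindrical and closed, hence $Z$-invariant; a cylindrical exact $1$-form that is $Z$-invariant forces $df$ to be pulled back from $F$ with $f = f_{\pm\infty}$ independent of the $\CC$-coordinate for $|I|$ large (the two constants $f_{+\infty}, f_{-\infty}$ arising from the two ends of $\partial X = \RR \times F$, compare \eqref{Cintegraldifference}). For the compact support statement: outside a large $F_0 \times \CC$, i.e. over the cylindrical part of $F$ itself, the section defining $\lambda_F$ can be chosen so that $\lambda_X|_{\partial X}$ already agrees with $\lambda_F$, killing $f$ there; so $f$ is supported in $F_0 \times \CC$ for a suitable Liouville domain $F_0 \subseteq F$. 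Uniqueness of the whole identification (given $I$) follows because every choice above — the collar via the $X_I$-flow, the section of $\partial X \to F$, the primitive $f$ — is either canonical or parametrized by a contractible space, and the constraint $I = y$ removes the remaining $\CC$-rotational ambiguity.

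I expect the main obstacle to be the Moser/Weinstein step extending the normal form from $\partial X$ into a genuinely cylindrical neighborhood: one must interpolate between the behavior over the compact part of $\partial X$ (where $Z_X$ need not be tangent to $\partial X$, so the collar from $X_I$ and the collar from $Z_X$ differ) and the behavior at infinity (where everything is $Z$-invariant), and show the interpolation can be done while keeping $I = y$ exactly and only modifying $\lambda_X$ by an exact form with the stated support. The rest is bookkeeping with primitives and the already-established structure of $\partial X = \RR \times F$.
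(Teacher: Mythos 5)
Your skeleton matches the paper's proof (flow of $X_I$ plus $I$ as the $\CC_{\Re\geq 0}$-coordinate, fiber $F=I^{-1}(0)$, then compare primitives to produce $df$), but the two steps that carry the actual content are not done correctly. For the splitting itself, no Moser argument is needed and none is really available as you set it up: the paper defines $R$ by $R|_{\partial X}=0$ and $X_IR\equiv-1$, notes $\omega(X_R,X_I)\equiv 1$, so $R+iI$ is a symplectic fibration with \emph{flat} connection (horizontal distribution spanned by $X_I,X_R$), and parallel transport gives the product symplectomorphism outright. The point you flag as ``the main obstacle'' --- validity over a cylindrical neighborhood --- is resolved precisely by the scaling identity $ZR=(1-\alpha)R$ near infinity, obtained by differentiating $\omega(X_R,X_I)=1$ along $Z$; one then extends $R$ by this scaling and extends the identification by integrating the Liouville flow. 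In your write-up this identity is only alluded to, $\omega(\partial_x,\partial_y)=1$ is asserted without specifying what $\partial_y$ means (two functions do not determine a frame), and the cylindrical extension is left unproved, so the proposition's actual assertion (an identification over a cylindrical neighborhood) is not established.

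For the properties of $f$, your argument rests on the claim that near infinity ``$\lambda_X$ and $\lambda_F+\lambda_\CC^\alpha$ are genuinely cylindrical for $Z_X=Z_F+Z_\CC^\alpha$ by construction.'' This is not by construction and is false in general: $Z_X=Z_F+Z_\CC^\alpha-X_f$, and what the scaling of $(R,I)$ gives at infinity is only $(R+iI)_\ast Z_X=Z_\CC^\alpha$, i.e.\ $X_f$ is \emph{vertical} there, equivalently $f$ is locally independent of the $\CC$-coordinate --- not $X_f=0$. Indeed $f_{+\infty}-f_{-\infty}=\int_C\lambda$ (cf.\ \eqref{Cintegraldifference}), so for non-exact $\partial X$ the functions $f_{\pm\infty}$ cannot both be constant and $Z_X\neq Z_F+Z_\CC^\alpha$ at points with $\left|I\right|$ large. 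The correct route is: take $\lambda_F$ to be the restriction of $\lambda_X$ to the fiber over $0$ (this also disposes of your ``absorb the $H^1(F)$-ambiguity into $\lambda_F$'' remark --- the allowed freedom in $\lambda_F$ is only by compactly supported exact forms, but with this choice the difference $1$-form vanishes on a deformation retract, hence is exact), normalize $f|_{F\times\{0\}}=0$, and then deduce both bullets from the local $\CC$-independence of $f$ outside a compact set. Relatedly, your support argument only controls $df$ on $T\partial X$ (the slice $\Re=0$); without the $\CC$-independence statement it does not give $f\equiv 0$ on $(F\setminus F_0)\times\CC$.
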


\begin{proof}
There is a unique function $R:\Nbd^Z\partial X\to\RR$ satisfying $R|_{\partial X}=0$, $X_IR\equiv -1$, and $ZR=(1-\alpha)R$.
Indeed, the first two conditions define $R$ uniquely over $\Nbd\partial X$ as the function ``time it takes to hit $\partial X$ under the flow of $X_I$''.
Differentiating $\omega(X_R,X_I)=1$ with respect to $Z$ shows that this $R$ satisfies $ZR=(1-\alpha)R$ outside a compact set (here we use the fact that $R=0$ over $\partial X$; the identity $\sL_ZX_H=X_{ZH-H}$ is also helpful).
Extending $R$ to a cylindrical neighborhood of $\partial X$ by maintaining the property $ZR=(1-\alpha)R$ preserves the relation $X_IR\equiv-1$.

Now
\begin{equation}
R+iI:\Nbd^Z\partial X\to\CC_{\Re\geq 0}
\end{equation}
is a symplectic fibration since $\omega(X_R,X_I)$ is non-vanishing.
Since $\omega(X_R,X_I)$ is in fact constant, the induced symplectic connection is flat.
The vector fields $X_I$ and $X_R$ on $\Nbd^Z\partial X$ are horizontal with respect to this connection.
Let $F:=(I,R)^{-1}(0,0)$ as a symplectic manifold (compare Definition \ref{lsfiber}), so the symplectic connection provides a germ of a symplectomorphism $X=F\times\CC_{\Re\geq 0}$ near the respective boundaries.

We now compare Liouville vector fields.
Define $\lambda_F$ as the restriction of $\lambda_X$ to the fiber $(I,R)^{-1}(0,0)=F$.
We therefore have
\begin{equation}
\lambda_X=\lambda_F+\lambda_\CC^\alpha+df
\end{equation}
for some function $f:\Nbd\partial X\to\RR$ (indeed, $\lambda_X-(\lambda_F+\lambda_\CC^\alpha)$ is closed, and to check exactness it is, for topological reasons, enough to check on the fiber over $(0,0)$ where it vanishes by definition).
Since $(R+iI)_\ast Z_X=Z_\CC^\alpha$ outside a compact set (this is a restatement of $ZI=\alpha I$ and $ZR=(1-\alpha)R$ near infinity), we conclude that $X_f$ is purely vertical outside a compact set, which is equivalent to saying that $f$ is locally independent of the $\CC_{\Re\geq 0}$-coordinate outside a compact set.
We conclude that $f$ satisfies the desired properties.

Finally, note that the identification thus constructed over a neighborhood of the boundary automatically extends to a cylindrical neighborhood of the boundary by integrating the Liouville flow.
\end{proof}

\begin{definition}\label{cprojdef}
The notation $\pi:\Nbd^Z\partial X\to\CC_{\Re\geq 0}$ shall refer to the special case $\alpha=\frac 12$ of the projection to $\CC_{\Re\geq 0}$ from Proposition \ref{productnbhd}.
\end{definition}

\begin{example}
Consider the Liouville sector $\CC_{\Re\geq 0}$ equipped with its usual symplectic form $\omega_\CC=dx\wedge dy$ and radial Liouville vector field $Z_\CC=\frac 12(x\frac\partial{\partial x}+y\frac\partial{\partial y})$.
We may take $I=y$ and $R=x$, so $\pi:=R+iI=x+iy:\CC_{\Re\geq 0}\to\CC$ is simply the usual inclusion.
\end{example}

\begin{proposition}\label{deformexact}
Any Liouville sector $X$ may be deformed to make $\partial X$ exact.
In fact, this deformation is homotopically unique.
\end{proposition}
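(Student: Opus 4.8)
The plan is to make the failure of exactness of $\partial X$ visible through the product structure of Proposition~\ref{productnbhd} and then to kill it by a modification of the Liouville form supported near $\partial X$; the deformation will keep the underlying symplectic manifold-with-boundary fixed, moving only the Liouville form. So, first fix the identification $\Op^Z\partial X=(F\times\CC_{\Re\geq 0},\lambda_F+\lambda_\CC+df)$ arising from a $\tfrac12$-defining function $I$ as in Proposition~\ref{productnbhd}, in which $I=\Im\pi$, the function $f$ is supported in $F_0\times\CC$, and $f$ equals functions $f_{\pm\infty}:F\to\RR$ for $\pm\Im\pi\gg 0$. Computing the integral of $\lambda$ along a leaf $\{p\}\times i\RR$ of the characteristic foliation of $\partial X$ — using that $\lambda_F$ annihilates $\partial_y$ and $\lambda_\CC(\partial_y)=\tfrac12 x$ vanishes on $\partial X=\{x=0\}$, so that $\lambda(\partial_y)|_{\partial X}=\partial_yf$ — identifies it with $f_{+\infty}(p)-f_{-\infty}(p)$. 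Thus, in the notation of Definition~\ref{lsfiber}, $\int_C\lambda=f_{+\infty}-f_{-\infty}$ is a compactly supported function on $F$, and $\partial X$ is exact exactly when $f_{+\infty}=f_{-\infty}$.

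Now set $h:=f_{+\infty}-f_{-\infty}$, pulled back to $\Op^Z\partial X$ via the projection to $F$, and choose a cutoff $\psi$ supported inside $\Op^Z\partial X$ with $\psi\to 1$ as $\Im\pi\to +\infty$ along $\partial X$ and $\psi\to 0$ as $\Im\pi\to -\infty$ along $\partial X$ (near infinity one takes $\psi$ to depend on $\arg\pi$, interpolating from $1$ near $\arg\pi=\tfrac\pi2$ down to $0$ before leaving the cylindrical neighborhood of $\partial X$). Put
\[
\lambda_X^s:=\lambda_X-s\,d(\psi\cdot h),\qquad s\in[0,1],
\]
extended by $\lambda_X$ over the rest of $X$; this is consistent since the correction vanishes outside $\Op^Z\partial X$. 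Since the correction is exact, $\omega=d\lambda^s_X$ is independent of $s$, so the characteristic foliation of $\partial X$ — and hence the condition $dI|_\charfol>0$ — is unchanged; and redoing the leaf integral shows the defect for $\lambda^s_X$ is $(1-s)h$, so $\partial X$ is exact for $\lambda^1_X$.

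The remaining step, which I expect to be the main obstacle, is to check that $(X,\lambda^s_X)$ is a Liouville sector for every $s$. Note the deformation is \emph{necessarily} nontrivial at infinity, because a compactly supported change of $\lambda$ integrates to $0$ over each noncompact characteristic leaf and so cannot alter $\int_C\lambda$; thus one must confirm by hand that perturbing $\lambda$ near infinity of $\partial X$ preserves the structure there. The new Liouville field is $Z_s=Z-X_{\psi h}$, and near infinity $X_{\psi h}$ is bounded — its $\CC$-component is radial of size $O(|\pi|^{-1})$ and its $F$-component is supported in $F_0$ — hence a lower-order perturbation of $Z$, so $(X,\lambda^s_X)$ is still cylindrical at infinity, i.e.\ a Liouville manifold-with-boundary. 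To upgrade this to a Liouville sector I would produce a $\tfrac12$-defining function for $Z_s$ near infinity as $\tilde I:=\lim_{T\to\infty}e^{-T/2}(\Phi^T_{Z_s})^\ast I$: the limit converges because $Z_sI-\tfrac12 I=-X_{\psi h}I=O(|\pi|^{-1})$ decays along the flow, $X_{\psi h}$ is tangent to $\partial X$ near infinity (its $\CC$-component, being radial, vanishes transversally to $\partial X$ along $\partial X$) so $\tilde I|_{\partial X}=0$, and $d\tilde I|_\charfol=dI|_\charfol+O(|\pi|^{-1})>0$ near infinity. Splicing $\tilde I$ near infinity with an extension having $d(\cdot)|_\charfol>0$ on the compact part — possible by convexity of the space of defining functions and triviality of the characteristic $\RR$-bundle — gives a defining function, so $X_s$ is a Liouville sector by Definition~\ref{sectordef}. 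This verification near infinity is exactly where the normal form $Z_\CC=\tfrac12(x\partial_x+y\partial_y)$ and the wedge-like shape of cylindrical neighborhoods of $\partial X$ enter, and is the delicate point.

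Finally, homotopic uniqueness follows by running the construction in families: the path $s\mapsto sh$ of defects may be replaced by any path from $h$ to $0$ in the convex (hence contractible) space of compactly supported functions on $F$, and the admissible cutoffs $\psi$ together with the splicing data also form a contractible space; moreover any deformation of $X$ to a Liouville sector with exact boundary is determined near $\partial X$ and near infinity up to contractible choice, hence is homotopic to one of the above form. So the space of deformations of $X$ to a Liouville sector with exact boundary is weakly contractible, which is the assertion. Liouville-sector-hood is checked throughout each family by the same, uniform, estimates.
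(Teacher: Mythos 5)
Your proposal is correct in outline and follows the same basic strategy as the paper: put $X$ in the product normal form of Proposition~\ref{productnbhd} near $\partial X$, deform the Liouville form by an exact term supported near $\partial X$ (necessarily nontrivial at infinity, as you rightly note), keep $\omega$ fixed, and reduce everything to convexity at infinity plus contractibility of the choices. The difference is the explicit correction: the paper (working in the $\alpha=1$ model $F\times T^\ast\RR_{\geq 0}$) interpolates $f\rightsquigarrow\varphi(t)f$, i.e.\ scales away the part of $df$ within distance $\varepsilon$ of $\partial X$, so that at the end the form is literally the product $\lambda_F+\lambda_{T^\ast\RR_{\geq 0}}$ near the boundary; you instead subtract $s\,d(\psi\,h)$ with $h=f_{+\infty}-f_{-\infty}$ and an angular cutoff $\psi$, which only cancels the leaf integrals. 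The paper's choice buys one thing for free that yours does not: since near $\partial X$ the deformed form differs from the product only by $a\,df$ with $f=f_{\pm\infty}(F)$ at large $\left|I\right|$ and $f=0$ outside $F_0\times\CC$, the original $I$ visibly satisfies $Z_aI=\alpha I$ along $\partial X$ near infinity for every $a$, so sector-ness of each intermediate structure is immediate and only convexity at infinity needs a calculation.

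Two soft spots in your write-up. First, the convexity check: ``$X_{\psi h}$ is a lower-order perturbation of $Z$, so $X$ is still cylindrical at infinity'' is not an argument — the deformed Liouville field genuinely differs from $Z$ at infinity on $F_0\times(\text{wedge})$, where $\psi X_h$ does not decay — and what is actually needed is the paper's style of verification: $Z_s$ is outward pointing along $\partial\bigl(F_0'\times\{\left|I\right|\leq N\}\bigr)$ for $F_0'\supseteq\supp h\cup\supp f_{\pm\infty}$ and $N$ large (there the $F$-component of the perturbation vanishes or is tangent, and the radial component is $O(1/N)$ against $Z_\CC\sim\tfrac N2\partial_r$). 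The facts you record are exactly the right ones, but the hypersurface check is the content. Second, the construction of $\tilde I$ by flow-averaging, followed by ``splicing,'' is both unnecessary and, as written, incomplete: by your own observation $\psi$ is locally constant along $\partial X$ near infinity, so $X_{\psi h}$ is tangent to the $F$-directions there and $Z_sI=\tfrac12 I$ already holds for the \emph{original} $I$ on $\partial X$ near infinity (recall the defining function only lives on $\partial X$); and naive splicing of two defining functions by a cutoff $\chi$ produces an extra $(I_1-I_2)\,d\chi$ term on the characteristic foliation, so ``convexity of the space of defining functions'' does not by itself justify that step. On homotopical uniqueness your appeal to contractibility of the choices is at the same level of detail as the paper, though your claim that an arbitrary exactifying deformation is homotopic to one of your standard form is asserted rather than proved.
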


Note that the proof we give involves a nontrivial deformation at infinity.
It seems likely that it is not possible in general to achieve exactness through a compactly supported deformation once $\dim X\geq 4$.

\begin{proof}
Let $I:X\to\RR$ be a defining function, and consider the resulting product neighborhood decomposition from Proposition \ref{productnbhd}:
\begin{equation}
X=F\times T^\ast\RR_{\geq 0}
\end{equation}
so that $\lambda_X=\lambda_F+\lambda_{T^\ast\RR_{\geq 0}}+df$ over a neighborhood of $\partial X$.

Now we let $\varphi:\RR_{\geq 0}\to[0,1]$ be a cutoff function which is zero near $t=0$ and which equals one for $t\geq\varepsilon$.
We consider the deformation of Liouville forms
\begin{equation}
\lambda_F+\lambda_{T^\ast\RR_{\geq 0}}+a\cdot d((1-\varphi(t))f)+d(\varphi(t)f)
\end{equation}
for $a\in[0,1]$.
For $a=1$ this is our original Liouville form on $X$, and for $a=0$ this is a Liouville form on $X$ making $\partial X$ exact.
It thus suffices to check that this gives a deformation of Liouville manifolds-with-boundary.
In other words, we just need to check convexity at infinity.
A calculation shows that for sufficiently large Liouville domains $F_0\subseteq F$ and sufficiently large $N<\infty$, the Liouville vector field is outward pointing along the boundary of $F_0\times\{\left|I\right|\leq N\}$ for all $a\in[0,1]$, which is sufficient.
\end{proof}

\subsection{Convex completion}\label{horizcomplete}

For every Liouville sector $X$, we describe a Liouville manifold $\bar X$, called the \emph{convex completion} of $X$, along with an inclusion of Liouville sectors $X\hookrightarrow\bar X$.
We show that the convex completion of the Liouville sector $X$ associated to a sutured Liouville manifold $(\bar X,F_0)$ is exactly $\bar X$ and that the inclusion $X\hookrightarrow\bar X$ is the obvious one.

To define $\bar X$, begin with the identification $X=F\times\CC_{\Re\geq 0}$ near the boundary from Proposition \ref{productnbhd} with $\alpha=\frac 12$ (i.e.\ using the radial Liouville vector field on $\CC$), and glue on $F\times\CC_{\Re\leq 0}$ in the obvious manner:
\begin{equation}
\bar X:=X{\textstyle\bigcup\limits_{\Nbd\partial(F\times\CC_{\Re\geq 0})}}\Nbd(F\times\CC_{\Re\leq 0}).
\end{equation}
This defines $\bar X$ as a symplectic manifold.
To fix a primitive
\begin{equation}
\lambda_{\bar X}=\lambda_F+\lambda_\CC+df,
\end{equation}
we just need to extend $f:\Nbd\partial(F\times\CC_{\Re\geq 0})\to\RR$ to $f:\Nbd(F\times\CC_{\Re\leq 0})\to\RR$.
Lemma \ref{horizcompletionisconvex} defines a contractible space of extensions $f$ for which $\bar X$ is a Liouville manifold, thus completing the definition of $\bar X$.

\begin{remark}
The convex completion is `complete' in two unrelated senses: both the Liouville vector field $Z_{\bar X}$ and the Hamiltonian vector field $X_I$ are complete in the positive direction.
\end{remark}

\begin{remark}
One may also define the ``$\alpha$-completion'' of a Liouville sector for any $\alpha>0$ by considering instead the Liouville form $\lambda_\CC^\alpha$ on $\CC$.
For $\alpha\in(0,1)$, the $\alpha$-completion is convex (and is, up to deformation, the convex completion), corresponding to the fact that $Z_\CC^\alpha$ has an index zero critical point at the origin.
For $\alpha>1$, the Liouville vector field $Z_\CC^\alpha$ has index one at the origin, and hence the $\alpha$-completion is not convex in any reasonable sense (if, however, one has two Liouville sectors with the same $F$ and desires to glue them together via charts $F\times\CC_{\Re\geq 0}$ and $F\times\CC_{\Re\leq 0}$ from Proposition \ref{productnbhd}, it is natural to take $\alpha>1$ to define this gluing).
The borderline case $\alpha=1$ corresponds to attaching an end $T^\ast\RR_{\leq 0}\times F$ (for instance the $1$-completion of $T^\ast B^n$ is $T^\ast\RR^n$).
This $\alpha=1$ completion is an open Liouville sector in the sense of Remark \ref{opensector}.
\end{remark}

\begin{lemma}\label{horizcompletionisconvex}
The convex completion $\bar X$ equipped with the Liouville form described above is a Liouville manifold, provided the extension $f:F\times\CC_{\Re<\varepsilon}\to\RR$ is bounded uniformly in $C^\infty$ and satisfies the two bulleted properties from Proposition \ref{productnbhd}.
\end{lemma}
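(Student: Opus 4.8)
The goal is to check that $\bar X$ is ``cylindrical and convex at infinity'', i.e.\ that the Liouville vector field $Z_{\bar X} = Z_F + Z_\CC + (\text{correction from } df)$ is outward-pointing along the boundary of a suitable family of compact domains exhausting $\bar X$, and that the resulting flow is complete. The plan is to use, as in the proof of Proposition \ref{deformexact}, the product box $F_0 \times \{|I| \le N\} \subseteq F \times \CC$ (now with $\Re$-coordinate ranging over all of $\RR$ rather than $\RR_{\ge 0}$), where $F_0 \subseteq F$ is a large Liouville domain chosen to contain $\supp f$ in the $F$-direction, and $N < \infty$ is large. One must verify that for such $F_0$ and $N$ the Liouville field $Z_{\bar X}$ points outward along the boundary of this box, which has two kinds of faces: the ``vertical'' faces $\partial F_0 \times \{|I| \le N\}$, and the ``horizontal'' faces $F_0 \times \{|I| = N\}$.

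First I would recall that $Z_{\bar X} = Z_{\lambda_F + \lambda_\CC} - X_f = Z_F + Z_\CC - X_f$, and that by the two bulleted properties of $f$ from Proposition \ref{productnbhd} (which, by hypothesis, the extension still satisfies), the vector field $X_f$ is supported in $F_0 \times \CC$ and, for $|I|$ large, $X_f = X_{f_{\pm\infty}}$ is purely in the $F$-directions and independent of the $\CC$-coordinate. On the horizontal faces $\{|I| = N\}$: there $Z_\CC = \tfrac12(x\,\partial_x + y\,\partial_y)$ has $\partial_y$-component $\tfrac12 y$, so $dI(Z_\CC) = \tfrac12 I = \pm N/2$, which is outward-pointing (positively on $\{I = N\}$, negatively on $\{I = -N\}$, matching the outward conormal); meanwhile $Z_F$ is tangent to these faces, and for $N$ large the $f$-correction is $-X_{f_{\pm\infty}}$, also tangent. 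So the horizontal faces are fine for any sufficiently large $N$, \emph{independently} of $F_0$. On the vertical faces $\partial F_0 \times \{|I|\le N\}$: there $Z_\CC$ is tangent (it is a $\CC$-direction field and the face is cut out by an $F$-condition), $Z_F$ is outward-pointing along $\partial F_0$ since $F_0$ is a Liouville domain in $F$, and the correction $-X_f$ is $C^\infty$-bounded uniformly by hypothesis; hence by enlarging $F_0$ (pushing $\partial F_0$ further out into the cylindrical end of $F$, where $Z_F$ grows) we can dominate the bounded correction and keep $Z_{\bar X}$ outward-pointing. The uniform boundedness of $f$ is exactly what makes this domination argument work with a \emph{single} choice of $F_0$ rather than needing $F_0$ to depend on $N$; I expect this interplay — choosing $N$ first using the horizontal faces, then $F_0$ using the vertical faces and the uniform $C^\infty$ bound — to be the one point requiring care, and it is the main (mild) obstacle.

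Next I would check completeness of the Liouville flow, equivalently that the domains $F_0 \times \{|I| \le N\}$ (with the $\Re$-coordinate unbounded) together with the cylindrical end of $F$ genuinely exhaust $\bar X$ and that the flow of $Z_{\bar X}$ carries a fixed such domain onto all larger ones. Outside $\supp f$, $Z_{\bar X} = Z_F + Z_\CC$, which is complete because $Z_F$ is complete at infinity in the Liouville manifold $F$ and $Z_\CC = \tfrac12(x\partial_x + y\partial_y)$ is complete on all of $\CC$ (this is precisely the reason $\alpha = \tfrac12 \in (0,1)$ is used, as flagged in the Remark before the lemma: for $\alpha \ge 1$ the $\CC$-component would be $\alpha x\,\partial_x + (1-\alpha)y\,\partial_y$ and completeness in the $\Re \le 0$ region could fail). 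Since $f$ is compactly supported in the $F$-direction and bounded, its contribution does not destroy completeness. Finally, assembling these: the box faces vary smoothly, $Z_{\bar X}$ is transverse-outward along all of them for the chosen $(F_0, N)$ and all $N' \ge N$, so $\bar X$ is the Liouville completion of the Liouville domain $\bar X_0 := F_0 \times \{|I|\le N\} \cup (\text{collar})$, smoothing the corners where the vertical and horizontal faces meet as in Remark \ref{sectorsmoothcorners} (the outward-pointing condition is preserved under corner smoothing since the outward conormal at a smoothed point lies in the convex hull of those of the adjacent faces). Therefore $\bar X$ is a Liouville manifold, and since the space of admissible extensions $f$ is convex (it is cut out by the two linear bulleted conditions plus an open $C^\infty$-boundedness condition, all convex), it is either empty or contractible; that it is nonempty follows by extending $f$ by its values $f_{\pm\infty}$ across $\Re \le 0$, completing the definition of $\bar X$.
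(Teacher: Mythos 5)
There is a genuine gap, and it comes from treating $\bar X$ as if it were globally the product $F\times\CC$. The identification of Proposition \ref{productnbhd} is valid only over a cylindrical neighborhood of $\partial X$, and the horizontal completion is $X$ glued to $\Op(F\times\CC_{\Re\leq 0})$ along that neighborhood; the bulk of $X$ carries no product coordinates at all. Consequently your proposed domains $F_0\times\{\left|I\right|\leq N\}$ with the $\Re$-coordinate ranging over all of $\RR$ are not well-defined subsets of $\bar X$, are not compact (so they cannot serve as Liouville domains), and in any case do not exhaust $\bar X$: they miss all of $X$ away from $\partial X$. The correct domain to test is the union of $X$ itself (whose cylindricity/convexity at infinity is already known, since $X$ is a Liouville manifold-with-boundary) with a \emph{compact} box $F_0\times\CC_{-N\leq\Re\leq 0,\,\left|\Im\right|\leq M}$ in the newly attached region, checking that $Z_{\bar X}=Z_F+Z_\CC-X_f$ is outward pointing only along the newly created faces; this is what the paper's proof does.

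Because your box is unbounded in the $\Re$-direction, the face $\{\Re=-N\}$ never appears in your case analysis, yet that is precisely the only place where the uniform $C^\infty$-bound on $f$ is needed: there the outward component of $Z_{\bar X}$ is $\tfrac N2-\partial f/\partial I$, which is nonnegative once $N$ is large compared with the bound on $df$. By contrast, on the faces $\partial F_0\times\cdots$, where you invoke the bound together with a ``domination by enlarging $F_0$'' argument, no such argument is needed: $f$ is supported inside $F_0\times\CC$ for some Liouville domain $F_0$, so after enlarging $F_0$ slightly one has $X_f\equiv 0$ near those faces and $Z_F+Z_\CC$ is outward for free. So the hypothesis the lemma is really about gets used where it is irrelevant, while the step where it is indispensable is skipped. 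The completeness/exhaustion paragraph inherits the same defect (it argues on $F\times\CC$ rather than on $\bar X$, whose Liouville field away from the product region is simply that of $X$), and the closing claim about convexity/contractibility of the space of extensions is extraneous to the statement being proved.
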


\begin{proof}
We study the restriction of the Liouville vector field $Z_{\bar X}=Z_F+Z_\CC-X_f$ to the boundary of
\begin{equation}\label{completionconvex}
X\cup\Bigl(F_0\times\CC_{\begin{smallmatrix}-N\leq\Re\leq 0\\\left|\Im\right|\leq M\end{smallmatrix}}\Bigr),
\end{equation}
where $F_0\subseteq F$ is a Liouville domain such that $f$ is supported inside $F_0\times\CC_{\Re<\varepsilon}$, and $M<\infty$ is such that $f=f_{\pm\infty}$ for $\left|\Im\right|\geq M$.

Over the piece of the boundary lying over the imaginary axis, the Liouville vector field equals $Z_F+Z_\CC$, which is tangent to the boundary.
The remaining compact part of the boundary is naturally divided into the following three pieces:
\begin{equation}
\partial F_0\times\CC_{\begin{smallmatrix}-N\leq\Re\leq 0\\\left|\Im\right|\leq M\end{smallmatrix}},\qquad
F_0\times\CC_{\begin{smallmatrix}-N\leq\Re\leq 0\\\left|\Im\right|=M\end{smallmatrix}},\qquad
F_0\times\CC_{\begin{smallmatrix}\Re=-N\\\left|\Im\right|\leq M\end{smallmatrix}}.
\end{equation}
Over the first piece, the Liouville vector field also equals $Z_F+Z_\CC$, which is outward pointing.
Over the second piece, the Liouville vector field equals $Z_F+Z_\CC-X_{f_{\pm\infty}}$, which is also outward pointing (note that $X_{f_{\pm\infty}}$ is tangent to the $F$ factor).
Over the third and final piece of the boundary, the Liouville vector field equals $Z_F+Z_\CC-X_f$ which is outward pointing iff $-\frac{\partial f}{\partial I}+N\geq 0$, which holds as long as $N$ is sufficiently large by the assumed $C^\infty$-boundedness of $f$.

It follows that for sufficiently large $N<\infty$, sufficiently large $M<\infty$, and sufficiently large $F_0\subseteq F$, the Liouville vector field is outward pointing along the boundary of \eqref{completionconvex} (other than that above the imaginary axis), and hence by removing from \eqref{completionconvex} an appropriately chosen open positive half of the symplectization of $\partial_\infty X$, we obtain a Liouville domain.
This is enough to imply the desired result.
\end{proof}

\begin{lemma}\label{suturedequivalence}
Every Liouville sector arises, up to deformation, from a unique (in the homotopical sense) sutured Liouville manifold.
Moreover, the convex completion of the Liouville sector $X$ associated to a sutured Liouville manifold $(\bar X,F_0)$ coincides with $\bar X$, and the inclusion $X\hookrightarrow\bar X$ is the obvious one.
\end{lemma}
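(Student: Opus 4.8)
The plan is to use the horizontal completion of \S\ref{horizcomplete} as the canonical procedure turning a Liouville sector into a sutured Liouville manifold, and to extract homotopical uniqueness from Lemma \ref{twosutures}. So I would split the statement into two tasks: (1) show that for any sutured Liouville manifold $(\bar X,F_0)$, the horizontal completion of the associated Liouville sector $X$ is $\bar X$ itself, with the tautological inclusion $X\hookrightarrow\bar X$; and (2) show that every Liouville sector $X$ arises from a sutured Liouville manifold, homotopically uniquely. Task (1) simultaneously proves the ``Moreover'' clause and exhibits an inverse, up to deformation, to the assignment $(\bar X,F_0)\mapsto X$, while task (2) then becomes mostly a matter of checking that the horizontal completion depends on $X$ only up to contractible choice and invoking Lemma \ref{twosutures}.

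For task (1): given $(\bar X,F_0)$, use the Reeb coordinates $F_0\times\RR_{\left|t\right|\leq\varepsilon}\subseteq\partial_\infty\bar X$ with contact form $dt+\lambda|_{F_0}$ (Definition \ref{liouvillesectorcreation}) to form $X=\bar X\setminus(\RR_{\geq 0}\times A)$ with $A=F_0\times\RR_{\left|t\right|\leq\varepsilon}$; by Lemma \ref{suturedliouvillemanifold} (with $I=t$) this is a Liouville sector whose defining function is the linear extension of $-t$. Feeding this defining function into Proposition \ref{productnbhd} with $\alpha=\frac 12$, the resulting product neighborhood $X=F\times\CC_{\Re\geq 0}$ has $\Im$-coordinate $\sim -t$ and fiber $F$ canonically the Liouville completion of $F_0$ (indeed $F=I^{-1}(0)$ traces out the completion of $\partial F_0$ inside $\partial X$). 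It then remains to identify, as Liouville manifolds-with-boundary rel the common boundary $\partial X=F\times\{i\RR\}$, the half-disk bundle $F\times\CC_{\Re\leq 0}$ glued on in the horizontal completion with (a cylindrical neighborhood of) the deleted region $\RR_{\geq 0}\times A\subseteq\bar X$. Both are ``half-disk bundles over $F$'' filling in $\partial X$, and the computation in the proof of Lemma \ref{horizcompletionisconvex} shows that for a suitable (contractibly-unique) choice of the extension of $f$ the glued-up manifold is genuinely $\bar X$; this yields the claimed isomorphism and proves task (1).

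For task (2): given $X$, choose a defining function, form the product neighborhood of Proposition \ref{productnbhd}, and form the horizontal completion $\bar X$ (Lemma \ref{horizcompletionisconvex}), a Liouville manifold containing $X$. Picking a Liouville domain $\bar X_0\subseteq\bar X$ whose completion is $\bar X$, the trace on $\partial\bar X_0=\partial_\infty\bar X$ of the glued-in region $F\times\CC_{\Re\leq 0}$ is a codimension-zero submanifold-with-corners $A\subseteq\partial\bar X_0$, one has $X=\bar X\setminus(\RR_{\geq 0}\times A)$ by construction (using that $Z_F+Z_\CC$ preserves $F\times\CC_{\Re\leq 0}$), and the pair $(\bar X_0,A)$ satisfies the hypotheses of Lemma \ref{suturedliouvillemanifold}: the required $I$ is the defining function of $X$ restricted to $A$, whose contact vector field is outward pointing along $\partial A$ because $X_I$ is outward pointing along $\partial X$ (Definition \ref{sectordef}) together with the product structure on $F\times\CC_{\Re\leq 0}$. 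Lemma \ref{twosutures} then upgrades $(\bar X_0,A)$ to a sutured Liouville domain, and completion gives a sutured Liouville manifold $(\bar X,F_0)$ (with $F_0\subseteq F$ any Liouville domain completing to $F$, a contractible choice), which by task (1) recovers $X$. Since every choice made --- the defining function, the primitive $f$ and the parameters $N,M,F_0$ in Lemma \ref{horizcompletionisconvex}, the domain $\bar X_0$, and the corner-smoothing --- varies in a contractible family, a deformation of Liouville sectors induces a deformation of the associated data $(\bar X_0,A)$, so by the homotopical uniqueness in Lemma \ref{twosutures} the sutured Liouville manifold is determined by $X$ up to deformation, uniquely in the homotopical sense.

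The step I expect to be the main obstacle is the Liouville-structure matching inside task (1): comparing the radial structure $\lambda_F+\lambda_\CC^{1/2}$ on $F\times\CC_{\Re\leq 0}$ near $\Re=0$ with the symplectization structure $e^s(dt+\lambda|_{F_0})$ carried by the deleted region $\RR_{\geq 0}\times A$, keeping careful track of the primitive $f$ and its extension, and verifying that the contractible ambiguities (choice of $F_0\subseteq F$, corner-smoothing near $\partial F_0\times\{\pm\varepsilon\}$, and so on) do not alter the deformation class. This is exactly the bookkeeping that Lemma \ref{twosutures} was designed to absorb, so in practice I would reduce as much as possible to that lemma and verify only that the horizontal completion provides a canonical (up to contractible choice) lift of $X$ to a pair $(\bar X_0,A)$.
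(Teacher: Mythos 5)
Your overall strategy is the paper's: reduce to pairs $(\bar X_0,A)$ via Lemma \ref{twosutures}, use the horizontal completion to produce such a pair from a sector, and get homotopical uniqueness from contractibility of the choices together with the round-trip identification for sectors coming from a sutured manifold. However, two steps that you assert are exactly the places where the paper's proof has to do concrete work, and as written they are not correct. First, the claim ``one has $X=\bar X\setminus(\RR_{\geq 0}\times A)$ by construction'' is false on the nose: the deleted region $F\times\CC_{\Re<0}$ is not the positive Liouville cone over a subset of $\partial\bar X_0$, and the two differ by a compact piece. The paper instead deforms $X$ to the domain \eqref{completionconvex} inside $\bar X$, and even then the candidate defining vector field $X_I=-\frac\partial{\partial R}$ is only \emph{tangent} (not outward pointing) along the side faces lying over the strip $-N<\Re<0$; one must shrink $M$ and $F_0$ slightly as the real part decreases to restore outward-pointing-ness. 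Your parallel claim that $V_I$ is outward along $\partial A$ ``because of the product structure'' glosses the same tangency issue, so the verification of the hypotheses of Lemma \ref{suturedliouvillemanifold} needs this perturbation, not just the product coordinates.

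Second, for the round-trip identification (your task (1), and the basis of uniqueness), appealing to Lemma \ref{horizcompletionisconvex} is not enough: that lemma only shows the horizontal completion is a Liouville \emph{manifold} for suitable extensions of $f$, not that it is canonically $\bar X$ when $X$ comes from $(\bar X,F_0)$. The paper's device here is to choose the defining function $I$ with $ZI=\tfrac12 I$ \emph{everywhere} (not just near infinity), which forces $\frac{\partial f}{\partial R}\equiv 0$, and then to take the unique $R$-independent extension of $f$ to $F\times\CC_{\Re\leq\varepsilon}$; with this normalization the completion is identified with $\bar X$ and the deformation from $X$ to \eqref{completionconvex} is canonically covered by a deformation of pairs $(\bar X_0^t,A^t)$, which is what delivers homotopical uniqueness. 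You correctly flag this Liouville-structure matching as the main obstacle, but the proposal does not supply the normalization that makes it go through; adding these two fixes would bring it in line with the paper's argument.
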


\begin{proof}
We instead prove the statement for pairs $(\bar X_0,A)$ satisfying the hypotheses of Lemma \ref{suturedliouvillemanifold}.
This is equivalent in view of Lemma \ref{twosutures}.

Given a Liouville sector $X$, we consider \eqref{completionconvex}.
The vector field $X_I=-\frac\partial{\partial R}$ is not quite outward pointing along the boundary of \eqref{completionconvex}, since it is tangent to the part of the boundary over the strip $-N<\Re<0$.
However, this is easily fixed by shrinking $M$ and $F_0$ slightly as the real part gets more negative.
It follows that $X_I$ is outward pointing along the boundary of this perturbed version of \eqref{completionconvex}, and thus \eqref{completionconvex} is a Liouville sector deformation equivalent to $X$.
On the other hand, \eqref{completionconvex} arises from a pair $(\bar X_0,A)$ in which $\bar X_0\subseteq\bar X$ is a Liouville domain (whose completion is $\bar X$) and $A$ is the closure of (the small perturbation of) the part of the boundary of \eqref{completionconvex} not lying on $i\RR$.
We conclude that, up to deformation, every Liouville sector arises from a pair $(\bar X_0,A)$ satisfying the hypotheses of Lemma \ref{suturedliouvillemanifold}.

Now consider the construction above in the case that $X$ is presented as the Liouville sector \eqref{suturedliouvillemanifoldeq} associated to a given pair $(\bar X_0^0,A^0)$ satisfying the hypotheses of Lemma \ref{suturedliouvillemanifold}.
We may take $I$ on $\partial X$ to satisfy $ZI=\frac 12I$ everywhere, which implies $\frac{\partial f}{\partial R}\equiv 0$ (this is a calculation), and we choose the unique extension of $f$ to $F\times\CC_{\Re\leq\varepsilon}$ maintaining this property.
In this case, the deformation from $X$ to \eqref{completionconvex} described above corresponds canonically to a deformation of pairs $(\bar X_0^t,A^t)$, where $\bar X_0^t\subseteq\bar X$ are Liouville domains (with completion $\bar X$) and $A^t$ are (the images inside $\partial\bar X_0^t$ of) $\partial_\infty(F\times\CC_{\Re\leq 0},\lambda_F+\lambda_\CC+df)\subseteq\partial_\infty\bar X$.
We conclude that every Liouville sector arises from a homotopically unique pair $(\bar X_0^0,A^0)$ satisfying the hypotheses of Lemma \ref{suturedliouvillemanifold}.
\end{proof}

\subsection{Stops and Liouville sectors}\label{withstops}

The notion of a Liouville sector is essentially equivalent to Sylvan's notion of a stop.

\begin{definition}[Sylvan \cite{sylvanthesis}]
A \emph{stop} on a Liouville manifold $X$ is a map $\sigma:F\times\CC_{\Re\leq\varepsilon}\to X$
which is a proper, codimension zero embedding, where $F$ is a Liouville manifold, satisfying $\sigma^\ast\lambda_X=\lambda_F+\lambda_\CC+df$ for some compactly supported $f$.
Here $\CC_{\Re\leq\varepsilon}$ has the standard symplectic form $dx\,dy$ and standard radial Liouville vector field $\frac 12(x\frac\partial{\partial x}+y\frac\partial{\partial y})$.
\end{definition}

If $(X,\sigma)$ is a Liouville manifold with a stop, then there is a Liouville sector $X\setminus\sigma$ with exact boundary obtained by removing $\sigma(F\times\CC_{\Re<0})$ from $X$ (the ``imaginary part'' function $y$ from the $\CC$ factor gives a $\frac 12$-defining function).
Conversely, let $X$ be a Liouville sector with exact boundary (recall that every Liouville sector may be deformed so it becomes exact by Proposition \ref{deformexact}).
Its convex completion $\bar X$ comes with an embedding of $F\times\CC_{\Re\leq\varepsilon}$ on which the Liouville form is given by $\lambda_F+\lambda_\CC+df$, for $f:F\times\CC_{\Re\leq\varepsilon}\to\RR$ as in \S\ref{horizcomplete}.
The difference $f_\infty-f_{-\infty}$ is given by $\int_C\lambda$ as before (see \eqref{Cintegraldifference}); in particular $f_\infty=f_{-\infty}$ since $\partial X$ is exact.
Thus if we replace $\lambda_F$ with $\lambda_F+df_\infty$, we may take $f$ to have compact support.
This defines a stop $\sigma$ on $\bar X$ such that $X=\bar X\setminus\sigma$.

\subsection{Cutoff Reeb dynamics}\label{reebdynamicsboundary}

To understand wrapping on Liouville sectors, we need to have precise control on the Reeb vector field near the boundary.
We now develop the necessary understanding, in the general context of a contact manifold $Y$ with convex boundary.

We are interested in what we shall call \emph{cutoff contact vector fields}, namely those contact vector fields on $Y$ which vanish on $\partial Y$ (this is, of course, the Lie algebra of the group of contactomorphisms of $Y$ fixing $\partial Y$ pointwise).
For any function $f:Y\to\RR_{\geq 0}$ vanishing transversely precisely on $\partial Y$, the space of cutoff contact vector fields corresponds to the space of contact Hamiltonians of the form $H = f^2G$ for smooth sections $G:Y\to TY/\xi$.
A \emph{cutoff Reeb vector field} shall mean a cutoff contact vector field whose contact Hamiltonian $H = f^2G$ satisfies $G>0$ over all of $Y$ (including $\partial Y$).

\begin{lemma}\label{compactcutoff}
For every cutoff Reeb vector field $\R$ on a contact manifold $Y$, there exists a compact subset of the interior of $Y$ which intersects all periodic orbits of $\R$.
\end{lemma}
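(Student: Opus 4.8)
The plan is to localise the problem near $\partial Y$ and to exploit the factor $f^2$ in the contact Hamiltonian. First, observe that $Y$ is compact, $\R$ vanishes identically on $\partial Y$, and in the interior $H=f^2G>0$ forces $X_H\neq 0$; hence every periodic orbit that must be caught is non-constant, and it suffices to produce an open neighbourhood $U\supseteq\partial Y$ containing no non-constant periodic orbit. Then $K:=Y\setminus U$ is compact, lies in the interior, and meets every non-constant periodic orbit, since such an orbit cannot be contained in $U$.

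To analyse the flow near $\partial Y$, fix a collar $\partial Y\times[0,\varepsilon)_t$ (with $\partial Y=\{t=0\}$ and interior $\{t>0\}$) and a contact form $\alpha$. The defining property of a cutoff \emph{Reeb} field is that $H:=\alpha(\R)$ vanishes on $\partial Y$ to order exactly two with positive transverse Hessian; taking the collar coordinate proportional to $f$ we may assume $H=t^2\mathcal G$ with $\mathcal G>0$. Writing $\R=X_H=H\R_\alpha+w$ with $w\in\xi$ the $d\alpha|_\xi$-dual of $-dH|_\xi$, one has $dt(\R)=H\,dt(\R_\alpha)+dt(w)$, where $H\,dt(\R_\alpha)=O(t^2)$ while $dt(w)=dH(\zeta_t)$ for $\zeta_t\in\xi$ the $d\alpha|_\xi$-dual of $dt|_\xi$; since $dH=2t\mathcal G\,dt+t^2\,d\mathcal G$ and $dt(\zeta_t)=d\alpha(\zeta_t,\zeta_t)=0$, this gives $dt(w)=O(t^2)$ as well. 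Thus the \emph{normal} motion is quadratically slow: along orbits $\dot t=t^2\psi$ for a smooth function $\psi$. The same bookkeeping shows that $\R/t$ extends smoothly across $\partial Y$, where it restricts to a nonzero multiple of the vector field directing the characteristic foliation $C$ of $\partial Y$ --- indeed $w|_{\partial Y}$ lies in $\xi\cap T\partial Y$ and is $d\alpha$-orthogonal within $\xi$ to $\xi\cap T\partial Y$, which characterises the characteristic direction.

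Here convexity enters. On the complement of its dividing set $\Gamma$, the characteristic foliation of a convex hypersurface is the flow of a Liouville vector field (each component of $\partial Y\setminus\Gamma$ carrying an ideal Liouville structure with ideal boundary $\Gamma$), so it has no closed orbits, no recurrent orbits, and is transversally monotone near $\Gamma$. I would then argue by contradiction: if non-constant periodic orbits $\gamma_k$ were contained in $\partial Y\times[0,1/k)$, then after the time change $ds=t\,d\tau$ (with $\tau$ the original parameter) they become orbits of $\R/t$, hence $C^1$-close to integral curves of $C$ on $\partial Y$. Away from $\Gamma$ and the skeleta of $C$, a closed orbit $C^1$-close to integral curves of $C$ would, in the limit, produce a closed or recurrent orbit of $C$ --- impossible. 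Near $\Gamma$, the coordinate $r$ transverse to $\Gamma$ is strictly monotone along $\gamma_k$ (from $\dot r\approx -tr$ together with $\dot t=O(t^2)$), so $\gamma_k$ converges to $\Gamma$ and is not periodic either. Patching these regimes --- and disposing of the small neighbourhoods of the zeros of $C$ --- produces the neighbourhood $U$.

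The main obstacle is exactly this last patching. The leading tangential dynamics near $\partial Y$ is the characteristic foliation $C$, which degenerates along $\Gamma$ and at the zeros (skeleta) of $C$; the compactness and shadowing argument ruling out periodic orbits that linger in these degenerate loci must use the convex structure of $C$ quantitatively --- the cylindrical model of $C$ near $\Gamma$, and the Liouville (hence non-recurrent) nature of $C$ near its skeleta --- rather than merely qualitatively, while keeping track of the second-order quantity $\psi$ that governs the normal drift. The remaining steps (the reduction, the collar normal form, and the $dt(\R)=O(t^2)$ computation) are routine.
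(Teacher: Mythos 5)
Your reduction to a collar and the computation that $dt(\R)=O(t^2)$, with $\R/t$ extending to $\partial Y$ and directing the characteristic foliation away from its zeros, are fine. The proof, however, hinges on the claim that the characteristic foliation of a convex hypersurface ``has no closed orbits, no recurrent orbits'' because on $\partial Y\setminus\Gamma$ it is directed by a Liouville vector field. That claim is false: a Liouville vector field can perfectly well have closed orbits. For instance, on the annulus $\RR_x\times S^1_\theta$ take the primitive $\lambda=x\,d\theta+dx$ of $dx\wedge d\theta$; its Liouville field is $x\partial_x-\partial_\theta$ and the core circle $\{x=0\}$ is a closed orbit. This is exactly what occurs for a convex torus in standard form, whose characteristic foliation has closed leaves (Legendrian divides) in $(\partial Y)_\pm$. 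So the contradiction you aim for cannot be extracted from the foliation dynamics alone. Moreover, even where the foliation is non-recurrent, the step ``closed orbits of $\R$ that linger near $\partial Y$ produce in the limit a closed or recurrent leaf of $C$'' is not justified: limits of closed orbits of a perturbed field can be degenerate invariant sets (unions of singular leaves through the zeros of $C$ and points of $\Gamma$), and your transverse estimate near $\Gamma$ ($\dot r\approx -tr$) is asserted rather than derived. You flag this ``patching'' yourself as unresolved; it is not a routine remainder but the entire content, and with the false non-recurrence input it cannot be completed as stated.

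The actual proof is much shorter and avoids the tangential dynamics entirely, because the correct monotone quantity is not the collar coordinate $t$ (which is only quadratically slow, and need not be monotone) but the contact Hamiltonian of the transverse contact vector field furnished by convexity. Work on the symplectization $X=SY$: convexity gives a linear function $I$ with $X_I$ outward pointing along $\partial X$. Since the linear Hamiltonian $H$ generating $\R$ vanishes to second order along $\partial X$ with positive transverse Hessian, $X_IH$ vanishes to first order there and is negative just inside; as $X_IH$ is again linear, it is negative on all of $(\Op^Z\partial X)\setminus\partial X$, i.e.\ $X_HI>0$ there. Hence $I$ increases strictly along (lifts of) orbits in that region (interior periodic orbits of $\R$ do lift to periodic orbits of $X_H$, since $H$ is conserved and positive there), so no periodic orbit of $\R$ is contained in a punctured collar of $\partial Y$, and compactness of $Y$ gives the desired compact set. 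In particular closed leaves of $C$ cause no trouble: along orbits shadowing them the function $I$ (which on $\partial Y$ is essentially the coordinate $u$ cutting out $\Gamma$) still increases, which is the monotonicity your approach was missing.
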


\begin{proof}
Work on the symplectization $X=SY$, in which convexity of $\partial Y$ means there is a linear function $I$ with $X_I$ outward pointing.
Let $H$ be the linear Hamiltonian generating the cutoff Reeb vector field $\R$, meaning that $X_H$ is the tautological lift of $\R$ from $Y$ to $X=SY$.
Since $H$ vanishes to order two along $\partial X$ with positive second derivative, we conclude that $X_IH$ vanishes to first order along $\partial X$ and is negative just inside.
Since $X_IH$ is linear, we conclude that $X_IH<0$ over $(\Nbd^Z\partial X)\setminus\partial X$; equivalently,
\begin{equation}\label{XHIlowerbound}
X_HI>0\quad\text{over }(\Nbd^Z\partial X)\setminus\partial X.
\end{equation}
In particular, there can be no periodic orbits of $X_H$ entirely contained in $(\Nbd^Z\partial X)\setminus\partial X$, which
implies the desired result.
\end{proof}

For certain purposes, we will need cutoff Reeb vector fields $\R$ whose dynamics near the boundary satisfy even stronger requirements (for example, we will want there to exist a compact subset of the interior of $Y$ which \emph{contains} all periodic orbits of $\R$).
We are not able to show that an arbitrary cutoff Reeb vector field conforms to such strong requirements; instead, we will simply write down a sufficient supply of such $\R$.

We proceed to write down a canonical (up to contractible choice) family of cutoff Reeb vector fields near $\partial Y$ with excellent dynamics.
Let us first choose a contact vector field $V$ outward pointing along $\partial Y$ (this is a convex, and hence contractible, choice).
This determines unique coordinates $\partial Y\times\RR_{t\geq 0}\to Y$ under which $V=-\frac\partial{\partial t}$, defined near $\partial Y$.
The choice of $V$ also divides $\partial Y$ into two pieces $(\partial Y)_\pm$ namely where $V$ is positively/negatively transverse to $\xi$.
They meet along the locus 
\begin{equation}
    \Gamma_{\partial Y} :=  \{V\in\xi\}
\end{equation}
(called the `dividing set') which is a \emph{transversely cut out submanifold} of $\partial Y$ for \emph{any} choice of $V$ 
(this is a standard easy fact in the study of convex hypersurfaces, see e.g.\ \cite[I.3.B(1)]{girouxthesis} for a proof in general and \cite{geigesbook} for the three-dimensional case).

In a neighborhood of (any compact subset of) $(\partial Y)_-$, there is a unique contact form $\alpha$ with $\alpha(V)=1$, namely
\begin{equation}\label{invtform}
\alpha=\lambda+dt
\end{equation}
where $\lambda$ is a Liouville form on $(\partial Y)_-$.
Now the contact vector field associated to the contact Hamiltonian $M(t)$ (that is, the Reeb vector field of $M(t)^{-1}\alpha$) is given by
\begin{equation}
V_M=M'(t)Z_\lambda+M(t)\frac\partial{\partial t}.
\end{equation}
We now observe that as long as $M(t)>0$ and $M'(t)>0$, this vector field has excellent dynamics.
Namely: the function $t$ is monotonically increasing along trajectories, and all backwards trajectories converge to the locus $\{\lambda=0\}=\{\xi\subseteq T\partial Y\}$, which is a compact subset of $(\partial Y)_-$.
A similar story applies over $(\partial Y)_+$, except that the vector field is attracting instead of repelling.

The locus $\Gamma_{\partial Y} =  \{V\in\xi\}\subseteq\partial Y$ is more interesting; our first task is to write down an explicit contact form in a neighborhood of $\Gamma_{\partial Y}$.
Recall that the characteristic foliation of the hypersurface $\partial Y$ inside the contact manifold $Y$ is by definition the kernel of the restriction of $d\alpha$ to $\xi\cap T\partial Y$ for any contact form $\alpha$ on $Y$.
There is a canonical projection $\pi:\partial Y\to \Gamma_{\partial Y}$
defined in a neighborhood of $\Gamma_{\partial Y}$ by following the characteristic foliation, which is transverse to $\Gamma_{\partial Y}$.
Furthermore, we have $\xi\cap T\partial Y=(d\pi)^{-1}(\xi\cap T\Gamma_{\partial Y})$
(to see this, consider the Lie derivative with respect to any vector field tangent to the characteristic foliation of $\partial Y$ of the restriction to $\partial Y$ of any contact form on $Y$).
Thus $\xi$ is determined by the projection $\pi$, the contact structure $\xi\cap T\Gamma_{\partial Y}$
on $\Gamma_{\partial Y}$, 
and the section 
$[-V] = [\frac\partial{\partial t}]\in TY|_{\partial Y}/\xi=T\partial Y/(\xi\cap T\partial Y)=\pi^\ast(T\Gamma_{\partial Y}/(\xi\cap T\Gamma_{\partial Y}))$
over $\partial Y$.
More explicitly, choose a contact form
$\mu$ for the contact structure 
$\xi\cap T\Gamma_{\partial Y}$ on $\Gamma_{\partial Y}$ 
(a contractible choice).
With respect to this choice, the section $[\frac\partial{\partial t}]$ of $TY|_{\partial Y}/\xi=\pi^\ast(T\Gamma_{\partial Y}/(\xi\cap T\Gamma_{\partial Y}))\xrightarrow\mu\RR$ is just a
function on $\partial Y$ vanishing transversely on $\Gamma_{\partial Y}$.
Denoting
this function by $u$, we have local coordinates on $Y$ near $\Gamma_{\partial Y}$
given
by
\begin{equation}\label{YcoordsnearGamma}
    \Gamma_{\partial Y}\times\RR_{\left|u\right|\leq\varepsilon}\times\RR_{t\geq 0}
\end{equation}
such that the contact vector field $V=-\frac\partial{\partial t}$ and the contact form is given by
\begin{equation}
\mu+u\,dt.
\end{equation}
We wish to consider instead the scaled contact form
\begin{equation}\label{keycontactform}
\alpha:=\psi(u)^{-1}[\mu+u\,dt]
\end{equation}
for a function $\psi:\RR\to\RR_{>0}$ satisfying
\begin{alignat}{2}
\psi(u)&=\left|u\right|&\qquad&\text{for }\left|u\right|\geq\frac\varepsilon 2\\
u\psi'(u)&>0&\qquad&\text{for }u\ne 0
\end{alignat}
(the choice of $\psi$ is convex and hence contractible).
Note that for $u\geq\frac\varepsilon 2$ (resp.\ $u\leq-\frac\varepsilon 2$), this scaled contact form $\alpha$ satisfies $\alpha(V)=1$ (resp.\ $\alpha(V)=-1$) and thus is of the form \eqref{invtform} (resp.\ its negative).
Now the contact vector field associated to a contact Hamiltonian $M(t)$ is given by
\begin{equation}\label{complicatedreeb}
V_M=\psi'(u)M(t)\frac\partial{\partial t}-\psi(u)M'(t)\frac\partial{\partial u}+[\psi(u)-u\psi'(u)]M(t)\R_\mu.
\end{equation}
As before, given $M(t)>0$ and $M'(t)>0$, this vector field has excellent dynamics.
Specifically, $u$ is monotonically decreasing along trajectories; furthermore, $t$ is increasing for $u>0$ and decreasing for $u<0$.
The resulting dynamics are illustrated in Figure \ref{cutoffdynamics}.
Note that these dynamics prohibit any periodic orbit of the Reeb vector field from intersecting this neighborhood of $\partial Y$.

\begin{figure}[hbt]
\centering
\includegraphics{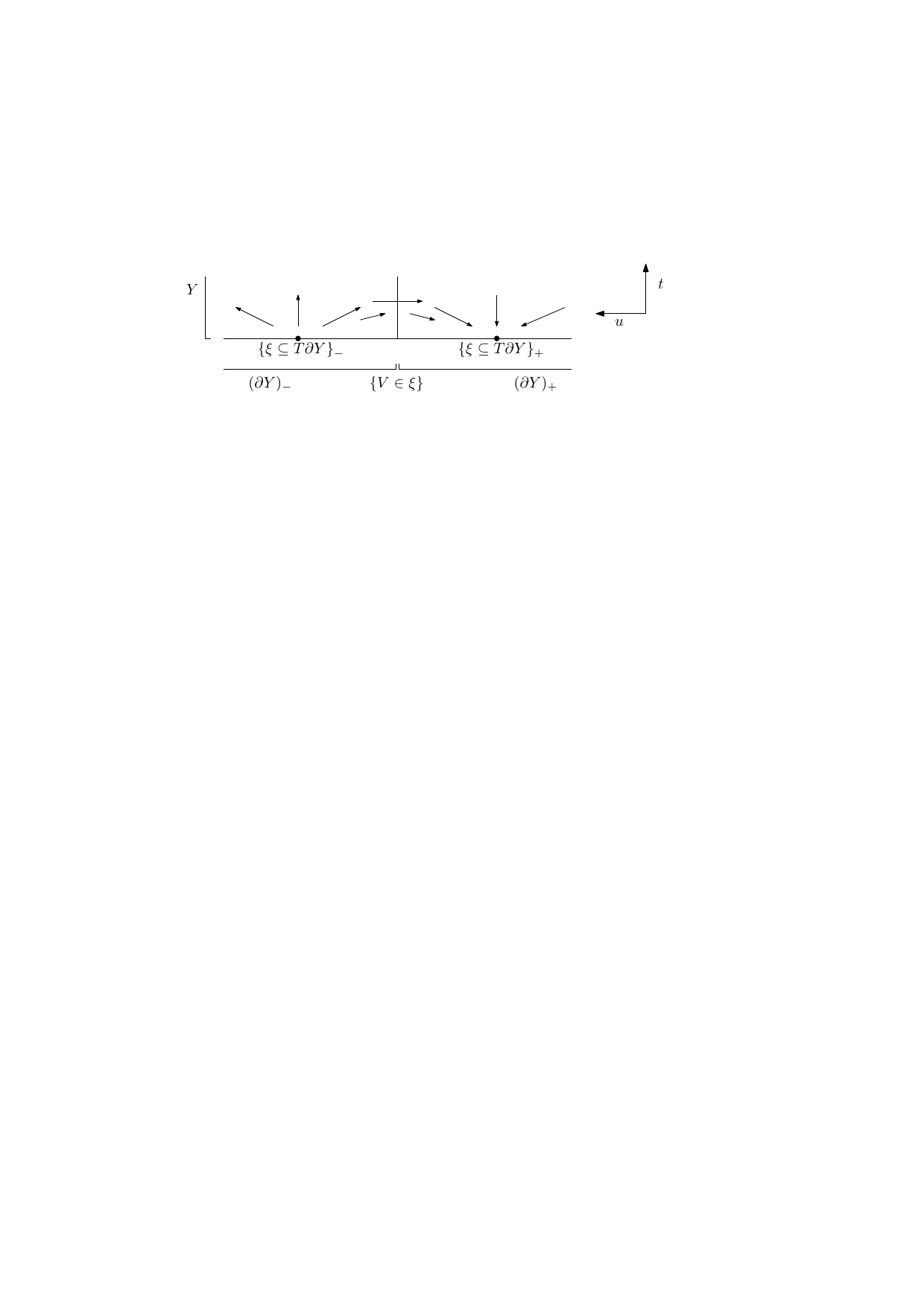}
\caption{Dynamics of the cutoff Reeb vector field near $\partial Y$.}\label{cutoffdynamics}
\end{figure}

We summarize the main properties of the above construction as follows.
Let us call a function $M:\RR_{\geq 0}\to\RR_{\geq 0}$ \emph{admissible} if $M(0)=0$, $M'(0)=0$, $M''(0)>0$, and $M'(t)>0$ for $t>0$.
(Usually, $M$ will only be defined on a connected interval inside $\RR_{\geq 0}$ containing zero).

\begin{proposition}\label{cutofflemma}
Let $Y$ be a contact manifold with convex boundary.
There exists a canonically defined contractible family of pairs consisting of a choice of coordinates $\partial Y\times\RR_{t\geq 0}\to Y$ near $\partial Y$ and a $\frac\partial{\partial t}$-invariant contact form $\alpha$, such that for any admissible function $M:\RR_{\geq 0}\to\RR_{\geq 0}$, the dynamics of the associated cutoff Reeb vector field defined over $\Nbd\partial Y$ satisfy the following property.
For any trajectory $\gamma:\RR\to\partial Y\times\RR_{t\geq 0}$, we have:
\begin{itemize}
\item If $dt(\gamma'(\tau_0))\geq 0$, then $dt(\gamma'(\tau))>0$ for all $\tau<\tau_0$.
\item If $dt(\gamma'(\tau_0))\leq 0$, then $dt(\gamma'(\tau))<0$ for all $\tau>\tau_0$.
\end{itemize}
In particular, for all sufficiently small $\delta>0$, no trajectory enters the region $\partial Y\times\RR_{0\leq t\leq\delta}$ and then exits.\qed
\end{proposition}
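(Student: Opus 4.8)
The plan is to assemble the three local normal forms developed above---over $(\partial Y)_+$, over $(\partial Y)_-$, and in a neighborhood of $\Gamma_{\partial Y}$---into a single $\partial/\partial t$-invariant contact form near $\partial Y$, and then read the two bulleted properties directly off the explicit formulas for the associated cutoff Reeb field $V_M$.

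First I would record the contractible choices involved: a contact vector field $V$ inward pointing along $\partial Y$ (a convex choice), which fixes the coordinates $\partial Y\times\RR_{t\geq 0}\to Y$ with $V=\partial/\partial t$ and the splitting $\partial Y=(\partial Y)_+\sqcup\Gamma_{\partial Y}\sqcup(\partial Y)_-$; a contact form $\mu$ on $\Gamma_{\partial Y}$ for $\xi\cap T\Gamma_{\partial Y}$; and a cutoff function $\psi$ as in \eqref{keycontactform}. Near $\Gamma_{\partial Y}$ these produce the coordinates \eqref{YcoordsnearGamma} and the contact form $\alpha=\psi(u)^{-1}[\mu+u\,dt]$, while over $(\partial Y)_\pm$ away from $\Gamma_{\partial Y}$ one takes $\alpha$ to be the unique $\partial/\partial t$-invariant contact form \eqref{invtform} determined by $V$ and $\xi$; since $\psi(u)=|u|$ for $|u|\geq\varepsilon/2$ the two descriptions agree on the overlap. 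As each ingredient is convex or contractible and the gluing is canonical, the space of resulting pairs $(\text{coordinates},\alpha)$ is contractible, which is the first assertion.

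Next I would verify the dichotomy for a fixed non-constant trajectory $\gamma$ (the constant trajectories on $\partial Y$, where $V_M\equiv 0$, are harmless). Over $(\partial Y)_\pm$ away from $\Gamma_{\partial Y}$ the normal form $V_M=M'(t)Z_\lambda+M(t)\,\partial/\partial t$ (and its mirror image over $(\partial Y)_-$) shows $dt(V_M)$ has constant sign for $t>0$---positive over $(\partial Y)_+$, negative over $(\partial Y)_-$---so on a trajectory confined to such a region both bullets are immediate. Near $\Gamma_{\partial Y}$, formula \eqref{complicatedreeb} gives $dt(V_M)=\psi'(u)M(t)$ and $du(V_M)=-\psi(u)M'(t)$; since $M,M'>0$ for $t>0$, $\psi>0$, and $u\psi'(u)>0$ for $u\neq 0$, the coordinate $u$ is \emph{strictly decreasing} along $\gamma$ while $\operatorname{sign}dt(V_M)=\operatorname{sign}u$. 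Extending $u$ over $\Op\partial Y$ by declaring $u>0$ on $(\partial Y)_+$, $u=0$ on $\Gamma_{\partial Y}$, and $u<0$ on $(\partial Y)_-$, the relation $\operatorname{sign}dt(V_M)=\operatorname{sign}u$ persists throughout, while the only locus where $\operatorname{sign}u$ can change---the chart near $\Gamma_{\partial Y}$---is crossed with $u$ strictly decreasing. Hence along $\gamma$ the set $\{\tau:dt(\gamma'(\tau))\leq 0\}=\{\tau:u(\gamma(\tau))\leq 0\}$ is forward invariant, i.e.\ an interval $[\tau_\ast,\infty)$ (possibly empty or all of $\RR$), and the two bullets are precisely its two halves. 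The ``in particular'' clause then follows by taking $\delta<\varepsilon$: a trajectory entering $\partial Y\times\RR_{0\leq t\leq\delta}$ does so with $t$ non-increasing, hence with $dt(\gamma'(\tau_0))\leq 0$ at the entry time, so by the second bullet $dt(\gamma'(\tau))<0$ for all $\tau>\tau_0$ and $t$ strictly decreases thereafter, so the trajectory never exits.

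The step I expect to be the main obstacle is the gluing-and-monotonicity bookkeeping in the previous paragraph: one must check that the three local normal forms patch to a genuine contact form on a full neighborhood of $\partial Y$, and---more substantively---that no trajectory can return from $\{u<0\}$ to $\{u>0\}$. The latter rests on the strict monotonicity of $u$ near $\Gamma_{\partial Y}$ (from \eqref{complicatedreeb}) together with the fact that $\Gamma_{\partial Y}$ separates $(\partial Y)_+$ from $(\partial Y)_-$, so a trajectory can pass from one side to the other only through the chart near $\Gamma_{\partial Y}$. Everything genuinely delicate---the choice of $\psi$ and the verification of \eqref{complicatedreeb}---has already been carried out above, so the remainder is assembly.
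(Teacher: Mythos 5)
Your proof is correct and follows essentially the same route as the paper: Proposition \ref{cutofflemma} is stated with a \qed precisely because it summarizes the preceding construction in \S\ref{reebdynamicsboundary}, i.e.\ the contractible choices of $V$, $\mu$, $\psi$ giving the normal forms \eqref{invtform} and \eqref{keycontactform}, and the monotonicity of $t$ over $(\partial Y)_\pm$ together with the strict decrease of $u$ near $\Gamma_{\partial Y}$ read off from \eqref{complicatedreeb}. Your gluing of the sign of $dt(V_M)$ to the sign of $u$ and the forward-invariance of $\{u\leq 0\}$ is exactly the paper's implicit argument, just spelled out more explicitly.
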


We now consider the cutoff Reeb dynamics on a very special class of contact manifolds with convex boundary, namely contactizations of Liouville domains.
Recall that for a Liouville domain $(F_0,\lambda)$, its \emph{contactization} is the contact manifold-with-boundary given by (smoothing the corners of) $([-1,1]\times F_0,dt+\lambda)$.
The contactization has convex boundary, as demonstrated by the contact vector field $t\frac\partial{\partial t}+Z_\lambda$.

\begin{lemma}\label{cylinderstopped}
Every contactization can be deformed through contact manifolds with convex boundary so as to admit a cutoff Reeb vector field with no periodic orbits.
\end{lemma}

Recall from Lemma \ref{deformboundary} that for any Liouville sector $X$, deformations of $\partial_\infty X$ always lift to deformations of $X$.

\begin{proof}[Proof \#1]
We consider a cutoff Reeb vector field $V_\varphi$ induced by a Hamiltonian $\varphi$ defined on (a smoothing of) $[-1,1]\times F_0$.
A calculation shows that $dt(V_\varphi)=\varphi-Z_\lambda\varphi$.
Hence to ensure that $V_\varphi$ has no periodic orbits, it is enough to choose $\varphi$ such that $\varphi-Z_\lambda\varphi>0$ on the interior of our contact manifold.
Since $\varphi>0$ on this region, this inequality may equivalently be written as $Z_\lambda\log\varphi<1$.
It is straightforward to define such a positive cutoff Hamiltonian function $\varphi$ on (a smoothing of) $[-1,1]\times F_0$ (in fact, we can even achieve the much stronger property $Z_\lambda\varphi<0$).
\end{proof}

\begin{proof}[Proof \#2]
The Liouville sector $\CC_{\Re\geq 0}\times F$ has a linear Hamiltonian $(\Re)^2$, whose Hamiltonian vector field $x\frac\partial{\partial y}$ clearly has no closed orbits.
It is thus enough to show that $\partial_\infty(\CC_{\Re\geq 0}\times F)$ is deformation equivalent to $[0,1]\times F_0$ through contact manifolds with convex boundary.
This follows from Lemma \ref{openbookconvex}.
\end{proof}

\begin{example}\label{ballfullystopped}
The boundary at infinity of $T^\ast B^n$ is contactomorphic (up to deformation) to the contactization $[0,1]\times D^\ast S^{n-1}$.
\end{example}

\begin{corollary}
If $(\bar X,F_0)$ is a sutured Liouville manifold, then any exact deformation of the Liouville form $\lambda|_{F_0}$ can be realized by a deformation of $(F_0,\lambda)$ supported in an arbitrarily small neighborhood of $F_0$.
\end{corollary}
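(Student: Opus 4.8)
The plan is to produce the deformation by hand in the local contactization normal form for $\lambda$ near $F_0$; the only real subtlety is confining it to an arbitrarily thin neighborhood. First, write the given exact deformation of $\lambda|_{F_0}$ as a smooth path $\lambda_s = \lambda|_{F_0} + dg_s$ of Liouville domain structures on $F_0$ with $g_0 = 0$ (possible by exactness, choosing primitives smoothly in $s$), so that $\omega_0 := d\lambda_s = d(\lambda|_{F_0})$ is fixed; put $2n := \dim F_0$. By Definition \ref{liouvillesectorcreation} the Reeb flow of $\lambda$ is transverse to $F_0$, so for every sufficiently small $\varepsilon > 0$ there is a coordinate chart $F_0 \times \RR_{|t|\le\varepsilon} \hookrightarrow \partial_\infty\bar X$ — a neighborhood of $F_0$, shrinking to $F_0$ as $\varepsilon\to 0$ — in which $\lambda = dt + \lambda|_{F_0}$. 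It then suffices to construct, for each such $\varepsilon$, a smooth path $\alpha_s$ of contact forms on this chart with $\alpha_0 = \lambda$, with $\alpha_s|_{F_0\times\{0\}} = \lambda_s$, and with $\alpha_s \equiv \lambda$ near $t = \pm\varepsilon$: the last condition lets each $\alpha_s$ glue with $\lambda$ to a contact form near $F_0$ in $\partial_\infty\bar X$, and since $(F_0,\alpha_s|_{F_0}) = (F_0,\lambda_s)$ is a Liouville domain — with $\R_{\alpha_s}$ automatically transverse to $F_0$, as $d\alpha_s|_{F_0} = \omega_0$ is symplectic — each $(\bar X, F_0, \alpha_s)$ is a sutured Liouville manifold.

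To build the path, fix a cutoff $\beta \colon [-\varepsilon,\varepsilon]\to[0,1]$ with $\beta(0) = 1$ and $\beta \equiv 0$ near $\pm\varepsilon$, and set $\alpha_s := \lambda|_{F_0} + \beta(t)\,dg_s + H_s(t)\,dt$ for a positive function $H_s(t)$ to be chosen. A direct computation gives $d\alpha_s = \omega_0 + \beta'(t)\,dt\wedge dg_s$, and hence
\[
\alpha_s\wedge(d\alpha_s)^n = \bigl(H_s(t) - n\,\beta'(t)\,\eta_s(x)\bigr)\,\omega_0^n\wedge dt ,
\]
where $\eta_s \colon F_0\to\RR$ is defined by $\lambda|_{F_0}\wedge\omega_0^{n-1}\wedge dg_s = \eta_s\,\omega_0^n$. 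Thus $\alpha_s$ is a positive contact form as long as $H_s(t) > n\,\beta'(t)\,\eta_s(x)$ everywhere; setting $C := 1 + \sup_{s\in[0,1],\,x\in F_0}|\eta_s(x)| < \infty$ (so $C \ge 1$ and, since $\eta_0\equiv 0$, the quantity $\sup_x|\eta_s|/(2C)$ vanishes at $s=0$), it is enough to take $H_s(t) := 1 + 2nC\,\rho(s)\,|\beta'(t)|$ with $\rho\colon[0,1]\to[0,1]$ smooth, nondecreasing, $\rho(0) = 0$, and $\rho(s) \ge \sup_x|\eta_s(x)|/(2C)$. One checks: $H_s\equiv 1$ and $\beta\equiv 0$ near $t=\pm\varepsilon$, so $\alpha_s\equiv\lambda$ there; $\alpha_0 = dt + \lambda|_{F_0} = \lambda$ (using $g_0 = 0$, $\rho(0)=0$); and $\alpha_s|_{F_0\times\{0\}} = \lambda|_{F_0} + dg_s = \lambda_s$ (using $\beta(0) = 1$ and $dt|_{F_0\times\{0\}} = 0$).

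The deformation of $(F_0,\lambda)$ thus produced is supported in $F_0\times\RR_{|t|<\varepsilon}$, which can be put inside any prescribed neighborhood of $F_0$ by taking $\varepsilon$ small; the point is that $C$ is independent of $\varepsilon$, so shrinking $\varepsilon$ only inflates the coefficient $H_s$, at no cost. This confinement is the only place care is needed, and it is the reason for the extra freedom in $H_s$: the naive attempt to realize $\lambda_s$ via the contactomorphism $(x,t)\mapsto\bigl(x,\,t + \beta(t)g_s(x)\bigr)$ pulling back $dt + \lambda|_{F_0}$ — which amounts to the choice $H_s(t) = 1 + \beta'(t)g_s(x)$ — fails once $\varepsilon$ is smaller than the oscillation of $g_s$, since that map then ceases to be a diffeomorphism and $\alpha_s$ ceases to be contact; inflating the $dt$-coefficient instead preserves the contact condition no matter how thin the neighborhood.
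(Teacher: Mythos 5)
Your contact-condition computation is correct, and inflating the $dt$-coefficient $H_s$ is a legitimate way to fit an arbitrarily large $dg_s$ into an arbitrarily thin slab. The gap is in the gluing step. The chart $F_0\times\RR_{\left|t\right|\leq\varepsilon}$ is \emph{not} a neighborhood of $F_0$ in $\partial_\infty\bar X$: points of $\partial F_0$ are not interior to it, while the definition of a sutured Liouville manifold (Definition \ref{liouvillesectorcreation}) asks for a contact form on an honest neighborhood $\Op F_0$, hence in particular on a neighborhood of $\partial F_0$. Your $\alpha_s$ agrees with $\lambda$ only near the faces $t=\pm\varepsilon$; near the lateral boundary $\partial F_0\times\RR_{\left|t\right|\leq\varepsilon}$ it does not, since $dg_s$ has no reason to vanish near $\partial F_0$ and $H_s\neq 1$ wherever $\beta'\neq 0$. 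So the claim that the condition ``$\alpha_s\equiv\lambda$ near $t=\pm\varepsilon$'' lets $\alpha_s$ glue with $\lambda$ to a contact form near $F_0$ is false as stated. The repair is the device the paper itself uses: pass to a slightly enlarged page $F_0\subseteq F_0^+$ (a small collar extension, so $d\lambda|_{F_0^+}$ is still symplectic and one has a chart $F_0^+\times\RR_{\left|t\right|\leq\varepsilon}$ with $\lambda=dt+\lambda|_{F_0^+}$), extend $g_s$ smoothly and arbitrarily to $F_0^+$, and run your formula there: the interior of the enlarged slab is then a genuine open neighborhood of $F_0$, no lateral gluing is needed, and everything is still contained in an arbitrarily small neighborhood of $F_0$. (If you instead insist on matching $\lambda$ laterally, you must cut off $g_s$ and $H_s$ in the $F_0^+$-direction by some $\chi$, and then $d(\chi g_s)$ and $d_xH_s\wedge dt$ contribute new terms to $\alpha_s\wedge(d\alpha_s)^n$ that must be re-estimated; the enlarged-page route avoids this. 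Also, $\left|\beta'(t)\right|$ should be replaced by a smooth majorant --- cosmetic.)

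With that repair, your argument is correct but genuinely different from the paper's. The paper does not write down a new contact form by hand: it rescales $\lambda$ to $\varphi^{-1}\lambda$ for a cutoff contact Hamiltonian $\varphi\equiv 1$ near $F_0$ (taken from the first proof of Lemma \ref{cylinderstopped}), so that the Reeb flow of $\varphi^{-1}\lambda$ is complete inside the thin slab $F_0^+\times\RR_{\left|t\right|\leq\varepsilon}$ and furnishes contactization coordinates $F_0\times\RR_\tau$ there; the deformation is then realized by moving $F_0$ to the graph of $g_s$, whose pullback of $\lambda|_{F_0}+d\tau$ is exactly $\lambda|_{F_0}+dg_s$. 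Your inflated $H_s$ and the paper's cutoff rescaling solve the same confinement problem; your route is more explicit and keeps $F_0$ fixed at the cost of a pointwise contact estimate, while the paper's reuses the cutoff-Reeb machinery already developed and requires no computation at all.
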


\begin{proof}
Let $F_0\subseteq F_0^+\subseteq\bar X$ be a slightly larger Liouville domain, and consider the embedding $F_0^+\times\RR_{\left|t\right|\leq\varepsilon}\subseteq\bar X$ in which $\lambda=dt+\lambda|_{F_0^+}$.
The first proof of Lemma \ref{cylinderstopped} provides a cutoff contact Hamiltonian $\varphi$ defined on a smoothing of $F_0^+\times\RR_{\left|t\right|\leq\varepsilon}$ inside $\bar X$ such that the Reeb vector field of $\varphi^{-1}\lambda$ pairs positively with $dt$.
Note that we may further require that $\varphi\equiv 1$ over a neighborhood of $F_0$.
The Reeb vector field of $\varphi^{-1}\lambda$ thus provides an embedding of $F_0\times\RR_\tau$ into $\bar X$ under which the pullback of $\varphi^{-1}\lambda$ is given by $\lambda|_{F_0}+d\tau$.
Now deforming $F_0$ to the graph of a function $\tau:F_0\to\RR$ gives the desired result.
\end{proof}

\begin{conjecture}
If $\partial_\infty X$ and $\partial_\infty X'$ both admit cutoff Reeb vector fields with no periodic orbits, then so does $\partial_\infty(X\times X')$.
\end{conjecture}

This conjecture is rather strong.
It may be more reasonable to expect that taking product preserves the existence of a sequence of cutoff contact Hamiltonians $\varphi_1,\varphi_2,\ldots$ (with $f^{-2}\varphi_i>0$ bounded uniformly away from zero) and real numbers $a_1,a_2,\ldots\to\infty$ such that $V_{\varphi_i}$ has no periodic orbit of action $\leq a_i$.

\begin{conjecture}\label{stoppedmeanscontactization}
If a contact manifold with convex boundary $Y$ admits a cutoff Reeb vector field with no periodic orbits, then $Y$ is deformation equivalent to a contactization.
\end{conjecture}

Note that this conjecture (which is the converse of Lemma \ref{cylinderstopped}) is even stronger than the Weinstein conjecture that every Reeb vector field on a closed contact manifold has a periodic orbit \cite{weinsteinconjecture}.
The Weinstein conjecture was proven by Taubes in dimension three \cite{taubesweinstein}, and Conjecture \ref{stoppedmeanscontactization} has also been proven in dimension three by Colin--Honda \cite[Corollary 4.7]{colinhonda}.

\subsection{Holomorphic curves in Liouville sectors}\label{compactnessdiscussion}

When proving compactness results for holomorphic curves in Liouville sectors, there are two main concerns to address, namely curves crossing $\partial X$ and curves escaping to infinity.
We now discuss these in turn, with the goal of giving a general preview of the relevant techniques before they are applied in specific settings in \S\S\ref{secwrapped},\ref{secsymplecticcohomology},\ref{secopenclosed}.
We will usually omit the adjectives ``$\omega$-compatible'' and ``cylindrical near infinity'', though they should always be understood even when unwritten.

\subsubsection{Preventing crossing \texorpdfstring{$\partial X$}{dX}}\label{bdryescape}

The key to preventing holomorphic curves from crossing $\partial X$ is the
function $\pi:\Nbd^Z\partial X\to\CC_{\Re\geq 0}$ from Definition \ref{cprojdef}.
Given such a $\pi$, there is an
abundance of cylindrical almost complex structures $J$ for which $\pi$ is
$J$-holomorphic, and the space of such $J$ is contractible
(indeed, $J$ makes $\pi$ holomorphic iff it preserves the decomposition $TX=(\ker d\pi)\oplus(\ker d\pi)^{\perp_\omega}$ agreeing with $J_\CC$ on the latter $(\ker d\pi)^{\perp_\omega}=\pi^\ast T\CC$, and these conditions are preserved under the Liouville flow near infinity, compare Lemma \ref{productnbhd}).
For the
corresponding $J$-holomorphic curves, it is well understood that
$\pi^{-1}(\CC_{\left|\Re\right|<\varepsilon})$ acts as a ``barrier'', a fact
we recall here:

\begin{lemma}\label{pibarrierlemma}
Suppose $\pi: \Nbd^Z \partial X \to \CC_{\Re\geq 0}$ is $J$-holomorphic, and let $u:\Sigma\to X$ be a $J$-holomorphic map.
Suppose that $u^{-1}(\pi^{-1}(\CC_{\left|\Re\right|\leq\varepsilon}))$ is compact and disjoint from $\partial\Sigma$.
Then $u^{-1}(\pi^{-1}(\CC_{\left|\Re\right|\leq\varepsilon}))$ is empty, except possibly for closed components of $\Sigma$ over which $u$ is constant.
\end{lemma}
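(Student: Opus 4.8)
The plan is to use the fact that $\pi\circ u$ is a holomorphic map into $\CC_{\Re\geq 0}$ wherever it is defined (since $\pi$ is $J$-holomorphic and $u$ is $J$-holomorphic), so that $g:=\Re(\pi\circ u)$ is a harmonic function, nonnegative because $\pi$ lands in $\CC_{\Re\geq 0}$. Write $K:=u^{-1}(\pi^{-1}(\CC_{\left|\Re\right|\leq\varepsilon}))=u^{-1}(\pi^{-1}(\{0\leq\Re\leq\varepsilon\}))$; by hypothesis this is compact and contained in $\Sigma\setminus\partial\Sigma$, and $g$ is defined and harmonic on an open neighborhood $\Omega$ of $K$ in $\Sigma$ (the domain of $\pi$ contains a neighborhood of $\pi^{-1}(\CC_{\Re\leq\varepsilon})$). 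The whole argument is then a maximum principle applied to $g$.

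The first step (the barrier step) is to show that $U_\delta:=u^{-1}(\pi^{-1}(\{0\leq\Re\pi<\delta\}))$ is a union of entire connected components of $\Sigma$ for every $\delta<\varepsilon$. Its closure is contained in $K$, hence compact and disjoint from $\partial\Sigma$. If a connected component $W$ of $U_\delta$ were a proper open subset of some component of $\Sigma$, then $W$ would have nonempty frontier, with $g\equiv\delta$ on the frontier (a frontier point has $g\leq\delta$ by continuity, and $g<\delta$ there would put the point back inside $U_\delta$). The maximum principle applied to $\delta-g$ --- harmonic and strictly positive on $W$, continuous on the compact set $\overline W$, and vanishing on $\partial W$ --- then gives $\delta-g\leq 0$ on $W$, a contradiction. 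So each component of $U_\delta$ is a full component of $\Sigma$, and being contained in $K$ it is disjoint from $\partial\Sigma$, hence closed. The same therefore holds for $U^-_\varepsilon:=\bigcup_{\delta<\varepsilon}U_\delta=u^{-1}(\pi^{-1}(\{0\leq\Re\pi<\varepsilon\}))$. On any closed component $\Sigma'$ one has $\int_{\Sigma'}u^\ast\omega_X=\int_{\Sigma'}d(u^\ast\lambda_X)=0$ by Stokes, and since $u^\ast\omega_X\geq 0$ pointwise (for $\omega$-compatible $J$) this forces $u|_{\Sigma'}$ to be constant; so $U^-_\varepsilon$ is a union of closed components of $\Sigma$ on which $u$ is constant.

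It remains to deal with the borderline locus $K\setminus U^-_\varepsilon=u^{-1}(\pi^{-1}(\{\Re\pi=\varepsilon\}))$, where the naive maximum principle does not directly apply. Given a point $z_0$ of this locus: if $\pi\circ u$ is non-constant near $z_0$, then by the open mapping theorem its image near $z_0$ is open around $\pi(u(z_0))$ and hence contains points with $\Re<\varepsilon$, so $z_0$ has neighbors in $U^-_\varepsilon$; since $U^-_\varepsilon$ is a union of whole components, the component of $z_0$ lies in $U^-_\varepsilon$, contradicting $z_0\notin U^-_\varepsilon$. Hence $\pi\circ u$ is constant near $z_0$, hence constant on the connected component $\Omega_0$ of $z_0$ in $\Omega$; since $\Omega_0\subseteq g^{-1}(\varepsilon)\subseteq K$, it is disjoint from $\partial\Sigma$ and is clopen in $\Sigma$, hence a closed component of $\Sigma$, on which $u$ is constant by the energy argument above. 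Thus every point of $K$ lies on a closed component of $\Sigma$ over which $u$ is constant, which is the claim.

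The mathematical content --- holomorphicity of $\pi\circ u$ together with the maximum principle for $\delta-g$ --- is classical; the anticipated difficulty is purely organizational: ensuring $g$ is honestly defined and harmonic on a neighborhood of $K$ (i.e.\ that $\pi$ is defined there), keeping track of which components of $U_\delta$ are entire components of $\Sigma$, and handling the borderline level $\{\Re\pi=\varepsilon\}$ separately.
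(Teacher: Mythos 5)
Your proof is correct, and it runs on the same engine as the paper's: holomorphicity of $\pi\circ u$, the maximum principle/open mapping theorem, and exactness of $\omega$ to kill closed components. The organization differs, though. The paper works on the image side: it considers $\pi\circ u$ on $u^{-1}(\pi^{-1}(\CC_{\left|\Re\right|<\varepsilon}))$ and argues that its image is simultaneously open (open mapping theorem), closed and bounded in the strip (by the compactness hypothesis), hence empty, with the locally-constant exception handled by unique continuation plus the observation that such a component is closed and therefore constant by exactness. You instead work on the domain side: the sublevel sets $\{0\leq\Re(\pi\circ u)<\delta\}$ are shown to be unions of entire components of $\Sigma$ via the weak maximum principle for the harmonic function $\delta-\Re(\pi\circ u)$ on a component $W$ with compact closure and $g\equiv\delta$ on its frontier, and then the borderline level $\{\Re(\pi\circ u)=\varepsilon\}$ is treated separately by the open mapping theorem and the identity theorem. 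One small thing your version buys is that it honestly addresses the closed strip $\left|\Re\right|\leq\varepsilon$ appearing in the statement, including its boundary level, whereas the paper's argument as written treats the open strip $\left|\Re\right|<\varepsilon$ (harmless in applications, where $\varepsilon$ is at one's disposal); the cost is that your argument is longer, with the clopen-ness bookkeeping (closure of $W$ and of $\Omega_0$ staying inside the compact set $K$, which you do use correctly) done in the domain rather than the one-line "open, closed, bounded, hence empty" step in the image.
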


In the situations of interest for us, $\Sigma$ never has any closed components, so we simply conclude that $u(\Sigma)$ is disjoint from $\pi^{-1}(\CC_{\left|\Re\right|<\varepsilon})$.

\begin{proof}
We consider the holomorphic map $\pi\circ u:u^{-1}(\pi^{-1}(\CC_{\left|\Re\right|<\varepsilon}))\to\CC_{\left|\Re\right|<\varepsilon}$.
By compactness, its image $\im:=(\pi\circ u)(u^{-1}(\pi^{-1}(\CC_{\left|\Re\right|<\varepsilon})))$ is a closed and bounded subset of $\CC_{\left|\Re\right|<\varepsilon}$.
Now the open mapping theorem from classical single-variable complex analysis implies that $\im$ is open (which implies the desired result $\im=\varnothing$, as the only closed, open, bounded subset of $\CC_{\left|\Re\right|<\varepsilon}$ is the empty set), except for possibly when $\pi\circ u$ is locally constant.
By analytic continuation, if $\pi\circ u$ is locally constant at some point of $\Sigma$, it is constant on the entire connected component of $\Sigma$ containing that point.
In particular, this component of $\Sigma$ is mapped entirely into $\CC_{\left|\Re\right|<\varepsilon}$, so the hypotheses imply this is a closed component of $\Sigma$.
Now $u$ is constant on this component since $X$ is exact.
\end{proof}

An alternative (though in some sense related) proof of Lemma \ref{pibarrierlemma} proceeds by integrating $\pi^\ast(\varphi(\Re z)\cdot\omega_\CC)$ for some $\varphi:\RR\to[0,1]$ supported inside $[-\varepsilon,\varepsilon]$ over the holomorphic map in question and using the fact that $H^2_\dR(\CC_{\left|\Re\right|\leq\varepsilon},\partial\CC_{\left|\Re\right|\leq\varepsilon})=0$.
This latter argument is important since it generalizes to Floer trajectories with respect to Hamiltonians $H$ whose restriction to $\pi^{-1}(\CC_{\left|\Re\right|\leq\varepsilon})$ coincides with $\Re\pi$.
Namely, one can show that such Floer trajectories can only pass through $\partial X$ ``in the correct direction''; see Lemma \ref{scconfinecurves} for a precise statement.
It could be an important technical advance to enlarge the class of Hamiltonians defined near $\partial X$ for which similar confinement results for holomorphic curves can be proven.

\subsubsection{Preventing escape to infinity using monotonicity}\label{inftyescape}

The key to preventing holomorphic curves from escaping to infinity is
\emph{monotonicity inequalities}, which, given a holomorphic curve $u:\Sigma\to X$
passing through a point $p$, provide a lower bound
\begin{equation}\label{monotonicitylowerbound}
\int_{u^{-1}(B_\varepsilon(p))}u^*\omega\geq\const\cdot\varepsilon^2
\end{equation}
(assuming $u(\partial\Sigma)$ lies outside the ball $B_\varepsilon(p)$), for all sufficiently small $\varepsilon>0$ and some positive constant, both depending on the local geometry of $X$ (as an almost K\"ahler manifold) near $p$.
For precise statements of the monotonicity inequalities which we will use, we refer the reader to Sikorav \cite[Propositions 4.3.1 and 4.7.2]{sikorav}.

To use monotonicity inequalities to effectively control holomorphic curves, we need to ensure that the target almost K\"ahler manifold has \emph{(globally) bounded geometry} in the following sense:

\begin{definition}\label{bddgeodefn}
An almost K\"ahler manifold $(X,\omega,J)$ is said to have \emph{bounded geometry} iff there exist $\varepsilon>0$, $M_0,M_1,\ldots<\infty$, and a collection of coordinate charts $\varphi_\alpha:B(1)\to X$ such that $X=\bigcup_\alpha\varphi_\alpha(B(\frac 12))$ and
\begin{align}
\left\|\varphi_\alpha^\ast\omega\right\|_{C^r}&\leq M_r,\\
\left\|\varphi_\alpha^\ast J\right\|_{C^r}&\leq M_r,\\
(\varphi_\alpha^\ast\omega)(v,(\varphi_\alpha^\ast J)v)&\geq\varepsilon\cdot g_\std(v,v).
\end{align}
Note that these conditions imply, in particular, that $X$ is complete when equipped with the metric $\omega(\cdot,J\cdot)$.\footnote{A common equivalent formulation of the notion of bounded geometry is to ask that the curvature and all its derivatives be bounded, the metric be complete, and the injectivity radius be bounded from below.  Bounded geometry in the sense of Definition \ref{bddgeodefn} clearly implies bounded geometry in this sense, but the converse is, while standard and well known, not completely trivial; for this reason, it is logically simpler to employ Definition \ref{bddgeodefn} as we have formulated it here.}
\end{definition}

To apply monotonicity arguments to holomorphic curves with Lagrangian boundary conditions, one further needs to know that the Lagrangians in question also have bounded (extrinsic) geometry; precisely, this means there exist charts as in Definition \ref{bddgeodefn} such that each $\varphi_\alpha^{-1}(L)$ is either empty or the intersection of $B(1)$ with a linear subspace.

As we now observe in Lemmas \ref{bddgeo} and \ref{domainacsbddgeo} below, for Liouville sectors, almost
complex structures of bounded geometry are easy to come by: any cylindrical $J$ has this property,
and such $J$ will suffice for all of our arguments.  Moreoever, cylindrical
Lagrangians have bounded geometry with respect to such $J$, which is the only
case we will need.

\begin{lemma}\label{bddgeo}
Let $X$ be a Liouville sector, and let $J$ be cylindrical.  Then $(X,\omega,J)$ has bounded geometry.  If $L$ is cylindrical at infinity, then the same is true for $(X,\omega,J;L)$.
\end{lemma}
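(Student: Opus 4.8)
The plan is to reduce to two ingredients: (a) bounded geometry of compact pieces, which is automatic, and (b) the exact self-similarity of the geometry near infinity under the Liouville flow, which upgrades ``bounded on a compact piece'' to ``bounded everywhere''. Write $\phi^t$ for the time-$t$ flow of the Liouville vector field $Z$, defined for $t\geq 0$ over a neighborhood of infinity. From $\sL_Z\omega=\omega$ together with the $\phi^t$-invariance of a cylindrical $J$ near infinity one gets $(\phi^t)^\ast g=e^t g$ for the metric $g:=\omega(\cdot,J\cdot)$ there; that is, $\phi^t$ is a homothety of $(X,g)$ with ratio $e^{t/2}$. Under a homothety of ratio $c=e^{t/2}\geq 1$, sectional curvature is multiplied by $c^{-2}$, the $k$-th covariant derivative of the curvature tensor by $c^{-(2+k)}$, and the injectivity radius by $c$; so moving out toward infinity only decreases curvature (and its derivatives) and increases the injectivity radius.

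First I would treat the boundaryless case, $X$ a Liouville manifold. Choose a Liouville domain $X_0$ with completion $X$, so that a neighborhood of infinity is $\bigcup_{t\geq 0}\phi^t(N)$ for the compact slab $N:=\overline{\phi^1(X_0)\setminus X_0}$. On the compact set $N$, curvature and its covariant derivatives are bounded and the injectivity radius is bounded below. By the homothety computation, every point of the end lies in some $\phi^t(N)$ with $t\geq 0$, where curvature and its derivatives are then bounded by their suprema over $N$ and the injectivity radius is bounded below by its infimum over $N$. Combined with completeness of $g$ along the cylindrical end, this shows $(X,\omega,J)$ has uniformly bounded geometry.

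Next I would handle a genuine boundary $\partial X$. Near infinity the Liouville flow preserves $\partial X$: in the product normal form $\Op^Z\partial X=F\times\CC_{\Re\geq 0}$ of Proposition \ref{productnbhd} (with $\alpha=\tfrac12$) one has $Z=Z_F+Z_\CC$ outside a compact set, and $Z_\CC=\tfrac12(x\partial_x+y\partial_y)$ is tangent to $\partial X=F\times i\RR$; hence $\phi^t$ carries a neighborhood of $\partial X$ at infinity into itself, and the homothety argument runs verbatim there, now with $N$ a compact manifold with corners. Bounded geometry ``up to $\partial X$'' for the smooth almost-K\"ahler structure on this compact $N$ is again automatic --- e.g.\ one can enlarge $X$ slightly past $\partial X$ (into its horizontal completion) and quote the result for the enlargement, or use directly that $\omega=\omega_F+\omega_\CC$ up to a correction that is $\CC$-independent outside a compact set. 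As this indicates, the only real friction in the argument is fixing the precise meaning of ``uniformly bounded geometry'' along the boundary of a manifold-with-boundary; the homothety itself is indifferent to it, so this is the step I would expect to cost the most bookkeeping.

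Finally, for the pair $(X,\omega,J;L)$ the extra quantities to control are the (normalized) second fundamental form of $L$, a uniform lower bound for the radius of an embedded tubular neighborhood of $L$, and a uniform lower bound for the angle between $TL$ and $JTL$ --- the last is meaningful because, $L$ being Lagrangian, one has $TL\cap JTL=0$ pointwise, and it is exactly the ``uniform separation'' in the statement. Since $L$ is cylindrical at infinity it is $\phi^t$-invariant near infinity, so the homothety acts on the pair too: under ratio $c\geq 1$ the normalized second fundamental form scales by $c^{-1}$ and the tubular radius by $c$, while the angle between $TL$ and $JTL$ is scale-invariant and, being moreover $\phi^t$-invariant near infinity, is constant along the end. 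Hence a fixed compact piece of $L$ in the slab $N$ controls all three quantities over the whole end, and on the compact remainder of $L$ they are bounded by compactness (the angle being bounded below since $L$ is Lagrangian and that piece is compact). This gives uniformly bounded geometry of $(X,\omega,J;L)$.
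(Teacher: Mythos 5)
Your proposal is correct and follows essentially the same route as the paper: both use the Liouville flow as a conformal rescaling to transport bounded-geometry estimates from a compact fundamental domain (your slab $N$) to the whole cylindrical end, with the same remark that cylindrical Lagrangians are flow-invariant there. Your explicit homothety bookkeeping (curvature, injectivity radius, second fundamental form, $TL$-vs-$JTL$ angle) just spells out what the paper summarizes by saying the rescaled structures converge to the linear almost K\"ahler model, so there is no substantive difference in approach.
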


\begin{proof}
Obviously $(X,\omega,J)$ has bounded geometry over any compact subset of $X$.
By scaling via the Liouville flow, we observe that the geometry of $(X,\omega,J)$ near a point $p$ close to infinity is the same as the geometry of $(X,\lambda\omega,J)$ near a point $q$ contained in a compact subset of $X$, for some real number $\lambda\geq 1$.
Now as $\lambda\to\infty$, the structure $(X,\lambda\omega,J)$ near $q$ just converges to the tangent space of $X$ at $q$ equipped with its linear almost K\"ahler structure.
In particular, the family of all scalings by $\lambda\geq 1$ has uniformly bounded geometry, which thus implies the desired result.
The same reasoning applies to cylindrical Lagrangian submanifolds.
\end{proof}

The ``family version'' of Lemma \ref{bddgeo} relevant for studying with holomorphic curves with respect to a domain dependent almost complex structure is the following.

\begin{lemma}\label{domainacsbddgeo}
Let $X$ be a Liouville sector, and let $J:D^2\to\J(X)$ be a family of cylindrical almost complex structures (meaning cylindrical outside a uniformly chosen compact subset of $X$).
Then $(D^2\times X,\omega_{D^2}+\omega_X,j_{D^2}\oplus J)$ has bounded geometry.
If $L\subseteq X$ is a cylindrical Lagrangian, then $\partial D^2\times L$ has bounded geometry inside $D^2\times X$.
\end{lemma}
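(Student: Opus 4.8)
The plan is to adapt the proof of Lemma~\ref{bddgeo}, the only new feature being the compact parameter space $D^2$, over which every bound will be made uniform by compactness. Write $g_{X,z}:=\omega_X(\cdot,J_z\cdot)$ for the metric associated to $J_z$, and $g_{D^2}:=\omega_{D^2}(\cdot,j_{D^2}\cdot)$ for the fixed metric on $D^2$. Fix a compact set $K\subseteq X$ outside of which every $J_z$ is cylindrical (given by hypothesis), and enlarge it to a compact $K'\subseteq X$ with $K\subseteq(K')^\circ$ such that $X\setminus(K')^\circ$ is a cylindrical end on which the Liouville flow $\phi^X_t$ of $X$ is defined and complete for $t\geq 0$ (recall $Z$ is tangent to $\partial X$ near infinity, so $\phi^X_t$ preserves $X\setminus(K')^\circ$). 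Over the compact set $D^2\times K'$ the almost K\"ahler structure $(\omega_{D^2}+\omega_X,\,j_{D^2}\oplus J_z)$ depends smoothly on $z\in D^2$, so it has uniformly bounded geometry there; moreover, since each $J_z$ is $\omega_X$-compatible and $L$ is Lagrangian, $TL$ and $J_zTL$ are $g_{X,z}$-orthogonal (and likewise $T\partial D^2$ and $j_{D^2}T\partial D^2$ inside $D^2$), so $\partial D^2\times(L\cap K')$ also has uniformly bounded geometry there, with the angle between $T(\partial D^2\times L)$ and its $(j_{D^2}\oplus J_z)$-image bounded away from $0$ uniformly in $z$ by compactness of $D^2$.

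It remains to treat points near infinity. Given $p\in X\setminus(K')^\circ$, write $p=\phi^X_{\tau(p)}(q)$ with $\tau(p)\geq 0$ and $q$ in the (compact) inner boundary of the end, and apply the diffeomorphism $\id_{D^2}\times\phi^X_{\tau(p)}$. Since $J_z$ and $L$ are invariant under $\phi^X_t$ over this end, while $(\phi^X_t)^\ast\omega_X=e^t\omega_X$ by $\sL_Z\omega_X=\omega_X$, this diffeomorphism identifies a neighborhood of $(z,p)$ with a neighborhood of $(z,q)$ and carries the data there to
\begin{equation*}
\bigl(D^2\times X,\ \omega_{D^2}+\lambda\,\omega_X,\ j_{D^2}\oplus J_z\,;\ \partial D^2\times L\bigr),\qquad\lambda=e^{\tau(p)}\geq 1.
\end{equation*}
Thus it suffices to bound, uniformly in $\lambda\geq 1$ and $z\in D^2$, the geometry of this rescaled data over a \emph{fixed} compact set $D^2\times K_1$, where $K_1$ is a fixed compact collar of the inner boundary of the end: this $K_1$ contains all the relevant balls because a $(g_{D^2}\oplus\lambda g_{X,z})$-ball of radius $1$ has $X$-component of $g_{X,z}$-radius at most $\lambda^{-1/2}\leq 1$. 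For $\lambda$ in a compact interval this is again a smooth family over a compact set. For $\lambda\to\infty$, note that at every point the metric is the pointwise product $g_{D^2}\oplus\lambda g_{X,z}$: the homothety $g_{X,z}\mapsto\lambda g_{X,z}$ only shrinks curvature and its covariant derivatives and enlarges the injectivity radius, while the $z$-derivatives of the metric, measured in the metric itself, are independent of $\lambda$ and bounded over $D^2\times K_1$; so the $X$-directions approach a flat model uniformly in $z$ and the $D^2$-directions are unchanged, giving uniformly bounded geometry. The Lagrangian estimates follow the same way, since $\phi^X_t$ and the homothety are conformal on $X$, hence preserve the angle between $TL$ and $J_zTL$, while the second fundamental form of $\partial D^2\times L$ is only decreased by the homothety.

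The only point requiring genuine care is the uniformity over $z\in D^2$: one must know that the $z$-dependence of $J_z$ introduces no $\lambda$-unbounded cross terms into the curvature of the pointwise-product metric $g_{D^2}\oplus\lambda g_{X,z}$, and that the flat-model limit is approached uniformly in $z$. Both follow from compactness of $D^2$: after the pullback by $\id_{D^2}\times\phi^X_{\tau(p)}$ all the relevant $z$-derivatives of $J_z$ (and of $L$) are evaluated over the single fixed compact set $D^2\times K_1$, and the elementary scaling behavior of curvature and injectivity radius under $g\mapsto\lambda g$, $\lambda\geq 1$, does the rest.
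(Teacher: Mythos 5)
Your proof is correct and follows essentially the same route as the paper's: handle the compact region by compactness of $D^2\times K'$, use the Liouville flow to identify the structure near a point at infinity with $(D^2\times X,\omega_{D^2}+\lambda\omega_X,j_{D^2}\oplus J)$ over a fixed compact set for $\lambda\geq 1$, and observe that this family (including the cylindrical Lagrangian boundary condition) remains uniformly bounded as $\lambda\to\infty$. You merely spell out the scaling/cross-term estimates that the paper leaves implicit.
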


Note that we make no claims about geometric boundedness of ``moving Lagrangian boundary conditions'' (outside of the case that they are fixed at infinity).

\begin{proof}
We follow the proof of Lemma \ref{bddgeo}.
Geometric boundedness holds trivially over any compact subset, and to show geometric boundedness everywhere, we just need to understand $(D^2\times X,\omega_{D^2}+\lambda\omega_X,j_{D^2}\oplus J)$ over a compact subset of $D^2\times X$ but for arbitrarily large $\lambda\geq 1$.
Again, the limit as $\lambda\to\infty$ is nice, so we are done.
\end{proof}

To derive \emph{a priori} bounds on holomorphic curves using monotonicity arguments, one needs certain additional conditions near the punctures/ends.  For example, at boundary punctures, one needs the Lagrangian boundary conditions on either side to be uniformly separated at infinity, see Proposition \ref{wcompactness}.
For Floer cylinders $\RR\times S^1$ with an
$\RR$-invariant Floer equation with Hamiltonian term $H_t$, it is enough to know that the integrated flow $\Phi:X\to X$ of $X_{H_t}$ over $S^1$ enjoys a lower bound $d(x,\Phi(x))\geq\varepsilon>0$ near infinity, see Proposition \ref{compactness} (this is, of course, related to the Lagrangian boundary conditions setting since Floer cylinders for $H_t$ in $X$ are in bijection with holomorphic strips in $X^- \times X$ between $\Delta_X$ and $\Gamma_{\Phi}$ with respect to a suitable almost complex structure).

Monotonicity inequalities can also be used to derive \emph{a priori} bounds on Floer continuation maps (i.e.\ with varying Hamiltonian term), however this is considerably more subtle and is due to recent work of Groman \cite{groman}, requiring careful choice of ``dissipative'' Floer data near infinity.  We will recall the arguments relevant for our work in Proposition \ref{compactness}.

\subsubsection{Preventing escape to infinity using pseudo-convexity}\label{productpseudoconvexinterpolation}

The maximum principle for holomorphic curves with respect to almost complex structures of contact type plays an important role in Floer theory on Liouville manifolds (see e.g.\ \cite{seidelbiased,abouzaidseidel,rittertqft}).
We will make use of similar arguments in the setting of Liouville sectors, so we recall below the basic definitions and result.

\begin{definition}
An $\omega$-compatible almost complex structure $J$ is said to be of \emph{contact type} with respect to a positive linear Hamiltonian $r:X\to\RR_{>0}$ iff $dr=\lambda\circ J$.
This condition is equivalent to requiring that $J(Z_\lambda)=X_r$ and that $J$ stabilizes the contact distribution $\xi=\ker\lambda$ of the level sets of $r$.
\end{definition}

There is an abundance of almost complex structures of contact type for any given $r$ (and moreover they form a contractible space).
Note, however, that we do not claim that on a Liouville sector $X$ we can find almost complex structures of contact type which make $\pi$ holomorphic as in \S\ref{bdryescape}.
This makes arguments using the maximum principle on Liouville sectors somewhat subtle, since the best we can do is choose almost complex structures which are of contact type over a large compact subset of the interior of $\partial_\infty X$.
It could be an important technical advance to identify a reasonable class of almost complex structures on Liouville sectors which make $\pi$ holomorphic and with respect to which one can prove a maximum principle.

\begin{lemma}[Folklore]\label{maxprinciple}
Let $u:\Sigma\to X$ be $J$-holomorphic and satisfy $u^\ast\lambda|_{\partial\Sigma}\leq 0$ over $u^{-1}(\{r>a\})$, where $J$ is of contact type with respect to $r$ over $\{r>a\}$.
Then $u$ is locally constant over $u^{-1}(\{r>a\})$.
\end{lemma}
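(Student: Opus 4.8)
The plan is to run the standard pseudo-convexity argument: for $J$ of contact type with respect to $r$, the function $\rho:=r\circ u$ is subharmonic on $u^{-1}(\{r>a\})$, and the strong maximum principle together with the Hopf boundary point lemma then force $du\equiv 0$ there. First I would record the two relevant identities. Writing $j$ for the complex structure on $\Sigma$, the equations $du\circ j=J\circ du$ and $dr=\lambda\circ J$ give, for $v\in T\Sigma$,
\[
(d\rho\circ j)(v)=dr\bigl(du(jv)\bigr)=dr\bigl(J\,du(v)\bigr)=\lambda\bigl(J^2du(v)\bigr)=-(u^\ast\lambda)(v),
\]
so $d\rho\circ j=-u^\ast\lambda$ over $u^{-1}(\{r>a\})$. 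Applying $d$ gives $d(d\rho\circ j)=-u^\ast d\lambda=-u^\ast\omega$, which, in a local holomorphic coordinate $z=s+it$, reads $\Delta\rho=\omega(\partial_s u,J\partial_s u)=|\partial_s u|^2\geq 0$, with strict inequality exactly where $du\neq 0$. Thus $\rho$ is subharmonic over $u^{-1}(\{r>a\})$, and strictly so away from critical points of $u$.

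Next I would translate the boundary hypothesis and conclude. In a half-disc coordinate with $\Sigma=\{t\geq 0\}$ and $\partial\Sigma=\{t=0\}$, the identity $d\rho\circ j=-u^\ast\lambda$ yields $\partial_t\rho=-(u^\ast\lambda)(\partial_s)$; with the standard boundary orientation, for which $\partial_s$ is the positively oriented tangent to $\partial\Sigma$, the hypothesis $u^\ast\lambda|_{\partial\Sigma}\leq 0$ over $u^{-1}(\{r>a\})$ says $\partial_t\rho\geq 0$ there, i.e.\ the outward normal derivative of $\rho$ is $\leq 0$ along $\partial\Sigma\cap u^{-1}(\{r>a\})$. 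Now let $\Omega$ be a connected component of $u^{-1}(\{r>a\})$ with compact closure in $\Sigma$ (automatic when $\Sigma$ is compact, which is the case in our applications). Then $\rho$ attains its maximum $M$ over $\overline{\Omega}$; since $\rho>a$ on $\Omega$ while $\rho=a$ on the part of $\partial\Omega$ lying in $\Sigma^\circ$, this $M>a$ is attained on $\Omega\cup(\partial\Sigma\cap\Omega)$. If it is attained at an interior point of $\Sigma$, the strong maximum principle for subharmonic functions makes $\rho$ constant on $\Omega$; if it is attained only on $\partial\Sigma$, the Hopf boundary point lemma would force the outward normal derivative there to be strictly positive, contradicting the previous sentence, so again $\rho$ is constant on $\Omega$. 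In either case $d\rho\equiv 0$ on $\Omega$, hence $u^\ast\lambda=-d\rho\circ j=0$ and so $u^\ast\omega=u^\ast d\lambda=0$ on $\Omega$, giving $du\equiv 0$ there. As the component $\Omega$ was arbitrary, $u$ is locally constant over $u^{-1}(\{r>a\})$.

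The computation is entirely routine; the one point deserving care is the orientation bookkeeping at $\partial\Sigma$ — checking that the sign conventions really do make $u^\ast\lambda|_{\partial\Sigma}\leq 0$ translate into an outward normal derivative $\leq 0$ for $\rho$, which is precisely the inequality the Hopf boundary point lemma can be played off. (Unique continuation for $J$-holomorphic maps then in fact propagates $du\equiv 0$ to the entire connected component of $\Sigma$ meeting $u^{-1}(\{r>a\})$, though this is more than the statement requires.)
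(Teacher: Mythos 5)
Your argument is correct, but it is a genuinely different proof from the one in the paper. The paper follows Abouzaid--Seidel and argues integrally: choosing a cutoff $\kappa$ with $\kappa'\geq 0$ supported in $\{r>a\}$, Stokes' theorem gives $0\leq\int_\Sigma\kappa(r(u))\,u^\ast\omega=\int_{\partial\Sigma}\kappa(r(u))\,u^\ast\lambda-\int_\Sigma\kappa'(r(u))\,u^\ast(dr\wedge\lambda)\leq 0$, using only that $dr\wedge\lambda\geq 0$ on complex lines for contact-type $J$ and the boundary hypothesis; this is shorter, needs no boundary-point (Hopf) analysis or decomposition into components, and the cutoff lets one avoid discussing the set $u^{-1}(\{r>a\})$ at all. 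You instead run the classical pointwise pseudo-convexity argument: $d\rho\circ j=-u^\ast\lambda$ for $\rho=r\circ u$ (your sign bookkeeping is consistent with the paper's convention $dr=\lambda\circ J$ and with the Stokes orientation of $\partial\Sigma$, so the boundary hypothesis does become a nonpositive outward normal derivative), hence $\Delta\rho=\left|\partial_su\right|^2\geq 0$, and the strong maximum principle plus Hopf's lemma force $\rho$ to be constant on each relevant component, whence $u^\ast\lambda=0$, $u^\ast\omega=0$, $du\equiv 0$. What your route buys is a transparent local mechanism and no choice of cutoff; what the paper's route buys is brevity and robustness (the same integral identity is what generalizes to the Hamiltonian-perturbed confinement statements such as Lemma \ref{scconfinecurves}). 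One remark on your compactness caveat: in the intended applications (e.g.\ Proposition \ref{wrappingunits}) the domain $\Sigma$ is \emph{not} compact --- it is a disk with strip-like ends --- but the asymptotic chords lie in $\{r<a\}$, so $u^{-1}(\{r\geq a\})$ is compact and both proofs apply; note that the paper's Stokes argument implicitly needs exactly the same properness, so your flagging it is a feature rather than a defect, though the phrase ``$\Sigma$ is compact in our applications'' should be amended accordingly. (Also, a point of $\partial\Omega\cap\partial\Sigma$ satisfies $\rho=a$ by the same continuity argument you use in the interior, so the maximum is indeed attained on $\Omega$ itself; this closes the small case you left implicit before invoking Hopf.)
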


Note that $u^\ast\lambda|_{\partial\Sigma}=0$ whenever $u(\partial\Sigma)\subseteq L$ for Lagrangian $L\subseteq X$ which is cylindrical over $\{r>a\}$.
More generally, the hypothesis $u^\ast\lambda|_{\partial\Sigma}\leq 0$ is satisfied whenever $u$ satisfies ``non-negatively moving cylindrical Lagrangian boundary conditions'' over $\{r>a\}$ (moving cylindrical Lagrangian boundary conditions $\{L_t\}_{t\in\partial\Sigma}$ are said to be moving non-negatively when $\lambda(\partial_tL_t)\leq 0$; the sign is due to the fact that $\partial\Sigma$ is oriented counterclockwise while we judge positivity/negativity of moving Lagrangian boundary conditions in the clockwise direction).

\begin{proof}
This proof is due to Abouzaid--Seidel \cite[Lemma 7.2]{abouzaidseidel}.
Let $\kappa:\RR\to\RR_{\geq 0}$ be any smooth function satisfying $\kappa'(r)\geq 0$ and vanishing for $r\leq a$.
Stokes' theorem gives
\begin{equation}
0\leq\int_\Sigma\kappa(r(u))\cdot u^\ast\omega=\int_{\partial\Sigma}\kappa(r(u))\cdot u^\ast\lambda-\int_\Sigma\kappa'(r(u))\cdot u^\ast(dr\wedge\lambda)\leq 0,
\end{equation}
where the second inequality holds because $dr\wedge\lambda$ is $\geq 0$ on complex lines (since $J$ is of contact type) and $u^\ast\lambda|_{\partial\Sigma}\leq 0$ (by hypothesis).
The result follows immediately.
\end{proof}

\section{Wrapped Fukaya category of Liouville sectors}\label{secwrapped}

For any Liouville sector $X$, we define an $\ainf$-category $\W(X)$ called its wrapped Fukaya category.
For an inclusion of Liouville sectors $X\hookrightarrow X'$, we define an $\ainf$-functor $\W(X)\to\W(X')$;
more generally, for a diagram of Liouville sectors $\{X_\sigma\}_{\sigma\in\Sigma}$ indexed by a finite poset $\Sigma$, we define a diagram of $\ainf$-categories $\{\W(X_\sigma)\}_{\sigma\in\Sigma}$.
In fact, we also define a model of $\W(X)$ which is a strict functor from all Liouville sectors to $\ainf$-categories.

In all these definitions, it is also possible to consider only the full subcategories spanned by chosen collections of Lagrangians
(provided that, in the latter cases, the Lagrangians chosen for a given $X$ are also chosen for any $X'$ into which $X$ is included).
Since these $\ainf$-categories and $\ainf$-functors are chain level objects, they depend on a number of choices (almost complex structures, etc.), though they are well-defined up to quasi-equivalence.
In particular, the wrapped Floer cohomology groups $HW^\bullet(L_0,L_1)$, their product $HW^\bullet(L_0,L_1)\otimes HW^\bullet(L_1,L_2)\to HW^\bullet(L_0,L_2)$, and their pushforward $HW^\bullet(L_0,L_1)_X\to HW^\bullet(L_0,L_1)_{X'}$ are all well-defined.

Officially, we work with $\ZZ$ coefficients and $\ZZ/2$-grading.
On the other hand, issues of coefficients/gradings are mostly orthogonal to the main point of our discussion, and a much more general setup is certainly possible in this regard.

To define $\W$, we adopt the method due to Abouzaid--Seidel \cite{abouzaidseidelunpublished} whereby one first defines a directed $\ainf$-category $\OO$ together with a collection $C$ of ``continuation morphisms'' in $\OO$, and then defines $\W:=\OO[C^{-1}]$ as the localization of $\OO$ at $C$.
An advantage of this definition is that it is very efficient in terms of the complexity of the Floer theoretic input (e.g.\ it does not require the construction of coherent systems of higher homotopies between chain level compositions of continuation maps/elements).
The key result in making this approach work is Lemma \ref{wrappingcalculatesW} (due to Abouzaid--Seidel) which asserts that the morphism spaces in the category $\W$ defined by localization are indeed isomorphic to the wrapped Floer cohomology groups defined by wrapping Lagrangians geometrically.

When doing Floer theory on Liouville sectors, the following convention is convenient, and will be in effect for the remainder of this section.
It is harmless because we will show that trivial inclusions of Liouville sectors induce quasi-isomorphisms/equivalences on all the invariants we define.

\begin{convention}\label{choosingpi}
By ``Liouville sector'' we will mean ``Liouville sector equipped with a choice of $\pi:\Nbd^Z\partial X\to\CC$ as in Definition \ref{cprojdef}''.
Inclusions of Liouville sectors $i:X\hookrightarrow X'$ are required to satisfy either $i(X)\cap\partial X'=\varnothing$ or $i(X)=X'$, and in the latter case we require $\pi=\pi'\circ i$.
\end{convention}

\subsection{\texorpdfstring{$\ainf$}{A\_infinity}-categories}

We work throughout with cohomologically unital $\ZZ/2$-graded $\ainf$-categories over $\ZZ$, with cofibrant morphism complexes.
The same assumptions (cohomological unitality, $\ZZ/2$-grading, $\ZZ$ coefficients, and cofibrancy) apply to all $\ainf$-modules and $\ainf$-bimodules as well.
Morphisms of the above ($\ainf$-functors and morphisms of $\ainf$-modules and bimodules) must also be cohomologically unital and must respect gradings and coefficients.
Gradings will be cohomological, and all $\ainf$-categories in this paper are small (i.e.\ they have a \emph{set} of objects).
In fact, all the $\ainf$-categories, functors, modules, bimodules, and morphisms thereof which we consider will be strictly unital, however in reasoning about them it is only cohomological unitality which is ever relevant.

For the basic defintions of $\ainf$-categories, $\ainf$-modules, and
$\ainf$-bimodules (about which we assume some basic familiarity), we refer the reader to
\cite{seidelbook,seidelsubalgebras,tradlerinfinity,ganatrathesis,sheridanformulae}.
Although these and most other references work over a field, the notions which
we will use make perfect sense over any commutative ring.
Cofibrancy (which is automatic over a field) is defined in Definition \ref{cofibrantdef}.
Cofibrancy is significant for many reasons in the theory of $\ainf$-categories, however its sole significance for us (other than in Lemma \ref{cofibrantqisoinvert}) is that tensor products of cofibrant complexes are well-behaved (since a cofibrant complex is, in particular, K-flat).

For an $\ainf$-category $\C$ and a pair of objects $X,Y\in\C$, we write $\C(X,Y)$ for the associated morphism space in $\C$.
We adopt the ``forward composition'' convention, meaning we write composition as
\begin{equation}
\mu^k: \C(X_0,X_1)\otimes\C(X_1,X_2) \otimes \cdots \otimes \C(X_{k-1},X_k)\to\C(X_0,X_k)[2-k].
\end{equation}
For details on the associated sign conventions for the $\ainf$ relations, see \cite{tradlerinfinity,seidelsubalgebras,sheridanformulae}.

\subsubsection{Cofibrant complexes}

\begin{definition}\label{cofibrantdef}
We fix any class $\mathfrak C$ of isomorphism classes of $\ZZ/2$-graded complexes over
$\ZZ$, which we call \emph{cofibrant} complexes, satisfying the following
properties: 
\begin{enumerate}
\item\label{cofibrantfree}A free module concentrated in degree zero is cofibrant.
\item\label{cofibrantshift}A shift of a cofibrant complex is cofibrant.
\item\label{cofibrantdirectlimit}If $\{C_i\}_{i\in I}$ is a directed system where $I$ is well-ordered and $0\to\varinjlim_{i'<i}C_{i'}\to C_i\to K_i\to 0$ is degreewise split with $K_i$ cofibrant for all $i\in I$, then $\varinjlim_iC_i$ is cofibrant.
\item\label{cofibranttensor}A tensor product of cofibrant complexes is cofibrant.
\item\label{cofibranttensorexact}If $C$ is cofibrant then $-\otimes C$ preserves acyclicity (``$C$ is K-flat'').
\item\label{cofibrantkprojective}If $C$ is cofibrant then $\Hom(C,-)$ preserves acyclicity (``$C$ is K-projective'').
\end{enumerate}
The existence of such a class of complexes $\mathfrak C$ is essentially due to Spaltenstein \cite{spaltenstein}.
We recall the argument in Lemma \ref{cofibrantexist} below.
\end{definition}

\begin{lemma}[{Spaltenstein \cite[0.11 Lemma]{spaltenstein}}]\label{inverseexact}
Let $\{A_i\}_{i\in I}$ be an inverse system of complexes of abelian groups, indexed by a well-ordered set $I$.
Suppose further that for all $i\in I$, the following sequence is exact with $K_i$ acyclic:
\begin{equation}
\smash{0\to K_i\to A_i\to\varprojlim_{i'<i}A_{i'}\to 0.}
\end{equation}
Then $\varprojlim_iA_i$ is acyclic.
\end{lemma}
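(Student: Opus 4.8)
The plan is to prove this directly, by exhibiting, for an arbitrary cocycle of $\varprojlim_i A_i$, an explicit primitive built by transfinite recursion over the well-ordered index set $I$. First I would fix notation: write $L_j:=\varprojlim_{i'<j}A_{i'}$ for the partial limits, so the hypothesis supplies for each $j\in I$ the canonical surjection $c_j:A_j\twoheadrightarrow L_j$ with acyclic kernel $K_j$. The map $c_j$ is characterized by $\mathrm{pr}_{i'}\circ c_j=\phi_{ji'}$ for $i'<j$, where $\mathrm{pr}_{i'}:L_j\to A_{i'}$ and $\phi_{ji'}:A_j\to A_{i'}$ are the structure maps; in particular every element of $K_j$ projects to $0$ in each $A_{i'}$ with $i'<j$. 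A cocycle of $\varprojlim_i A_i$ is by definition a compatible family $z=(z_i)_{i\in I}$ of cocycles $z_i\in A_i$, and it bounds precisely when it admits a \emph{compatible} family of primitives; constructing such a family is the whole task.

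So fix such a $z$. I would define $w_i\in A_i$ by recursion on $i\in I$ so that (i) $dw_i=z_i$ and (ii) $\phi_{ii'}(w_i)=w_{i'}$ for all $i'<i$. Suppose the $w_{i'}$ are in hand for all $i'<j$ with these properties. Then $(w_{i'})_{i'<j}$ is compatible, hence defines $\zeta\in L_j$ with $d\zeta=(z_{i'})_{i'<j}$. Using surjectivity of $c_j$, choose $\bar w_j\in A_j$ with $c_j(\bar w_j)=\zeta$. A short computation shows $c_j(d\bar w_j-z_j)=d\zeta-c_j(z_j)=(z_{i'})_{i'<j}-(z_{i'})_{i'<j}=0$ (the last step using compatibility of $z$), so $d\bar w_j-z_j\in K_j$, and it is a cocycle. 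By acyclicity of $K_j$ there is $\kappa\in K_j$ with $d\kappa=d\bar w_j-z_j$; set $w_j:=\bar w_j-\kappa$. Then $dw_j=z_j$, and for $i'<j$ we get $\phi_{ji'}(w_j)=\mathrm{pr}_{i'}(c_j(\bar w_j))-\mathrm{pr}_{i'}(c_j(\kappa))=\mathrm{pr}_{i'}(\zeta)-0=w_{i'}$ since $\kappa\in\ker c_j$, so (ii) is restored. The base case $j=\min I$ is the same recipe with $L_j=0$, $K_j=A_j$, $\bar w_j=0$, using only acyclicity of $A_{\min I}$.

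Once $(w_i)_{i\in I}$ is constructed, compatibility makes it an element $w\in\varprojlim_i A_i$ with $dw=(dw_i)_i=(z_i)_i=z$, so every cocycle of $\varprojlim_i A_i$ bounds and the limit is acyclic. I expect the only genuinely delicate point to be checking, at each recursive step, that the correction by $\kappa$ does not destroy compatibility of $w_j$ with the earlier $w_{i'}$ — and this works exactly because $\kappa$ lies in the kernel of $A_j\to L_j$ and hence maps to $0$ in each $A_{i'}$, $i'<j$. (This is also the reason the hypothesis must be phrased in terms of $\varprojlim_{i'<j}A_{i'}$ rather than the individual maps $A_j\to A_{i'}$.) It is worth noting that the argument uses neither surjectivity of the transition maps $\phi_{ji'}$ nor acyclicity of the individual $A_j$, only the two stated properties of the sequences $0\to K_j\to A_j\to\varprojlim_{i'<j}A_{i'}\to0$; this recursion simultaneously handles successor and limit stages, subsuming what would otherwise be an induction on the order type of $I$. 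This is Spaltenstein's argument \cite{spaltenstein}.
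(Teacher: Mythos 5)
Your proof is correct: the transfinite recursion is well-founded, and the key point — correcting the lift $\bar w_j$ by a primitive $\kappa$ taken \emph{inside} the subcomplex $K_j=\ker(A_j\to\varprojlim_{i'<j}A_{i'})$, so that compatibility with all earlier $w_{i'}$ is preserved — is handled exactly right, with the base case and limit stages absorbed into the same step. The paper itself gives no argument for Lemma \ref{inverseexact}, deferring entirely to Spaltenstein \cite{spaltenstein}, and your cocycle-lifting recursion is precisely the standard argument behind that citation, so there is nothing to reconcile.
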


\begin{lemma}\label{cofibrantexist}
The class $\mathfrak C$ of K-projective complexes satisfies the conditions in Definition \ref{cofibrantdef}.
\end{lemma}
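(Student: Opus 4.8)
The plan is to verify, one at a time, the six conditions of Definition \ref{cofibrantdef} for the class of K-projective complexes of $\ZZ/2$-graded abelian groups, where throughout $\Hom^\bullet$ denotes the internal $\Hom$ complex. Condition \ref{cofibrantkprojective} holds tautologically: K-projectivity of $C$ \emph{means} that $\Hom^\bullet(C,A)$ is acyclic whenever $A$ is. Condition \ref{cofibrantshift} is immediate since $\Hom^\bullet(C[1],A)$ is a shift of $\Hom^\bullet(C,A)$. For condition \ref{cofibrantfree}, if $F=\bigoplus_{s\in S}\ZZ$ is concentrated in degree zero then $\Hom^\bullet(F,A)=\prod_{s\in S}A$ with the termwise differential, and since cohomology commutes with products of complexes of abelian groups, this is acyclic whenever $A$ is.

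For condition \ref{cofibranttensor} I would use the tensor--hom adjunction at the level of $\Hom$ complexes, $\Hom^\bullet(C\otimes D,A)\cong\Hom^\bullet(C,\Hom^\bullet(D,A))$ (sign-delicate but otherwise formal, requiring no flatness): if $A$ is acyclic then $\Hom^\bullet(D,A)$ is acyclic as $D$ is K-projective, hence so is $\Hom^\bullet(C,\Hom^\bullet(D,A))$ as $C$ is K-projective. For condition \ref{cofibranttensorexact} (K-flatness) I would argue via Pontryagin duality: a complex $M$ of abelian groups is acyclic iff its dual $M^\vee:=\Hom_\ZZ(M,\QQ/\ZZ)$ is acyclic, using that $\QQ/\ZZ$ is injective (so $\Hom_\ZZ(-,\QQ/\ZZ)$ is exact) and that Pontryagin duality is faithful on abelian groups. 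Given acyclic $A$, tensor--hom adjunction identifies $(C\otimes A)^\vee$ with $\Hom^\bullet(C,A^\vee)$; here $A^\vee$ is acyclic since $A$ is, so $\Hom^\bullet(C,A^\vee)$ is acyclic as $C$ is K-projective, whence $C\otimes A$ is acyclic.

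The one condition with real content is \ref{cofibrantdirectlimit}, and here the essential input is Spaltenstein's Lemma \ref{inverseexact}. Fix an acyclic complex $A$. Since $\Hom^\bullet$ turns colimits in its first variable into limits, $\Hom^\bullet(\varinjlim_iC_i,A)=\varprojlim_i\Hom^\bullet(C_i,A)$ as complexes, indexed by the same well-ordered set $I$; write $A_i:=\Hom^\bullet(C_i,A)$. Applying the contravariant functor $\Hom^\bullet(-,A)$ to the given short exact sequences $0\to\varinjlim_{i'<i}C_{i'}\to C_i\to K_i\to 0$ --- and using that these are \emph{degreewise split}, which is exactly what lets a contravariant $\Hom$ preserve exactness --- yields, for every $i\in I$, a short exact sequence
\begin{equation*}
0\to\Hom^\bullet(K_i,A)\to A_i\to\varprojlim_{i'<i}A_{i'}\to 0
\end{equation*}
in which the right-hand map is the canonical projection and $\Hom^\bullet(K_i,A)$ is acyclic because $K_i$ is K-projective. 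These are precisely the hypotheses of Lemma \ref{inverseexact}, which therefore gives that $\varprojlim_iA_i=\Hom^\bullet(\varinjlim_iC_i,A)$ is acyclic; hence $\varinjlim_iC_i$ is K-projective.

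The main obstacle is the bookkeeping in condition \ref{cofibrantdirectlimit}: one must check that dualizing the $C_i$ really produces an inverse system over the same well-ordered $I$ with the canonical structure maps, that the right-hand maps in the displayed sequences are surjective (which is where degreewise splitness is used), and that the acyclic kernels are the $\Hom^\bullet(K_i,A)$, so that Lemma \ref{inverseexact} applies verbatim. Everything else reduces formally to the definition of K-projectivity together with the tensor--hom adjunction and Pontryagin duality.
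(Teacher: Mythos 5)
Your proposal is correct and follows essentially the same route as the paper: the tautological case of K-projectivity, the tensor--hom adjunction $\Hom(C\otimes C',-)\cong\Hom(C,\Hom(C',-))$ for closure under tensor products and for deducing K-flatness by testing against injectives, and Spaltenstein's Lemma \ref{inverseexact} (after applying $\Hom^\bullet(-,A)$ to the degreewise split sequences) for the well-ordered colimit condition. The only cosmetic difference is that you test acyclicity against the single injective cogenerator $\QQ/\ZZ$ where the paper quantifies over all injective $\ZZ$-modules, and you spell out the dualization bookkeeping that the paper leaves implicit.
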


\begin{proof}
By definition, $C\in\mathfrak C$ if and only if \ref{cofibrantkprojective} $\Hom(C,-)$ preserves acyclicity.
Clearly \ref{cofibrantfree} free modules in degree zero are in $\mathfrak C$, and clearly \ref{cofibrantshift} $\mathfrak C$ is closed under shift.
The adjunction $\Hom(C\otimes C',-)=\Hom(C,\Hom(C',-))$ shows that \ref{cofibranttensor} $\mathfrak C$ is closed under tensor products.
This adjunction together with the fact that a complex $K$ is acyclic if and only if $\Hom(K,I)$ is acyclic for every injective $\ZZ$-module $I$ (which holds since the category of $\ZZ$-modules has enough injectives) implies that \ref{cofibranttensorexact} $-\otimes C$ preserves acyclicity if $\Hom(C,-)$ does.
Finally, property \ref{cofibrantdirectlimit} follows from Lemma \ref{inverseexact}.
\end{proof}

\begin{remark}
The Floer complexes appearing in this paper are all equipped with an action filtration, which implies they are cofibrant by properties \ref{cofibrantfree}--\ref{cofibrantdirectlimit}.
\end{remark}

\begin{lemma}\label{cofibrantqisoinvert}
Let $f:A\to B$ be a quasi-isomorphism.
\begin{enumerate}
\item If $B$ is cofibrant, then there exists $g:B\to A$ such that $fg:B\to B$ is chain homotopic to the identity.
\item If $A$ is also cofibrant, then $gf:A\to A$ is also chain homotopic to the identity.
\end{enumerate}
\end{lemma}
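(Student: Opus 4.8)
The plan is to deduce both statements from a single observation: if $C$ is a cofibrant (hence, by property \ref{cofibrantkprojective} of Definition \ref{cofibrantdef}, K-projective) complex and $f\colon A\to B$ is a quasi-isomorphism, then the induced map on Hom-complexes
\[
f_*\colon\Hom^\bullet(C,A)\to\Hom^\bullet(C,B)
\]
is again a quasi-isomorphism. Granting this, part (i) follows by taking $C=B$: since $f_*$ is then an isomorphism on $H^0$, the class $[\mathrm{id}_B]\in H^0(\Hom^\bullet(B,B))$ has a preimage $[g]\in H^0(\Hom^\bullet(B,A))$; representing it by a degree-zero cocycle gives an honest chain map $g\colon B\to A$, and the equality $f_*[g]=[\mathrm{id}_B]$ says precisely that $fg-\mathrm{id}_B$ is a coboundary in $\Hom^\bullet(B,B)$, i.e.\ that $fg$ is chain homotopic to $\mathrm{id}_B$. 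For part (ii), take instead $C=A$, now using that $A$ is cofibrant. Then $f_*\colon\Hom^\bullet(A,A)\to\Hom^\bullet(A,B)$ is a quasi-isomorphism, hence in particular injective on $H^0$. But $f_*[gf]=[(fg)f]=[f]$ because $fg\simeq\mathrm{id}_B$ from part (i), and likewise $f_*[\mathrm{id}_A]=[f]$; injectivity of $f_*$ on $H^0$ forces $[gf]=[\mathrm{id}_A]$, that is, $gf\simeq\mathrm{id}_A$.

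It remains to prove the observation. Let $\operatorname{Cone}(f)$ denote the mapping cone of $f$, which sits in a degreewise-split short exact sequence $0\to B\to\operatorname{Cone}(f)\to A[1]\to 0$; the associated long exact sequence in cohomology shows that $\operatorname{Cone}(f)$ is acyclic precisely because $f$ is a quasi-isomorphism. Unwinding definitions, there is a natural identification of complexes
\[
\Hom^\bullet(C,\operatorname{Cone}(f))=\operatorname{Cone}\bigl(f_*\colon\Hom^\bullet(C,A)\to\Hom^\bullet(C,B)\bigr),
\]
because $\Hom(C,-)$ is additive and the mapping cone is assembled degreewise from direct sums together with the component maps of the differential; equivalently, $\Hom(C,-)$ carries the degreewise-split triangle above to a triangle. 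Since $C$ is K-projective and $\operatorname{Cone}(f)$ is acyclic, the left-hand side is acyclic, so $\operatorname{Cone}(f_*)$ is acyclic, which is exactly the assertion that $f_*$ is a quasi-isomorphism.

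There is no real difficulty here — this is the standard fact that K-projective complexes corepresent cohomology correctly — so the only points needing attention are bookkeeping ones: verifying that the identification $\Hom^\bullet(C,\operatorname{Cone}(f))\cong\operatorname{Cone}(f_*)$ is compatible with differentials, and tracking the $\ZZ/2$-grading throughout, for which degree-zero cocycles and the homotopies witnessing $[gf]=[\mathrm{id}_A]$ behave exactly as in the $\ZZ$-graded setting. (One should note that the statement is used in the excerpt only via these purely formal consequences of cofibrancy, so no finer analysis of the Floer complexes involved is required.)
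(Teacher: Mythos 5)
Your proof is correct, and its engine is exactly the paper's: cofibrancy of $B$ (property \ref{cofibrantkprojective}, K-projectivity) applied to the acyclic cone of $f$ makes $f_\ast:\Hom^\bullet(B,A)\to\Hom^\bullet(B,B)$ a quasi-isomorphism, and lifting $[\id_B]$ produces $g$ with $fg\simeq\id_B$; the paper simply takes for granted the cone bookkeeping that you spell out. For part (ii) you deviate mildly from the paper: there, part (i) is applied a second time, to $g:B\to A$ (which is a quasi-isomorphism because $fg\simeq\id_B$ and $f$ is one, a point left implicit), yielding $h$ with $gh\simeq\id_A$, and then one chains $gf\simeq gfgh\simeq gh\simeq\id_A$; you instead use that $f_\ast:\Hom^\bullet(A,A)\to\Hom^\bullet(A,B)$ is injective on $H^0$ together with $f_\ast[gf]=[f]=f_\ast[\id_A]$. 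Both deductions are standard and valid; yours has the small advantage of not needing to notice that $g$ is itself a quasi-isomorphism, while the paper's route is the usual ``left and right homotopy inverses agree'' argument.
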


\begin{proof}
If $B$ is cofibrant, then $\Hom(B,A)\to\Hom(B,B)$ is a quasi-isomorphism, so we may lift the identity element of $\Hom(B,B)$ to a cycle homologous to it.  In other words, we get a chain map $g:B\to A$ such that $fg$ is chain homotopic to the identity.  If $A$ is also cofibrant, then applying the first part to $g$, we get $h$ with $gh$ homotopic to the identity.  Now $gf$ is homotopic to $gfgh$, which is homotopic to $gh$, which is homotopic to the identity.
\end{proof}

\subsubsection{\texorpdfstring{$\ainf$}{A\_infinity}-modules and bimodules}\label{secbimodules}

It will be helpful to recall some terminology and foundational results about
$\ainf$-bimodules over a pair of $\ainf$-categories, as well left and right
modules over a single $\ainf$-category (which arise as
the special case of bimodules where one of the categories is $\ZZ$).

We recall that for $\ainf$-categories $\C$ and $\D$, there is
a dg-category, denoted $[\C, \D]$, of \emph{$(\C,\D)$-bimodules}
\cite{tradlerinfinity, seidelsubalgebras, ganatrathesis}, whose
objects are cohomologically unital $\ainf$-bilinear functors $\B: \C^\op \times \D \to \Ch$ (in the
sense of \cite{lyubashenkomultilinear}; see also \cite{sheridanformulae}).
Concretely, a bimodule $\B$ consists of a cochain complex $\B(X,Y)$ for every pair of objects $X\in\C$, $Y\in\D$, equipped
with higher multiplication maps
\begin{multline}
\mu^{k|1|\ell}_{\B}: \C(X_k, X_{k-1}) \otimes \cdots
\otimes \C(X_1,X_0) \otimes \B(X_0,Y_0) \otimes \D(Y_0, Y_1) \otimes \cdots
\otimes \D(Y_{\ell-1},Y_\ell)\\
\to \B(X_k, Y_\ell)[1-k-\ell]
\end{multline}
satisfying the natural $\ainf$ relations.
As mentioned earlier, we require that each cochain complex $\B(X,Y)$ be cofibrant.

The space of morphisms in $[\C, \D]$ of degree $s$ from a bimodule $\PP$ to a
bimodule $\Q$ is, as a $\ZZ$-module, the direct product over all
pairs of tuples $(X_0, \ldots, X_k)$ and $(Y_0, \ldots, Y_\ell)$
of all maps
\begin{multline}
F^{k|1|\ell}: \C(X_k, X_{k-1}) \otimes \cdots
\otimes \C(X_1,X_0) \otimes \PP(X_0,Y_0) \otimes \D(Y_0, Y_1) \otimes \cdots
\otimes \D(Y_{\ell-1},Y_\ell)\\
\to \Q(X_k, Y_\ell)[s - k - \ell].
\end{multline}
These are called the degree $s$ bimodule `pre-homomorphisms' in the language
of \cite[Section 2]{seidelsubalgebras}.
There is a natural differential on the morphism space measuring the failure
of a collection $\{F^{k|1|l}\}$ to satisfy the ``$\ainf$-bimodule morphism
relations''; see \cite[eq.\ (2.8)]{seidelsubalgebras}. A \emph{bimodule homomorphism}
$\PP\to\Q$ is a closed degree zero pre-homomorphism. Recall that the condition of being a bimodule
homomorphism implies that $F^{0|1|0}:\PP(X,Y) \to \Q(X, Y)$ is a cochain map for
all $X,Y$. A bimodule homomorphism is called a \emph{quasi-isomorphism} iff $F^{0|1|0}$ 
is a quasi-isomorphism of co-chain complexes for each $X,Y$.

A $(\C,\C)$-bimodule will often simply be called a $\C$-bimodule.
There is a canonical $\C$-bimodule, the \emph{diagonal bimodule}
$\C_{\Delta}(X,Y):=\C(X,Y)$, with bimodule multiplication maps induced by
$\ainf$-multiplication (up to a sign twist, rather; see \cite[eq.\ (2.20)]{seidelsubalgebras}). By abuse of notation, we will simply denote this
bimodule by $\C$.

A left $\C$-module (resp.\ right $\D$-module) is simply a $(\C,\D)$-bimodule in which $\D=\ZZ$ (resp.\ $\C=\ZZ$) is the $\ainf$-category with a single object $\ast$ with endomorphism algebra $\ZZ$, and we shall write
\begin{align}
\M(X)& := \M( X, \ast),\\
\N(Y) &:= \N(\ast, Y).
\end{align}
Put another way, although left (resp.\ right) $\ainf$-modules are typically
defined as functors from $\C^\op$ (resp.\ $\D$) to $\Ch$, there is no
difference in thinking of them as bilinear functors $\C^{\op} \times \ZZ$
(resp.\ $\ZZ \times \D$) to $\Ch$ (compare \cite[p.\ 11]{seidelsubalgebras}).
In particular, the present discussion of $\ainf$-bimodules specializes
immediately to a discussion of left/right $\ainf$-modules. As a particularly
degenerate special case, we note that by the same convention $(\ZZ,\ZZ)$-bimodules are simply cochain complexes.

Let $\C_0$, $\C$, and $\C_1$ be $\ainf$-categories. Suppose $\PP$ is
a $(\C_0,\C)$-bimodule and $\Q$ is a $(\C,\C_1)$-bimodule.
We denote (the bar model of) the \emph{derived tensor product} of $\PP$ and $\Q$ over $\C$ by
\begin{equation}
    \PP \otimes_\C \Q,
\end{equation}
which concretely is the $(\C_0,\C_1)$-bimodule which associates to a pair
of objects $(X,Z)$ the chain complex
\begin{equation}\label{barcomplextensorproduct}
(\PP \otimes_{\C} \Q )(X,Z) := \bigoplus_{\begin{smallmatrix}k\geq 0\\Y_0, \ldots, Y_k \in \C\end{smallmatrix}} \PP(X,Y_0) \otimes \C(Y_0, Y_1)[1] \otimes \cdots \otimes \C(Y_{k-1}, Y_k)[1] \otimes \Q(Y_k, Z)
\end{equation}
with differential and bimodule maps given by summing over ways to contract tensor chains
by the $\ainf$ and bimodule structure maps, see \cite{seidelsubalgebras,ganatrathesis} for more details.
Note that this construction preserves
cofibrancy (i.e.\ if the morphism spaces in $\C$ and the chain complexes associated to $\PP$ and $\Q$ are all cofibrant, then the above chain complexes are too).
As noted above, this construction (and the results that follow) carries over to modules as well, so in
particular, the tensor product of a right $\C$-module (i.e.\ a $(\ZZ,\C)$-bimodule)
with a $(\C,\D)$-bimodule is a right $\D$-module, etc.

For any $(\C_0, \C)$-bimodule $\PP$ or any $(\C,
\C_0)$-bimodule $\Q$, there are canonical bimodule homomorphisms
\begin{align}
    \label{tensordiagonalright}\PP \otimes_{\C} \C &\to \PP,\\
    \label{tensordiagonalleft} \C \otimes_{\C}  \Q & \to \Q.
\end{align}
For instance, on the level of chain complexes, for any pair of objects $(X,Z)$,
the map  \eqref{tensordiagonalright} from
\begin{equation}
(\PP \otimes_{\C} \C )(X,Z)
:=\bigoplus_{\begin{smallmatrix}k\geq 0\\ Y_0, \ldots, Y_k \in
    \C\end{smallmatrix}} \PP(X,Y_0) \otimes \C(Y_0, Y_1)[1] \otimes \cdots \otimes
\C(Y_{k-1}, Y_k)[1] \otimes \C(Y_k, Z)
\end{equation}
to $\PP(X,Z)$ is given by contracting
using the bimodule structure map of $\PP$ (up to an overall sign correction,
compare \cite[eq.\ (2.21)--(2.24)]{seidelsubalgebras}). The following lemma is standard,
see \cite{seidelsubalgebras, ganatrathesis} for more details.

\begin{lemma}\label{identityidempotent}
    The canonical maps \eqref{tensordiagonalright} and \eqref{tensordiagonalleft} are quasi-isomorphisms.
\end{lemma}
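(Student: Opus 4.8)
The plan is to prove that the augmentation $\epsilon\colon\PP\otimes_\C\C\to\PP$ of \eqref{tensordiagonalright} is a quasi-isomorphism; the assertion for \eqref{tensordiagonalleft} is entirely symmetric. I will fix objects $X\in\C_0$ and $Z\in\C$, work with the complex $(\PP\otimes_\C\C)(X,Z)$ of \eqref{barcomplextensorproduct}, and produce an explicit chain-homotopy inverse to $\epsilon$ for the strictly unital models of $\C$ and $\PP$ used throughout the paper.

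First I would write down the section $s\colon\PP(X,Z)\to(\PP\otimes_\C\C)(X,Z)$, $p\mapsto\pm\,p\otimes e_Z$, landing in the $k=0$, $Y_0=Z$ summand $\PP(X,Z)\otimes\C(Z,Z)$; strict unitality gives $\epsilon\circ s=\id$ up to sign, since $\epsilon$ restricted to this summand is the linear right-action map $\mu^{0|1|1}_\PP(p\otimes e_Z)=\pm p$. Next I would define the degree $-1$ homotopy $h$ on $(\PP\otimes_\C\C)(X,Z)$ which sends $p\otimes a_1\otimes\cdots\otimes a_k\otimes c$ (with $c$ the final $\C$-factor) to $\pm\,p\otimes a_1\otimes\cdots\otimes a_k\otimes c\otimes e_Z$: that is, $h$ demotes $c$ to an interior bar factor and appends the strict unit $e_Z$ as the new last factor. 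The verification that $dh\pm hd=\id-s\epsilon$ is then a direct computation of extra-degeneracy type: in every term of $dh$ or $hd$ the inserted unit $e_Z$ is fed into some $\ainf$- or bimodule structure map, and strict unitality annihilates all such terms except the $\mu^2$-contraction of $c$ against $e_Z$, which returns the original chain, and (when $k=0$) the $\mu^{0|1|1}_\PP$-contraction, which produces exactly $-s\epsilon$.

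To obtain the lemma in full generality, where $\C$ is only cohomologically unital, I would reduce to the strictly unital case: every cohomologically unital $\ainf$-category admits a quasi-equivalent strictly unital model, and, because the morphism complexes are cofibrant and hence $(-)\otimes_\C(-)$ is homotopically well-behaved (properties \ref{cofibranttensor} and \ref{cofibranttensorexact}), both sides of \eqref{tensordiagonalright} as well as $\epsilon$ are carried to quasi-isomorphic data under such a replacement. An alternative, self-contained argument runs the spectral sequence of the filtration of $(\PP\otimes_\C\C)(X,Z)$ by bar length: the bar differential preserves or strictly lowers this length, on the associated graded only internal differentials survive, cofibrancy identifies the $E_1$-page with the classical two-sided bar complex of the ordinary category $H^\bullet\C$ with coefficients in the right $H^\bullet\C$-module $H^\bullet\PP(X,-)$, and since higher $\ainf$-operations drop bar length by at least two, $d_1$ is the ordinary bar differential, whose augmented complex is acyclic away from length zero by the same extra-degeneracy homotopy.

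The sign bookkeeping in the homotopy is routine and not the point. The one step that genuinely needs care is checking that strict unitality really does kill \emph{every} term of $dh\pm hd$ containing the inserted unit other than the two enumerated ones (i.e.\ that no higher $\ainf$- or bimodule operation with a unit entry contributes), and---for the cohomologically unital case---that cofibrancy of the morphism complexes is exactly the hypothesis that lets one transport the statement along a strictification. Further details may be found in \cite{seidelsubalgebras,ganatrathesis}.
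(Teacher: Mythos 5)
Your strictly unital argument (section plus extra-degeneracy homotopy appending the strict unit) is fine and coincides with the first observation in the paper's proof. The problem is the cohomologically unital case, which is the actual content of the lemma, and both of your proposed reductions have gaps. The spectral-sequence route breaks at the identification of the $E_1$-page: the associated graded of the bar-length filtration is a direct sum of tensor products $\PP(X,Y_1)\otimes\C(Y_1,Y_2)\otimes\cdots\otimes\C(Y_k,Z)$ with the internal differentials, and over $\ZZ$ there is no K\"unneth isomorphism identifying its cohomology with $H^\bullet\PP(X,Y_1)\otimes H^\bullet\C(Y_1,Y_2)\otimes\cdots$; cofibrancy gives K-flatness (acyclicity is preserved by tensoring), not a K\"unneth isomorphism, so the $E_1$-page is \emph{not} the classical bar complex of the cohomology category, and the ``same extra-degeneracy homotopy'' is not available there: what one actually needs is that $\id^{\otimes k}\otimes\mu^2(-,\1_Z)$ acts as the identity on the cohomology of these tensor products, where $\1_Z$ is merely a cycle representing the cohomological unit, and that statement is not supplied by anything in your write-up.

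The strictification route is likewise not a one-liner in this paper's setting: existence of strictly unital replacements of $\C$ and of the bimodule $\PP$ over $\ZZ$ (with only cofibrancy assumed) is nontrivial, and, more importantly, transporting the statement along such a replacement requires knowing that the bar tensor product and the augmentation \eqref{tensordiagonalright} are invariant under quasi-equivalences --- in the paper that invariance (Lemma \ref{splitgeneratorquasiisomorphism}) is \emph{deduced from} Lemma \ref{identityidempotent}, so within this development your reduction is circular, and proving the invariance independently is essentially work of the same order as the lemma itself. The missing ingredient in both of your routes is Lemma \ref{multhunitisidhtpy}: multiplication $\mu^2(-,\1_Z)$ by a cohomological-unit cycle is chain homotopic to the identity, which is where cofibrancy genuinely enters (via Lemma \ref{cofibrantqisoinvert}, K-projectivity lets one invert this quasi-isomorphism up to homotopy, and idempotent plus invertible gives identity). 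Granting that, the paper argues directly at the chain level: given a cycle $c$ in the cone with top bar-length $k_{\max}$, replace it by $c-d(c\otimes\1_Z)$; the length-$(k_{\max}{+}1)$ part cancels and the length-$k_{\max}$ part becomes $c_{k_{\max}}-(\id^{\otimes k}\otimes\mu^2(-,\1_Z))(c_{k_{\max}}\otimes\1_Z)$, which is nullhomologous by the homotopy-unit lemma, so one can strictly decrease $k_{\max}$ and conclude acyclicity --- no K\"unneth, no strictification. If you want to salvage your filtration argument, you must replace the $E_1$ identification by exactly this use of Lemma \ref{multhunitisidhtpy}.
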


\begin{proof}
Without loss of generality, we discuss \eqref{tensordiagonalright} only (the case of \eqref{tensordiagonalleft} is identical, and, in fact, follows from the case of \eqref{tensordiagonalright} by passing to the ``opposite bimodule'').

The cone of \eqref{tensordiagonalright} evaluated at $(X,Z)$ is given by
\begin{equation}\label{conecomplextrivialtensor}
\bigoplus_{\begin{smallmatrix}k\geq 0\\ Y_1, \ldots, Y_k \in \C\end{smallmatrix}} \PP(X,Y_1) \otimes \C(Y_1, Y_2)[1] \otimes \cdots \otimes \C(Y_{k-1}, Y_k)[1]\otimes\C(Y_k,Z)[1]
\end{equation}
where the $k=0$ term is by definition $\PP(X,Z)$.
If $\C$ and $\PP$ are strictly unital, then $-\otimes\1_Z$ is a contracting homotopy of this complex, where $\1_Z\in\C(Z,Z)$ denotes the strict unit.
In the general cohomologically unital case, argue as follows.

For any cycle $c=\sum_{k\geq 0}c_k$ in \eqref{conecomplextrivialtensor}, let $k_{\max}(c)$ denote the maximum $k$ for which $c_k$ is nonzero.
To show that \eqref{conecomplextrivialtensor} is acyclic, it suffices to show that for every cycle $c$, there exists another cycle $c'$ cohomologous to $c$ with $k_{\max}(c')<k_{\max}(c)$.
If
\begin{equation}
[c_{k_{\max}(c)}]\in H^\bullet\biggl(\bigoplus_{Y_1,\ldots,Y_{k_{\max}(c)}\in\C}\PP(X,Y_1)\otimes\C(Y_1, Y_2)\otimes\cdots \otimes\C(Y_{k_{\max}(c)},Z)\biggr)
\end{equation}
vanishes, then the existence of a suitable $c'$ is clear.
Thus it is enough to show that every cycle $c$ is cohomologous to a cycle $c'$ with $k_{\max}(c')=k_{\max}(c)$ and $[c'_{k_{\max}(c')}]=0$.

For any given cycle $c$, consider $c':=c-d(c\otimes\1_Z)$ where $\1_Z\in\C(Z,Z)$ denotes any cycle whose class in cohomology is the cohomological unit (clearly $c$ and $c'$ are cohomologous).
Note that since both $c$ and $\1_Z$ are cycles, the chain $d(c\otimes\1_Z)$ may be obtained from $c\otimes\1_Z$ by contracting just those strings of consecutive tensor factors which intersect both $c$ and $\1_Z$ nontrivially.
In particular, we conclude that the length $k_{\max}(c)+1$ term of $c'$ vanishes, and the length $k_{\max}(c)$ term of $c'$ is given by $c_k-(\id^{\otimes k}\otimes\mu^2)(c_k\otimes\1_Z)$.
Hence it is enough to show that the map
\begin{multline}
\PP(X,Y_1)\otimes\C(Y_1, Y_2)\otimes\cdots \otimes\C(Y_{k_{\max}(c)},Z)\\
\xrightarrow{\id^{\otimes k}\otimes\mu^2(-,\1_Z)}\PP(X,Y_1)\otimes\C(Y_1,Y_2)\otimes\cdots \otimes\C(Y_{k_{\max}(c)},Z)
\end{multline}
acts as the identity on cohomology for all $Y_1,\ldots,Y_{k_{\max}(c)}\in\C$.
This follows from the fact that both maps
\begin{align}
&\mu^2(-,\1_Z):\C(Y_{k_{\max}(c)},Z)\to\C(Y_{k_{\max}(c)},Z)\\
&\mu^2(-,\1_Z):\PP(X,Z)\to\PP(X,Z)
\end{align}
are homotopic to the identity by Lemma \ref{multhunitisidhtpy} below.
\end{proof}

\begin{lemma}\label{multhunitisidhtpy}
Let $\M$ be a $\C$-module and let $\1_X\in\C(X,X)$ be any cycle representing the cohomology unit.
The map $\mu^2(-,\1_X):\M(X)\to\M(X)$ is chain homotopic to the identity map.
\end{lemma}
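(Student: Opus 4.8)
The plan is to show that the chain map $t:=\mu^2(-,\1_X)\colon\M(X)\to\M(X)$ is chain homotopic to $\id_{\M(X)}$ by establishing three facts and then combining them formally: (1) $t$ is a chain map; (2) $t^2$ is chain homotopic to $t$; (3) $t$ is a quasi-isomorphism. The third is the heart of the matter.

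Fact (1) is immediate from the quadratic $\ainf$-module relation for the input $(m,\1_X)$, using $\mu^1_\C(\1_X)=0$. For fact (2) I would apply the $\ainf$-module relation to the input $(m,\1_X,\1_X)$; using $\mu^1_\C(\1_X)=0$ it reads, up to signs,
\[
d_{\M}\,\mu^{1|2}_{\M}(m,\1_X,\1_X)+\mu^{1|2}_{\M}(d_{\M}m,\1_X,\1_X)+\mu^{1|1}_{\M}\!\bigl(\mu^{1|1}_{\M}(m,\1_X),\1_X\bigr)+\mu^{1|1}_{\M}\!\bigl(m,\mu^2_\C(\1_X,\1_X)\bigr)=0.
\]
Since $[\1_X]$ is the cohomological unit, $\mu^2_\C(\1_X,\1_X)-\1_X=\mu^1_\C(g)$ for some $g\in\C(X,X)$ of degree $-1$; substituting this and rewriting $\mu^{1|1}_{\M}(m,\mu^1_\C(g))$ via the $\ainf$-module relation for $(m,g)$ exhibits $t^2-t$ as $d_{\M}\circ H_0+H_0\circ d_{\M}$ for the explicit degree $-1$ operator $H_0(m):=\pm\mu^{1|2}_{\M}(m,\1_X,\1_X)\pm\mu^{1|1}_{\M}(m,g)$. (That the signs produce $t^2-t$ rather than $t^2+t$ is forced by the strictly unital case, where $g=0$, the $\mu^{1|2}_{\M}$-terms vanish, and $t=\id$.)

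Granting (1)--(3), I would conclude as follows. The complex $\M(X)$ is cofibrant, since all module complexes in this paper are, so Lemma \ref{cofibrantqisoinvert} furnishes a homotopy inverse $t'$ with $t'\circ t\simeq\id_{\M(X)}$. Then
\[
t\;\simeq\;(t'\circ t)\circ t\;=\;t'\circ t^2\;\simeq\;t'\circ t\;\simeq\;\id_{\M(X)},
\]
where the first and last relations come from $t'\circ t\simeq\id$ (precomposed with $t$, respectively used directly) and the middle one from applying $t'$ to the homotopy $t^2\simeq t$ of fact (2); since $t$ and $t'$ are chain maps, each step is legitimate.

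The main obstacle is fact (3). Restricting $\M$ along the inclusion of the $\ainf$-algebra $A:=\C(X,X)$ turns $M:=\M(X)$ into a right $A$-module without changing $t$, so one may assume $\C=A$ is an $\ainf$-algebra. Form the two-sided bar complex $B:=M\otimes_A A=\bigoplus_{k\geq 0}M\otimes A[1]^{\otimes k}\otimes A$, with evaluation map $\epsilon\colon B\to M$ and unit-insertion map $\iota\colon M\to B$, $m\mapsto m\otimes\1_X$; these are chain maps (for $\iota$ one uses $\mu^1_\C(\1_X)=0$) and $\epsilon\circ\iota=t$. The claim is that $\epsilon$ is a quasi-isomorphism and $\iota\circ\epsilon\simeq\id_B$, whence $\iota$ is a quasi-isomorphism (a homotopy section of a quasi-isomorphism) and therefore so is $t=\epsilon\circ\iota$. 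Both parts are proved via the ``extra degeneracy'' homotopy $s\colon B\to B[-1]$ that appends $\1_X$ to the bar word: in the strictly unital case one has $ds+sd=\id_B-\iota\circ\epsilon$ on the nose, while in the cohomologically unital case the defect is measured by $\mu^2_\C(\1_X,\1_X)-\1_X=\mu^1_\C(g)$ and is absorbed by an induction on the top bar-word length, exactly as in the proof of Lemma \ref{identityidempotent} (which cannot be quoted here, as it relies on the present lemma). This bar-word bookkeeping is the only substantive computation in the proof; equivalently, it amounts to showing that the unit-insertion pre-homomorphism $\Phi\colon\M\to\M$ with components $\Phi^{1|\ell}:=\mu^{1|\ell+1}_{\M}(-,\1_{Y_0},-,\dots,-)$ is an $\ainf$-module homomorphism homotopic to $\id_\M$, of which the present lemma is the $(1|0)$-component.
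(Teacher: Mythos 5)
Your facts (1) and (2) and the formal endgame (idempotent up to homotopy, invertible up to homotopy by cofibrancy and Lemma \ref{cofibrantqisoinvert}, hence homotopic to the identity) are exactly the paper's argument, and your explicit homotopy for $t^2\simeq t$ via the $(m,\1_X,\1_X)$ relation and the correcting element $g$ with $\mu^2(\1_X,\1_X)-\1_X=\mu^1(g)$ is a fine spelled-out version of the paper's one-line appeal to $\mu^3$. The gap is in fact (3). In this paper ``$\ainf$-module'' carries the standing assumption of cohomological unitality, which says precisely that $[\1_X]$ acts as the identity on $H^\bullet\M(X)$; so $t=\mu^2(-,\1_X)$ is a quasi-isomorphism by hypothesis, and the paper's proof simply cites this. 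You instead try to \emph{prove} quasi-isomorphy by a bar-resolution argument, and as sketched that argument does not go through: it never uses the one hypothesis that makes the statement true. Indeed, the lemma is false for modules that are not cohomologically unital (take $\M(X)$ any non-acyclic complex with all structure maps $\mu^{1|k}$, $k\geq 1$, set to zero over a strictly unital $\C$: then $t=0$), so any proof of fact (3) must invoke cohomological unitality of $\M$ somewhere, and yours does not.

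Concretely, the extra-degeneracy homotopy $s$ (appending $\1_X$) uses only the unit acting on the algebra factor, so it yields $ds+sd=\id_B-\iota\epsilon$ (in the strictly unital model), i.e.\ $\iota\circ\epsilon\simeq\id_B$ --- but not your other claim that $\epsilon\colon M\otimes_A A\to M$ is a quasi-isomorphism. In the zero-action example above, $B$ is acyclic while $M$ is not and $\epsilon=0$, so that claim genuinely fails without unitality of the module; and with unitality it is exactly the content of Lemma \ref{identityidempotent}, whose proof in the paper runs the top-word-length induction by feeding in the \emph{present} lemma (homotopy-level unitality on each tensor factor is what makes $\id^{\otimes k}\otimes\mu^2(-,\1_X)$ act as the identity on cohomology of the tensor product over $\ZZ$; cohomology-level information about the factors alone does not suffice). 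So the proposed ``induction exactly as in the proof of Lemma \ref{identityidempotent}'' is either circular or has a hole at precisely the step you defer. The fix is simply to delete the bar-complex detour and justify fact (3) the way the paper does: $t$ induces the identity on $H^\bullet\M(X)$ because $\M$ is assumed cohomologically unital, hence is a quasi-isomorphism, and then your facts (1), (2) and Lemma \ref{cofibrantqisoinvert} finish the proof.
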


\begin{proof}
First note that the chain homotopy class of the map $\mu^2(-,\1_X)$ is independent of the choice of cycle $\1_X$ representing the cohomology unit.
Next, note that since $\mu^2(\1_X,\1_X)$ is cohomologous to $\1_X$, the $\mu^3$ operation provides a chain homotopy between $\mu^2(-,\1_X)\circ\mu^2(-,\1_X)$ and $\mu^2(-,\1_X)$ (in other words, $\mu^2(-,\1_X)$ is idempotent up to chain homotopy).
Finally, recall that since $\mu^2(-,\1_X)$ is a quasi-isomorphism (by cohomological unitality), cofibrancy of $\M(X)$ implies by Lemma \ref{cofibrantqisoinvert} that $\mu^2(-,\1_X)$ is invertible up to homotopy.
This completes the proof since idempotent and invertible implies identity.
\end{proof}

For a  $(\C,\D)$-bimodule $\B$ and a pair of functors $g: \C'\to \C$ and $h: \D'\to \D$, we denote by
$(g,h)^*\B$ the \emph{two-sided pull-back} (see e.g.\ \cite{ganatrathesis}) of $\B$ along
$g$ and $h$.  On the level of chain complexes, $(g,h)^*\B(X,Y):= \B(gX,hY)$.
Thinking of the bimodule $\B$ as a bilinear functor $\C^\op \times \D \to
\Ch$, the two sided pull-back is simply the composition $\B\circ(g^\op,h)$.

For $\PP$ a $(\C_0,\C)$-bimodule, $\Q$ a $(\C,\C_1)$-bimodule, and $f: \A \to \C$ an $\ainf$-functor, we denote by
\begin{equation}
    \PP \otimes_{\A} \Q
\end{equation}
the tensor product $(\id_{\C_0}, f)^* \PP \otimes_{\A} (f,
\id_{\C_1})^*\Q$. In the cases we study below, $f$ will be an inclusion on the
level of morphism complexes with no higher order functor operations (which we call a \emph{naive inclusion})
justifying our omission of $f$ from the notation. We note that $f$ always induces
a canonical bimodule homomorphism
\begin{equation}\label{eq:subtensormap}
    \PP \otimes_{\A} \Q \to \PP \otimes_{\C} \Q.
\end{equation}
If $f$ is a naive inclusion, this map is just for each pair of objects $(X,Y)$,
the levelwise inclusion of the complexes \eqref{barcomplextensorproduct}.
Finally, let us discuss a circumstance under which this map is a quasi-isomorphism.

\begin{lemma}\label{splitgeneratorquasiisomorphism}
Let $f: \A\to\C$ be cohomologically fully faithful and split-generating (e.g.\ it could be a quasi-equivalence, or it could be the inclusion of a full subcategory spanned by split-generators).
Then the map
\begin{equation}
\PP\otimes_\A\Q\to\PP\otimes_\C\Q
\end{equation}
is a quasi-isomorphism for all $(\C_0, \C)$-bimodules $\PP$ and $(\C, \C_1)$-bimodules $\Q$.
\end{lemma}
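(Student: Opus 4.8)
The plan is to reduce, via standard manipulations of the bar model of the derived tensor product, to the case where $\PP=\C$ is the diagonal bimodule, and then to exploit split-generation by restricting attention to objects in the image of $f$.

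First I would assemble the formal tools. Since all morphism complexes and all bimodule complexes are cofibrant (hence K-flat), the bar model $-\otimes_\C-$ preserves quasi-isomorphisms in each variable; there are evident identities $\PP\otimes_\C(\id,g)^*\B=(\id,g)^*(\PP\otimes_\C\B)$ and its mirror, because a pull-back on an outer variable does not interact with the bar construction over the middle category; the derived tensor product is associative up to natural quasi-isomorphism; and $\PP\otimes_\C\C\xrightarrow{\sim}\PP$ and $\C\otimes_\C\Q\xrightarrow{\sim}\Q$ by Lemma \ref{identityidempotent}. Writing $\PP\otimes_\A\Q=(\id,f)^*\PP\otimes_\A(f,\id)^*\Q=(\PP\otimes_\C(\id,f)^*\C)\otimes_\A(f,\id)^*\Q$ and applying these facts, one identifies \eqref{eq:subtensormap} with the map obtained by applying $\PP\otimes_\C(-)$ to the comparison map $\C\otimes_\A\Q\to\C\otimes_\C\Q\xrightarrow{\sim}\Q$ (this identification is a routine, if slightly fiddly, comparison of bar complexes). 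By cofibrancy of $\PP$, it therefore suffices to prove that $\C\otimes_\A\Q\to\Q$ is a quasi-isomorphism for every $(\C,\C_1)$-bimodule $\Q$; fixing an object of $\C_1$, this in turn reduces to showing that $\C\otimes_\A\M\to\M$ is a quasi-isomorphism for every left $\C$-module $\M$.

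Now $\C\otimes_\A\M\to\M$ is a homomorphism of left $\C$-modules (the counit of the restriction–induction pair along $f$). An $\ainf$-module over $\C$ extends canonically to the idempotent completion $\Pi\Tw\C$, sending exact triangles to exact triangles and respecting summands, so the class of objects $Y$ at which this homomorphism is a quasi-isomorphism is closed under mapping cones and passing to summands. As $f$ split-generates $\C$, it is thus enough to treat the objects $Y=fW$ with $W\in\A$. For such $Y$ one has $(\C\otimes_\A\M)(fW)=\C(fW,f-)\otimes_\A(f^*\M)$; the homomorphism of left $\A$-modules $\A(W,-)\to\C(fW,f-)$ induced by $f$ (leading term $f^1$) is a quasi-isomorphism because $f$ is cohomologically fully faithful, and tensoring over $\A$ with $f^*\M$ while invoking $\A(W,-)\otimes_\A(f^*\M)\xrightarrow{\sim}(f^*\M)(W)=\M(fW)$ (Lemma \ref{identityidempotent} applied to $\A$) shows that $(\C\otimes_\A\M)(fW)\to\M(fW)$ is a quasi-isomorphism. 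The main obstacle is the bookkeeping in the first reduction—verifying that \eqref{eq:subtensormap} is indeed carried, under the associativity and pull-back identities, onto $\PP\otimes_\C$ of the diagonal comparison map—after which the two hypotheses enter cleanly: cohomological full faithfulness in the computation at $fW$, and split-generation in the passage from $f(\A)$ to all of $\C$. (When $f$ is a quasi-equivalence one may omit the split-generation step, since then every object of $\C$ is already isomorphic, in cohomology, to an object of $f(\A)$.)
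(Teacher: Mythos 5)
Your argument is correct and its backbone is the same as the paper's: sandwich with diagonal bimodules via Lemma \ref{identityidempotent} to reduce \eqref{eq:subtensormap} to a one-sided comparison ($\C\otimes_\A\Q\to\Q$ in your version, $\C\otimes_\A\C\to\C$ in the paper's), and then use split-generation through the standard ``closed under cones and summands'' mechanism. Where you diverge is in how the general functor $f$ is treated. The paper splits into three cases: it declares the bijective-on-objects case clear (this is where cohomological full faithfulness silently does its work, via a length-filtration comparison of bar complexes), proves the full-subcategory case by restricting the second variable to $\A$ --- where the map literally becomes $\C\otimes_\A\A\to\C$ and Lemma \ref{identityidempotent} applies --- and then handles general $f$ by factoring $\A\to\im(\A)\to\C$. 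You instead give a single uniform argument: evaluate the counit at $Y=fW$, compare $(\C\otimes_\A\M)(fW)=\C(fW,f-)\otimes_\A f^\ast\M$ with $\A(W,-)\otimes_\A f^\ast\M\simeq\M(fW)$, and let full faithfulness enter through the Yoneda-module quasi-isomorphism $\A(W,-)\to\C(fW,f-)$ (a right $\A$-module map, incidentally, not a left one, though nothing hinges on this). Your route buys a cleaner division of labor --- full faithfulness and split-generation each appear exactly once, and you never need the factorization through the image or the glossed-over bijective-on-objects step --- at the cost of the extra bar-complex bookkeeping you flag (the compatibility of the associativity/collapse maps with \eqref{eq:subtensormap}, and the homotopy-commutativity of the triangle at $fW$), which is at the same level of elided routine verification as the paper's own ``by virtue of the quasi-isomorphism \ldots it is enough to show'' reductions.
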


\begin{proof}
The result in the case $\A\to\C$ is bijective on objects is clear.

The result in the case $\A\to\C$ is the inclusion of a full subcategory may be seen as follows.
Note that, by virtue of the quasi-isomorphism $\PP\otimes_\C\C\otimes_\A\C\otimes_\C\Q\to\PP\otimes_\A\Q$ (and the same with $\C$ in place of $\A$) from Lemma \ref{identityidempotent}, it is enough to show that $\C\otimes_\A\C\to\C\otimes_\C\C$ is a quasi-isomorphism.
Further composing with the quasi-isomorphism $\C\otimes_\C\C\to\C$, we see that it is enough to show that $\C\otimes_\A\C\to\C$ is a quasi-isomorphism.
When restricted to inputs $(X,Y)$ with $Y\in\A$, this map is simply $\C\otimes_\A\A\to\C$, which is a quasi-isomorphism by Lemma \ref{identityidempotent}.
This implies the same for any $Y$ split-generated by $\A$, namely all $Y\in\C$.

These two special cases suffice to treat the general case, as we now argue.
We may factor $\A\to\C$ as $\A\to\im(\A)\to\C$.
Since $\im(\A)\to\C$ is the inclusion of a full subcategory, it is enough to treat the case of $\A\to\im(\A)$, i.e.\ we may assume $\A\to\C$ is surjective on objects.
Now, choose a full subcategory $\tilde\C\subseteq\A$ mapping bijectively to $\C$, so there is a factorization $\tilde\C\to\A\to\C$.
Now the result for $\A\to\C$ follows from the result for $\tilde\C\to\C$ (bijective on objects) and $\tilde\C\to\A$ (full subcategory; note $\tilde\C\to\A$ is essentially surjective since $\A\to\C$ is cohomologically fully faithful).
\end{proof}

\subsubsection{Quotients and localization}\label{alocal}

We review some basic elements of the theory of quotients and localizations of $\ainf$-categories due to Lyubashenko--Ovsienko \cite{lyubashenkoovsienko}, Lyubashenko--Manzyuk \cite{lyubashenkomanzyuk}, and Drinfeld \cite{drinfelddgquotient}, generalizing much earlier work of Verdier \cite{verdierderivedcategories} on quotients and localizations of triangulated categories.
Our aim is both to make this article self-contained
and to verify that the (rather elementary) results we need remain valid for
$\ainf$-categories over a general commutative ring (which, as stated above, carry the
requirement that all morphism, module, bimodule complexes are
cofibrant).

\begin{definition}\label{ainfquotient}
Let $\C$ be an $\ainf$-category, and let $\A$ be a set of objects of $\C$ (meaning, $\A$ is a set, and for each element of $\A$ there is specified an object of $\C$).
The \emph{quotient $\ainf$-category} $\C/\A$ is defined to
have the same objects as $\C$ and morphism spaces
\begin{align}\label{quotientdefcomplex}
(\C/\A)(X,Y):={}&\bigoplus_{\begin{smallmatrix}p\geq 0\\A_1,\ldots,A_p\in\A\end{smallmatrix}}\C(X,A_1)\otimes \C(A_1, A_2)[1] \otimes\cdots\otimes\C(A_p,Y)[1]\\
={}&[(\C\otimes_\A\C)[1]\to\C](X,Y)\nonumber
\end{align}
with the usual bar differential (note that the $p=0$ term is, by convention, $\C(X,Y)$).
It is important that $\A$ be a \emph{set} so that the direct sum \eqref{quotientdefcomplex} makes sense.
Note that this construction preserves strict unitality and cofibrancy.
More generally, we may quotient $\C$ by any set $\A$ of objects of $\Tw\C$ (note that, although $\A$ must be a set, there is no need to fix a specific small model for $\Tw\C$; also note that when $\C$ is strictly unital or cohomologically unital, so is $\Tw\C$).

Let $\M$ be a right $\C$-module.  There is a $\C/\A$-module $\M/\A$ defined as
\begin{align}\label{modulequotientdefcomplex}
(\M/\A)(Y):={}&\bigoplus_{\begin{smallmatrix}p\geq 0\\A_1,\ldots,A_p\in\A\end{smallmatrix}}\M(A_1)\otimes\C(A_1,A_2)[1]\otimes\cdots\otimes\C(A_p,Y)[1]\\
={}&[\M\otimes_\A\C[1]\to\M](Y).\nonumber
\end{align}
with the usual bar differential (note again that the $p=0$ term in the sum is $\M(Y)$).  If $\M$ is a left $\C$-module, then the analogously
defined $\C/\A$-module is denoted $\A\backslash\M$.  Bimodules can be
quotiented on both sides: if $\B$ is a $(\C,\C')$-bimodule, then we can form
the $(\C/\A,\C'/\A')$-bimodule $\A\backslash\B/\A'$.
\end{definition}

\begin{lemma}
The quotient construction in Definition \ref{ainfquotient} preserves cohomological unitality, and quotient functors are cohomologically unital.
\end{lemma}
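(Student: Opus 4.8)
The plan is to exhibit explicit cohomological units in $\C/\A$ and then observe that the canonical functor carries units to units. Fix an object $X$ and a cycle $\1_X\in\C(X,X)$ representing the cohomological unit of $\C$. Let $\iota\colon\C\to\C/\A$ denote the quotient functor, i.e.\ the inclusion of the $p=0$ summands in \eqref{quotientdefcomplex}, and set $e_X:=\iota(\1_X)\in(\C/\A)(X,X)$. I would first record two easy structural facts. The bar differential on $(\C/\A)(X,X)$ restricted to the length-one chains is just $\mu^1_\C$, so $\iota$ is a chain map on the $p=0$ summands and $e_X$ is a degree-zero cycle. Moreover $\iota$ is in fact a \emph{strict} $\ainf$-functor: the bar operations $\mu^n_{\C/\A}$ applied to inputs in $p=0$ summands cannot produce terms in positive-$p$ summands (these would require intermediate objects lying in $\A$, which need not be present), so $\mu^n_{\C/\A}(c_1,\dots,c_n)=\mu^n_\C(c_1,\dots,c_n)$ for $c_i\in\C$; in particular $\mu^2_{\C/\A}(e_X,e_X)=\iota\bigl(\mu^2_\C(\1_X,\1_X)\bigr)$ is homologous to $e_X$.

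The heart of the argument is to show that left multiplication $L:=\mu^2_{\C/\A}(e_X,-)\colon(\C/\A)(X,Y)\to(\C/\A)(X,Y)$ is chain homotopic to the identity, for every $Y$. I would first show $L$ is a quasi-isomorphism by a filtration argument. Equip $(\C/\A)(X,Y)$ with the increasing filtration $F_{\leq\ell}$ by the number of tensor factors; the bar differential and the map $L$ both preserve it (applying $\mu^k_\C$ with $k\geq 2$ strictly decreases length, and $L$ composes $e_X$ into the first factor of its argument). On the $\ell$-th associated graded piece $\bigoplus\C(X,A_1)\otimes\C(A_1,A_2)[1]\otimes\cdots\otimes\C(A_p,Y)[1]$, whose differential is the sum of the internal $\mu^1_\C$'s, the map $L$ is induced by applying $\mu^2_\C(\1_X,-)$ to the leftmost tensor factor and the identity to the remaining ones. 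Since $\mu^2_\C(\1_X,-)$ is a quasi-isomorphism by cohomological unitality of $\C$, and the trailing tensor factors are cofibrant and hence K-flat, $L$ is a quasi-isomorphism on each graded piece; as the filtration is exhaustive and bounded below, the mapping cone of $L$ has acyclic associated graded and is therefore acyclic, so $L$ is a quasi-isomorphism. Because $(\C/\A)(X,Y)$ is cofibrant (Definition \ref{ainfquotient}), Lemma \ref{cofibrantqisoinvert} then produces a homotopy inverse $L'$ of $L$.

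Next I would show $L$ is idempotent up to homotopy. The $\ainf$-relation applied to the inputs $(e_X,e_X,b)$, using $\mu^1_{\C/\A}e_X=0$, exhibits $L\circ L-\mu^2_{\C/\A}\bigl(\mu^2_{\C/\A}(e_X,e_X),-\bigr)$ as $\pm[\mu^1_{\C/\A},\mu^3_{\C/\A}(e_X,e_X,-)]$, hence null-homotopic; and writing $\mu^2_{\C/\A}(e_X,e_X)=e_X+\mu^1_{\C/\A}\xi$, the Leibniz rule for $\mu^2_{\C/\A}$ in its first argument shows $\mu^2_{\C/\A}\bigl(\mu^2_{\C/\A}(e_X,e_X),-\bigr)-L$ is null-homotopic as well. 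Thus $L\circ L\simeq L$, so $L\simeq L\circ L\circ L'\simeq L\circ L'\simeq\id$. The identical argument applied to $R:=\mu^2_{\C/\A}(-,e_X)$ — which on the $\ell$-th graded piece acts by $\mu^2_\C(-,\1_X)$ on the \emph{rightmost} factor — gives $R\simeq\id$. Hence $[e_X]$ is a two-sided identity in the cohomology category of $\C/\A$, so $\C/\A$ is cohomologically unital; and since $\iota$ sends $\1_X$ to $e_X$, which represents the cohomological unit of $\C/\A$, the quotient functor is cohomologically unital (in fact strictly unital when $\C$ is). The main obstacle is the filtration step in the second paragraph: unwinding the bar model of the $\ainf$-structure on $\C/\A$ precisely enough to identify the action of $L$ on the graded pieces, and checking K-flatness of the trailing factors and convergence of the resulting spectral sequence; once this is in place, the remainder is formal manipulation of $\ainf$-relations together with Lemma \ref{cofibrantqisoinvert}.
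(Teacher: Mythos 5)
Your argument is correct and is essentially the paper's proof, just written out in full: the paper likewise reduces to showing that multiplication by (the image of) a unit cycle $\1_X$ is idempotent up to homotopy (from cohomological unitality of $\C$) and a quasi-isomorphism via the length filtration on the quotient complex, hence the identity on cohomology. The only cosmetic difference is that the paper phrases the statement for quotient modules $(\M/\A)(X)$ with right multiplication, which covers both sides at once, whereas you treat $L$ and $R$ on $(\C/\A)(X,Y)$ directly and spell out the spectral-sequence/filtration and Lemma \ref{cofibrantqisoinvert} steps.
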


\begin{proof}
It is enough to show that if $\1_X\in\C(X,X)$ is a cycle representing the cohomological unit, then $\mu^2(-,\1_X):(\M/\A)(X)\to(\M/\A)(X)$ acts as the identity on cohomology.
Since $\mu^2(-,\1_X)\circ\mu^2(-,\1_X)$ is chain homotopic to $\mu^2(-,\1_X)$ (since $\1_X$ is a cohomological unit in $\C$), it is enough to show that $\mu^2(-,\1_X)$ is a quasi-isomorphism.
This then follows from a filtration argument.
\end{proof}

\begin{lemma}\label{quotientannihilatesA}
If $Y$ is split-generated by $\A$, then $(\M/\A)(Y)$ is acyclic.  In
particular, if $X$ or $Y$ is split-generated by $\A$, then $(\C/\A)(X,Y)$ is acyclic.\qed
\end{lemma}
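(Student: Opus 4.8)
The plan is to prove the statement for a module $\M$ and a single object $Y$ first, and then deduce the statement about $\C/\A$ by specializing $\M$ to a Yoneda module. I would begin by unwinding the definition \eqref{modulequotientdefcomplex}: the complex $(\M/\A)(Y)$ is, up to a shift, the mapping cone of the canonical contraction map $\M\otimes_\A\C\to\M$ evaluated at $Y$, so it is acyclic precisely when that map is a quasi-isomorphism at $Y$. For $Y\in\A$ this is immediate, since $\A\subseteq\C$ is a \emph{full} subcategory: then $\C(A_p,Y)=\A(A_p,Y)$ for every $A_p\in\A$, whence $(\M\otimes_\A\C)(Y)=(\M\otimes_\A\A)(Y)$ on the nose, and the contraction map is exactly the canonical map \eqref{tensordiagonalright} (with $\A$ in place of $\C$), which is a quasi-isomorphism by Lemma \ref{identityidempotent}. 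Thus $(\M/\A)(Y)$ is acyclic for all $Y\in\A$.

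Next I would pass from $\A$ to its thick closure. By Definition \ref{ainfquotient} the quotient $\M/\A$ is a $\C/\A$-module, hence by restriction along the quotient functor $\C\to\C/\A$ it is a $\C$-module. A module over an $\ainf$-category extends canonically over $\Tw$ and over the idempotent completion $\Pi$, taking exact triangles to exact triangles and retracts to retracts; consequently the class of objects $Y$ with $(\M/\A)(Y)$ acyclic is closed under shifts, mapping cones, and passage to direct summands in $\Pi\Tw\C$. Since this class contains every object of $\A$ by the previous paragraph, it contains every object split-generated by $\A$, which is the first assertion.

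For the ``in particular'' part I would take $\M$ to be a Yoneda module. For fixed $X$, the right $\C$-module $\C(X,-)$ satisfies $\C(X,-)/\A=(\C/\A)(X,-)$ by comparing \eqref{modulequotientdefcomplex} with \eqref{quotientdefcomplex}, so the case already proved gives acyclicity of $(\C/\A)(X,Y)$ whenever $Y$ is split-generated by $\A$; the mirror argument with the left $\C$-module $\C(-,Y)$ and the quotient $\A\backslash\C(-,Y)$, or simply passing to $\C^\op$, handles the case where $X$ is split-generated by $\A$. The one step meriting care is the middle one: one must invoke the standard fact that modules extend to $\Pi\Tw$ and are exact in the appropriate sense (see, e.g., Seidel \cite{seidelbook}), so that their acyclicity locus is thick; the cofibrancy hypotheses maintained throughout ensure the derived tensor products entering \eqref{modulequotientdefcomplex} and Lemma \ref{identityidempotent} are well behaved, so no genuine obstacle arises.
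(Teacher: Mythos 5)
Your proof is correct, but it takes a genuinely different route from the paper's. The paper's argument is shorter and stays inside the tensor-product formalism it has already set up: after assuming without loss of generality that $\C$ has no objects besides $\A$ and $Y$ (so that the inclusion $\A\hookrightarrow\C$ is split-generating), it observes that $\M\otimes_\A\C\to\M\otimes_\C\C$ is a quasi-isomorphism by Lemma \ref{splitgeneratorquasiisomorphism} and $\M\otimes_\C\C\to\M$ is one by Lemma \ref{identityidempotent}, so the contraction $\M\otimes_\A\C\to\M$ is a quasi-isomorphism and its cone $(\M/\A)(Y)$ is acyclic. You instead handle the case $Y\in\A$ directly (your fullness observation plus Lemma \ref{identityidempotent} is fine) and then propagate to all objects split-generated by $\A$ by extending the cohomologically unital module $\M/\A$ over $\Tw\C$ and $\Pi\Tw\C$ and using that its acyclicity locus is thick; the retract step does require cohomological unitality of $\M/\A$, which the paper verifies in the lemma immediately preceding this one, so no gap arises. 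What the paper's route buys is self-containedness over $\ZZ$ with the standing cofibrancy assumptions: it never needs the (standard, but not developed in the paper) machinery of extending modules to twisted complexes and idempotent completions. Note also that your propagation step is not really avoided by the paper but outsourced: the proof of Lemma \ref{splitgeneratorquasiisomorphism} contains exactly the same ``true for $Y\in\A$, hence for everything split-generated by $\A$'' argument, phrased for the map $\C\otimes_\A\C\to\C$. What your route buys is conceptual transparency---it isolates the trivial case $Y\in\A$ and makes clear that split-generation enters only through thickness of an acyclicity locus---and your Yoneda-module specialization together with the passage to $\C^\op$ for the case that $X$ is split-generated is exactly how the ``in particular'' clause is meant to follow.
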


\begin{proof}
We may as well assume that $\C$ contains no objects other than $\A$ and $Y$.
Now $\M\otimes_\A\C\to\M\otimes_\C\C$ is a quasi-isomorphism by Lemma \ref{splitgeneratorquasiisomorphism}, and $\M\otimes_\C\C\to\M$ is a quasi-isomorphism by Lemma \ref{identityidempotent}.
Thus $\M\otimes_\A\C\to\M$ is a quasi-isomorphism, and hence $\M/\A$ is acyclic.
\end{proof}

\begin{lemma}\label{localgood}
If $\M(A)$ is acyclic for all $A\in\A$, then the natural map
$\M(Y)\to(\M/\A)(Y)$ is a quasi-isomorphism for all $Y\in\C$.

In particular,
if $\C(X,A)$ is acyclic for all $A\in\A$ (``$X$ is left-orthogonal to $\A$''), then the natural map
$\C(X,Y)\to(\C/\A)(X,Y)$ is a quasi-isomorphism for all $Y\in\C$.
\end{lemma}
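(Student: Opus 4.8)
The plan is to exhibit the natural map $\M(Y)\to(\M/\A)(Y)$ as the inclusion of the $p=0$ summand of \eqref{modulequotientdefcomplex}, to observe that this is an inclusion of a subcomplex, and to show its cokernel is acyclic. First I would record the elementary but essential bookkeeping fact that no component of the bar differential on $(\M/\A)(Y)$ \emph{increases} the number of tensor factors: each component either applies an internal differential to a single factor (preserving the number of factors) or contracts a consecutive string of two or more factors by an $\ainf$-operation of $\C$ or a structure map of $\M$ (strictly decreasing the number of factors). Hence, writing $F_n\subseteq(\M/\A)(Y)$ for the span of the terms with at most $n$ factors of $\C$, each $F_n$ is a subcomplex, $F_0=\M(Y)$ with its module differential, and the natural map is the inclusion $F_0\hookrightarrow\bigcup_n F_n=(\M/\A)(Y)$. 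It therefore suffices to prove that the quotient complex $Q(Y):=(\M/\A)(Y)/\M(Y)$ is acyclic. For the ``in particular'' assertion I would then simply apply this to the right $\C$-module $\M:=\C(X,-)$, for which $(\M/\A)(Y)=(\C/\A)(X,Y)$ by Definition \ref{ainfquotient} and the natural map is the one in the statement, while the hypothesis that $X$ is left $\A$-local is exactly the hypothesis that $\M(A)=\C(X,A)$ is acyclic for all $A\in\A$.

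To prove $Q(Y)$ acyclic, I would use the exhaustive, bounded-below filtration it inherits from the $F_n$, namely $0=F_0/F_0\subseteq F_1/F_0\subseteq F_2/F_0\subseteq\cdots$. By the previous paragraph, the differential on the associated graded
\[
\operatorname{gr}_n\;=\;\bigoplus_{A_1,\ldots,A_n\in\A}\M(A_1)\otimes\C(A_1,A_2)[1]\otimes\cdots\otimes\C(A_n,Y)[1]
\]
retains only the internal differentials of the individual tensor factors, so $\operatorname{gr}_n$ is a direct sum of ordinary tensor products of complexes, each containing the factor $\M(A_1)$ with $A_1\in\A$. Since $\M(A_1)$ is acyclic by hypothesis, and the remaining factors $\C(A_i,A_{i+1})$ and $\C(A_n,Y)$ are cofibrant — hence so is their tensor product, which is therefore K-flat by the axioms of Definition \ref{cofibrantdef} — each such tensor product is acyclic, and thus $\operatorname{gr}_n$ is acyclic for all $n\geq 1$. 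An induction on $n$ using the short exact sequences $0\to F_{n-1}/F_0\to F_n/F_0\to\operatorname{gr}_n\to 0$ then shows that every $F_n/F_0$ is acyclic; since any cycle in $Q(Y)$ is a finite sum and hence lies in some $F_n/F_0$, it follows that $Q(Y)$ is acyclic, completing the proof.

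The argument has no serious obstacle; the two points that require genuine attention are (a) the verification that the bar differential is triangular with respect to the filtration by number of tensor factors — so that the $F_n$ really are subcomplexes and $\operatorname{gr}_n$ really carries only the internal differential — and (b) the invocation of K-flatness to deduce acyclicity of the tensor products appearing in $\operatorname{gr}_n$ from acyclicity of the single factor $\M(A_1)$. Over a field both (a) and (b) would be automatic or trivial, but over $\ZZ$ point (b) is precisely where the cofibrancy hypotheses built into Definition \ref{cofibrantdef} (closure under shifts and tensor products, and K-flatness of cofibrant complexes) are used.
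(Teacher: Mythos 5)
Your proposal is correct and follows essentially the same route as the paper: the paper's proof also uses the length filtration (filtration by the number of tensor factors) on the quotient of \eqref{modulequotientdefcomplex} by $\M(Y)$, reducing to the acyclicity of each graded piece $\M(A_1)\otimes\C(A_1,A_2)\otimes\cdots\otimes\C(A_p,Y)$, which follows since $\M(A_1)$ is acyclic and the remaining factors are cofibrant, hence K-flat. Your write-up merely makes explicit the triangularity of the bar differential and the exhaustion/induction step that the paper leaves implicit.
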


\begin{proof}
By considering the length filtration (i.e.\ the filtration by $p$) on the
quotient of \eqref{modulequotientdefcomplex} by the inclusion of $\M(Y)$, it is
enough to show that for $p\geq 1$ and $A_1,\ldots,A_p\in\A$, the complex
\begin{equation}
\M(A_1)\otimes\C(A_1,A_2)\otimes\cdots\otimes\C(A_p,Y)
\end{equation}
is acyclic.  Now note that the first term is acyclic and each remaining tensor
factor is cofibrant.
\end{proof}

\begin{corollary}\label{localizationsplitgenerate}
If $\A$ split-generates $\B$, then $\M/\A\to\M/\B$ is a quasi-isomorphism (in particular, $\C/\A\to\C/\B$ is a quasi-isomorphism).
\end{corollary}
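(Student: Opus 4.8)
The plan is to deduce this directly from Lemma \ref{splitgeneratorquasiisomorphism}, applied to the full-subcategory inclusion $\A\hookrightarrow\B$ (here $\A\subseteq\B$, as is implicit for the functor $\C/\A\to\C/\B$ to be defined). First I would recall from Definition \ref{ainfquotient} that for any full subcategory $\mathcal D\subseteq\C$ the morphism complex $(\C/\mathcal D)(X,Y)$ is, for each pair of objects, the cone of the canonical chain map $\C\otimes_{\mathcal D}\C\to\C$ given by composing \eqref{eq:subtensormap} with \eqref{tensordiagonalright}, and that the quotient functor $\C/\A\to\C/\B$ is, on each morphism complex, the map of cones induced by the evident commutative square whose top and bottom rows are these canonical maps for $\mathcal D=\A$ and $\mathcal D=\B$, whose right vertical arrow is the identity of $\C$, and whose left vertical arrow is the natural comparison map $\C\otimes_\A\C\to\C\otimes_\B\C$ of \eqref{eq:subtensormap} attached to $\A\hookrightarrow\B$.

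The key step is that this left vertical arrow is a quasi-isomorphism on every morphism complex. Indeed, the inclusion $\A\hookrightarrow\B$ is cohomologically fully faithful (being the inclusion of a full subcategory) and split-generating (by hypothesis), so Lemma \ref{splitgeneratorquasiisomorphism} applies to it (with $\B$ playing the role of the ambient category denoted $\C$ there), taking for $\PP$ and $\Q$ the diagonal bimodule $\C$ restricted to a $(\C,\B)$- and a $(\B,\C)$-bimodule respectively; its conclusion is exactly that $\C\otimes_\A\C\to\C\otimes_\B\C$ is a quasi-isomorphism. Since the right vertical arrow of the square is the identity, and a map of cones induced by a commutative square both of whose vertical arrows are quasi-isomorphisms is itself a quasi-isomorphism, we conclude that $(\C/\A)(X,Y)\to(\C/\B)(X,Y)$ is a quasi-isomorphism for all $X,Y$, which is the assertion.

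I do not expect a genuine obstacle: the substance is entirely contained in Lemma \ref{splitgeneratorquasiisomorphism}, and the only thing requiring care is the bookkeeping in the first step — checking that the comparison square commutes (on the nose, or at worst after inserting the canonical homotopy) and that $\C/\A\to\C/\B$ really is the induced map of cones. This is routine: the two augmentation maps $\C\otimes_{\mathcal D}\C\to\C$ and the comparison $\C\otimes_\A\C\to\C\otimes_\B\C$ are all instances of \eqref{eq:subtensormap}/\eqref{tensordiagonalright} for the nested full subcategories $\A\subseteq\B\subseteq\C$, so commutativity follows by factoring both composites through $\C\otimes_\C\C\to\C$. (Alternatively, one could argue via iterated quotients, observing that the objects of $\B$ become acyclic in $\C/\A$ by Lemma \ref{quotientannihilatesA} and applying Lemma \ref{localgood}, but this would require identifying $(\C/\A)/\B$ with $\C/\B$, which is less immediate than the cone argument above.)
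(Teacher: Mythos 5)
Your argument is correct, but it is not the route the paper takes. The paper's proof is an iterated-quotient argument: it observes that $\C/\A\to\C/\B$ is itself the quotient functor $\C/\A\to(\C/\A)/(\B\setminus\A)$ (the identification with $\C/\B$ being a regrouping of the bar complexes, asserted without further comment), and then applies Lemma~\ref{localgood} together with Lemma~\ref{quotientannihilatesA}: since $\A$ split-generates $\B$, every object of $\B$ becomes a zero object in $\C/\A$, so the further quotient changes nothing on cohomology. Your proof instead stays at the level of the cone description $(\C/\D)(X,Y)=[(\C\otimes_\D\C)[1]\to\C](X,Y)$ from Definition~\ref{ainfquotient} and feeds the inclusion $\A\hookrightarrow\B$ directly into Lemma~\ref{splitgeneratorquasiisomorphism} (with $\B$ as the ambient category and the diagonal bimodule $\C$ restricted on each side), concluding via the five lemma for cones; the commutativity of your comparison square is immediate because both augmentations and the comparison $\C\otimes_\A\C\to\C\otimes_\B\C$ are levelwise inclusions/collapses of the same bar strings, and the functor $\C/\A\to\C/\B$ is exactly the levelwise inclusion, hence the induced map of cones. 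The two arguments ultimately rest on the same computation — the paper's Lemma~\ref{quotientannihilatesA} is itself proved from Lemma~\ref{splitgeneratorquasiisomorphism} — but they package it differently: your version avoids having to justify the identification $(\C/\A)/(\B\setminus\A)=\C/\B$ (which you rightly flag as the less immediate step of the alternative, and which the paper elides), at the cost of the modest bookkeeping about cones you acknowledge; the paper's version is more modular, reducing the statement to the independently useful fact that $\B$-objects are zero objects in $\C/\A$ and reusing the locality machinery of Lemmas~\ref{quotientannihilatesA} and~\ref{localgood}.
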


\begin{proof}
The map in question is itself a quotient $\M/\A\to(\M/\A)/\B'=\M/\B$ where $\B'$ denotes (the image in $\C/\A$ of) the objects of $\B$ which are not in $\A$ (note that this latter quotient is the quotient of a $\C/\A$-module).
Thus by Lemma \ref{localgood}, it is enough to show that $\B'$ are zero objects in $\C/\A$, which in turn follows from Lemma \ref{quotientannihilatesA} since $\A$ split-generates $\B$.
\end{proof}

\begin{lemma}\label{quotienttensorproperty}
For any $(\C/\A)$-modules $\M$ and $\N$, the natural map $\M\otimes_\C\N\to\M\otimes_{\C/\A}\N$ is a quasi-isomorphism.
\end{lemma}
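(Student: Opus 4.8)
The statement to prove is Lemma \ref{quotienttensorproperty}: for $(\C/\A)$-modules $\M$ and $\N$, the natural map $\M\otimes_\C\N\to\M\otimes_{\C/\A}\N$ is a quasi-isomorphism. Here $\M$ and $\N$ are already modules over the quotient category; pulling back along the quotient functor $q\colon\C\to\C/\A$ gives $\C$-modules, and the map in question is the canonical one of the form \eqref{eq:subtensormap} for the functor $q$ (which is a naive inclusion on morphism complexes, so the map is the levelwise inclusion of bar complexes \eqref{barcomplextensorproduct}). The key input is that $\A$ consists of zero objects in $\C/\A$: indeed by Lemma \ref{quotientannihilatesA}, $(\C/\A)(X,A)$ and $(\C/\A)(A,Y)$ are acyclic for $A\in\A$, so $\M(A)$ and $\N(A)$ are acyclic for all $A\in\A$ (being retracts of, or built from, such complexes — more precisely, a $(\C/\A)$-module evaluated on a zero object is acyclic, since the identity of that object is null-homotopic).

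First I would record that $\M(A)\simeq 0$ and $\N(A)\simeq 0$ for all $A\in\A$. This follows because the cohomological unit of $(\C/\A)(A,A)$ is a coboundary (as $(\C/\A)(A,A)$ is acyclic), hence $\mu^2(-,\1_A)$ is both null-homotopic and — by cohomological unitality plus cofibrancy and Lemma \ref{cofibrantqisoinvert}, exactly as in Lemma \ref{multhunitisidhtpy} — homotopic to the identity; so the identity of $\M(A)$ is null-homotopic, i.e.\ $\M(A)$ is acyclic, and likewise for $\N$.

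Next I would analyze the cone of $\M\otimes_\C\N\to\M\otimes_{\C/\A}\N$, or rather compare the two bar complexes directly. Write out $\M\otimes_{\C/\A}\N$ using the bar resolution \eqref{barcomplextensorproduct} with the category $\C/\A$, and expand each factor $(\C/\A)(Y_{i-1},Y_i)$ via its own bar expression \eqref{quotientdefcomplex} as $\bigoplus\C(Y_{i-1},A_1)\otimes\C(A_1,A_2)[1]\otimes\cdots\otimes\C(A_p,Y_i)[1]$. Doing this, $\M\otimes_{\C/\A}\N$ becomes a complex built out of the same kind of tensor strings $\M(Y_0)\otimes\C(Y_0,Z_1)[1]\otimes\cdots\otimes\C(Z_{m-1},Z_m)[1]\otimes\N(Z_m)$ that appear in $\M\otimes_\C\N$, but now each internal object $Z_j$ is \emph{decorated} by whether it came from an ``outer'' bar position (an arbitrary object $Y_i\in\C$) or an ``inner'' one (an object $A\in\A$), and the differential is the full bar differential together with the combinatorics of the nested bar construction. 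The inclusion $\M\otimes_\C\N\hookrightarrow\M\otimes_{\C/\A}\N$ is the inclusion of the subcomplex where no object is decorated as ``inner''. So it suffices to show the quotient complex — spanned by tensor strings in which at least one internal object lies in $\A$ — is acyclic.

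Finally, to see acyclicity of this quotient complex, I would filter by the number of internal tensor factors and run the standard bar-resolution contraction. The associated graded pieces are finite tensor products of the morphism complexes of $\C$, of $\M$, and of $\N$; on any such piece containing at least one $\A$-object decorated as ``inner,'' one of the tensor factors meeting that object is a complex of the form $\C(-,A)$, $\C(A,-)$, $\M(A)$, or $\N(A)$, and — exactly as in the proof of Lemma \ref{localgood} — one uses that that factor, while not necessarily acyclic on its own, sits in a spot where the \emph{nested} bar differential provides a contracting homotopy: the inner bar complex $(\C/\A)(Y_{i-1},Y_i)$ is (by construction, cf.\ Lemma \ref{identityidempotent} and the filtration argument of Lemma \ref{localgood}) quasi-isomorphic to its own truncation, and the subcomplex with a genuine inner $\A$-insertion is the acyclic part. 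Concretely: tensoring the short exact sequence expressing $(\C/\A)(Y_{i-1},Y_i)$ as $[\C\otimes_\A\C[1]\to\C]$ against cofibrant complexes on either side preserves acyclicity of the $\C\otimes_\A\C[1]$ part after one more filtration step, because each remaining tensor factor is cofibrant (hence K-flat, by Definition \ref{cofibrantdef}\eqref{cofibranttensorexact}) and the relevant module factor $\M(A)$ or $\N(A)$ is acyclic by the first step.

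\textbf{Main obstacle.} The bookkeeping in the third step — keeping the two nested bar differentials straight and identifying precisely which subcomplex of the doubly-expanded complex is the image of $\M\otimes_\C\N$ and which is the acyclic complement — is the only real content; once the complement is correctly identified, acyclicity is a routine iterated application of ``acyclic $\otimes$ cofibrant is acyclic'' together with the bar-complex contracting homotopy, just as in Lemmas \ref{identityidempotent} and \ref{localgood}. One should be slightly careful that $\M,\N$ are only \emph{cohomologically} unital over $\C/\A$, so the null-homotopy of $\M(A)$ comes via Lemma \ref{cofibrantqisoinvert} (using cofibrancy of $\M(A)$) rather than from a strict unit; this is the same subtlety already handled in Lemma \ref{multhunitisidhtpy}.
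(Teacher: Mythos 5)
Your setup and first step are fine: the map is indeed the levelwise inclusion of bar complexes, and $\M(A)$, $\N(A)$ are acyclic for $A\in\A$ because the cohomological unit of $(\C/\A)(A,A)$ vanishes (Lemma \ref{quotientannihilatesA}). The gap is in your final step, which rests on two false claims. First, the part of $(\C/\A)(Y_{i-1},Y_i)$ with at least one $\A$-insertion is \emph{not} ``the acyclic part'': it is (a shift of) $(\C\otimes_\A\C)(Y_{i-1},Y_i)$, which is generally far from acyclic --- if it were always acyclic, then $\C(X,Y)\to(\C/\A)(X,Y)$ would always be a quasi-isomorphism and localization would never change anything; Lemma \ref{localgood} gives this only under the $\A$-locality hypothesis, which does not hold for arbitrary $Y_{i-1}$. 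Second, in the associated graded of your filtration by the total number of tensor factors, the factors meeting an ``inner'' $\A$-object are of the form $\C(-,A)$ and $\C(A,-)$, which need not be acyclic, while the complexes $\M(A)$, $\N(A)$ that you know to be acyclic never occur in such positions: inner $\A$-objects sit strictly inside a quotient morphism factor, so the arguments of $\M$ and $\N$ are the outer objects, which need not lie in $\A$. So the graded pieces of your filtration containing an inner insertion are not acyclic, and the argument does not close.

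A direct argument of your flavor can likely be repaired, but with a different filtration: filter the quotient complex by the \emph{number of $\A$-insertions} rather than by length. The associated graded then splits, for each sequence $A_1,\ldots,A_p\in\A$, into a tensor product of ``segments'': the piece from $\M$ to $A_1$ is the bar complex $(\M\otimes_\C\C)(A_1)$, quasi-isomorphic to $\M(A_1)\simeq 0$; each piece between consecutive $A_i$'s is the augmented bar complex of $\C$, i.e.\ the (acyclic) cone of the counit $\C\otimes_\C\C\to\C$ of Lemma \ref{identityidempotent}; and the piece from $A_p$ to $\N$ is $(\C\otimes_\C\N)(A_p)\simeq\N(A_p)\simeq 0$; K-flatness of cofibrant complexes then gives acyclicity of each graded piece. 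The paper avoids this bookkeeping entirely: using Lemma \ref{identityidempotent} it reduces to showing that $\C/\A\otimes_\C\C/\A\to\C/\A$ is a quasi-isomorphism, which it factors through $\A\backslash\C/\A$, the second map admitting the obvious inclusion $\C/\A\to\A\backslash\C/\A$ as a section that is a quasi-isomorphism by Lemmas \ref{quotientannihilatesA} and \ref{localgood}.
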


\begin{proof}
By Lemma \ref{identityidempotent}, we can replace $\M$ with $\M\otimes_{\C/\A}\C/\A$ and $\N$ with $\C/\A\otimes_{\C/\A}\N$.
Hence it is enough to show that $\C/\A\otimes_\C\C/\A\to\C/\A\otimes_{\C/\A}\C/\A$ is a quasi-isomorphism.
Using Lemma \ref{identityidempotent} again, it is thus enough to show that $\C/\A\otimes_\C\C/\A\to\C/\A$ is a quasi-isomorphism.

Explicitly, the bimodule $\C/\A\otimes_\C\C/\A$ is given by
\begin{equation}\label{caccaweirdtensor}
    \bigoplus_{\begin{smallmatrix}p,q,r\geq 0\\A_1,\ldots,A_p\in\A\\Z_0,\ldots,Z_q\in\C\\A_1',\ldots,A_r'\in\A\end{smallmatrix}}\C(-,A_1)\otimes\cdots\otimes\C(A_p,Z_0)\otimes \C(Z_0, Z_1) \cdots\otimes \C(Z_{q-1}, Z_q) \otimes \C(Z_q,A_1')\otimes\cdots\otimes\C(A_r',-),
\end{equation}
the bimodule $\C/\A$ is given by
\begin{equation}\label{causualtensor}
\bigoplus_{\begin{smallmatrix}s\geq 0\\A_1,\ldots,A_s\in\A\end{smallmatrix}}\C(-,A_1)\otimes\cdots\otimes\C(A_s,-),
\end{equation}
and the map $\C/\A\otimes_\C\C/\A\to\C/\A$ is given by summing up (with appropriate signs) all ways of applying $\mu^k$'s to subsequences containing all the $Z_i$'s in \eqref{caccaweirdtensor} and then viewing the surviving $A_i$'s and $A_i'$'s as a single sequence $A_1,\ldots,A_s$ as in \eqref{causualtensor}.
Said a bit differently, but equivalently, the map $\C/\A\otimes_\C\C/\A\to\C/\A$ is the composition
\begin{equation}
\C/\A\otimes_\C\C/\A=\A\backslash\C\otimes_\C\C/\A\xrightarrow\alpha\A\backslash\C/\A\xrightarrow\beta\C/\A,
\end{equation}
where $\alpha$ is simply induced by the obvious map $\C\otimes_\C\C\to\C$, and $\beta$ is a non-obvious map given by concatenating the two adjacent sequences of elements of $\A$.
The map $\alpha$ is a quasi-isomorphism by Lemma \ref{identityidempotent}.
The map $\beta$ has a section $\C/\A\to\A\backslash\C/\A$, namely the obvious inclusion $\M\to\A\backslash\M$ for $\M=\C/\A$.
This section is a quasi-isomorphism by Lemmas \ref{quotientannihilatesA} and \ref{localgood}, and hence $\beta$ is also a quasi-isomorphism.
\end{proof}

\begin{lemma}\label{localgoodpro}
If $\varinjlim_iH^\bullet\M_i(A)=0$ for all $A\in\A$ for a sequence
$\M_1\xrightarrow{f_1}\M_2\xrightarrow{f_2}\cdots$ of $\C$-modules, then the natural map
$\varinjlim_iH^\bullet\M_i(Y)\to\varinjlim_iH^\bullet(\M_i/\A)(Y)$ is an isomorphism for all $Y\in\C$.

In particular, if $\varinjlim_iH^\bullet\C(X_i,A)=0$ for all $A\in\A$ for
$X_1,X_2,\ldots\in\C$ and cycles $c_i\in\C(X_{i+1},X_i)$ (``$\varprojlim_iX_i\in\Pro\C$ is left-orthogonal to $\A$''), then the natural map
\begin{equation}
\varinjlim_iH^\bullet\C(X_i,Y)\to\varinjlim_iH^\bullet(\C/\A)(X_i,Y)
\end{equation}
is an isomorphism for all $Y\in\C$.
\end{lemma}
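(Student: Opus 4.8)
The plan is to reduce to Lemma \ref{localgood} by passing to the sequential direct limit of the modules, using that both homology and the quotient-by-$\A$ construction commute with filtered colimits.

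First I would form $\M_\infty:=\varinjlim_i\M_i$ levelwise, so that $\M_\infty(Y)=\varinjlim_i\M_i(Y)$ for each object $Y$, with $\C$-module structure maps obtained as the colimits of those of the $\M_i$. Since homology of complexes of abelian groups commutes with direct limits over a directed set, $H^\bullet\M_\infty(A)=\varinjlim_iH^\bullet\M_i(A)=0$ for all $A\in\A$; that is, $\M_\infty(A)$ is acyclic for every $A\in\A$. Hence Lemma \ref{localgood} applies to $\M_\infty$ and shows that the natural map $\M_\infty(Y)\to(\M_\infty/\A)(Y)$ is a quasi-isomorphism for all $Y\in\C$. (The proof of Lemma \ref{localgood} uses only acyclicity of $\M(A)$ together with the standing cofibrancy of the morphism complexes of $\C$, and not cofibrancy of the module itself, so there is no issue in applying it to $\M_\infty$.)

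Next I would check that the quotient construction commutes with the colimit, i.e.\ that the canonical map $\varinjlim_i(\M_i/\A)(Y)\to(\M_\infty/\A)(Y)$ is an isomorphism of complexes. This is immediate from the explicit formula \eqref{modulequotientdefcomplex}: each summand $\M(A_1)\otimes\C(A_1,A_2)[1]\otimes\cdots\otimes\C(A_p,Y)[1]$ is obtained from $\M$ by applying functors (direct sum, and tensor product with a fixed complex) that commute with filtered colimits, and the bar differentials are compatible. Taking homology, and again using that it commutes with direct limits, gives $H^\bullet(\M_\infty/\A)(Y)=\varinjlim_iH^\bullet(\M_i/\A)(Y)$ as well as $H^\bullet\M_\infty(Y)=\varinjlim_iH^\bullet\M_i(Y)$. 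Combining with the previous paragraph, and noting that the quasi-isomorphism produced by Lemma \ref{localgood} for $\M_\infty$ is by construction the direct limit of the natural maps $\M_i(Y)\to(\M_i/\A)(Y)$, we conclude that $\varinjlim_iH^\bullet\M_i(Y)\to\varinjlim_iH^\bullet(\M_i/\A)(Y)$ is an isomorphism. This is the first assertion.

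For the ``in particular'' statement, I would take $\M_i:=\C(X_i,-)$, the right Yoneda module of $X_i$, with transition homomorphisms $\M_i\to\M_{i+1}$ induced by the cycles $c_i\in\C(X_{i+1},X_i)$ via Yoneda functoriality. Then $\M_i(A)=\C(X_i,A)$, so the hypothesis $\varinjlim_iH^\bullet\C(X_i,A)=0$ is precisely the hypothesis of the general statement, and comparing \eqref{modulequotientdefcomplex} with \eqref{quotientdefcomplex} identifies $(\M_i/\A)(Y)$ with $(\C/\A)(X_i,Y)$; the general statement then gives exactly the displayed isomorphism. I expect the only mildly delicate point to be the bookkeeping at the end of the third paragraph, namely confirming that the abstract composite isomorphism really is the map induced by the family of natural maps $\M_i\to\M_i/\A$; but this is a routine naturality check once one observes that the quasi-isomorphism of Lemma \ref{localgood} applied to $\M_\infty$ is, by construction, the colimit of those natural maps.
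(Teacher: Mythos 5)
The gap is at the very first step: you assert that the levelwise direct limit $\M_\infty:=\varinjlim_i\M_i$ is a $\C$-module ``with structure maps obtained as the colimits of those of the $\M_i$''. That only makes sense if the transition maps $f_i$ are \emph{strict} module homomorphisms, i.e.\ have only their first-order component. A general $\ainf$-module homomorphism has higher components, and it intertwines the structure maps of $\M_i$ and $\M_{i+1}$ only up to the homotopies those components encode; the structure maps therefore do not pass to the levelwise colimit, and $\M_\infty$ is not defined as stated. This is not a corner case you can wave away: in the ``in particular'' statement the transition maps are the Yoneda morphisms induced by the cycles $c_i\in\C(X_{i+1},X_i)$, whose $\ell$-th component is essentially $\mu^{\ell+2}(c_i,-,\ldots,-)$ and is nonzero as soon as $\C$ has nontrivial higher products — exactly the situation of the wrapped categories this lemma is used for. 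So the reduction to Lemma \ref{localgood} via a strict colimit does not get off the ground.

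The fix — and this is what the paper does — is to use a homotopy colimit instead: apply Lemma \ref{localgood} to the telescope, i.e.\ the mapping cone $\bigl[\bigoplus_{i\geq 1}\M_i[1]\xrightarrow{\bigoplus\1_i-f_i}\bigoplus_{i\geq 1}\M_i\bigr]$, which is an honest $\C$-module because the dg-category of modules admits shifts, direct sums, and cones of (possibly non-strict) homomorphisms. Its value at each object has cohomology $\varinjlim_iH^\bullet\M_i(-)$ (sequential colimits are exact), so the hypothesis gives acyclicity at every $A\in\A$; its quotient by $\A$ is the corresponding telescope of the $\M_i/\A$ because the bar construction \eqref{modulequotientdefcomplex} commutes with direct sums and cones; and Lemma \ref{localgood} then yields the claimed isomorphism on $\varinjlim_iH^\bullet$. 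The remaining ingredients of your argument (commuting cohomology and the bar complex with the colimit, and identifying $(\M_i/\A)(Y)$ with $(\C/\A)(X_i,Y)$ in the Yoneda case) are fine once the strict colimit is replaced by this telescope.
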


\begin{proof}
Apply Lemma \ref{localgood} to the mapping cone
\begin{equation}
\varinjlim_i\M_i:=
\biggl[\bigoplus_{i=1}^\infty\M_i[1]\xrightarrow{\bigoplus\1_i-f_i}\bigoplus_{i=1}^\infty\M_i\biggr]
\end{equation}
which is naturally a $\C$-module.
\end{proof}

\begin{definition}\label{localizationdef}
Let $\C$ be an $\ainf$-category, and let $W$ be a collection of morphisms in
$H^0\C$.  The \emph{localized $\ainf$-category} $\C[W^{-1}]$ is defined as
the quotient 
\begin{equation}
\C[W^{-1}]:=\C/\!\cones(W)
\end{equation}
where $\cones(W)$ denotes the set of all cones $[X\xrightarrow aY]$ where $a\in\C(X,Y)$ is a cycle representing an element of $W$.

For a right $\C$-module $\M$, we define $\M_{W^{-1}}:=\M/\!\cones(W)$ which is a right $\C[W^{-1}]$-module.
Similarly define $_{W^{-1}}\M$ for $\M$ a left $\C$-module, and define
${}_{W^{-1}}\M_{W^{\prime-1}}$ for $\M$ a $(\C,\C')$-bimodule.
\end{definition}

\subsection{Holomorphic curves and \texorpdfstring{$\ainf$ operations}{A\_infinity operations}}\label{wcurvessec}

We begin with a general discussion of holomorphic curves in Liouville sectors,
as well as the corresponding $\ainf$ operations, for sequences of mutually transverse Lagrangians.

Let $X$ be a Liouville sector and let $L_1,L_2\subseteq X$ be a pair of transverse cylindrical exact Lagrangians (we tacitly assume all Lagrangians to be disjoint from $\partial X$).
For the purpose of defining gradings/orientations, suppose further that these Lagrangians are equipped with $\Spin$ structures, namely equipped with lifts
\begin{equation}\label{spinlifts}
\begin{tikzcd}
{}&B\Spin\ar{d}\\
L\ar{r}{TL}\ar[dashed]{ru}&BO\ar{r}{(w_1,w_2)}&K(\ZZ/2,1)\times K(\ZZ/2,2)
\end{tikzcd}
\end{equation}
where we remark that $B\Spin\to BO$ is the $K(\ZZ/2,0)\times K(\ZZ/2,1)$ bundle trivializing $(w_1,w_2):BO\to K(\ZZ/2,1)\times K(\ZZ/2,2)$ (if we were content working without gradings and over characteristic $2$, such data could be omitted).
Associated to any such pair $L_1,L_2\subseteq X$ is a \emph{Lagrangian Floer complex}
\begin{equation}\label{lagnfloercomplex}
CF^\bullet(L_1,L_2):=\bigoplus_{p\in L_1\cap L_2}\oo_{L_1,L_2,p}
\end{equation}
which is, as a $\ZZ$-module, isomorphic
to the free abelian group generated by the finite set $L_1 \cap
L_2$. More intrinsically, \eqref{lagnfloercomplex} is the direct sum of the orientation lines $\oo_{L_1,L_2,p}$ associated to each intersection point $p \in L_1 \cap L_2$ (an \emph{orientation line} is a $\ZZ/2$-graded free $\ZZ$-module of rank one; the $\ZZ/2$-grading is relevant for the Koszul rule of signs and, equivalently, for the super tensor product).
The orientation line $\oo_{L_1,L_2,p}$ is extracted from index theory in the usual way, which we summarize as follows.
Consider the linearized $\bar\partial$-operator at the constant map $u:(-\infty,0]\times[0,1]\to X$ sending everything to $p$ and subject to Lagrangian boundary conditions $u(t,0)\in L_1$ and $u(t,1)\in L_2$.
We extend this Cauchy--Riemann operator with totally real boundary conditions to such an operator on the unit disk with a single negative boundary puncture (see Figure \ref{orientationlineslagr}).
The boundary conditions are extended by choosing a path in the Lagrangian Grassmannian of $T_pX$ from $TL_1$ to $TL_2$ compatible with the given lifts \eqref{spinlifts} (note that there are multiple such paths differing by an even multiple of the Maslov class, however the resulting orientation lines are canonically isomorphic, so we systematically elide this point).
Then $\oo_{L_1,L_2,x}:=\oo_D^\vee$ is defined as the dual of the Fredholm orientation line $\oo_D:=\oo_{\ker D}\otimes\oo_{\coker D}^\vee$ of this Cauchy--Riemann operator $D$ (the orientation line $\oo_V$ of a finite-dimensional vector space $V$ is the $\ZZ$-module generated by the two orientations on $V$ modulo the relation that their sum vanishes, placed in cohomological degree $-\dim V$; more succinctly, we could equivalently define $\oo_V:=H_\bullet(V,V\setminus 0)$).
The grading of $\oo_{L_1,L_2,x}$ is determined by the
usual Maslov index considerations (see \cite{seidelbook}).
For further background on the theory of gradings and orientations in Floer
theory, we refer the reader to Seidel \cite[\S 11]{seidelbook}
\cite{seidelgraded} and Abouzaid \cite[\S 1.4]{abouzaidviterbo} (let us also point out that we could just as well use $\ZZ/2N$-gradings in our setup).

\begin{figure}[hbt]
\centering
\includegraphics{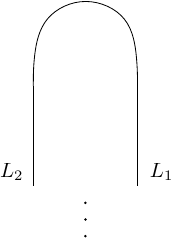}
\caption{The orientation line $\oo_{L_1,L_2,p}$ is defined as the dual of the Fredholm orientation line of Cauchy--Riemann operator on the punctured disk, illustrated here, extending the linearized $\bar\partial$-operator at the constant map $u:(-\infty,0]\times[0,1]\to X$ sending everything to $p$.}\label{orientationlineslagr}
\end{figure}

We now discuss the construction of the $\ainf$ operations $\mu^k$ on the Lagrangian Floer complexes \eqref{lagnfloercomplex}, of which the differential is the special case $k=1$.

For $k\geq 2$, let $\Rbar_{k,1}$ denote the Deligne--Mumford moduli space of
stable disks with $k+1$ marked points on the boundary labelled $x_1,\ldots,x_k,y$
in counterclockwise order, which inherits the structure of a smooth manifold
with corners from its embedding into $\Mbar_{0,k+1}$.  For $k=1$, define
$\Rbar_{k,1}$ as the stack $\pt/\RR$.  For $k\geq 1$, denote by
$\Sbar_{k,1}\to\Rbar_{k,1}$ the universal curve (note that
$\Sbar_{1,1}=[0,1]$).

The \emph{thin parts} of the fibers of $\Sbar_{k,1}\to\Rbar_{k,1}$ refers to a neighborhood (inside the total space $\Sbar_{k,1}$) of the boundary marked points $x_1,\ldots,x_k,y$ and the nodes of the fibers.
The complement of the thin parts is called the \emph{thick parts}.

Denote by $\J(X)$ the space of $\omega$-compatible cylindrical almost complex structures on $X$.
Families of almost complex structures $U\to\J(X)$ are always implicitly required to be uniformly cylindrical, in the sense that there exists a subset of $U\times X$ which is proper over $U$ outside which the family is $Z$-invariant.

Let $L_1,\ldots,L_N\subseteq X$ be a finite collection of mutually transverse cylindrical exact Lagrangians equipped with $\Spin$ structures.
For every sequence $1\leq i_0<\cdots<i_k\leq N$ ($k\geq 1$), fix families of ``universal strip-like coordinates'' (following \cite[\S 2b]{abouzaidseidel})
\begin{align}
\label{coordsI}\xi_{i_0,\ldots,i_k;j}^+:[0,\infty)\times[0,1]\times\Rbar_{k,1}&\to\Sbar_{k,1}\quad j=1,\ldots,k\\
\label{coordsII}\xi_{i_0,\ldots,i_k}^-:(-\infty,0]\times[0,1]\times\Rbar_{k,1}&\to\Sbar_{k,1}
\end{align}
and a family of almost complex structures
\begin{equation}\label{Jfamilyforainfty}
J_{i_0,\ldots,i_k}:\Sbar_{k,1}\to\J(X)
\end{equation}
satisfying the following properties:
\begin{itemize}
\item
The strip-like coordinates $\xi$ must be \emph{compatible with gluing} in the following sense.
For each $j=1,\ldots,\ell$, there is a boundary collar
\begin{equation}\label{collarimage}
\Rbar_{k,1}\times\Rbar_{\ell,1}\times(0,\infty]\to\Rbar_{k+\ell-1,1}
\end{equation}
defined, for sufficiently large $S\in(0,\infty]$, by gluing together the ends
at $x_j$ in the disk with $\ell+1$ punctures and $y$ in the disk with $k+1$ punctures
according to the parameter $S$ via the coordinates \eqref{coordsI}--\eqref{coordsII}.
Over the image of this boundary collar, the strip-like
coordinates on $\Rbar_{k,1}$ and $\Rbar_{\ell,1}$ determine ``glued'' strip-like
coordinates on $\Rbar_{k+\ell-1,1}$, and we require that these glued coordinates agree with the
strip-like coordinates specified on $\Rbar_{k+\ell-1,1}$.
Note that this gluing procedure also gives rise to strip-like coordinates in all thin parts of fibers.

(Though we will always write choices of coordinates in the form \eqref{coordsI}--\eqref{coordsII}, it is somewhat better to view these coordinates as only being well-defined up to translating in the $s$-coordinate, i.e.\ what is well-defined is the coordinate $t$ and the $1$-form $ds$ in a neighborhood of each puncture, and more generally over the thin parts.)
\item The almost complex structures $J$ must be \emph{compatible with gluing} via $\xi$ in the following sense:
(1) $J_{i_0,\ldots,i_k}$ must be $s$-invariant in the thin parts with respect to the strip-like coordinates $\xi$,
(2) $J_{i_0,\ldots,i_{k+\ell-1}}$ must, over (the inverse image in $\Sbar_{k+\ell-1,1}$ of) the image of \eqref{collarimage}, be given by the obvious gluing under $\xi$ of its restriction to $\Rbar_{k,1}\times\Rbar_{\ell,1}$, and
(3) $J_{i_0,\ldots,i_{k+\ell-1}}$ must, over (the inverse image in $\Sbar_{k+\ell-1,1}$ of) the image of $\Rbar_{k,1}\times\Rbar_{\ell,1}$ under \eqref{collarimage} coincide with the product of $J_{i_{j-1},\ldots,i_{k+j-1}}$ and $J_{i_0,\ldots,i_{j-1},i_{k+j-1},\ldots,i_{k+\ell-1}}$.
\item The almost complex structures $J$ must be \emph{adapted to $\partial X$}, meaning that $\pi:\Nbd^Z\partial X\to\CC$ is $J_{i_0,\ldots,i_k}$-holomorphic over $\pi^{-1}(\CC_{\left|\Re\right|\leq\varepsilon})$ for some $\varepsilon>0$.
\end{itemize}

\begin{lemma}\label{stripacsinduct}
Strip-like coordinates \eqref{coordsI}--\eqref{coordsII} and almost complex structures \eqref{Jfamilyforainfty} as above may be constructed by induction on the subset $\{i_0,\ldots,i_k\}$.
\end{lemma}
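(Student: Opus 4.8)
The plan is to induct on the subset $\{i_0,\ldots,i_k\}\subseteq\{1,\ldots,N\}$, ordered by inclusion, constructing the strip-like coordinates \eqref{coordsI}--\eqref{coordsII} and the family of almost complex structures \eqref{Jfamilyforainfty} attached to a sequence $i_0<\cdots<i_k$ once they have been fixed for every proper subset. The base case $k=1$ amounts to choosing, for each pair $i_0<i_1$, the standard strip-like coordinates on $\Sbar_{1,1}=[0,1]$ together with a single $s$-invariant cylindrical almost complex structure $J_{i_0,i_1}$ on $X$ which is adapted to $\partial X$; such structures exist and form a nonempty contractible space, by the discussion following Lemma \ref{pibarrierlemma}. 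It is also here that one fixes, for each pair, the $s$-invariant datum near punctures to which the families for larger sequences must limit.

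For the inductive step, the key combinatorial point is that every positive-codimension boundary stratum of $\Rbar_{k,1}$ is an (iterated) product $\Rbar_{k_1,1}\times\Rbar_{k_2,1}$ with $k_1,k_2<k$, hence labelled by proper subsets of $\{i_0,\ldots,i_k\}$, while the thin parts surrounding the punctures are governed by $s$-invariance with limiting data supplied by the base case. Consequently the three compatibility conditions (compatibility of $\xi$ with gluing, compatibility of $J$ with gluing via $\xi$, and $s$-invariance in the thin parts) \emph{prescribe} both $\xi$ and $J$ on a closed neighborhood $A\subseteq\Sbar_{k,1}$ of the union of the thin parts with the images of the gluing collars \eqref{collarimage}, entirely in terms of data already chosen for proper subsets. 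The first thing to verify is that this prescription is well-defined: over the deeper strata several collars overlap, and the coordinates and almost complex structures induced by gluing in different orders must agree there. This coherence follows from the associativity relations among the collar maps \eqref{collarimage} together with the inductive hypothesis, exactly as in Abouzaid--Seidel \cite[\S 2]{abouzaidseidel}; it is unaffected by the new ``adapted to $\partial X$'' condition, which is a pointwise, $Z$-invariant constraint near $\partial X$ and is therefore automatically inherited by any almost complex structure assembled by gluing from pieces that each satisfy it (shrinking the constant $\varepsilon$ as needed, with a uniform choice possible since $\Rbar_{k,1}$ is compact).

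It then remains to extend the prescribed data from $A$ to all of $\Sbar_{k,1}$. For the strip-like coordinates this is the usual extension step: the space of systems of universal strip-like coordinates restricting to a given one on $A$ is nonempty and contractible (cf.\ \cite[\S 2]{abouzaidseidel}). For $J$, over the complement of the thin parts one must choose a section of the bundle over $\Sbar_{k,1}$ whose fiber is the space of cylindrical almost complex structures adapted to $\partial X$; that fiber is nonempty and contractible by the discussion after Lemma \ref{pibarrierlemma}, and since $\Sbar_{k,1}$ is a compact manifold-with-corners and $A$ a closed subset, the section extends, completing the induction. I expect the main obstacle to be precisely the coherence check in the preceding paragraph --- confirming that the various overlapping gluing collars near the deep boundary strata prescribe mutually compatible data --- but this is insulated from the Liouville-sector-specific content and is handled exactly as in the case of Liouville manifolds.
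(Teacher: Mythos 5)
Your proposal is correct and follows essentially the same route as the paper: induct on the subset, note that the compatibility conditions prescribe the data near the boundary strata and thin parts in terms of earlier choices, verify that the various prescriptions agree on overlaps (which follows from the compatibility imposed at earlier stages, as in Seidel \cite[II (9g,9i)]{seidelbook}), and then extend using contractibility of the space of strip-like coordinates and of cylindrical almost complex structures making $\pi$ holomorphic near $\partial X$.
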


\begin{proof}
The only thing to check is that the various compatibility conditions agree on their overlap, and this follows from the compatibility conditions at earlier steps in the induction.
For more details, see Seidel \cite[II (9g,9i)]{seidelbook}.
\end{proof}

\begin{figure}[hbt]
\centering
\includegraphics{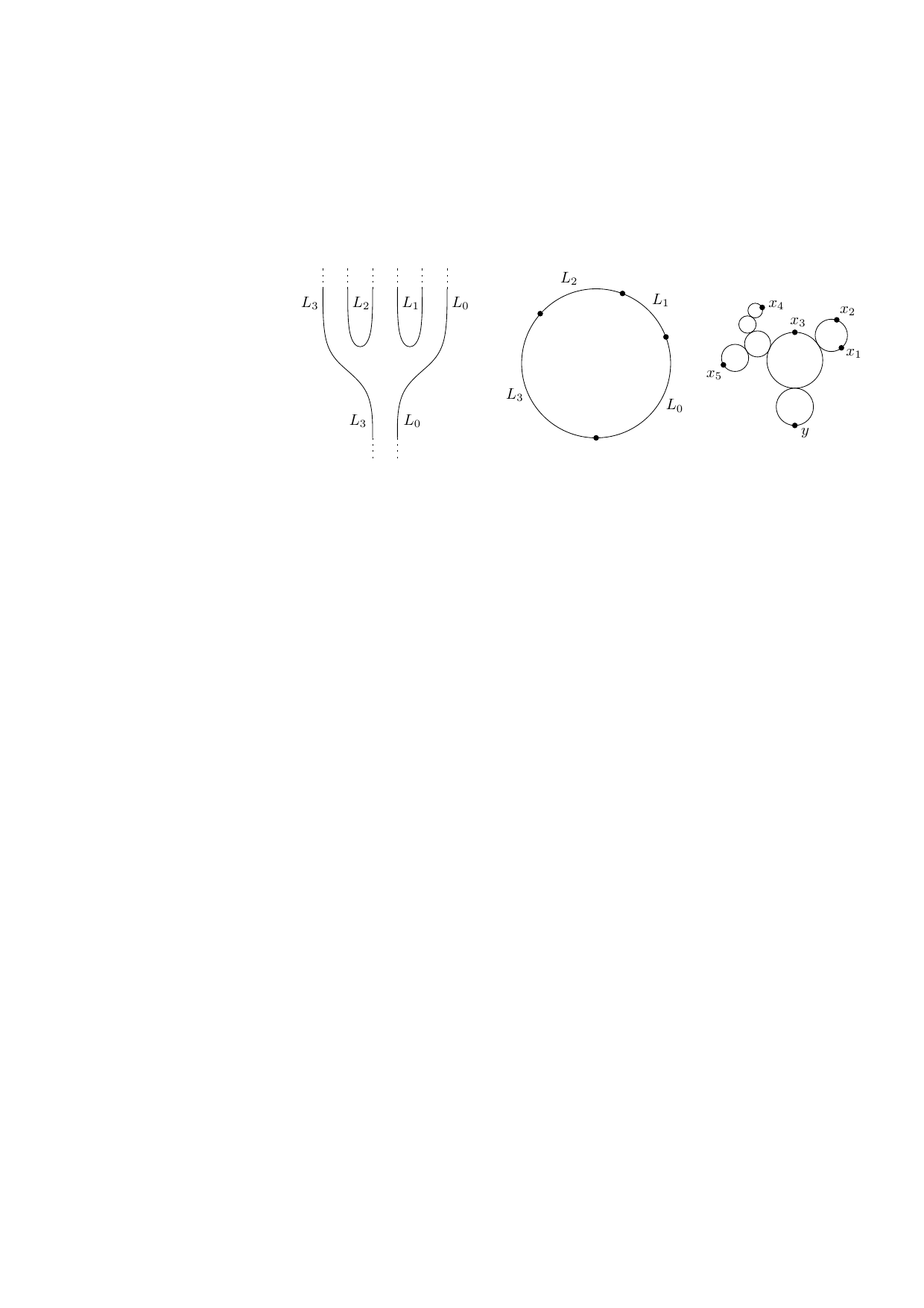}
\caption{Holomorphic curves comprising the moduli spaces $\Rbar_{k,1}(y;x_1,\ldots,x_k)$.}\label{mudisk}
\end{figure}

Fixing a choice of strip-like coordinates and almost complex structures as
above, we can now consider the compactified moduli spaces
\begin{equation}\label{modulidiscs_with_asymptotics}
\Rbar_{k,1}(y;x_1,\ldots,x_k)
\end{equation}
of stable $J_{i_0, \ldots, i_k}$-holomorphic maps $u: \Sigma \to X$ with Lagrangian boundary
conditions as in Figure \ref{mudisk}, mapping the marked points to the chosen intersections $y\in L_{i_0}\cap L_{i_k}$ and $x_s\in L_{i_{s-1}}\cap L_{i_s}$.  Note that it is the map which must be stable, not the domain (the domain merely has a unique stabilization map to a fiber of $\Sbar_{k,1}\to\Rbar_{k,1}$).
Note that $J_{i_0,\ldots,i_k}$-holormophicity of $\pi$ as above implies that any such holomorphic disk must be disjoint from $\pi^{-1}(\CC_{0\leq\Re\leq\varepsilon})$ since its boundary is disjoint from this region by  Lemma  \ref{pibarrierlemma} of \S\ref{bdryescape}.

To show that the moduli spaces
\eqref{modulidiscs_with_asymptotics}
are indeed compact (as their name would suggest), we need to recall the definition of the action functional for exact Lagrangians $L_0,L_1\subseteq X$:
\begin{align}
a:L_0\cap L_1&\to\RR\\
x&\mapsto f_0(x)-f_1(x)
\end{align}
where $\lambda_i|_{L_i}=df_i$ (the ambiguity in choosing $f_i$ will not concern us; note that we do not require $f_i$ to have compact support).
The energy of a strip $u:\RR\times[0,1]\to X$ with $u(s,0)\in L_0$, $u(s,1)\in L_1$, and $u(\pm\infty,t)=x^\pm$ is defined as
\begin{equation}
E(u):=\int_{\RR\times[0,1]}u^\ast d\lambda=a(x^+)-a(x^-).
\end{equation}
For pseudo-holomorphic $u$, the integrand is $\geq 0$, and $E(u)=0$ implies $u$ is constant.
Similar considerations apply to all disks as in Figure \ref{mudisk}.

\begin{proposition}\label{wcompactness}
The moduli spaces $\Rbar_{k,1}(y;x_1,\ldots,x_k)$ are compact.
\end{proposition}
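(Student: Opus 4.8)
The plan is to deduce compactness of $\Rbar_{k,1}(y;x_1,\ldots,x_k)$ from the Gromov compactness theorem for (domain-dependent) $J$-holomorphic maps with Lagrangian boundary conditions, once the two potential sources of non-compactness have been ruled out: curves escaping to infinity in $X$, and curves colliding with $\partial X$. Energy is not a concern, since by Stokes' theorem and exactness every $u$ in the moduli space has $E(u)=a(x_1)+\cdots+a(x_k)-a(y)$ for the action functional $a$ described above, a fixed constant depending only on the asymptotic data; call this bound $E_0$.

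First I would rule out collision with $\partial X$. The almost complex structures $J_{i_0,\ldots,i_k}$ are adapted to $\partial X$, so $\pi:\Op^Z\partial X\to\CC$ is holomorphic over $\pi^{-1}(\CC_{\left|\Re\right|\leq\varepsilon})$, and since the $L_i$ are cylindrical and disjoint from $\partial X$ we may shrink $\varepsilon$ so that they are disjoint from $\pi^{-1}(\CC_{\left|\Re\right|\leq\varepsilon})$ as well. Then for any $u$ the set $u^{-1}(\pi^{-1}(\CC_{\left|\Re\right|\leq\varepsilon}))$ is compact and disjoint from $\partial\Sigma$, so Lemma \ref{pibarrierlemma} applies and shows the image of $u$ avoids $\pi^{-1}(\CC_{0\leq\Re<\varepsilon})$ (the one loophole in Lemma \ref{pibarrierlemma}, constant closed components of $\Sigma$, is harmless, as such a component maps to a single point, necessarily outside this region by continuity). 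This holds uniformly over the moduli space, and re-applying Lemma \ref{pibarrierlemma} to any Gromov-limiting stable map shows it persists in the limit; hence every curve in sight lies in the fixed closed subset $X\setminus\pi^{-1}(\CC_{0\leq\Re<\varepsilon})$, bounded away from $\partial X$.

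Next I would rule out escape to infinity by the monotonicity method of \S\ref{inftyescape}. By Lemmas \ref{bddgeo} and \ref{domainacsbddgeo}, the target $X$ equipped with the (uniformly cylindrical) family $J_{i_0,\ldots,i_k}$ and the cylindrical Lagrangians $L_i$ has uniformly bounded geometry in the sense needed for monotonicity arguments; moreover, since the $L_i$ are mutually transverse and cylindrical, their Legendrian ends at infinity are pairwise disjoint (a cylindrical Lagrangian cannot be transverse to another along an entire Liouville ray), so the $L_i$ are pairwise uniformly separated outside a compact set. Sikorav's monotonicity inequalities \cite[Propositions 4.3.1 and 4.7.2]{sikorav} then furnish a constant $\hbar>0$ such that any portion of such a holomorphic curve which meets a sufficiently small metric ball and whose boundary avoids that ball carries energy at least $\hbar$. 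If some $u$ in the moduli space strayed arbitrarily far from a fixed compact set containing $y,x_1,\ldots,x_k$, then---$\Sigma$ being connected with punctures asymptotic to these fixed points---its image would contain a long ``neck'' reaching that far and returning, along which one could place an unbounded number of disjoint such balls, forcing $E(u)>E_0$, a contradiction. Hence there is a compact $K\subseteq X^\circ$ containing the image of every curve in the moduli space, and, by the same reasoning, of every Gromov limit.

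With this a priori $C^0$ bound and the energy bound $E_0$ in hand, the target is effectively the compact set $K$, away from $\partial X$, so the usual Gromov compactness theorem for stable $J$-holomorphic maps with Lagrangian boundary conditions applies---with $J$ domain-dependent and degenerating compatibly with the gluing coordinates as the domain approaches the boundary strata of $\Rbar_{k,1}$ (cf.\ Seidel \cite{seidelbook})---producing, from any sequence in $\Rbar_{k,1}(y;x_1,\ldots,x_k)$, a Gromov-convergent subsequence whose limit is a stable $J$-holomorphic map of the appropriate type with the prescribed asymptotics (energy being conserved in the limit by the action identity), i.e.\ again an element of $\Rbar_{k,1}(y;x_1,\ldots,x_k)$. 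I expect the escape-to-infinity step to be the main obstacle: it is the only place where the Liouville structure at infinity and the monotonicity/bounded-geometry technology genuinely enter, whereas excluding collision with $\partial X$ is immediate from Lemma \ref{pibarrierlemma}.
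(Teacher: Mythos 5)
Your energy identity and the exclusion of curves from a neighborhood of $\partial X$ via Lemma \ref{pibarrierlemma} are fine and match the paper. The gap is in the escape-to-infinity step. Your ladder-of-balls argument presupposes a uniform quantum $\hbar>0$ of \emph{curve} energy per ball, but the almost complex structures here are domain-dependent (and $t$-dependent even on the thin parts), so Sikorav's inequalities cannot be applied to $u$ directly in $X$; they must be applied to the graph of $u$ inside $(\mathrm{domain})\times X$, which is precisely why Lemma \ref{domainacsbddgeo} is stated for products. The graph's energy in a ball is the curve's $\omega_X$-energy plus the area of the corresponding piece of the domain, and the monotonicity lower bound $c\,r^2$ can be entirely accounted for by that domain-area term (the graph of a nearly constant, slowly drifting map is nearly horizontal). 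So monotonicity only controls excursions of $u$ over domain pieces of a priori bounded area; over the strip-like ends and the arbitrarily long necks appearing near $\partial\Rbar_{k,1}$ and in Gromov limits, the domain area is unbounded (and if one instead compactifies the domain, bounded geometry of the product fails at the punctures since the $t$-dependence of $J$ does not extend). Your disjoint balls therefore do not force $E(u)>E_0$: a curve with $\left|\partial_s u\right|\sim\delta$ along a neck of length $T$ travels distance $\sim\delta T$ at energy cost $\sim\delta^2T$, which can be tiny while $\delta T$ is huge, and nothing in your argument excludes this.

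What the paper's proof supplies, and your proposal omits, is the thick/thin decomposition of the domain together with a quantitative use of the Lagrangian boundary conditions on the thin parts: monotonicity (for the graph) is applied only on the thick parts and on bounded-length windows of the thin parts, and the uniform separation at infinity of the pairwise transverse, cylindrical Lagrangians is used to show that if a window $[a,b]\times[0,1]$ of an end or neck maps entirely outside a fixed compact set $K$, then every cross-section $\{s\}\times[0,1]$ joins two uniformly separated Lagrangians, so $\int_0^1\left|\partial_tu\right|\geq\mathrm{const}$ and hence $E\geq\mathrm{const}\cdot(b-a)$, as in \eqref{wcsdistanceintegral}. This bounds the length of any excursion outside $K$ linearly in $E_0$, provides an anchor point in $K$ in every window of bounded length, and rules out the slow-drift scenario; the windowed monotonicity argument then yields the uniform $C^0$ bound, after which Gromov compactness applies as you say. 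You do observe the pairwise separation of the $L_i$, but you never use it in this essential way, so as written the escape-to-infinity step does not go through.
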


\begin{proof}
It is enough to show that all stable disks $u:\Sigma\to X$ are contained \emph{a priori} inside a fixed compact subset of $X$ depending only on $y,x_1,\ldots,x_k$ (then the usual Gromov compactness arguments apply).

We first claim that, given any compact subset $B\subseteq\Sigma$ (a fiber of $\Sbar_{k,1}\to\Rbar_{k,1}$) disjoint from the boundary marked points and nodes of $\Sigma$ (i.e.\ $B$ is contained in the thick parts of $\Sigma$) and a point $p\in B$, the image $u(B)$ is bounded \emph{a priori} away from infinity in terms of $u(p)$, the energy $E$, and the geometry of $J$ restricted to $B$.
Indeed, this follows by applying monotonicity inequalities to the graph of $u$ inside $\Sigma\times X$ (see \S\ref{inftyescape}) and using the geometric boundedness from Lemma \ref{domainacsbddgeo}.

Let us now discuss the situation over the thin parts of $\Sigma$.
We work in the chosen strip-like coordinates $[0,\infty)\times[0,1]\to\Sigma$ (or $(-\infty,0]\times[0,1]\to\Sigma$ or $I\times[0,1]\to\Sigma$).
Outside a sufficiently large compact subset $K$ of $X$, all Lagrangians in question are uniformly separated.
Therefore, if $[a,b]$ is an interval such that $u([a,b]\times[0,1])$ lies entirely outside $K$, then we have
\begin{equation}\label{wcsdistanceintegral}
E\geq\int_{[a,b]}\int_{[0,1]}\left|\partial_tu\right|^2\geq\int_{[a,b]}\biggl(\int_{[0,1]}\left|\partial_tu\right|\biggr)^2\geq\const\cdot(b-a).
\end{equation}
It follows that there exists $L<\infty$ such that for every interval $[a,b]$ of length $\geq L$, there is some point in $[a,b]\times[0,1]\subseteq[0,\infty)\times[0,1]\subseteq\Sigma$ which gets mapped inside $K$ by $u$.
Now we apply the monotonicity argument in the previous paragraph to conclude.
\end{proof}

Choosing $J_{i_0,\ldots,i_k}$ generically (meaning ``belonging to an unspecified comeagre set'') guarantees all moduli spaces $\Rbar_{k,1}(y;x_1,\ldots,x_k)$ are all cut out transversely.\footnote{Warning: some papers use the word ``generic'' to mean ``achieves transversality''.  This language inevitably causes confusion when trying to express the key (nontrivial!)\ fact that, in favorable cases, ``a generic choice of almost complex structure achieves transversality''.}
We merely sketch the argument as it is standard: perturb by induction on $k$; the induction hypothesis and gluing guarantee transversality over $\Nbd\partial\Rbar_{k,1}$, and then we just perturb over the rest (for more details, see Seidel \cite[II (9k)]{seidelbook}).

Counting holomorphic disks in the dimension zero part of $\Rbar_{k,1}(y;x_1,\ldots,x_k)$ defines operations
\begin{equation}
CF^\bullet(L_{i_0},L_{i_1})\otimes\cdots\otimes CF^\bullet(L_{i_{k-1}},L_{i_k})\otimes\oo_{\Rbar_{k,1}}\to CF^\bullet(L_{i_0},L_{i_k})
\end{equation}
which (with suitable signs as in \cite[(12.24)]{seidelbook}) can be interpreted as maps
\begin{equation}
\label{muoperation}
\mu^k:CF^\bullet(L_{i_0},L_{i_1})\otimes\cdots\otimes CF^\bullet(L_{i_{k-1}},L_{i_k})\to CF^\bullet(L_{i_0},L_{i_k})[2-k]
\end{equation}
satisfying the $\ainf$ relations, by the usual arguments considering the boundary of the moduli spaces of dimension one (see \cite[(12d,12g)]{seidelbook}, in particular Proposition 12.3 therein, for a discussion of signs).

For future reference, we record here the definition of the action functional for holomorphic maps $u:\Sigma\to X$ with ``moving Lagrangian boundary conditions'', meaning that $u(s)\in L_s$ for a family of cylindrical exact Lagrangians $L_s\subseteq X$ for $s\in\partial\Sigma$.
Specifically, the following expression for the geometric energy of such a map will be crucial:
\begin{equation}\label{geoenergymoving}
E^\geo(u):=\int_\Sigma u^\ast d\lambda=\sum_ia(x_i^+)-\sum_ia(x_i^-)+\int_{\partial\Sigma}H_s(u)
\end{equation}
where the Hamiltonians $H_s:L_s\to T^\ast_s\partial\Sigma$ for $s\in\partial\Sigma$ are defined by the property that, choosing $f_s:L_s\to\RR$ satisfying $df_s=\lambda|_{L_s}$ with respect to which we define the action $a$, we have $H=\lambda|_{\mathcal L}-df$ as $1$-forms on $\mathcal L:=\bigcup_{s\in\partial\Sigma}L_s$ (of course, this means the total space of the abstract family mapping to $X$, not a union of subsets of $X$).
This $H_s$ is also the Hamiltonian function generating the isotopy, meaning that $X_{H_s(\frac\partial{\partial s})}=\frac{\partial L_s}{\partial s}\in TX/TL_s$.

\subsection{Floer cohomology and continuation elements}\label{continuationsection}

Let $X$ be a Liouville sector.
As a consequence of the discussion in \S\ref{wcurvessec}, we have Floer cohomology groups $HF^\bullet(L_1,L_2)$ for any transverse exact Lagrangians $L_1,L_2\subseteq X$ (cylindrical at infinity).
We also have composition maps $HF^\bullet(L_1,L_2)\otimes
HF^\bullet(L_2,L_3)\to HF^\bullet(L_1,L_3)$ when $L_1,L_2,L_3$ are mutually
transverse.  Let us first observe that $HF^\bullet$ satisfies a locality
property with respect to inclusions of Liouville sectors:

\begin{lemma}\label{lemma:localityHF}
    If $X\hookrightarrow X'$ is an inclusion of Liouville sectors, then the
    Floer cohomology of a pair of Lagrangians inside $X$ is canonically
    isomorphic to their Floer cohomology taken inside $X'$.
\end{lemma}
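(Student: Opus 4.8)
The plan is to show that Floer complexes, not just Floer cohomology, are canonically identified under an inclusion $i:X\hookrightarrow X'$, using the confinement of holomorphic curves away from $\partial X$ provided by the projection $\pi$. By Convention \ref{choosingpi}, the inclusion either satisfies $i(X)\cap\partial X'=\varnothing$ or $i(X)=X'$ with $\pi=\pi'\circ i$; the second case is trivial, so I would focus on the first. Fix transverse cylindrical exact Lagrangians $L_1,L_2\subseteq X$; under $i$ they become transverse cylindrical exact Lagrangians in $X'$. Since $i$ is a diffeomorphism onto its (closed, since $i$ is proper) image, the intersection points $L_1\cap L_2$ and the orientation lines $\oo_{L_1,L_2,p}$ are literally the same whether computed in $X$ or $X'$ (the index-theoretic data depends only on a neighborhood of the $L_j$'s), so the underlying $\ZZ$-modules $CF^\bullet(L_1,L_2)$ agree on the nose.

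First I would choose the auxiliary data (almost complex structures, strip-like coordinates) for $X'$ to restrict to valid data for $X$ on $i(X)$: this is possible because the space of cylindrical almost complex structures adapted to $\partial X$ is nonempty and the relevant compatibility-with-gluing conditions are local, so one extends an $X$-choice to $X'$. The key point is then that the moduli spaces $\Rbar_{k,1}(y;x_1,\ldots,x_k)$ computed in $X$ and in $X'$ coincide. One inclusion is clear: a holomorphic disk in $X$ is a holomorphic disk in $X'$. For the reverse, I would invoke Lemma \ref{pibarrierlemma} applied to $X$ (with its projection $\pi:\Op^Z\partial X\to\CC_{\Re\geq 0}$): any $J$-holomorphic disk in $X'$ with boundary on $L_1\cup L_2\subseteq i(X)$, which a priori might wander outside $i(X)$, in fact cannot — its boundary lies in $i(X)$ and is disjoint from $\pi^{-1}(\CC_{|\Re|\le\varepsilon})$, so by the barrier lemma the whole disk avoids $\pi^{-1}(\CC_{0\le\Re\le\varepsilon})$ and hence stays in $i(X)$; combined with Proposition \ref{wcompactness}'s a priori compactness this shows the disk is entirely contained in a compact subset of $i(X)^\circ$. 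Thus it \emph{is} a holomorphic disk in $X$, and the identification of moduli spaces is an equality respecting the stabilization maps, evaluation maps, and boundary strata, hence it identifies the counts defining all the $\mu^k$, compatibly with signs.

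The main obstacle I anticipate is purely a bookkeeping one: making precise the sense in which the auxiliary data on $X'$ can be taken to agree with a chosen set of auxiliary data on $X$, so that the two constructions of the $\ainf$-structure literally coincide (rather than merely being quasi-isomorphic). The geometric content — that curves cannot escape from $i(X)$ — is entirely contained in Lemma \ref{pibarrierlemma}, but one must be slightly careful that a holomorphic disk in $X'$ with boundary on the $L_j$ genuinely has its $\pi$-preimage data controlled: since $i(X)$ is closed in $X'$ and cylindrical at infinity, the subset $\pi^{-1}(\CC_{|\Re|\le\varepsilon})$ is a genuine neighborhood of $\partial X$ inside $i(X)$, so a curve in $X'$ that touched $i(X)$ along $L_1\cup L_2$ and then wished to leave $i(X)$ would have to cross this region, which the lemma forbids. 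Once this is set up, the isomorphism of Floer complexes (and of all higher operations involving only Lagrangians in $X$) is canonical because it is the identity on generators and the identity on moduli spaces; passing to cohomology yields the claimed canonical isomorphism on $HF^\bullet$.
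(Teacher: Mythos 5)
Your proposal is correct and follows essentially the same route as the paper: choose the almost complex structures on $X'$ to extend choices on $X$ making $\pi$ holomorphic near $\partial X$, observe the generators coincide, and use Lemma \ref{pibarrierlemma} to confine all relevant disks to $X$, so the moduli spaces and hence the Floer complexes agree. The extra care you take with compactness (via Proposition \ref{wcompactness}) to verify the hypotheses of the barrier lemma is exactly the implicit content of the paper's one-line argument.
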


\begin{proof}
    Choose a $J$ on $X$ as in \S\ref{wcurvessec}, and extend it to $X'$. Now Lemma
    \ref{pibarrierlemma}
    implies that the relevant holomorphic disks (in $X'$) with boundary on the Lagrangians (in $X$) must lie entirely inside $X$, since $J$ was chosen to make $\pi:
    \Nbd^Z \partial X \to \CC$ holomorphic.
\end{proof}

Let us also observe that $HF^\bullet(L,K)$ satisfies a sort of deformation
invariance property with respect to some (non-compactly supported) deformations.

\begin{lemma}\label{simultaneousdeformation}
For any simultaneous deformation of cylindrical exact Lagrangians $(L_t,K_t)$ with $L_0\pitchfork K_0$ and $L_1\pitchfork K_1$ such that $L_t$ and $K_t$ are disjoint at infinity for all $t\in[0,1]$, there is an induced isomorphism $HF^\bullet(L_0,K_0)=HF^\bullet(L_1,K_1)$.
\end{lemma}

\begin{proof}
Any such deformation may be factored as a composition of a global Hamiltonian
isotopy of $X$ (fixed near $\partial X$, cylindrical at infinity) followed by a
compactly supported deformation of each of the Lagrangians. The invariance of
$HF^\bullet$ with respect to either of these types of deformations is standard.
More precisely, both types of deformations induce Floer theoretically defined continuation maps between Floer cohomology groups, whose actions on cohomology are canonical and intertwine composition of isomorphisms with concatenation of isotopies.
\end{proof}

To understand the relationship between $HF^\bullet(L_0, K_0)$ and $HF^\bullet(L_1, K_1)$ when $L_t$ and $K_t$ intersect near
$\infty$ for some $t$, we need the notion of a \emph{positive} isotopy $L_t$ (and, correspondingly, a \emph{negative} isotopy $K_t$).

\begin{definition}
An isotopy of Lagrangians $L_t$ is called \emph{positive} (resp.\ \emph{non-negative}, \emph{negative}, \emph{non-positive}) near infinity iff for some (equivalently, any) contact form $\alpha$ on $\partial_{\infty} X$, we have $\alpha(\partial_t\partial_{\infty} L_t) > 0$ (resp.\ $\geq 0$, $<0$, $\leq 0$) for all $t$.
\end{definition}

Let us write $HF^\bullet(L,L)$ to mean $HF^\bullet(L^+,L)$ where $L^+$ denotes an unspecified sufficiently small transverse pushoff of $L$, which is positive near infinity.
Lemma \ref{simultaneousdeformation} implies that $HF^\bullet(L,L)$ is independent of the choice of $L^+$ up to canonical isomorphism (so the notation $HF^\bullet(L,L)$ is justified).
We also have that $HF^\bullet(L,L)$ is an associative algebra, and that $HF^\bullet(L,K)$ and $HF^\bullet(K,L)$ are left- and right-modules over $HF^\bullet(L,L)$, respectively.
In fact, as we now show, $HF^\bullet(L,L)$ is unital and $HF^\bullet(L,K)$ and $HF^\bullet(K,L)$ are unital modules over it.

\begin{proposition}\label{wrappingunits}
The associative algebra $HF^\bullet(L,L)$ has a unit, and the modules $HF^\bullet(L,K)$ and $HF^\bullet(K,L)$ over it are unital.
\end{proposition}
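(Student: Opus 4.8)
The plan is to produce the unit as a continuation element, in the standard way; the only point that uses the Liouville‑sector hypothesis is the confinement of holomorphic curves near $\partial X$, for which Lemma \ref{pibarrierlemma} and the monotonicity/energy bounds of \S\ref{wcurvessec} (in particular the moving‑boundary energy identity \eqref{geoenergymoving}) apply verbatim. Throughout, fix a sufficiently small positive transverse pushoff $L^+$ of $L$ — small enough that it remains disjoint at infinity from $L$ itself and from $K$ — so that by Lemma \ref{lemma:simultaneousdeformation} there are canonical isomorphisms $HF^\bullet(L^+,L)=HF^\bullet(L,L)$, $HF^\bullet(L^+,K)=HF^\bullet(L,K)$, and $HF^\bullet(K,L^+)=HF^\bullet(K,L)$, each realized by a continuation map counting disks with the appropriate moving Lagrangian boundary condition.

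First I would enlarge the Floer‑theoretic package of \S\ref{wcurvessec} by one more type of domain: a disk with a single output boundary puncture, no input punctures, and a moving Lagrangian boundary condition interpolating from $L$ to $L^+$ along the boundary, with compatible strip‑like coordinates and adapted‑to‑$\partial X$ almost complex structures chosen as before. Counting the rigid such disks defines an element $e_L\in CF^0(L^+,L)$, and inspecting the codimension‑one boundary of the one‑dimensional component of this moduli space shows $\mu^1 e_L=0$. The claim is that $[e_L]\in HF^\bullet(L,L)$ is the desired unit.

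Next I would verify the unit property. For $b\in CF^\bullet(L,K)$, consider the moduli space defining $\mu^2(e_L,b)$: disks with two input punctures and one output puncture, boundary arcs on $L^+$, $L$, $K$ in cyclic order, the first input asymptotic to $e_L$. Degenerating the first input — equivalently, gluing onto it the disk that defines $e_L$ — identifies the induced map on cohomology $\mu^2([e_L],-):HF^\bullet(L,K)\to HF^\bullet(L^+,K)$ with the continuation isomorphism realizing $HF^\bullet(L,K)=HF^\bullet(L^+,K)$ of Lemma \ref{lemma:simultaneousdeformation}; so $\mu^2([e_L],-)$ is the identity under the identifications fixed above. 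The mirror‑image argument, gluing the disk defining $e_L$ onto the second input of a triangle with corners on $K$, $L^+$, $L$, gives $\mu^2(-,[e_L])=\id$ on $HF^\bullet(K,L)$. Running the first computation instead with a chain of pushoffs $L$, $L^+$, $L^{++}$ shows that $[e_L]$ is also a two‑sided unit for the associative algebra $HF^\bullet(L,L)$ itself. Altogether $[e_L]$ is the unit and $HF^\bullet(L,K)$, $HF^\bullet(K,L)$ are unital modules over it.

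The hard part will be the gluing/degeneration step of the previous paragraph: one must choose the perturbation data for the moduli space defining $e_L$ and for the triangle moduli space defining $\mu^2(e_L,-)$ consistently, so that the relevant boundary stratum of the triangle space is precisely the fibre product [disk defining $e_L$] $\times$ [continuation strip for the isotopy $L\rightsquigarrow L^+$], and so that the gluing map is a diffeomorphism onto a neighborhood of that stratum. This is the usual ``the $A_\infty$ structure is unital'' argument and is insensitive to the presence of $\partial X$: all the compactness it requires is supplied by Lemma \ref{pibarrierlemma} (no curve meets $\pi^{-1}(\CC_{|\Re|<\varepsilon})$) together with the monotonicity arguments of \S\ref{inftyescape}, exactly as in the proof of Proposition \ref{wcompactness}, so no analytic input beyond \S\ref{wcurvessec} is needed.
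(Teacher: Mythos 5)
Your overall architecture (unit as the count of once-punctured disks with a positively moving boundary condition $L\rightsquigarrow L^+$, unitality by gluing/degeneration against continuation strips) is exactly the ``direct'' argument that the paper sketches as its alternative proof, and the algebraic part of your argument is fine. The genuine gap is in your compactness claim. You assert that the needed \emph{a priori} bounds are supplied by Lemma \ref{pibarrierlemma} together with the monotonicity arguments ``exactly as in the proof of Proposition \ref{wcompactness}, so no analytic input beyond \S\ref{wcurvessec} is needed.'' But the monotonicity scheme of Proposition \ref{wcompactness} rests on Lemma \ref{domainacsbddgeo}, i.e.\ on uniformly bounded geometry of the boundary condition $\partial D^2\times L$ for a \emph{cylindrical} Lagrangian, and the paper explicitly disclaims any such boundedness for \emph{moving} Lagrangian boundary conditions. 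Since the isotopy $L\rightsquigarrow L^+$ is positive at infinity, the swept locus is non-compact and non-cylindrical, and monotonicity does not control the curve there. This is precisely what the paper identifies as ``the only subtle point'': one needs, in addition to the energy bound from \eqref{geoenergymoving} (which you do invoke), almost complex structures of contact type near infinity over a large compact subset $V$ of the interior of $\partial_\infty X$ containing the moving boundary conditions, so that the maximum principle (Lemma \ref{maxprinciple}, \S\ref{productpseudoconvexinterpolation}) handles the region swept by the moving boundary, while monotonicity handles the rest. Without this (or a Groman-style dissipative argument as in \S\ref{secsymplecticcohomology}), your moduli spaces are not known to be contained in a compact subset of $X$, and the construction of $e_L$ and of the comparison with continuation maps does not get off the ground.

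It is also worth noting that the paper's primary proof is different and much shorter: it reduces to the known statement for Liouville manifolds by including $X$ into its horizontal completion $\bar X$ (\S\ref{horizcomplete}) and using the locality of $HF^\bullet$ under inclusions of Liouville sectors (Lemma \ref{lemma:localityHF}), so that units and module structures for $X$ are inherited from $\bar X$. If you want to avoid the delicate compactness analysis for moving boundary conditions altogether, that reduction is the cleanest route; otherwise your direct argument can be repaired by inserting the contact-type/maximum-principle step described above.
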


\begin{proof}
This is well-known for Liouville manifolds,
and we may deduce the result for Liouville sectors from the case of Liouville manifolds as follows.
Namely, if $X\hookrightarrow X'$ is an inclusion of Liouville sectors, then the result for $X'$ implies the result for $X$, and every Liouville sector $X$ embeds into its convex completion $\bar X$ (see \S\ref{horizcomplete}) which is a Liouville manifold.

Alternatively, the proof for Liouville manifolds can be implemented directly in the Liouville sector setting.
The only subtle point in this adaptation is to show compactness, namely to show that the relevant disks are bounded \emph{a priori} away from infinity.
Recall that one considers holomorphic disks with non-negatively (in the clockwise direction) moving Lagrangian boundary conditions (and note that Proposition \ref{wcompactness} concerns only fixed Lagrangian boundary conditions).
We consider almost complex structures which are of contact type near infinity over a sufficiently large compact subset $V$ of the interior of $\partial_\infty X$ (compare \S\ref{productpseudoconvexinterpolation}) containing the moving Lagrangian boundary conditions.
Now the geometric energy of such disks is bounded by their topological energy since the Lagrangians move non-negatively at infinity (see \eqref{geoenergymoving}).
The arguments from Proposition \ref{wcompactness} based on monotonicity thus imply that such disks are bounded away from infinity, except possibly near the region swept out by the moving Lagrangian boundary conditions (which we do not claim have bounded geometry, compare Lemma \ref{domainacsbddgeo}).
This locus is contained in $V$, and hence the maximum principle applies to deal with it as well.

Yet a third possible argument would be to adapt the methods of Groman \cite{groman} based solely on monotonicity, as we will do in \S\ref{secsymplecticcohomology} to define symplectic cohomology of Liouville sectors and in \S\ref{secopenclosed} to define the open-closed map.
\end{proof}

\begin{remark}\label{hfmorse}
The proof(s) of Proposition \ref{wrappingunits} generalize directly to define a map $H^\bullet(L)\to HF^\bullet(L,L)$ sending the unit $[L]\in H^\bullet(L)$ to the unit $\1_L\in HF^\bullet(L,L)$ (e.g.\ by counting holomorphic disks with moving Lagrangian boundary conditions and one point constraint on $L$).
This map is well-known to be an isomorphism (it should moreover be an algebra map, though this is technically more difficult to show; see Fukaya--Oh \cite{fukayaoh} and Abouzaid \cite{abouzaidplumbing} for proofs in closely related settings).
\end{remark}

The existence of a unit inside $HF^\bullet(L,L)$ allows us to define \emph{continuation elements}.
\begin{definition}
For any positive isotopy $L_t$, we associate a \emph{continuation element}
\begin{equation}
c(L_t)\in HF^\bullet(L_1,L_0)
\end{equation}
defined as follows.
For sufficiently small isotopies, $c(L_t)$ is simply the the image of the unit under the deformation isomorphism $HF^\bullet(L_0,L_0):=HF^\bullet(L_0^+,L_0)=HF^\bullet(L_1,L_0)$.
For an arbitrary isotopy, break it up into smaller isotopies, and define $c(L_t)$ as the composition of the continuation elements for each of these smaller isotopies.
\end{definition}

\begin{lemma} \label{continuationproperties}
The continuation elements satisfy the following properties:
\begin{itemize}
\item\emph{(Well-definedness)} $c(L_t)$ is independent of how the given isotopy is divided into small isotopies.
\item\emph{(Deformation invariance)} $c(L_t)$ is invariant under deformation of the isotopy $L_t$ fixed at endpoints.
\item\emph{(Composition)} A composition of continuation elements coincides with the continuation element of the concatenation of isotopies.
\item\emph{(Naturality)} Continuation elements are preserved by inclusions of Liouville sectors.
Namely, if $L_t$ is a positive isotopy contained in $X \subseteq X'$, then $c(L_t)$ does not depend on whether it is computed in $X$ or $X'$ (with respect to the corresponding invariance of $HF^\bullet(L_1, L_0)$ from Lemma \ref{lemma:localityHF}).
\item For any positive isotopy $L_t$ and negative isotopy $K_t$ such that $L_t$ and $K_t$ disjoint at infinity for all $t\in[0,1]$, the map $HF^\bullet(L_0,K_0)\to HF^\bullet(L_1,K_1)$ given by composing with continuation elements on either side coincides with the deformation isomorphism described in Lemma \ref{simultaneousdeformation}.
\end{itemize}
\end{lemma}

\begin{proof}
Given a positive isotopy $L_t$ for $t\in[0,1]$, denote by $c_{s,t}\in HF^\bullet(L_t,L_s)$ the units from Proposition \ref{wrappingunits} for $0\leq s<t\leq 1$ and $\left|s-t\right|$ sufficiently small.
The fact that the $c_{s,t}$ represent the units means that $c_{x,y}\cdot c_{y,z}=c_{x,z}$ for $0\leq x<y<z\leq 1$ and $\left|x-z\right|$ sufficiently small.
It follows that the continuation element $c(L_t):=c_{t_0,t_1}\cdots c_{t_{N-1},t_N}$ is independent of the choice of sufficiently fine partition $0=t_0<\cdots<t_N=1$.
This proves well-definedness.
The proofs of deformation invariance and composition are similar.
Naturality is immediate from the fact that $HF^\bullet$ and its product are preserved under inclusions of Liouville sectors (and hence the units are also preserved).
The proof of the final assertion is similar to the proofs of well-definedness, deformation invariance, and composition, using in addition the unitality of the modules $HF^\bullet(L,K)$ and $HF^\bullet(L,K)$ over $HF^\bullet(L,L)$ from Proposition \ref{wrappingunits}.
\end{proof}

For any positive isotopy $L_t$ and $K$ transverse to $L_0$ and $L_1$, there is an associated \emph{continuation map}, namely composition with $c(L_t)$:
\begin{equation}\label{continuationonHF}
HF^\bullet(L_0, K) \xrightarrow{c(L_t) \cdot  } HF^\bullet(L_1, K).
\end{equation}
In fact, such a continuation map is defined for any non-negative isotopy $L_t$, using the properties from Lemma \ref{continuationproperties} and the fact that a non-negative isotopy $L_0\leadsto L_1$ can be perturbed to a positive isotopy $L_0\leadsto L_1^+$.
These continuation maps \eqref{continuationonHF} compose with each other as expected under concatenation of isotopies.

\subsection{Wrapped Floer cohomology}\label{wrappedhfsection}

Intuitively, the wrapped Floer cohomology $HW^\bullet(L, K)$ is the usual Floer cohomology $HF^\bullet(L^w, K)$ where $L^w$ is the image of $L$ under a large positive Hamiltonian flow.
To avoid choosing a particular such Hamiltonian, the actual definition involves a direct limit.

To this end, for a Lagrangian $L$ (equipped with a $\Spin$ structure), we define the \emph{positive wrapping category} $(L\leadsto-)^+$ as follows.
The objects of $(L\leadsto-)^+$ are isotopies of exact Lagrangians $\phi:L\leadsto L^w$ (equipped with $\Spin$ structures). The morphisms $(\phi:L\leadsto L^w)\to(\phi':L\leadsto L^{w'})$ are homotopy classes of \emph{positive} isotopies of exact Lagrangians $\psi:L^w\leadsto L^{w'}$ (equipped with $\Spin$ structures) such that $\phi\#\psi=\phi'$. We also define the \emph{negative wrapping category} $(L\leadsto-)^-$ in the same manner, save that the morphisms are homotopy classes of \emph{negative} isotopies.
It is the opposite category of the positive wrapping category.

As we will be taking direct limits over wrapping categories, it is crucial to point out that they are filtered (this is a standard elementary fact for which we know no reference).
Recall that a category $\C$ is called \emph{filtered} iff
\begin{enumerate}
\item\label{filteredI}it is non-empty,
\item\label{filteredII}for every $x,y\in\C$, there exists $z\in\C$ and morphisms $x\to z$ and $y\to z$, and
\item\label{filteredIII}for every pair of morphisms $f,g:x\to y$ in $\C$, there exists a morphism $h:y\to z$ such that $h\circ f=h\circ g$.
\end{enumerate}
Direct limits over filtered categories are exact.
Also recall that a functor $F:\C\to\D$ between filtered categories is called \emph{cofinal} iff
\begin{enumerate}
\item\label{cofinalI}for every $d\in\D$ there exists $c\in\C$ and a morphism $d\to F(c)$, and
\item\label{cofinalII}for every pair of morphisms $f,g:d\to F(c)$, there exists a morphism $h:c\to c'$ such that $F(h)\circ f=F(h)\circ g$.
\end{enumerate}
Pulling back a directed system under a cofinal functor preserves the direct limit.
A filtered category $\C$ is said to have \emph{countable cofinality} iff there exists a cofinal functor $\ZZ_{\geq 0}\to\C$.

\begin{lemma}\label{wrappingcatfiltered}
The wrapping category $(L\leadsto-)^+$ is filtered.
\end{lemma}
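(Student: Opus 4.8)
We must show that the positive wrapping category $(L\to-)^+$ is filtered and has countable cofinality.

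The plan is to produce an explicit countable cofinal subcategory of $(L\to-)^+$, from which both filteredness and countable cofinality follow simultaneously (a category receiving a cofinal functor from a filtered, countable category is itself filtered with countable cofinality).

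Nonemptiness is immediate from the constant isotopy $\mathrm{id}:L\to L$. Next, fix a wrapping Hamiltonian $H:X\to\RR$, linear and positive at infinity, with time-$t$ flow $\theta^t$ defined for $t\geq 0$; write $\phi^{(n)}:L\to\theta^n(L)$ for the wrapping isotopy $\tau\mapsto\theta^{n\tau}(L)$, $n\in\ZZ_{\geq 0}$. Since $H>0$ at infinity, for $m\leq n$ the tail $\sigma\mapsto\theta^{m+\sigma(n-m)}(L)$ is a positive isotopy $\theta^m(L)\to\theta^n(L)$ whose concatenation after $\phi^{(m)}$ is (homotopic to) $\phi^{(n)}$; these are therefore morphisms $\phi^{(m)}\to\phi^{(n)}$, and $\{\phi^{(n)}\}_{n\geq 0}$ together with them is a subcategory isomorphic to the directed poset $(\ZZ_{\geq 0},\leq)$, in particular filtered and countable. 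So it remains to prove this subcategory is cofinal.

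The one nontrivial geometric input is the standard ``turn-positive'' fact: \emph{for any isotopy $\eta:K_0\to K_1$ of exact Lagrangians cylindrical at infinity and all $n\gg 0$, the isotopy $\eta\#(\tau\mapsto\theta^{n\tau}(K_1))$ is homotopic rel endpoints to a positive isotopy $K_0\to\theta^n(K_1)$.} One proves this by realizing $\eta$ by an ambient Hamiltonian isotopy $\varphi_\tau$ of $X$ (with $\varphi_0=\mathrm{id}$, $\varphi_\tau(K_0)=\eta(\tau)$, linear at infinity) and considering the square of Lagrangians $(s,\tau)\in[0,1]^2\mapsto\theta^{sn\tau}\varphi_\tau(K_0)$: its four sides are $\eta$ (at $s=0$), the isotopy $\Phi^1:\tau\mapsto\theta^{n\tau}\varphi_\tau(K_0)$ from $K_0$ to $\theta^n(K_1)$ (at $s=1$), the constant isotopy at $K_0$ (at $\tau=0$), and the wrapping $s\mapsto\theta^{sn}(K_1)$ of $K_1$ (at $\tau=1$). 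The square exhibits a homotopy rel endpoints between $\eta\#(\text{wrapping of }K_1)$ and $\Phi^1$; and $\Phi^1$ is positive at infinity for $n\gg 0$ because its contact Hamiltonian there is $nH+(\theta^{n\tau})_\ast b_\tau$, where $b_\tau$ is the (bounded at infinity) generator of $\varphi$, so that the positive term $nH$ dominates.

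Granting this, cofinality is immediate: for an arbitrary object $\phi:L\to L^w$, applying the turn-positive fact to $\eta:=\bar\phi:L^w\to L$ yields, for $n\gg 0$, a positive isotopy $\psi:L^w\to\theta^n(L)$ with $\psi\simeq\bar\phi\#\phi^{(n)}$ rel endpoints, hence $\phi\#\psi\simeq\phi\#\bar\phi\#\phi^{(n)}\simeq\phi^{(n)}$; thus $\psi$ is a morphism $\phi\to\phi^{(n)}$ in $(L\to-)^+$. Since the set of such $n$ is upward closed (post-compose with a tail of the wrapping), the relevant comma categories are nonempty and directed, hence connected, which is precisely what cofinality demands. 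The main obstacle is the turn-positive fact, and within it the verification that $\Phi^1$ is positive for large $n$: this needs a uniform estimate at infinity showing that transporting the bounded generator $b_\tau$ of $\eta$ by $\theta^{sn\tau}$ keeps it dominated by $nH$ for all $\tau\in[0,1]$, which uses that everything is linear/cylindrical at infinity so the relevant conformal factors stay controlled. Everything else is formal.
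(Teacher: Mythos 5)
Your construction of the countable one-parameter wrapping $\{\phi^{(n)}\}$ and the ``turn-positive'' fact are sound and closely parallel what the paper does for nonemptiness, condition (ii), and the closing cofinality remark; your worry about conformal factors in the estimate for $\Phi^1$ in fact dissolves, since positivity of an isotopy is conformally invariant, so one checks it after pulling back by $\theta^{n\tau}$, where the generator is literally $nH+b_\tau$ with $b_\tau$ bounded on a compact family and $H$ bounded below on the swept region (this is exactly how the paper argues, via $\Phi_t^{-1}\partial_t\Phi_t$). Also, as a minor point, $H$ should be taken with contact Hamiltonian strictly positive only on the \emph{interior} of $\partial_\infty X$ and cut off near $\partial X$ (a cutoff Reeb vector field), not ``positive at infinity'' outright.

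The genuine gap is your one-line claim that the comma categories $\phi\downarrow\{\phi^{(n)}\}$ are ``directed, hence connected.'' Upward-closedness of the set of $n$ for which \emph{some} morphism $\phi\to\phi^{(n)}$ exists only lets you push a single object of the comma category forward; it says nothing about two objects $(n,\psi)$ and $(n',\psi')$ admitting a common refinement. Unwinding the definitions, connectedness of this comma category is \emph{equivalent} to the statement that $\psi\#t_{n\to M}$ and $\psi'\#t_{n'\to M}$ become \emph{equal as morphisms} for $M\gg0$, and morphisms here are homotopy classes of positive isotopies through \emph{positive} homotopies (if the homotopies were through arbitrary isotopies, Hom-sets would be at most singletons and the paper's condition (iii) would be vacuous, which it is not). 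Coequalizing two such parallel morphisms therefore requires the parametric version of your turn-positive fact: given a homotopy $L_{s,t}$ of isotopies whose endpoints $L_{0,t},L_{1,t}$ are positive, one must wrap the entire family, uniformly in $s$, so that all slices $\Phi_tL_{s,t}$ become positive --- this is precisely the paper's verification of filteredness condition (iii) --- and, for your specific strategy, one must additionally show that the resulting coequalizing positive isotopy $h$ can be replaced by (absorbed into) a tail $t_{m\to M}$ of your chosen one-parameter wrapping, again up to homotopy through positive isotopies. Neither step is formal, and neither appears in your proposal; as written, the reduction to the poset $(\ZZ_{\geq0},\leq)$ establishes only nonemptiness of the comma categories, not finality, so filteredness of $(L\to-)^+$ does not yet follow.
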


\begin{proof}
\ref{filteredI} The wrapping category is non-empty since we may take $L^w=L$.

\ref{filteredII} Suppose $L^w$ and $L^{w'}$ are objects of $(L\leadsto-)^+$, meaning $L^w$ is equipped with an isotopy to $L$, and the same for $L^{w'}$.
Choose isotopies $L^w_t$ and $L^{w'}_t$ (parameterized by $t\in[0,1]$) starting at $L^w_0=L^w$ and $L^{w'}_0=L^{w'}$ ending at the same Lagrangian $L^w_1=L^{w'}_1$, such that the resulting isotopies from $L$ to $L^w_1=L^{w'}_1$ are homotopic rel endpoints.
It suffices to show that these isotopies $L^w_t$ and $L^{w'}_t$ can be modified to be positive.
To do this, simply consider $\Phi_tL^w_t$ and $\Phi_tL^{w'}_t$ for $\Phi:[0,1]\to\Ham(X)$ starting at $\Phi_0=\id$ (for the present purpose, the Lie algebra of $\Ham(X)$ consists of those Hamiltonians which are linear at infinity and vanish along with their first derivatives over $\partial X$).
As long as $\Phi_t^{-1}\frac{\partial\Phi_t}{\partial t}$ is sufficiently large (as a section of $T\partial_\infty X/\xi$) over $L^w_t$ and $L^{w'}_t$, these modified paths are positive.
Fix a one-parameter subgroup $\Phi^\circ:\RR_{\geq 0}\to\Ham(X)$ which wraps positively over the subset of $\partial_\infty X$ swept out by $L^w_t$ and $L^{w'}_t$ for $t\in[0,1]$.
It suffices now to take $\Phi_t=\Phi^\circ_{Nt}$ for sufficiently large $N<\infty$, since then $\Phi_t^{-1}\frac{\partial\Phi_t}{\partial t}=N\frac{\partial\Phi_t^\circ}{\partial t}|_{t=0}$.

\ref{filteredIII} Suppose we are given two positive isotopies from an object $L^w$ to another $L^{w'}$ which are homotopic as (not necessarily positive) isotopies.
Denote this situation by $L_{s,t}$ where $L_{s,0}=L^w$, $L_{s,1}=L^{w'}$, and $L_{0,t}$ and $L_{1,t}$ are the given positive isotopies.
As above, for sufficiently positive $\Phi:[0,1]\to\Ham(X)$ starting at $\Phi_0=\id$, the isotopies $\{\Phi_tL_{s,t}\}_{t\in[0,1]}$ are positive for all $s\in[0,1]$.
Now note that $\Phi_tL_{0,t}$ and $\Phi_tL_{0,t}$ are homotopic to the compositions of the given positive isotopies $L_{0,t}$ and $L_{1,t}$ with the positive isotopy $\Phi_tL^{w'}$.
\end{proof}

\begin{remark}\label{cofinaltransversality}
Note that the collection of Lagrangians satisfying any given countable collection of transversality conditions is cofinal in the wrapping category $(L\leadsto-)^+$ by general position arguments.
In particular, the full subcategory consisting of such Lagrangians is also filtered.
\end{remark}

A positive isotopy $\{L_t\}_{t\geq 0}$ starting at $L=L_0$ gives rise to a functor $\RR_{\geq 0} \to (L\leadsto-)^+$ given by $s \mapsto (L\leadsto L_s)$ (the isotopy $\{L_t\}_{0 \leq t \leq s}$).
We now give a criterion under which (the functor associated to) such an isotopy is cofinal:

\begin{lemma}\label{cofinalintegrationcriterion}
Let $\{L_t\}_{t\geq 0}$ be a positive Lagrangian isotopy of $L=L_0$.
If there exists a contact form $\alpha$ on the interior of $\partial_\infty X$ such that
\begin{equation}\label{cofinalintegral}
\int_0^\infty\min_{\partial_\infty L_t}\alpha(\partial_t\partial_\infty L_t)\,dt=\infty,
\end{equation}
then $\{L_t\}_{t\geq 0}$ is cofinal in wrapping category $(L\leadsto-)^+$.
\end{lemma}

\begin{proof}
We begin by arguing that for any contact form $\alpha$, there exists a function $f:(\partial_\infty X)^\circ\to\RR_{\geq 1}$ such that the Reeb flow of $f\alpha$ is complete (note that since $f\geq 1$, replacing $\alpha$ with $f\alpha$ preserves divergence of the integral \eqref{cofinalintegral}).
To ensure that the Reeb flow of $f\alpha$ is complete, it suffices to fix an infinite sequence of disjoint shells $S_1,S_2,\ldots\subseteq(\partial_\infty X)^\circ$ separating larger and larger compact subsets of $(\partial_\infty X)^\circ$ from infinity, and choose $f|_{S_i}$ so that it takes at least unit time under the Reeb flow of $f\alpha$ to travel between the inner and outer boundaries of $S_i$ (this minimum time is scaled by $r>0$ if we scale $f|_{S_i}$ by $r>0$).
To produce for any given $L$ a positive isotopy and a contact form satisfying \eqref{cofinalintegral}, simply choose a contact form $\alpha$ with complete Reeb flow, and define $L_t$ by flowing at infinity by the Reeb flow of $\alpha$.

Now suppose $\{L_t\}_{t\geq 0}$ satisfies \eqref{cofinalintegral}, and let us show that it is cofinal.
By the paragraph above, we may assume without loss of generality that the Reeb flow of $\alpha$ is complete.
Let $\Phi:\RR_{\geq 0}\to\Ham(X)$ be a one-parameter subgroup which at infinity corresponds to the Reeb flow of $\alpha$.
By reparameterizing $L_t$, we may assume that $\alpha(\partial_t\partial_\infty L_t)\geq 2$ pointwise on $\partial_\infty L_t$ for all $t\geq 0$.
It follows that $t\mapsto\Phi_t^{-1}L_t$ is a positive path of Lagrangians.
Using this property, we may verify properties \ref{cofinalI} and \ref{cofinalII} of cofinality as follows.

To verify \ref{cofinalI}, we need to produce a morphism in $(L\leadsto-)^+$ from an arbitrary $L\leadsto L^w$ to some $L\leadsto L_t$.
The reversed isotopy $L^w\leadsto L$ gives a positive isotopy $L^w\leadsto\Phi_tL$ for sufficiently large $t$, and now there is a positive isotopy $\Phi_tL\leadsto L_t$ (by positivity of $t\mapsto\Phi_t^{-1}L_t$).

To verify \ref{cofinalII}, we need to show that any two given morphisms $(L\leadsto L^w)\to(L\leadsto L_t)$ coincide after composing with $L_t\leadsto L_s$ for some $s\geq t$.
The argument for \ref{filteredIII} from Lemma \ref{wrappingcatfiltered} shows that two such morphisms coincide after composing with $L_t\leadsto\Phi_NL_t$ for sufficiently large $N$.
Now simply further compose with the positive isotopy $\Phi_NL_t\leadsto L_{t+N}$.
\end{proof}

The proof of Lemma \ref{cofinalintegrationcriterion} produced a contact form $\alpha$ with complete Reeb flow; hence for every $L$, the positive isotopy $\{L_t\}_{t\geq 0}$ defined by flowing under this Reeb vector field satisfies \eqref{cofinalintegral} and hence is cofinal.
Since $\ZZ_{\geq 0} \to \RR_{\geq 0}$ is cofinal, we conclude:

\begin{corollary}
The wrapping category $(L\leadsto-)^+$ has countable cofinality.\qed
\end{corollary}

\begin{remark}\label{bdrycofinal}
Note that the hypothesis of Lemma \ref{cofinalintegrationcriterion} is satisfied for trivial reasons if $L_t$ approaches $\partial(\partial_\infty X)$ as $t\to\infty$ (in the sense that for every compact subset $K$ of the interior of $\partial_\infty X$, there exists $T<\infty$ such that $L_t\cap K=\varnothing$ for all $t\geq T$), since we are free to make $\alpha$ grow as fast as we like near $\partial(\partial_\infty X)$.
\end{remark}

To define wrapped Floer cohomology, we consider the following covariant functor defined on the positive wrapping category of $L$:
\begin{equation}\label{wrappingfunctorhf}
(\phi: L \leadsto L^w) \mapsto HF^\bullet(L^w, K)
\end{equation}
in which to a given positive isotopy $\psi:L^w \leadsto L^{w'}$, we associate the map $HF^\bullet(L^w, K)\to HF^\bullet(L^{w'}, K)$ given by multiplication by the continuation element $c(\psi)\in HF^\bullet(L^{w'},L^w)$.
The composition property for continuation elements from Lemma \ref{continuationproperties} implies that this defines a functor.
Strictly speaking, this functor \eqref{wrappingfunctorhf} is defined only on the full subcategory spanned by those $L^w$ which are transverse to $K$, however we will elide this point in the present discussion (recall Remark \ref{cofinaltransversality}).

Wrapped Floer cohomology is defined as the direct limit
\begin{equation}\label{hwdirectlimit}
HW^\bullet(L,K):=\varinjlim_{(L\leadsto L^w)^+}HF^\bullet(L^w,K).
\end{equation}
Wrapping the first factor forwards yields the same cohomology groups as wrapping the second factor backwards, or 
as doing both.  That is, the following maps are both isomorphisms:
\begin{equation}
\varinjlim_{(L\leadsto L^w)^+}HF^\bullet(L^w,K)\xrightarrow\sim\varinjlim_{\begin{smallmatrix}(L\leadsto L^w)^+\\(K\leadsto K^w)^-\end{smallmatrix}}HF^\bullet(L^w,K^w)\xleftarrow\sim\varinjlim_{(K\leadsto K^w)^-}HF^\bullet(L,K^w).
\end{equation}
From the middle direct limit, it is apparent 
that isotopies $L_t$ and $K_t$ induce isomorphisms $HW^\bullet(L_0,K_0)=HW^\bullet(L_1,K_1)$.
We may define an associative product on $HW^\bullet$ via
\begin{equation}
\varinjlim_{(L_1\leadsto L_1^w)^+}HF^\bullet(L_1^w,L_2)\otimes\varinjlim_{(L_3\leadsto L_3^w)^-}HF^\bullet(L_2,L_3^w)\to\varinjlim_{\begin{smallmatrix}(L_1\leadsto L_1^w)^+\\(L_3\leadsto L_3^w)^-\end{smallmatrix}}HF^\bullet(L_1^w,L_3^w).
\end{equation}
Wrapped Floer cohomology is covariantly functorial under inclusions of Liouville sectors.
That is, an inclusion of Liouville sectors $X\hookrightarrow X'$ induces a map
\begin{equation}
    HW^\bullet(L,K)_X\to HW^\bullet(L,K)_{X'}
\end{equation}
defined as follows.
First, there is an obvious functor $(L\leadsto-)^+_X\to(L\leadsto-)^+_{X'}$.
Applying $HF^\bullet(-, K)$, note that for any $L^w \subset X$, there are isomorphisms $HF^\bullet(L^w,K)_X=HF^\bullet(L^w,K)_{X'}$ by Lemma \ref{lemma:localityHF}.
Moreoever, for any positive isotopy $\psi: L^w \leadsto L^{w'}$ in $X$, the isomorphisms are compatible with multiplication by the induced continuation maps (in $X$ versus in $X'$) by naturality (Lemma \ref{continuationproperties}).
This defines the map $HW^\bullet(L,K)_X\to HW^\bullet(L,K)_{X'}$.

Similar disc confining arguments establish that the maps $HW^\bullet(L,K)_X\to
HW^\bullet(L,K)_{X'}$ are compatible with composition and identity morphisms,
and so induce a functor between cohomological wrapped Fukaya categories
(or ``wrapped Donaldson--Fukaya categories''). We will describe a chain level
implementation of this functor in the next section.

\begin{remark}\label{cwhomotopydiagram}
The most direct (though not necessarily the easiest) way to upgrade the above discussion to the chain level is as follows.
One defines the ``wrapping $\infty$-category of $L$'' by removing the words ``homotopy class of'' from the definition of the wrapping category of $L$ (this defines a topological category, though technically speaking it is probably more convenient to define it as a quasi-category and work with that instead).
The argument of Lemma \ref{wrappingcatfiltered} shows that the wrapping $\infty$-category of $L$ is filtered in the $\infty$-categorical sense (see \S\ref{inftycatssec}).
One can then define wrapped Floer cochains $CW^\bullet(L,K)$ as the homotopy direct limit of $CF^\bullet(L^w,K)$ over the wrapping $\infty$-category of $L$, provided one upgrades the construction of continuation elements to a construction of coherent continuation cycles (or defines the continuation maps in a different way, e.g.\ by non-negatively moving Lagrangian boundary conditions).
\end{remark}

\begin{lemma}\label{wrappedtrivialiso}
A trivial inclusion of Liouville sectors induces an isomorphism on $HW^\bullet$.
\end{lemma}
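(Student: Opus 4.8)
The plan is to identify the map induced by a trivial inclusion with the map on the filtered colimits defining wrapped Floer cohomology, and then to show that this colimit is unchanged, using the deformation of Liouville sectors furnished by the very definition of ``trivial''. Let $i\colon X\hookrightarrow X'$ be a trivial inclusion and fix cylindrical exact Lagrangians $L,K\subseteq X$ with $\Spin$ structures. By the construction in \S\ref{wrappedhfsection}, the map $HW^\bullet(L,K)_X\to HW^\bullet(L,K)_{X'}$ is the map on filtered colimits (both wrapping categories are filtered by Lemma \ref{wrappingcatfiltered}) induced by the functor $F\colon(L\to-)^+_X\to(L\to-)^+_{X'}$ together with the canonical identifications $HF^\bullet(L^w,K)_X=HF^\bullet(L^w,K)_{X'}$ of Lemma \ref{lemma:localityHF}; these identifications intertwine the continuation maps defining the two diagrams, by the naturality clause of Lemma \ref{prop:continuationproperties}. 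Equivalently, the diagram computing $HW^\bullet(L,K)_X$ is the restriction along $F$ of the diagram computing $HW^\bullet(L,K)_{X'}$, so it suffices to show the induced map on colimits is an isomorphism; we do this by comparing both colimits to one computed along the deformation.

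By definition of a trivial inclusion there is a family of Liouville sectors $X_t\subseteq X'$ ($t\in[0,1]$) with $X_0=i(X)$, $X_1=X'$, and each $X_t\hookrightarrow X'$ an inclusion of Liouville sectors; near infinity this is a deformation of $\partial_\infty X$ to $\partial_\infty X'$ through contact manifolds with convex boundary embedded in $\partial_\infty X'$, during which $\partial(\partial_\infty X_t)$ sweeps out $\partial_\infty X'\setminus\partial_\infty X$. The comparison then proceeds in three steps. First, using Lemma \ref{cofinalintegrationcriterion} (and the remark following it), choose a positive wrapping isotopy $\{L_s\}_{s\ge 0}$ of $L$ inside $X'$ which is cofinal in $(L\to-)^+_{X'}$ and such that, for some reparametrization $t=t(s)$ increasing to $1$, $L_s$ is a cylindrical Lagrangian of $X_{t(s)}$ --- i.e.\ the wrapping ``follows'' the expanding family. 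Second, for each such $s$ the inclusion $X_{t(s)}\hookrightarrow X'$ together with Lemma \ref{lemma:localityHF} lets us compute $HF^\bullet(L_s,K)$ inside $X_{t(s)}$, and deformation invariance of Floer cohomology of cylindrical Lagrangians along the family $X_t$ --- supplied by Lemmas \ref{deformboundary} and \ref{lemma:simultaneousdeformation} --- identifies the colimit of $HF^\bullet(L_s,K)$ over this cofinal family with the analogous colimit of wrappings taken entirely inside $X_0=X$, compatibly with the identifications above; that colimit is by definition $HW^\bullet(L,K)_X$. Third, cofinality of $\{L_s\}$ in $(L\to-)^+_{X'}$ means the same colimit also equals $HW^\bullet(L,K)_{X'}$, and unwinding the identifications shows the resulting isomorphism $HW^\bullet(L,K)_X\xrightarrow{\sim}HW^\bullet(L,K)_{X'}$ is precisely $i_*$.

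The main obstacle is the first and second steps together: one must produce a wrapping of $L$ that is genuinely cofinal in the larger sector $X'$ yet stays confined to the exhausting family $X_t$, and then check that the relevant \emph{non-compactly supported} deformation of the Liouville structure near $\partial_\infty$ --- which is exactly what distinguishes the two sectors at infinity --- has no effect on Floer cohomology or on the continuation maps. This is where triviality of the inclusion is essential: for a general proper cylindrical inclusion $X\hookrightarrow X'$, $\partial_\infty X'$ carries strictly more Reeb dynamics than $\partial_\infty X$, no confined cofinal wrapping exists, and $i_*$ is genuinely not an isomorphism. An essentially equivalent route, which may in fact be cleaner to implement, is to construct a quasi-inverse functor $(L\to-)^+_{X'}\to(L\to-)^+_X$ by using the reverse deformation $\{X_{1-t}\}$ to push wrappings performed in $X'$ back into $X$, and to verify via Lemma \ref{lemma:simultaneousdeformation} that both composites become naturally isomorphic to the identity after passing to $HF^\bullet$; either way the heart of the argument is the compatibility of the Floer-theoretic data with the deformation near infinity.
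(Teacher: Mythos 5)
Your overall strategy --- compare a cofinal wrapping of $L$ inside $X$ with one inside $X'$ and show the two filtered colimits agree --- is the same as the paper's, but the step that carries all the content is missing. You defer everything to ``deformation invariance of Floer cohomology along the family $X_t$'' via Lemmas \ref{deformboundary} and \ref{lemma:simultaneousdeformation}. Lemma \ref{lemma:simultaneousdeformation} only applies when $L$ and $K$ remain \emph{disjoint at infinity} throughout the deformation, and that disjointness is precisely the nontrivial assertion here: when you push a wrapped Lagrangian $L_s\subseteq X_{t(s)}$ back into $X$ (or, equivalently, interpolate between a wrapping confined to $X$ and a wrapping that is allowed to use the extra collar $X'\setminus X$), you must rule out that $\partial_\infty L_s$ crosses $\partial_\infty K$ during the interpolation. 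If it could cross, the groups $HF^\bullet(L_s,K)$ computed in $X'$ and the groups computed from wrappings inside $X$ need not be identified, and even where they are, the identifications need not commute with the continuation maps of the two colimit diagrams --- which is what you would need to conclude $i_*$ is an isomorphism. Your proposal names this as ``the main obstacle'' but never resolves it; asserting that ``triviality is essential'' is not an argument. (Lemma \ref{deformboundary} is also not a Floer-invariance statement; it only produces the family of sectors.)

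The paper closes exactly this gap with a dynamical input you never invoke: after reducing to a small collar extension $\{t\geq 0\}\hookrightarrow\{t\geq -a\}$ in the coordinates of \S\ref{reebdynamicsboundary}, it wraps by cutoff Reeb flows $\Phi_M^T$ (for $X$) and $\Phi_N^T$ (for $X'$) with $M\leq N$ agreeing away from the boundary collar, and then uses Proposition \ref{cutofflemma} --- trajectories of the cutoff dynamics that enter the collar neighborhood of $\partial(\partial_\infty X')$ can never exit it --- to see that the interpolated wrappings $\Phi_{tN+(1-t)M}^T L$ never create chords to $K$, so the interpolation stays disjoint from $K$ and the two colimits coincide. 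Some substitute for this confinement statement (whether phrased for chords, as in the paper, or for the isotopy pushing $L_s$ back into $X$, as in your quasi-inverse variant) is indispensable; without it your second and third steps, and equally the proposed quasi-inverse functor, do not go through. Also note that your first step (a cofinal wrapping in $X'$ confined to the exhausting family $X_{t(s)}$) is asserted rather than constructed; the paper sidesteps this by using the explicit cutoff contact Hamiltonians, which are cofinal by the argument in Lemma \ref{wrappingcatfiltered}.
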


\begin{proof}
Fix $L,K\subseteq X$, let $X\hookrightarrow X'$ be a trivial inclusion of Liouville sectors, and let us show that $HW^\bullet(L,K)_X\to HW^\bullet(L,K)_{X'}$ is an isomorphism.

Fix collar coordinates $\partial X'\times\RR_{t\geq 0}$ near $\partial X'$ in $X'$ using the flow of the Hamiltonian vector field of a defining function for $X'$, so that in coordinates this Hamiltonian vector field is $-\frac\partial{\partial t}$.
We can reduce to the case $X=\{t\geq a\}$ as follows.
For any sufficiently small inward shrinking $X\subseteq X'$, we have $\{t\geq a\}\subseteq X\subseteq X'$ for some small $a>0$, and hence the result for $\{t\geq a\}\hookrightarrow X'$ implies surjectivity of the corresponding map for $X\hookrightarrow X'$.
Similarly, there is a chain of inclusions from $X$ into $X'$ into the image of $X$ under the flow of $-\frac\partial{\partial t}$ (which is also the Hamiltonian vector field of a defining function for $X$) for some small time $a>0$, which gives injectivity.
This derives the desired result for sufficiently small inward shrinkings $X\subseteq X'$ from the case of inclusions $\{t\geq a\}\subseteq X'$; moreover, the smallness required is uniform over any compact family of sectors $X'$, from which we may deduce the general case.
It is thus enough to prove the desired result in the special case $X=\{t\geq a\}$ for some (say small) $a>0$.

Fix coordinates on $\partial_\infty X'$ near $\partial(\partial_\infty X')$ as in \S\ref{reebdynamicsboundary}, so $\partial_\infty X=\{t\geq a\}$ and $\partial_\infty X'=\{t\geq 0\}$.
Let $M:\RR_{\geq 0}\to\RR_{\geq 0}$ be admissible as in \S\ref{reebdynamicsboundary}, and let $N:\RR_{\geq a}\to\RR_{\geq 0}$ be such that $N(t+a)$ is admissible and $N\leq M$, with equality for $t$ large.
Extend $M$ and $N$ to all of $X'$ so that they coincide except in these collar coordinates.
Now $L\leadsto\Phi_M^TL$ is cofinal in the wrapping category of $L$ inside $X'$ as $T\to\infty$, and the same for $L\leadsto\Phi_N^TL$ inside $X$.
We claim that the non-negative isotopy of Lagrangians $\Phi_{tM+(1-t)N}^TL$ parameterized by $t\in[0,1]$ stays disjoint from $K$ near infinity (this being assumed to hold for $t=0$).
In other words, we claim that there are no time $T$ Reeb chords of the flow of $tM+(1-t)N$ from $L$ to $K$ for any $t\in[0,1]$, where this is assumed to hold for $t=0$.
This claim follows from the discussion of the dynamics in \S\ref{reebdynamicsboundary}, specifically Proposition \ref{cutofflemma}.
Indeed, the vector field is only changing in a collar neighborhood of the boundary, and chords entering this neighborhood cannot subsequently escape into the rest of $X'$.

To conclude the proof, it suffices to argue that the map $HW^\bullet(L,K)_X\to HW^\bullet(L,K)_{X'}$ is the direct limit of the continuation maps $HF^\bullet(\Phi_N^TL,K)\to HF^\bullet(\Phi_M^TL,K)$ as $T\to\infty$ and appeal to the last part of Lemma \ref{continuationproperties} to see that each of these continuation maps is an isomorphism (specifically, the one from Lemma \ref{simultaneousdeformation}).
Note that since $\Phi_{tM+(1-t)N}^TL$ is a non-negative (as opposed to a positive) isotopy, the relevant continuation map is as defined below \eqref{continuationonHF} (rather than being literally multiplication by a continuation element), however the last part of Lemma \ref{continuationproperties} remains applicable (by perturbing the non-negative isotopy $\Phi_{tM+(1-t)N}^TL$ to a positive isotopy from $\Phi_N^TL$ to a small positive pushoff of $\Phi_M^TL$).
\end{proof}

\subsection{Wrapped Fukaya category}

The \emph{wrapped Fukaya category} is an $\ainf$-category whose objects are Lagrangians (equipped with $\Spin$ structures) and
the cohomology of whose morphisms spaces is wrapped Floer cohomology.  One way to define such a category
would be to lift the constructions from the previous subsections to the chain level as sketched in Remark \ref{cwhomotopydiagram}.   We follow instead an approach of Abouzaid--Seidel \cite{abouzaidseidelunpublished} which avoids this by a clever use of the theory of localization of $\ainf$-categories recalled in \S\ref{alocal}.

Let $X$ be a Liouville sector.
Fix a collection of (not necessarily mutually transverse) cylindrical exact Lagrangians in $X$ (with $\Spin$ structures), indexed by a countable (possibly finite) set $I$.
We suppose also that $I$ contains at least one representative in every isotopy class of exact Lagrangian equipped with a $\Spin$ structure (without this assumption, the only difference is that the category we get will depend, of course, on which isotopy classes are present).

\begin{definition}[Poset $\OO$]\label{posetOdef}
For each Lagrangian $L\in I$, choose a cofinal sequence
\begin{equation}
L=L^{(0)}\leadsto L^{(1)}\leadsto L^{(2)} \leadsto\cdots
\end{equation}
of morphisms in the wrapping category $(L\leadsto-)^+$.
Let $\OO:=\ZZ_{\geq 0}\times I$ be the set of all such $L^{(i)}$, and equip $\OO$ with the partial order inherited from the order on $\ZZ_{\geq 0}$, namely $L^{(i)}<K^{(i')}$ iff $i<i'$.
By choosing the $L^{(i)}$ generically, we ensure that any \emph{finite totally ordered collection} of Lagrangians in $\OO$ are mutually transverse.
\end{definition}

\begin{definition}[$\ainf$-category $\OO$]\label{categoryOdef}
We turn $\OO$ into a strictly unital $\ainf$-category with the following morphism spaces:
\begin{equation}
\OO(L_0,L_1):=\begin{cases}CF^\bullet(L_0,L_1)&L_0>L_1\\\ZZ&L_0=L_1\\0&\text{otherwise.}\end{cases}
\end{equation}
To define the operations
\begin{equation}
\mu^k:\OO(L_0,L_1)\otimes\cdots\otimes\OO(L_{k-1},L_k)\to\OO(L_0,L_k)[2-k]
\end{equation}
for $L_0>\cdots>L_k\in\OO$,
we count holomorphic disks using compatible choices of universal strip-like coordinates and families of almost complex structures
\begin{align}
\label{OcoordsI}\xi_{L_0,\ldots,L_k;j}^+:[0,\infty)\times[0,1]\times\Rbar_{k,1}&\to\Sbar_{k,1}\quad j=1,\ldots,k\\
\label{OcoordsII}\xi_{L_0,\ldots,L_k}^-:(-\infty,0]\times[0,1]\times\Rbar_{k,1}&\to\Sbar_{k,1}\\
\label{Oacs}J_{L_0,\ldots,L_k}:\Sbar_{k,1}&\to\J(X)
\end{align}
as in \S\ref{wcurvessec}.
Note that the only other possibly nontrivial operations $\mu^k$, namely when some $L_i=L_{i+1}$, are formally fixed by strict unitality.
\end{definition}

\begin{definition}[$\ainf$-category $\W$]\label{Cdef}
Denote by $C$ the set of all continuation elements $c\in HF^0(L^{(i+1)},L^{(i)})$ as defined in \S\ref{continuationsection}, for all $L^{(i)} \in \OO$.
Define the \emph{wrapped Fukaya category of $X$} to be the localized category $\W:=\OO[C^{-1}]$ (recall Definition \ref{localizationdef}).
\end{definition}

This category $\W$ depends on a number of choices as specified above (the collection of Lagrangians $I$, the poset $\OO$, the almost complex structures, etc.).
We will see in Proposition \ref{lem:winvariant} that $\W$ is well-defined up to quasi-equivalence.

\begin{lemma}[Abouzaid--Seidel \cite{abouzaidseidelunpublished}]\label{wrappingcalculatesW}
The natural maps
\begin{equation}
HW^\bullet(L,K)=\varinjlim_iH^\bullet\OO(L^{(i)},K)\xrightarrow\sim\varinjlim_iH^\bullet\W(L^{(i)},K)\xleftarrow\sim H^\bullet\W(L,K)
\end{equation}
are both isomorphisms.
\end{lemma}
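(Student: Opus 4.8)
The plan is to deduce both isomorphisms from the localization machinery of \S\ref{alocal}, specifically the $\Pro$-version of the locality lemma (Lemma \ref{localgoodpro}). The essential point to establish is that the tower $L=L^{(0)}\to L^{(1)}\to L^{(2)}\to\cdots$, viewed as a pro-object $\varprojlim_i L^{(i)}\in\Pro\OO$, is \emph{left $\cones(C)$-local}, meaning that $\varinjlim_i H^\bullet\OO(L^{(i)},Z)=0$ for every $Z$ which is a cone of a continuation morphism in $C$. Granting this, Lemma \ref{localgoodpro} applied to the modules $\M_i:=\OO(L^{(i)},-)$ immediately gives that the natural map $\varinjlim_i H^\bullet\OO(L^{(i)},K)\to\varinjlim_i H^\bullet(\OO/\cones(C))(L^{(i)},K)=\varinjlim_i H^\bullet\W(L^{(i)},K)$ is an isomorphism, which is the left-hand isomorphism. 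The leftmost equality $HW^\bullet(L,K)=\varinjlim_i H^\bullet\OO(L^{(i)},K)$ is then just the definition of $HW^\bullet$ together with the fact that $\{L^{(i)}\}$ is a cofinal sequence in the (filtered, countable-cofinality) wrapping category $(L\to-)^+$ by Lemma \ref{wrappingcatfiltered} and the choice in Definition \ref{posetOdef}, so that the colimit over the cofinal subsequence computes the colimit over the whole category (and the transition maps $H^\bullet\OO(L^{(i)},K)\to H^\bullet\OO(L^{(i+1)},K)$ are precisely multiplication by continuation elements, i.e.\ the maps defining the functor \eqref{wrappingfunctorhf}).

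So the heart of the matter is the locality claim. Here I would argue as follows. A cone $Z=\cones(c)$ for $c\in C$ sits in an exact triangle involving two objects $L^{(m)},L^{(m+1)}$ of $\OO$ with $c$ the continuation element; thus $\OO(L^{(i)},Z)$ sits in an exact triangle with $\OO(L^{(i)},L^{(m)})$ and $\OO(L^{(i)},L^{(m+1)})$, and by the long exact sequence and exactness of filtered colimits it suffices to show that the map
\begin{equation}
\varinjlim_i H^\bullet\OO(L^{(i)},L^{(m)})\xrightarrow{\ c\,\cdot\ }\varinjlim_i H^\bullet\OO(L^{(i)},L^{(m+1)})
\end{equation}
is an isomorphism, where the map is post-composition with the continuation element $c\in HF^\bullet(L^{(m+1)},L^{(m)})$. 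Both sides are colimits over the cofinal tower $\{L^{(i)}\}$; using cofinality I may compare them instead to colimits over the full wrapping categories $(L^{(m)}\to-)^+$ and $(L^{(m+1)}\to-)^+$ respectively. The key is that post-composition with the continuation element $c:L^{(m)}\to L^{(m+1)}$ in $HF^\bullet$, combined with the induced functor on wrapping categories $(L^{(m)}\to-)^+\to(L^{(m+1)}\to-)^+$ (which is cofinal, since wrapping $L^{(m)}$ forward is the same as wrapping $L^{(m+1)}$ forward up to the fixed initial segment), identifies the two colimit systems. Concretely: an object $L^{(m)}\to L^w$ with $L^w$ far-wrapped factors through $L^{(m+1)}$, and the continuation element for the composite is the composition of continuation elements (Lemma \ref{prop:continuationproperties}); hence the two directed systems $\{HF^\bullet(\text{far wrapping of }L^{(m)},K)\}$ and $\{HF^\bullet(\text{far wrapping of }L^{(m+1)},K)\}$ are cofinally identified, and the colimit map is the identity up to this identification. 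This gives the desired isomorphism.

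Once the left-hand isomorphism is in place, the right-hand isomorphism $H^\bullet\W(L,K)\xrightarrow{\sim}\varinjlim_i H^\bullet\W(L^{(i)},K)$ follows because $L=L^{(0)}$ is the initial term of the tower and the transition morphisms $L^{(i)}\to L^{(i+1)}$ in $\OO$ become \emph{isomorphisms} in $\W=\OO_{C^{-1}}$ by construction (that is exactly the point of inverting $C$); therefore each map $H^\bullet\W(L,K)\to H^\bullet\W(L^{(i)},K)$ is already an isomorphism, and so is the colimit. The main obstacle I anticipate is the bookkeeping in the cofinality/continuation-element comparison of the previous paragraph — making precise that post-composition with a fixed continuation element, together with the cofinal functor between wrapping categories, induces an isomorphism of directed colimits — which is where Lemma \ref{prop:continuationproperties} (composition and deformation invariance of continuation elements) and Lemma \ref{wrappingcatfiltered} (filteredness and countable cofinality) must be invoked carefully; everything else is a formal consequence of the localization lemmas of \S\ref{alocal}.
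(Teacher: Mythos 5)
Your overall skeleton is the same as the paper's: identify $\varinjlim_i H^\bullet\OO(L^{(i)},K)$ with $HW^\bullet(L,K)$ by cofinality of the chosen tower, deduce the right-hand isomorphism from the fact that cones of morphisms in $C$ are annihilated in $\W$ (Lemma \ref{quotientannihilatesA}), and reduce the middle isomorphism, via Lemma \ref{localgoodpro} and the long exact sequence of a cone, to the statement that composition with each continuation element in $C$ induces an isomorphism of the colimits over the tower $L^{(i)}$. Up to that point the proposal is correct (modulo two slips: the continuation morphisms in $C$ live in the towers of \emph{all} Lagrangians $M\in I$, not just in the tower of $L$, so writing their source and target as $L^{(m)},L^{(m+1)}$ is misleading; and the direction of your displayed map is reversed --- post-composition with $c\in HF^\bullet(L^{(m+1)},L^{(m)})$ sends $\varinjlim_iH^\bullet\OO(L^{(i)},L^{(m+1)})$ to $\varinjlim_iH^\bullet\OO(L^{(i)},L^{(m)})$, as in the paper).

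The genuine gap is in your justification of that final isomorphism: you argue by comparing the positive wrapping categories $(L^{(m)}\to-)^+$ and $(L^{(m+1)}\to-)^+$ and the directed systems $\{HF^\bullet(\text{far wrapping of }L^{(m)},K)\}$, $\{HF^\bullet(\text{far wrapping of }L^{(m+1)},K)\}$. That argument proves invariance in the \emph{first} slot, i.e.\ $HW^\bullet(L^{(m)},K)\cong HW^\bullet(L^{(m+1)},K)$, which is not what locality requires. The colimits appearing in the locality condition are taken over wrappings of $L$ in the first slot, with the source and target of $c$ sitting in the \emph{second} slot; what must be shown is that composition with $c$ gives an isomorphism $HW^\bullet(L,M^{(j+1)})\to HW^\bullet(L,M^{(j)})$, a statement about the second argument. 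This is exactly how the paper's proof concludes: it identifies the map with $HW^\bullet(L,M^{(j+1)})\to HW^\bullet(L,M^{(j)})$ and cites the fact, established in \S\ref{wrappedhfsection}, that wrapping the second factor negatively computes the same colimit (with structure maps given by composition with continuation elements, via Proposition \ref{wrappingunits} and the last item of Lemma \ref{prop:continuationproperties}), so that the map is realized by a cofinal comparison of negative wrapping categories of $M^{(j)}$ and $M^{(j+1)}$. Your cofinality idea is the right kind of argument, but it must be run on the negative wrapping categories of the second-slot objects, using the two-sided colimit description of $HW^\bullet$, which your write-up never invokes; as written, the step does not establish left $C$-locality of $\varprojlim_iL^{(i)}$.
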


\begin{proof}
Since the continuation map $L^{(i+1)}\to L^{(i)}$ is in $C$, each map $\W(L^{(i)},K)\to\W(L^{(i+1)},K)$ is a quasi-isomorphism (by Lemma \ref{quotientannihilatesA}), and hence the rightmost arrow is an isomorphism.

To show that the middle arrow is an isomorphism, it is enough (by Lemma \ref{localgoodpro}) to show that ``$\varprojlim_iL^{(i)}$ is left $C$-local'', meaning that every continuation map $M^{(j+1)}\to M^{(j)}$ induces an isomorphism
\begin{equation}
\varinjlim_iHF^\bullet(L^{(i)},M^{(j+1)})\to\varinjlim_iHF^\bullet(L^{(i)},M^{(j)}).
\end{equation}
This map is simply $HW^\bullet(L,M^{(j+1)})\to HW^\bullet(L,M^{(j)})$, which we know is an isomorphism.
\end{proof}

\begin{corollary}\label{wrappingcalculatesanylocalM}
For any left $\OO$-module $\M$, the natural maps
\begin{equation}
\varinjlim_iH^\bullet\M(L^{(i)})\xrightarrow\sim\varinjlim_iH^\bullet{}_{C^{-1}}\M(L^{(i)})\xleftarrow\sim H^\bullet{}_{C^{-1}}\M(L^{(0)})
\end{equation}
are both isomorphisms.
\end{corollary}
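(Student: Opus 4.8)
The plan is to mimic the proof of Lemma \ref{wrappingcalculatesW}, of which this corollary is the generalization from the Yoneda module $\OO(-,K)$ to an arbitrary left $\OO$-module $\M$; in particular no new Floer-theoretic input is required, only the bookkeeping for modules in place of morphism spaces.

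For the right-hand isomorphism $H^\bullet{}_{C^{-1}}\M(L^{(0)})\xrightarrow\sim\varinjlim_iH^\bullet{}_{C^{-1}}\M(L^{(i)})$: the continuation morphism $L^{(i+1)}\to L^{(i)}$ (the cycle $c\in\OO(L^{(i+1)},L^{(i)})$) lies in $C$, so its mapping cone is a zero object of $\W=\OO_{C^{-1}}$ by Lemma \ref{quotientannihilatesA}, whence $c$ becomes invertible in $H^0\W$. Since ${}_{C^{-1}}\M$ is a $\W$-module, the structure map $\mu^{1|1}(c,-)\colon{}_{C^{-1}}\M(L^{(i)})\to{}_{C^{-1}}\M(L^{(i+1)})$ — which is precisely the map in the direct system — is then a quasi-isomorphism, multiplication by a morphism invertible in $H^0\W$ being a homotopy equivalence by the $\ainf$-module relations together with cohomological unitality (compare Lemma \ref{multhunitisidhtpy}). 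A direct limit over $\ZZ_{\geq 0}$ of quasi-isomorphisms is a quasi-isomorphism onto its first term, which gives the claim.

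For the left-hand isomorphism, the key input — already established inside the proof of Lemma \ref{wrappingcalculatesW} — is that $\varprojlim_iL^{(i)}\in\Pro\Tw\OO$ is left $C$-local, i.e.\ $\varinjlim_iH^\bullet\Tw\OO(L^{(i)},A)=0$ for every $A\in\cones(C)$; writing $A$ as the mapping cone of some $c\in C$, this is the statement that the wrapping isotopy underlying $c$ induces an isomorphism on $HW^\bullet(L,-)$. Granting this, I filter the complex ${}_{C^{-1}}\M(L^{(i)})=(\cones(C)\backslash\M)(L^{(i)})$ by word length: the length-zero piece is the subcomplex $\M(L^{(i)})$, and the associated graded piece of length $p\geq 1$ is a direct sum, over tuples $A_1,\dots,A_p\in\cones(C)$, of tensor products of the cofibrant complexes $\M(A_1)$ and $\Tw\OO(A_j,A_{j-1})$ with a single further factor $\Tw\OO(L^{(i)},A_p)$ — the only factor depending on $i$. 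Since $\varinjlim_i$ is exact and the remaining factors are $K$-flat, $\varinjlim_iH^\bullet$ of each such graded piece is computed with $\Tw\OO(L^{(i)},A_p)$ replaced by its colimit $\varinjlim_i\Tw\OO(L^{(i)},A_p)$, which is acyclic by the locality input; hence it vanishes, exactly as in the proof of Lemma \ref{localgood}. Therefore the inclusion $\M(L^{(i)})\hookrightarrow{}_{C^{-1}}\M(L^{(i)})$ becomes a quasi-isomorphism after applying $\varinjlim_iH^\bullet$. (Equivalently, one may tensor the right-$\OO$-module quasi-isomorphism $\varinjlim_i\OO(L^{(i)},-)\to\varinjlim_i\W(L^{(i)},-)$ of Lemma \ref{wrappingcalculatesW} over $\OO$ with $\M$ — legitimate since all complexes are cofibrant — and use the bar-model identifications $\OO(L^{(i)},-)\otimes_\OO\M\simeq\M(L^{(i)})$ and $\W(L^{(i)},-)\otimes_\OO\M\simeq{}_{C^{-1}}\M(L^{(i)})$.)

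The only genuine work is the bookkeeping: tracking module variances through the quotient construction of Definition \ref{ainfquotient}, and verifying that the interchanges of $\varinjlim_i$, tensor products, and $H^\bullet$ are valid — which is exactly the point at which the standing cofibrancy (hence $K$-flatness) hypotheses on all morphism, module, and bimodule complexes get used. I do not expect any difficulty beyond what already appears in Lemma \ref{wrappingcalculatesW}.
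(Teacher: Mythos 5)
Your proposal is correct, and your parenthetical alternative for the left-hand arrow is in fact the paper's own proof: the paper simply replaces $\M$ by $\OO\otimes_\OO\M$ (Lemma \ref{identityidempotent}), observes that the maps in question become $\varinjlim_iH^\bullet(\OO(L^{(i)},-)\otimes_\OO\M)\to\varinjlim_iH^\bullet(\W(L^{(i)},-)\otimes_\OO\M)\leftarrow H^\bullet(\W(L^{(0)},-)\otimes_\OO\M)$, and concludes from Lemma \ref{wrappingcalculatesW} by tensoring its quasi-isomorphisms with the cofibrant $\M$ and commuting the filtered colimit, with no separate treatment of the two arrows. Your primary argument instead reruns the localization machinery directly on $\M$: invertibility of the continuation classes in $H^0\W$ (via Lemma \ref{quotientannihilatesA} and cohomological unitality of the localized module) for the right-hand arrow, and a length-filtration argument in the style of Lemmas \ref{localgood} and \ref{localgoodpro} for the left-hand arrow, fed by the same key Floer input, namely the left $C$-locality of $\varprojlim_iL^{(i)}$ established inside the proof of Lemma \ref{wrappingcalculatesW}. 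This is a valid, if more hands-on, route: the interchanges of the filtered colimit with cohomology, tensor products, and the exhaustive bounded-below length filtration are all legitimate, and cofibrancy/K-flatness enters exactly where you say. What the paper's reduction buys is brevity, at the cost of the (elided) identification of ${}_{C^{-1}}(\OO\otimes_\OO\M)(L)$ with $\W(L,-)\otimes_\OO\M$; what your direct argument buys is that this identification is never needed, since you work with the quotient complex of ${}_{C^{-1}}\M$ itself. One small wording caveat: chain-level homotopy invertibility of multiplication by $c$ uses the standing cofibrancy hypotheses (Lemmas \ref{cofibrantqisoinvert} and \ref{multhunitisidhtpy}), but you only need that it is a quasi-isomorphism, which already follows from cohomological unitality, so nothing is lost.
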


\begin{proof}
We may replace $\M$ with $\OO\otimes_\OO\M$ (Lemma \ref{identityidempotent}).
The maps in question then become
\begin{equation}
\varinjlim_iH^\bullet(\OO(L^{(i)},-)\otimes_\OO\M)\rightarrow\varinjlim_iH^\bullet(\W(L^{(i)},-)\otimes_\OO\M)\leftarrow H^\bullet(\W(L^{(0)},-)\otimes_\OO\M)
\end{equation}
which are isomorphisms by Lemma \ref{wrappingcalculatesW}.
\end{proof}

The construction of $\W$ given above can be made a bit more flexible, as we now describe.
We will take advantage of this flexibility both to show that $\W$ is well-defined up to quasi-equivalence and to define pushforward functors on the wrapped Fukaya category for inclusions of Liouville sectors.

Let $\OO$ be any countable poset of Lagrangians (equipped with $\Spin$ structures) in $X$ containing every isotopy class and with the property that every totally ordered collection of Lagrangians in $\OO$ is mutually transverse.
Let $C$ be any collection of elements of $HF^\bullet(L,K)$ for $L>K\in\OO$ consisting only of continuation elements for various positive isotopies from $K$ to $L$.
Now assume that the pair $(\OO,C)$ satisfies the following property:
\begin{itemize}
\item For every $L\in\OO$, there is a cofinal sequence $L=L^{(0)}<L^{(1)}<\cdots$ in $\OO$ together with positive isotopies $L^{(i)}\leadsto L^{(i+1)}$ such that $L=L^{(0)}\leadsto L^{(1)}\leadsto\cdots$ is cofinal in the Lagrangian wrapping category of $L$ and each of the resulting continuation elements in $HF^\bullet(L^{(i+1)},L^{(i)})$ is in $C$.
\end{itemize}
We may construct strip-like coordinates $\underline\xi$ and almost complex structures $\underline J$ as in \eqref{OcoordsI}--\eqref{Oacs} achieving transversality by induction on the collection of finite totally ordered subsets of $\OO$, thus turning $\OO$ into a directed $\ainf$-category.
We can now define the wrapped Fukaya category of $X$ as the localization $\W:=\OO[C^{-1}]$ as before.
Note that the proof of Lemma \ref{wrappingcalculatesW} applies to this category $\W$, so it has the correct cohomology category.

\begin{proposition}\label{lem:winvariant}
The category $\W$, as defined just above in terms of choices of $(\OO,C,\underline\xi,\underline J)$, is well-defined up to quasi-equivalence.
\end{proposition}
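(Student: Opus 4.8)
The plan is to show that any two sets of choices $(\OO_0, C_0, \underline\xi_0, \underline J_0)$ and $(\OO_1, C_1, \underline\xi_1, \underline J_1)$ produce quasi-equivalent categories $\W_0$ and $\W_1$. First I would dispense with the dependence on $\underline\xi$ and $\underline J$: with $(\OO, C)$ fixed, any two choices of strip-like coordinates and (transversality-achieving) families of almost complex structures are connected by a path, and the standard continuation/parametrized-moduli-space argument (as in Seidel \cite[II (10)]{seidelbook}, bearing in mind that the disk-confinement Lemma \ref{pibarrierlemma} applies to the whole family since every member is adapted to $\partial X$) produces a quasi-isomorphism $\OO_0 \to \OO_1$ of directed $\ainf$-categories which is the identity on objects and sends the continuation elements $C_0$ to $C_1$ (continuation elements being characterized cohomologically). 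Such a functor descends to a quasi-isomorphism of localizations $\W_0 = (\OO_0)_{C_0^{-1}} \to (\OO_1)_{C_1^{-1}} = \W_1$, since a quasi-equivalence respecting the marked morphisms induces a quasi-equivalence on quotients by the (quasi-equivalent) full subcategories $\cones(C_i)$ of twisted complexes.

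Next, to compare two different pairs $(\OO_0, C_0)$ and $(\OO_1, C_1)$, the key move is to interpolate through a common refinement. Given $(\OO_0, C_0)$ and $(\OO_1, C_1)$, I would build a third poset $\OO$ containing (generic perturbations of) all Lagrangians from both $\OO_0$ and $\OO_1$, ordered so that $\OO_0$ and $\OO_1$ each embed as sub-posets and so that every totally ordered finite subset is mutually transverse; one takes $C$ to be the union of $C_0$, $C_1$, and enough additional continuation elements to verify the cofinality bullet. By symmetry it then suffices to treat the case where $\OO_0 \subseteq \OO$ is a sub-poset (with $C_0 = C \cap \text{(morphisms among }\OO_0\text{)}$, say, or at least $C_0 \subseteq C$) and show $\W_0 \to \W$ is a quasi-equivalence. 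Essential surjectivity of $H^0\W_0 \to H^0\W$: every object $L \in \OO$ has a cofinal wrapping sequence whose terms eventually — after enough wrapping — become isotopic to (hence isomorphic in $\W_0$, via the now-standard fact that isotopic Lagrangians give quasi-isomorphic objects) Lagrangians already present in $\OO_0$, because $\OO_0$ contains a representative of every isotopy class; since continuation maps in $C$ are isomorphisms in $\W$, $L$ is isomorphic in $\W$ to an object of $\OO_0$. Full faithfulness: one checks that for $L, K \in \OO_0$ the map $H^\bullet\W_0(L,K) \to H^\bullet\W(L,K)$ is an isomorphism by the computation of Lemma \ref{wrappingcalculatesW} — both sides compute $\varinjlim_i HF^\bullet(L^{(i)}, K) = HW^\bullet(L,K)$ along a wrapping sequence, and the wrapping sequence for $L$ in $\OO_0$ remains cofinal in the wrapping category when regarded inside $\OO$, so the two colimits agree. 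Here one uses Lemma \ref{localgoodpro} exactly as in the proof of Lemma \ref{wrappingcalculatesW} to pass between $\OO$-morphisms and $\W$-morphisms.

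The main obstacle I expect is the bookkeeping needed to make the common refinement $(\OO, C)$ legitimately satisfy the cofinality hypothesis while simultaneously restricting correctly to \emph{both} $(\OO_0, C_0)$ and $(\OO_1, C_1)$ — one must choose the interleaving of the two wrapping sequences and the perturbations carefully so that the chosen cofinal chains, their continuation elements, and the transversality of totally ordered subsets are all compatible, and so that the restriction maps $\OO_i \hookrightarrow \OO$ are naive inclusions of directed $\ainf$-subcategories (no higher functor terms), which lets us invoke Lemma \ref{localgood}/\ref{localgoodpro} and the tensor-product lemmas of \S\ref{secbimodules} cleanly. A secondary technical point, which I would handle by a direct appeal to Lemma \ref{wrappedtrivialiso} and Lemma \ref{lemma:simultaneousdeformation}, is the assertion that Lagrangians related by an isotopy (not necessarily positive, and possibly crossing intermediate Lagrangians at infinity) become isomorphic objects of $\W$: one factors the isotopy into a positive part and a negative part, realizes each by a continuation element (invertible in $\W$ after enough wrapping), and notes these compose to the claimed isomorphism. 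Everything else is a routine adaptation of Seidel's invariance arguments, now permissible over $\ZZ$ because of the cofibrancy hypotheses collected in \S\ref{secbimodules}.
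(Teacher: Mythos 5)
Your overall strategy for the main step---include both quadruples into a common refinement and then invoke Lemma \ref{wrappingcalculatesW}---is the same as the paper's, but the step you yourself flag as ``the main obstacle,'' namely the actual construction of the refinement, is the entire content of the paper's proof, and your sketch of it does not work as stated. Taking ``generic perturbations'' of the Lagrangians of $\OO_0$ and $\OO_1$ destroys the property that $\OO_0$ and $\OO_1$ embed as sub-posets carrying their given $\ainf$ structures, which your full-faithfulness and localization arguments need. And ``enough additional continuation elements to verify the cofinality bullet'' is not enough if the bullet is read as cofinality in the wrapping category only: for instance $\OO:=\OO_0\sqcup\OO_1$ with no order relations between the halves and $C:=C_0\cup C_1$ satisfies that reading, yet the two halves are orthogonal in $\OO$ and remain orthogonal after localization (every summand of the quotient complex $(\OO/\!\cones(C))(L,K)$ with $L\in\OO_1$, $K\in\OO_0$ vanishes), so $\W_0\to\W$ is not essentially surjective and $H^\bullet\W(L,K)$ does not compute $HW^\bullet(L,K)$ for cross pairs. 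What is needed, and what the paper supplies, is that the chosen wrapping sequences be cofinal in the \emph{poset}, not just in the wrapping category: the paper takes $\OO'':=\ZZ_{\geq 0}\times(\OO_0\sqcup\OO_1)$ with the lexicographic order---no order relations between $\OO_0$ and $\OO_1$ at level $0$, so the given $\underline\xi_i,\underline J_i$ restrict without conflict and transversality of totally ordered chains is automatic there---where the positive levels are freshly chosen, generic cofinal wrapping sequences of every Lagrangian of $\OO_0\sqcup\OO_1$. These dominate every fixed object, so Lemma \ref{wrappingcalculatesW} applies to $(\OO'',C'')$, cross morphism spaces compute $HW^\bullet$, and both inclusions $\W_0,\W_1\to\W''$ are quasi-equivalences. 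Note that your essential-surjectivity argument (``isotopic Lagrangians are isomorphic in $\W$,'' as in Lemma \ref{wtriv}) presupposes that the big category computes $HW^\bullet$ between the two families, so it cannot be used to repair a badly built refinement; it is downstream of it.

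A secondary point: your first step, constructing a continuation $\ainf$-functor $\OO_0\to\OO_1$ to compare different $(\underline\xi,\underline J)$ with $(\OO,C)$ fixed, is unnecessary and goes outside the toolkit the paper sets up. It requires coherent parametrized continuation data (exactly what the localization approach is designed to avoid), plus a statement that a quasi-equivalence carrying $C_0$ to $C_1$ induces a quasi-equivalence of localizations, which is not among the proved lemmas (though it could be assembled from Lemma \ref{quotientannihilatesA} and Corollary \ref{localizationsplitgenerate}). In the paper's refinement the two original quadruples sit as incomparable sub-posets of $\OO''$, so different choices of $\underline\xi,\underline J$ never have to be compared directly and this step disappears.
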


By ``are quasi-equivalent'', we mean ``are connected by a zig-zag of quasi-equivalences'' (we do not wish to discuss the question of whether quasi-equivalences of $\ainf$-categories over $\ZZ$ are invertible under our cofibrancy assumptions).

\begin{proof}
If $\OO\subseteq\OO'$ and $C\subseteq C'$ such that $\underline\xi'|_\OO=\underline\xi$ and $\underline J'|_\OO=\underline J$, then there is a natural functor $\W\to\W'$.
By Lemma \ref{wrappingcalculatesW}, this functor is a quasi-equivalence.

It thus suffices to show thay any two quadruples $(\OO,C,\underline\xi,\underline J)$ and $(\OO',C',\underline\xi',\underline J')$ can be included into a third.
Consider the disjoint union $\OO\sqcup\OO'$ with no order relations between $\OO$ and $\OO'$.
Now let $\OO'':=\ZZ_{\geq 0}\times(\OO\sqcup\OO')$ with its lexicographical partial order, choosing for every Lagrangian in $\OO\sqcup\OO'$ a cofinal sequence in its Lagrangian wrapping category.
Let $C''$ denote the union of the resulting continuation elements together with $C$ and $C'$.
The given strip-like coordinates and almost complex structures for $\OO$ and $\OO'$ can be extended to the same for $\OO''$ by induction.
We thus obtain quasi-equivalences $\W\xrightarrow\sim\W''\xleftarrow\sim\W'$ as desired.
\end{proof}

It is natural to expect that $\W(X)$ satisfies a K\"unneth formula as we formulate below.
A careful proof of this is, however, beyond the scope of this paper (for Liouville manifolds, results in this direction have been proven by Gao \cite{gaokunneth,gaokunneth2}).

\begin{conjecture}
There is a natural (cohomologically) fully faithful bilinear $\ainf$-functor $\W(X)\times\W(X')\hookrightarrow\W(X\times X')$.
\end{conjecture}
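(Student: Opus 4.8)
The plan is to reduce the statement to a chain-level Künneth quasi-isomorphism for the directed $\ainf$-categories $\OO$, which then descends to the localizations $\W=\OO_{C^{-1}}$ using the general localization machinery of \S\ref{alocal} (Lemmas \ref{identityidempotent}, \ref{localgoodpro}, \ref{wrappingcalculatesW}). First I would arrange, using Proposition \ref{deformexact} together with Lemma \ref{sectorboundarytangent} and the fact that trivial inclusions induce equivalences, that $X$ and $X'$ have exact boundary with Liouville vector fields tangent to $\partial X$ and $\partial X'$, so that $X\times X'$ is a Liouville sector. For exact cylindrical Lagrangians $L\subseteq X$ and $L'\subseteq X'$ (disjoint from the respective boundaries, hence from $\partial(X\times X')$), the product $L\times L'$ is again an exact cylindrical Lagrangian, carrying a product $\Spin$ structure; this defines the functor on objects. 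The key geometric input at this stage is that products of cofinal wrapping sequences are cofinal: given cofinal wrappings $L=L^{(0)}\to L^{(1)}\to\cdots$ in $X$ and $L'=L'^{(0)}\to L'^{(1)}\to\cdots$ in $X'$, the products $L^{(i)}\times L'^{(j)}$ form a cofinal system in the wrapping category of $L\times L'$ in $X\times X'$. I would prove this by running the two wrappings simultaneously via a one-parameter family of Hamiltonian diffeomorphisms $\Phi^t\times\Phi'^t$ of $X\times X'$, generated by sums $H+H'$ of suitable nonnegative, Reeb-positive, linear-at-infinity Hamiltonians, and checking that the resulting isotopy of $L\times L'$ satisfies the integrated positivity criterion of Lemma \ref{cofinalintegrationcriterion}; the filteredness of wrapping categories (Lemma \ref{wrappingcatfiltered}) then ensures cofinality.

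Next I would choose all auxiliary data on $X\times X'$ to be \emph{split}: for a totally ordered chain $L_0\times L'_0>\cdots>L_k\times L'_k$ built from the chosen cofinal sequences, take the family of almost complex structures over $\Sbar_{k,1}$ to be the product $J^X_{L_0,\ldots,L_k}\times J^{X'}_{L'_0,\ldots,L'_k}$, with compatible split strip-like coordinates, constructed by the same induction on finite chains used for a single sector. Because the product sector's projection near each face of $\partial(X\times X')$ is assembled from $\pi_X$ and $\pi_{X'}$, such a split $J$ is adapted to $\partial(X\times X')$, and Lemma \ref{pibarrierlemma} applied in each factor confines the relevant disks away from the boundary. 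A $J$-holomorphic disk in $X\times X'$ with boundary on product Lagrangians is exactly a pair $(u,u')$ of holomorphic disks sharing a common domain, so the moduli space $\Rbar_{k,1}(y\times y';x_1\times x'_1,\ldots,x_k\times x'_k)$ is the fiber product over $\Rbar_{k,1}$ of the corresponding moduli spaces in $X$ and in $X'$; combined with the additivity of Cauchy--Riemann orientation lines under direct sums, this yields a chain isomorphism $\OO(X\times X')(L_0\times L'_0,L_1\times L'_1)\cong\OO(X)(L_0,L_1)\otimes\OO(X')(L'_0,L'_1)$ compatible with the $\ainf$-operations, i.e.\ an $\ainf$-functor $\OO(X)\otimes\OO(X')\to\OO(X\times X')$ onto the full subcategory on product objects. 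Products of continuation cycles are continuation cycles for $X\times X'$, so this functor carries $C\otimes C'$ into the localizing collection; after inverting, the universal properties of the tensor product of $\ainf$-categories and of localization produce a bilinear $\ainf$-functor $\W(X)\times\W(X')\to\W(X\times X')$, well-defined up to quasi-equivalence by the argument of Proposition \ref{lem:winvariant}.

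For cohomological full faithfulness, note that the morphism complex of $\W(X)\times\W(X')$ at $((L_0,L'_0),(L_1,L'_1))$ is $\W(X)(L_0,L_1)\otimes\W(X')(L'_0,L'_1)$, so one must show the induced map to $\W(X\times X')(L_0\times L'_0,L_1\times L'_1)$ is a quasi-isomorphism (working chain-level avoids the $\ZZ$-coefficient Tor term that would otherwise obstruct a naive statement about $HW\otimes HW'$). By Lemma \ref{wrappingcalculatesW} each side computes a direct limit over the respective wrapping categories, and by the cofinality of product wrappings established above, the limit for $X\times X'$ may be computed along the product cofinal sequences. Since filtered colimits of complexes are exact and the Floer complexes are action-filtered, hence cofibrant and in particular K-flat, the chain isomorphisms of the previous paragraph assemble into the desired quasi-isomorphism $\W(X)(L_0,L_1)\otimes\W(X')(L'_0,L'_1)\xrightarrow{\sim}\W(X\times X')(L_0\times L'_0,L_1\times L'_1)$.

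I expect the main obstacle to be transversality. Split almost complex structures are non-generic, so identifying $\Rbar_{k,1}(L\times L')(\ldots)$ with a fiber product of transversally cut out moduli spaces in the two factors requires showing that this fiber product is itself regular of the expected dimension --- the familiar subtlety being the bookkeeping of the domain moduli $\Rbar_{k,1}$ (the linearized operator of a product disk splits as a direct sum, but the extended operator for the product carries one copy of $T\Rbar_{k,1}$ while each factor carries its own). This is resolved by a careful dimension and gluing analysis, as in Gao's treatment of the Liouville manifold case \cite{gaokunneth,gaokunneth2}, or, if needed, by allowing a perturbation of $J$ supported away from the boundary-confinement region while keeping the Künneth identification valid on homology. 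A secondary, more routine obstacle is arranging the split perturbation data coherently across all totally ordered chains of product Lagrangians simultaneously, which is handled by the same inductive scheme used to set up $\OO$ for a single sector.
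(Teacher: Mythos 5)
First, a point of order: the paper does not prove this statement. It is stated as a conjecture, with the explicit remark that ``a careful proof of this is beyond the scope of this paper'' and a pointer to Gao \cite{gaokunneth,gaokunneth2} for the Liouville manifold case; so there is no proof of the authors' to compare yours against. Your outline is the natural strategy (product Lagrangians, split Floer data, cofinality of product wrappings, filtered colimits, then localization), and it is essentially the route taken in later work on Liouville sectors, but as written it has concrete gaps rather than being a complete argument.

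The most basic gap is your assertion that $L\times L'$ ``is again an exact cylindrical Lagrangian.'' It is not: $Z_X+Z_{X'}$ is tangent to $L\times L'$ only where \emph{both} factors are conical, and the bad locus (e.g.\ points of $L$ near infinity times points of $L'$ in the compact region where $Z_{X'}\notin TL'$) is non-compact. One must first deform $L\times L'$ to a cylindrical Lagrangian (or enlarge the admissible class), and then all subsequent statements must be made for the deformed object. Second, the cofinality of product wrappings is the real geometric content and is only asserted: Lemma \ref{cofinalintegrationcriterion} requires a divergent integral of the \emph{minimum} of $\alpha(\partial_t\partial_\infty(L_t\times L'_t))$ over the non-compact ideal boundary of the product, including the corner region of $\partial_\infty(X\times X')$; moreover $H+H'$ is not linear at infinity on $X\times X'$ (linearity fails where one factor is in its compact region), so even positivity and admissibility of the product isotopy need an argument, not just ``checking.'' Third, split almost complex structures are not cylindrical for $Z_X+Z_{X'}$, so the paper's compactness framework (Lemma \ref{bddgeo}, Proposition \ref{wcompactness}) does not apply verbatim, and the transversality problem for split data --- which you acknowledge and defer to Gao --- together with the verification that products of continuation elements are continuation elements in $X\times X'$ (needed to descend through the localization, for which this paper develops no universal property) are precisely the points that make this a theorem requiring a separate paper rather than a corollary of the machinery here. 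In short: correct strategy, but the steps you defer or assert are the ones that constitute the proof.
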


\subsection{Inclusion functors and deformation invariance}

Let $X\hookrightarrow X'$ be an inclusion of Liouville sectors.
Fix collections of Lagrangians $I$ and $I'$ inside $X$ and $X'$, respectively (not necessarily mutually transverse), containing all isotopy classes. Using the flexibility granted by Proposition \ref{lem:winvariant}, we can pick a ``nice'' (adapted to $X \subseteq X'$) chain model of $\W(X')$ in order to realize (in a particularly simple way) the inclusion functor $\W(X) \to \W(X')$, as we now describe.

Define the poset of Lagrangians $\OO:=\ZZ_{\geq 0}\times I$ following Definition \ref{posetOdef} as before.
Define $\OO':=\ZZ_{\geq 0}\times[I'\sqcup\OO]$, meaning that for each Lagrangian $L\in(I'\sqcup\OO)$, we choose a cofinal sequence of morphisms in its wrapping category inside $X'$.
We equip $\OO'$ with the partial order defined lexicographically, where the second factor $I'\sqcup\OO$ has only the order relations coming from $\OO$.
By choosing the cofinal sequences generically, we can ensure that any finite totally ordered subset of $\OO'$ consists of mutually transverse Lagrangians.
There is a natural inclusion $\OO\hookrightarrow\OO'$.

We turn $\OO$ and $\OO'$ into $\ainf$-categories following Definition \ref{categoryOdef} as before.
More precisely, we first choose almost complex structures \eqref{Oacs} for $\OO$.
We then choose strip-like coordinates and almost complex structures for $\OO'$ whose restriction to $\OO$ and $X$ are those for $\OO$; the inductive construction of these follows \S\ref{wcurvessec}.
Since $\pi$ forces holomorphic disks for $\OO$ to stay within $X$, the inclusion $\OO\hookrightarrow\OO'$ is the inclusion of a full subcategory.

Let $C^\circ$ denote the continuation morphisms for $\OO$, and let $C^{\prime\circ}$ denote the continuation morphisms for $\OO'$, following Definition \ref{Cdef}.
Now let $C:=C^\circ$ and $C':=C^{\prime\circ}\cup C^\circ$.
We set $\W(X):= \W:=\OO[C^{-1}]$ and $\W(X'):=\W':=\OO'[C^{\prime-1}]$.
Note that the proof of Lemma \ref{wrappingcalculatesW} applies just as well to $C'$ as it does to $C^{\prime\circ}$.
Since $C\subseteq C'$, there is a functor 
\begin{equation}
    \W(X)\to\W(X').
\end{equation}
We remark that this functor $F=\{F^k\}_{k\geq 1}$ is somewhat special: the map on objects is injective, the map $F^1$ on morphism complexes is injective, and the maps $F^k$ vanish for $k\geq 2$.

\begin{lemma}\label{wtriv}
Let $X\hookrightarrow X'$ be a trivial inclusion.  Then the functor $\W(X)\to\W(X')$ is a quasi-equivalence.
\end{lemma}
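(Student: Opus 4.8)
The plan is to show that for a trivial inclusion $X \hookrightarrow X'$, the chain-level functor $\W(X) = \OO_{C^{-1}} \to \OO'_{C'^{-1}} = \W(X')$ constructed just above is a quasi-equivalence. Since the functor is the inclusion of a full subcategory on objects followed by localization, the essential point is cohomological full faithfulness: by Lemma \ref{wrappingcalculatesW} applied to both $\W(X)$ and $\W(X')$ (which, as noted, applies equally to $C'$), it suffices to check that for all $L, K \in I$ the induced map $HW^\bullet(L,K)_X \to HW^\bullet(L,K)_{X'}$ is an isomorphism, and that every object of $\W(X')$ is quasi-isomorphic to one coming from $\W(X)$. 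The first assertion is precisely Lemma \ref{wrappedtrivialiso}, which we may invoke directly. So the only real content is essential surjectivity: we must show that every Lagrangian in $I'$, viewed as an object of $\W(X')$, is isomorphic in $H^0\W(X')$ to (a wrapping of) some Lagrangian in $I \subseteq \OO \subseteq \OO'$.

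First I would reduce, exactly as in the proof of Lemma \ref{wrappedtrivialiso}, to the model case $X = \{t \geq 0\}$ and $X' = \{t \geq -a\}$ in the collar coordinates $\partial Y \times \RR_{t \geq 0} \to Y$ near $\partial(\partial_\infty X)$ from \S\ref{reebdynamicsboundary}, since any trivial inclusion factors (up to composition on either side) through such a standard one. Given a Lagrangian $L' \in I'$, I would use a globally positive Hamiltonian isotopy inside $X'$, generated by an admissible $M$-type Hamiltonian supported in the collar, to push the part of $\partial_\infty L'$ lying in the collar region $\{-a \leq t \leq 0\}$ into $\{t \geq 0\}$: by the dynamics analysis of Proposition \ref{cutofflemma}, once the relevant portion of the boundary-at-infinity enters the region $\{t \geq \delta\}$ under such a flow it never escapes back, and the monotonicity/confinement argument shows the isotopy can be carried out so that after finite time $\partial_\infty L'$ lies entirely in $\{t > 0\} = \partial_\infty X$. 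The resulting wrapped Lagrangian $L'^w$ is, up to a compactly supported deformation, a cylindrical Lagrangian contained in $X$, hence isotopic to some element of $I$; and wrapping is an isomorphism in $H^0\W(X')$ because the continuation elements are in $C'$. This gives essential surjectivity.

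Combining: the functor $\W(X) \to \W(X')$ is cohomologically fully faithful (Lemma \ref{wrappedtrivialiso} together with Lemma \ref{wrappingcalculatesW}) and essentially surjective (the wrapping argument above), hence a quasi-equivalence. The main obstacle I anticipate is the essential surjectivity step: one must be careful that the globally positive wrapping pushing $L'$ into $X$ can be chosen to remain a \emph{legitimate} positive isotopy of cylindrical exact Lagrangians realizing a morphism in the wrapping category inside $X'$ (so that it becomes an isomorphism after localizing at $C'$), and that the resulting confined Lagrangian, after a compactly supported perturbation, genuinely falls into an isotopy class already present in $I$ — this is where one leans on the precise collar dynamics of Proposition \ref{cutofflemma} and on $I$ containing every isotopy class. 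The full faithfulness, by contrast, is essentially a citation of Lemma \ref{wrappedtrivialiso}.
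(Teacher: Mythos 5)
Your full-faithfulness step is exactly the paper's: combine Lemma \ref{wrappedtrivialiso} with Lemma \ref{wrappingcalculatesW}. The gap is in essential surjectivity. You propose to move a Lagrangian $L'\subseteq X'$ into $X$ by a \emph{globally positive} wrapping generated by an admissible cutoff Hamiltonian in the collar, claiming that under such a flow $\partial_\infty L'$ eventually lies entirely in $\{t>0\}$. This is not what the cutoff Reeb dynamics do. In the coordinates of \S\ref{reebdynamicsboundary}, the vector field \eqref{complicatedreeb} has $t$ \emph{increasing} only on the side $u>0$; on the side $u<0$ it pushes points \emph{toward} $\partial(\partial_\infty X')$, where the cutoff Hamiltonian vanishes, so that portion of $\partial_\infty L'$ accumulates at the boundary and never enters $\{t>0\}$. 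Indeed Proposition \ref{cutofflemma} says trajectories that head toward the boundary region never leave it --- the opposite of what you need. A concrete counterexample: for $T^\ast[0,1]\hookrightarrow T^\ast[-a,1]$ and $L'$ a cotangent fiber over a point $q\in(-a,0)$, any positive isotopy moves one ideal endpoint of $L'$ toward the stop at $q=-a$, so no positive wrapping ever carries $\partial_\infty L'$ into $\partial_\infty T^\ast[0,1]$; yet $L'$ is of course isotopic (non-positively) to a fiber inside $T^\ast[0,1]$. So the mechanism you rely on --- realizing the identification by continuation elements in $C'$, which forces positivity of the isotopy --- cannot work as stated.

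The paper sidesteps this entirely by observing that \emph{any} exact Lagrangian isotopy $L_0\rightsquigarrow L_1$ (positive or not) identifies the wrapping categories of $L_0$ and $L_1$ and hence gives an isomorphism between them in $\W$; no continuation element is needed. Essential surjectivity then only requires that every exact Lagrangian in $X'$ be \emph{isotopic} to one contained in $X$, which, after reducing to a sufficiently small trivial inclusion, is achieved by flowing under $-X_I$ for a defining function $I$ of $\partial X'$ (this flow is inward pointing, so it displaces any Lagrangian off a collar of $\partial X'$ and into $X$). If you replace your positive-wrapping step by this observation plus the $-X_I$ isotopy, the rest of your argument goes through.
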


\begin{proof}
It follows from Lemmas \ref{wrappingcalculatesW} and \ref{wrappedtrivialiso} that this functor is a quasi-equivalence onto its image.

To show essential surjectivity, note first that every exact Lagrangian isotopy $L_t$ induces an identification between the Lagrangian wrapping categories of $L_0$ and $L_1$, and thus an isomorphism between $L_0$ and $L_1$ in $\W$.
Hence it is enough to show that every exact Lagrangian in $X'$ is isotopic to an exact Lagrangian inside $X$.
It is enough to show this for sufficiently small trivial inclusions, where it follows by flowing under $-X_I$ where $I:\Nbd^Z\partial X'\to\RR$ is a defining function.
\end{proof}

\subsection{Diagram of wrapped Fukaya categories}\label{wrappedfulldiagram}

Now suppose we have a diagram of Liouville sectors
$\{X_\sigma\}_{\sigma\in\Sigma}$ indexed by a finite poset $\Sigma$.  Fix
collections of Lagrangians $I_\sigma$ inside $X_\sigma$ (not necessarily
mutually transverse) containing all isotopy classes. As in the previous
section, we will make convenient
adapted choices of chain models of the categories $\W(X_{\sigma})$ in order to
obtain an associated (strict) diagram of $\ainf$-categories.

We inductively define posets of Lagrangians
\begin{equation}\label{oosigmasplitting}
\OO_\sigma:=\ZZ_{\geq 0}\times\Bigl[I_\sigma\sqcup\colim_{\sigma'<\sigma}\OO_{\sigma'}\Bigr].
\end{equation}
In other words, the set of Lagrangians $\OO_\sigma$ is defined by
(1) starting with all Lagrangians comprised in $\OO_{\sigma'}$ for $\sigma'<\sigma$,
(2) adding the chosen Lagrangians $I_\sigma$ inside $X_\sigma$, and then
(3) choosing cofinal sequences in the wrapping categories of each of these Lagrangians so that every totally ordered collection of Lagrangians in $\OO_\sigma$ are mutually transverse.
To define the partial order on $\OO_\sigma$, equip $I_\sigma$ with the trivial partial order, equip the $\colim$ with the colimit partial order, equip the $\sqcup$ with the coproduct partial order (no additional order relations), and equip the $\times$ with the lexicographical partial order.

To define the $\ainf$ operations on $\OO_\sigma$, we must specify the compatiblity relations we impose on the choices of almost complex structures.  First, note that there is a weakly order preserving map $\OO_\sigma\to\Sigma_{\leq\sigma}$ which associates to an element $L\in\OO_\sigma$ the unique minimal $\sigma'\leq\sigma$ for which $L\in\im(\OO_{\sigma'}\to\OO_\sigma)$.
Thus for any chain $L_0>\ldots>L_k\in\OO_\sigma$, there is an associated chain $\sigma_{L_0}\geq\cdots\geq\sigma_{L_k}\in\Sigma_{\leq\sigma}$.
We choose strip-like coordinates and almost complex structures
\begin{align}
\label{WcoordsI}\xi_{L_0,\ldots,L_k;j}^+:[0,\infty)\times[0,1]\times\Rbar_{k,1}&\to\Sbar_{k,1}\quad j=1,\ldots,k\\
\label{WcoordsII}\xi_{L_0,\ldots,L_k}^-:(-\infty,0]\times[0,1]\times\Rbar_{k,1}&\to\Sbar_{k,1}\\
\label{Wacs}J_{L_0,\ldots,L_k}:\Sbar_{k,1}&\to\J(X_{\sigma_{L_0}})
\end{align}
for $L_0>\cdots>L_k\in\OO_\sigma$ which are compatible in the natural way and which make $\pi_{\sigma_{L_0}}$ holomorphic (as usual $\pi_\sigma:\Nbd^Z\partial X_\sigma\to\CC_{\Re\geq 0}$ denotes the projection associated to the Liouville sector $X_\sigma$, recalling Convention \ref{choosingpi}).
They may be constructed by induction on $\sigma$ as usual (though note that for the inductive step to work, it is crucial that we have taken the target of $J_{L_0,\ldots,L_k}$ to be $\J(X_{\sigma_{L_0}})$ rather than $\J(X_\sigma)$).

Now we have a diagram
\begin{align}
\Sigma&\to\Ainftycat\\
\sigma&\mapsto\OO_\sigma
\end{align}
meaning that for every $\sigma\in\Sigma$, we have an $\ainf$-category $\OO_\sigma$, for every pair $\sigma\leq\sigma'\in\Sigma$, we have an $\ainf$-functor $F_{\sigma'\sigma}:\OO_\sigma\to\OO_{\sigma'}$, and for every triple $\sigma\leq\sigma'\leq\sigma''\in\Sigma$, we have $F_{\sigma''\sigma}=F_{\sigma''\sigma'}\circ F_{\sigma'\sigma}$.\footnote{It would be somewhat better to call this a ``strict diagram'' to contrast it with the notion of a ``homotopy diagram'' in which the functors only compose up to coherent homotopy. Fortunately, this latter notion of a ``homotopy diagram of $\ainf$-categories'' is not needed for this paper.}
Furthermore, each map $\OO_\sigma\to\OO_{\sigma'}$ is simply the inclusion of a full subcategory.

Let $C_\sigma^\circ$ denote the class of continuation morphisms in $H^0\OO_\sigma$ following Definition \ref{Cdef} (i.e.\ maps $(i+1,x)\to(i,x)$ in terms of the product decomposition \eqref{oosigmasplitting}), and let $C_\sigma:=\bigcup_{\sigma'\leq\sigma}C_{\sigma'}^\circ$, so $C_\sigma\subseteq C_{\sigma'}$ for $\sigma\leq\sigma'$.
Let $\W_\sigma:= \W(X_{\sigma}):=\OO_\sigma[C_\sigma^{-1}]$.  There is thus a diagram
\begin{align}
\Sigma&\to\Ainftycat\\
\sigma&\mapsto\W_\sigma
\end{align}
as desired.
Each inclusion functor $\W_\sigma\to\W_{\sigma'}$ is a naive inclusion, namely an inclusion on the level of morphism complexes with no higher order functor operations.

Note that these categories $\W_\sigma$ fall under the scope of Proposition \ref{lem:winvariant}.
An argument similar to the proof of Proposition \ref{lem:winvariant} shows moreover that the entire diagram $\{\W_\sigma\}_{\sigma\in\Sigma}$ is well-defined up to quasi-equivalence.

\subsection{Functorial wrapped Fukaya categories}

We now give a strictly functorial definition of the wrapped Fukaya category of a Liouville sector.
In fact, this definition also applies to define strictly functorial wrapped Fukaya categories of open Liouville sectors in the sense of Remark \ref{opensector}.
There is a trade off between this construction and that from \S\ref{wrappedfulldiagram}: in exchange for strict functoriality over all Liouville sectors at once, we are forced to work with very large (i.e.\ uncountable) collections of Lagrangians (though on the other hand, we no longer need to appeal to the fact that there are only countably many isotopy classes of exact cylindrical Lagrangians).

Given a Liouville sector (or an open Liouville sector) $X$, we consider \emph{decorated posets over $X$}, namely tuples
\begin{equation}
\vec P = (P,\{X_p\}_{p\in P},\{L_p\}_{p\in P},\underline\xi,\underline J)
\end{equation}
where $P$ is a poset and the remaining data is as follows:
\begin{itemize}
\item Each $X_p\subseteq X$ is a Liouville subsector, such that $X_p\subseteq X_{p'}$ for $p\leq p'$.
There is no need to record the data of a Liouville form on $X_p$ possibly differing on a compact set from the restriction of the Liouville form on $X$, however in accordance with Convention \ref{choosingpi} we do record a choice of $\pi_p:\Nbd\partial X_p\to\CC_{\Re\geq 0}$ such that the inclusions $X_p\subseteq X_{p'}$ and $X_p\subseteq X$ conform to Convention \ref{choosingpi}.
In the case that $X$ is an open Liouville sector, the $X_p$ remain ordinary Liouville sectors.
\item Each $L_p\subseteq X_p$ is a Lagrangian, such that every chain $L_{p_0},\ldots,L_{p_k}$ for $p_0>\cdots>p_k\in P$ is mutually transverse.
\item The $\underline\xi$ are choices of universal strip-like coordinates for each chain $p_0>\cdots>p_k\in P$, which are compatible with gluing in the sense of \S\ref{wcurvessec}.
\item The $\underline J$ are a choice of, for each chain $p_0 > \cdots > p_k \in P$, families of almost complex structures on $X_{p_0}$ making $\pi_{p_0}$ holomorphic, compatible with gluing via $\underline\xi$ in the sense of \S\ref{wcurvessec}, such that the associated moduli spaces of Fukaya $\ainf$ disks are cut out transversely.
\end{itemize}
Given any decorated poset $\vec P$, we have a strictly unital directed $\ainf$-category $\OO_{\vec P}$ as in Definition \ref{categoryOdef}.
Namely, its set of objects is $P$ (though of as the Lagrangians $L_p$), its morphism spaces are $\OO_{\vec P}(p,p)=\ZZ$ and $\OO_{\vec P}(p,p')=CF^\bullet(L_p,L_{p'})$ for $p>p'$ (vanishing otherwise), and the $\ainf$ operations count holomorphic disks using the almost complex structures $\underline J$.
We also have a category $\W_{\vec P}:=\OO_{\vec P}[C_{\vec P}^{-1}]$, namely the localization of $\OO_{\vec P}$ at the set $C_{\vec P}$ of all morphisms in $HF^0(L_{p'},L_p)$ for $p<p'$ which are the continuation element associated to some positive isotopy $L_p\leadsto L_{p'}$ inside $X_{p'}$.
We emphasize that $\W_{\vec P}$ will \emph{not} be quasi-equivalent to $\W(X)$ except under additional assumptions on $\vec P$ (a sufficient condition is given in Proposition \ref{lem:winvariant}).
Given an inclusion of decorated posets $\vec P'\hookrightarrow\vec P$ (meaning an inclusion of underlying posets $P'\hookrightarrow P$ such that the decorations on $P'$ are obtained by restricting those on $P$), there are induced functors $\OO_{\vec P'}\to\OO_{\vec P}$ and $\W_{\vec P'}\to\W_{\vec P}$.

Now for any Liouville sector $X$, we would like to argue that there is a \emph{universal} decorated poset $\vec P_X$ over $X$ and that the associated category $\W_{\vec P_X}$ is a model of $\W(X)$.
To turn this into a statement we can prove, we impose two additional conditions on the decorated posets we consider: we require that $P$ must be \emph{cofinite} (meaning that for all $p\in P$, the subposet $P^{\leq p}$ is finite) and must \emph{have no duplicates}, meaning that the decorated posets $\vec P^{\leq p}$ are pairwise non-isomorphic as $p$ ranges over all elements of $P$; we also restrict attention to inclusions of posets $P'\hookrightarrow P$ which are \emph{downward closed} (meaning that if $p\in P$ is in the image, then so is $P^{\leq p}$).
Let $\Pos_X$ denote the category whose objects are decorated posets $\vec P$ over $X$ which are cofinite and without duplicates and whose morphisms are downward closed inclusions (respecting decorations).
The sense in which there is a (necessarily unique up to unique isomorphism) universal decorated poset $\vec P_X$ over $X$ is that:

\begin{lemma}\label{univposet}
The category $\Pos_X$ has a final object $\vec P_X \in \Pos_X$.
\end{lemma}

\begin{proof}
The first observation is that for $\vec P,\vec Q\in\Pos_X$, there is at most one morphism $\vec P\to\vec Q$.
Indeed, note that if $f:P\hookrightarrow Q$ is a downward closed inclusion, then $\vec P^{\leq p}=\vec Q^{\leq f(p)}$.
Since $\vec Q$ is without duplicates, there is at most one $q\in Q$ satisfying $\vec P^{\leq p}\cong\vec Q^{\leq q}$, and thus $f$ is unique if it exists.

This reasoning may be taken further to construct the terminal object $\vec P_X\in\Pos_X$.
Namely, the underlying poset $P_X$ is defined to be the full subcategory of $\Pos_X$ spanned by the objects $\vec Q\in\Pos_X$ which have a maximum $q\in Q$ (meaning $Q=Q^{\leq q}$); we should note that these objects form a \emph{set} (e.g.\ in view of cofiniteness).
For $p\in P_X$, denote by $\vec Q(p)\in\Pos_X$ the corresponding object.
Now for any $p\in P_X$, it is easy to check that $P_X^{\leq p}=Q(p)$.
We define the decorations on $\vec P_X$ by the requirement that $\vec P_X^{\leq p}=\vec Q(p)$ as decorated posets; it is straightforward to check that this defines unique decorations on $P_X$.
The decorated poset $\vec P_X$ is cofinite and without duplicates by definition, so $\vec P_X\in\Pos_X$.

Now given any $\vec Q\in\Pos_X$, we would like to argue that there is a (necessarily unique) map $\vec Q\to\vec P_X$.
There is only one possible choice for this map, namely it must map $q\in Q$ to the element of $P_X$ corresponding to the decorated poset $\vec Q^{\leq q}$, and it is straightforward to check that this does indeed define a map $\vec Q\to\vec P_X$ respecting decorations.
We conclude that $\vec P_X\in\Pos_X$ is a final object, as desired.
\end{proof}

We now define $\W(X):=\W_{\vec P_X}$.  For any inclusion of Liouville sectors
$X\hookrightarrow X'$ there is a tautological functor $\Pos_X\to\Pos_{X'}$ (by observing that any decorated poset over $X$ defines one over $X'$), and hence an induced map between their final objects $\vec P_X\to\vec P_{X'}$, thus inducing a canonical functor $\W(X)\to\W(X')$ (and these functors
compose with each other as expected).  The situation is identical for open
Liouville sectors $X$.

\begin{proposition}
The category $\W_{\vec P_X}$ is quasi-equivalent to the wrapped Fukaya categories defined in Proposition \ref{lem:winvariant}.
\end{proposition}

\begin{proof}
Note that Proposition \ref{lem:winvariant} does not apply directly to the decorated poset $\vec P_X$, since $\vec P_X$ does not have countable cofinality (the existence of cofinal wrapping \emph{sequences} was used in an essential way in the proof of Proposition \ref{lem:winvariant}).
Instead, we will argue using a direct limit over countable subposets of $\vec P_X$ to which Proposition \ref{lem:winvariant} does apply.

For any decorated poset $\vec P$ and any Lagrangians $L,K\in \vec P$ the map
\begin{equation}\label{posetdirlim}
\varinjlim_{\begin{smallmatrix}\{L,K\}\subseteq  \vec Q\subseteq \vec P\\Q \text{ countable}\end{smallmatrix}}\W_{\vec Q}(L,K)\xrightarrow\sim\W_{\vec P}(L,K)
\end{equation}
is an isomorphism, where $\varinjlim$ is the direct limit of chain complexes (all maps in the directed system are simply inclusions of subcomplexes, and they are strictly compatible with each other).
This is of course also true for the direct limit over finite $\vec Q$, however it is the case of countable $\vec Q$ that is relevant for our present purpose.
If $\vec P$ is cofinite, then we may restrict the direct limit \eqref{posetdirlim} to those $\vec Q\subseteq \vec P$ which are downward closed, as these are cofinal.

Now Proposition \ref{lem:winvariant} provides a sufficient condition on countable $\vec Q$ to imply that $\W_{\vec Q}$ models the wrapped Fukaya category of $X$, namely it is sufficient that:
\begin{itemize}
\item For every $q\in\vec Q$ there exists a sequence $q=q_0<q_1<\cdots\in Q$ which is cofinal in $Q$ and positive isotopies $L_q=L_{q_0}\leadsto L_{q_1}\leadsto\cdots$ which are cofinal in the wrapping category of $L_q$.
\end{itemize}
(The condition that $\vec Q$ should contain all isotopy classes of Lagrangians can be safely ignored since we need only to check that the morphism space in $\W_{\vec P_X}$ between a fixed pair of Lagrangians is correct.)
Hence it suffices to show that those $\vec Q$ satisfying this bulleted condition are cofinal in the direct limit \eqref{posetdirlim} for $\vec P=\vec P_X$.
Now downward closed $\vec Q\subseteq\vec P_X$ are the same thing as objects $\vec Q\in\Pos_X$ (i.e.\ cofinite decorated posets without duplicates).
It thus suffices to show that every countable $\vec Q\in\Pos_X$ admits a morphism to (i.e.\ a downward closed inclusion into) a countable $\vec Q'\in\Pos_X$ satisfying the bulleted condition above.

Fix an exhaustion of $Q$ by downward closed finite subsets $Z_0\subseteq Z_1\subseteq\cdots$.
We define $Q ' :=  Q \sqcup \ZZ_{\geq 0}$, equipped with the order induced by the given order on $Q$, the usual order on $\ZZ_{\geq 0}$, along with the declaration that $i \in \ZZ_{\geq 0}$ is greater than all elements of $Z_i \subseteq Q$.
Note that $Q'$ is cofinite and that $Q \hookrightarrow Q'$ is downward closed.

We define the Lagrangians $L_{q'}$ for $q'\in Q'\setminus Q=\ZZ_{\geq 0}$ as follows.
For every $q\in Q$, we choose an order preserving injection $f_q:\ZZ_{\geq 1}\hookrightarrow Q'\setminus Q=\ZZ_{\geq 0}$, such that the images of $f_{q_1}$ and $f_{q_2}$ are disjoint for $q_1\ne q_2$ and such that $q\leq f_q(1)$ in $Q'$ (since $Q$ is countable, such a family of injections $f_q$ may be constructed by induction on any enumeration of $Q$).
We now revise our definition of $Q'$ to $Q':=Q\sqcup\bigcup_{q\in Q}\im f_q\subseteq Q\sqcup\ZZ_{\geq 0}$ (with the restriction of the originally defined partial order).
Now finally, we define the Lagrangians $L_{q'}$ for $q'\in Q'$ by declaring that for $q\in Q$, there should be a cofinal sequence of positive wrappings $L_q\leadsto L_{f_q(1)}\leadsto L_{f_q(2)}\leadsto\cdots$.
By choosing these wrappings generically, we may ensure that all totally ordered subsets of $Q'$ are mutually transverse and that the Lagrangians in $Q'\setminus Q$ are distinct from each other and from the Lagrangians in $Q$ (this ensures that $\vec Q'$ has no duplicates).
By construction, $Q'$ satisfies the bulleted property above.

Finally, note that the Liouville sectors $X_p$ for $p\in Q'\setminus Q$ may be constructed by induction (using crucially that $Q'$ is cofinite), and the strip-like coordinates and almost complex structures may be constructed by induction as in Lemma \ref{stripacsinduct}.
\end{proof}

\subsection{Geometric criterion for properness}

We observe here that if $\partial_\infty X$ is deformation equivalent to a contactization, then $\W(X)$ is proper (compare Lemma \ref{cylinderstopped}).

Recall that a $\ZZ$-graded complex is called \emph{perfect} iff it is quasi-isomorphic to a bounded complex of finitely generated projective modules.
In general, a complex is called perfect iff it is (up to quasi-isomorphism) a direct summand of a finite iterated extension of finite free modules (regarded as complexes concentrated in a single degree with trivial differential).
An $\ainf$-category $\C$ is called \emph{proper} iff $\C(X,Y)$ is perfect for all $X,Y\in\C$.
(Note that \S\S\ref{continuationsection}--\ref{wrappedhfsection} not only define homology groups $HF^\bullet$ and $HW^\bullet$, they in fact define quasi-isomorphism types $CF^\bullet$ and $CW^\bullet$.)

\begin{lemma}\label{wproper}
If $\partial_\infty X$ is deformation equivalent to a contactization $F_0\times[0,1]$, then $\W(X)$ is proper (equivalently, $CW^\bullet(L,K)$ is perfect for all $L,K\subseteq X$).
\end{lemma}

\begin{proof}
By Lemma \ref{wrappedtrivialiso}, a deformation of Liouville domains induces a quasi-isomorphism on $CW^\bullet$.
By Lemma \ref{deformboundary}, deformations of $\partial_\infty X$ lift to deformations of $X$.
So, without loss of generality, we may deform $X$ so that at infinity it is of the form considered in either of the proofs of Lemma \ref{cylinderstopped}.
We consider now the associated cutoff Reeb vector fields on $\partial_\infty X$.
Under the flow of any such cutoff Reeb vector field, any compact subset of $\partial_\infty X$ (in particular $\partial_\infty L$) converges to the boundary of $\partial_\infty X$.
In particular, this gives a cofinal wrapping of $L$ (by Remark \ref{bdrycofinal}) which after finite time never again passes through $\partial_\infty K$ at infinity, and so we conclude by the last property from Lemma \ref{continuationproperties} and Lemma \ref{simultaneousdeformation} that $CW^\bullet(L,K)$ is quasi-isomorphic to $CF^\bullet(L^w,K)$ (which is a perfect complex, as it is action filtered and generated by finitely many intersections points) for some finite wrapping $L\leadsto L^w$.
\end{proof}

\section{Symplectic cohomology of Liouville sectors}\label{secsymplecticcohomology}

For any Liouville sector $X$, we define a symplectic cohomology group $SH^\bullet(X,\partial X)$, and we show that $SH^\bullet(X,\partial X)$ is covariantly functorial with respect to inclusions of Liouville sectors.
The key to the functoriality of $SH^\bullet$ is Lemma \ref{scconfinecurves}, which shows that for an inclusion of Liouville sectors $X\hookrightarrow X'$ and a Hamiltonian $H:X'\to\RR$ adapted to both $X$ and $X'$, a Floer trajectory with input inside $X$ must lie entirely inside $X$.
In fact, we define a cochain complex $SC^\bullet(X,\partial X)_\pi$ functorial in $X$ and $\pi:\Nbd^Z\partial X\to\CC$ as in Definition \ref{cprojdef}, which computes the functor $SH^\bullet(X,\partial X)$ (this chain level information is crucial for the proof of Theorem \ref{localtoglobalnondegenerate}).
The relevant higher homotopical data is defined directly in terms of holomorphic curve counts (as opposed to the quotient category construction of the wrapped Fukaya category given in \S\ref{secwrapped}).

The notion of a ``homotopy coherent diagram'' plays an important role in this section (and the next) to keep track of chain level information.
To formalize this notion, we use (in a very elementary way) the language of quasi-categories (aka $\infty$-categories) introduced by Joyal \cite{joyal} and developed further by Lurie \cite{luriehtt,lurieha}.

Rather than choosing consistent Floer data for all Liouville sectors and all homotopies at once, we prefer to simply take a (homotopy) colimit over the ``space'' of all allowable Floer data.
This allows for more flexibility in this and subsequent constructions, and it is convenient in that ``independence of choice'' is built into the definition itself.
In this framework, it is of course crucial to show that this space of Floer data is contractible in the relevant sense.
In the present ``wrapped'' context, the relevant sense is that the simplicial set of Floer data should be a filtered $\infty$-category.

Proving compactness for the moduli spaces of holomorphic curves we wish to consider is nontrivial, and for this purpose we adapt Groman's \cite{groman} notion of dissipative Floer data (Definition \ref{dissipative}) to our setting.
We also adopt Groman's construction of dissipative Floer data (Proposition \ref{hjcontractible}) and Groman's proof of compactness for dissipative Floer data (Proposition \ref{compactness}).

Convention \ref{choosingpi} will be in effect for the remainder of this section.

\subsection{Moduli spaces of domains}\label{scdomainmodulisec}

\begin{figure}[hbt]
\centering
\includegraphics{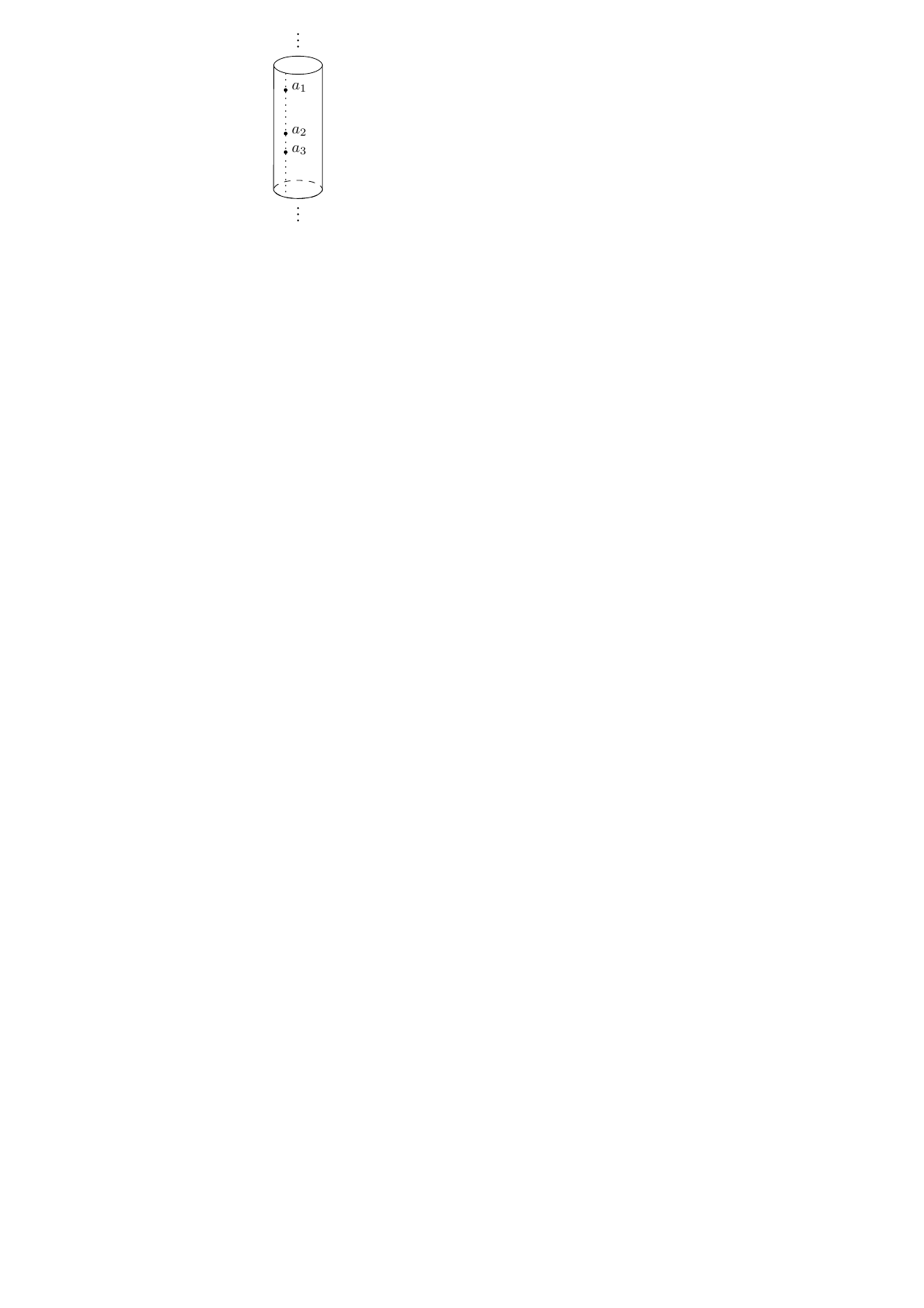}
\caption{Riemann surfaces used to define symplectic cohomology.}\label{domainSC}
\end{figure}

We consider the compactified moduli space of $n$-tuples of points $a_1\geq\cdots\geq a_n\in\RR$ up to translation.
It is helpful to view these points as lying on $\RR\times\{0\}\subseteq\RR\times S^1$ as in Figure \ref{domainSC}.
The points are allowed to collide with each other (meaning $a_i=a_{i+1}$) and no ``bubble'' is considered to have been formed (this makes a difference if at least three collide as in $a_i=a_{i+1}=a_{i+2}$).
On the other hand, if the consecutive spacing $b_i:=a_i-a_{i+1}$ approaches infinity for some $i$, then we regard $\RR\times S^1$ as splitting into two copies of $\RR\times S^1$, the first containing $a_1,\ldots,a_i$ and the second containing $a_{i+1},\ldots,a_n$.
The resulting moduli spaces may be described topologically as
\begin{equation}
\Mbar^{SC}_n=\begin{cases}\pt/\RR&n=0\\ 
    [0,\infty]^{n-1}&n\geq 1
\end{cases}
\end{equation}
using the coordinates $b_1,\ldots,b_{n-1}\in[0,\infty]$.
Denote by $\Cbar^{SC}_n$ the universal curve over $\Mbar^{SC}_n$, so $\Cbar^{SC}_0=S^1$, $\Cbar^{SC}_1=\RR\times S^1$, etc.
The spaces $\Mbar^{SC}_n$ (and correspondingly $\Cbar^{SC}_n$) come with natural inclusions of codimension one boundary strata
\begin{align}
\label{scboundaryI}\Mbar^{SC}_k\times\Mbar^{SC}_{n-k}&\hookrightarrow\Mbar^{SC}_n\\
\label{scboundaryII}\Mbar^{SC}_{n-1}&\hookrightarrow\Mbar^{SC}_n
\end{align}
for $0<k<n$, corresponding to setting $b_k=\infty$ and $b_k=0$, respectively.

There are tautological cylindrical coordinates on $\Cbar^{SC}_n\to\Mbar^{SC}_n$, as $\RR\times S^1$ is itself a cylinder (the translational ambiguity does not concern us, as we always view strip-like/cylindrical coordinates as well-defined up to translation anyway); we point out the obvious fact that these coordinates are compatible with each other in the sense of \S\ref{wcurvessec}.
Gluing via these coordinates defines a collar
\begin{equation}
\label{sccollarI}\Mbar^{SC}_k\times\Mbar^{SC}_{n-k}\times(0,\infty]\hookrightarrow\Mbar^{SC}_n
\end{equation}
covered by a map of universal curves.
Moreover, these collars \eqref{sccollarI} are compatible with each other in the sense that, for any boundary stratum (possibly of higher codimension) of $\Mbar^{SC}_n$, every curve over a neighborhood of the stratum has a well-defined identification with a well-defined curve in the stratum via the above gluing operation.

\begin{figure}[hbt]
\centering
\includegraphics{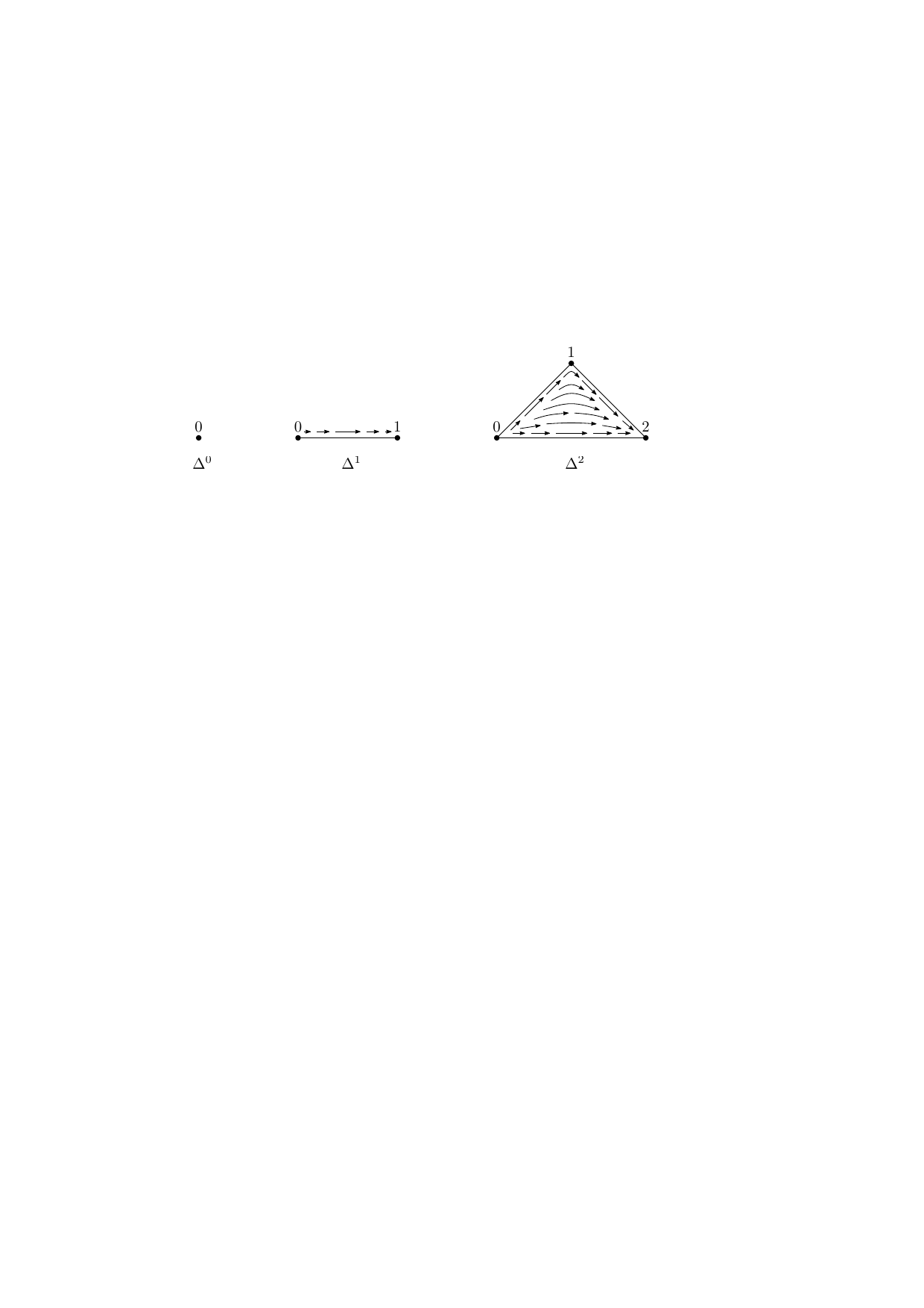}
\caption{The vector field $V_n$ from Remark \ref{adamspathsremark} for $n=0,1,2$.}\label{simplexflowsfigure}
\end{figure}

\begin{remark}\label{adamspathsremark}
The spaces $\Mbar^{SC}_n$ can also be described as spaces of Morse flow lines on $\Delta^n$ for a particular choice of Morse function.
Namely, following \cite[Definition 10.1.4 and \S C.13.1]{pardonimplicitatlas} we consider the gradient-like pair (illustrated in Figure \ref{simplexflowsfigure})
\begin{equation}\label{simplexmorsefunctiongradient}
(F_n,V_n):=\biggl(-\sum_{i=1}^n\cos(\pi x_i),\sum_{i=1}^n\sin(\pi x_i)\frac\partial{\partial x_i}\biggr)
\end{equation}
on the $n$-simplex with coordinates
\begin{equation}
\Delta^n=\{\underline x\in[0,1]^n:0\leq x_1\leq\cdots\leq x_n\leq 1\},
\end{equation}
where the $i$th vertex is given by $x_{n-i}=0$ and $x_{n-i+1}=1$.
The critical locus of $F_n$ consists of the vertices of $\Delta^n$, and the Morse index of vertex $i$ equals $i$.
The vector field $V_n$ is compatible with all simplicial maps $\Delta^n\to\Delta^m$.

The space $\F(\Delta^n)$ of broken flow lines of $V_n$ from vertex $0$ to
vertex $n$ is (a convenient variation on) the ``Adams family of paths''
\cite{adamscobar}.
Namely, we consider maps $\ell:\RR\to\Delta^n$ satisfying $\ell'(s)=-V_n(\ell(s))$ with $\ell(+\infty)=0$ and $\ell(-\infty)=n$.
Every such flow line is of the form $\ell(t)=(f(a_n-t),\ldots,f(a_1-t))$ for
some $a_1\geq\cdots\geq a_n\in\RR$ (unique up to the addition of an overall
constant), where $f:\RR\to[0,1]$ denotes the unique solution to the initial
value problem $f(0)=\frac 12$ and $f'(x)=\sin(\pi f(x))$.  Thus the space of
flow lines is parameterized by $(b_1,\ldots,b_{n-1})\in[0,\infty)^{n-1}$, where
$b_i=a_i-a_{i+1}$; moreover, this parameterization extends continuously to
a homeomorphism
\begin{equation}
[0,\infty]^{n-1}\xrightarrow\sim\F(\Delta^n).
\end{equation}
In these coordinates, $b_k=\infty$ iff the flow line
is broken at vertex $k$, and $b_k=0$ iff the flow line factors through
$\Delta^{[0\ldots\hat k\ldots n]}\subseteq\Delta^n$.  More generally, the
natural inclusions 
\begin{align}
\label{Fproduct}\F(\Delta^{[0\ldots k]})\times\F(\Delta^{[k\ldots n]})&\to\F(\Delta^n)\\
\label{Fface}\F(\Delta^{[0\ldots\hat k\ldots n]})&\to\F(\Delta^n)
\end{align}
admit a simple description in terms of the $b$-coordinates.
In fact, any simplicial map $f:\Delta^n\to\Delta^m$ with $f(0)=0$ and $f(n)=m$ induces a map $f_\ast:\F(\Delta^n)\to\F(\Delta^m)$.
\end{remark}

Remark \ref{adamspathsremark} gives an important conceptual understanding of the meaning of the moduli spaces $\Mbar^{SC}_n$.
A given moduli space $\Mbar^{SC}_n$ should be regarded as associated to an $n$-simplex $\Delta^n$, the points $a_i$ should be regarded as associated to the edges $(i-1)\to i$ of $\Delta^n$, and the intervals $(a_1,\infty),(a_2,a_1),\ldots,(a_n,a_{n-1}),(-\infty,a_n)$ should be regarded as associated to the vertices $0,\ldots,n$ of $\Delta^n$.

\subsection{Hamiltonians and almost complex structures}\label{subsec:nsimplexfloerdata}

We now introduce the technical conditions we impose on Hamiltonians and almost complex structures in order to define symplectic cohomology for Liouville sectors.
We organize the collection of all allowable Floer data into a simplicial set, in which a $0$-simplex specifies Floer data for the differential, a $1$-simplex specifies Floer data for a continuation map, a $2$-simplex specifies Floer data for a homotopy between a continuation map and a composition of two continuation maps, etc.

Let $\H(X)$ denote the space of Hamiltonians $H:X\to\RR$, and recall that $\J(X)$ denotes the space of $\omega$-compatible cylindrical almost complex structures on $X$.

\begin{definition}\label{nsimplexfloer}
An $n$-simplex of Floer data $(H,J)$ on $X$ consists of a collection of maps
\begin{align}
H_{v_0\cdots v_m}:\Cbar^{SC}_m&\to\H(X)\\
J_{v_0\cdots v_m}:\Cbar^{SC}_m&\to\J(X)
\end{align}
for all integers $0\leq v_0<\cdots<v_m\leq n$.
These maps must be \emph{compatible with gluing and forgetting vertices} in the natural way with respect to the boundary collars \eqref{sccollarI} and the inclusions of strata \eqref{scboundaryII}.
Namely, $H_{v_0\cdots v_m}$ must agree with $H_{v_0}$ for $s\gg0$ and with $H_{v_m}$ for $s\ll 0$, the restriction of $H_{v_0\cdots v_m}$ to the image of \eqref{sccollarI} must agree with the obvious splicing of $H_{v_0\cdots v_k}$ and $H_{v_k\cdots v_m}$ (note that the former condition may be interpreted as the $k=0,m$ cases of the latter), and the restriction of $H_{v_0\cdots v_m}$ to the image of \eqref{scboundaryII} must coincide with $H_{v_0\cdots\widehat{v_k}\cdots v_m}$ (the meaning of these conditions should be compared with Remark \ref{adamspathsremark}).

The same requirements are imposed on $J$ as well.
See \cite[II (9i)]{seidelbook} for similar conditions.
\end{definition}

Note that, in the above definition, the maps $H_{0\cdots n}$ and $J_{0\cdots n}$ determine all the rest, so we could have equivalently defined an $n$-simplex of Floer data as a pair of maps
\begin{align}
\label{nsimplexalternateI}H:\Cbar^{SC}_n&\to\H(X)\\
\label{nsimplexalternateII}J:\Cbar^{SC}_n&\to\J(X)
\end{align}
satisfying certain analogous properties.
Note also that a $0$-simplex of Floer data is simply a time-dependent Hamiltonian $H:S^1\to\H(X)$ and a time-dependent family of almost complex structures $J:S^1\to\J(X)$.

An $n$-simplex of Floer data can be pulled back to an $m$-simplex of Floer data under any simplicial map $\Delta^m\to\Delta^n$.
It follows that the collections of all $n$-simplices of Floer data, for all $n$, form a simplicial set.
Concretely, thinking of an $n$-simplex of Floer data as a pair of maps \eqref{nsimplexalternateI}--\eqref{nsimplexalternateII}, the \emph{face maps} are
\begin{equation}\label{floerdatafacemaps}
    d_i(H, J) = (H,J)|_{\Cbar^{SC}_{n-1}}
\end{equation}
where $\Cbar^{SC}_{n-1}\hookrightarrow\Cbar^{SC}_n$ is the stratum where $a_i=a_{i+1}$ for $0<i<n$ (for $i=0$ and $i=n$, we instead pull back under $\Mbar^{SC}_1\times\Mbar^{SC}_{n-1}\to\Mbar^{SC}_n$ and $\Mbar^{SC}_{n-1}\times\Mbar^{SC}_1\to\Mbar^{SC}_n$, respectively).
The \emph{degeneracy maps} for $0 \leq i \leq n$ are given by
\begin{equation}\label{floerdatadegeneracymaps}
s_i(H,J) = \pi_i^\ast(H, J)
\end{equation}
where $\pi_i: \Cbar^{SC}_{n+1} \to \Cbar^{SC}_{n}$ denotes the map that forgets the marked point $a_{i+1}$.

\begin{definition}\label{adapted}
An $n$-simplex of Floer data $(H,J)$ is said to be \emph{adapted to $\partial X$} when both $H=\Re\pi$ and $\pi$ is $J$-holomorphic over $\pi^{-1}(\CC_{\left|\Re\right|\leq\varepsilon})$ for some $\varepsilon>0$.
(These conditions are crucial for constraining holomorphic curves near $\partial X$ and, in particular, for the functoriality of $SH^\bullet$; specifically, they are used in Lemma \ref{scconfinecurves}.)
\end{definition}

\begin{figure}[hbt]
\centering
\includegraphics{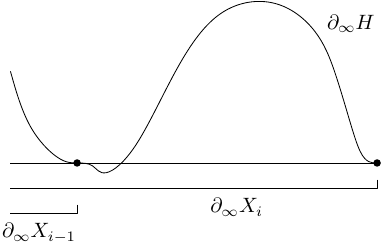}\qquad\qquad\qquad\includegraphics{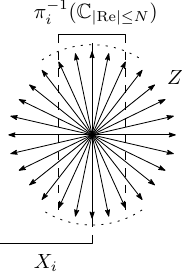}
\caption{Behavior of admissible Hamiltonians near infinity (left), and the region $\pi^{-1}(\CC_{\left|\Re\right|\leq N})$ (right).}\label{graphHhamregionsfig}
\end{figure}

\begin{definition}\label{admissible}
A Hamiltonian $H:S^1\to\H(X_r)$ is said to be \emph{admissible} with respect to a chain of Liouville sectors $X_0\subseteq\cdots\subseteq X_r$ iff $H$ is linear at infinity, except for near $\partial X_i$ where it instead is subject to the following requirements:
\begin{itemize}
\item Outside $\pi_i^{-1}(\CC_{\left|\Re\right|\leq N})$ for some $N<\infty$, we must have $H=H_i$ for some (necessarily unique) linear $H_i:\Nbd^Z\partial X_i\to\RR$ defined near infinity.
Here $H_i$ must be smooth on the closure of each component of $(\Nbd^Z\partial X_i)\setminus\partial X_i$.
Over $\partial X_i$, we must have $H_i=0$ and $dH_i=0$, and furthermore its second derivative over $\partial X_i$ must be positive on the ``inside'' $X_i$ and negative on the ``outside'' $X_r\setminus X_i$, as illustrated in Figure \ref{graphHhamregionsfig} left.
\item Inside $\pi^{-1}_i(\CC_{\left|\Re\right|\leq N})$ for all $N<\infty$, we must have $H$ bounded uniformly in $C^\infty$ (with respect to some, equivalently any, Riemannian metric $g$ satisfying $\sL_Zg=g$, e.g.\ one induced by a cylindrical almost complex structure).
Note that these strips $\pi_i^{-1}(\CC_{\left|\Re\right|\leq N})$ around $\partial X_i$ limit to $\partial\partial_\infty X_i$ at infinity, see Figure \ref{graphHhamregionsfig} right.
\end{itemize}
\end{definition}

The definition of admissibility above is somewhat complicated, so let us explain the motivation behind it.
To define the symplectic cohomology of Liouville manifolds, it is usually convenient to use Hamiltonians which are linear at infinity.
Unfortunately, linearity at infinity is incompatible with the condition of being adapted (Definition \ref{adapted}) to the boundary of a Liouville sector (recall that $Z\Re\pi=\frac 12\Re\pi$ rather than $Z\Re\pi=\Re\pi$).
Being adapted is, however, crucial for the necessary confinement results for holomorphic curves (specifically, ensuring that holomorphic curves do not approach the boundary and, more generally, that for a chain $X_0\subseteq\cdots\subseteq X_r$, holomorphic curves with positive asymptotic in $X_i$ do not pass through $\partial X_i$; see Lemma \ref{scconfinecurves}, which is the key to establishing $d^2=0$ and functoriality under inclusions of Liouville sectors).
The notion of an admissible Hamiltonian is a compromise: the linearity constraint is weakened to allow admissible Hamiltonians to also be  adapted, yet enough regularity is imposed to ensure that admissible Hamiltonians remain well behaved at infinity (for example, the flow of any admissible Hamiltonian adapted to $\partial X_r$ is complete in both directions---this follows from the reasoning used in the proof of Lemma \ref{bddgeoforsh}).
Later, we will construct a sufficient supply of simultaneously adapted and admissible Hamiltonians by perturbing linear Hamiltonians which satisfy a certain condition called being `pre-admissible'; see Definition \ref{preadmissible} and the surrounding discussion.
Note that it is not clear \emph{a priori} that there are any admissible Hamiltonians at all---the essential reason they exist is that (a smoothing of) the function $\frac{\Re\pi}{\left|\Re\pi\right|}(\Re\pi)^2$ satisfies both conditions of admissibility.

\begin{definition}[Adapted from Groman \cite{groman}]\label{dissipative}
An $n$-simplex of Hamiltonians $H$ on $X$ will be called \emph{dissipative} iff the following conditions are satisfied (these conditions are relevant for the proof of compactness in Proposition \ref{compactness}):
\begin{itemize}
\item(Non-degenerate fixed points) For every vertex $v\in\Delta^n$, the flow map $\Phi_{H_v}$ of $H_v:S^1\to\H(X)$ has non-degenerate fixed points.

\item(No fixed points at infinity) For every vertex $v\in\Delta^n$, the flow map $\Phi_{H_v}$ satisfies $d(x,\Phi_{H_v}(x))>\varepsilon>0$ for some $\varepsilon>0$ and all $x$ outside a compact subset of $X$ (distance is measured with respect to some/any $g$ satisfying $\sL_Zg=g$).

(This lower bound $d(x,\Phi_{H_v}(x))>\varepsilon>0$ is used to prove \emph{a priori} $C^0$-estimates over the thin parts of the domain, i.e.\ long cylinders $I\times S^1$ with constant Floer data (meaning independent of $s\in I$).)

\item(Boundedness below of wrapping) $\inf_{\Cbar^{SC}_k\times X}(-\frac\partial{\partial s}H_{v_0\cdots v_k})>-\infty$.

(Boundedness below of wrapping ensures that the geometric energy of a Floer trajectory is bounded above by its topological energy plus a constant, see \eqref{poswrappinggood}.)

\item(Dissipation data) We require dissipation data in the following sense \emph{to be specified}.

Dissipation data for a family $H:\Cbar^{SC}_n\to\H(X)$ ($n\geq 1$) consists of an open set $A_v\subseteq\Cbar^{SC}_n$ for each vertex $v\in\Delta^n$ and a finite collection of quadruples $(v_i,B_i,\{K_{ij}\}_{j\geq 1},\{U_{ij}\}_{j\geq 1})$ where $v_i\in\Delta^n$ is a vertex, $B_i\subseteq\Cbar^{SC}_n$ is open, $K_{ij}\subseteq X$ are compact, $U_{ij}\subseteq X$ are open, and $K_{i1}\subseteq U_{i1}\subseteq K_{i2}\subseteq U_{i2}\subseteq\cdots$ is an exhaustion of $X$, such that
\begin{align}
A_v&\subseteq\Cbar^{SC}_n\text{ contains the thin part associated to }v,\\
\Cbar^{SC}_n&=\bigcup_vA_v\cup\bigcup_iB_i,\\
H&=H_v\phantom{H_{v_i}}\text{over }(\Nbd\overline{A_v})\times X,\\
\label{dissipationshellcondition}H&=H_{v_i}\phantom{H_v}\text{over }(\Nbd\overline{B_i})\times\bigcup_{j=1}^\infty\Nbd(\overline{U_{ij}\setminus K_{ij}}),\\
\label{dissipationshellsdistancesum}\smash{\sum_{j=1}^\infty d(X\setminus U_{ij}^-,K_{ij}^+)^2}&=\infty,
\end{align}
where $K_{ij}^+$ denotes the locus of points $p\in X$ such that the forwards/backwards Hamiltonian trajectory of $H_{v_i}$ for time $\leq\frac 12$ starting at $(p,t)$ for some $t\in S^1$ intersects $K_{ij}$, and $U_{ij}^-$ denotes the locus of points for which all such trajectories stay inside $U_{ij}$.
As usual, distance is measured with some/any $g$ satisfying $\sL_Zg=g$.
We will often refer to the regions $U_{ij}^-\setminus K_{ij}^+$ and/or $U_{ij}\setminus K_{ij}$ as ``shells''.

Dissipation data for an $n$-simplex of Hamiltonians $\{H_{v_0\cdots v_m}\}_{0\leq v_0<\cdots<v_m\leq n}$ consists of dissipation data for each family $H_{v_0\cdots v_m}:\Cbar^{SC}_m\to\H(X)$ ($m\geq 1$) which is compatible in the following sense.
For each open set $A_v'$ or $B_i'$ of $\Cbar^{SC}_{m-1}$ specified for $H_{v_0\cdots\widehat{v_a}\cdots v_m}$ ($0<a<m$), there must exist a corresponding open set $A_v$ (the same $v$ and with $A_v'\subseteq A_v$) or $B_j$ (with $v_i'=v_j$, $K_{ik}'=K_{jk}$, $U_{ik}'=U_{jk}$, and $B_i'\subseteq B_j$).
The same condition is imposed for every open set $A_v'$ or $B_i'$ of $\Cbar^{SC}_a$ specified for $H_{v_0\cdots v_a}$ or $H_{v_{m-a}\cdots v_m}$ ($0<a<m$), except we require $A_v'\times\Mbar^{SC}_{m-a}\subseteq A_v$ or $B_i'\times\Mbar^{SC}_{m-a}\subseteq B_j$.

(Dissipation data is used to prove \emph{a priori} $C^0$-estimates over the thick parts of the domain, i.e.\ cylinders $I\times S^1$ of bounded length with possibly varying Floer data (meaning depending upon $s\in I$).)
\end{itemize}
Note that for $n=0$, the latter two conditions are vaccuous, so a single Hamiltonian $H:S^1\to\H(X)$ is dissipative if and only if $\Phi_H$ has non-degenerate fixed points and $d(x,\Phi_H(x))>\varepsilon>0$ near infinity.
Also note that for $n>0$, being dissipative is extra structure rather than simply a property.
\end{definition}

It causes no difference in our arguments to restrict consideration to $S^1$-invariant dissipation data (meaning each open set $A_v$ and $B_i$ is $S^1$-invariant).

\begin{definition}\label{hjscdef}
We define a simplicial set $\HJ_\bullet(X_0,\ldots,X_r)$ as follows for any chain of Liouville sectors $X_0\subseteq\cdots\subseteq X_r$.
An $n$-simplex of $\HJ_\bullet(X_0,\ldots,X_r)$ consists of an $n$-simplex of Floer data
\begin{align}
\label{Hsc}H:\Cbar^{SC}_n&\to\H(X_r)\\
\label{Jsc}J:\Cbar^{SC}_n&\to\J(X_r)
\end{align}
satisfying the following properties:
\begin{itemize}
\item $(H,J)$ is adapted to $\partial X_i$ for $0\leq i\leq r$ (Definition \ref{adapted}).
\item $H_v$ is admissible (Definition \ref{admissible}) for all $v\in\Delta^n$ with respect to a \emph{specified} chain of Liouville sectors
\begin{equation}\label{Ychain}
\varnothing=:X_{-1}\subseteq Y_1^{(0)}\subseteq\cdots\subseteq Y_{a_0}^{(0)}\subseteq X_0\subseteq Y_1^{(1)}\subseteq\cdots\subseteq Y_{a_1}^{(1)}\subseteq X_1\subseteq\cdots\subseteq X_r
\end{equation}
depending on $v$.
We require that the chain \eqref{Ychain} specified at vertex $v+1$ be obtained from that specified at vertex $v$ by removing some of the $Y^{(i)}_b$'s.

(The purpose of allowing Hamiltonians which are admissible with respect to such a chain \eqref{Ychain} is so that we can define the forgetful maps \eqref{hjforget}.
Note that a Hamiltonian admissible for $X_0\subseteq\cdots\subseteq X_r$ will usually not be admissible for the chain with $X_i$ removed.)
\item $H$ is dissipative with specified dissipation data (Definition \ref{dissipative}).
\end{itemize}
The face and degeneracy maps on the simplicial set $\HJ_\bullet(X_0, \ldots, X_r)$ are given by
the operations \eqref{floerdatafacemaps} and \eqref{floerdatadegeneracymaps},
which tautologically preserve the condition of being in $\HJ_\bullet(X_0, \ldots, X_r)$.
\end{definition}

There are forgetful maps (of simplicial sets) 
\begin{equation}\label{hjforget}
\HJ_\bullet(X_0,\ldots,X_r)\to\HJ_\bullet(X_0,\ldots,\widehat{X_i},\ldots,X_r)
\end{equation}
for $0\leq i\leq r$; the only non-obvious part of their definition is that the
chains \eqref{Ychain} retain the forgotten $X_i$ if $i<r$ (note that the
case of $i = r$, in which one restricts $(H,J)$ to $X_{r-1}$, is somewhat special compared to the cases $i<r$, in which one considers the same $(H,J)$ on $X_r$).

\subsection{Construction of Hamiltonians and almost complex structures}

We now introduce constructions of Hamiltonians and almost complex structures suitable for defining symplectic cohomology of Liouville sectors, i.e.\ constructions of simplices of $\HJ_\bullet$.
Lemma \ref{constructH} provides a ready supply of vertices of $\HJ_\bullet$ (i.e.\ Floer data suitable for defining a Floer complex $CF^\bullet(X;H)$ and its differential).
Proposition \ref{hjcontractible} produces sufficiently many higher simplices in $\HJ_\bullet$ (i.e.\ Floer data suitable for defining continuation maps, etc.).

\begin{definition}\label{preadmissible}
A linear Hamiltonian $H:S^1\to\H(X_r)$ defined near infinity is said to be \emph{pre-admissible} with respect to a chain of Liouville sectors $X_0\subseteq\cdots\subseteq X_r$ iff it satisfies the following conditions.

\begin{itemize}
    \item We require $H$ to be smooth on the complement of $\bigcup_i\partial X_i$.
    \item Near each $\partial X_i$, we require the restriction of $H$ to the closure of each ``side'' of $\partial X_i$ to be smooth, meaning it admits a smooth extension to an open neighborhood (it thus makes sense to evaluate the derivatives of $H$ over $\partial X_i$, though we must specify whether we compute them from the ``inside'' or the ``outside'').
    \item Over $\partial X_i$, we require that $H=0$ and $dH=0$ (from both sides)
    and that the second derivative of $H$ be positive on the inside and negative on the outside.
\end{itemize}
\end{definition}

Every admissible (in the sense of Definition \ref{admissible}) Hamiltonian $\tilde H:S^1\to\H(X_r)$ determines a unique linear Hamiltonian $H:S^1\to\H(X_r)$ defined near infinity, defined by the property that $H=\tilde H$ outside $\bigcup_i\pi_i^{-1}(\CC_{\left|\Re\right|\leq N})$ for some $N<\infty$.
Clearly such $H$ is pre-admissible.

Being linear at infinity, pre-admissible Hamiltonians are easy to construct.
In contrast, an adapted admissible Hamiltonian fails to be linear at infinity over the small strips $\pi_i^{-1}(\CC_{\left|\Re\right|\leq\varepsilon})$, where it must coincide with $\Re \pi_i$.
However, the next lemma shows we can modify most pre-admissible Hamiltonians to make them adapted and admissible.

\begin{lemma}\label{constructH}
Let $H:S^1\to\H(X_r)$ be pre-admissible with respect to $X_0\subseteq\cdots\subseteq X_r$.
Assume also that $\Phi_H$ has no fixed points other than $\bigcup_i\partial X_i$.
There exists an admissible $\tilde H:S^1\to\H(X_r)$ corresponding
to $H$ at infinity,
which is dissipative and adapted to all $\partial X_i$.
If $H$ is $S^1$-independent, then we may take $\tilde H$ to be as well.
\end{lemma}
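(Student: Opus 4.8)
The plan is to start from a pre-admissible $H$ and modify it only in the small strips $\pi_i^{-1}(\CC_{|\Re|\le\varepsilon})$ so that it becomes adapted while preserving admissibility and gaining dissipativity. First I would fix cylindrical coordinates near each $\partial X_i$ given by the projection $\pi_i:\Op^Z\partial X_i\to\CC_{\Re\ge 0}$ of Definition \ref{cprojdef}, so that $\Re\pi_i$ is a linear-at-infinity function vanishing to second order along $\partial X_i$ with the correct sign of the second derivative (positive on the inside, negative on the outside). Since $H$ is pre-admissible it has exactly these same leading-order behavior along $\partial X_i$. I would then interpolate: choose a cutoff $\beta:\CC_{\Re\ge 0}\to[0,1]$ equal to $1$ on $\pi_i^{-1}(\CC_{|\Re|\le\varepsilon/2})$ and supported in $\pi_i^{-1}(\CC_{|\Re|\le\varepsilon})$, and set $\tilde H := (1-\beta\circ\pi_i) H + (\beta\circ\pi_i)\,\Re\pi_i$ near each $\partial X_i$, and $\tilde H := H$ elsewhere. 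Because $H$ and $\Re\pi_i$ agree to second order along $\partial X_i$, the interpolation does not create new fixed points of $\Phi_{\tilde H}$ arbitrarily close to $\partial X_i$ as long as $\varepsilon$ is taken small enough; away from $\partial X_i$, $\tilde H=H$, whose flow has no fixed points other than $\bigcup_i\partial X_i$ by hypothesis.

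Next I would verify the three properties demanded. Adaptedness is immediate: on $\pi_i^{-1}(\CC_{|\Re|\le\varepsilon/2})$ we have $\tilde H=\Re\pi_i$ by construction, and one chooses $J$ on $X_r$ to be cylindrical and making each $\pi_i$ holomorphic over $\pi_i^{-1}(\CC_{|\Re|\le\varepsilon})$ (such $J$ form a contractible nonempty space, as recalled in \S\ref{bdryescape}). Admissibility (Definition \ref{admissible}) is checked stratum by stratum: outside $\pi_i^{-1}(\CC_{|\Re|\le N})$ (for $N$ large) we have $\tilde H=H$, which is linear at infinity; inside $\pi_i^{-1}(\CC_{|\Re|\le N})$ the function $\tilde H$ is a convex combination of $H$ (uniformly $C^\infty$-bounded in the $\mathcal L_Z g=g$ sense because $H$ is linear at infinity, hence $ZH=H$ there) and $\Re\pi_i$ (likewise $C^\infty$-bounded), so $\tilde H$ is $C^\infty$-bounded; and over $\partial X_i$ the conditions $\tilde H=0$, $d\tilde H=0$, with the correct sign of the Hessian, are inherited from $H$ and $\Re\pi_i$, which share them. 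The admissibility chain \eqref{Ychain} is the one carried by $H$. Non-degeneracy of fixed points of $\Phi_{\tilde H}$ follows from the assumed non-degeneracy away from $\bigcup_i\partial X_i$ (where $\tilde H = H$) together with a small generic perturbation of $\tilde H$ supported away from $\Op\partial X_i$ if needed; and $d(x,\Phi_{\tilde H}(x))>\varepsilon'>0$ near infinity follows since $\tilde H=H$ outside a compact set together with the $\pi_i$-strips, and in those strips $\Phi_{\Re\pi_i}$ is the shear $x\partial/\partial y$, which moves every point a uniform amount.

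The remaining and, I expect, main obstacle is producing the \emph{dissipation data} of Definition \ref{dissipative}: one must exhibit an exhaustion $K_{i1}\subseteq U_{i1}\subseteq K_{i2}\subseteq\cdots$ with $\sum_j d(X\setminus U_{ij}^-, K_{ij}^+)^2=\infty$, together with the compatible open cover of $\Cbar^{SC}_n$. For a single Hamiltonian ($n=0$) this is vacuous, so the content is in the higher simplices — but for the present Lemma we are constructing a \emph{vertex}, i.e.\ a $0$-simplex, so the dissipation condition reduces to non-degenerate fixed points plus the lower bound $d(x,\Phi_{\tilde H}(x))>\varepsilon>0$ near infinity, both of which I addressed above. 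Thus for Lemma \ref{constructH} proper the dissipation clause is automatic, and the real work is confirming that the interpolation introduces no new periodic orbits near $\partial X_i$; this I would handle by an explicit estimate in the $\pi_i$-coordinates, using that $H-\Re\pi_i$ vanishes to second order along $\partial X_i=\{\pi_i\in i\RR\cup\{0\}\}$, so that the $t$-dependent Hamiltonian vector fields $X_{\tilde H_t}$ and $X_{\Re\pi_i}$ are $C^1$-close on $\pi_i^{-1}(\CC_{|\Re|\le\varepsilon})$ for $\varepsilon$ small, whence time-one flows have no new fixed points there. Finally, the $S^1$-independent case is immediate since the entire construction is carried out pointwise in $t$ and uses no $t$-dependence. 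Completeness of the flow of $\tilde H$ in both directions follows from the reasoning behind Lemma \ref{bddgeoforsh} as remarked after Definition \ref{admissible}.
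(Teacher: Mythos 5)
Your framing (modify $H$ only near each $\partial X_i$ so that it equals $\Re\pi_i$ on a strip, and observe that for a $0$-simplex dissipativity reduces to non-degenerate fixed points plus the displacement bound $d(x,\Phi_{\tilde H}(x))>\varepsilon$ near infinity) is the right one, but the central step fails. You assert that $\Re\pi_i$ is linear at infinity and vanishes to second order along $\partial X_i$, hence agrees with the pre-admissible $H$ to second order, so that the splicing $(1-\beta\circ\pi_i)H+(\beta\circ\pi_i)\Re\pi_i$ over the thin strip $\{\varepsilon/2\leq\Re\pi_i\leq\varepsilon\}$ is a $C^1$-small change. Neither claim is true: $Z\Re\pi_i=\frac 12\Re\pi_i$ (this failure of linearity is precisely the difficulty the lemma must address), and $R:=\Re\pi_i$ vanishes only to \emph{first} order along $\partial X_i$ (indeed $X_RI\equiv 1$ for $I=\Im\pi_i$), whereas pre-admissibility forces $H$ to vanish to second order; after normalization $H=FR^2$ with $F>0$ and $Z$-invariant. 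So on $\{R\leq\varepsilon\}$ one has $R\gg H$, and your spliced Hamiltonian must drop from about $\varepsilon/2$ down to about $F\varepsilon^2$ as $R$ crosses the transition zone. Already in the model $X=F\times\CC_{\Re\geq 0}$ with $H=cR^2$ ($c$ constant) the spliced Hamiltonian is a function $h(R)$ with $h'=1$ just below $R=\varepsilon/2$ and $h'=2cR>0$ just above $R=\varepsilon$, yet $h(\varepsilon)=c\varepsilon^2<\varepsilon/2=h(\varepsilon/2)$ for small $\varepsilon$; hence $h'$ vanishes on some level set $\{R=R_0\}$ inside the strip, where $X_{\tilde H}=h'(R)X_R=0$. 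This produces non-compact hypersurfaces of (degenerate) fixed points of $\Phi_{\tilde H}$ going out to infinity, violating both the ``no fixed points at infinity'' and the non-degeneracy clauses of dissipativity. Your verification of the displacement bound only discusses the loci where $\tilde H$ equals $H$ or $\Re\pi_i$ and never the splicing region, so it does not detect (or repair) this.

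The paper avoids this by taking $\tilde H$ to be a smoothing of $\max(H,R)$: the crossover then happens on the thick strip $\{\frac 1N\leq R\leq N\}$, where $FR^2$ and $R$ are comparable, rather than on a thin strip hugging $\partial X_i$. On that strip one has the uniform lower bounds $X_RI\equiv 1$ and $X_HI\geq cR\geq c/N$, and $I$, $R$, $H=FR^2$ are $C^\infty$-bounded with respect to metrics satisfying $\sL_Zg=g$, so the smoothing can be arranged to satisfy $X_{\tilde H}I\geq\varepsilon>0$ throughout $\Op^Z\partial X_i$; since $\left|dI\right|$ is $Z$-invariant this yields the displacement bound near infinity. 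If you wish to keep an interpolation-style construction, you must carry out the splicing over such a thick region and verify an estimate of this type (e.g.\ $X_{\tilde H}I>0$) directly; the second-order-agreement shortcut is not available. (A minor slip: $\Phi_{\Re\pi_i}$ is generated by $\partial/\partial y$, not the shear $x\,\partial/\partial y$, which is the flow of $\frac 12(\Re\pi_i)^2$; the displacement claim for $\Re\pi_i$ itself is still fine, but it is the transition region, not the regions where $\tilde H$ equals $\Re\pi_i$ or $H$, that carries the content of the lemma.)
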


\begin{proof}
It is enough to modify $H$ near each $\partial X_i$.
It is furthermore enough to discuss this modification on the ``inner'' side of $X_i$, as the situation on the ``outer'' side is the same upon negation.
Hence, we may forget about the chain of Liouville sectors altogether and simply modify a given pre-admissible $H:S^1\to\H(X)$ near $\partial X$ to make it admissible, adapted to $X$, and dissipative.

We define $\tilde H$ by smoothing $\max(H,R)$, where $R:=\Re\pi$.
We may write $H=FR^2$ for some $Z$-invariant function $F:\Nbd^Z\partial X\to\RR_{>0}$ defined near infinity.
In particular, we have
\begin{equation}
\max(H,R)=\begin{cases}R&R\leq\frac 1N\\H&R\geq N\end{cases}
\end{equation}
for sufficiently large $N<\infty$.
The smoothing will take place over the strip $\{\frac 1N\leq R\leq N\}$.
(Note that $ZR=\frac 12R$, so $R$ covers the entirety of $[0,N]$ near infinity over any $\Nbd^Z\partial X$.)

Since $H$ is linear at infinity, $\Phi_H$ is cylindrical at infinity, and hence satisfies $d(x,\Phi_H(x))\geq\varepsilon>0$ near infinity, except possibly over $\Nbd^Z\partial X$.
We must smooth $\max(H,R)$ so it retains this displacement property and is bounded in $C^\infty$.
The key to doing this is to note that the desired lower bound $d(x,\Phi_{\tilde H}(x))\geq\varepsilon>0$ is implied by the stronger property $X_{\tilde H}I\geq\varepsilon>0$ over $\Nbd^Z\partial X$, where $I=\Im\pi$.
Indeed, $\left|dI\right|$ is $Z$-invariant (and nonzero), and hence is of constant order near infinity.

To produce $\tilde H$ satisfying $X_{\tilde H}I\geq\varepsilon>0$ over $\Nbd^Z\partial X$, argue as follows.
We have that
\begin{equation}\label{XRIlbforH}
X_RI\equiv 1
\end{equation}
by definition.
We have that $X_HI$ vanishes to first order over $\partial X$ with positive inward derivative (see the proof of Lemma \ref{compactcutoff}), so since $Z(X_HI)=\frac 12X_HI$ and $ZR=\frac 12R$, we have
\begin{equation}\label{XHIlbforH}
X_HI\geq c\cdot R
\end{equation}
over $\Nbd^Z\partial X$ for some $c>0$.
Now, simply observe that over the locus $\{0\leq R\leq N\}$, the metric $g$ has bounded geometry and the functions $I$, $R$, and $H=FR^2$ are uniformly bounded in $C^\infty$.
It thus follows from \eqref{XRIlbforH}--\eqref{XHIlbforH} that we may smooth $\max(H,R)$ over the strip $\{\frac 1N\leq R\leq N\}$ to obtain $\tilde H$ (also uniformly bounded in $C^\infty$) such that $X_{\tilde H}I\geq\varepsilon>0$.
\end{proof}

\begin{proposition}\label{hjcontractible}
For $n\geq 2$, every map $\partial\Delta^n\to\HJ_\bullet(X_0,\ldots,X_r)$ extends to $\Delta^n$.
For $n=1$, a sufficient (and obviously necessary) condition for an extension to exist is that $H_0\leq H_1+C$ for some $C<\infty$ and that the chain \eqref{Ychain} at $0$ be a superset of that at $1$.
\end{proposition}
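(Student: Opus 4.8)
The plan is to construct the extension in two stages: first the Hamiltonian family $H:\Cbar^{SC}_n\to\H(X_r)$, then the almost complex structure family $J:\Cbar^{SC}_n\to\J(X_r)$. Observe that for $n\geq 2$ the boundary $\partial\Delta^n$ already contains every vertex and every edge of $\Delta^n$, so all the vertex Hamiltonians $H_v$, their associated chains \eqref{Ychain}, and the constants witnessing positivity of wrapping along each edge are fixed by the given data; what remains is to interpolate over the interior of $\Cbar^{SC}_n$ while preserving adaptedness, admissibility, positivity of wrapping, and dissipativity. For $n=1$ no interpolation is prescribed and one instead builds a family from $H_0$ to $H_1$ outright; this requires $-\partial_sH$ to stay bounded below (equivalently $H_0\leq H_1+C$, since otherwise $H_0-H_1$ is unbounded above and forces $\partial_sH$ to be unbounded above along the cylinder), together with the chain condition so that a single admissible profile can specialize to both vertices.

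For the Hamiltonian, outside the strips $\pi_i^{-1}(\CC_{\left|\Re\right|\leq\varepsilon})$ we work with linear-at-infinity (pre-admissible) data, where the space of families with prescribed boundary values is convex once the chains \eqref{Ychain} along the vertices are fixed; positivity of wrapping is then arranged by choosing the interpolation so that $-\partial_sH$ is bounded below, using the edge constants together with compactness of $\Cbar^{SC}_n$ for $n\geq 2$, or the hypothesis for $n=1$. Near each $\partial X_i$ we then replace the family by a smoothing of $\max(H,\Re\pi_i)$ exactly as in Lemma \ref{constructH}, carried out parametrically over $\Cbar^{SC}_n$; since the family is uniformly cylindrical, the estimates \eqref{XRIlbforH}--\eqref{XHIlbforH} hold uniformly in the $\Cbar^{SC}_n$-parameter, so the result is adapted to every $\partial X_i$, admissible at every vertex, and restricts correctly to $\partial\Delta^n$.

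The substantive point, and the step I expect to be the main obstacle, is producing compatible dissipation data for the extended family, which is where Groman's \cite{groman} construction is invoked. Since the vertex Hamiltonians are unchanged, the exhausting shells $\{K_{ij}\subseteq U_{ij}\}$ attached to the boundary faces can be reused; the work is to (i) extend each open set $A_v$ or $B_i$ occurring on a face of $\Cbar^{SC}_n$ to an open set of $\Cbar^{SC}_n$, using the collar structure \eqref{sccollarI} so that the compatibility conditions in the last bullet of Definition \ref{dissipative} hold verbatim; (ii) adjoin finitely many further quadruples $(v_i,B_i,\{K_{ij}\},\{U_{ij}\})$, with freshly built shells, to cover the part of $\Cbar^{SC}_n$ not reached by the extensions, where Groman's device of taking $U_{ij}\setminus K_{ij}$ to be sufficiently thick cylindrical shells exhausting $X_r$ makes $\sum_jd(X_r\setminus U_{ij}^-,K_{ij}^+)^2=\infty$ automatic; and (iii) arrange the interpolating family so that $H=H_{v_i}$ over $(\Op\overline{B_i})\times\bigcup_j\Op(\overline{U_{ij}\setminus K_{ij}})$, which one achieves by routing the interpolation ``through'' $H_{v_i}$ over those shells — legitimate because over a shell near infinity all the relevant Hamiltonians are linear and differ by a bounded amount. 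The covering condition $\Cbar^{SC}_n=\bigcup_vA_v\cup\bigcup_iB_i$, positivity of wrapping, and nondegeneracy of fixed points at the vertices are unaffected by these choices, and the remaining conditions of Definition \ref{dissipative} for faces and products follow from the gluing compatibilities already built into the boundary data.

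Finally, for the almost complex structure: given $J$ on all faces, an extension $J:\Cbar^{SC}_n\to\J(X_r)$ compatible with gluing and with each $\pi_i$ holomorphic over $\pi_i^{-1}(\CC_{\left|\Re\right|\leq\varepsilon})$ exists by the standard obstruction argument, since the space of $\omega$-compatible cylindrical almost complex structures making the finitely many maps $\pi_i$ holomorphic near $\partial X_i$ is non-empty and contractible, and the constraints along the collars \eqref{sccollarI} are already satisfied by the face data. This step is routine in comparison with the Hamiltonian construction, and completes the extension.
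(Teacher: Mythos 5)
Your overall strategy (extend $J$ by contractibility of $\J(X)$, reduce the substance to extending $H$ with dissipativity, and for $n=1$ use $H_0\leq H_1+C$ to keep $-\partial_sH$ bounded below) matches the paper's, and you correctly identify the construction of compatible dissipation data as the crux. But that is exactly where your argument has a genuine gap. The face-compatibility clause in Definition \ref{dissipative} forces each open set $A_v'$ or $B_i'$ coming from a facet to sit inside a corresponding set of the top-dimensional family with \emph{the same} shells $K_{ij},U_{ij}$ -- you may enlarge the domain sets but you may not modify or discard their shells. Meanwhile, to cover the interior of $\Cbar^{SC}_n$ you must adjoin new sets $B_{i'}$ with fresh exhausting shells. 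Since the sets in the cover necessarily overlap in the domain, over an overlap $\Op\overline{B_i}\cap\Op\overline{B_{i'}}$ the condition $H=H_{v_i}$ over the shells of $B_i$ and $H=H_{v_{i'}}$ over the shells of $B_{i'}$ is contradictory wherever the two shell systems intersect in $X$ and $H_{v_i}\neq H_{v_{i'}}$ there. Your step (iii) -- ``routing the interpolation through $H_{v_i}$ over those shells, legitimate because the Hamiltonians are linear and differ by a bounded amount'' -- does not resolve this: the requirement is exact equality with two different Hamiltonians on a common region, and boundedness of their difference is irrelevant. Nor does choosing the new shells ``sufficiently thick'' help, since the old shell systems already exhaust $X$ and will meet any fresh exhaustion.

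The paper's proof supplies precisely the missing mechanism. One first \emph{duplicates} each boundary-extended $B_i$: the first copy hugs the boundary of the domain (so it keeps its mandated shells but is disjoint, in the domain, from the newly added interior sets), while the second copy and the two new interior sets are then made mutually consistent by \emph{thinning} their shell systems, following Groman \cite{groman} (after Cieliebak--Eliashberg \cite{cieliebakeliashberg}): one iterates over these sets infinitely often, at each stage keeping only finitely many shells disjoint from all previously selected ones and contributing at least $1$ to the distance sum, so that \eqref{dissipationshellsdistancesum} survives the selection. Only after this disjointness has been arranged can one extend $H$ consistently with all the constraints while preserving positivity of wrapping. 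Without the duplication-and-thinning step (or some equivalent device producing pairwise disjoint shells for overlapping domain sets), your construction in steps (ii)--(iii) cannot be carried out, so the proof as written does not go through. (Your secondary device of re-running the $\max(H,\Re\pi_i)$ smoothing of Lemma \ref{constructH} parametrically also needs care, since the extension is rel boundary and the given facet data need not arise from that construction; but this is minor compared with the dissipation-data issue.)
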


\begin{proof}
The input data of a map $\partial\Delta^n\to\HJ_\bullet(X_0,\ldots,X_r)$ amounts to all of the data of an $n$-simplex of Floer data $(H,J)$ except for the ``top-dimensional'' maps $H_{0\cdots n}$ and $J_{0\cdots n}$.
Note that $H_{0\cdots n}$ and $J_{0\cdots n}$ are determined uniquely over (the inverse image of) $\partial\Mbar^{SC}_n$, the images of all collars \eqref{sccollarI}, and the ends $s\gg 0$ and $s\ll 0$.
Extension of $J_{0\cdots n}$ to all of $\Cbar^{SC}_n$ is trivial by contractibility of $\J(X)$.
Our main task is to show that $H_{0\cdots n}$ extends so that dissipativity is satisfied.

For this purpose of extending $H_{0\cdots n}$, we may as well forget about the meaning of $\Cbar^{SC}_n$ and remember only its topology.
Namely, we replace the original extension problem on $\Cbar^{SC}_n$ (rel boundary) with an extension problem on $D^{n-1}\times[0,1]\times S^1$ (rel boundary), where, for the purpose of making sense of boundedness below of wrapping, the role of the vector field $-\partial_s$ is played by differentiation in the $[0,1]$-coordinate direction.

By assumption, the given $H$ (defined near the boundary) satisfies boundedness below of wrapping.
Dissipation data can be defined covering $\Nbd\partial(D^{n-1}\times[0,1]\times S^1)$ by extending from the boundary (note that this uses crucially the compatibility properties of the dissipation data chosen for each facet of $\Delta^n$).
To complete this to a cover of $D^{n-1}\times[0,1]\times S^1$, we add two more open sets $B_i$ given by $(D^{n-1}\setminus\Nbd\partial D^{n-1})\times[\delta,\frac 23]\times S^1$ (for vertex $0$) and $(D^{n-1}\setminus\Nbd\partial D^{n-1})\times[\frac 13,1-\delta]\times S^1$ (for vertex $n$), with any exhaustion satisfying \eqref{dissipationshellsdistancesum} (which exists since the flow of $H_v$ is complete).
For reasons which will become apparent below, we actually duplicate the open sets $B_i$ which arose from extending from the boundary, where the first copy covers the boundary (but is disjoint from the two new added $B_i$'s) and the second copy is disjoint from the boundary.
We illustrate the resulting cover of $D^{n-1}\times[0,1]\times S^1$ in Figure \ref{dataextensionfig} (the regions $D^{n-1}\times[0,\delta)\times S^1$ and $D^{n-1}\times(1-\delta,1]\times S^1$ are $A_0$ and $A_n$, respectively).

\begin{figure}[hbt]
\centering
\includegraphics{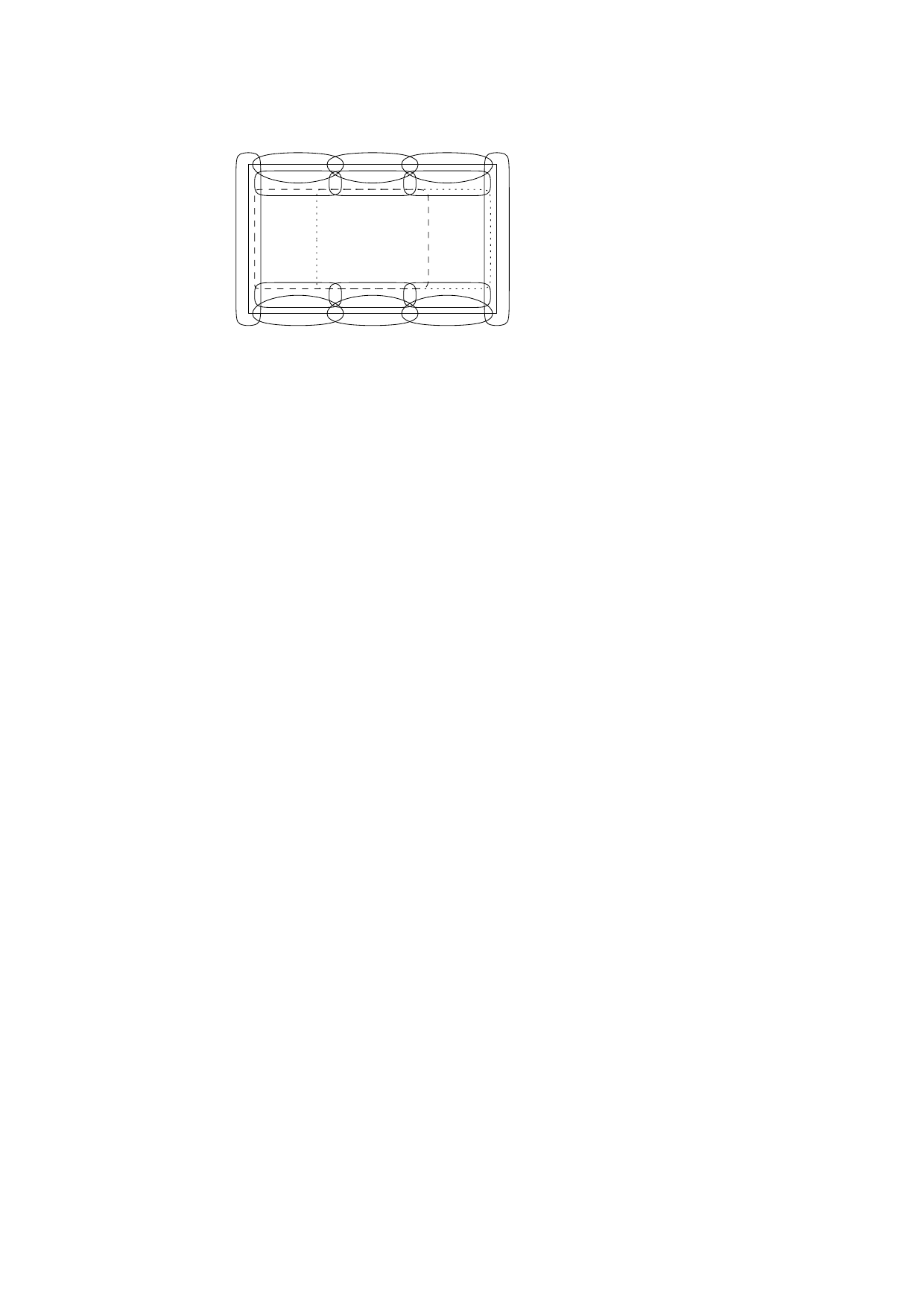}
\caption{Open cover of $S^1\times D^{n-1}\times[0,1]$ consisting of a small extension of the dissipation data given on the boundary (solid) and two more open sets $B_i$ (dashed/dotted).}\label{dataextensionfig}
\end{figure}

Unfortunately, this dissipation data may be self-contradictory: the two new open sets $B_i$ may have associated shells which intersect those from the dissipation data extended from the boundary, which is problematic in view of \eqref{dissipationshellcondition}.
The key observation (due to Groman \cite{groman}, following Cieliebak--Eliashberg \cite{cieliebakeliashberg}) is that we can remedy this defect by forgetting some of the shells associated to the two new sets $B_i$ and the second copies of the dissipation data lifted from the boundary.
To achieve this, we simply iterate over these $B_i$'s infinitely many times, where at each iteration we choose a finite number of shells $U_{ij}\setminus K_{ij}$ to ``keep'' which are disjoint from already chosen shells and for which the corresponding sum of $d(X\setminus U_{ij}^-,K_{ij}^+)^2$ is at least $1$.

Now that the dissipation data has been ``thinned out'' so as to be consistent, it is straightforward to extend $H$, consistently with this dissipation data, maintaining boundedness below of wrapping.
\end{proof}

Together, Lemma \ref{constructH} and Proposition \ref{hjcontractible} imply the following result, which formalizes the vague statements that the space of Floer data is contractible and that wrapping is filtered.
The notion of a filtered $\infty$-category is reviewed in \S\ref{inftycatssec} below.

\begin{corollary}\label{hjfiltered}
The simplicial set $\HJ_\bullet(X_0,\ldots,X_r)$ is a filtered $\infty$-category, and the forgetful map $\HJ_\bullet(X_0,\ldots,X_r)\to\HJ_\bullet(X_0,\ldots,X_{r-1})$ is cofinal.
\end{corollary}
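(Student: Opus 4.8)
The plan is to deduce this directly from Lemma \ref{constructH} and Proposition \ref{hjcontractible} by unwinding the definition of ``filtered $\infty$-category'' (recalled in \S\ref{inftycatssec}). Recall that a simplicial set $\mathcal K$ is a filtered $\infty$-category if it is an $\infty$-category (inner Kan) and every map $K \to \mathcal K$ from a finite simplicial set $K$ extends along $K \hookrightarrow K^\triangleright$ (equivalently, $\mathcal K$ is nonempty and every finite diagram admits a cocone); a map $f: \mathcal K' \to \mathcal K$ is cofinal if for every $k \in \mathcal K$ the comma $\infty$-category $\mathcal K' \times_{\mathcal K} \mathcal K_{k/}$ is weakly contractible, for which Joyal's criterion (every right fibration over a filtered $\infty$-category with weakly contractible total space, etc.) applies; but since everything in sight is filtered, cofinality will reduce to the statement that $\HJ_\bullet(X_0,\ldots,X_r) \times_{\HJ_\bullet(X_0,\ldots,X_{r-1})} \HJ_\bullet(X_0,\ldots,X_{r-1})_{c/}$ is nonempty and filtered for each vertex $c$.

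First I would check that $\HJ_\bullet(X_0,\ldots,X_r)$ is an $\infty$-category, i.e.\ every inner horn $\Lambda^n_i \to \HJ_\bullet$ ($0 < i < n$) extends to $\Delta^n$. For $n \geq 2$ this is immediate from Proposition \ref{hjcontractible}, which gives extension along the strictly larger inclusion $\partial\Delta^n \hookrightarrow \Delta^n$: given an inner horn, first fill the missing face $d_i$ using Proposition \ref{hjcontractible} applied to its boundary (the relevant $1$-simplex monotonicity/chain hypotheses hold because they are inherited from the $(n-1)$-faces already present in the horn, and for $n\geq 3$ there is no condition at all), obtaining a map $\partial\Delta^n \to \HJ_\bullet$, and then apply Proposition \ref{hjcontractible} once more. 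For $n=2$ one must verify the hypothesis ``$H_0 \leq H_1 + C$'' for the edge being filled: in an inner horn $\Lambda^2_1$ the two given edges are $0\to 1$ and $1\to 2$, so one needs $H_0 \le H_2 + C$, which follows by transitivity of $\le$ (up to constants) from the two hypotheses on the given edges, and similarly the chain \eqref{Ychain} at vertex $0$ contains that at $1$ contains that at $2$. Next, filteredness: nonemptiness follows from Lemma \ref{constructH} (take any pre-admissible $H$, e.g.\ built from a small multiple of $\Re\pi_i$-type functions spliced with a linear Hamiltonian with nondegenerate orbits, which always exists), and the cocone-extension property for a finite diagram $K \to \HJ_\bullet$ is exactly what Proposition \ref{hjcontractible} provides, once one observes that any finite subcomplex maps into some $\Delta^N$ and the required upper bound ``$H_v \le H_{\text{new vertex}} + C$'' can be met by choosing the new Hamiltonian sufficiently large (linearly at infinity, with steeper slope than all the finitely many $H_v$), which is possible by Lemma \ref{constructH} since there is no upper constraint on admissible Hamiltonians, and the chain condition is met by taking the new chain \eqref{Ychain} to be a common subset (e.g.\ just $X_0 \subseteq \cdots \subseteq X_r$ with no extra $Y$'s).

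For cofinality of $\HJ_\bullet(X_0,\ldots,X_r) \to \HJ_\bullet(X_0,\ldots,X_{r-1})$, the clean route is: both source and target are filtered $\infty$-categories (target by the special case $r-1$ of what we just proved), the forgetful functor \eqref{hjforget} is, upon inspection, an isofibration, and a functor between filtered $\infty$-categories is cofinal as soon as it satisfies the relevant lifting/surjectivity condition — concretely, it suffices to show that given a vertex $c \in \HJ_\bullet(X_0,\ldots,X_{r-1})$ one can extend it to a vertex of $\HJ_\bullet(X_0,\ldots,X_r)$ together with, for any finite diagram in the comma category, a cocone. The extension is Lemma \ref{constructH} again: given admissible adapted dissipative Floer data on $X_{r-1}$, one extends the Hamiltonian over the collar $\Op^Z\partial X_r$ by the pre-admissible prescription ($0$ and vanishing first derivative on $\partial X_r$, negative second derivative on the outside $X_r \setminus X_{r-1}$), splices with $\Re\pi_r$ over $\pi_r^{-1}(\CC_{|\Re|\le\varepsilon})$, extends $J$ to $X_r$ making $\pi_r$ holomorphic there (contractibility of $\J$), and one must check the new data is dissipative on $X_r$ — but the flow is unchanged away from the new collar, and on the collar the dynamics of $\Re\pi_r$ forbid fixed points and give the required displacement, exactly as in the proof of Lemma \ref{constructH}. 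The cocone condition in the comma category then follows by the same Proposition \ref{hjcontractible} mechanism, extending compatible families of Hamiltonians/dissipation data with fixed restriction to $X_{r-1}$.

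The main obstacle I anticipate is bookkeeping rather than substance: verifying that the various compatibility conditions in Definition \ref{dissipative} (especially the nested-shells compatibility of dissipation data across faces, and the chain \eqref{Ychain} retaining $X_i$ for $i < r$ but being restricted when $i = r$) are all preserved under the horn-filling and cocone-extension operations, and carefully matching the hypotheses of Proposition \ref{hjcontractible} for $n=1$ (the $H_0 \le H_1 + C$ bound and the chain-superset condition) at each step. None of these should present a genuine difficulty — Proposition \ref{hjcontractible} is designed precisely so that these extensions go through — but the translation from ``every $\partial\Delta^n$ fills'' plus ``$n=1$ edges fill under an explicit condition'' into the precise combinatorial statements ``inner horns fill'' and ``finite diagrams have cocones'' requires one to be attentive to which vertices and which Hamiltonians play which role.
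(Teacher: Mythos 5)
Your proposal is correct in substance and rests on exactly the same inputs as the paper --- Proposition \ref{hjcontractible} for the boundary fillings and Lemma \ref{constructH} for a supply of vertices, plus transitivity of the $1$-simplex condition --- but it packages the combinatorics differently. The paper feeds these inputs into Lemma \ref{filteredlifting}: boundary filling for $n\geq 2$ plus transitivity make $\HJ_\bullet\to h\HJ_\bullet$ a trivial Kan fibration onto a poset, so filteredness (including the cocone extension over an arbitrary finite $K\hookrightarrow K^\vartriangleright$) reduces to filteredness of the vertex poset, which is supplied by Lemma \ref{constructH} \emph{together with Lemma \ref{compactcutoff}} --- your phrase ``there is no upper constraint on admissible Hamiltonians'' is hiding precisely the hypothesis of Lemma \ref{constructH} that the time-one flow have no fixed points at infinity, which is where the cutoff-Reeb analysis is used. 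You instead redo the combinatorics by hand: the inner-horn and $\Lambda^2_1$ discussion is fine, but your justification of the cocone step via ``any finite subcomplex maps into some $\Delta^N$'' is false as stated (a finite simplicial set need not embed in a simplex); what works, and what Lemma \ref{filteredlifting} encapsulates, is the skeletal induction --- choose a dominating vertex with the minimal chain, attach the edges using the $n=1$ criterion, then fill successive $\partial\Delta^n$'s. For cofinality you invoke Joyal's comma-category criterion and propose to extend the given Floer data from $X_{r-1}$ to $X_r$ on the nose; note that the paper's definition of cofinal for functors of filtered $\infty$-categories (\S\ref{inftycatssec}), which is all that Lemma \ref{cofinalityI} uses, asks only that every vertex of $\HJ_\bullet(X_0,\ldots,X_{r-1})$ admit a $1$-simplex to the image of some vertex of $\HJ_\bullet(X_0,\ldots,X_r)$, so it suffices to produce via Lemma \ref{constructH} (applied to the full chain, with large slope) a vertex on $X_r$ whose restriction dominates the given Hamiltonian up to a constant, rather than literally extending it; your extension route can be made to work, but it still needs the Lemma \ref{compactcutoff}-type input on the region between $\partial X_{r-1}$ and $\partial X_r$ away from the strips, and proving that the comma categories are genuinely filtered is more than the statement, as used in this paper, requires.
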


\begin{proof}
Note that Proposition \ref{hjcontractible} implies that $\HJ_\bullet$ satisfies the hypotheses of Lemma \ref{filteredlifting}; in particular, the necessary and sufficient condition for a pair of vertices of Floer data $H_0$ and $H_1$ to extend to a $1$-simplex from $H_0$ to $H_1$ is obviously transitive; denote this condition, which gives a partial ordering on vertices, as $H_0 \unlhd H_1$.
Lemma \ref{filteredlifting} therefore implies that, to show $\HJ_\bullet$ is
filtered, it is sufficient to show that the poset induced by the relation $\unlhd$ on the vertices
of $\HJ_\bullet$ is filtered (directed).
Directedness of $\unlhd$ follows from Lemma \ref{constructH} since pre-admissible Hamiltonians $H$ are trivial to construct (note that the hypothesis of $\Phi_H$ having no fixed points other than $\bigcup_i\partial X_i$ holds generically in view of Lemma \ref{compactcutoff}).

Cofinality of the forgetful map also follows immediately from Lemmas \ref{filteredlifting}, \ref{constructH}, and \ref{compactcutoff}.
\end{proof}

\subsection{Filtered \texorpdfstring{$\infty$-categories}{infinity-categories}}\label{inftycatssec}

\begin{definition}[{Lurie \cite[Definition 1.1.2.4]{luriehtt}}]
A simplicial set $\C$ is called an $\infty$-category iff it satisfies the extension property for all inclusions $\Lambda^n_i\hookrightarrow\Delta^n$ for $0<i<n$ (recall that $\Lambda^n_i\subseteq\Delta^n$ denotes the union of all faces containing the vertex $i$).
A functor between $\infty$-categories is simply a map of simplicial sets.
An $\infty$-category is called an $\infty$-groupoid iff it satisfies the extension property for all inclusions $\Lambda^n_i\hookrightarrow\Delta^n$ for $0\leq i\leq n$.
\end{definition}

\begin{example}
Every category $\C$ gives rise to a simplicial set $N\C$, called its \emph{nerve}, which is an $\infty$-category.
A functor $\C\to\D$ is the same thing as a map of simplicial sets $N\C\to N\D$.
We will hence make no distinction between a category and its nerve.
\end{example}

\begin{definition}
For every $\infty$-category $X_\bullet$, there is an associated category $hX_\bullet$ called its \emph{homotopy category}.
The objects of $hX_\bullet$ are the $0$-simplices of $X_\bullet$, and the morphisms of $hX_\bullet$ are equivalence classes of $1$-simplices in $X_\bullet$, where an equivalence between $1$-simplices $x\to y$ is a map $\Delta^1\times\Delta^1\to X_\bullet$ for which $\Delta^1\times\{0\}$ and $\Delta^1\times\{1\}$ are the degenerate $1$-simplices over $x$ and $y$, respectively.
There is a canonical map $X_\bullet\to hX_\bullet$, which is initial in the category of all maps from $X_\bullet$ to (the nerve of) a category.
\end{definition}

\begin{definition}\label{def:diagram}
A \emph{diagram} in an $\infty$-category $\C$ is a map of simplicial sets $p:K\to\C$; the simplicial set $K$ is called the indexing simplicial set (or indexing ($\infty$-)category as the case may be).
\end{definition}

\begin{definition}[{Lurie \cite[Notation 1.2.8.4]{luriehtt}}]\label{trirightdef}
Given any simplicial set $K$, denote by $K^\vartriangleleft$ the simplicial set obtained by adding an initial vertex $\ast$ to $K$.
More formally,
\begin{equation}
\Hom(\Delta^p,K^\vartriangleleft):=\Hom(\Delta^p,K)\cup\Hom(\Delta^{p-1},K)\cup\cdots\cup\Hom(\Delta^0,K) \cup \{\ast\}
\end{equation}
where a simplex $\Delta^p\to K^\vartriangleleft$ consists of the map sending the initial $\Delta^{[0\ldots k]}\subseteq\Delta^p$ to the initial vertex $\ast$ and some map from the final $\Delta^{[k+1\ldots p]}$ to $K$, for $-1\leq k\leq p$.
Similarly, define $K^\vartriangleright$ by adding a terminal vertex to $K$.
\end{definition}

\begin{definition}[{Lurie \cite[\S 1.2.9]{luriehtt}}]
For an $\infty$-category $\C$ and an object $c\in\C$ (i.e.\ a vertex $c\in\C_0$ of the simplicial set $\C_\bullet$), the under-category $\C_{c/}$ is the $\infty$-category defined by the universal property that $\Hom(K,\C_{c/})\subseteq\Hom(K^\vartriangleleft,\C)$ is the subset sending the initial vertex $\ast\in K^\vartriangleleft$ to $c$.
The over-category $\C_{/c}$ is defined similarly via $\Hom(K,\C_{/c})\subseteq\Hom(K^\vartriangleright,\C)$.
\end{definition}

There is a canonical map $\C_{c/} \to \C$ (respectively $\C_{/c} \to \C$). 

\begin{definition}[{Lurie \cite[Definition 5.3.1.7]{luriehtt}}]
An $\infty$-category $\C$ is called \emph{filtered} iff it satisfies the extension property for all inclusions $K\hookrightarrow K^\vartriangleright$ for finite simplicial sets $K$.
\end{definition}

An ordinary category is filtered (see \S\ref{wrappedhfsection}) iff it (or rather its nerve) is filtered as an $\infty$-category.
The homotopy category of a filtered $\infty$-category is filtered.
An $\infty$-groupoid is filtered iff it is contractible.
(There is a notion of when an arbitrary simplicial set is filtered \cite[Remark 5.3.1.11]{luriehtt}, but it is not the naive generalization of the above.)

\begin{definition}
A functor $F:\C\to\D$ between filtered $\infty$-categories is said to be \emph{cofinal} iff for every vertex $d\in\D$, the fiber product $\C\times_\D\D_{d/}$ is a filtered $\infty$-category.
\end{definition}

A functor between filtered categories is cofinal (see \S\ref{wrappedhfsection}) iff it is cofinal as a functor between filtered $\infty$-categories.
A cofinal functor between filtered $\infty$-categories induces a cofinal map between their homotopy categories.
(Recalling that any filtered $\infty$-category is weakly contractible \cite[Lemma 5.3.1.18]{luriehtt}, the definition of cofinal functor given above appears stronger than the definition given in \cite[Theorem 4.1.3.1]{luriehtt} for functors between arbitrary $\infty$-categories; it is in fact equivalent, though we will not need to appeal to this fact.
There is a notion of when a map of arbitrary simplicial sets is cofinal due to Joyal \cite[Definition 4.1.1.1]{luriehtt}.)

We leave the following result as an exercise.

\begin{lemma}\label{filteredlifting}
Let $X_\bullet$ be a simplicial set, and suppose that:
\begin{itemize}
\item$X_\bullet$ satisfies the extension property for all inclusions $\partial\Delta^n\hookrightarrow\Delta^n$ for $n\geq 2$.
\item The relation $\leq$ on $X_0$ defined by $p\leq q$ iff there is a $1$-simplex from $p$ to $q$ is transitive.
\end{itemize}
Then, $X_\bullet$ is an $\infty$-category, its homotopy category $hX_\bullet$ is a poset (i.e.\ for all $x,y\in hX_\bullet$ there is at most one morphism $x\to y$), and $X_\bullet\to hX_\bullet$ is a trivial Kan fibration.\qed
\end{lemma}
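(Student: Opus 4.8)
The plan is to verify the three assertions in turn, all of which follow from standard simplicial homotopy theory once one unwinds the definitions; the role of the two hypotheses is precisely to make these standard arguments go through. First I would observe that $X_\bullet$ is an $\infty$-category: an inner horn $\Lambda^n_i \hookrightarrow \Delta^n$ with $0<i<n$ admits an extension because for $n=2$ the transitivity hypothesis on $\leq$ supplies the required $1$-simplex $x\to z$ (given $x\to y\to z$), and a choice of such a filler together with the degenerate $2$-simplices gives the $2$-simplex; for $n\geq 3$ the inner horn $\Lambda^n_i$ contains $\partial\Delta^n$ minus one face, but since $X_\bullet$ has the extension property for all $\partial\Delta^n \hookrightarrow \Delta^n$ with $n\geq 2$, one first fills the missing face (using the $(n-1)\geq 2$ case) to get a map $\partial\Delta^n \to X_\bullet$, then fills $\Delta^n$. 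Actually the cleanest route: for $n\geq 3$, any inner horn extends because $\Lambda^n_i$ and $\partial\Delta^n$ differ by a single $(n-1)$-cell, and filling that cell requires solving $\partial\Delta^{n-1}\hookrightarrow\Delta^{n-1}$, which is granted; then $\partial\Delta^n\hookrightarrow\Delta^n$ finishes.

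Next I would show $hX_\bullet$ is a poset. A morphism in $hX_\bullet$ from $x$ to $y$ is an equivalence class of $1$-simplices; the hypothesis on $\leq$ plus the existence of fillers for $\partial\Delta^2$ shows that if $f,g: x\to y$ are two $1$-simplices, then the boundary of a $2$-simplex with edges $f$, $\mathrm{id}_y$, $g$ (suitably arranged) is a map $\partial\Delta^2 \to X_\bullet$, hence fills, exhibiting $f$ and $g$ as equal in $hX_\bullet$; one must also check that the relation $\leq$ thereby induced is antisymmetric on isomorphism classes, but in a poset we only claim at most one morphism between any two objects, so antisymmetry of the underlying relation is not even required — only that $\mathrm{Hom}_{hX_\bullet}(x,y)$ has at most one element, which is exactly the filler argument just given. (If one wants $hX_\bullet$ to be a genuine poset rather than a preorder, this is a matter of convention; the statement as written only asserts the "at most one morphism" property.)

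Finally, for the trivial Kan fibration claim, I would show $X_\bullet \to hX_\bullet$ has the right lifting property against all boundary inclusions $\partial\Delta^n \hookrightarrow \Delta^n$, $n\geq 0$. For $n=0$ this is surjectivity on vertices (immediate, since objects of $hX_\bullet$ \emph{are} vertices of $X_\bullet$). For $n=1$ it says: given $x,y\in X_0$ and a morphism $x\to y$ in $hX_\bullet$, there is a $1$-simplex of $X_\bullet$ representing it — true by definition of $hX_\bullet$. For $n=2$: given a $2$-simplex of $hX_\bullet$ (automatically determined, since $hX_\bullet$ is a poset) and a compatible $\partial\Delta^2 \to X_\bullet$, one fills using the hypothesis. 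For $n\geq 3$: a $\partial\Delta^n \to X_\bullet$ lifting a given $n$-simplex of $hX_\bullet$ is filled directly by the $\partial\Delta^n\hookrightarrow\Delta^n$ extension hypothesis, and there is nothing to check on the $hX_\bullet$ side since $hX_\bullet$ is $2$-coskeletal (being the nerve of a poset). The main obstacle, such as it is, is purely bookkeeping: matching up the combinatorics of which faces of $\Delta^2$ carry $f$, $g$, and the degenerate edge when proving the "at most one morphism" claim, and being careful that the $n\geq 3$ inner-horn-filling argument does not circularly invoke the conclusion. None of this is deep; the lemma is genuinely an exercise, as the text says, and I would present it as a short sequence of horn/boundary-filling verifications rather than belaboring any single point.
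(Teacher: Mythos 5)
Your proof is correct, and since the paper states Lemma \ref{filteredlifting} without proof (``we leave the following result as an exercise''), there is no argument of the authors to compare against; your chain of boundary-filling verifications --- transitivity supplying the missing edge of $\Lambda^2_1$, the $\partial\Delta^{n-1}$- and $\partial\Delta^n$-fillings handling all horns with $n\geq 3$, the $2$-simplex with edges $f$, $g$ and a degenerate edge collapsing parallel morphisms, and the fact that simplices in the nerve of a category with at most one morphism between any two objects are determined by their boundaries, which upgrades the boundary fillings to lifts --- is exactly the intended one. The only step worth writing out in full is the passage from your $2$-simplex witness to the paper's definition of equivalence of $1$-simplices via $\Delta^1\times\Delta^1$: either paste your filled $2$-simplex with the degenerate $2$-simplex $s_0g$ to build the required square, or invoke the standard fact that the various homotopy relations on $1$-simplices coincide once $X_\bullet$ is known to be an $\infty$-category, which your first step establishes.
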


In particular, if $X_\bullet$ satisfies the hypotheses of Lemma \ref{filteredlifting}, then $X_\bullet$ is a filtered $\infty$-category if and only if $hX_\bullet$ is filtered (directed) as a poset.
Specializing even further, such $X_\bullet$ is a contractible $\infty$-groupoid if and only if $hX_\bullet=\ast$ (equivalently, for every $x,y\in hX_\bullet$ there exists a morphism $x\to y$).
It also follows that if $X_\bullet$ and $Y_\bullet$ satisfy the hypotheses of Lemma \ref{filteredlifting} and both $hX_\bullet$ and $hY_\bullet$ are filtered (so $X_\bullet$ and $Y_\bullet$ are filtered $\infty$-categories), then $X_\bullet\to Y_\bullet$ is cofinal iff $hX_\bullet\to hY_\bullet$ is cofinal.

\subsection{Holomorphic curves}

\begin{definition}
For any $(H,J)\in\HJ_n(X)$ and periodic orbits $\gamma^+:S^1\to X$ of $H_0$ and $\gamma^-$ of $H_n$, we define 
\begin{equation}
\M_n(H,J,\gamma^+,\gamma^-)
\end{equation}
to be the moduli space of sequences $a_1 \geq \cdots \geq a_n \in \RR$ and maps $u:\RR\times S^1\to X$
with asymptotics $u(+\infty,t)=\gamma^+(t)$, $u(-\infty,t)=\gamma^-(t)$, 
satisfying Floer's equation with respect to the given $(H,J)$
\begin{equation}\label{floereqnforsh}
(du-X_H\otimes dt)^{0,1}_J=0
\end{equation}
modulo simultaneous $\RR$-translation of $a_1 \geq \cdots \geq a_n$ and the domain of $u$ 
(recall that $(H,J)$ are maps $\Cbar^{SC}_n \to \H(X)$, which we pull back to $\RR\times S^1$ by identifying $(\RR\times S^1,a_1,\ldots,a_n)$ with a unique fiber of $\Cbar^{SC}_n$).
The associated ``compactified'' moduli space
\begin{equation}\label{mbarscmoduli}
\Mbar_n(H,J,\gamma^+,\gamma^-)
\end{equation}
includes all stable broken trajectories as well.
\end{definition}

For future reference, we record here the two notions of energy for trajectories in $\Mbar_n(H,J,\gamma^+,\gamma^-)$.
The topological energy is defined as
\begin{align}
E^\top(u)&:=\int_{\RR\times S^1}u^\ast\omega-dH\wedge dt-\partial_sH\,ds\wedge dt\\
&=\int_{\RR\times S^1}d(u^\ast\lambda-H\,dt)\\
&=a(\gamma^+)-a(\gamma^-)
\end{align}
where the action of $\gamma$ is $a(\gamma):=\int_{S^1}\gamma^\ast\lambda-H(\gamma)\,dt$.
The geometric energy is
\begin{equation}
E^\geo(u):=\int_{\RR\times S^1}u^\ast\omega-dH\wedge dt = \int_{\RR \times S^1}\frac 12 \| du - X_H \otimes dt\|^2,
\end{equation}
where the norm $\|\cdot\|$ comes from $\omega(\cdot, J\cdot)$ (the equality of
the two expressions above holds since $du-X_H\otimes dt$ is complex linear).
Note that the integrand of $E^\geo(u)$ is $\geq 0$, and if
$E^\geo(u)=0$ then $\partial_su\equiv 0$ (i.e.\ the solution $u$ is a ``trivial
cylinder'').
We have
\begin{equation}\label{poswrappinggood}
E^\geo(u)=E^\top(u)+\int_{\RR\times S^1}\partial_sH\,ds\wedge dt.
\end{equation}
Boundedness below of wrapping $\inf(-\partial_sH)>-\infty$ provides an upper bound on the last term (note that the projection to $\RR$ of the support of $\partial_sH$ has \emph{a priori} bounded length), and hence $E^\geo(u)$ is bounded above by $E^\top(u)$ plus a constant.
If in fact $-\partial_sH\geq 0$, then $E^\geo(u)\leq E^\top(u)$.

\begin{lemma}\label{scconfinecurves}
For Floer data $(H,J)\in\HJ_\bullet(X_0,\ldots,X_r)$, any trajectory in $\Mbar_n(H,J,\gamma^+,\gamma^-)$ with positive end $\gamma^+$ in $X_i$ must lie entirely inside $X_i$.
\end{lemma}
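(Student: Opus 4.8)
The plan is to argue that a Floer trajectory with positive end inside $X_i$ cannot cross $\partial X_i$ in the ``wrong'' direction, using the barrier $\pi_i^{-1}(\CC_{|\Re|\leq\varepsilon})$ together with the fact that $H=\Re\pi_i$ and $\pi_i$ is $J$-holomorphic near $\partial X_i$. First I would reduce to the case $n=1$ (and indeed, by the usual stable-breaking/gluing analysis, it suffices to treat a single unbroken Floer trajectory $u:\RR\times S^1\to X_r$ on one fixed fiber $(\RR\times S^1,a_1,\ldots,a_n)$; the parameters $a_j$ play no role in what follows). So let $u:\RR\times S^1\to X_r$ satisfy Floer's equation with $u(+\infty,\cdot)=\gamma^+$ a periodic orbit of $H_0$ lying in $X_i$; since $H_0$ is admissible and adapted to $\partial X_i$, and since over $\pi_i^{-1}(\CC_{|\Re|\leq\varepsilon})$ we have $H=\Re\pi_i$ whose Hamiltonian vector field is tangent to the level sets of $\Re\pi_i$ and has no periodic orbits there, $\gamma^+$ is in fact disjoint from a neighborhood of $\partial X_i$.

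The key step is a barrier argument for $\Re\pi_i\circ u$. Consider the composite $R_i:=\Re\pi_i$ defined on $\Op^Z\partial X_i$, extended (as in the proof of Lemma \ref{constructH}) so that $H$ agrees with $R_i$-type data near $\partial X_i$. On the locus where $u$ maps into $\pi_i^{-1}(\CC_{|\Re|<\varepsilon})$, the Floer equation $(du-X_H\otimes dt)^{0,1}_J=0$ together with $H=\Re\pi_i$ and $J$-holomorphicity of $\pi_i$ implies that $w:=\pi_i\circ u$ satisfies an \emph{inhomogeneous} $\bar\partial$-equation of the form $\partial_s w+i(\partial_t w-1)=0$, i.e.\ $w(s,t)-t$ (or rather $w$ shifted by the flow $t\mapsto$ translation in the real direction) is honestly $J_\CC$-holomorphic with values in the strip $\CC_{|\Re|<\varepsilon}$. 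Equivalently, as in the alternative proof of Lemma \ref{pibarrierlemma}, one integrates $\pi_i^\ast(\varphi(\Re z)\,\omega_\CC)$ for a cutoff $\varphi$ supported in $[-\varepsilon,\varepsilon]$ against $u$ and uses $H^2_{\dR}(\CC_{|\Re|\leq\varepsilon},\partial)=0$ to control the sign: this is precisely the monotone-in-one-direction behavior promised in \S\ref{bdryescape}. The upshot is that $\Re\pi_i\circ u$ can only \emph{decrease} as $s$ increases through the strip, so a trajectory whose positive asymptote lies in $X_i=\{\Re\pi_i\geq 0\}\cup(\text{interior})$, i.e.\ on the $X_i$ side of the barrier, can never reach the $X_r\setminus X_i$ side. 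More carefully: suppose for contradiction that $u$ enters $X_r\setminus\overline{X_i}$; then since $u(+\infty,\cdot)\subseteq X_i$ stays in $\{\Re\pi_i>\varepsilon\}$ and the trajectory is continuous, $u$ must have nonempty intersection with $\pi_i^{-1}(\CC_{|\Re|<\varepsilon})$; restricting to the connected component $V$ of $u^{-1}(\pi_i^{-1}(\CC_{|\Re|<\varepsilon}))$ adjacent to the $X_r\setminus X_i$ excursion, compactness of $V$ (it is closed, and the $C^0$-bounds from the dissipative Floer data of \S\ref{subsec:nsimplexfloerdata} prevent escape to infinity, so $V$ is compact and disjoint from any closed components on which $u$ is trivial) together with the barrier monotonicity yields that $\Re\pi_i\circ u$ restricted to $V$ attains its minimum only on $\partial V$, which combined with the open mapping/maximum principle for the (twisted) holomorphic map $\pi_i\circ u$ forces $\pi_i\circ u$ to be constant on $V$ — and then $V$ must be closed (hence a closed component of the domain, impossible here since $\RR\times S^1$ with the cylindrical ends has none) or else it abuts the asymptote, giving the desired contradiction.

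Finally I would assemble: since $u$ cannot cross the barrier of $\partial X_i$, and since $u(+\infty,\cdot)$ lies on the $X_i$ side, the entire trajectory $u$ lies in $X_i$; then reattach the broken-trajectory components by induction on the number of breakings (each piece has its own positive end lying in $X_i$ by the previous piece, so the same argument applies). The main obstacle, and the place requiring genuine care, is the compactness/finiteness input needed to run the maximum-principle barrier argument: a priori $u^{-1}(\pi_i^{-1}(\CC_{|\Re|<\varepsilon}))$ need not be compact (the domain $\RR\times S^1$ is noncompact), so one must invoke the dissipativity of the Floer data (Definition \ref{dissipative}, Proposition \ref{compactness}) to bound the image of $u$ away from infinity, and separately handle the neighborhoods of the punctures using that the asymptotic orbits avoid the barrier strip; once image-compactness is in hand, Lemma \ref{pibarrierlemma} (more precisely its Floer-trajectory generalization sketched in \S\ref{bdryescape}) applies essentially verbatim. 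I would state the needed generalization of Lemma \ref{pibarrierlemma} to Floer trajectories with $H|_{\pi^{-1}(\CC_{|\Re|\leq\varepsilon})}=\Re\pi$ as a preliminary lemma, prove it by the integration-of-$\pi^\ast(\varphi\,\omega_\CC)$ method which is robust to the Hamiltonian term, and then deduce Lemma \ref{scconfinecurves} as a formal consequence.
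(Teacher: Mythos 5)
You have correctly located the intended mechanism---integrating $\pi_i^\ast(\varphi(\Re z)\,\omega_\CC)$ over the Floer solution, as advertised at the end of \S\ref{bdryescape}---and your closing plan (prove a Floer-trajectory generalization of Lemma \ref{pibarrierlemma} by that integration method) is indeed what the paper does. But the argument you actually write out does not carry this out, and its substitutes have genuine gaps. The claimed monotonicity ``$\Re\pi_i\circ u$ can only decrease as $s$ increases through the strip'' is false as a pointwise statement: on $u^{-1}(\pi_i^{-1}(\CC_{\left|\Re\right|<\varepsilon}))$ the gauge-transformed projection is holomorphic (note the projected Hamiltonian vector field of $\Re\pi_i$ is $\partial_y$, i.e.\ the constant $i$, not $1$), so $\Re\pi_i\circ u$ is merely harmonic there, and harmonic functions---like the open mapping/maximum principle you invoke---obey both a maximum and a minimum principle and are therefore blind to the direction of crossing; they cannot distinguish the forbidden crossing (outside $X_i$ at $s\ll 0$, inside at $s\gg 0$) from the allowed one. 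The directionality is a global statement: in the paper it is extracted by identifying the pointwise-nonnegative quantity $E^{\varphi(x)\omega_\CC}(u)=\int(du-X_H\otimes dt)^\ast\pi_i^\ast(\varphi(x)\omega_\CC)$ with the flux pairing $\langle[\beta],(\pi_i\circ u)_\ast([\RR]\times\{0\})\rangle$, where $\beta=d\kappa(X_H,\cdot)$ is closed by Cartan's formula and represents a nonzero class in $H^1(\CC_{\left|\Re\right|\leq\varepsilon},\partial)$; the sign of this pairing rules out $\gamma^-$ lying outside $X_i$, and the equality case $E=0$ then forces disjointness from the barrier. That computation is the actual content of the lemma and is absent from your proposal, while your surrogate for it is incorrect. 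Moreover, your component $V$ of $u^{-1}(\pi_i^{-1}(\CC_{\left|\Re\right|<\varepsilon}))$ is open, not closed, so the compactness claim and the ensuing ``maximum principle forces constancy, hence contradiction'' chain do not stand as written.

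Second, the compactness input you single out as the main obstacle is both unnecessary and logically backwards. The flux argument needs no $C^0$ bound on $u$: the integrand is pointwise nonnegative and controlled by the geometric energy density, the geometric energy is finite by positivity of wrapping (see \eqref{poswrappinggood}), and the only asymptotic input is that $\gamma^\pm$ avoid the strip, which holds because the flow of $\Re\pi_i$ moves $\Im\pi_i$ at unit speed and so admits no periodic orbit meeting $\pi_i^{-1}(\CC_{\left|\Re\right|<\varepsilon})$. Citing Proposition \ref{compactness} (or its dissipativity machinery) here is circular as stated: the paper's proof of Proposition \ref{compactness} begins by invoking Lemma \ref{scconfinecurves} to keep trajectories away from $\partial X$, so at minimum you would have to argue that the away-from-infinity half of that proof is independent of confinement---something your write-up does not do, and which the paper's argument is designed to avoid needing.
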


\begin{proof}
Fix a bump function $\varphi:\RR\to\RR_{\geq 0}$ supported in $[-\frac\varepsilon 2,\frac\varepsilon 2]$ and define the $\varphi(x)\omega_\CC$-energy
\begin{equation}
E^{\varphi(x)\omega_\CC}(u):=\int_{\RR\times S^1}(du-X_H\otimes dt)^\ast\pi_i^\ast(\varphi(x)\omega_\CC).
\end{equation}
Since $\pi_i$ is holomorphic over $\pi_i^{-1}(\CC_{\left|\Re\right|\leq\varepsilon})$, this energy is $\geq 0$, and equality implies that the trajectory is disjoint from the support of $\varphi\circ\pi_i$ (which in particular contains $\partial X_i$).

Now the $2$-form $\varphi(x)\omega_\CC$ on $\CC_{\left|\Re\right|\leq\varepsilon}$ is exact relative to a neighborhood of the boundary, namely $\varphi(x)\omega_\CC=d\kappa$ for some $1$-form $\kappa$ on $\CC$ supported inside $\CC_{\left|\Re\right|\leq\varepsilon}$.
Furthermore, the flow of $X_{\Re}$ on $\CC$ preserves $d\kappa=\varphi(x)\omega_\CC$, so in view of Cartan's formula, the $1$-form $\beta:=d\kappa(X_H,\cdot)$ is closed; the class $[\beta]\in H^1(\CC_{\left|\Re\right|\leq\varepsilon},\partial)$ is called the \emph{flux}.
We may thus express the $\varphi(x)\omega_\CC$-energy as a ``flux pairing'', namely
\begin{equation}
E^{\varphi(x)\omega_\CC}(u)=\int_{\RR\times S^1}d\kappa\Bigl(X_H,\frac{\partial u}{\partial s}\Bigr)\,ds\,dt=\langle[\beta],(\pi\circ u)_\ast([\RR]\times\{0\})\rangle
\end{equation}
where the latter is the pairing between $H^1(\CC_{\left|\Re\right|\leq\varepsilon},\partial)$ and $H_1(\CC_{\left|\Re\right|\leq\varepsilon},\partial)$.
Keeping careful track of signs, we conclude that if $\gamma^-$ lies outside $X_i$, then $E^{\varphi(x)\omega_\CC}(u)<0$, a contradiction.
Thus $\gamma^-$ lies inside $X_i$, implying $E^{\varphi(x)\omega_\CC}(u)=0$, which means the trajectory must be disjoint from $\partial X_i$ and hence entirely inside $X_i$.
\end{proof}

\subsection{Compactness}

The proof of compactness we give here is similar to Proposition \ref{wcompactness}, but now crucially incorporates arguments of Groman \cite{groman}.
We need the following result about geometric boundedness (so that we can make use of the monotonicity inequalities recalled in \S\ref{inftyescape}).

\begin{lemma}\label{bddgeoforsh}
Let $H:S^1\to\H(X_r)$ be admissible with respect to $X_0\subseteq\cdots\subseteq X_r$ in the sense of Definition \ref{admissible}.
Let $D^2\subseteq\RR\times S^1$ be a disk centered at $p\in D^2$, and let $J:D^2\to\J(X)$ be cylindrical.
Then $(D^2\times X,\omega_{D^2}+\omega_X,j_{D^2}\oplus\bar J)$ has bounded geometry, where $\bar J:=(\Phi_H^{t(p)\to t})^\ast J$.
\end{lemma}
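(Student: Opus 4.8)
The plan is to follow the proof of Lemma~\ref{domainacsbddgeo} essentially verbatim, with the caveat that the pullback almost complex structure $\bar J$ is no longer literally cylindrical, but is a uniformly bounded modification of a cylindrical structure. First I would recall the key point from Lemma~\ref{bddgeo}: geometric boundedness of an almost K\"ahler structure near infinity on a Liouville sector is detected by rescaling via the Liouville flow, and any cylindrical $J$ rescales to a bounded family with the limiting linear structure at infinity. The new content here is that $\bar J = (\Phi_H^{t(p)\to t})^\ast J$, where $\Phi_H^{t(p)\to t}$ is the Hamiltonian flow of the admissible Hamiltonian $H$ over the (bounded) time interval from $t(p)$ to $t$ for $t\in D^2\subseteq\RR\times S^1$. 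The essential observation is that, because $H$ is admissible with respect to $X_0\subseteq\cdots\subseteq X_r$, its time-one flow (and hence any bounded-time flow $\Phi_H^{t(p)\to t}$) has uniformly bounded geometry: away from the strips $\pi_i^{-1}(\CC_{|\Re|\leq N})$ it is linear-at-infinity hence commutes (up to bounded error) with the Liouville scaling, and inside those strips $H$ is uniformly bounded in $C^\infty$ with respect to a metric satisfying $\sL_Z g = g$, so the flow is a diffeomorphism of uniformly bounded geometry there as well.

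The key steps, in order, would be: (i) observe that it suffices to establish geometric boundedness over (the preimage in $D^2\times X$ of) a neighborhood of infinity in $X$, since everything is automatically bounded over compact subsets; (ii) apply the Liouville-flow rescaling argument of Lemma~\ref{bddgeo}/\ref{domainacsbddgeo}: it is enough to bound $(D^2\times X,\omega_{D^2}+\lambda\omega_X, j_{D^2}\oplus\bar J)$ over a compact subset of $D^2\times X$ uniformly in $\lambda\geq 1$; (iii) show that under this rescaling, $\Phi_H^{t(p)\to t}$ converges (as $\lambda\to\infty$) to a limiting isometry of the linearized structure --- away from the finitely many strips this is because $H$ is linear at infinity (so $X_H$ commutes with $Z$ up to the bounded cutoff corrections), and over the strips this uses the $C^\infty$-boundedness of $H$ with respect to a scaling-invariant metric, which is exactly the second bullet of Definition~\ref{admissible}; (iv) conclude that $\bar J = (\Phi_H^{t(p)\to t})^\ast J$ is a uniformly $C^\infty$-bounded perturbation of $J$, hence $(D^2\times X,\omega_{D^2}+\omega_X,j_{D^2}\oplus\bar J)$ has uniformly bounded geometry. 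One also notes that the flow $\Phi_H^{t(p)\to t}$ is defined for all relevant times (completeness of the flow of an admissible adapted Hamiltonian, as remarked after Definition~\ref{admissible}).

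The main obstacle I expect is step (iii): making precise that the bounded-time Hamiltonian flow of $H$ interacts well with the Liouville rescaling near infinity over the transition strips $\pi_i^{-1}(\CC_{|\Re|\leq N})$, where $H$ is genuinely not linear. The resolution is that admissibility was designed precisely for this: $H$ is uniformly bounded in $C^\infty$ there with respect to a metric $g$ with $\sL_Z g = g$, so each Liouville rescaling leaves $H$ in a fixed bounded set of Hamiltonians, and the flows of a bounded family of Hamiltonians (for bounded time) form a bounded family of diffeomorphisms in the appropriate uniform sense. Everything else is a routine repackaging of the geometric boundedness arguments already given for Lemmas~\ref{bddgeo} and \ref{domainacsbddgeo}, together with the fact (used implicitly throughout) that Floer cylinders for $H$ are holomorphic sections of the mapping torus, so the $t$-dependent reparametrization by $\Phi_H$ is exactly what converts the Floer equation into an honest $\bar\partial$-equation on $D^2\times X$ with almost complex structure $j_{D^2}\oplus\bar J$.
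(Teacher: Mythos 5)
Your overall strategy (split $X$ into the region where $H$ is linear and the strips $\pi_i^{-1}(\CC_{\left|\Re\right|\leq N})$, use the rescaling argument of Lemmas \ref{bddgeo}/\ref{domainacsbddgeo} in the first region and $C^\infty$-boundedness of $H$ in the second) is the same as the paper's, but there is a genuine gap at your step (iii). The gauge transform $\bar J=(\Phi_H^{t(p)\to t})^\ast J$ at a point $x$ is controlled by the behavior of $H$ along the \emph{entire flow trajectory} through $x$, not by the regime in which $x$ itself sits. So the pointwise dichotomy ``linear away from the strips, $C^\infty$-bounded inside them'' does not yet control $\bar J$: a point outside the strips could a priori flow into them in bounded time (in which case $\bar J$ at that point is not cylindrical), and a point inside the strips could flow out into the linear region (where $H$ is \emph{not} $C^\infty$-bounded with respect to $g$, since $\left|dH\right|_g$ grows like $e^{s/2}$), so ``the flows of a bounded family of Hamiltonians'' is not directly applicable either. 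What is missing is a confinement statement: bounded-time trajectories of $X_H$ distort the strips only by a bounded amount, i.e.\ the set of points whose trajectory meets $\pi_i^{-1}(\CC_{\left|\Re\right|\leq N})$ is contained in $\pi_i^{-1}(\CC_{\left|\Re\right|\leq M})$, and trajectories starting there stay in $\pi_i^{-1}(\CC_{\left|\Re\right|\leq M'})$, where $H$ is still uniformly $C^\infty$-bounded by Definition \ref{admissible}.

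The paper proves exactly this confinement via the differential inequality $\left|X_HR\right|\leq c\cdot\left|R\right|$ for $R=\Re\pi_i$ (so $R$ grows at most exponentially along the flow). This inequality is not routine bookkeeping: it uses the specific structure of admissibility --- $H_i$ and $dH_i$ vanish along $\partial X_i$, so $X_HR$ vanishes on $\{R=0\}$, and both $X_HR$ and $R$ have the same square-root scaling ($Z(X_HR)=\frac 12X_HR$, $ZR=\frac 12R$), whence the ratio is bounded near infinity. If the linear Hamiltonian did not vanish quadratically at $\partial X_i$, points arbitrarily far (in $\left|R\right|$) from the strips could reach them in bounded time and the decomposition you propose would break down. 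Once you add this estimate, your argument becomes the paper's: trajectories avoiding the strips give cylindrical $\bar J$ (handled by Lemma \ref{domainacsbddgeo}), and the remaining points lie in enlarged strips over which $H$ is $C^\infty$-bounded along the whole flow, giving the required uniform bounds there.
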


\begin{proof}
In the case that $H$ is linear at infinity (i.e.\ in the very special case $\partial X_0=\cdots=\partial X_r=\varnothing$), the family of almost complex structures $\bar J:D^2\to\J(X)$ is cylindrical at infinity, and so we can simply appeal to Lemma \ref{domainacsbddgeo}.

To push this reasoning a bit further, let $N<\infty$ be such that $H$ is linear at infinity outside $\bigcup_i\pi_i^{-1}(\CC_{\left|\Re\right|\leq N})$.
Now over the subset of $X$ which never hits $\bigcup_i\pi_i^{-1}(\CC_{\left|\Re\right|\leq N})$ under the flow of $X_H$ over any time interval $[t(p),t(q)]$ for $q\in D^2$, Lemma \ref{domainacsbddgeo} applies to show the desired geometric boundedness statement, since there the gauge transformed almost complex structure $\bar J$ is cylindrical.
We claim that the ``bad'' subset of $X$ which does hit $\bigcup_i\pi_i^{-1}(\CC_{\left|\Re\right|\leq N})$ is contained within $\bigcup_i\pi_i^{-1}(\CC_{\left|\Re\right|\leq M})$ for some $M<\infty$.
To show this, it is enough to show that $\left|X_HR\right|\leq c\cdot\left|R\right|$ for $R=\Re\pi_i$ and some $c<\infty$ (so $R$ grows at most exponentially under the flow of $X_H$).
This inequality holds since $X_HR=0$ over $\{R=0\}$ (by admissibility of $H$) and both $X_HR$ and $R$ have ``square root growth at infinity'', i.e.\ $Z(X_HR)=\frac 12X_HR$ and $ZR=\frac 12R$.

It remains only to prove geometric boundedness over $\bigcup_i\pi_i^{-1}(\CC_{\left|\Re\right|\leq M})$ for $M<\infty$.
This follows immediately using the fact that $H$ is $C^\infty$-bounded over such regions and again the fact that flowing under $X_H$ for finite time stays within yet a larger region $\bigcup_i\pi_i^{-1}(\CC_{\left|\Re\right|\leq M'})$.
\end{proof}

\begin{proposition}\label{compactness}
The moduli spaces $\Mbar_n(H,J,\gamma^+,\gamma^-)$ are compact.
\end{proposition}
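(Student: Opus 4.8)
The plan is to follow the strategy of Proposition \ref{wcompactness}, upgraded by the dissipativity arguments of Groman \cite{groman} to control the thick parts of the domain where the Floer data varies. The entire content of the proof is an \emph{a priori} $C^0$-estimate: once we show that every trajectory in $\Mbar_n(H,J,\gamma^+,\gamma^-)$ has image contained in a single compact subset of $X_r$ depending only on $\gamma^\pm$, the usual Gromov compactness machinery for broken Floer trajectories applies. (The compactification by stable broken trajectories is exactly the one encoded in $\Mbar^{SC}_n=[0,\infty]^{n-1}$: an edge breaks as some $b_i\to\infty$, and a collision $b_i\to 0$ contributes no bubble by construction; the ``no fixed points at infinity'' condition confines the intermediate periodic orbits to a fixed compact set, so only finitely many breakings are possible.)

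First I would record the energy bounds. The topological energy $E^\top(u)=a(\gamma^+)-a(\gamma^-)$ depends only on the asymptotics, and positivity of wrapping (Definition \ref{dissipative}) together with \eqref{poswrappinggood} bounds the geometric energy above by $E^\geo(u)\leq E^\top(u)+C$ for a uniform constant $C$; thus every trajectory has $E^\geo(u)\leq E$ for a fixed $E=E(\gamma^+,\gamma^-)$. Since Lemma \ref{bddgeoforsh} (via Lemma \ref{domainacsbddgeo}) supplies uniformly bounded geometry for $(D^2\times X,\omega_{D^2}+\omega_X,j_{D^2}\oplus\bar J)$ whenever the Hamiltonian is $s$-independent near the relevant disk (here $\bar J$ is the gauge transform of $J$ along the flow of the Hamiltonian), one can apply the monotonicity inequalities of Sikorav \cite{sikorav} to the graph of $u$ inside the domain times target, exactly as in Proposition \ref{wcompactness}.

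Next comes confinement near $\partial X$. Because $(H,J)$ is adapted to each $\partial X_i$ (Definition \ref{adapted}), Lemma \ref{scconfinecurves} shows that a trajectory with positive end in $X_i$ stays inside $X_i$; taking $X_i=X_r$ shows the trajectory stays in $X_r$, and the $\varphi(x)\omega_\CC$-energy/flux computation in that lemma in fact keeps the curve disjoint from $\pi_r^{-1}(\CC_{|\Re|<\varepsilon})$, hence within $\pi_r^{-1}(\CC_{\Re\geq\varepsilon})$. This reduces the problem to confining the trajectory in the ``interior at infinity'' of $X_r$, where by admissibility (Definition \ref{admissible}) the data is linear at infinity outside a fixed $\bigcup_i\pi_i^{-1}(\CC_{|\Re|\leq N})$ and $C^\infty$-bounded inside.

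The remaining — and principal — step is the interior $C^0$-estimate, split between the thin and thick parts of $\RR\times S^1$. Over the thin parts the data equals a fixed vertex Hamiltonian $H_v$ (independent of $s$), and the displacement bound $d(x,\Phi_{H_v}(x))\geq\varepsilon>0$ near infinity gives, by the computation \eqref{wcsdistanceintegral}, $E\geq\mathrm{const}\cdot(\text{length})$ for any stretch $[a,b]\times S^1$ lying outside a fixed compact set; hence long stretches must repeatedly dip into that compact set, and monotonicity anchored at such a point confines the whole stretch. Over the thick parts one invokes the dissipation data: near infinity the data coincides, on an infinite nested family of ``shells'' $U_{ij}^-\setminus K_{ij}^+$, with a fixed Hamiltonian $H_{v_i}$, so over each shell $u$ solves a translation-invariant Floer equation and (after gauge transformation) the monotonicity estimate of Lemma \ref{bddgeoforsh} applies; a trajectory that reaches far out must cross shell $j$, picking up energy bounded below by a constant times $\min(d(X\setminus U_{ij}^-,K_{ij}^+),r_0)^2$ for a uniform $r_0$, and since $\sum_j d(X\setminus U_{ij}^-,K_{ij}^+)^2=\infty$ by \eqref{dissipationshellsdistancesum} this forces $E^\geo(u)=\infty$, a contradiction. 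Hence the trajectory stays within some $U_{ij}$, i.e.\ in a fixed compact set. I expect this dissipativity estimate to be the main obstacle, since it requires combining the monotonicity inequality with the bookkeeping of the (consistently chosen, per Definition \ref{dissipative}) shells; the thin-part estimate and the boundary confinement are, by contrast, essentially as in Proposition \ref{wcompactness} and Lemma \ref{scconfinecurves}.
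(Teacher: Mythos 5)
Your proposal is correct and follows essentially the same route as the paper: bound $E^\geo$ via positivity of wrapping, confine near $\partial X$ via Lemma \ref{scconfinecurves}, and then prove the interior $C^0$-estimate by splitting $\RR\times S^1$ into thin parts (handled by the displacement bound and the Cauchy--Schwarz estimate \eqref{csdistanceintegral} plus monotonicity) and thick parts (handled by gauge-transforming to a $\bar J$-holomorphic graph over the dissipation shells, whose bounded geometry from Lemma \ref{bddgeoforsh} and the divergence condition \eqref{dissipationshellsdistancesum} force only finitely many shell crossings). The only cosmetic difference is your quadratic $\min(d,r_0)^2$ lower bound per shell versus the paper's linear one; both yield the same confinement.
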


\begin{proof}
In view of the inclusion $\HJ_\bullet(X_0,\ldots,X_r)\subseteq\HJ_\bullet(X_r)$, it is enough to treat the case $r=0$, writing $X_0=X$.

It is enough to show that there is a compact subset of $X \backslash \partial X$ containing the image of all trajectories in $\Mbar_n(H,J,\gamma^+,\gamma^-)$ (then the usual Gromov compactness arguments apply).
By Lemma \ref{scconfinecurves}, trajectories in $X$ stay away from $\partial X$, so it suffices to construct such a compact subset of $X$.
Note that $E^\geo$ is bounded above on $\Mbar_n(H,J,\gamma^+,\gamma^-)$ since $E^\top=a(\gamma^+)-a(\gamma^-)$ is fixed and wrapping is bounded below (see \eqref{poswrappinggood}).
The compact set we produce will depend only on $(H,J)$ and this upper bound on $E^\geo$.
Note that since $E^\geo\geq 0$ and is additive along broken trajectories, we may assume that our trajectory is not broken (though this is only for psychological comfort; the argument below applies verbatim to broken trajectories as well).

Let a trajectory $u:\RR\times S^1\to X$ be given.
We decompose $\RR$ into $\leq 2n+1$ (possibly infinite or half-infinite) overlapping intervals, each of one of the following two types:
\begin{itemize}
\item Intervals of \emph{a priori} bounded length (the \emph{thick parts}; concretely, these can be taken to be the union of the intervals of length $N$ centered at each of the points $a_i\in\RR$, for some sufficiently large $N<\infty$).
\item Intervals over which $H$ and $J$ are independent of the $s$-coordinate (that is, the \emph{thin parts}, namely where the domain curve is close to breaking or where $s$ is close to $\pm\infty$).
\end{itemize}

Let us first show that the image under $u$ of any component of the thick part (i.e.\ any interval of the first type above) is bounded \emph{a priori} in terms of $(H,J)$, $E^\geo$, and $u(p)$ for any point $p$ in the component.
Let dissipation data $\{(B_i,v_i,\{K_{ij}\}_j,\{U_{ij}\}_j)\}_i$ be given.
Since we are presently interested only in the particular domain $\RR\times S^1\subseteq\Cbar^{SC}_n$, we will abuse notation and denote by $B_i$ what would more accurately be written $B_i\cap(\RR\times S^1)$.
Since the length of the interval is bounded, it suffices to show that for every small ball $B_{2\varepsilon}(p)\subseteq B_i$ (let us assume $0<\varepsilon<\frac 18$), the image $u(B_\varepsilon(p))\subseteq X$ is bounded in terms of $u(p)$ (of course, we should also prove the same statement for $B_{2\varepsilon}(p)\subseteq A_v$, however this case is strictly easier, hence omitted).
To prove this, we consider the map $\bar u:B_{2\varepsilon}(p)\to X$ defined by the relation $u=\Phi_{H(s,\cdot)}^{t(p)\to t}\circ\bar u$ (i.e.\ we do a change of coordinates on the target using the Hamiltonian flow of $H$).
For any $j$, we have
\begin{equation}
(d\bar u)^{0,1}_{\bar J}=0\quad\text{over }\bar u^{-1}(U_{ij}^-\setminus K_{ij}^+)
\end{equation}
where $\bar J=(\Phi_{H_{v_i}}^{t(p)\to t})^\ast J$ (note that $\Phi_{H(s,\cdot)}^{t(p)\to t}=\Phi_{H_{v_i}}^{t(p)\to t}$ over $U_{ij}^-\setminus K_{ij}^+$).
Now the graph of $\bar u$ over $B_{2\varepsilon}(p)\cap\bar u^{-1}(U_{ij}^-\setminus K_{ij}^+)$ is a properly embedded holomorphic curve in $B_{2\varepsilon}(p)\times(U_{ij}^-\setminus K_{ij}^+)$ with respect to $j_{B_{2 \varepsilon}(p)} \oplus \bar J$.
Using the fact that $H_{v_i}$ is admissible, one sees that this almost complex structure has bounded geometry by Lemma \ref{bddgeoforsh}. 
We claim that monotonicity (see \S\ref{inftyescape}) applied to the graph of $\bar u$ implies that if $\bar u|B_\varepsilon(p)$ crosses a ``shell'' $U_{ij}^-\setminus K_{ij}^+$, then the graph of $\bar u$ over $B_{2\varepsilon}(p)\cap\bar u^{-1}(U_{ij}^-\setminus K_{ij}^+)$ has geometric energy bounded below by a constant (depending on $\varepsilon>0$ and the geometry of $(X,\omega,\bar J)$) times $\min(d,d^2)$ where $d=d(X\setminus U_{ij}^-,K_{ij}^+)$ (compare Groman \cite{groman}).
Indeed, suppose that $\bar u|B_\varepsilon(p)$ crosses a ``shell'' $U_{ij}^-\setminus K_{ij}^+$ (meaning, precisely, that $\bar u(B_\varepsilon(p))$ intersects both $K_{ij}^+$ and $X\setminus U_{ij}^-$).
Then by connectedness of $B_\varepsilon(p)$, we may find a set of $\max(1,\left\lfloor d\right\rfloor)$ points in $B_\varepsilon(p)$ such that the balls of radius $\min(\frac 12,\frac 12d)$ around their images in $X$ under $\bar u$ are disjoint and contained inside $U_{ij}^-\setminus K_{ij}^+$.
Now we consider the corresponding points on the graph of $\bar u$, and we observe that the balls of radius $\min(\frac 12,\frac 12d,\varepsilon)$ centered at these points are disjoint and contained inside $B_{2\varepsilon}(p)\times(U_{ij}^-\setminus K_{ij}^+)$, and moreoever the boundary of the graph of $\bar u$ over $B_{2 \varepsilon}(p) \times (U_{ij}^-\setminus K_{ij}^+)$ lies outside these balls (recall that their centers are contained in $B_{\varepsilon}(p) \times (U_{ij}^-\setminus K_{ij}^+)$).
We can therefore apply monotonicity to these balls to produce the desired lower bound on the energy of the graph of $\bar u$ over $B_{2\varepsilon}(p)\cap\bar u^{-1}(U_{ij}^-\setminus K_{ij}^+)$.
Now, on the other hand, the sum of these geometric energies (over all $j$) is bounded above by the geometric energy of the graph of $\bar u$ over $B_{2\varepsilon}(p)$, which is bounded above by $E^\geo(u)+(2\varepsilon)^2\pi$.  This shows that $\bar u|B_\varepsilon(p)$ can cross at most
finitely many shells by divergence of \eqref{dissipationshellsdistancesum}, and hence that $u(B_\varepsilon(p))$ is bounded depending
on $u(p)$, as desired.
This proves the desired claim about intervals in the thick part of the domain.

Now let us examine the thin parts of the domain (i.e.\ intervals of the second type above).
Define $\bar u$ by the relation $u(s,t)=\Phi_H^{0\to t}(\bar u(s,t))$ for $0\leq t<2\pi$, so
\begin{equation}\label{gaugerelation}
\partial_tu-X_H=(\Phi_H^{0\to t})_\ast\partial_t\bar u.
\end{equation}
Fix a large compact set $K\subseteq X$ outside which $d(x,\Phi_H(x))\geq\varepsilon>0$; note that this implies that if $u(s,0)\notin K$, then
\begin{equation}
\int_0^{2\pi}\left|\partial_t\bar u\right|\geq d(u(s,0),\Phi_H(u(s,0)))\geq\varepsilon.
\end{equation}
Recall that we measure length $\left|\cdot\right|$ with respect to any Riemannian metric $g$ satisfying $\sL_Zg=g$ near infinity (they are all uniformly commensurable), such as the metric induced by any cylindrical $J$.
Now for any interval $[a,b]\subseteq\RR$ in the thin part such that $u(s,0)\notin K$ for all $s\in[a,b]$, we have
\begin{align}\label{csdistanceintegral}
E^\geo\geq\int_{[a,b]}\int_{S^1}\left|\partial_tu-X_H\right|^2\gtrsim{}&\int_{[a,b]}\biggl(\int_{S^1}\left|\partial_tu-X_H\right|\biggr)^2\\
{}\asymp{}&\int_{[a,b]}\biggl(\int_0^{2\pi}\left|\partial_t\bar u\right|\biggr)^2\geq\varepsilon^2(b-a).\nonumber
\end{align}
The last inequality on the first line is Cauchy--Schwarz.
To justify the next relation $\asymp$ (meaning equality up to a positive constant bounded uniformly away from $0$ and $\infty$), in view of \eqref{gaugerelation} we just need to argue that $\Phi_H^{0\to t}:X\to X$ distorts lengths by at most a constant uniform in $t$.
Over the region where the flow of $H$ never hits $\bigcup_i\pi_i^{-1}(\CC_{\left|\Re\right|\leq N})$, this follows from linearity of $H$; the region where the flow does hit this region is contained in $\bigcup_i\pi_i^{-1}(\CC_{\left|\Re\right|\leq M})$ for some $M<\infty$ (see the proof of Lemma \ref{bddgeoforsh}) and so the desired statement follows from $C^\infty$-boundedness of $H$ over this region.
Having justified \eqref{csdistanceintegral}, it thus follows that there exists $L<\infty$ such that in every interval $[a,b]\subseteq\RR$ in the thin part of length $\geq L$, there exists an $s$ such that $u(s,0)\in K$.
Now we apply the monotonicity argument (used above in the thick part) to conclude that $u(s,0)\in K$ implies an \emph{a priori} bound on $u$ over $[s-10L,s+10L]\times S^1$ as well.
This shows that over the thin part of the domain, $u$ is bounded uniformly away from infinity.
The same then follows for in the thick part as well using the claim proved in the previous paragraph and the fact that every component of the thick part is adjacent to the thin part.
\end{proof}

\subsection{Transversality}\label{sctranssec}

Let $\HJ_\bullet^\reg\subseteq\HJ_\bullet$ denote the collection of data for which all moduli spaces $\Mbar_n(H,J,\gamma^+,\gamma^-)$ are cut out transversely, including those associated to $(H,J)|_{\Delta^k}$ for facets $\Delta^k\hookrightarrow\Delta^n$.
Note that $\HJ_\bullet^\reg\subseteq\HJ_\bullet$ is indeed closed under degeneracy maps (note that this involves appealing to the fact that trivial cylinders are cut out transversely).

Standard Sard--Smale transversality arguments suffice to show that given any map $(\Delta^k,\partial\Delta^k)\to(\HJ_\bullet,\HJ_\bullet^\reg)$, the $J$ component can be perturbed away from $\partial\Delta^k$ to obtain a map $\Delta^k\to\HJ_\bullet^\reg$.
It is enough to perturb away from $\partial\Delta^k$ because gluing for holomorphic curves shows that transversality over $\partial\Delta^k$ implies transversality for $\Mbar_k(H,J,\gamma^+,\gamma^-)$ over $\Nbd\partial\Mbar^{SC}_k$.

Now this ``perturbed lifting property'' for maps $(\Delta^k,\partial\Delta^k)\to(\HJ_\bullet,\HJ_\bullet^\reg)$ implies (following Corollary \ref{hjfiltered}) that each $\HJ_\bullet^\reg$ is a filtered $\infty$-category and that each inclusion map $\HJ_\bullet^\reg\hookrightarrow\HJ_\bullet$ is cofinal; it follows that each forgetful map $\HJ_\bullet^\reg(X_0,\ldots,X_r)\to\HJ_\bullet^\reg(X_0,\ldots,X_{r-1})$ is cofinal.

\subsection{Homotopy coherent diagrams}\label{ndgdefsec}

We now formalize the notion of a homotopy coherent diagram of chain complexes.
Denote by $\Ch=\Ch_\ZZ^{\ZZ/2}$ the category of $\ZZ/2$-graded cochain complexes of $\ZZ$-modules.
A (strict) diagram of chain complexes is simply a map of simplicial sets $K\to\Ch$ (recall that we conflate any category with its nerve).
A homotopy coherent diagram of chain complexes is a map $K\to\Ndg\Ch$, where $\Ndg\Ch$ is the \emph{differential graded nerve} of $\Ch$, defined as follows (compare Lurie \cite[Construction 1.3.1.6]{lurieha}).
Recall the cubes $\F(\Delta^n)$ of broken Morse flow lines from Remark \ref{adamspathsremark}, and denote by $C_\bullet(\F(\Delta^n))$ the cubical chain complex of $\F(\Delta^n)=[0,1]^{n-1}$ (namely free of rank $3^{n-1}$).

\begin{definition}\label{ndgdef}
Denote by $\Ndg\Ch$ the simplicial set whose $p$-simplices are $(p+1)$-tuples of objects $A_0^\bullet,\ldots,A_p^\bullet\in\Ch$ along with (degree zero) chain maps
\begin{equation}
f_\sigma:A_{\sigma(0)}^\bullet\otimes C_{-\bullet}(\F(\Delta^q))\to A_{\sigma(q)}^\bullet
\end{equation}
for every map $\sigma:\Delta^q\to\Delta^p$ ($q\geq 1$) such that
\begin{itemize}
\item For $0<k<q$, we have $f_{\sigma|k\cdots q}\circ f_{\sigma|0\cdots k}=f_\sigma|_{C_{-\bullet}(\F(\Delta^k))\otimes C_{-\bullet}(\F(\Delta^{q-k}))}$ with respect to the natural map $\F(\Delta^k)\times\F(\Delta^{q-k})\to\F(\Delta^q)$ from \eqref{Fproduct}.
\item For every $\tau:\Delta^r\to\Delta^q$ with $\tau(0)=0$ and $\tau(r)=q$, we have $f_{\sigma\circ\tau}=f_\sigma\circ\tau_\ast$, where $\tau_\ast:\F(\Delta^r)\to\F(\Delta^q)$ is induced from $\tau$ as in Remark \ref{adamspathsremark}.
In the degenerate case $q=0$ (so $f_\sigma$ is not defined), we interpret $f_\sigma$ as the identity map above.
\end{itemize}
More generally, for any simplicial set $X_\bullet$, a map $X_\bullet\to\Ndg\Ch$ consists of an object $A^\bullet_v\in\Ch$ for every vertex $v\in X_0$ and maps $f_\sigma:A^\bullet_{\sigma(0)}\otimes C_{-\bullet}(\F(\Delta^p))\to A^\bullet_{\sigma(p)}$ for every $p$-simplex $\sigma\in X_p$ with $p\geq 1$ satisfying the two conditions above.
\end{definition}

Note the tautological functor from the classical nerve to the dg-nerve $\Ch\to\Ndg\Ch$, corresponding to taking $f_\sigma$ to factor through $C_\bullet(\F(\Delta^q))\to C_\bullet(\pt)=\ZZ$.

\begin{remark}
Definition \ref{ndgdef} extends immediately to any dg-category $\C$ in place of $\Ch$, though we should remark that in this more general context, rather than saying $f_\sigma:A_{\sigma(0)}^\bullet\otimes C_{-\bullet}(\F(\Delta^q))\to A_{\sigma(q)}^\bullet$ is a chain map, we should instead say $f_\sigma\in\Hom^\bullet(A_{\sigma(0)},A_{\sigma(q)})\otimes C^\bullet(\F(\Delta^q))$ is a cycle of degree zero.
\end{remark}

\begin{remark}
The set of $p$-simplices of $\Ndg\C$ can be equivalently described as the set of strictly unital $\ainf$-functors from the $A_{p+1}$-quiver $0\to\cdots\to p$ to $\C$. 
In particular, the face and degeneracy maps on a $p$-simplex $\Ndg\C$ are simply induced by composition with the co-face and co-degeneracy maps of the cosimplicial dg-category $\Delta_\bullet=\{\Delta_p=0 \to \cdots \to p\}_{p\geq 0}$.
\end{remark}

\begin{remark}
It is not difficult to check that $\Ndg\C$ is an $\infty$-category for any dg-category $\C$ \cite[Proposition 1.3.1.10]{lurieha}, though this fact is not essential to our arguments.
\end{remark}

\subsection{Homotopy colimits}\label{subsechocolim}

Let $X_\bullet$ be a simplicial set, and let $bX_\bullet$ denote the \emph{barycentric subdivision} of $X_\bullet$, namely a $p$-simplex of $bX_\bullet$ is a chain $\Delta^{a_p}\hookrightarrow\cdots\hookrightarrow\Delta^{a_0}\to X_\bullet$ (note that this is the ``opposite'' of what is usually called the barycentric subdivision).

For any diagram $A:bX_\bullet\to\Ndg\Ch$, we denote by
\begin{equation}
C_\bullet(X_\bullet;A):=\bigoplus_{\sigma:\Delta^n\to X_\bullet}A(\sigma)[n]
\end{equation}
the complex of ``simplicial chains on $X_\bullet$ with coefficients in $A$''.

\begin{remark}
The span of all degenerate simplices is a subcomplex and it is moreover easily seen to be acyclic (this is easy once it is observed that every degenerate simplex lies over a unique non-degenerate simplex).
It is thus equivalent to consider the quotient complex of ``normalized chains''.
In everything which follows, we could just as easily work with the normalized complex in place of the non-normalized version above.
\end{remark}

For any diagram $A:X_\bullet\to\Ndg\Ch$, we denote by
\begin{equation}\label{vanillahocolim}
\hocolim_{X_\bullet}A:=C_\bullet(X_\bullet;A\circ r)
\end{equation}
the ``homotopy colimit of $A$'', where $r:bX_\bullet\to X_\bullet$ is the canonical map sending a $p$-simplex $\Delta^{a_p}\hookrightarrow\cdots\hookrightarrow\Delta^{a_0}\xrightarrow\sigma X_\bullet$ in $bX_\bullet$ to the $p$-simplex $\sigma(0\in\Delta^{a_0},\ldots,0\in\Delta^{a_p})$ of $X_\bullet$.

Note that the diagrams $A:bX_\bullet\to\Ndg\Ch$ obtained by pre-composition with $r:bX_\bullet\to X_\bullet$ satisfy the property that $A(\sigma)\xrightarrow\sim A(\sigma|0\cdots\hat k\cdots n)$ is an isomorphism for any $k>0$ and any simplex $\sigma:\Delta^n\to X_\bullet$.
By abuse of notation, we also use
\begin{equation}\label{hocolimbarycentricabuse}
\hocolim_{X_\bullet}A
\end{equation}
to denote $C_\bullet(X_\bullet;A)$ for any diagram $A:bX_\bullet\to\Ndg\Ch$ with the property that $A(\sigma)\xrightarrow\sim A(\sigma|0\cdots\hat k\cdots n)$ is a \emph{quasi-}isomorphism for any $k>0$ (trivial but frequently used fact: it is equivalent to require this map be a quasi-isomorphism just for $k=n$).

\begin{remark}
The discussion above generalizes immediately to any dg-category $\C$ in place of $\Ch$.
Namely, for any diagram $A:bX_\bullet\to\Ndg\C$ we can form $C_\bullet(X_\bullet;A)\in\Tw^\oplus\C$ (the category of infinite direct sum twisted complexes of $\C$), we can form $\hocolim_{X_\bullet}A\in\Tw^\oplus\C$ for any $A:X_\bullet\to\Ndg\C$, etc.
\end{remark}

\begin{remark}
The homotopy colimit as defined above satisfies a universal property in the $\infty$-category $\Ndg\C$, though this fact is not essential to our arguments.
\end{remark}

The following result is standard.

\begin{lemma}\label{cofinalityI}
For any cofinal map $f:X_\bullet\to Y_\bullet$ between filtered $\infty$-categories and any diagram $A:Y_\bullet\to\C$, the natural map
\begin{equation}
\hocolim_{X_\bullet}A\circ f\xrightarrow\sim\hocolim_{Y_\bullet}A
\end{equation}
is a quasi-isomorphism.\qed
\end{lemma}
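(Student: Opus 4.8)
The plan is to view this as the chain-level incarnation of the fact that cofinal functors preserve colimits in an $\infty$-category, and to reduce it to the corresponding classical statement for directed posets.  The essential structural point is that the comparison map $\hocolim_{X_\bullet}p\circ f\to\hocolim_{Y_\bullet}p$ is induced functorially by $f$ (the construction $Y_\bullet\mapsto C_\bullet(Y_\bullet;-)$ and its barycentric-subdivision input are natural in the indexing simplicial set), so it is a \emph{strict} chain map and, for any commuting triangle of maps of simplicial sets, the corresponding triangle of comparison maps commutes on the nose.  This means we may freely apply the $2$-out-of-$3$ property for quasi-isomorphisms without worrying about coherences.

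First I would record two elementary facts.  (i) A composite of cofinal maps of filtered $\infty$-categories is cofinal: given a vertex $v$ in the final target, pull it back through the two maps in turn, concatenating the resulting $1$-simplices using that the target is an $\infty$-category.  (ii) Every filtered $\infty$-category $X_\bullet$ admits a cofinal map $g\colon N(I)\to X_\bullet$ out of (the nerve of) a directed poset $I$; this is standard and may be built by transfinite induction from the filling property for $K\hookrightarrow K^\vartriangleright$ which defines filteredness, or cited from Lurie \cite[\S 5.3.1]{luriehtt}.  Granting the lemma in the special case where the source is $N(I)$ for a directed poset $I$, the general case follows: pick $g\colon N(I)\to X_\bullet$ cofinal as in (ii); then $f\circ g\colon N(I)\to Y_\bullet$ is cofinal by (i); applying the special case to $g$ (with diagram $p\circ f$) and to $f\circ g$ (with diagram $p$) gives quasi-isomorphisms $\hocolim_{N(I)}p\circ f\circ g\xrightarrow{\sim}\hocolim_{X_\bullet}p\circ f$ and $\hocolim_{N(I)}p\circ f\circ g\xrightarrow{\sim}\hocolim_{Y_\bullet}p$; since they sit in the evident commuting triangle, $2$-out-of-$3$ finishes the reduction.

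It then remains to treat the base case of a cofinal $g\colon N(I)\to X_\bullet$ with $I$ a directed poset, which is the classical part of the statement.  I expect this to be the main obstacle — not because the homotopy-theoretic content is hard, but because it requires confirming that the explicit complex built here out of $\Ndg\C$ and the cubes $\F(\Delta^n)$ genuinely models the $\infty$-categorical colimit, and that the naive notion of cofinality used in the excerpt coincides with Joyal's notion for filtered $\infty$-categories.  Both are standard, so the cleanest option is to cite Lurie's treatment of cofinal maps and colimits \cite[\S 4.1--4.2]{luriehtt}; alternatively one can give a direct proof by a length/skeletal filtration on the (Bousfield--Kan-type) complex defining $\hocolim$, in which the cone of the comparison map is identified with a complex of relative simplicial chains whose associated graded pieces are acyclic because directedness of $I$ and cofinality of $g$ force the relevant slices to be weakly contractible.
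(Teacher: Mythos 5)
The paper records this lemma as standard and supplies no argument, so there is no ``official'' proof to measure you against; I will assess your plan on its own terms. Your scaffolding is fine: the comparison maps are strict and natural in the indexing simplicial set, composites of maps that are cofinal in the naive sense of this paper are again cofinal, and the poset approximation you invoke does exist (every filtered $\infty$-category admits a Joyal-cofinal functor from a directed poset, see \cite[\S 5.3.1]{luriehtt}). The genuine gap is in the base case, specifically your assertion that the naive notion of cofinality used here ``coincides with Joyal's notion for filtered $\infty$-categories,'' equivalently that ``directedness of $I$ and cofinality of $g$ force the relevant slices to be weakly contractible.'' The naive condition only makes those slices \emph{nonempty}, and in your reduction the map you must feed into the base case is $f\circ g:N(I)\to Y_\bullet$, about which you know only the naive condition (since $f$ itself is only naively cofinal); so the contractibility you need is exactly the unproven point, not a routine check.

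Moreover, no argument can close this gap from the naive hypothesis alone, because in that generality the statement fails. Let $Y_\bullet$ be the nerve of the one-object category with morphism monoid $M=\{1,e\}$, $e^2=e$: this category is filtered (parallel arrows are coequalized by $e$, and every finite diagram admits the commuting cocone with all legs $e$), so $Y_\bullet$ is a filtered $\infty$-category, and the inclusion $X_\bullet=\Delta^0\to Y_\bullet$ is cofinal in the naive sense. For the diagram $p$ with $p(\ast)=\ZZ$ and $p(e)=0$ one gets $H_0(\hocolim_{Y_\bullet}p)=\ZZ/(a-p(e)a)=0$, while $\hocolim_{X_\bullet}p\circ f=\ZZ$, so the comparison map is not a quasi-isomorphism. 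Hence your proof must use a stronger input: either assume Joyal cofinality of $f$ outright (then your skeletal-filtration argument, or \cite[Theorem 4.1.3.1]{luriehtt} together with the identification of this chain-level $\hocolim$ with the $\infty$-categorical colimit, does work), or exploit the special structure of the filtered $\infty$-categories to which the lemma is actually applied in this paper: they all satisfy the hypotheses of Lemma \ref{filteredlifting}, hence are trivial Kan fibrations over filtered posets, and for an order-preserving map of filtered posets the naive condition does imply Joyal cofinality, since each comma poset $\{x:y\leq f(x)\}$ is then filtered and therefore weakly contractible. With that amendment your two-step reduction goes through.
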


\begin{lemma}\label{finalobjectII}
Let $K$ be a simplicial set, and let $K^\vartriangleright$ be as in Definition \ref{trirightdef}.
The natural inclusion
\begin{equation}
A(\ast)\xrightarrow\sim\hocolim_{K^\vartriangleright}A
\end{equation}
is a quasi-isomorphism (even for the homotopy colimit in the generalized sense of \eqref{hocolimbarycentricabuse}).
\end{lemma}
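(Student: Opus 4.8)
The plan is to prove that the cofiber of the stated inclusion is acyclic. First note that the map in question identifies $p(\ast)$ with the subcomplex of $\hocolim_{K^\vartriangleright}p$ spanned by the summand of the $0$-simplex $\ast\in K^\vartriangleright$; this really is a subcomplex, because $\ast$, being a $0$-simplex, has no faces (not even in the ``bar'' sense), so the differential restricted to that summand is purely internal. Write $\bar Q$ for the quotient complex. The nondegenerate simplices of $K^\vartriangleright$ split into those lying in $K$, which span a subcomplex $C_\bullet(K;p|_K)$, and those meeting $\ast$, each of which is uniquely of the form $C\tau:=\tau\star\ast$ for a nondegenerate simplex $\tau$ of $K$ (with $C\varnothing=\ast$ itself). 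Letting $B_\bullet$ denote the subquotient of $\bar Q$ spanned by the $C\tau$ with $\tau\neq\varnothing$, this decomposition exhibits $\bar Q$ as the mapping cone (total complex) of a chain map $\psi\colon B_\bullet\to C_\bullet(K;p|_K)$, up to the degree shift $\dim(C\tau)=\dim\tau+1$, where $\psi$ records for each $C\tau$ its unique facet not containing $\ast$, namely $d_{\dim\tau+1}(C\tau)=\tau$.

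The heart of the matter is that $\psi$ is a quasi-isomorphism. The key point is that $\ast$ occurs as the \emph{last} vertex of $C\tau$, so deleting it is a non-initial face operation ($k=\dim\tau+1>0$); hence the defining property of the generalized homotopy colimit \eqref{hocolimbarycentricabuse} — that $p(\sigma)\to p(\sigma|0\cdots\hat k\cdots n)$ is a quasi-isomorphism whenever $k>0$ — supplies a quasi-isomorphism $p(C\tau)\xrightarrow{\sim}p(\tau)$ for every $\tau$. These are the ``length-one'' components of $\psi$; to conclude that $\psi$ itself (whose higher components are built from the homotopy-coherence data $f_\sigma$ of $p$) is a quasi-isomorphism, I would filter $B_\bullet$ and $C_\bullet(K;p|_K)$ by the number of bar tensor factors — a filtration that is bounded below, exhaustive, and preserved by $\psi$ — and observe that on the associated graded $\psi$ becomes the direct sum of the maps $p(C\tau)\xrightarrow{\sim}p(\tau)$. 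Thus the associated graded of $\psi$, and hence $\psi$, is a quasi-isomorphism, its mapping cone $\bar Q$ is acyclic, and therefore the inclusion $p(\ast)\hookrightarrow\hocolim_{K^\vartriangleright}p$ is a quasi-isomorphism.

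The routine work lives in the first paragraph, and that is also where I expect the only real friction: one must unwind the definitions of $\Ndg\C$ and of $C_\bullet(-;-)$ to verify that the face combinatorics of $K^\vartriangleright$ organize $\bar Q$ exactly into the mapping cone of $\psi$ — in particular, that no differential component leaves the $\{C\tau\}$-part for $C_\bullet(K;p|_K)$ except via $\psi$, which is precisely where terminality of $\ast$ (no simplex of $K^\vartriangleright$ emanates from $\ast$) enters — and that the individual maps $p(C\tau)\to p(\tau)$ really do assemble into a chain map. The associated-graded comparison in the second paragraph is standard. One checks that the argument degenerates correctly: for $K=\varnothing$ it is tautological, and for $K=\Delta^0$ it reduces to acyclicity of the mapping cone of a quasi-isomorphism. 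Finally, note that a naive ``contract toward the cone point $\ast$'' homotopy does \emph{not} work directly, since it would require a map $p(\tau)\to p(C\tau)$ in the wrong direction; the mapping-cone formulation above is what replaces it.
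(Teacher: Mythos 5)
Your strategy is sound, and it isolates exactly the right input: deleting the terminal vertex $\ast$ from a cone $\tau\star\ast$ is a non-initial face, so the defining condition of \eqref{hocolimbarycentricabuse} makes the leading component $p(\tau\star\ast)\to p(\tau)$ of $\psi$ a quasi-isomorphism, and a bounded-below, exhaustive filtration then upgrades this to $\psi$ itself and hence to acyclicity of $\bar Q$. (One terminological slip: your filtration ``by the number of bar tensor factors'' should be the filtration by simplicial dimension --- $C_\bullet(K;p|_K)$ is a sum over simplices with coefficients, not a bar construction --- but the intended filtration is clear and does the job.) The one genuine gap is in your first paragraph's identification of $\bar Q$: the homotopy colimit of \S\ref{subsechocolim} is by definition a direct sum over \emph{all} simplices of $K^\vartriangleright$, not only the nondegenerate ones, so the complement of the $p(\ast)$-summand also contains the simplices $\sigma\star\ast^{j}$ with $j\geq 2$ trailing copies of $\ast$ (with $\sigma$ possibly degenerate or empty; for $\sigma=\varnothing$, $j\geq 2$, there is no facet avoiding $\ast$ at all, and for $j\geq 2$ the facet avoiding $\ast$ is not a single face). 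Hence $\bar Q$ is \emph{not} the mapping cone you describe unless you first pass to normalized chains. That reduction is legitimate --- the remark in \S\ref{subsechocolim} states that the span of degenerate simplices is an acyclic subcomplex, so the normalized model computes the same homology --- but it has to be invoked explicitly; as written, your decomposition is false for the model in which the lemma is stated.

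Once repaired, your argument is a mild variant of the paper's rather than a reproduction of it. The paper neither normalizes nor quotients by $p(\ast)$: it filters the full complex by the vertex position at which a simplex becomes constant at $\ast$. The bottom piece, spanned by the simplices constant at $\ast$, is quasi-isomorphic to $p(\ast)$ (it is $p(\ast)$ tensored with simplicial chains on a point in the vanilla case \eqref{vanillahocolim}), and each higher associated graded piece collects, for a fixed $(k-1)$-simplex $\sigma$ of $K$, all strings $\sigma\star\ast^{j}$, $j\geq 0$; using the quasi-isomorphism condition for non-initial deletions, that piece is acyclic, being (up to the same filtration trick you use) $p(\sigma(0))$ tensored with \emph{reduced} chains on a point. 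So the degenerate trailing-$\ast$ simplices you discarded are exactly what the paper's ``reduced chains on the point'' factors absorb. Your route buys a cleaner cone-of-a-quasi-isomorphism picture at the cost of a normalization step; the paper's buys a uniform treatment of degeneracies at the cost of a slightly more elaborate filtration.
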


\begin{proof}
Let $A:b(K^\vartriangleright)\to\Ndg\Ch$ be the diagram in question, and filter
\begin{equation}
\hocolim_{K^\vartriangleright}A=\bigoplus_{\sigma:\Delta^n\to K^\vartriangleright}A(\sigma)[n]
\end{equation}
by subcomplexes $(\hocolim_{K^\vartriangleright}A)_{\leq k}$ spanned by those simplices $\sigma:\Delta^n\to K^\vartriangleright$ for which $\sigma|k\ldots n$ is the degenerate $(n-k)$-simplex over $\ast$.

The subcomplex $(\hocolim_{K^\vartriangleright}A)_{\leq 0}$ is given by $A(\ast)$ tensored with simplicial chains on the point.
Each subsequent associated graded piece $(\hocolim_{K^\vartriangleright}A)_{\leq k}/(\hocolim_{K^\vartriangleright}A)_{\leq k-1}$ with $k\geq 1$ is the direct sum over all $(k-1)$-simplices $\sigma:\Delta^{k-1}\to K$ of (something quasi-isomorphic to) $A(\sigma(0))$ tensored with reduced simplicial chains on the point.
\end{proof}

\subsection{Symplectic cochain diagrams}\label{secsympcochaindiagrams}

By counting holomorphic curves in the moduli spaces $\Mbar_n(H,J,\gamma^+,\gamma^-)$, we produce a diagram of symplectic cochain complexes over the simplicial set $\HJ_\bullet^\reg(X)$, namely a map
\begin{equation}\label{hjdiagram}
\HJ_\bullet^\reg(X)\to\Ndg{\Ch}
\end{equation}
where $\Ndg\Ch$ is as in \S\ref{ndgdefsec}.

To a vertex $(H,J)\in\HJ_0^\reg(X)$, we associate the Floer complex $CF^\bullet(X;H)$ equipped with the differential arising from counting the zero-dimensional moduli spaces $\Mbar_0(H,J,\gamma^+,\gamma^-)$.
The Floer complex $CF^\bullet(X;H)$ is isomorphic as a $\ZZ$-module to the free abelian group on the fixed points of the flow map $\Phi_H:X\to X$ obtained from integrating $X_H$ around $S^1$.
More canonically, it is defined as the direct sum of orientation lines
\begin{equation}
CF^\bullet(X;H):=\bigoplus_{\Phi_H(x)=x}\oo_{\Phi_H,x}
\end{equation}
(compare the analogous discussion in \S\ref{wcurvessec}).
We recall briefly the definition of the orientation line $\oo_{\Phi,x}$ associated to a non-degenerate fixed point $x$ of a symplectomorphism $\Phi:X\to X$.  Consider the linearized
$\bar\partial$-operator at the constant map
$u:(-\infty,0]\times[0,1]\to X$ sending everything to $x$ and subject to $u(s,1)=\Phi(u(s,0))$.  We may view this operator as living on $(-\infty,0]\times S^1$ by gluing
$(s,1)\sim_\Phi(s,0)$ (if $\Phi=\Phi_H$, then this is equivalently described as the linearized operator associated to the equation $(du-X_H\otimes dt)^{0,1}_J=0$ at the constant solution $u:(-\infty,0]\times S^1\to X$ at $x$).
We may further glue on a disk to obtain a Cauchy--Riemann operator $D$ on the Riemann sphere with one
negative puncture (see Figure \ref{orientationlinessc}), and $\oo_{\Phi,x}:=\oo_D^\vee$ is defined as the dual of the Fredholm orientation line of $D$.
(The obstruction to extending the Cauchy--Riemann operator to the glued on disk lies in $\pi_1(BU(n))=0$; there is an ambiguity of $\pi_2(BU(n))=\ZZ$ in choosing such an extension (the relative Chern class), however the resulting orientation lines are all canonically isomorphic.)

\begin{figure}[hbt]
\centering
\includegraphics{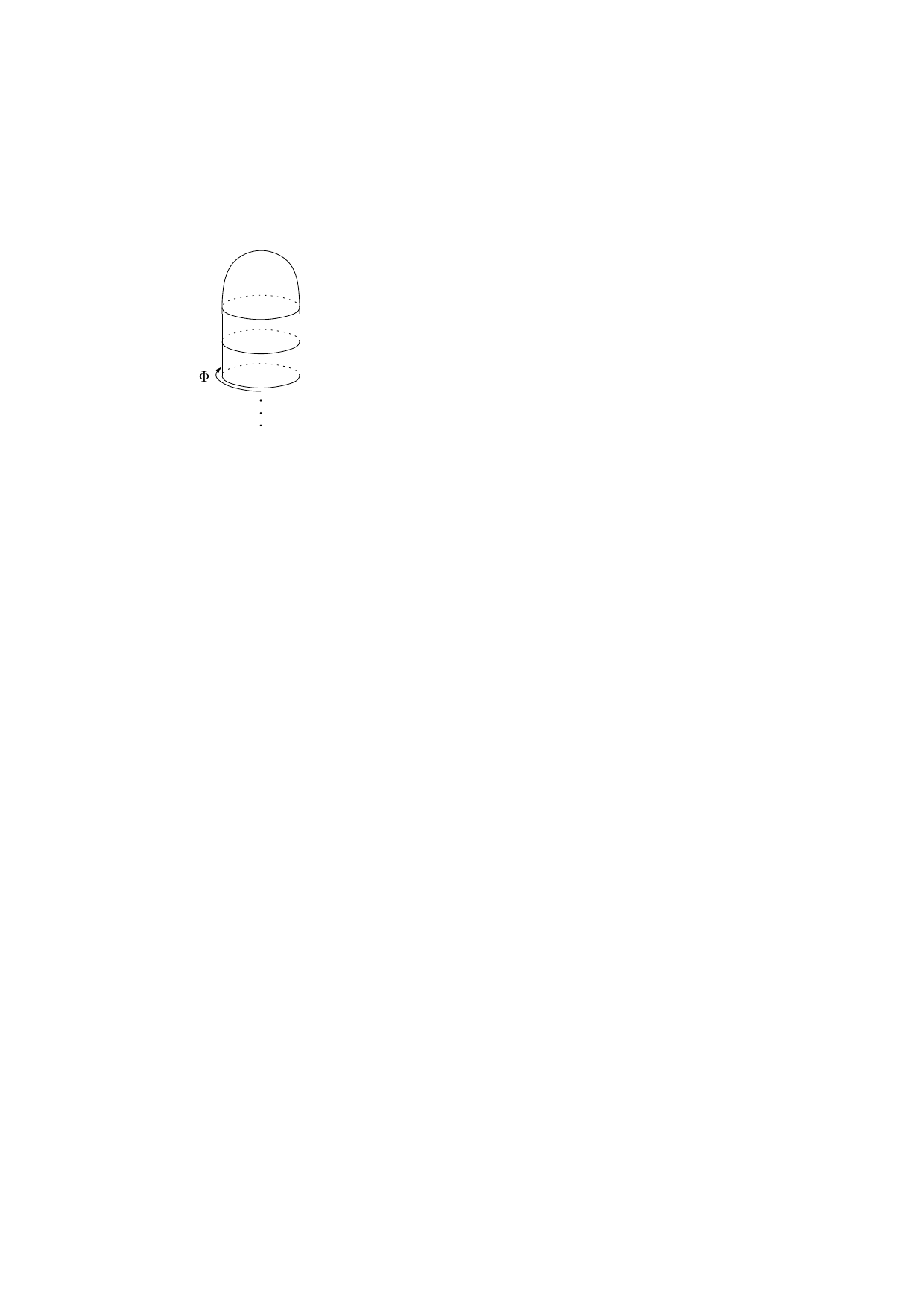}
\caption{The orientation line $\oo_{\Phi,x}$ is the dual of the Fredholm orientation line of the Cauchy--Riemann operator the Riemann surface illustrated above, extending the linearized $\bar\partial$-operator at the constant map $u:(-\infty,0]\times[0,1]\to X$ subject to $u(s,1)=\Phi(u(s,0))$ sending everything to $x$.}\label{orientationlinessc}
\end{figure}

A $1$-simplex $(H,J)\in\HJ_1^\reg(X)$ defines a chain map
\begin{equation}
F_{(H,J)}:CF^\bullet(X;H(0))\to CF^\bullet(X;H(1))
\end{equation}
by counting the zero-dimensional moduli spaces $\Mbar_1(H,J,\gamma^+,\gamma^-)$.
More generally, suppose $(H,J)\in\HJ_n^\reg(X)$ is an $n$-simplex with $n\geq 1$.
Now $\Mbar_n^{SC}=\F(\Delta^n)$ is a cube, and we can count (zero-dimensional components of) the inverse image in $\Mbar_n(H,J,\gamma^+,\gamma^-)$ of any of the $3^{n-1}$ strata of $\Mbar_n^{SC}$.
These counts define a chain map
\begin{equation}
\label{SChomotopyoperation}
F_{(H,J)}:CF^\bullet(X;H(0))\otimes C_{-\bullet}(\F(\Delta^n))\to CF^\bullet(X;H(n)).
\end{equation}

\begin{lemma}\label{scdiagramdegenerate}
The collection of maps \eqref{SChomotopyoperation} defines a diagram $\HJ_\bullet^\reg(X)\to\Ndg\Ch$ in the sense of Definition \ref{ndgdef}.
\end{lemma}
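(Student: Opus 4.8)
The plan is to verify directly that the maps $F_{(H,J)}$ of \eqref{SChomotopyoperation} satisfy the two compatibility conditions in Definition \ref{ndgdef}, namely (1) the composition/gluing relation associated to the product map $\F(\Delta^k)\times\F(\Delta^{q-k})\to\F(\Delta^q)$, and (2) naturality with respect to simplicial maps $\tau:\Delta^r\to\Delta^q$ preserving first and last vertices, together with the fact that each $F_{(H,J)}$ is itself a chain map. The chain map property is the codimension-one boundary analysis of the one-dimensional components of $\Mbar_n(H,J,\gamma^+,\gamma^-)$: the boundary consists of (a) genuine Floer breakings at $\pm\infty$, which produce the terms $d\circ F_{(H,J)}\pm F_{(H,J)}\circ d$, and (b) degenerations of the domain where some consecutive spacing $b_i\to\infty$, i.e.\ the strata \eqref{scboundaryI}, which are exactly the differential of $C_{-\bullet}(\F(\Delta^n))$ applied inside $F_{(H,J)}$; collecting signs this says $F_{(H,J)}$ is a chain map $CF^\bullet(X;H(0))\otimes C_{-\bullet}(\F(\Delta^n))\to CF^\bullet(X;H(n))$. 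Here one uses Proposition \ref{compactness} for compactness and the transversality from \S\ref{sctranssec} (including over facets, which is why we work in $\HJ^\reg_\bullet$).

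Next I would address condition (1). Over the boundary collar \eqref{sccollarI} the Floer data is, by the gluing compatibility built into Definition \ref{nsimplexfloer}, literally the splice of the data for $H_{0\cdots k}$ and $H_{k\cdots q}$; correspondingly the moduli space over this stratum is identified with the fiber product $\Mbar_k(H|_{0\cdots k},\ldots)\times\Mbar_{q-k}(H|_{k\cdots q},\ldots)$ of broken configurations, and the count of its zero-dimensional part factors as the composition of the two operations. The identification $\F(\Delta^k)\times\F(\Delta^{q-k})\hookrightarrow\F(\Delta^q)$ of \eqref{Fproduct} is precisely the codimension-one stratum inclusion in cube coordinates $(b_1,\ldots,b_{q-1})$ with $b_k=\infty$, so matching up which strata of $\F(\Delta^q)$ contribute to which tensor factor is bookkeeping in the $b$-coordinates. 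Condition (2) is even more formal: a simplicial map $\tau$ with $\tau(0)=0$, $\tau(r)=q$ pulls back an $n$-simplex of Floer data to a lower one by the face/degeneracy formulas \eqref{floerdatafacemaps}--\eqref{floerdatadegeneracymaps}, and since these are defined by restriction to (or pullback along forgetful maps from) the corresponding sub/quotient curves $\Cbar^{SC}_\bullet$, the induced moduli spaces agree under $\tau_\ast:\F(\Delta^r)\to\F(\Delta^q)$ of Remark \ref{adamspathsremark}; the degenerate case $q=0$ (where $f_\sigma$ is interpreted as the identity) corresponds to the fact that $\Mbar^{SC}_1=\RR\times S^1$ carries the trivial-cylinder stratum and the count there is the identity on $CF^\bullet$ by the usual transversality-of-trivial-cylinders remark.

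I would organize the write-up by first recording the standard orientation/sign conventions (citing \cite[\S 12--13]{seidelbook} and the analogous Floer-theoretic discussion in \cite{abouzaidganatra}) so that the signs in the boundary identification come out as the signs of the differential on $C_{-\bullet}(\F(\Delta^n))\otimes -$; then doing the chain-map computation; then the two compatibility identities. The main obstacle is really just the sign accounting in the boundary-of-the-moduli-space argument: one must check that the contributions of the domain-degeneration strata \eqref{scboundaryI} assemble with the correct signs into the cubical differential on $C_{-\bullet}(\F(\Delta^n))$ (so that what we get is genuinely a map $K\to\Ndg\Ch$ rather than to some sign-twisted variant), and that the gluing/collar identifications are compatible with the chosen strip-like coordinates so that no spurious automorphisms or orientation discrepancies appear. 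All the analytic input — compactness, transversality, gluing near $\partial\Mbar^{SC}_n$ — is already in place from Proposition \ref{compactness}, \S\ref{sctranssec}, and the construction of the moduli spaces, so the remaining work is combinatorial and is of the same nature as the verification that Floer continuation data defines an honest homotopy-coherent diagram; I would state it as such and relegate the sign verification to a remark pointing to the cited references, exactly as is done for the $\ainf$ relations in \S\ref{wcurvessec}.
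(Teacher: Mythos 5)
There is a genuine gap, and it sits exactly where the paper's proof does all of its work: the compatibility with \emph{degeneracy} maps. Your treatment of condition (2) asserts that for any endpoint-preserving $\tau$ the ``induced moduli spaces agree under $\tau_\ast$.'' That is true for face maps (injective $\tau$), where the Floer data literally restricts, but it is false for degeneracies: by \eqref{floerdatadegeneracymaps} the degenerate $(n+1)$-simplex $s_j(H,J)=\pi_j^\ast(H,J)$ carries an extra marked point $a_{j+1}$ which does not enter the equation, so its moduli spaces are one dimension \emph{larger} than those of $(H,J)$, not equal to them. Meanwhile, simpliciality of the map $\HJ_\bullet^\reg(X)\to\Ndg\Ch$ forces the operation attached to $s_j(H,J)$ to equal $F_{(H,J)}\circ(\kappa_j)_\ast$, and since $(\kappa_j)_\ast$ kills the top-dimensional generator of $C_{-\bullet}(\F(\Delta^{n+1}))$ when $n\geq 1$, one must \emph{prove} that the geometric count \eqref{SChomotopyoperation} for $s_j(H,J)$ vanishes on $[\F(\Delta^{n+1})]$ for $n>0$ and equals the identity for $n=0$ (this is \eqref{degenerateclaim}). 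Your proposal only addresses the $n=0$ identity case (via trivial cylinders) and never establishes the vanishing for $n>0$; without it the collection of maps is not known to be compatible with degeneracies, i.e.\ not a map of simplicial sets in the sense of Definition \ref{ndgdef}.

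The missing argument is the one the paper gives: a rigid solution for $\pi_j^\ast(H,J)$ projects, by forgetting $a_{j+1}$, to a solution for $(H,J)$ lying in a moduli space of dimension $-1$, which is empty by the transversality built into $\HJ_\bullet^\reg$ — \emph{unless} forgetting $a_{j+1}$ destabilizes the configuration, which (given that rigid transverse moduli contain no broken trajectories) can only happen when $n=0$ and the solution is an $\RR$-invariant trivial cylinder; those contribute exactly the identity. By contrast, the parts of your write-up devoted to the chain-map property and to the composition condition (1) are fine but are not where the content lies: the chain-map statement is already asserted when \eqref{SChomotopyoperation} is defined, and (1), as well as compatibility with face maps, is tautological from the gluing compatibilities imposed in Definition \ref{nsimplexfloer}, which is why the paper dismisses them in one line. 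So the fix is not more careful sign bookkeeping, but the forgetful-map dimension-and-stability analysis for degenerate simplices sketched above.
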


\begin{proof}
The compatibility conditions are all tautological except for the assertion that for any degenerate $(n+1)$-simplex $(H',J')$, we have
\begin{equation}\label{degenerateclaim}
F_{(H',J')}(-\otimes[\F(\Delta^{n+1})])=\begin{cases}0&n>0\\\id&n=0\end{cases}
\end{equation}
where $[\F(\Delta^{n+1})]$ denotes the top-dimensional generator of $C_{-\bullet}(\F(\Delta^{n+1}))$.
To prove \eqref{degenerateclaim}, argue as follows.

Say $(H',J')$ is obtained by pulling back an $n$-simplex $(H,J)$ under a surjection $\kappa_j:\Delta^{n+1}\to\Delta^n$, say mapping vertices $j+1$ and $j$ of $\Delta^{n+1}$ to the same vertex $j$ of $\Delta^n$ (any $0\leq j\leq n$).
Concretely, this means that our $(n+1)$-simplex $(H',J')$ is given by $\pi_j^\ast(H,J)$, where $\pi_j:\Cbar^{SC}_{n+1}\to\Cbar^{SC}_n$ is the map forgetting $a_{j+1}$ (see \eqref{floerdatadegeneracymaps}).
Thus every solution $(a_1,\ldots,a_{n+1},u)$ to \eqref{floereqnforsh} for $(H',J')=\pi_j^\ast(H,J)$ gives rise to a solution for $(H,J)$ simply by forgetting $a_{j+1}$.
This almost gives a map
\begin{equation}
\Mbar_{n+1}(\pi_j^\ast(H,J),\gamma^+,\gamma^-)\to\Mbar_n(H,J,\gamma^+,\gamma^-),
\end{equation}
except for the fact that forgetting $a_{j+1}$ might make the trajectory unstable.
Now we are interested in the case $\dim\Mbar_{n+1}(\pi_j^\ast(H,J),\gamma^+,\gamma^-)=0$, which means $\dim\Mbar_n(H,J,\gamma^+,\gamma^-)=-1$ and thus $\Mbar_n(H,J,\gamma^+,\gamma^-)=\varnothing$.
Thus to show \eqref{degenerateclaim}, we just need to analyze when forgetting $a_{j+1}$ can make the trajectory unstable.
Since $\Mbar_{n+1}(\pi_j^\ast(H,J),\gamma^+,\gamma^-)$ is cut out transversely and of dimension zero, it contains no split trajectories.
So, if there are any $a_i$ other than $a_{j+1}$, a given trajectory will remain stable upon forgetting $a_{j+1}$.
Thus the only remaining case is when $n=0$.
Now an unstable trajectory is one with an $\RR$-symmetry, i.e.\ a trivial cylinder, and these contribute exactly the identity map, as desired in \eqref{degenerateclaim}.
\end{proof}

Having completed the definition of the diagram of symplectic cochains \eqref{hjdiagram} over $\HJ_\bullet^\reg(X)$, we now define the symplectic cochain complex of a Liouville sector $X$ as the homotopy colimit (in the sense of \eqref{vanillahocolim} in \S\ref{subsechocolim}) of this diagram over $\HJ_\bullet^\reg(X)$, namely
\begin{equation}\label{scchainspre}
SC^\bullet(X,\partial X):=\hocolim_{\HJ_\bullet^\reg(X)}CF^\bullet(X;-).
\end{equation}
Note that since $\HJ_\bullet^\reg$ is filtered, the homology $SH^\bullet(X,\partial X)$ may be computed by taking the direct limit of $HF^\bullet(X;H)$ over any cofinal collection of $(H,J)$.

To define pushforward maps on $SH^\bullet$ for inclusions of Liouville sectors, observe that these diagrams of symplectic cochain complexes over $\HJ_\bullet^\reg(X)$ generalize directly to $\HJ_\bullet^\reg(X_0,\ldots,X_r)$.
Namely, using the forgetful maps $\HJ_\bullet^\reg(X_0,\ldots,X_r)\to\HJ_\bullet^\reg(X_i)$, we obtain via pullback diagrams $CF^\bullet(X_i;-)$ over $\HJ_\bullet^\reg(X_0,\ldots,X_r)$ for each $0\leq i \leq r$. By Lemma \ref{scconfinecurves}, there are inclusions of diagrams (over $\HJ_\bullet^\reg(X_0,\ldots,X_r)$)
\begin{equation}\label{shsubcomplexinclusion}
CF^\bullet(X_0;-)\subseteq\cdots\subseteq CF^\bullet(X_r;-).
\end{equation}
Now the functoriality of $SH^\bullet$ for inclusions of Liouville sectors may be defined by considering
\begin{equation}\label{scsimplefunctoriality}
\hocolim_{\HJ_\bullet^\reg(X)}CF^\bullet(X;-)\xleftarrow\sim\hocolim_{\HJ_\bullet^\reg(X,X')}CF^\bullet(X;-)\to\hocolim_{\HJ_\bullet^\reg(X')}CF^\bullet(X';-).
\end{equation}
The left arrow is a quasi-isomorphism by Lemma \ref{cofinalityI} since the forgetful map $\HJ_\bullet^\reg(X,X')\to\HJ_\bullet^\reg(X)$ is cofinal by Corollary \ref{hjfiltered}, and hence this defines a map $SH^\bullet(X,\partial X)\to SH^\bullet(X',\partial X')$.

\newlength{\lengthofbiggerhocolim}
\settowidth\lengthofbiggerhocolim{$\displaystyle\hocolim_{\HJ_\bullet^\reg(X_0,\ldots, \widehat{X_i}, \ldots, X_r)}$}

To define symplectic cochain complexes which are functorial under inclusions of Liouville sectors, note that there are natural maps
\begin{align}
\label{schjforgetxi}\hocolim_{\HJ_\bullet^\reg(X_0,\ldots,X_r)}CF^\bullet(X_0;-)&\to{\hocolim_{\HJ_\bullet^\reg(X_0,\ldots, \widehat{X_i}, \ldots, X_r)}}\,CF^\bullet(X_0;-)\qquad i>0\\
\label{schjforgetxifirst}\hocolim_{\HJ_\bullet^\reg(X_0,\ldots,X_r)}CF^\bullet(X_0;-)&\to\makebox[\lengthofbiggerhocolim][c]{$\displaystyle\hocolim_{\HJ_\bullet^\reg(X_1,\ldots, X_r)}$}\,CF^\bullet(X_1;-)
\end{align}
induced by the forgetful maps
$\HJ_\bullet^\reg(X_0,\ldots,X_r) \to \HJ_\bullet^\reg(X_0,\ldots,\widehat{X_i},
\ldots,X_r)$ (for $i>0$ this map of indexing simplicial sets is covered by ``the identity map'' on diagrams, and for $i=0$ it is covered by the natural inclusion $CF^\bullet(X_0; -) \subseteq CF^\bullet(X_1; -)$).
Since $\HJ_\bullet^\reg(X_0,\ldots,X_r)\to\HJ_\bullet^\reg(X_0,\ldots,X_{r-1})$ is cofinal, it follows that \eqref{schjforgetxi} is a quasi-isomorphism for $i=r$, which in turn implies that \eqref{schjforgetxi} is a quasi-isomorphism for all $i>0$.

We now consider the alternative definition
\begin{equation}\label{scchains}
SC^\bullet(X,\partial X):=\hocolim_{\begin{smallmatrix}X_0\subseteq\cdots\subseteq X_r\subseteq X\\X_i\text{ Liouville sectors}\end{smallmatrix}}\hocolim_{\HJ_\bullet^\reg(X_0,\ldots,X_r)}CF^\bullet(X_0;-),
\end{equation}
in which the outer $\hocolim$ (over the poset of Liouville subsectors of $X$) is taken in the sense of \eqref{hocolimbarycentricabuse}, which applies in this case since \eqref{schjforgetxi} is a quasi-isomorphism for $i>0$.
Concretely, \eqref{scchains} is the direct sum of $\hocolim_{\HJ_\bullet^\reg(X_0,\ldots,X_r)}CF^\bullet(X_0;-)$ over all chains $X_0\subseteq\cdots\subseteq X_r$ of Liouville subsectors of $X$, equipped with the differential which is (the internal differential of each direct summand, plus) maps \eqref{schjforgetxi}--\eqref{schjforgetxifirst} forgetting some $X_i$ (with appropriate signs).
It follows from Lemma \ref{finalobjectII} that the inclusion of the former version \eqref{scchainspre} of $SC^\bullet$ into the latter version \eqref{scchains} as the subcomplex $r=0$ and $X_0=X$ is a quasi-isomorphism.

The latter version \eqref{scchains} of $SC^\bullet$ is emminently functorial with respect to inclusions of Liouville sectors, and the induced maps on cohomology coincide with those defined by \eqref{scsimplefunctoriality}.

\subsection{Properties}

Let $X$ be a Liouville sector.
Let $V$ be a contact vector field on $\partial_\infty X$ which near the boundary is a cutoff Reeb vector field (see \S\ref{reebdynamicsboundary}) and whose time $2\pi$ flow has no fixed points (on the symplectization).
There is an invariant
\begin{equation}
SH^\bullet(X,\partial X)^{<V}
\end{equation}
namely the Hamiltonian Floer cohomology of some (any) Hamiltonian arising from
applying Lemma \ref{constructH} to $V$ (this is well-defined by our earlier
arguments).  There are continuation maps
\begin{equation}
SH^\bullet(X,\partial X)^{<V_1}\to SH^\bullet(X,\partial X)^{<V_2}
\end{equation}
for $V_1\leq V_2$ (meaning $\alpha(V_1)\leq\alpha(V_2)$ for any/all contact forms $\alpha$).
This order on $V$ is filtered (obviously), and we have, by definition, that
\begin{equation}
SH^\bullet(X,\partial X)=\varinjlim_VSH^\bullet(X,\partial X)^{<V}.
\end{equation}
There are also pushforward maps
\begin{equation}\label{filteredpushforward}
SH^\bullet(X,\partial X)^{<V}\to SH^\bullet(X',\partial X')^{<V'}
\end{equation}
for $X\hookrightarrow X'$ and $V\leq V'|_{X}$ with strict inequality over $\partial X$ (strict inequality over $\partial X$ is used to ensure that the admissible Hamiltonians associated to $V'$ will be larger than the admissible Hamiltonians associated to $V$ plus a constant).

\begin{lemma}\label{isoifnoneworbits}
Suppose $V_1\leq V_2$ and $tV_1+(1-t)V_2$ has no periodic orbits of period $2\pi$ for any $t\in[0,1]$.
Then, the continuation map $SH^\bullet(X,\partial X)^{<V_1}\to SH^\bullet(X,\partial X)^{<V_2}$ is an isomorphism.
\end{lemma}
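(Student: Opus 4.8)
The plan is to realize the continuation map by an explicit monotone family of Floer data and then run the standard ``no bifurcation'' argument. Since $\HJ_\bullet$ is a filtered $\infty$-category (Corollary~\ref{hjfiltered}), the map $SH^\bullet(X,\partial X)^{<V_1}\to SH^\bullet(X,\partial X)^{<V_2}$ does not depend on which $1$-simplex of $\HJ_\bullet$ represents it, so we are free to compute it with any convenient choice. Consider the family of contact vector fields $V^\tau:=(1-\tau)V_1+\tau V_2$, $\tau\in[0,1]$. Each $V^\tau$ is again a contact vector field which near $\partial(\partial_\infty X)$ is a cutoff Reeb vector field (the defining inequality $G>0$ on the contact Hamiltonian $f^2G$ of a cutoff Reeb vector field is convex in $G$), and by hypothesis the time-one flow of $V^\tau$ has no fixed points, so $V^\tau$ is a legitimate input to the construction of Lemma~\ref{constructH}. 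Applying that construction to the whole family $V^\tau$ at once --- and choosing the interior part of the underlying pre-admissible Hamiltonian to be independent of $\tau$, which is permitted since $V^\tau$ only constrains the data near $\partial X$ and near infinity --- produces a monotone family $\{H^\tau\}_{\tau\in[0,1]}$ of admissible, adapted, dissipative Hamiltonians with the property that $H^0$ (resp.\ $H^1$) is a Hamiltonian arising from Lemma~\ref{constructH} applied to $V_1$ (resp.\ $V_2$) and hence computes $SH^\bullet(X,\partial X)^{<V_1}$ (resp.\ $SH^\bullet(X,\partial X)^{<V_2}$). Because $V_1\le V_2$, we may further arrange $\partial_\tau H^\tau\ge 0$ and take all $H^\tau$ to agree outside a fixed region that is cylindrical near infinity, differing from one another only over the linear-at-infinity locus and near $\partial X$; over the linear locus we keep the radial profile convex, so that the ``knee'' sweeps monotonically from the slope of $V_1$ to that of $V_2$. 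Dissipative homotopies between nearby members are supplied by Proposition~\ref{hjcontractible}, and a small further perturbation puts everything in $\HJ_\bullet^\reg$.

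The crucial point is that the set of $1$-periodic orbits of $H^\tau$ is contained in a fixed compact subset $K\subseteq X\setminus\partial X$ and in fact is independent of $\tau$. Indeed, near $\partial X$ the dynamics of $H^\tau$ is a cutoff of $V^\tau$, whose flow has no periodic orbit intersecting a neighborhood of $\partial X$ by Proposition~\ref{cutofflemma} (cf.\ Lemma~\ref{compactcutoff}); over the linear-at-infinity locus, a $1$-periodic orbit passing through the convex knee at slope matching some $V^\tau$ would be a period-$1$ closed orbit of $V^\tau$, of which there are none by hypothesis; and over the fixed interior region the Hamiltonian does not vary with $\tau$, so its fixed points do not move. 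Combining this with Lemma~\ref{scconfinecurves} and Proposition~\ref{compactness} shows that all relevant Floer trajectories --- for the differentials of the $H^\tau$ and for the small monotone continuation homotopies between consecutive $H^{\tau_i}$ --- are confined to a fixed compact subset of $X\setminus\partial X$, uniformly in $\tau$.

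It remains to conclude by the usual no-bifurcation argument. After a $C^\infty$-small perturbation we may assume all $1$-periodic orbits of all $H^\tau$ are non-degenerate; since $[0,1]$ is compact and the orbit set is constant and confined, the orbits vary continuously in $\tau$ without colliding, so there is a subdivision $0=\tau_0<\cdots<\tau_m=1$ such that for each $i$ the Hamiltonians $H^{\tau_i}$ and $H^{\tau_{i+1}}$ are $C^\infty$-close with their orbit sets in action-preserving bijection up to arbitrarily small error. For each $i$ the monotone continuation map $CF^\bullet(X;H^{\tau_i})\to CF^\bullet(X;H^{\tau_{i+1}})$ preserves the action filtration (since $\partial_sH\le0$ along the homotopy gives $E^\geo\le E^\top$, cf.\ \eqref{poswrappinggood}), and on the associated graded it is, up to sign, the identity pairing each orbit with its small deformation (the only index-zero solution with the requisite small topological energy is the nearly constant one); hence it is an isomorphism on cohomology. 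Composing over $i$ yields an isomorphism, which by the composition property of continuation maps together with the filteredness of $\HJ_\bullet$ coincides with the continuation map $SH^\bullet(X,\partial X)^{<V_1}\to SH^\bullet(X,\partial X)^{<V_2}$. The main technical point requiring care is the second paragraph: verifying that the intermediate data $H^\tau$ and the homotopies between them genuinely lie in $\HJ_\bullet^\reg$ (in particular that compatible dissipation data can be chosen along the family) and that the $1$-periodic orbit set is truly stable across the three regimes --- the fixed interior, the cutoff region near $\partial X$, and the linear region near infinity --- and across their transition zones.
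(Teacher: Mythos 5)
Your overall strategy---realize the map by monotone Floer data and conclude via the action filtration, keeping the $1$-periodic orbits fixed while the behavior at infinity sweeps from $V_1$ to $V_2$---is the same as the paper's, which does this in a single step: it constructs $\tilde H_1\leq\tilde H_2$ agreeing near all of their periodic orbits and observes that a continuation map with $-\partial_sH\geq 0$ is, by \eqref{poswrappinggood}, the identity plus an action-shifting term. But there is a genuine gap at exactly the point you defer to at the end, and it is the heart of the matter: you must show that no $1$-periodic orbits appear in the region where the Hamiltonian is actually being changed, namely the radial transition zone between the fixed interior datum and the locus where the data is linear of slope $\alpha(V^\tau)$ (your ``knee''). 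Your justification---that an orbit passing through the knee ``at slope matching some $V^\tau$ would be a period-$1$ closed orbit of $V^\tau$''---is not correct as stated: in that zone the Hamiltonian has the form $e^s c_s$ with an $s$-dependent slope function $c_s$, so it is not linear, and its Hamiltonian vector field is not the lift of any single contact vector field on the segment; it differs from such a lift by a term proportional to $\partial_s c_s$ times the Reeb vector field (in the paper's computation, $X_{e^s((1-\varphi(s))A+\varphi(s)B)}=(1-\varphi(s))X_{e^sA}+\varphi(s)X_{e^sB}+(B-A)\varphi'(s)\R_\alpha$). Hence the no-orbits hypothesis on the segment $tV_1+(1-t)V_2$ does not by itself rule out orbits in the transition zone. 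The paper's proof consists precisely of resolving this: interpolate with $\varphi'$ small, so the flow is close to that of an interpolated contact vector field and (by a confinement/openness argument in the spirit of Lemma \ref{compactcutoff}) still has no period-one orbits, and push the interpolation region out to infinity (replacing $\varphi(s)$ by $\varphi(s-s_0)$) so that the two Hamiltonians fed into Lemma \ref{constructH} agree near all of their periodic orbits. Without this step, or a substitute for it, your claim that the orbit set is independent of $\tau$---on which everything else in your argument rests---is unproven.

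Two smaller points. First, the blanket assertion that a $C^\infty$-small perturbation makes all $1$-periodic orbits of all $H^\tau$ simultaneously nondegenerate is false for a generic one-parameter family (birth--death bifurcations occur); it is salvageable only if, as you claim, the orbit set is genuinely $\tau$-independent and contained in the region where the $H^\tau$ agree, in which case a single perturbation of the common interior datum suffices. Second, once that claim is in place your subdivision is unnecessary: a single monotone continuation map from $H^0$ to $H^1$ already preserves the action filtration and equals the identity on the common orbits plus a strictly action-shifting term, which is exactly the paper's argument. Relatedly, ``fixed points do not move because the Hamiltonian does not vary on the fixed interior region'' requires that the time-one trajectories through that region remain where the Hamiltonians agree, which again comes down to controlling the transition zone.
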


\begin{proof}
Let $H_1:X\to\RR$ be linear at infinity corresponding to $V_1$.
We will construct a Hamiltonian $H_2:X\to\RR$ corresponding to $V_2$ at infinity along with smoothings $\tilde H_1,\tilde H_2$ as in Lemma \ref{constructH} satisfying the following properties:
\begin{itemize}
\item $\tilde H_1\leq\tilde H_2$.
\item $\tilde H_1=\tilde H_2$ over a neighborhood of their periodic orbits.
\end{itemize}
Now the map $SH^\bullet(X,\partial X)^{<V_1}\to SH^\bullet(X,\partial X)^{<V_2}$ is realized by the continuation map $HF^\bullet(X;\tilde H_1)\to HF^\bullet(X;\tilde H_2)$.
We may choose Floer data for this continuation map (i.e.\ a $1$-simplex in $\HJ_\bullet^\reg$) for which $-\partial_sH\geq 0$ (non-negative wrapping).
Energy considerations and \eqref{poswrappinggood} thus imply that this continuation map is the identity map plus a map which strictly decreases action, and hence is an isomorphism.
It is thus enough to construct $\tilde H_1$ and $\tilde H_2$ as above.

Fix a contact form $\alpha$ on $Y=\partial_\infty X$, thus fixing symplectization coordinates $(\RR\times Y,e^s\alpha)\to(X,\lambda)$ near infinity.
Let us denote the Hamiltonians for $V_1$ and $V_2$ by $e^sA=:H_1$ and $e^sB$ for $A,B:Y\to\RR$, which by assumption have no periodic orbits of period $2\pi$.
We now consider the interpolation $H_2:=e^s((1-\varphi(s))A+\varphi(s)B)$ for some $\varphi:\RR\to[0,1]$ such that $\varphi(s)=0$ for $s$ sufficiently negative and $\varphi(s)=1$ for $s$ sufficiently positive.
We claim that for $\varphi'(s)$ sufficiently small, this interpolation also has no periodic orbits of period $2\pi$.
To see this, we calculate
\begin{equation}\label{interpolatedvf}
X_{e^s((1-\varphi(s))A+\varphi(s)B)}=(1-\varphi(s))X_{e^sA}+\varphi(s)X_{e^sB}+\varphi'(s)(B-A)\R_\alpha.
\end{equation}
Note that each of the terms $X_{e^sA}$, $X_{e^sB}$, and $(B-A)\R_\alpha$ are $Z$-invariant vector fields.
As in the proof of Lemma \ref{compactcutoff}, we fix a defining function $I$ and consider its derivatives with respect to each of these vector fields.
Since $X_I$ is outward pointing, we conclude that $X_{e^sA}I$ and $X_{e^sB}I$ vanish transversally on $\partial X$ and are positive in the interior, and $(B-A)\R_\alpha I$ vanishes to second order on $\partial X$.
Since these functions are all linear at infinity, we conclude that the derivative of $I$ by \eqref{interpolatedvf} is positive over $(\Nbd^Z\partial X)\setminus\partial X$, where the size of the neighborhood depends only on an upper bound on $\varphi'(s)$.
In particular, there are no periodic orbits contained entirely inside this neighborhood $(\Nbd^Z\partial X)\setminus\partial X$.
For $\varphi'(s)$ sufficiently small, this vector field \eqref{interpolatedvf} is also very close to $\varphi(s)V_1+(1-\varphi(s))V_2$ and hence has no periodic orbits of period $2\pi$ which intersect an arbitrarily large compact subset of the interior of $\partial_\infty X$.
Thus we have shown that this interpolated Hamiltonian $H_2$ coincides with $H_1=e^sA$ for $s$ small, equals $e^sB$ for $s$ large, and has no periodic orbits of period $2\pi$.

Finally, we need to apply Lemma \ref{constructH} to produce admissible $\tilde H_1\leq\tilde H_2$ from our pre-admissible $H_1\leq H_2$.
To see that this creates no new orbits near the boundary, note that the reasoning in Lemma \ref{constructH} about $X_{\tilde H}I$ for $\frac 12$-defining functions $I$ applies equally well to ($1$-)defining functions $I$ as considered above and shows that $X_{\tilde H}I\geq c\cdot\max(1,R)e^{\frac 12s}$ over $\Nbd^Z\partial X$ for some $c>0$.
Finally, to ensure that the region where $\tilde H_1<\tilde H_2$ is sufficiently close to infinity and thus away from the periodic orbits, we should replace $\varphi(s)$ with $\varphi(s-s_0)$ for sufficiently large $s_0$.
\end{proof}

\begin{corollary}
Up to canonical isomorphism, $SH^\bullet(X,\partial X)^{<V}$ is invariant under deformation of $V$ through contact vector fields with no periodic orbits of period $2\pi$ on the symplectization.
\end{corollary}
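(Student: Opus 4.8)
The plan is to deduce this from Lemma \ref{isoifnoneworbits} by a standard ``locally directed system'' argument. Write $\mathcal V$ for the set of contact vector fields on $\partial_\infty X$ which near $\partial(\partial_\infty X)$ are cutoff Reeb vector fields (\S\ref{reebdynamicsboundary}) and whose time one flow on the symplectization has no fixed points; this is exactly the set of $V$ for which $SH^\bullet(X,\partial X)^{<V}$ is defined, and the continuation maps make $SH^\bullet(X,\partial X)^{<-}$ into a functor on the poset $(\mathcal V,\leq)$. The first step is to observe that $\mathcal V$ is \emph{open} in the affine space of contact vector fields that are cutoff Reeb near $\partial(\partial_\infty X)$: by the dynamics described in \S\ref{reebdynamicsboundary} (in particular Proposition \ref{cutofflemma} and Lemma \ref{compactcutoff}, which are stable under small perturbation), the period one orbits of any such vector field are confined to a fixed compact region of the interior of $\partial_\infty X$, uniformly for $V$ in a neighborhood, and on a fixed compact region nonexistence of fixed points of the time one return map is manifestly an open condition.

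Given a deformation $\{V_t\}_{t\in[0,1]}$ in $\mathcal V$, I would next subdivide $[0,1]$ into small intervals $[t_{k-1},t_k]$ and, for each, produce an auxiliary $W_k\in\mathcal V$ with $V_{t_{k-1}}\leq W_k$ and $V_{t_k}\leq W_k$ lying in a convex open neighborhood $\mathcal U_k\subseteq\mathcal V$ of $V_{t_k}$. To build $W_k$ one adds to $V_{t_{k-1}}$ a small nonnegative contact Hamiltonian $h$, positive in the interior and vanishing to second order along $\partial(\partial_\infty X)$ (so that $V_{t_{k-1}}+V_h$ is again cutoff Reeb near the boundary, hence in $\mathcal V$), chosen to dominate the difference of the contact Hamiltonians of $V_{t_{k-1}}$ and $V_{t_k}$; this difference itself vanishes to second order near $\partial(\partial_\infty X)$, since both Hamiltonians are cutoff Hamiltonians there, which is why such an $h$ can be taken small enough that $W_k\in\mathcal U_k$. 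Then, because $\mathcal U_k$ is convex and contained in $\mathcal V$, the straight line segments from $V_{t_{k-1}}$ and from $V_{t_k}$ to $W_k$ consist of vector fields with no periodic orbit of period one, so Lemma \ref{isoifnoneworbits} applies: the two continuation maps into $SH^\bullet(X,\partial X)^{<W_k}$ are isomorphisms. Composing one with the inverse of the other, and then composing over all $k$, gives an isomorphism $SH^\bullet(X,\partial X)^{<V_0}\xrightarrow{\ \sim\ }SH^\bullet(X,\partial X)^{<V_1}$.

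Finally I would check that this isomorphism is canonical, i.e.\ independent of the subdivision and of the $W_k$, and depends only on the homotopy class of $\{V_t\}$ rel endpoints. Independence of $W_k$ follows from the fact that any two admissible choices $W_k,W_k'\in\mathcal U_k$ admit (by the same construction) a common upper bound $W_k''\in\mathcal U_k$ with all segments orbit-free, together with the fact that continuation maps compose and are isomorphisms into $SH^\bullet(X,\partial X)^{<W_k''}$ by Lemma \ref{isoifnoneworbits}; independence of the subdivision follows by passing to common refinements; and a homotopy of paths rel endpoints can be cut into a fine grid of squares each lying inside a single convex $\mathcal U$, on which the same reasoning gives agreement. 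Compatibility with constant paths (the identity) and with concatenation (composition of isomorphisms) is then immediate. I expect the only genuinely nontrivial point to be the construction of the common upper bounds $W_k$ while staying inside $\mathcal V$ and respecting the cutoff Reeb normal form near $\partial(\partial_\infty X)$; the rest is the routine bookkeeping of continuation maps, along the lines already used in the proofs of Lemma \ref{isoifnoneworbits} and Corollary \ref{hjfiltered}.
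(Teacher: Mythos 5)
Your proposal is correct and is essentially the paper's own argument: the paper's (very terse) proof likewise observes that having no period-one orbits is an open condition among vector fields that are cutoff Reeb near the boundary (citing Lemma \ref{compactcutoff}) and then defines the deformation isomorphism by composing continuation maps and their inverses, which are isomorphisms by Lemma \ref{isoifnoneworbits}. Your write-up merely makes explicit the subdivision, the construction of common upper bounds $W_k$, and the canonicity checks that the paper leaves implicit.
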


\begin{proof}
Note that the property of having no periodic orbits of period $2\pi$ is an open condition among contact vector fields which are cutoff Reeb near the boundary by Lemma \ref{compactcutoff}.
Using this, we can define the deformation isomorphism in terms of the pushforward maps \eqref{filteredpushforward} and their inverses, using the fact that these are isomorphisms by Lemma \ref{isoifnoneworbits}.
\end{proof}

\begin{lemma}\label{calculateSHwithsimpleboundaryreeb}
Fix coordinates near $\partial(\partial_\infty X)$ as in \S\ref{reebdynamicsboundary} and fix $M:\RR_{\geq 0}\to\RR_{\geq 0}$ (defined in a neighborhood of $[0,t_0]$) which is admissible in the sense of \S\ref{reebdynamicsboundary}.
The natural map
\begin{equation}\label{goodtoarbitraryboundary}
\varinjlim_{\begin{smallmatrix}V\\V|_{[0,t_0+\varepsilon]}=V_M\end{smallmatrix}}SH^\bullet(X,\partial X)^{<V}\to SH^\bullet(X,\partial X)
\end{equation}
is an isomorphism, where the direct limit on the left is over those $V$ which coincide with $V_M$ (the cutoff Reeb vector field defined by $M(t)$) over $\partial(\partial_\infty X)\times[0,t_0+\varepsilon]$ for some $\varepsilon>0$.
\end{lemma}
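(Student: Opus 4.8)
Lemma \ref{calculateSHwithsimpleboundaryreeb} asserts that, when computing the direct limit $SH^\bullet(X,\partial X)=\varinjlim_V SH^\bullet(X,\partial X)^{<V}$ over all cutoff Reeb vector fields $V$ with no periodic orbits, it suffices to restrict to the cofinal subfamily of those $V$ which agree with a fixed admissible model $V_M$ near $\partial(\partial_\infty X)$. The plan is to prove this by exhibiting the indexing poset $\{V : V|_{[0,t_0+\varepsilon]}=V_M\}$ as a cofinal sub-poset of the poset of all allowable $V$, and then invoking the fact that a direct limit over a filtered poset may be computed over any cofinal subposet (the analogue of Lemma \ref{cofinalityI} at the level of ordinary direct limits, applied to the direct system $V\mapsto SH^\bullet(X,\partial X)^{<V}$ from \S\ref{secsympcochaindiagrams}).

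The first step is to make precise which $V$ we are taking the limit over on the right-hand side: these are contact vector fields on $\partial_\infty X$ which near $\partial(\partial_\infty X)$ are cutoff Reeb vector fields in the sense of \S\ref{reebdynamicsboundary} and whose time-one flow on the symplectization has no fixed points. The second (and essentially only substantive) step is cofinality: given any such $V$, I must produce $V'$ with $V\leq V'$ (meaning $\alpha(V)\leq\alpha(V')$ for any contact form $\alpha$), with $V'$ still free of period-one orbits, and with $V'|_{\partial(\partial_\infty X)\times[0,t_0+\varepsilon]}=V_M$ for some $\varepsilon>0$. To build such a $V'$: away from the collar $\partial(\partial_\infty X)\times[0,t_0+\varepsilon']$ take $V'$ to be a large multiple of (or a vector field dominating) $V$; inside the collar, interpolate down to $V_M$. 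The key point is that, by Proposition \ref{cutofflemma} and the dynamics discussed there, \emph{any} admissible cutoff Reeb vector field forbids periodic orbits from entering the collar $\partial Y\times\RR_{0\le t\le\delta}$ and then exiting; consequently, as long as $V'$ restricted to the collar is of the form $V_M$ (admissible) and the interpolation region lies in the collar, no new periodic orbits are created — any periodic orbit of $V'$ would have to avoid the collar entirely, hence be a periodic orbit of the part of $V'$ outside the collar, which we may take to be a large multiple of $V$ composed with the original $V$ away from $\partial(\partial_\infty X)$, where by Lemma \ref{compactcutoff} the orbit structure is under control and the absence of period-one orbits of $V$ is inherited (possibly after rescaling, using that having no period-one orbits is an open condition, cf.\ the Corollary following Lemma \ref{isoifnoneworbits}). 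One also needs $V\le V'$ pointwise, which is automatic by taking $V'$ large enough outside the collar and noting $V_M\ge 0$ suffices inside (after arranging $\alpha(V)$ is small there, which one may do since $V$ is a cutoff vector field vanishing on $\partial(\partial_\infty X)$, or simply by further enlarging $V'$). Having established that the subfamily is cofinal and that the direct system is filtered, the map \eqref{goodtoarbitraryboundary} is an isomorphism by general nonsense about filtered colimits.

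The main obstacle is the period-one-orbit bookkeeping in the interpolation region of the second step: one must verify that gluing a large rescaling of $V$ (outside the collar) to $V_M$ (inside the collar) through an interpolation supported in the collar produces a vector field with \emph{no} period-one orbits, not merely one whose orbits are confined. This is exactly the kind of argument carried out in the proof of Lemma \ref{wrappedtrivialiso} and in Lemma \ref{isoifnoneworbits}: the collar dynamics of Proposition \ref{cutofflemma} ensure that trajectories entering the collar never return to the interior, so no periodic orbit can pass through the interpolation region, and the remaining orbits are governed by the behavior outside the collar, where we have arranged (by rescaling and openness of the no-period-one-orbit condition) that there are none of period one. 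Once this is in hand, everything else is formal.
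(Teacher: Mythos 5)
There is a genuine gap, and it is at the heart of the lemma. Your plan is to show that the subfamily $\{V':V'|_{[0,t_0+\varepsilon]}=V_M\}$ is cofinal (upward, for the order $\leq$ given by $\alpha(V)\leq\alpha(V')$) in the poset of all allowable $V$, and then invoke generalities about filtered colimits. But that subfamily is \emph{not} cofinal: every one of its members has contact Hamiltonian exactly $M(t)$ on the fixed collar $\partial(\partial_\infty X)\times[0,t_0]$, so it can never dominate an allowable $V$ whose Hamiltonian there is, say, $2M(t)$ (and such $V$ certainly occur in the full direct system --- indeed they must, or the right-hand side would not wrap past $V_M$ near the boundary). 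Your attempted repairs do not work: ``arranging $\alpha(V)$ to be small on the collar'' is not available because $V$ is given, not chosen (it vanishes only at $t=0$, not on $[0,t_0]$), and ``further enlarging $V'$'' is impossible precisely where it is needed, since $V'$ is pinned to $V_M$ on the collar. So the isomorphism \eqref{goodtoarbitraryboundary} cannot be obtained by pure cofinality; some Floer-theoretic input is unavoidable.

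The paper's proof supplies exactly that input, and compares in the opposite direction. For a cofinal family of $V_2$ in the \emph{full} poset --- those given over $[0,t_0+\varepsilon]$ by an admissible $N(t)\geq M(t)$ --- one builds $V_1$ \emph{below} $V_2$ lying in the subfamily: $V_1=V_2$ away from the collar, and on the collar $V_1$ has Hamiltonian $M(t)+\varphi(t)(N(t)-M(t))$ with $\varphi$ a cutoff vanishing for $t\leq t_0$. One then shows the continuation map $SH^\bullet(X,\partial X)^{<V_1}\to SH^\bullet(X,\partial X)^{<V_2}$ is an \emph{isomorphism} by Lemma \ref{isoifnoneworbits}: the interpolation $tV_1+(1-t)V_2$ differs from $V_2$ only in the boundary collar, where Proposition \ref{cutofflemma} forbids trajectories from entering and leaving, so no period-one orbits appear during the homotopy. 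Taking the direct limit of these isomorphisms over $N$ and $V_2$ yields \eqref{goodtoarbitraryboundary}. This isomorphism statement --- that extra wrapping near $\partial(\partial_\infty X)$ beyond $V_M$ changes nothing because it creates no new orbits --- is precisely the content your ``general nonsense'' step skips, and it cannot be recovered by cofinality alone.
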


\begin{proof}
Let $N:\RR_{\geq 0}\to\RR_{\geq 0}$ be admissible, and suppose $N(t)\geq M(t)$.
Fix any $V_2$ which is given by the contact Hamiltonian $N(t)$ over $[0,t_0+\varepsilon]$ for some $\varepsilon>0$.
Now let $\varphi:\RR\to[0,1]$ be a cutoff function with $\varphi(x)=0$ for $x\leq t_0$ and $\varphi(x)=1$ for $x\geq t_0+\varepsilon$.
We define $V_1$ to coincide with $V_2$ away from $[0,t_0+\varepsilon]$ and to be given by the contact Hamiltonian $M(t)+\varphi(t)\cdot(N(t)-M(t))$ over $[0,t_0+\varepsilon]$.

With these choices of $V_1$ and $V_2$, the continuation map
\begin{equation}\label{isoconstructed}
SH^\bullet(X,\partial X)^{<V_1}\to SH^\bullet(X,\partial X)^{<V_2}
\end{equation}
is an isomorphism by Lemma \ref{isoifnoneworbits}.
Indeed, $V_1=V_2$ except over a neighborhood of $\partial(\partial_\infty X)$, and trajectories passing through this neighborhood cannot be closed by Proposition \ref{cutofflemma}.
Taking the direct limit of \eqref{isoconstructed} over $N(t)$ and $V_2$, we obtain \eqref{goodtoarbitraryboundary} by cofinality.
\end{proof}

\begin{proposition}\label{trivsciso}
Let $X\subseteq X'$ be a trivial inclusion of Liouville sectors.
The induced map $SH^\bullet(X,\partial X)\to SH^\bullet(X',\partial X')$ is an isomorphism.
\end{proposition}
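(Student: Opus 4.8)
The argument follows the template of Lemma~\ref{wrappedtrivialiso}. The plan is first to reduce to a model inclusion: since a trivial inclusion is, by definition (after absorbing the deformation into a composition of small steps), a composition of arbitrarily small inclusions, and $SH^\bullet(-,\partial)$ is covariantly functorial under composition via the chain-level construction \eqref{scchains}, it suffices to treat the case $X=\{t\geq 0\}\hookrightarrow X'=\{t\geq -a\}$ written in collar coordinates near $\partial(\partial_\infty X')$ as in \S\ref{reebdynamicsboundary}; then $X'\setminus X$ is the collar $\{-a\leq t<0\}$, with $\partial X'$ at $t=-a$ and $\partial X$ at $t=0$. That a general trivial inclusion reduces to such a composition, and that $SH^\bullet$ is unchanged under the exact symplectomorphism $X\xrightarrow{\ \sim\ }i(X)$, is routine given the precedent of Lemma~\ref{wrappedtrivialiso}.

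Next I would pass to the chain level. By the zig-zag \eqref{scsimplefunctoriality} and cofinality of $\HJ_\bullet^\reg(X,X')\to\HJ_\bullet^\reg(X')$, it is enough to show that the inclusion of diagrams $CF^\bullet(X;-)\subseteq CF^\bullet(X';-)$ over $\HJ_\bullet^\reg(X,X')$ (from Lemma~\ref{scconfinecurves}) induces a quasi-isomorphism on homotopy colimits; equivalently, that $\hocolim_{\HJ_\bullet^\reg(X,X')}[CF^\bullet(X';-)/CF^\bullet(X;-)]$ is acyclic. At Floer data $(H,J)$ the quotient diagram is the span of $1$-periodic orbits of $H$ lying in $X'\setminus X$, so it suffices to produce a cofinal collection of $(H,J)\in\HJ_\bullet^\reg(X,X')$ for which $H$ has \emph{no} $1$-periodic orbits in $X'\setminus X$ at all: over such a collection the quotient diagram vanishes identically, so by Lemma~\ref{cofinalityI} the homotopy colimit of the quotient is acyclic. (One could instead run the same computation through the filtered presentation $SH^\bullet=\varinjlim_V SH^\bullet(-,\partial)^{<V}$ using Lemma~\ref{calculateSHwithsimpleboundaryreeb} and the pushforward maps \eqref{filteredpushforward}, whose chain-level model is again the subcomplex inclusion $CF^\bullet(X;H)\subseteq CF^\bullet(X';H)$; the verification of ``no orbits in $X'\setminus X$'' is the same.)

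To produce this cofinal collection I would exploit the dynamics near $\partial X'$ and $\partial X$ established in \S\ref{reebdynamicsboundary}. Any $(H,J)\in\HJ_\bullet^\reg(X,X')$ has $H$ admissible with respect to $X\subseteq X'$, hence near $\partial X'$ equal to $\Re\pi'$ on the strip $(\pi')^{-1}(\CC_{|\Re|\le\varepsilon})$ and otherwise a linear Hamiltonian vanishing to second order on $\partial X'$ with second derivative positive toward the interior, and near $\partial X$ equal to $\Re\pi$ on a strip and otherwise a linear Hamiltonian vanishing to second order on $\partial X$ with second derivative negative toward $X'\setminus X$. I would choose $H$ in the collar $X'\setminus X=\{-a\le t<0\}$ from the canonical cutoff-Reeb family of \S\ref{reebdynamicsboundary} associated, at its two ends, to the convex hypersurfaces $\partial X'$ and $\partial X$ (with opposite co-orientations). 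Then Lemma~\ref{compactcutoff} and Proposition~\ref{cutofflemma} give that $t$ is strictly monotone along every nonconstant trajectory in a neighborhood of each of $\partial X'$ and $\partial X$, and that a trajectory entering the collar cannot re-exit; for $a$ small these neighborhoods cover $\{-a\le t<0\}$, so $H$ has no $1$-periodic orbit meeting $X'\setminus X$. Since this prescribed behavior constrains $H$ only near $\partial X\cup\partial X'$ while leaving $H$ free (arbitrarily large) elsewhere, such $(H,J)$ are cofinal in $\HJ_\bullet^\reg(X,X')$; this is essentially the mechanism by which $[0,1]\times F_0$ is fully stopped (Lemma~\ref{cylinderstopped}).

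The main obstacle I expect is precisely this last step: gluing the two cutoff-Reeb normal forms of \S\ref{reebdynamicsboundary} across the thin collar $X'\setminus X$ and checking simultaneously that (i) the result still satisfies \emph{all} constraints defining a simplex of $\HJ_\bullet^\reg(X,X')$ — adaptedness to both $\partial X$ and $\partial X'$ (Definitions~\ref{adapted},~\ref{admissible}), dissipativity (Definition~\ref{dissipative}), and regularity — and (ii) these $(H,J)$ genuinely form a cofinal (filtered $\infty$-)subcategory, so that Lemma~\ref{cofinalityI} applies to the quotient diagram. A secondary bookkeeping point is making the reduction to the model inclusion fully precise, in particular handling the deformation hidden in the definition of ``trivial inclusion.''
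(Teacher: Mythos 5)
The central step of your reduction is not valid: the forgetful map $\HJ_\bullet^\reg(X,X')\to\HJ_\bullet^\reg(X')$ is \emph{not} cofinal, and this is not what the paper establishes (Corollary \ref{hjfiltered} gives cofinality only of the restriction map $\HJ_\bullet^\reg(X,X')\to\HJ_\bullet^\reg(X)$, i.e.\ forgetting the \emph{largest} sector). Indeed, any vertex $H$ of $\HJ_\bullet(X,X')$ is adapted to $\partial X$, so at infinity it equals $\Re\pi$ on the strip and vanishes quadratically along $\partial X$, whereas a general admissible Hamiltonian $H'$ for $X'$ may wrap with positive slope there and hence grows linearly along the cylinder over $\partial(\partial_\infty X)\subseteq\operatorname{int}(\partial_\infty X')$; then $H'\leq H+C$ fails for every such $H$, so no $1$-simplex from $H'$ to $H$ exists (Proposition \ref{hjcontractible}). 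This failure is precisely why the right-hand arrow of \eqref{scsimplefunctoriality} is a genuine pushforward rather than automatically an equivalence. Granting your step that a cofinal-in-$\HJ_\bullet(X,X')$ family has no orbits in $X'\setminus X$, you only obtain $SH^\bullet(X,\partial X)\cong H^\bullet\bigl(\hocolim_{\HJ_\bullet^\reg(X,X')}CF^\bullet(X';-)\bigr)$; comparing this with $SH^\bullet(X',\partial X')$ --- i.e.\ showing that allowing the wrapping to continue past the thin collar does not change the direct limit --- is exactly the content of Proposition \ref{trivsciso}, so the proposal begs the question at its key point.

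There is also a secondary problem with the step you do flag: cofinality inside $\HJ_\bullet(X,X')$ forces your Hamiltonians to dominate pair-admissible Hamiltonians that are arbitrarily large in the middle of the collar at infinity, while being pinned to vanish with \emph{negative} second derivative just outside $\partial X$ and \emph{positive} second derivative just inside $\partial X'$; such contact Hamiltonians must change sign inside the collar, so the monotone-in-$t$ dynamics of Proposition \ref{cutofflemma} (which concerns a single positive cutoff Reeb field) does not cover them, and shrinking $a$ does not help because the constraint lives at infinity, where nothing shrinks. The paper's proof avoids both issues by a different mechanism: it constructs a globally defined defining function $I$ together with a cofinal family of admissible $\tilde H$ satisfying $X_I\tilde H\leq 0$ near infinity, pushes $\tilde H$ forward by the time-$a$ Hamiltonian flow of $I$ to obtain a cofinal family $\tilde H_a$ on $X_a=X'$ with $\tilde H_a\geq\tilde H-C$, and then shows the resulting continuation maps \eqref{contisofortriv} are isomorphisms for small $a$ by an action-filtration argument (after cutting $I$ off near the periodic orbits of $\tilde H$), concluding by a direct limit; the bulk of the work is the quantitative construction of $I$ and $\tilde H$. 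Some ingredient of this kind (or a genuine substitute) is needed to close the gap.
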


\begin{proof}
We claim that it is enough to produce, for every Liouville sector $X$, a (globally defined) defining function $I:X\to\RR$ (linear at infinity) and a cofinal collection of admissible Hamiltonians $\tilde H:S^1\times X\to\RR$, each satisfying
\begin{equation}\label{supbound}
\sup_{S^1\times X}X_I\tilde H<\infty.
\end{equation}
Indeed, suppose such an $I$ and collection of $\tilde H$ are given.
The Hamiltonian flow of $I$ defines a coordinate system $\partial X\times\RR_{t\geq 0}\to X$ near $\partial X$ and moreover defines Liouville sectors $X_a:=\{t\geq-a\}$ by ``flowing out of the boundary for time $a$''.
A sandwiching argument as in the proof of Lemma \ref{wrappedtrivialiso} shows that it is enough to show that the inclusions $X\hookrightarrow X_a$ induce isomorphisms on $SH^\bullet$.
Now consider one of the given Hamiltonians $\tilde H$ on $X$.
Using the Hamiltonian flow of $I$, we push $\tilde H$ forward to $X_a$, and denote the result by $\tilde H_a$.
By \eqref{supbound}, we have $\tilde H_a\geq\tilde H-C$ for some $C<\infty$, and hence there is a well-defined continuation map
\begin{equation}\label{contisofortriv}
HF^\bullet(X;\tilde H)\to HF^\bullet(X_a;\tilde H_a)
\end{equation}
for any $a>0$.
As $\tilde H$ varies over a cofinal collection of admissible Hamiltonians for $X$, so does $\tilde H_a$ for $X_a$.
By a direct limit argument, it is thus enough to show that \eqref{contisofortriv} is an isomorphism for all $a>0$.
We can moreover assume that $a>0$ is sufficiently small, since the maps \eqref{contisofortriv} compose as expected (note that for any $b\geq 0$, the map $HF^\bullet(X_b;\tilde H_b)\to HF^\bullet(X_{b+a};\tilde H_{b+a})$ is isomorphic to \eqref{contisofortriv}).

To show that \eqref{contisofortriv} is an isomorphism for sufficiently small $a>0$, argue as follows.
First, note that we may perturb $\tilde H$ so that no two of its (finitely many!) periodic orbits have the same action.
Now we cutoff $I$ in a neighborhood of all periodic orbits of $\tilde H$ to obtain $I'$, and we use the Hamiltonian flow of $I'$ to push $\tilde H$ forward to $X_a$, denoting the result by $\tilde H_a'$.
By \eqref{supbound}, for sufficiently small $a>0$ we have $\tilde H_a'\geq\tilde H-\varepsilon$, where $\varepsilon>0$ is smaller than the smallest action difference of any pair of periodic orbits of $\tilde H$.
It follows that the continuation map
\begin{equation}
CF^\bullet(X;\tilde H)\to CF^\bullet(X_a;\tilde H_a')
\end{equation}
is (for appropriate choice of extension of $\tilde H$ from $X$ to $X_a$) the ``identity'' plus a map which strictly decreases action, and hence is an isomorphism.
Since $\tilde H_a'-\tilde H_a$ has compact support, this map coincides with \eqref{contisofortriv}, giving the desired result.
We conclude that, as claimed, it suffices to construct $I$ and cofinal $\tilde H$ satisfying \eqref{supbound}; in fact, our construction below ensures that $X_I\tilde H\leq 0$ near infinity.

To construct $I$ and $\tilde H$, argue as follows.
Fix a linear $I_0:\Nbd^Z\partial X\to\RR$ defined near infinity with $X_{I_0}$ outward pointing along $\partial X$, and fix coordinates $\partial(\partial_\infty X)\times\RR_{t\geq 0}\to\partial_\infty X$ as in \S\ref{reebdynamicsboundary} in which $\frac\partial{\partial t}$ is the contact vector field induced by $-X_{I_0}$.
Define $I:=N\cdot I_0$, for a cutoff function $N(t)$ which is $1$ in a neighborhood of zero and is $0$ outside a small neighborhood of zero, with $N'(t)\leq 0$.
We now consider $H:\Nbd^Z\partial X\to\RR$ given by an admissible contact Hamiltonian $M(t)$ as in \S\ref{reebdynamicsboundary}, and we claim that $X_IH\leq 0$ near infinity.
First, note that
\begin{align}
X_IH&=NX_{I_0}H+I_0X_NH\\
&=NX_{I_0}H-I_0X_HN.
\end{align}
We now calculate in coordinates $\RR_s\times\eqref{YcoordsnearGamma}=\RR_s\times\Gamma_{\partial Y}\times\RR_{\left|u\right|\leq\varepsilon}\times\RR_{t\geq 0}$ with Liouville form $\lambda=e^s\cdot\eqref{keycontactform}=\frac{e^s}{\psi(u)}(\mu+u\,dt)$ (which tell the whole story).
We have $H=e^sM(t)$ (by definition) and $X_{I_0}=-\frac\partial{\partial t}$ (since this preserves the Liouville form and lifts $-\frac\partial{\partial t}$ on the contact boundary), so
\begin{equation}
NX_{I_0}H=-e^sN(t)M'(t)\leq 0.
\end{equation}
Now note that $I_0=ZI_0=\omega(Z,X_{I_0})=\lambda(X_{I_0})=-e^su/\psi(u)$.
Hence by \eqref{complicatedreeb}, we have
\begin{equation}
-I_0X_HN=e^s\frac u{\psi(u)}\psi'(u)M(t)N'(t)\leq 0
\end{equation}
since $u\psi'(u)\geq 0$.
This shows $X_IH\leq 0$ near infinity as claimed.

Now we claim that $X_I\tilde H\leq 0$ near infinity as well, where $\tilde H$ is obtained from $H$ by the procedure of Lemma \ref{constructH} (clearly this is enough, since such $\tilde H$ are cofinal among admissible Hamiltonians as $M(t)$ varies).
Since the locus where $\tilde H\ne H$ is disjoint from where we cut off $I_0$ to obtain $I$, it is equivalent to show that
\begin{equation}
X_{I_0}\tilde H\leq 0
\end{equation}
over this locus.

Recall from the proof of Lemma \ref{constructH} that $\tilde H$ is defined by smoothing $\max(R,H)$ over the strip $\{\frac 1N\leq R\leq N\}$, over which both $R$ and $H$ are $C^\infty$ bounded (we should warn the reader that here $R=\Re\pi$, but $I$ and $I_0$ will continue to denote the functions defined above, not $\Im\pi$).
It thus would be enough to show that $-X_{I_0}H>\varepsilon>0$, $-X_{I_0}R>\varepsilon>0$, and that $X_{I_0}$ is $C^\infty$ bounded over $\{\frac 1N\leq R\leq N\}$, for some $\varepsilon>0$.
The vector field $X_{I_0}$ is, however, not even $C^0$ bounded in the required sense, as $\sL_ZX_{I_0}=0$ and we are measuring with respect to metrics $g$ satisfying $\sL_Zg=g$.
To fix this, however, it suffices to simply consider $e^{-\frac 12s}X_{I_0}$ in its place.
The scaling behavior $\sL_Z(e^{-\frac 12s}X_{I_0})=-\frac 12(e^{-\frac 12s}X_{I_0})$ together with $\sL_Zg=g$ implies easily that $e^{-\frac 12s}X_{I_0}$ is $C^\infty$ bounded over the strip $\{\frac 1N\leq R\leq N\}$.
It is thus enough to show lower bounds
\begin{equation}\label{weirdlowerboundI}
\begin{cases}-e^{-\frac 12s}X_{I_0}H>\varepsilon>0\\-e^{-\frac 12s}X_{I_0}R>\varepsilon>0\end{cases}\quad\text{over the strip }{\textstyle\{\frac 1N\leq R\leq N\}}\text{ near infinity.}
\end{equation}
Since $X_{I_0}$ is outward pointing along $\partial X$, it follows that
\begin{equation}\label{weirdlowerboundII}
\begin{cases}-X_{I_0}R>\varepsilon e^{\frac 12s}\\-X_{I_0}H>\varepsilon e^{\frac 12s}R\end{cases}\quad\text{over }\Nbd^Z\partial X.
\end{equation}
Indeed, since in each of these inequalities, both sides have the same scaling behavior under $Z$, it is enough that $\varepsilon>0$ exist over a neighborhood of any compact subset of $\partial X$, which holds since $X_{I_0}$ is outward pointing (recall that $H$ vanishes to order two along $\partial X$ with positive second derivative).
Clearly \eqref{weirdlowerboundII} implies \eqref{weirdlowerboundI}.
\end{proof}

\begin{corollary}\label{scdefinvariance}
The invariant $SH^\bullet(X,\partial X)$ is invariant under deformation of $X$ up to canonical isomorphism.
\end{corollary}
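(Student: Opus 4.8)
The plan is to deduce this from Proposition \ref{trivsciso} together with the functoriality of $SH^\bullet$, by writing any deformation as a finite zig-zag of trivial inclusions. Let $\{X_t\}_{t\in[0,1]}$ be a deformation through Liouville sectors. Being a Liouville sector is an open condition (Definition \ref{sectordef}) and the pushforward maps on $SH^\bullet$ compose, so it suffices to produce a canonical isomorphism $SH^\bullet(X_{t_0},\partial X_{t_0})\cong SH^\bullet(X_{t_1},\partial X_{t_1})$ for $t_1$ in a small neighborhood of each $t_0$, and then chain these isomorphisms together over a finite subdivision of $[0,1]$.

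First I would normalize the family near the boundary and at infinity: using the contractibility of the choices in Proposition \ref{productnbhd} (a product collar $F_t\times\CC_{\Re\geq 0}$ near $\partial X_t$) together with Lemma \ref{deformboundary} (to transport deformations of $\partial_\infty X_t$), one may arrange that for $t$ near $t_0$ the family $X_t$ shares a fixed underlying manifold-with-boundary. Next I would build a common ambient Liouville sector $X'$ containing all these $X_t$, obtained by combining the ``flow the boundary outward by a small time $a$'' construction used in the proof of Proposition \ref{trivsciso} (which absorbs the collar perturbation and any compactly supported exact change of $\lambda_t$) with the boundary-enlargement of Lemma \ref{deformboundary} (which absorbs the part of the deformation that is genuinely nontrivial at infinity, cf.\ Proposition \ref{deformexact}). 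Provided $a$ and the enlargement exceed the ``size'' of the deformation on $[t_0,t_1]$, each $X_t$ sits inside $X'$ as a \emph{trivially} included sub-sector: near its boundary $X_t$ is a small perturbation of the collar of $X_{t_0}$, so it lands in $X'$, and the inclusion is trivial because inside $X'$ one may first deform $X_t$ back to $X_{t_0}$ and then deform $X_{t_0}$ out to $X'$, staying within Liouville sectors included in $X'$.

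Proposition \ref{trivsciso} then supplies isomorphisms
\begin{equation}
SH^\bullet(X_{t_0},\partial X_{t_0})\xrightarrow{\ \sim\ }SH^\bullet(X',\partial X')\xleftarrow{\ \sim\ }SH^\bullet(X_{t_1},\partial X_{t_1}),
\end{equation}
whose composite is the desired local isomorphism; concatenating over a finite cover of $[0,1]$ yields $SH^\bullet(X_0,\partial X_0)\cong SH^\bullet(X_1,\partial X_1)$. To see that this isomorphism is canonical — independent of the subdivision, of the auxiliary enlargements $X'$, and of the normalization, compatible with concatenation of deformations, and equal to the identity on the constant deformation — I would note that any two such zig-zags admit a common refinement: two choices of ambient enlargement both embed trivially into a still larger common sector, and the two resulting isomorphisms then coincide because the relevant square of pushforward maps commutes, which is part of the chain-level functoriality of $SC^\bullet$ built into \eqref{scchains} (ultimately a consequence of the curve-confinement Lemma \ref{scconfinecurves}).

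The main obstacle is precisely the construction of the common ambient sector $X'$ in the second step and the verification that \emph{both} $X_{t_0}\hookrightarrow X'$ and $X_{t_1}\hookrightarrow X'$ are trivial inclusions (not merely inclusions): this requires making the openness of the sector condition quantitative along the deformation, and being careful that a deformation which is nontrivial near infinity is nevertheless absorbed by the combined enlargement of Lemma \ref{deformboundary} and the boundary-flow trick. The remaining ingredients — the compactness reduction, the invocation of Proposition \ref{trivsciso}, and the bookkeeping establishing canonicity — are formal.
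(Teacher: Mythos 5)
Your proposal is correct and follows essentially the same route as the paper: the paper's proof simply observes that an arbitrary deformation factors as a composition of trivial inclusions and their inverses, and then invokes Proposition \ref{trivsciso}. Your additional discussion of constructing a common ambient sector and verifying canonicity is a (reasonable) elaboration of the factorization step that the paper leaves implicit.
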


\begin{proof}
An arbitrary deformation may be factored as a composition of trivial inclusions and their inverses, so the result follows from Proposition \ref{trivsciso}.
\end{proof}

It is natural to expect that $SH^\bullet(X,\partial X)$ satisfies a K\"unneth formula generalizing that for symplectic cohomology of Liouville manifolds proved by Oancea \cite{oanceakunneth} (see also Groman \cite{groman}).
A careful proof of this is, however, beyond the scope of this paper.

\begin{conjecture}
There is a natural quasi-isomorphism $SC^\bullet(X,\partial X)\otimes SC^\bullet(X',\partial X')=SC^\bullet(X\times X',\partial(X\times X'))$.
\end{conjecture}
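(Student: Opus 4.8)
The plan is to reduce to the split case and then run the chain-level constructions of \S\ref{secsymplecticcohomology} factor by factor, in the spirit of Oancea \cite{oanceakunneth} and Groman \cite{groman}, with extra care taken near $\partial(X\times X')$. Throughout, ``$=$'' is understood up to a zig-zag of quasi-isomorphisms, as elsewhere in the paper.

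\emph{Step 1 (reduction).} Using Proposition \ref{deformexact} and Lemma \ref{sectorboundarytangent}, I would first deform $X$ and $X'$ so that their boundaries are exact and their Liouville vector fields are everywhere tangent to $\partial X$ and $\partial X'$; by Proposition \ref{trivsciso} (whose proof yields a chain-level quasi-isomorphism on $SC^\bullet$) this changes neither side of the desired equivalence. With $Z_X,Z_{X'}$ tangent to the boundary, $X\times X'$ is a Liouville sector, and near $\partial(X\times X')$ the projection $\pi_{X\times X'}$ agrees with $\pi_X\circ\mathrm{pr}_1$ over $\partial X\times X'$, with $\pi_{X'}\circ\mathrm{pr}_2$ over $X\times\partial X'$, and with some interpolation across the smoothed corner $\partial X\times\partial X'$.

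\emph{Step 2 (spliced product Floer data).} Given $(\tilde H,J_1)$ and $(\tilde H',J_2)$ in cofinal subcategories of $\HJ_\bullet^\reg(X)$ and $\HJ_\bullet^\reg(X')$, I would construct Floer data $(H,J)$ on $X\times X'$ equal to the direct sum $(\tilde H\oplus\tilde H',J_1\oplus J_2)$ over the region $\{\Re\pi_X\geq\varepsilon,\ \Re\pi_{X'}\geq\varepsilon\}$ intersected with a large compact subset of each Liouville factor, and modified in two places: (a) near $\partial(X\times X')$, replace $H$ by the adapted Hamiltonian built from $\Re\pi_{X\times X'}$ as in Lemma \ref{constructH} and choose $J$ making $\pi_{X\times X'}$ holomorphic there; (b) near ``mixed infinity'' (bounded in one Liouville factor, cylindrical in the other), splice $\tilde H\oplus\tilde H'$ with a linear Hamiltonian on $X\times X'$ and replace $J$ by a genuinely cylindrical structure, following Groman (compare Proposition \ref{hjcontractible} and \cite[\S 2]{groman}). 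These modifications must be arranged so that $(H,J)$ is admissible, adapted to $\partial(X\times X')$, and dissipative. Since the Hamiltonian flow of $\Re\pi$ moves monotonically along the leaves of the characteristic foliation (compare the proofs of Lemmas \ref{compactcutoff} and \ref{constructH}) and the mixed-infinity splicing may be pushed arbitrarily far out, all $1$-periodic orbits of $H$ remain in the product region and hence are pairs of orbits of $\tilde H$ and $\tilde H'$. After a further generic perturbation achieving transversality away from the product locus, this assembles into a map of simplicial sets $\HJ_\bullet^\reg(X)\times\HJ_\bullet^\reg(X')\to\HJ_\bullet^\reg(X\times X')$, which is cofinal because $\tilde H\oplus\tilde H'$ dominates any given admissible Hamiltonian on $X\times X'$ once $\tilde H,\tilde H'$ wrap fast enough (using the cutoff Reeb dynamics of \S\ref{reebdynamicsboundary} near $\partial(X\times X')$).

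\emph{Steps 3--4 (Floer complexes and homotopy colimits).} For $(H,J)$ as above, Lemma \ref{scconfinecurves} confines Floer trajectories away from $\partial(X\times X')$ and Proposition \ref{compactness} confines them to a compact set, so every trajectory lies in the product region, where $H=\tilde H\oplus\tilde H'$, $J=J_1\oplus J_2$; by the standard split-data argument such a trajectory over a stratum of $\Mbar_n^{SC}$ decomposes uniquely into a pair of trajectories over the same stratum for the two factors, and (compatibly with orientation lines, $\oo_{\Phi_1\times\Phi_2,(x_1,x_2)}=\oo_{\Phi_1,x_1}\otimes\oo_{\Phi_2,x_2}$ up to Koszul sign) the resulting counts identify the operations \eqref{SChomotopyoperation} for $X\times X'$ with the external tensor product of those for $X$ and $X'$. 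Hence the pullback of the diagram $CF^\bullet(X\times X';-)$ along $\HJ_\bullet^\reg(X)\times\HJ_\bullet^\reg(X')\to\HJ_\bullet^\reg(X\times X')$ is the external tensor product of the diagrams $CF^\bullet(X;-)$ and $CF^\bullet(X';-)$ in $\Ndg\Ch$. Combining this with cofinality (Lemma \ref{cofinalityI}), the standard Fubini property of homotopy colimits over a product of filtered $\infty$-categories, and the fact that $-\otimes M$ preserves quasi-isomorphisms for $M$ cofibrant (K-flat; Definition \ref{cofibrantdef}), one obtains
\begin{equation*}
SC^\bullet(X\times X',\partial(X\times X'))\simeq\hocolim_{\HJ_\bullet^\reg(X)\times\HJ_\bullet^\reg(X')}\bigl(CF^\bullet(X;-)\otimes CF^\bullet(X';-)\bigr)\simeq SC^\bullet(X,\partial X)\otimes SC^\bullet(X',\partial X').
\end{equation*}
All maps in this zig-zag are natural with respect to inclusions of Liouville sectors, by naturality of the spliced-product construction and of Lemma \ref{scconfinecurves} for chains of sectors, which gives the asserted naturality.

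\emph{The main obstacle} is Step 2: producing spliced product Floer data that is simultaneously admissible, adapted to $\partial(X\times X')$, and dissipative. Making the dissipation data consistent at the mixed-infinity corners is exactly the difficulty Groman resolves for Liouville manifolds (via the ``thinning out of shells'' trick used in the proof of Proposition \ref{hjcontractible}), and here it must be reconciled with the requirement that $(H,J)$ be adapted near the new boundary $\partial(X\times X')$, especially near the smoothed corner $\partial X\times\partial X'$, where neither factor's product structure is available. Carrying this out carefully, together with the routine but lengthy sign bookkeeping in the moduli-space decomposition, is what puts a complete proof beyond the scope of this paper.
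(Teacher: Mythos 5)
The statement you are addressing is not proved in the paper: it is stated as a conjecture, with the authors explicitly remarking that a careful proof of a K\"unneth formula for $SC^\bullet(X,\partial X)$ (generalizing Oancea and Groman) is beyond the scope of the paper. So there is no proof of record to compare your argument against, and the relevant question is whether your outline actually closes the gap the authors chose not to close. It does not; it is a plausible program, and to your credit you flag the critical step yourself, but the difficulty in Step 2 is more than ``routine but lengthy.'' Concretely: the split datum $\tilde H\oplus\tilde H'$ is not adapted to $\partial(X\times X')$ in the sense of Definition \ref{adapted} anywhere along $\partial X\times X'$, because there $H=\Re\pi_X+\tilde H'$, which differs from $\Re\pi_{X\times X'}$ by a term unbounded in the $X'$-direction; hence Lemma \ref{scconfinecurves} does not apply to the unmodified data, and once you perform your modification (a) the Hamiltonian is no longer split near the boundary. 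The assertion that all $1$-periodic orbits and all Floer trajectories nevertheless stay in the region where $H=\tilde H\oplus\tilde H'$ is therefore exactly the unproven point: the control you invoke (monotone motion of $X_{\Re\pi}$ along the characteristic foliation, as in Lemmas \ref{compactcutoff} and \ref{constructH}) governs $\Re\pi_{X\times X'}$ itself, not the interpolation between $\Re\pi_{X\times X'}$ and $\Re\pi_X+\tilde H'$ across the smoothed corner $\partial X\times\partial X'$, where no product structure is available.

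A second, related gap is cofinality. To conclude via Lemma \ref{cofinalityI} you must produce, for an arbitrary vertex of $\HJ_\bullet^\reg(X\times X')$, a $1$-simplex of $\HJ_\bullet^\reg(X\times X')$ to some spliced datum, i.e.\ a monotone, dissipative, adapted homotopy of Hamiltonians on the product; pointwise domination of linear-at-infinity Hamiltonians by $\tilde H\oplus\tilde H'$ at mixed infinity is not sufficient, since the interpolating family must itself satisfy admissibility (Definition \ref{admissible}), adaptedness near $\partial(X\times X')$, and carry consistent dissipation data, and the ``thinning out of shells'' device in Proposition \ref{hjcontractible} only extends dissipation data over a simplex whose boundary data is already given — it does not manufacture the comparison homotopies across the corner and mixed-infinity regions. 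Finally, even granting Steps 2--3, the Fubini-type identification of a homotopy colimit over $\HJ_\bullet^\reg(X)\times\HJ_\bullet^\reg(X')$ with the tensor product of the two homotopy colimits requires an argument (filteredness plus K-flatness makes it believable, but it should be spelled out in the model of \S\ref{subsechocolim}). In short, your write-up is an honest research plan consistent with the authors' expectations, but the conjecture remains open as far as both the paper and your proposal are concerned.
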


\subsection{From cohomology to symplectic cohomology}

Recall that when $H$ and $J$ are both $S^1$-invariant, Floer trajectories with $\partial_tu\equiv 0$ are simply Morse trajectories of $H$ with respect to the metric induced by $J$.
In particular, there is a map from Morse trajectories to Floer trajectories.
For sake of clarity in the following discussion, we may indicate the choice of projection $\pi:\Nbd^Z\partial X\to\CC$ (recall Convention \ref{choosingpi}) by adding a subscript, as in $X_\pi$.
Note that the rescalings $a\cdot\pi$ ($a\in\RR_{>0}$) of any given projection $\pi$ are also valid projections.

\begin{proposition}\label{morsefloer}
Let $(H,J)\in\HJ_n(X_\pi)$ be $S^1$-invariant, with $H(i)$ Morse for vertices $i\in\Delta^n$, and suppose that all Morse trajectories are cut out transversely.
For sufficiently small $\delta>0$, the map from Morse trajectories to Floer trajectories for $(\delta\cdot H,J)$ is bijective and all Floer trajectories are regular (i.e.\ $(\delta\cdot H,J)\in\HJ_n^\reg(X_{\delta\pi})$).
\end{proposition}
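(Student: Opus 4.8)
The plan is to run the standard ``small Hamiltonian'' comparison between Floer theory and Morse theory (as in Floer--Hofer--Salamon and Salamon--Zehnder), taking care of the two non-compactness issues specific to Liouville sectors exactly as in Propositions \ref{wcompactness} and \ref{compactness}: holomorphic curves are kept away from $\partial X$ by (the argument of) Lemma \ref{scconfinecurves}, which applies verbatim to $\delta H$ since its restriction near $\partial X_i$ is the positive multiple $\delta\,\Re\pi_i$ of $\Re\pi_i$, and curves are kept away from infinity by monotonicity together with the uniform geometric boundedness of Lemmas \ref{domainacsbddgeo} and \ref{bddgeoforsh}. First I would observe that for $\delta>0$ small, the fixed points of $\Phi_{\delta H_i}$ are exactly the critical points of $H_i$: near a critical point this is the inverse function theorem applied to non-degeneracy of $H_i$, and near infinity (resp.\ near $\partial X_j$) there are no fixed points at all because $H_i$ is linear (resp.\ adapted), the relevant displacement bound merely rescaling with $\delta$. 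Thus the Floer complex of $(\delta H_i,J)$ coincides with the Morse complex of $(H_i,g_J)$ as graded groups, and the same holds on all facets $\Delta^k\hookrightarrow\Delta^n$.

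The analytic heart is an a priori estimate: there is $\epsilon(\delta)\to 0$ as $\delta\to 0$ with $\|\partial_t u\|_{C^0}\leq\epsilon(\delta)$ for every $u$ in every $\Mbar_k(\delta H,J,\gamma^+,\gamma^-)$, $k\leq n$. The inputs are: (i) the topological energy is $E^\top=a(\gamma^+)-a(\gamma^-)=O(\delta)$, since the action of a constant orbit at a critical point $p$ equals $-\delta H(p)$; (ii) positivity of wrapping for $\delta H$ (that for $H$, scaled by $\delta\leq 1$) bounds $E^\geo$ by $E^\top$ plus $\delta$ times a constant, so $E^\geo=O(\delta)$; (iii) since $\gamma^\pm$ sit at critical points of $H$, which are fixed independently of $\delta$, and the energy is uniformly small, the compactness argument of Proposition \ref{compactness} confines all such trajectories to a compact subset of $X$ independent of $\delta\in(0,1]$; (iv) on that fixed compact set, a rescaling/bubbling argument (using uniform geometric boundedness to rule out bubbles) upgrades the $L^2$-smallness of $\partial_s u$ and $\partial_t u-X_{\delta H}$ to $C^1$-smallness of $\partial_t u$, using $X_{\delta H}=O(\delta)$; a Poincaré/Fourier estimate on $S^1$ then forces the nonconstant Fourier modes of $u(s,\cdot)$ to be $O(\delta)$, so $u$ is $C^0$-close to a $t$-independent map whose $s$-trajectory is an honest (possibly broken, possibly face-degenerate) gradient flow line of the family $H$ over the corresponding stratum of $\F(\Delta^n)$.

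Given this, bijectivity is the statement that for $\delta$ small no genuinely $t$-dependent Floer trajectories exist, which follows from the estimate together with a uniqueness statement near each Morse trajectory; injectivity of ``include a $t$-independent solution'' is clear. For uniqueness and for regularity I would decompose the linearized Floer operator $D_u$ at a $t$-independent solution into Fourier modes along $S^1$ (legitimate since $J$ is $S^1$-invariant): on the zero mode it is the linearized Morse operator of $(H,g_J)$, surjective by hypothesis; on the $k$th mode ($k\neq 0$) it is $2\pi k$ times the complex structure in the $\partial_t$-direction plus the zeroth-order term $-\delta S_s$ (the Hessian of $H$), which is invertible for $\delta$ small because the $k$-th mode of $\partial_t$ dominates. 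Hence $D_u$ is surjective, giving $(\delta H,J)\in\HJ_n^\reg(X)$, and the Newton--Picard/implicit function theorem then shows every Morse trajectory deforms uniquely to a $t$-independent Floer trajectory and that these exhaust all Floer trajectories in the a priori $C^1$-neighborhood, hence — by the estimate — all of them.

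The main obstacle is uniformity in $\delta$ of the confinement and bubbling estimates, i.e.\ checking that the compact subset of $X$ produced by Proposition \ref{compactness} and the constants in the rescaling argument can be chosen independent of $\delta\in(0,1]$. This works because $\delta H$ has the same admissibility chain, the same behavior near each $\partial X_j$ (a positive multiple of $\Re\pi_j$), and positivity of wrapping scaled by $\delta\leq 1$, so the dissipation data for $H$ — with shells rescaled by a bounded factor — serves simultaneously for all $\delta H$; I would spell this out carefully, as it is the only point at which the Liouville-sector setting adds content beyond the classical compact case.
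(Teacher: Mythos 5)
Your overall architecture is the right one (and matches the paper's): import the local Morse--Floer comparison (bijectivity plus regularity via Fourier decomposition of the linearized operator) from Floer and Salamon--Zehnder, and observe that the only content specific to Liouville sectors is that all Floer trajectories for $(\delta H,J)$ stay in a compact subset of $X\setminus\partial X$ that can be chosen \emph{uniformly} as $\delta\to 0$. You even flag this uniformity as ``the main obstacle.'' But the justification you give for it is exactly where the argument breaks. In step (iii) you assert that because $E^\geo=O(\delta)$ and the asymptotic orbits sit at the $\delta$-independent critical points, ``the compactness argument of Proposition \ref{compactness} confines all such trajectories to a compact subset independent of $\delta$.'' It does not: in the thin-part estimate \eqref{csdistanceintegral}, the displacement lower bound $d(x,\Phi_{\delta H}(x))\geq\varepsilon$ also scales linearly in $\delta$ (the time-one flow of $\delta H$ is the time-$\delta$ flow of $H$), so the length bound produced there is $L\asymp E^\geo/\varepsilon^2\sim\delta/\delta^2=1/\delta$, which blows up. A trajectory is then only prevented from leaving $K$ on domain intervals of length $\sim 1/\delta$, and the subsequent monotonicity step only confines the image to a region whose size grows with $L$; the compact set degenerates as $\delta\to 0$. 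Your closing remedy---rescaling the dissipation shells so that one choice of dissipation data works for all $\delta H$---addresses only the thick-part/monotonicity-shell argument, which was never the problematic step (it only improves as the energy shrinks); it does nothing for the thin-part displacement estimate.

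The missing ingredient, which is essentially the entire content of the paper's proof, is to replace the length bound by a displacement bound: by Cauchy--Schwarz,
\begin{equation*}
\int_{[a,b]\times S^1}\left|\partial_su\right|\lesssim\Bigl((b-a)\int_{[a,b]\times S^1}\left|\partial_su\right|^2\Bigr)^{1/2}\lesssim\Bigl(\frac{E^\geo}{\varepsilon^2}\,E^\geo\Bigr)^{1/2}=\frac{E^\geo}{\varepsilon},
\end{equation*}
so some line $[a,b]\times\{t\}$ satisfies \eqref{lineintegralinequality}, and the right-hand side $E^\geo/\varepsilon\sim\delta/\delta$ \emph{is} uniform in $\delta$. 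Bounding the total distance traveled along such a line (rather than the length of the excursion) is what yields a $\delta$-independent compact set, after which your Fourier/Newton--Picard discussion (or simply the citation of Floer's theorem, as the paper does) finishes the proof. Without this step, or some substitute for it, your step (iii) is an unsupported assertion and the proposal does not go through.
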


\begin{proof}
For $X$ closed, this is a fundamental result due to Floer \cite[Theorem 2]{floerwitten} (also presented conceptually in Salamon--Zehnder \cite[Theorem 7.3 0.1]{salamonzehnder}).
To extend this result to the present setting, it is enough to show that in the limit $\delta\to 0$, all Floer trajectories are contained in a fixed compact subset of $X$.
In other words, we must show that the compact subset of $X$ produced by the proof of Proposition \ref{compactness} (compactness) can be made uniform as $\delta\to 0$.

The only part of the proof of Proposition \ref{compactness} which is potentially problematic as $\delta\to 0$ is the argument surrounding \eqref{csdistanceintegral}, specifically the upper bound $L\asymp E^\geo/\varepsilon^2$ on the length of an interval $[a,b]$ with the property that $u(s,0)\notin K$ for all $s\in[a,b]$.
Both $\varepsilon$ and $E^\top$ clearly scale linearly with $\delta$, and the same holds for $E^\geo$ by \eqref{poswrappinggood}.
The upper bound $E^\geo/\varepsilon^2$ is therefore unfortunately not uniform in $\delta$.
However, this upper bound does lead to
\begin{equation}
\int_{[a,b]\times S^1}\left|\partial_su\right|\lesssim\biggl((b-a)\int_{[a,b]\times S^1}\left|\partial_su\right|^2\biggr)^{1/2}\lesssim\biggl(\frac{E^\geo}{\varepsilon^2}E^\geo\biggr)^{1/2}=\frac{E^\geo}\varepsilon
\end{equation}
for any interval $[a,b]$ with the property that $u(s,0)\notin K$ for all $s\in[a,b]$.
Thus on any such interval, there exists a $t\in S^1$ such that
\begin{equation}\label{lineintegralinequality}
\int_{[a,b]\times\{t\}}\left|\partial_s u\right|\leq\frac{E^\geo}\varepsilon,
\end{equation}
where the right hand side is now bounded uniformly as $\delta\to 0$.
This upper bound \eqref{lineintegralinequality} gives the desired uniform bound on the image of $u$ as $\delta\to 0$.

Alternatively, uniform compactness as $\delta\to 0$ follows from the following covering trick.
A trajectory for $\delta\cdot H$ pulls back under the $N$-fold covering $S^1\to S^1$ to a trajectory for $N\delta\cdot H$.
The proof of Proposition \ref{compactness} is obviously uniform over $a\cdot H$ for (say) $a\in[1,2]$, and for all sufficiently small $\delta>0$ there exists a integer $N$ such that $N\delta\in[1,2]$.
\end{proof}

\begin{proposition}\label{shpss}
Proposition \ref{morsefloer} defines a canonical map $H^\bullet(X,\partial X)\to SH^\bullet(X,\partial X)$, and this map is functorial with respect to inclusions of Liouville sectors.
\end{proposition}

\begin{proof}
For $(H,J)\in\HJ_0(X_\pi)$ which is $S^1$-invariant and Morse with transversally cut out Morse trajectories, Proposition \ref{morsefloer} guarantees that for sufficiently small $\delta>0$, we have $(\delta H,J)\in\HJ_0^\reg(X_{\delta\pi})$ and all Floer trajectories are Morse trajectories.
Hence we obtain a map $H^\bullet(X,\partial X) = HF^\bullet(X; \delta H, J) \to SH^\bullet(X,\partial X)$ (note that there is no need for a subscript indicating the choice of projection to $\CC_{\Re\geq 0}$ on the target $SH^\bullet$ in view of Proposition \ref{scdefinvariance}).
This map $H^\bullet(X,\partial X)\to SH^\bullet(X,\partial X)$ depends \emph{a priori} on the choice of $(H,J)\in\HJ_0(X_\pi)$ and the choice of sufficiently small $\delta>0$.

Next, given an inclusion of Liouville sectors $X\hookrightarrow X'$, along with the data of a vertex $(H,J)\in\HJ_0(X_\pi)$, a vertex $(H_e,J_e)\in\HJ_0(X_\pi,X'_{\pi'})$ whose restriction to $X$ is $(H,J)$, a vertex $(H',J')\in\HJ_0(X'_{\pi'})$, and a $1$-simplex from (the image in $\HJ_0(X'_{\pi'})$ of) $(H_e,J_e)$ to $(H',J')$ (which are, as before, all $S^1$-invariant and Morse with transversally cut out Morse trajectories), Proposition \ref{morsefloer} implies that for sufficiently small $\delta > 0$, there is a commutative diagram
\begin{equation}\label{morsefloerpushforward}
\begin{tikzcd}[column sep = small]
HF^\bullet(X; \delta H,J) \ar{r} \ar[equal]{d}& HF^\bullet(X'; \delta H_e, J_e)\ar{r} \ar[equal]{d} & HF^\bullet(X'; \delta H', J') \ar[equal]{d} \\
H^\bullet(X, \partial X) \ar{r} & H^\bullet(X', \partial X') \ar{r} & H^\bullet(X', \partial X')
\end{tikzcd}
\end{equation} 
where the left horizontal maps are induced by the subcomplex inclusions \eqref{shsubcomplexinclusion} and the right horizontal maps are the Floer (respectively Morse) continuation maps induced by the $1$-simplex with Hamiltonian term scaled by $\delta$.
This implies that for such sufficiently small $\delta>0$, the maps $H^\bullet(X,\partial X)\to SH^\bullet(X,\partial X)$ and $H^\bullet(X',\partial X')\to SH^\bullet(X',\partial X')$ defined by $(H,J,\delta)$ and $(H',J',\delta)$, respectively, are compatible with pushforward under $X\hookrightarrow X'$.

Now the key step is to consider a \emph{$1$-parameter family} of data as in the paragraph above, where $X_\pi$, $(H,J)$, and $X'$ are fixed, but $\pi'$ and $(H',J')$ vary in the parameter $t\in(-\varepsilon,\varepsilon)$ as $(1+t)\pi'$ and $((1+t)H',J')\in\HJ_0(X'_{(1+t)\pi'})$, and the remaining data of $(H_e,J_e)\in\HJ_0(X_\pi,X'_{(1+t)\pi'})$ and the $1$-simplex are chosen arbitrarily.
The estimate on $\delta>0$ in Proposition \ref{shpss} is uniform over $t$ in a sufficiently small neighborhood of zero, and hence we conclude that the maps $H^\bullet(X,\partial X)\to SH^\bullet(X,\partial X)$ and $H^\bullet(X',\partial X')\to SH^\bullet(X',\partial X')$ defined by $(H,J,\delta)$ and $(H',J',(1+t)\delta)$, respectively, are compatible with pushforward under $X\hookrightarrow X'$ \emph{for every $\left|t\right|\leq\varepsilon$}.
Taking $X\hookrightarrow X'$ to be a trivial inclusion (or in fact any map for which $H^\bullet(X,\partial X)\to H^\bullet(X',\partial X')$ is surjective), it follows that the map $H^\bullet(X',\partial X')\to SH^\bullet(X',\partial X')$ defined by $(H',J',\delta)$ is independent of $\delta$ for sufficiently small $\delta>0$.

Knowing that the map $H^\bullet(X,\partial X)\to SH^\bullet(X,\partial X)$ for a given $(H,J)$ is independent of the choice of sufficiently small $\delta>0$, it is now straightforward to check independence of $(H,J)$ (use continuation maps between different $(H,J)$) and compatibility with pushforward (use the reasoning surrounding \eqref{morsefloerpushforward}).
\end{proof}

In fact the above argument shows that there is a canonical isomorphism $H^\bullet(X,\partial X)=SH^\bullet(X,\partial X)^{<V}$ for sufficiently small cutoff Reeb vector fields $V$ (note that Lemma \ref{compactcutoff} implies that every cutoff Reeb vector field has no small time periodic orbits).

\begin{proposition}\label{hhshiso}
If $\partial_\infty X$ admits (up to deformation) a cutoff Reeb vector field with no periodic orbits (for example, this holds if $\partial_\infty X$ is deformation equivalent to $F\times[0,1]$ by Lemma \ref{cylinderstopped}), then the natural map $H^\bullet(X,\partial X)\to SH^\bullet(X,\partial X)$ is an isomorphism.
\end{proposition}

\begin{proof}
Let $V$ be the given cutoff Reeb vector field on $\partial_\infty X$ with no periodic orbits.
As remarked above, the map $H^\bullet(X,\partial X)\to SH^\bullet(X,\partial X)$ arises from an isomorphism $H^\bullet(X,\partial X)=SH^\bullet(X,\partial X)^{<\delta\cdot V}$ for $\delta>0$ sufficiently small.
Now the continuation map $SH^\bullet(X,\partial X)^{<\delta\cdot V}\to SH^\bullet(X,\partial X)^{<N\cdot V}$ is an isomorphism for all $N<\infty$ by Lemma \ref{isoifnoneworbits} since $V$ has no closed orbits.
The result now follows by taking the direct limit as $N\to\infty$.
\end{proof}

\begin{proposition}\label{mfchain}
For any diagram of Liouville sectors $\{X_\sigma\}_{\sigma\in\Sigma}$ indexed by a finite poset $\Sigma$, there is a corresponding map $C^\bullet(X_\sigma,\partial X_\sigma)\to SC^\bullet(X_\sigma,\partial X_\sigma)$ of diagrams $\Sigma\to\Ndg\Ch$.
\end{proposition}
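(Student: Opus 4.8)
The plan is to realize the cohomology-level map \eqref{shpss} on the chain level by restricting the Floer data of \S\ref{secsympcochaindiagrams} to the region where Proposition \ref{morsefloer} identifies Floer theory with Morse theory, and then feeding this through the homotopy-colimit formalism of \S\S\ref{subsechocolim}--\ref{secsympcochaindiagrams}.

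First I would introduce, for each chain of Liouville sectors $X_0\subseteq\cdots\subseteq X_r$, a simplicial subset $\HJ_\bullet^{\morse}(X_0,\ldots,X_r)\subseteq\HJ_\bullet^\reg(X_0,\ldots,X_r)$ whose $n$-simplices are the $S^1$-invariant data $(\delta\cdot\tilde H,J)$ such that $\tilde H$ is produced by Lemma \ref{constructH} from a sufficiently small cutoff Reeb vector field on $\partial_\infty X_r$ with Morse fixed points, $J$ is $S^1$-invariant with all facet restrictions inducing Morse--Smale metrics, and $\delta>0$ is small enough — in terms of all the remaining data — that Proposition \ref{morsefloer} applies over every facet of $\Delta^n$. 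The first thing to check is that each $\HJ_\bullet^{\morse}(X_0,\ldots,X_r)$ is again a filtered $\infty$-category and that the forgetful maps $\HJ_\bullet^\morse(X_0,\ldots,X_r)\to\HJ_\bullet^\morse(X_0,\ldots,X_{r-1})$ (and hence, as in \S\ref{sctranssec}, all the $i>0$ forgetful maps) are cofinal. This goes exactly as in Corollary \ref{hjfiltered}: Lemmas \ref{constructH} and \ref{compactcutoff} supply a cofinal poset of vertices, Proposition \ref{hjcontractible} fills in higher simplices, and transversality (for holomorphic curves and for Morse flow lines) is arranged as in \S\ref{sctranssec}; the one new point is a families-and-chains upgrade of Proposition \ref{morsefloer}, whose only nontrivial ingredient — uniform-in-$\delta$ compactness — is already supplied by the estimate \eqref{lineintegralinequality} in its proof (together with Lemma \ref{scconfinecurves} for the confinement near each $\partial X_i$).

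Over $\HJ_\bullet^\morse(X_0,\ldots,X_r)$, Proposition \ref{morsefloer} identifies $CF^\bullet(X_0;-)$ with the Morse cochain complex of an adapted Morse function and identifies the homotopy operations \eqref{SChomotopyoperation} with counts of broken Morse flow lines; so the restriction of the diagram \eqref{hjdiagram} to $\HJ_\bullet^\morse$ factors through a purely Morse-theoretic diagram whose $1$-simplex transition maps are the invertible Morse continuation maps. Setting
\begin{equation}
\widetilde C^\bullet(X_\sigma,\partial X_\sigma):=\hocolim_{X_0\subseteq\cdots\subseteq X_r\subseteq X_\sigma}\hocolim_{\HJ_\bullet^\morse(X_0,\ldots,X_r)}CF^\bullet(X_0;-)
\end{equation}
(the outer homotopy colimit taken in the generalized sense of \eqref{hocolimbarycentricabuse}, valid by cofinality of the $i>0$ forgetful maps as above), the standard chain-level identification of Morse cohomology with relative singular cohomology — taken compatibly with the pushforward along proper codimension-zero inclusions, under which the Morse subcomplex of critical points inside $X_\sigma$ corresponds to the Borel--Moore pushforward $C^\bullet(X_\sigma,\partial X_\sigma)=C^{\BM}_{n-\bullet}(X_\sigma)\to C^{\BM}_{n-\bullet}(X_{\sigma'})=C^\bullet(X_{\sigma'},\partial X_{\sigma'})$ — gives a levelwise quasi-isomorphism of diagrams $\Sigma\to\Ndg\Ch$
\begin{equation}
\widetilde C^\bullet(X_\sigma,\partial X_\sigma)\xrightarrow{\ \sim\ }C^\bullet(X_\sigma,\partial X_\sigma).
\end{equation}
On the other hand, the inclusions $\HJ_\bullet^\morse(X_0,\ldots,X_r)\hookrightarrow\HJ_\bullet^\reg(X_0,\ldots,X_r)$ are compatible with forgetful maps and with \eqref{hjdiagram}, so they induce a strict map of diagrams $\Sigma\to\Ndg\Ch$
\begin{equation}
\widetilde C^\bullet(X_\sigma,\partial X_\sigma)\to\hocolim_{X_0\subseteq\cdots\subseteq X_r\subseteq X_\sigma}\hocolim_{\HJ_\bullet^\reg(X_0,\ldots,X_r)}CF^\bullet(X_0;-)=SC^\bullet(X_\sigma,\partial X_\sigma).
\end{equation}
Since a levelwise quasi-isomorphism of diagrams into $\Ndg\Ch$ is an equivalence of diagrams (all complexes in sight are cofibrant, so one may apply Lemma \ref{cofibrantqisoinvert} and assemble the homotopies), the first map admits an inverse up to the appropriate homotopy, and composing that inverse with the second map yields the desired morphism of diagrams $C^\bullet(X_\sigma,\partial X_\sigma)\to SC^\bullet(X_\sigma,\partial X_\sigma)$ in $\Ndg\Ch$; by construction it induces \eqref{shpss} on cohomology and is natural in $\sigma$.

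The main obstacle is the combination of (a) the families-and-chains version of Proposition \ref{morsefloer} needed to make each $\HJ_\bullet^\morse$ a filtered $\infty$-category with cofinal forgetful maps — in particular keeping the compact set in the compactness argument uniform as $\delta\to0$ along an entire non-compact simplex of data adapted to a chain of sectors — and (b) pinning down the Morse-versus-singular comparison as an honest levelwise quasi-isomorphism of diagrams over $\Sigma$, including the assertion that the Morse-theoretic ``restriction to critical points inside $X_\sigma$'' maps match the Borel--Moore pushforwards. Every geometric input for these — confinement near $\partial X_i$, uniform compactness, and the supply of data — is already available from Lemmas \ref{constructH}, \ref{compactcutoff}, \ref{scconfinecurves}, Propositions \ref{morsefloer}, \ref{hjcontractible}, \ref{compactness}, and \S\ref{sctranssec}; what remains is bookkeeping of the Morse theory, which is standard.
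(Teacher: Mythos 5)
There is a genuine gap, and it sits exactly where you flag obstacle (a). You define $\HJ_\bullet^\morse(X_0,\ldots,X_r)$ as the (infinite) simplicial subset of all $S^1$-invariant data $(\delta\cdot\tilde H,J)$ with $\delta$ ``small enough for Proposition \ref{morsefloer}'', and then assert filteredness and cofinality of the forgetful maps ``exactly as in Corollary \ref{hjfiltered}''. But the extension property that drives Corollary \ref{hjfiltered} fails to transfer: to fill a map $\partial\Delta^n\to\HJ_\bullet^\morse$ you must produce an interior family whose restriction to the boundary is the \emph{already chosen} data, and you must then verify that all parametrized Floer solutions of the filled family are Morse solutions and are regular. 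Proposition \ref{morsefloer} does not give this: it rescales an entire given simplex by a single $\delta$ chosen \emph{after} the simplex, whereas in the filling problem the boundary scalings were fixed earlier, without knowledge of the filler, and cannot be shrunk. The uniform-in-$\delta$ compactness estimate \eqref{lineintegralinequality} is only one ingredient of such a ``families version with prescribed boundary''; the bijectivity/regularity threshold depends on the whole interpolating family, so the circularity is real and is not resolved by anything in the paper. This is precisely why the paper's proof does something different: for each chain $\sigma_0\leq\cdots\leq\sigma_r$ it chooses (by downward induction) a \emph{finite} subcomplex $\HJ_\bullet^\morse(\sigma_0,\ldots,\sigma_r)\subseteq\HJ_\bullet(X_{\sigma_0},\ldots,X_{\sigma_r})$ obtained by adjoining a single final vertex (using filteredness of $\HJ_\bullet$), and only at the very end chooses \emph{one} global $\delta>0$ that works simultaneously for the finitely many simplices appearing over the finite poset $\Sigma$; finiteness plus the final object (via Lemma \ref{finalobjectII}) replaces any filteredness claim for a Morse-type $\HJ_\bullet$.

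A second, related problem is your last step. Having built $\widetilde C^\bullet(X_\sigma,\partial X_\sigma)$, you compare it to singular relative cochains by a levelwise quasi-isomorphism of diagrams $\Sigma\to\Ndg\Ch$ and then ``invert'' that map by Lemma \ref{cofibrantqisoinvert} and ``assembling the homotopies''. Lemma \ref{cofibrantqisoinvert} only produces objectwise homotopy inverses; promoting these to a morphism of homotopy-coherent diagrams over $\Sigma$ (with all higher coherences, and compatibly with the Borel--Moore pushforwards) is a nontrivial lifting problem that the paper's toolkit does not address, and your proposal does not supply the argument. The paper avoids the issue entirely by \emph{taking} the finite Morse/Floer homotopy colimit itself as the functorial model of $C^\bullet(X_\sigma,\partial X_\sigma)$ (this is the reading consistent with how Proposition \ref{mfchain} is used in Corollary \ref{schomologyhypercover} and in \eqref{keydiagram}), so that the desired map of diagrams is simply the strict inclusion of a subcomplex of \eqref{scchains}; no comparison-and-inversion against singular cochains is needed. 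If you want to keep your formulation, you must either prove the families version of Proposition \ref{morsefloer} with prescribed boundary data and carry out the diagrammatic inversion honestly, or, more economically, adopt the finiteness-plus-final-object device and drop the inversion step.
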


\begin{proof}
For every chain $\sigma_0\leq\cdots\leq\sigma_r\in\Sigma$, we choose as follows a finite subcomplex
\begin{equation}
\HJ_\bullet^\morse(\sigma_0,\ldots,\sigma_r)\subseteq\HJ_\bullet(X_{\sigma_0},\ldots,X_{\sigma_r})
\end{equation}
consisting of simplices satisfying the hypotheses of Proposition \ref{morsefloer}.
For chains with strict inequalities $\sigma_0<\cdots<\sigma_r$, we define $\HJ_\bullet^\morse$ by downwards induction on $r$ as
\begin{equation}
\HJ_\bullet^\morse(\sigma_0,\ldots,\sigma_r):=\left(\colim_{\begin{smallmatrix}\underline\tau_0<\sigma_0<\underline\tau_1<\sigma_1<\cdots<\sigma_r<\underline\tau_{r+1}\\\text{some }\underline\tau_i\text{ nonempty}\end{smallmatrix}}\HJ_\bullet^\morse(\underline\tau_0,\sigma_0,\underline\tau_1,\sigma_1,\ldots,\sigma_r,\underline\tau_{r+1})\right)^\vartriangleright
\end{equation}
where each $\underline\tau_i$ is a chain inside $\Sigma$.
Concretely, this means we consider the union of all possible $\HJ_\bullet^\morse(\underline\tau_0,\sigma_0,\underline\tau_1,\sigma_1,\ldots,\sigma_r,\underline\tau_{r+1})$ and adjoin a final object in $\HJ_\bullet(\sigma_0,\ldots,\sigma_r)$, which exists since $\HJ_\bullet(\sigma_0,\ldots,\sigma_r)$ is filtered.
We choose this final vertex to restrict to a cutoff Reeb vector field on $\partial_\infty X_{\sigma_0}$.
For general chains $\sigma_0\leq\cdots\leq\sigma_r$, simply forget the repetitions.

Since $\Sigma$ is finite and each $\HJ_\bullet^\morse(\sigma_0,\ldots,\sigma_r)$ is finite, sufficiently small $\delta>0$ satisfy the conclusion of Proposition \ref{morsefloer} for all simplices in all $\HJ_\bullet^\morse$ at once.
Since each $\HJ_\bullet^\morse$ has a final object, the complex
\begin{equation}
\hocolim_{\delta\cdot\HJ_\bullet^\morse(\sigma_0,\ldots,\sigma_r)}CF^\bullet(X_{\sigma_0};-)
\end{equation}
calculates $C^\bullet(X_{\sigma_0},\partial X_{\sigma_0})$ for sufficiently small $\delta>0$.
Thus the obvious inclusion of
\begin{equation}
\hocolim_{\sigma_0\leq\cdots\leq\sigma_r\leq\sigma}\hocolim_{\delta\cdot\HJ_\bullet^\morse(\sigma_0,\ldots,\sigma_r)}CF^\bullet(X_{\sigma_0};-)
\end{equation}
into \eqref{scchains} is the desired diagram.
\end{proof}

\begin{corollary}\label{schomologyhypercover}
Let $\{X_\sigma\}_{\sigma\in\Sigma}$ be a collection of Liouville sectors indexed by a poset $\Sigma$ which form a homology hypercover of a Liouville manifold $X$.
The induced map
\begin{equation}
\hocolim_{\sigma\in\Sigma}SC^\bullet(X_\sigma,\partial X_\sigma)\to SC^\bullet(X)
\end{equation}
hits the unit.
\end{corollary}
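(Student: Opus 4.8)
The plan is to deduce this from Proposition \ref{mfchain} together with the definition of homology hypercover, as already sketched around \eqref{keydiagram} in the introduction. Enlarge the indexing poset to $\Sigma^\vartriangleright:=\Sigma\sqcup\{\ast\}$ with $\ast$ a new top element and $X_\ast:=X$, regarding the Liouville manifold $X$ as a (degenerate) Liouville sector with $\partial X=\varnothing$; then $C^\bullet(X_\ast,\partial X_\ast)=C^\bullet(X)$, $SC^\bullet(X_\ast,\partial X_\ast)=SC^\bullet(X)$, and each inclusion $X_\sigma\subseteq X$ is an inclusion of Liouville sectors. If $\Sigma$ is infinite, one first passes to a finite sub-poset $\Sigma_0$ large enough to support the chain realizing the hypercover condition, runs the argument for $\Sigma_0^\vartriangleright$, and then pushes the conclusion forward along $\hocolim_{\Sigma_0}\to\hocolim_\Sigma$; so we may assume $\Sigma$ finite. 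Applying Proposition \ref{mfchain} to $\{X_\sigma\}_{\sigma\in\Sigma^\vartriangleright}$ yields a morphism of diagrams $C^\bullet(X_\sigma,\partial X_\sigma)\to SC^\bullet(X_\sigma,\partial X_\sigma)$ over $\Sigma^\vartriangleright$; restricting to $\Sigma$ and taking homotopy colimits, while using the structure maps $\sigma\leq\ast$ (i.e.\ the functoriality maps $C^\bullet(X_\sigma,\partial X_\sigma)\to C^\bullet(X)$ and $SC^\bullet(X_\sigma,\partial X_\sigma)\to SC^\bullet(X)$ of \S\ref{secsymplecticcohomology}, which assemble into maps out of the respective homotopy colimits), gives the commutative square
\[
\begin{tikzcd}[column sep = small]
\smash{\hocolim\limits_{\sigma\in\Sigma}C^\bullet(X_\sigma,\partial X_\sigma)}\ar{r}\ar{d}&\smash{\hocolim\limits_{\sigma\in\Sigma}SC^\bullet(X_\sigma,\partial X_\sigma)}\ar{d}\\
C^\bullet(X)\ar{r}&SC^\bullet(X),
\end{tikzcd}
\]
which is the right-hand square of \eqref{keydiagram}.

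It remains to analyze the other three arrows. The bottom arrow is the map \eqref{shpss} for $X$ (with $\partial X=\varnothing$), and for a Liouville manifold this low-energy/PSS-type map $H^\bullet(X)\to SH^\bullet(X)$ is well known to be unital, carrying $1\in H^0(X)$ to the unit of $SH^\bullet(X)$; concretely, in the Morse model of Proposition \ref{morsefloer} the class $1$ is represented by (the orientation line at) a local maximum of a Morse function, whose image is the unit, consistent with the identification $H^\bullet(X)=SH^\bullet(X)^{<V}$ for small $V$ noted above. The left vertical arrow hits $1\in H^\bullet(X)$: this is precisely the cohomological reformulation via Poincar\'e--Lefschetz duality of the hypothesis that $\{X_\sigma\}$ is a homology hypercover of $X$, \emph{provided} one checks that the structure maps $C^\bullet(X_\sigma,\partial X_\sigma)\to C^\bullet(X_{\sigma'},\partial X_{\sigma'})$ of the diagram above agree with the wrong-way maps dual to the proper pushforwards $C_\bullet^{\BM}(X_\sigma)\to C_\bullet^{\BM}(X_{\sigma'})$ along the closed inclusions $X_\sigma\subseteq X_{\sigma'}$; this is immediate from the Morse-theoretic construction of Proposition \ref{mfchain}, since Morse continuation maps realize exactly these Poincar\'e--Lefschetz wrong-way maps on relative cohomology.

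Granting these, the corollary follows by a diagram chase: $1\in SH^\bullet(X)$ is the image under the bottom arrow of $1\in H^\bullet(X)$, which by the previous paragraph lies in the image of the left vertical arrow; by commutativity $1$ therefore lies in the image of the composite $\hocolim_\sigma C^\bullet(X_\sigma,\partial X_\sigma)\to\hocolim_\sigma SC^\bullet(X_\sigma,\partial X_\sigma)\to SC^\bullet(X)$, hence a fortiori in the image of $\hocolim_\sigma SC^\bullet(X_\sigma,\partial X_\sigma)\to SC^\bullet(X)$, as desired. The genuinely nontrivial points — and where I expect the real work to lie — are the two compatibility assertions in the previous paragraph: matching the structure maps of the Morse-theoretic diagram with the Poincar\'e--Lefschetz wrong-way maps appearing in the definition of homology hypercover, and the unitality on cohomology of $C^\bullet(X)\to SC^\bullet(X)$ for a Liouville manifold. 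Everything else is formal bookkeeping with homotopy colimits (Lemmas \ref{cofinalityI} and \ref{finalobjectII}) and the already-established functoriality of $SC^\bullet$ for inclusions of Liouville sectors.
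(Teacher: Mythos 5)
Your argument is correct and is essentially the paper's own proof: adjoin $X$ as a final object of $\Sigma^\vartriangleright$, apply Proposition \ref{mfchain}, take homotopy colimits to produce the right half of \eqref{keydiagram}, and conclude using the Poincar\'e-dual form of the hypercover hypothesis together with the fact that $H^\bullet(X)\to SH^\bullet(X)$ is unital. The two compatibility points you flag (identifying the Morse-theoretic structure maps with the topological wrong-way maps, and unitality of $C^\bullet(X)\to SC^\bullet(X)$) are exactly what the paper treats as implicit/standard, so there is no substantive difference in approach.
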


\begin{proof}
We consider the diagram of Liouville sectors over $\Sigma^\vartriangleright$ obtained from the given diagram over $\Sigma$ by adjoining $X$ as a final object.
We apply Proposition \ref{mfchain} to this diagram.
Taking homotopy colimits over $\Sigma$ produces the right half of the diagram \eqref{keydiagram}, which implies the desired result (noting that $H^\bullet(X)\to SH^\bullet(X)$ sends the unit to the unit).
\end{proof}

\section{Open-closed map for Liouville sectors}\label{secopenclosed}

For any Liouville sector $X$, we define an open-closed map $\OC:HH_\bullet(\W(X))\to SH^{\bullet+n}(X,\partial X)$, which respects the functoriality of both sides with respect to inclusions of Liouville sectors.
In fact, for any diagram of Liouville sectors $\{X_\sigma\}_{\sigma\in\Sigma}$ indexed by a finite poset $\Sigma$, we define a corresponding diagram of maps $\{\OC:CC_\bullet(\W(X_\sigma))\to SC^{\bullet+n}(X_\sigma,\partial X_\sigma)\}_{\sigma\in\Sigma}$.

To define the open-closed map, we use a method introduced by Abouzaid--Ganatra \cite{abouzaidganatra} whereby we take the domain of $\OC$ to be $CC_\bullet(\OO,\B)$, where $\OO$ is as in \S\ref{secwrapped} and $\B$ is a geometrically defined $\OO$-bimodule quasi-isomorphic as $\OO$-bimodules to $\W$ (a similar construction of $\ainf$-bimodules appears in Seidel \cite{seidelfukayalefschetzI}).
General properties of localization imply that $HH_\bullet(\OO,\B)=HH_\bullet(\OO,\W)=HH_\bullet(\W)$ (due to Abouzaid--Ganatra).

\subsection{Hochschild homology}

We recall some standard results concerning the Hochschild homology of $\ainf$-categories.

\begin{definition}
For any $\C$-bimodule $\M$, its Hochschild homology $HH_\bullet(\C,\M)$ is
the homology of the Hochschild complex
\begin{equation}
CC_\bullet(\C,\M)=\bigoplus_{\begin{smallmatrix}p\geq 0\\X_0,\ldots,X_p\in\C\end{smallmatrix}}
\M(X_p,X_0)\otimes\C(X_0,X_1)[1]\otimes\cdots\otimes\C(X_{p-1},X_p)[1]=\text{``}\begin{tikzcd}\M\otimes_\C\ar[dash, rounded corners, to path = {(\tikztostart) -- ([xshift=0.5ex]\tikztostart.east) -- ([xshift=0.5ex,yshift=2ex]\tikztostart.east) -- ([xshift=-0.5ex,yshift=2ex]\tikztotarget.west) -- ([xshift=-0.5ex]\tikztotarget.west) -- (\tikztotarget)}]{}\end{tikzcd}\text{''}
\end{equation}
with the usual Hochschild differential (see e.g.\ \cite{seidelsubalgebras, abouzaidcriterion, ganatrathesis, sheridanformulae}, and note again that our conventions are closest to \cite{seidelsubalgebras}).
We use the usual abbreviation $HH_\bullet(\C):=HH_\bullet(\C,\C)$.
A word on gradings: the grading appears in the subscript to indicate Hochschild \emph{homology}, however the grading is \emph{cohomological} (i.e.\ the differential has degree one).
\end{definition}

If $f: \D \to \C$ is an $\ainf$-functor and $\M$ is a $\C$-bimodule, we will frequently use the abuse of notation
\begin{equation}
    HH_\bullet(\D, \M) := HH_\bullet(\D, f^* \M)
\end{equation}
where $f^*\M := (f,f)^*\M$ is the two-sided pullback of $\M$ along $f$ as in \S\ref{secbimodules} (in all
cases we consider, $f$ will be a naive inclusion functor, justifying this
notation).

\begin{lemma}\label{hochschildobviousiso}
Let $j:\C\to\D$ be cohomologically fully faithful and split-generating (e.g.\ the map $j$ could be a quasi-equivalence).
For any $\D$-bimodule $\B$, the map $HH_\bullet(\C,j^\ast\B)\to HH_\bullet(\D,\B)$ is an isomorphism; in particular $HH_\bullet(\C)\to HH_\bullet(\D)$ is an isomorphism.
\end{lemma}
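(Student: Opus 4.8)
The plan is to reduce the statement to the two structural results about derived tensor products already established in the excerpt: Lemma~\ref{splitgeneratorquasiisomorphism} (which says that for a cohomologically fully faithful split-generating functor $f\colon\A\to\C$ the natural map $\PP\otimes_\A\Q\to\PP\otimes_\C\Q$ is a quasi-isomorphism) and Lemma~\ref{identityidempotent} (which says the canonical maps $\PP\otimes_\C\C\to\PP$ and $\C\otimes_\C\Q\to\Q$ are quasi-isomorphisms). The key observation is that the Hochschild complex of $\C$ with coefficients in a $\C$-bimodule $\M$ is, up to the usual identification, computed by the ``cyclic'' tensor product; concretely $CC_\bullet(\C,\M)$ is quasi-isomorphic to $\M\otimes_{\C\otimes\C^{\op}}(\C\otimes\C^{\op})$-type expressions, but the cleanest route here is to use the bar resolution directly.

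First I would recall that $CC_\bullet(\C,\M)$ depends on $\M$ only through its restriction along the diagonal, and that there is a standard quasi-isomorphism identifying $CC_\bullet(\C,\M)$ with the ``two-sided bar complex traced along the diagonal'': explicitly, $CC_\bullet(\C,\M)\simeq \M\otimes_{\C^e}\C$ where $\C^e=\C\otimes\C^{\op}$ and $\C$ on the right denotes the diagonal bimodule viewed as a $\C^e$-module (this is the content of the bar-complex formula displayed in the definition of $CC_\bullet$). Under this identification, the map in question, $CC_\bullet(\C,j^\ast\B)\to CC_\bullet(\D,\B)$, becomes the natural map
\begin{equation}
(j^\ast\B)\otimes_{\C^e}\C \to \B\otimes_{\D^e}\D
\end{equation}
induced by $j$. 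Now I would factor this map through $\B\otimes_{\D^e}\C\simeq (j^\ast\B)\otimes_{\C^e}\C$ ... more carefully: the functor $j$ induces $j^e=j\otimes j^{\op}\colon\C^e\to\D^e$, which is again cohomologically fully faithful and split-generating (a tensor product of such functors is such; this is an elementary check, essentially because $H^\bullet(\C^e)=H^\bullet\C\otimes H^\bullet\C^{\op}$ and split-generation passes to tensor products). So Lemma~\ref{splitgeneratorquasiisomorphism} applied to $j^e$ gives that $(j^\ast\B)\otimes_{\C^e}\Q\to\B\otimes_{\D^e}\Q$ is a quasi-isomorphism for any $\D^e$-bimodule $\Q$, in particular for $\Q$ the diagonal bimodule $\D$; and separately the natural map $(j^e)^\ast\D\to$ (something split-generated by the image) combined with Lemma~\ref{identityidempotent} handles the comparison of coefficients. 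Assembling: $CC_\bullet(\C,j^\ast\B)\simeq (j^\ast\B)\otimes_{\C^e}(j^e)^\ast\D\to \B\otimes_{\D^e}\D\simeq CC_\bullet(\D,\B)$, where the middle arrow is a quasi-isomorphism by Lemma~\ref{splitgeneratorquasiisomorphism} for $j^e$ (noting $(j^e)^\ast\D$ is the relevant $\C^e$-module, and $\C$ is split-generated inside it appropriately). The final sentence, $HH_\bullet(\C)\to HH_\bullet(\D)$ being an isomorphism, is the special case $\B=\D$ (the diagonal bimodule), for which $j^\ast\B=j^\ast\D$ and the diagonal $\C$ of $\C$ maps to it; one applies the general statement just proven together with the fact that $j^\ast\D$ is quasi-isomorphic as $\C$-bimodules to the diagonal $\C$ when $j$ is fully faithful (indeed $j^\ast\D(X,Y)=\D(jX,jY)\simeq\C(X,Y)$).

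The main obstacle I anticipate is the bookkeeping around the fact that ``$\C$ split-generated inside $(j^e)^\ast\D$'' is not quite the literal hypothesis of Lemma~\ref{splitgeneratorquasiisomorphism}, which is phrased in terms of the functor $f\colon\A\to\C$ being split-generating rather than a bimodule being built from split-generators; so I would need to either restate Lemma~\ref{splitgeneratorquasiisomorphism} slightly or, more cleanly, argue as in its proof: reduce (via Lemma~\ref{identityidempotent}) to showing $\C\otimes_{\C^e}\C\to\C\otimes_{\D^e}\C$ is a quasi-isomorphism, restrict to inputs where the relevant objects lie in $\C$ (where it is $\C\otimes_{\C^e}\C\to\C\otimes_{\C^e}\C$ after using full faithfulness), and then propagate to all objects using split-generation and the fact that the homology of a derived tensor product is a ``finitary'' invariant closed under retracts and cones. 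A secondary, more pedestrian obstacle is carefully justifying the identification $CC_\bullet(\C,\M)\simeq\M\otimes_{\C^e}\C$ with correct signs and gradings; this is standard (see \cite{seidelsubalgebras, ganatrathesis}) but needs to be cited rather than rederived. Once these identifications are in place the proof is a two-line diagram chase, so I would keep the write-up short and lean heavily on the already-established Lemmas~\ref{identityidempotent}, \ref{splitgeneratorquasiisomorphism}, and \ref{splitgeneratorquasiisomorphism}'s proof technique.
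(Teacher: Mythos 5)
Your strategy---rewrite $CC_\bullet(\C,\M)$ as a tensor product over the enveloping category $\C^e=\C\otimes\C^{\op}$ and apply Lemma \ref{splitgeneratorquasiisomorphism} to $j^e=j\otimes j^{\op}$---can in principle be made to work, but as written it has real gaps relative to the framework of this paper. First, it relies on machinery the paper never constructs: a model for the tensor product $\C\otimes\C^{\op}$ of $\ainf$-categories (which is genuinely nontrivial to define, since there is no strict diagonal for the $\ainf$ operad), the dictionary identifying $\C$-bimodules with one-sided $\C^e$-modules, and the quasi-isomorphism $CC_\bullet(\C,\M)\simeq\M\otimes_{\C^e}\C_\Delta$ (the bar-model tensor product is not literally the cyclic Hochschild complex, so this is an identification to be proved, not a definition). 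Second, your stated justification that $j^e$ is cohomologically fully faithful, namely $H^\bullet(\C^e)=H^\bullet\C\otimes H^\bullet\C^{\op}$, is false over $\ZZ$ (K\"unneth has Tor terms); the correct argument is that a tensor product of quasi-isomorphisms between K-flat (cofibrant) complexes is a quasi-isomorphism, and split-generation of $\D^e$ by the image of $\C^e$ needs a separate twisted-complex argument, not K\"unneth. Third, your worry that Lemma \ref{splitgeneratorquasiisomorphism} does not literally apply is misplaced once you are in the $\C^e$ framework: take $f=j^e:\C^e\to\D^e$, $\PP=\B$ viewed as a right $\D^e$-module, and $\Q=\D_\Delta$ viewed as a left $\D^e$-module; the vagueness in the middle of your write-up (``something split-generated by the image,'' ``split-generated inside it appropriately'') signals that this reduction was not pinned down.

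For comparison, the paper's proof stays entirely inside its own toolkit and avoids $\C^e$: by Lemma \ref{identityidempotent} one may replace $\B$ by $\D\otimes_\D\B$; then the Hochschild complex of $\C$ with coefficients $j^\ast(\D\otimes_\D\B)$ is, after rotating the cyclic tensor word, literally the Hochschild complex of $\D$ with coefficients in the $\D$-bimodule $\B\otimes_\C\D$, while $CC_\bullet(\D,\B)$ is identified with $CC_\bullet(\D,\B\otimes_\D\D)$. The comparison map is then induced by the bimodule map $\B\otimes_\C\D\to\B\otimes_\D\D$, which is a quasi-isomorphism by Lemma \ref{splitgeneratorquasiisomorphism} applied to $j$ itself, and one concludes because a quasi-isomorphism of coefficient bimodules induces a quasi-isomorphism on Hochschild chains (length filtration plus cofibrancy). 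If you want to salvage your route, you would need to either import the $\ainf$ tensor product and the three identifications above with proofs valid over $\ZZ$, or simply adopt the rotation trick, which is the paper's two-line argument.
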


\begin{proof}
This is almost the same statement as Lemma \ref{splitgeneratorquasiisomorphism}, and may be reduced to that result as follows.
Since the natural map $\D\otimes_\D\B\to\B$ is a quasi-isomorphism, we may replace $\B$ with $\D\otimes_\D\B$.
The map in question can then be written as
\begin{equation}
HH_\bullet(\D,\B\otimes_\C\D)\to HH_\bullet(\D,\B\otimes_\D\D).
\end{equation}
Now the result follows from Lemma \ref{splitgeneratorquasiisomorphism} applied to the map $\B\otimes_\C\D\to\B\otimes_\D\D$.
\end{proof}

\begin{lemma}[Abouzaid--Ganatra \cite{abouzaidganatra}]\label{HHquotientisomorphism}
For any $(\C/\A)$-bimodule $\M$, the natural map $HH_\bullet(\C,\M)\to
HH_\bullet(\C/\A,\M)$ is a quasi-isomorphism.
\end{lemma}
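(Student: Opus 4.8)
The plan is to mimic the proof of Lemma \ref{HHquotientisomorphism}'s non-Hochschild analogue, namely Lemma \ref{quotienttensorproperty}, using the standard identification of the Hochschild complex as a derived self-tensor product of the diagonal bimodule. Concretely, for any $\ainf$-category $\C$ and $\C$-bimodule $\M$ there is a quasi-isomorphism $CC_\bullet(\C,\M)\simeq \M\otimes_{\C\otimes\C^\op}\C$, or more usefully for us, $CC_\bullet(\C,\M)$ is the ``cyclic tensor product'' whose bar resolution presents it as $\M$ tensored against the bimodule bar resolution of the diagonal and then traced. The key formal input is that $HH_\bullet(\C,-)$ only sees the image of $\M$ under the two-sided operations, so the first step is to reduce, via Lemma \ref{identityidempotent}, to the case $\M=\C/\A\otimes_{\C/\A}\M\otimes_{\C/\A}\C/\A$ (absorbing $\C/\A$ on both sides); this is legitimate since the canonical maps \eqref{tensordiagonalright} and \eqref{tensordiagonalleft} are quasi-isomorphisms. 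After this reduction it suffices to prove the statement for $\M=(\C/\A)_\Delta$ the diagonal bimodule, i.e.\ that the natural map
\begin{equation}
HH_\bullet(\C,\C/\A)\to HH_\bullet(\C/\A,\C/\A)=HH_\bullet(\C/\A)
\end{equation}
is a quasi-isomorphism, where on the left $\C/\A$ is regarded as a $\C$-bimodule via the quotient functor $\C\to\C/\A$.

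The second step is to recognize both sides as homotopy colimits of the same diagram, or better, to compute them directly via the explicit bar complexes. Writing out $CC_\bullet(\C,\C/\A)$ gives a double complex: an ``outer'' Hochschild-type string of morphisms in $\C$ (the $X_i$'s) wrapped around a bimodule factor which is itself a bar string of morphisms through objects of $\A$ (this is exactly the structure \eqref{caccaweirdtensor} appearing in the proof of Lemma \ref{quotienttensorproperty}, now with the two ends glued cyclically). By contrast $CC_\bullet(\C/\A)$ has only strings of morphisms in $\C/\A$, each of which is itself a bar string of $\A$-morphisms with $\C$-morphisms at the joints. I would set up a filtration-by-length argument — filter the cone of the comparison map by the total number of ``$\C$-morphism'' tensor factors that do not lie between two consecutive $\A$-objects — and show the associated graded is acyclic because each graded piece contains a tensor factor which is acyclic (coming from the fact that, once an object is split-generated by $\A$, the relevant complex $(\C/\A)(X,Y)$ or the module quotient is acyclic by Lemmas \ref{quotientannihilatesA} and \ref{localgood}) while all remaining tensor factors are cofibrant, hence K-flat. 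This is the same mechanism that makes Lemma \ref{quotienttensorproperty} work; the Hochschild version just carries an extra cyclic identification which does not interfere with the length filtration.

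An alternative, perhaps cleaner, second step: use Lemma \ref{quotienttensorproperty} as a black box. Since $CC_\bullet(\C,\M)$ for a $(\C/\A)$-bimodule $\M$ can be computed as $\M\otimes_{\C\otimes\C^\op}(\C\text{-bimodule bar resolution of }\C)$, and since $\C\otimes\C^\op\to(\C/\A)\otimes(\C/\A)^\op$ is again a localization-type functor (the quotient by $\cones$ of the relevant morphisms in the product), one can apply Lemma \ref{quotienttensorproperty} to the pair of modules $\M$ and the diagonal over $\C\otimes\C^\op$ versus $(\C/\A)\otimes(\C/\A)^\op$. The technical point to verify is that the diagonal $\C$-bimodule, restricted from $(\C/\A)$ when $\M=\C/\A$, has the form needed — i.e.\ that $CC_\bullet(\C/\A)$ really is $(\C/\A)\otimes_{(\C/\A)\otimes(\C/\A)^\op}(\C/\A)$ and that the comparison map is induced by $\C\otimes\C^\op\to(\C/\A)\otimes(\C/\A)^\op$. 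Modulo bookkeeping this follows from the bar-model definitions in \S\ref{secbimodules}.

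\textbf{Main obstacle.} The genuine difficulty is purely organizational rather than conceptual: matching up the cyclic/Hochschild bar complex with an honest two-sided tensor product over $\C\otimes\C^\op$ requires either invoking the standard (but notation-heavy) identification $HH_\bullet(\C,\M)\cong\M\otimes_{\C^e}\C^{\mathrm{bar}}$ with all the $\ainf$ signs, or redoing the length-filtration acyclicity argument from Lemma \ref{quotienttensorproperty} by hand in the cyclic setting and checking the filtration is exhaustive and complete so that ``associated graded acyclic'' implies ``acyclic'' (here one uses that the complexes are bounded below in the length grading on each fixed object-tuple, and cofibrancy/K-flatness to commute the relevant (co)limits). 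I expect the filtration argument to go through verbatim after the reduction to $\M=\C/\A$, with the only new verification being that the cyclic gluing does not create a graded piece lacking an acyclic tensor factor — which it cannot, since any term with a genuine $\C$-morphism factor outside the $\A$-blocks still has that factor sitting in an acyclic complex once one of its endpoints is $\A$-split-generated, exactly as before.
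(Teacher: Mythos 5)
You correctly identify Lemma \ref{quotienttensorproperty} as the key input, which is indeed what the paper's proof rests on, but your primary route has a genuine gap at its first step: knowing the statement for the diagonal bimodule $\M=(\C/\A)_\Delta$ does not imply it for a general $(\C/\A)$-bimodule $\M$. Replacing $\M$ by $\C/\A\otimes_{\C/\A}\M\otimes_{\C/\A}\C/\A$ via Lemma \ref{identityidempotent} is fine, but neither $HH_\bullet(\C,-)$ nor $HH_\bullet(\C/\A,-)$ is determined by its value on the diagonal (arbitrary bimodules are not built from the diagonal by shifts, cones, sums and retracts), and you supply no mechanism transporting the diagonal case to the sandwiched bimodule. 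In fact the statement for all $\M$ is equivalent to $\C/\A\otimes_\C\C/\A\to\C/\A$ being a quasi-isomorphism (i.e.\ to Lemma \ref{quotienttensorproperty} itself), whereas the diagonal Hochschild case only records a ``trace'' of that map, which is strictly weaker. The missing idea --- and the paper's actual argument, which is ``the same argument used to prove Lemma \ref{hochschildobviousiso}'' --- is the cyclic rotation of the bar complex: replace $\M$ by $\C/\A\otimes_{\C/\A}\M$, note that $CC_\bullet(\C,\C/\A\otimes_{\C/\A}\M)$ is the same complex as $CC_\bullet(\C/\A,\M\otimes_\C\C/\A)$ (the $\C$-string and the $\C/\A$-bar string trade places around the cycle), and similarly replace $\M$ by $\M\otimes_{\C/\A}\C/\A$ on the target; the comparison map then becomes the map induced on Hochschild chains by $\M\otimes_\C\C/\A\to\M\otimes_{\C/\A}\C/\A$, which is a quasi-isomorphism by Lemma \ref{quotienttensorproperty} with $\M$ kept arbitrary throughout (one also uses, via the length filtration and K-flatness, that quasi-isomorphic cofibrant coefficient bimodules have quasi-isomorphic Hochschild complexes).

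Your ``alternative'' second step is closer in spirit but also does not go through as written: Lemma \ref{quotienttensorproperty} applies to a quotient of a category by a full subcategory in the sense of Definition \ref{ainfquotient}, and $(\C/\A)\otimes(\C/\A)^\op$ is not such a quotient of $\C\otimes\C^\op$ --- in a quotient the bar strings pass through objects of the subcategory simultaneously in both tensor factors, whereas in $(\C/\A)\otimes(\C/\A)^\op$ the two $\A$-strings are independent --- so you would need an extra comparison, on top of the identification $CC_\bullet(\C,\M)\simeq\M\otimes_{\C\otimes\C^\op}\C$ and a notion of tensor product of $\ainf$-categories, none of which the paper's framework provides. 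A smaller point on your filtration sketch: the acyclic tensor factors must have an endpoint at an object of $\A$ itself (the intermediate objects of the bar strings), which is what Lemmas \ref{quotientannihilatesA} and \ref{localgood} give; general objects of $\C$ are not split-generated by $\A$, so the phrase ``once an object is split-generated by $\A$'' only applies in that restricted sense.
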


\begin{proof}
This follows from Lemma \ref{quotienttensorproperty} by the same argument used to prove Lemma \ref{hochschildobviousiso}.
\end{proof}

\subsection{Preparatory remarks}\label{ocprep}

The main construction of this section takes as input a diagram of Liouville sectors $\{X_\sigma\}_{\sigma\in\Sigma}$ indexed by a finite poset $\Sigma$.
A choice of such input diagram shall be regarded as fixed for most of this section (up through \S\ref{mapocsec}, to be precise).
In keeping with Convention \ref{choosingpi}, we regard each $X_\sigma$ as coming with a choice of $\pi_\sigma:\Nbd^Z\partial X_\sigma\to\CC$ such that for $\sigma\leq\sigma'$, either $X_\sigma\subsetneqq X_{\sigma'}$ or $X_\sigma=X_{\sigma'}$ and $\pi_\sigma=\pi_{\sigma'}$.
The main constructions of \S\ref{secwrapped} and \S\ref{secsymplecticcohomology} now give, respectively, diagrams $\{\W_\sigma\}_{\sigma\in\Sigma}$ and $\{SC^\bullet(X_\sigma,\partial X_\sigma)\}_{\sigma\in\Sigma}$, and the goal of this section is to define a map of diagrams $\{\OC_\sigma:CC_\bullet(\W_\sigma)\to SC^{\bullet+n}(X_\sigma,\partial X_\sigma)\}_{\sigma\in\Sigma}$.
Actually, we slightly tweak the definitions from \S\ref{secwrapped} and \S\ref{secsymplecticcohomology} as we now describe, in order to better accomodate the construction of $\OC$.

Regarding the construction of $\{\W_\sigma\}_{\sigma\in\Sigma}$ in \S\ref{wrappedfulldiagram}, we make the following modification.
We fix slight inward pushoffs $X_\sigma^{--}\subsetneqq X_\sigma^-\subsetneqq X_\sigma$ (trivial inclusions) together with $\pi_\sigma^-$ and $\pi_\sigma^{--}$, such that for $\sigma\leq\sigma'$ we have $X_\sigma^{--}\subseteq X_{\sigma'}^{--}$ and $X_\sigma^-\subseteq X_{\sigma'}^-$ (strict iff $X_\sigma\subseteq X_{\sigma'}$ is strict).
When defining $\OO_\sigma$, we use only Lagrangians inside $X_\sigma^{--}$, we wrap them only inside $X_\sigma^{--}$ (namely in Definition \ref{posetOdef}, we consider the wrapping category inside $X_\sigma^{--}$ instead of inside $X_\sigma$), and we use almost complex structures defined over $X_\sigma^-$ (and adapted to $\pi_\sigma^-$).
This modification is completely harmless; it follows easily from the results of \S\ref{secwrapped} that this produces a quasi-equivalent diagram of $\ainf$-categories.
We henceforth fix such choices of $X_\sigma^-$, $X_\sigma^{--}$, $\OO_\sigma$, strip-like coordinates, and almost complex structures
\begin{align}
\label{ocWcoordsI}\xi_{L_0,\ldots,L_k;j}^+:[0,\infty)\times[0,1]\times\Rbar_{k,1}&\to\Sbar_{k,1}\quad j=1,\ldots,k\\
\label{ocWcoordsII}\xi_{L_0,\ldots,L_k}^-:(-\infty,0]\times[0,1]\times\Rbar_{k,1}&\to\Sbar_{k,1}\\
\label{ocWacs}J_{L_0,\ldots,L_k}:\Sbar_{k,1}&\to\J(X_{\sigma_{L_0}}^-)
\end{align}
used to define $\W_\sigma$ for the diagram $\{X_\sigma\}_{\sigma\in\Sigma}$ (recalling that $\sigma_L$ denotes the unique minimal element of $\Sigma$ such that $L\in\OO_{\sigma_L}$).
For reasons of technical convenience, we will also assume that the negative strip-like coordinates \eqref{ocWcoordsII} extend to a biholomorphism
\begin{equation}\label{ocWcoordsIII}
\RR\times[0,1]\times\RRR_{k,1}\to\SSS_{k,1}.
\end{equation}
Such strip-like coordinates may be constructed by induction exactly as in \S\ref{wcurvessec}.

Regarding $\{SC^\bullet(X_\sigma,\partial X_\sigma)\}_{\sigma\in\Sigma}$ as defined in \eqref{scchains} from \S\ref{secsympcochaindiagrams}, we make the following modification.
We let $\HJ_\bullet^{SC}(\sigma_0,\ldots,\sigma_r)$ for $\sigma_0\leq\cdots\leq\sigma_r\in\Sigma$ denote the following variant of $\HJ_\bullet(X_{\sigma_0},\ldots,X_{\sigma_r})$.
To specify an $n$-simplex, we specify for every vertex $v\in\Delta^n$ a chain
\begin{equation}\label{scchainforoc}
\mu_1^{(0)}\leq\cdots\leq\mu_{a_0}^{(0)}\leq\sigma_0\leq\mu_1^{(1)}\leq\cdots\leq\mu_{a_1}^{(1)}\leq\sigma_1\leq\cdots\leq\sigma_r
\end{equation}
in $\Sigma$ (instead of a chain \eqref{Ychain} in the category of Liouville sectors), and the rest of the definition remains the same.
In this section, we use the definition
\begin{equation}\label{scmodelforoc}
SC^\bullet(X_\sigma,\partial X_\sigma):=\hocolim_{\sigma_0\leq\cdots\leq\sigma_r\leq\sigma}\hocolim_{\HJ_\bullet^{SC,\reg}(\sigma_0,\ldots,\sigma_r)}CF^\bullet(X_{\sigma_0};-),
\end{equation}
which maps quasi-isomorphically to \eqref{scchains} by virtue of the natural cofinal map $\HJ_\bullet^{SC}(\sigma_0,\ldots,\sigma_r)\to\HJ_\bullet(X_{\sigma_0},\ldots,X_{\sigma_r})$.
We should admit that this replacement is for aesthetic, not mathematical (perhaps even just notational), reasons.

\subsection{Moduli spaces of domains for \texorpdfstring{$\B$}{B}}\label{Bmodulisubsec}

\begin{figure}[hbt]
\centering
\includegraphics{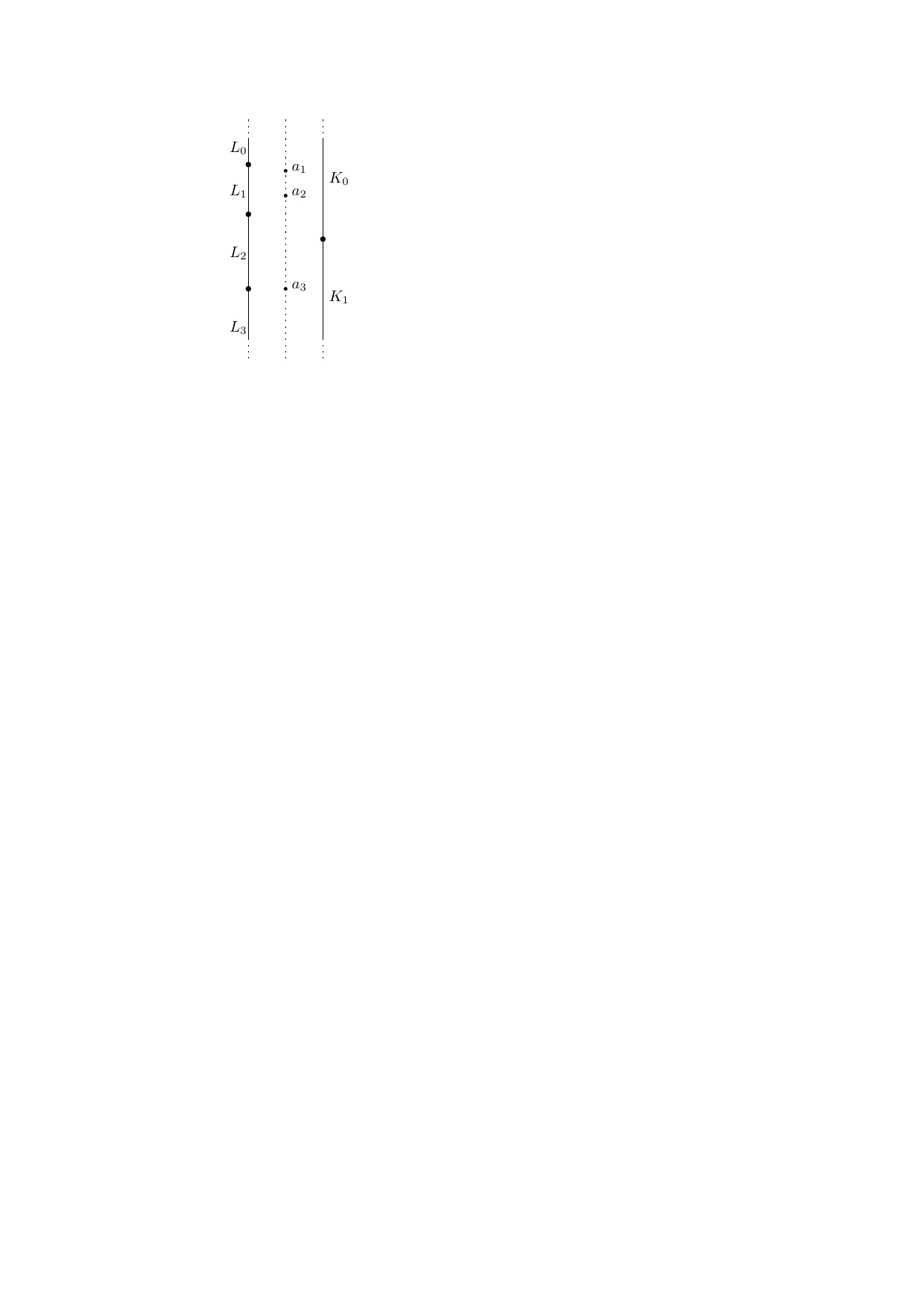}
\caption{Such holomorphic maps define the diagram of bimodules $\B$.}\label{Bstrip}
\end{figure}

We consider the compactified moduli space of strips $\RR\times[0,1]$ with $k$ boundary marked points on $\RR\times\{0\}$ and $\ell$ boundary marked points on $\RR\times\{1\}$, together with marked points $a_1\geq\cdots\geq a_n\in\RR$ (allowed to collide with each other as in \S\ref{scdomainmodulisec}) up to simultaneous translation (we view these points $a_i$ as lying on $\RR\times\{\frac 12\}\subseteq\RR\times[0,1]$).
We denote this moduli space and its universal family by $\Cbar^\B_{k,\ell,n}\to\Mbar^\B_{k,\ell,n}$.
Of course, for $n=0$ this is simply $\Sbar_{k+1+\ell,1}\to\Rbar_{k+1+\ell,1}$ from \S\ref{wcurvessec}, and for $k=\ell=0$, we have $\Mbar^\B_{0,0,n}=\Mbar^{SC}_{0,0,n}$ (but of course the universal curves are not the same).

To help orient the reader, we list the codimension one boundary strata of $\Mbar^\B_{k,\ell,n}$.
There are boundary strata $\Mbar^\B_{k,\ell,n-1}$ corresponding to when $a_i=a_{i+1}$, and there are boundary strata $\Mbar^\B_{k',\ell',n'}\times\Mbar^\B_{k'',\ell'',n''}$ for $k=k'+k''$, $\ell=\ell'+\ell''$, $n=n'+n''$ corresponding to when $a_{n'}-a_{n'+1}=\infty$.
There are also boundary strata $\Mbar^\B_{k',\ell,n}\times\Rbar_{k'',1}$ for $k''+k'=k+1$ corresponding to when some of the marked points on $\RR\times\{0\}$ collide, and similarly $\Mbar^\B_{k,\ell',n}\times\Rbar_{\ell'',1}$.

The operation of forgetting the $k+\ell$ boundary marked points and remembering only the $n$ marked points gives a map $\Mbar^\B_{k,\ell,n}\to\Mbar^{SC}_n$.
This forgetful map can be covered by a map of universal curves $\Cbar^\B_{k,\ell,n}\to\Cbar^{SC}_n$ (up to handling unstable components correctly) defined by $(s,t)\mapsto(s,\varphi(t))$ for any fixed $\varphi:[0,1]\to S^1$.
We fix once and for all such a map
\begin{equation}\label{ocvarphi}
\varphi:[0,1]\to S^1
\end{equation}
with $\varphi'(t)\geq 0$, $\varphi(t)=0$ for $t$ in a neighborhood of $0,1\in[0,1]$, and $\varphi$ of degree one.
This map will be used to define the Hamiltonian terms in the Floer equations we are about to consider.
That is, our Hamiltonians will be specified on $\Cbar^{SC}_m$ and then pulled back under this map to $\Cbar^\B_{k,\ell,m}$.
This may seem like a somewhat hacked approach, but it allows us to reuse wholesale the definition and construction of dissipative Hamiltonians from \S\ref{secsymplecticcohomology} rather than adapting these to the present setting (which would be time consuming, though likely not difficult).

One warning about the moduli spaces $\Mbar^\B_{k,\ell,n}$ is in order.
Given strip-like coordinates at the various boundary marked points, the standard gluing operation does not in general define a collar neighborhood of a given boundary stratum (such as $\Mbar^\B_{k',\ell',n'}\times\Mbar^\B_{k'',\ell'',n''}$, $\Mbar^\B_{k',\ell,n}\times\Rbar_{k'',1}$, or $\Mbar^\B_{k,\ell',n}\times\Rbar_{\ell'',1}$ as described above) due to the simple fact that, on the glued curve, the images of the marked points $a_i$ need not lie on the line $\RR\times\{\frac 12\}\subseteq\RR\times[0,1]$.
This issue does not arise, however, as long as we use the tautological strip-like coordinates near $s=\pm\infty$ and as long as we require the strip-like coordinates \eqref{ocWcoordsII} to extend as in \eqref{ocWcoordsIII}.
(The same warning applies to the moduli spaces $\Mbar^{SC}_m$ from \S\ref{secsymplecticcohomology}, though for them there is little reason to discuss anything other than tautological cylindrical coordinates near $s=\pm\infty$.)

\subsection{Floer data for \texorpdfstring{$\B$}{B}}

For every chain $\tau\leq\sigma_0\leq\cdots\leq\sigma_r\in\Sigma$, we define a simplicial set $\HJ_\bullet^\B(\tau;\sigma_0,\ldots,\sigma_r)$ of Floer data.
An $n$-simplex of $\HJ_\bullet^\B(\tau;\sigma_0,\ldots,\sigma_r)$ consists of a specification of strip-like coordinates, almost complex structures, and Hamiltonians
\begin{align}
\label{BchoicescoordsI}\xi^{v_0\cdots v_m}_{K_k,\ldots,K_0;L_0,\ldots,L_\ell;0,j}:\RR_{\geq 0}\times[0,1]\times\Mbar^\B_{k,\ell,m}&\to\Cbar^\B_{k,\ell,m}\quad j=1,\ldots,k\\
\label{BchoicescoordsII}\xi^{v_0\cdots v_m}_{K_k,\ldots,K_0;L_0,\ldots,L_\ell;1,j}:\RR_{\geq 0}\times[0,1]\times\Mbar^\B_{k,\ell,m}&\to\Cbar^\B_{k,\ell,m}\quad j=1,\ldots,\ell\\
\label{Bchoicesacs}J^{v_0\cdots v_m}_{K_k,\ldots,K_0;L_0,\ldots,L_\ell}:\Cbar^\B_{k,\ell,m}&\to\J(X_{\sigma_r}^-)\\
\label{BchoicesH}H^{v_0\cdots v_m}:\Cbar^{SC}_m&\to\H(X_{\sigma_r}^-)
\end{align}
for $K_k>\cdots>K_0\in\OO_{\sigma_r}$, $L_0>\cdots>L_\ell\in\OO_{\sigma_r}$, and $0\leq v_0<\cdots<v_m\leq n$, satisfying the following properties:
\begin{itemize}
\item The strip-like coordinates $\xi$, the tautological strip-like coordinates at $s=\pm\infty$, and the strip-like coordinates \eqref{ocWcoordsI}--\eqref{ocWcoordsII} fixed for $\OO_{\sigma_i}$ must be \emph{compatible with gluing and forgetting vertices} in the sense of \S\ref{wcurvessec} and Definition \ref{nsimplexfloer}.
Namely, over a neighborhood of each glued boundary stratum of $\Mbar^\B_{k,\ell,m}$, the strip-like coordinates must coincide with those defined by ``gluing'' via the strip-like coordinates given on each fiberwise irreducible component (this makes sense in view of the extension \eqref{ocWcoordsIII}), and over each forgotten vertex boundary stratum, they must coincide with those specified for the relevant copy of $\Mbar^\B_{k,\ell,m-1}$.
\item The almost complex structures $J$ must be \emph{compatible with gluing and forgetting vertices} in the sense of \S\ref{wcurvessec} and Definition \ref{nsimplexfloer}.
Namely, (1) they must be $s$-invariant in the thin parts, (2) over a neighborhood of each glued boundary stratum, they must be extended via the usual gluing construction, and (3) over each (glued or forgotten vertex) boundary stratum, they must agree with the almost complex structures specified on each fiberwise irreducible component.
\item The almost complex structures $J$ must be \emph{adapted to $\partial X_i^-$}, meaning that $\pi_{\sigma_i}^-$ must be holomorphic with respect to \eqref{Bchoicesacs} over $(\pi_{\sigma_i}^-)^{-1}(\CC_{\left|\Re\right|\leq\varepsilon})$ for some $\varepsilon>0$ whenever $K_k,L_0\in\OO_{\sigma_i}\subseteq\OO_{\sigma_r}$ (compare Definition \ref{adapted}).
\item The Hamiltonians $H$ must be \emph{compatible with gluing and forgetting vertices} in the sense of Definition \ref{nsimplexfloer}.
\item The Hamiltonians $H$ must be \emph{adapted to $\partial X_i^-$}, meaning that $H\equiv 0$ over $(\pi_{\sigma_i}^-)^{-1}(\CC_{\left|\Re\right|\leq\varepsilon})$ for some $\varepsilon>0$ and $0\leq i\leq r$ (compare Definition \ref{adapted}).
\item The Hamiltonians $H^v$ must be \emph{linear at infinity} (compare Definition \ref{admissible}).
\item The Hamiltonians $H$ must be \emph{dissipative} with specified dissipation data in the sense of Definition \ref{dissipative}, except the ``non-degenerate fixed points'' and ``no fixed points at infinity'' conditions (which in the present context are both irrelevant and never satisfied) are replaced with the requirement that $\Phi_{H^v}K\pitchfork L$ for all $K,L\in\OO_\tau$ (note that it is not reasonable to impose this condition for all $K,L\in\OO_{\sigma_j}$ for any $j>0$, since $H^v$ vanishes near $\partial X_{\sigma_i}^-$ and thus cannot ensure that $\Phi_{H^v}L\pitchfork L$ if $L\in\OO_{\sigma_j}$ intersects $\partial X_{\sigma_i}^-$ nontrivially).
\end{itemize}
Note the forgetful maps
\begin{alignat}{2}
\label{BforgetI}\HJ_\bullet^\B(\tau;\sigma_0,\ldots,\sigma_r)&\to\HJ_\bullet^\B(\tau;\sigma_0,\ldots,\widehat{\sigma_i},\ldots,\sigma_r)&\quad&0\leq i\leq r,\\
\label{BforgetII}\HJ_\bullet^\B(\tau';\sigma_0,\ldots,\sigma_r)&\to\HJ_\bullet^\B(\tau;\sigma_0,\ldots,\sigma_r)&\quad&\tau\leq\tau'.
\end{alignat}

It is straightforward to construct vertices of $\HJ_\bullet^\B(\tau;\sigma_0,\ldots,\sigma_r)$ (recall from \S\ref{ocprep} that the Lagrangians in $\OO_\tau$ are contained in $X_\tau^{--}\subseteq X_{\sigma_0}^{--}\subsetneqq X_{\sigma_0}^-$, so the requirement that $H$ vanish over $\Nbd^Z\partial X_{\sigma_0}^-$ does not conflict with the need to perturb all Lagrangians in $\OO_\tau$).
Moreover, the proofs of Proposition \ref{hjcontractible} and Corollary \ref{hjfiltered} apply essentially verbatim to show that each $\HJ_\bullet^\B(\tau;\sigma_0,\ldots,\sigma_r)$ is a filtered $\infty$-category.
The forgetful map \eqref{BforgetI} forgetting $\sigma_i$ is easily seen to be cofinal (even surjective on vertices) for $i=r$.
The forgetful map \eqref{BforgetII} decreasing $\tau$ is also easily seen to be cofinal.

\subsection{Holomorphic curves for \texorpdfstring{$\B$}{B}}

Given an $n$-simplex $(H,J,\xi)$ of $\HJ_\bullet^\B(\tau;\sigma_0,\ldots,\sigma_r)$, we consider the moduli space of maps $u:\RR\times[0,1]\to X_{\sigma_r}$ and points $a_1\geq\cdots\geq a_n\in\RR$ (up to simultaneous translation) as in Figure \ref{Bstrip}, with boundary conditions in $\OO_\tau$, satisfying
\begin{equation}\label{Beqn}
(du-X_{H(s,\varphi(t))}\otimes d\varphi(t))^{0,1}_J=0
\end{equation}
where $\varphi$ is the universally fixed function \eqref{ocvarphi}.
To interpret the Hamiltonian term in \eqref{Beqn}, recall the map $\Cbar^\B_{k,\ell,n}\to\Cbar^{SC}_n$ determined by $\varphi$ as mentioned at the end of \S\ref{Bmodulisubsec}.
Such solutions $u$ are contained in $X_{\sigma_0}^-$ since $H$ vanishes and $\pi_{\sigma_0}^-$ is $J$-holomorphic over $\pi_{\sigma_0}^{-1}(\CC_{\left|\Re\right|\leq\varepsilon})$ (see Lemma \ref{pibarrierlemma}).
Denote by
\begin{equation}\label{Bmodulispaces}
\Mbar^\B_{k,\ell,n}(H,J,\xi;p_k,\ldots,p_1,x^+,q_1,\ldots,q_\ell,x^-)
\end{equation}
the associated compactified moduli spaces (i.e.\ including all stable broken trajectories as well), where $p_i\in K_i\cap K_{i-1}$, $q_i\in L_{i-1}\cap L_i$, $x^+\in\Phi_{H(0)}K_0\cap L_0$, and $x^-\in\Phi_{H(n)}K_k\cap L_\ell$.

\begin{proposition}\label{bcompactness}
The moduli spaces \eqref{Bmodulispaces} are compact.
\end{proposition}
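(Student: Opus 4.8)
The proof of Proposition~\ref{bcompactness} will follow the same template as the proofs of Propositions~\ref{wcompactness} and~\ref{compactness}: reduce to producing an \emph{a priori} compact subset of $X_{\sigma_r}$ (disjoint from $\partial X_{\sigma_r}$) containing every stable trajectory, after which standard Gromov compactness applies. The plan is first to invoke Lemma~\ref{pibarrierlemma} and the adaptedness of $(H,J)$ to $\partial X_i^-$ (exactly as was done right after \eqref{Bmodulispaces}) to confine all curves to $X_{\sigma_0}^-$, so in particular they stay uniformly away from $\partial X_{\sigma_r}$; thus it suffices to bound them away from infinity. Second, as in Proposition~\ref{compactness}, I would note that $E^\top$ is fixed (it is a difference of actions of the asymptotic orbits/chords, now including the boundary-Hamiltonian term \eqref{geoenergymoving} coming from the moving Lagrangian conditions along the two sides of the strip) and positivity of wrapping \eqref{poswrappinggood} bounds $E^\geo$ above in terms of $E^\top$ plus a constant depending only on the chosen dissipation data.

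The core of the argument is then the same dichotomy between thick and thin parts of the domain $\RR\times[0,1]$ (together with the $\Rbar$-type boundary marked points), which carries over \emph{mutatis mutandis} from Proposition~\ref{compactness} and Proposition~\ref{wcompactness}. Over the thick parts (intervals of \emph{a priori} bounded length, i.e.\ neighborhoods of the $a_i$ and of the boundary marked points), one applies the monotonicity/dissipativity argument: gauge-transform by the Hamiltonian flow, observe the graph of $\bar u$ over each shell $U_{ij}^-\setminus K_{ij}^+$ is a properly embedded holomorphic curve of uniformly bounded geometry (here one invokes Lemma~\ref{bddgeoforsh} for the interior almost complex structures and Lemma~\ref{domainacsbddgeo} together with the cylindricity of the $J_{L_0,\ldots,L_k}$'s near the Lagrangian boundary), and uses $\sum_j d(X\setminus U_{ij}^-,K_{ij}^+)^2=\infty$ to conclude only finitely many shells can be crossed. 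Over the thin parts one uses the $s$-invariance of the Floer data and the separation estimates: along long strips $I\times[0,1]$ near breaking/at $\pm\infty$ with constant data, either the Hamiltonian-displacement estimate (near a puncture approaching an interior orbit) or the uniform separation of the two Lagrangian boundary conditions at infinity (Proposition~\ref{wcompactness}, inequality~\eqref{wcsdistanceintegral}) forces the $s$-direction energy to grow linearly, so bounded energy forces the domain interval to have bounded length unless $u$ visits a fixed compact set; then one propagates the bound via monotonicity from that visit.

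One genuinely new bookkeeping point, which I expect to be the main obstacle, is that the Hamiltonian term in \eqref{Beqn} is $X_{H(s,\varphi(t))}\otimes d\varphi(t)$ rather than the honest $X_{H}\otimes dt$ appearing in \eqref{floereqnforsh}: the cutoff $\varphi$ makes the equation genuinely holomorphic (no Hamiltonian term) in a neighborhood of the boundary $t\in\{0,1\}$, and one must check that the energy identities, the positivity-of-wrapping bound \eqref{poswrappinggood}, and the dissipativity/monotonicity estimates are unaffected by this reparametrization. This is where I would spend the most care: the function $\varphi$ has $\varphi'\geq 0$ and degree one, so $d\varphi$ plays the role of $dt$ in all integrals, and $-\partial_s H\,d\varphi\wedge ds = -\partial_s H\,\varphi'(t)\,dt\wedge ds$ still has the correct sign, so positivity of wrapping survives; similarly the gauge transformation in the thin/thick-part arguments is by the flow of $H(s,\varphi(t))$, and since $\varphi$ is locally constant (equal to $0$) near $t=0,1$ the boundary conditions are honestly $J$-holomorphic there, which is exactly what makes the Lagrangian-boundary monotonicity of Proposition~\ref{wcompactness} applicable without change. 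The remaining routine points are: handling the extra $\Rbar_{k'',1}$ and $\Rbar_{\ell'',1}$ strata where boundary marked points collide (these are exactly the disk-bubbling phenomena already controlled in \S\ref{wcurvessec}, and since the bubbled disks have no $a_i$'s they satisfy the honest $\mu^k$-type equation with $\pi$-adapted data, hence are confined and energy-bounded by Proposition~\ref{wcompactness}); and checking that when $a_i$ spacings go to infinity the curve breaks into a piece of type $\Mbar^\B$ and a piece of type $\Mbar^{SC}$-with-Lagrangian-boundary, both already handled. Assembling these pieces as in Proposition~\ref{compactness} — decompose $\RR$ into $O(n+k+\ell)$ overlapping intervals of the two types, bound each, and propagate — yields the \emph{a priori} compact set and hence compactness.
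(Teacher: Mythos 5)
Your proposal is correct and follows essentially the same route as the paper's proof: confine curves to $X_{\sigma_0}^-$ via the $J$-holomorphicity of $\pi_{\sigma_0}^-$ (Lemma \ref{pibarrierlemma}), then rule out escape to infinity by combining the thick/thin decomposition and monotonicity-with-dissipation arguments of Propositions \ref{wcompactness} and \ref{compactness}, with energy controlled by the topological energy and positivity of wrapping. The extra points you flag (the $\varphi(t)$ reparametrization, boundary bubbling, breaking into $\Mbar^\B$/$\Mbar^{SC}$ pieces) are exactly the "insignificant modifications" the paper elides, and your treatment of them is sound.
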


\begin{proof}
As mentioned above, the projection map $\pi_{\sigma_0}^-$ keeps trajectories away from $\partial X_{\sigma_0}^-$. To prevent escape to infinity in $X_{\sigma_0}^-$, the proofs of Propositions \ref{wcompactness} and \ref{compactness} apply without significant modification.
Namely, in any end $[0,\infty)\times[0,1]$ (at either the top or bottom of the strip or at one of the boundary punctures) or thin part, the reasoning surrounding \eqref{wcsdistanceintegral} and \eqref{csdistanceintegral} shows that there exists a compact subset $K\subseteq X_{\sigma_0}^-$ and a real number $L<\infty$ such that for every interval $[a,b]$ of length $\geq L$, there exists a point in $[a,b]\times[0,1]$ which is mapped into $K$ by $u$.
We can now apply monotonicity inequalities to the graph of $u$ and use dissipativity of $H$ to conclude that the image of $u$ is contained \emph{a priori} in a compact subset of $X_{\sigma_0}^-$ depending only on the Floer data and the topological energy.
\end{proof}

Let $\HJ_\bullet^{\B,\reg}(\tau;\sigma_0,\ldots,\sigma_r)\subseteq\HJ_\bullet^\B(\tau;\sigma_0,\ldots,\sigma_r)$ consist of those $n$-simplices for which all moduli spaces \eqref{Bmodulispaces} associated to facets of $\Delta^n$ are cut out transversely.
A standard perturbation argument as in \S\ref{sctranssec} shows that $\HJ_\bullet^{\B,\reg}$ is also a filtered $\infty$-category and that the inclusion $\HJ_\bullet^{\B,\reg}\hookrightarrow\HJ_\bullet^\B$ is cofinal.

\subsection{Bimodules \texorpdfstring{$\B$}{B}}\label{Bdiagramsec}

We now define a diagram of $\OO_\tau$-bimodules over $\HJ_\bullet^{\B,\reg}(\tau;\sigma_0,\ldots,\sigma_r)$ by counting the moduli spaces \eqref{Bmodulispaces}.
That is, we define a map of simplicial sets
\begin{equation}\label{Bdiagram}
\HJ_\bullet^{\B,\reg}(\tau;\sigma_0,\ldots,\sigma_r)\to\Ndg{[\OO_\tau,\OO_\tau]},
\end{equation}
where $[\OO_\tau,\OO_\tau]$ denotes the dg-category of $\OO_\tau$-bimodules from \S\ref{secbimodules} and $\Ndg$ denotes the differential graded nerve from \S\ref{ndgdefsec}.
This map \eqref{Bdiagram} encodes the various operations
\begin{multline}\label{Bdiagramoperations}
\OO(K_k,K_{k-1})\otimes\cdots\otimes\OO(K_1,K_0)\otimes CF^\bullet(K_0,L_0;H(0))\otimes\OO(L_0,L_1)\otimes\cdots\otimes\OO(L_{\ell-1},L_\ell)\\
\to CF^\bullet(K_k,L_\ell;H(n))[1-\ell-k-n]
\end{multline}
defined by counting the moduli spaces \eqref{Bmodulispaces} for any $n$-simplex $(H,J,\xi)$ of $\HJ_\bullet^{\B,\reg}$ and any $K_k>\cdots>K_0\in\OO_\tau$ and $L_0>\cdots>L_\ell\in\OO_\tau$ (and dictated by strict unitality when some $K_{i+1}=K_i$ or $L_i=L_{i+1}$) and the relations they satisfy with \eqref{muoperation}.
The construction of \eqref{Bdiagram} follows \S\ref{secsympcochaindiagrams} very closely.

To a vertex $(H,J,\xi)\in\HJ_0^{\B,\reg}(\tau;\sigma_0,\ldots,\sigma_r)$, we associate the $\OO_\tau$-bimodule $CF^\bullet(-,-;H)$, equipped with the $\ainf$-bimodule structure maps coming from the operations \eqref{Bdiagramoperations} (with respect to suitable signs/orientations as in \cite{seidelfukayalefschetzI}); the $\ainf$ relations follow from the usual boundary analysis.
A $1$-simplex $(H,J,\xi)\in\HJ_1^{\B,\reg}(\tau;\sigma_0,\ldots,\sigma_r)$ induces an $\ainf$-bimodule morphism
\begin{equation}
F_{(H,J,\xi)}:CF^\bullet(-,-;H(0))\to CF^\bullet(-,-;H(1))
\end{equation}
for the same reason; more generally, an $n$-simplex $(H,J,\xi)\in\HJ_1^{\B,\reg}(\tau;\sigma_0,\ldots,\sigma_r)$ with $n\geq 1$ induces an $\ainf$-bimodule morphism
\begin{equation}
F_{(H,J,\xi)}:CF^\bullet(-,-;H(0))\otimes C_{-\bullet}(\F(\Delta^n))\to CF^\bullet(-,-;H(n)).
\end{equation}
Arguing as in the proof of Lemma \ref{scdiagramdegenerate}, we conclude that these maps define a diagram \eqref{Bdiagram} as desired.

We define various $\OO_\tau$-bimodules as the homotopy colimits of the various diagrams \eqref{Bdiagram}, namely
\begin{equation}
\B_{\tau;\sigma_0,\ldots,\sigma_r}(-,-):=\hocolim_{\HJ_\bullet^{\B,\reg}(\tau;\sigma_0,\ldots,\sigma_r)}CF^\bullet(-,-;-)\in\Tw^\oplus{[\OO_\tau,\OO_\tau]}.
\end{equation}
Note that since $\HJ_\bullet^{\B,\reg}(\tau;\sigma_0,\ldots,\sigma_r)$ is a filtered $\infty$-category, the cohomology of $\B_{\tau;\sigma_0,\ldots,\sigma_r}$ can be computed by taking an ordinary direct limit of $HF^\bullet(-,-;H)$ over any cofinal collection of $H$.
The homotopy colimit $\B_{\tau;\sigma_0,\ldots,\sigma_r}$ is an object of $\Tw^\oplus{[\OO_\tau,\OO_\tau]}$ (infinite direct sum twisted complexes), but it may also be regarded as an honest bimodule by composing with the natural functor $\Tw^\oplus{[\OO_\tau,\OO_\tau]}\to[\OO_\tau,\OO_\tau]$.
It makes, however, essentially no difference which of these perspectives we take.

There are natural maps
\begin{alignat}{2}
\label{BmodforgetI}\B_{\tau;\sigma_0,\ldots,\sigma_r}&\to\B_{\tau;\sigma_0,\ldots,\widehat{\sigma_i},\ldots,\sigma_r}&\quad&0\leq i\leq r,\\
\label{BmodforgetII}\B_{\tau';\sigma_0,\ldots,\sigma_r}|_{\OO_\tau}&\to\B_{\tau;\sigma_0,\ldots,\sigma_r}&\quad&\tau\leq\tau'.
\end{alignat}
induced by the forgetful maps \eqref{BforgetI}--\eqref{BforgetII}.
The first of these is a quasi-isomorphism for $i>0$ since \eqref{BforgetI} is cofinal for $i=r$, and the second is (always) a quasi-isomorphism since \eqref{BforgetII} is cofinal.

\subsection{Cohomology of \texorpdfstring{$\B$}{B}}\label{Bprop}

We now calculate the cohomology of the $\OO_\tau$-bimodules $\B_{\tau;\sigma_0,\ldots,\sigma_r}$.
In view of when \eqref{BmodforgetI}--\eqref{BmodforgetII} are quasi-isomorphisms, it is enough to fix $\sigma$ and consider the case of $\B_{\sigma;\sigma}$.
We will thus abbreviate $X=X_\sigma$, $\OO=\OO_\sigma$, $\HJ_\bullet^\B=\HJ_\bullet^\B(\sigma;\sigma)$, $\B=\B_{\sigma;\sigma}$, etc.

Let us begin by observing that there are natural isomorphisms
\begin{equation}\label{Bhomologyiso}
HF^\bullet(L,K;H)=HF^\bullet(\Phi_HL,K),
\end{equation}
where the left side denotes cohomology of the $\OO$-bimodule $CF^\bullet(-,-;H)$ and the right side denotes Floer cohomology as defined in \S\S\ref{wcurvessec}--\ref{continuationsection}.
The notation on the left is justified because $HF^\bullet(L,K;H)$ is independent of the choice of Floer data, due to the fact that the subcomplex of $\HJ_\bullet^{\B,\reg}$ with fixed $H$ is a contractible $\infty$-groupoid.
To see the isomorphism \eqref{Bhomologyiso}, simply change coordinates on $[0,1]\times X$ using the flow of $X_{H(\varphi(t))}d\varphi(t)$ (here it is crucially important that $X_{H(\varphi(t))}d\varphi(t)$ is a function of $t\in[0,1]$ only and that $H$ is linear at infinity, so this change of coordinates preserves cylindricity of $J$).
This same argument applies to show that the composition operations
\begin{align}
HF^\bullet(L,K;H)\otimes HF^\bullet(K,K')\to HF^\bullet(L,K';H)\\
HF^\bullet(L',L)\otimes HF^\bullet(L,K;H)\to HF^\bullet(L',K;H)
\end{align}
from the bimodule diagram \eqref{Bdiagram} coincide (under the isomorphism \eqref{Bhomologyiso}) with the usual compostion operations in Lagrangian Floer theory
\begin{align}
HF^\bullet(\Phi_HL,K)\otimes HF^\bullet(K,K')\to HF^\bullet(\Phi_HL,K')\\
HF^\bullet(L',L)\otimes HF^\bullet(\Phi_HL,K)\to HF^\bullet(\Phi_HL',K)
\end{align}
from \S\S\ref{wcurvessec}--\ref{continuationsection}.
We now address a less tautological comparison.

\begin{lemma}
For $H,H':S^1\to\H(X)$ linear at infinity, vanishing over $\Nbd^Z\partial X$, and satisfying $H\leq H'$ near infinity, the ``continuation map defined via dissipative Hamiltonians''
\begin{equation}\label{Bopcontinuation}
HF^\bullet(L,K;H)\to HF^\bullet(L,K;H')
\end{equation}
from the bimodule diagram \eqref{Bdiagram} coincides under \eqref{Bhomologyiso} with the continuation map
\begin{equation}\label{elementcontinuation}
HF^\bullet(\Phi_HL,K)\to HF^\bullet(\Phi_{H'}L,K)
\end{equation}
from \eqref{continuationonHF} associated to the non-negative Lagrangian isotopy $\Phi_{(1-a)H+aH'}L$ for $a\in[0,1]$.
\end{lemma}

\begin{proof}
In the case $H=H'$ at infinity, both maps \eqref{Bopcontinuation} and \eqref{elementcontinuation} coincide with the continuation map isomorphism from Lemma \ref{simultaneousdeformation}, and thus agree.
In addition, both classes of continuation maps \eqref{Bopcontinuation} and \eqref{elementcontinuation} compose as expected for triples $H,H',H''$.
Hence to check that \eqref{Bopcontinuation} and \eqref{elementcontinuation} agree for all $H$ and $H'$, it will suffice to first prove it for some particular ``nice'' pairs $(H,H')$ (to be specified below) and then show that for an arbitrary pair $(H,H'')$, we can always find a ``nice'' $(H,H')$ with $H' = H''$ at infinity.

The first ``niceness'' condition we impose on $(H,H')$ is that $H=H'$ over a sufficiently large (in terms of $H$) compact subset of $X$ (in particular, one outside of which $H$ is already linear), which implies that there is an inclusion $\Phi_HL\cap K\subseteq\Phi_{H'}L\cap K$, and hence a direct sum decomposition of $\ZZ$-modules
\begin{equation}\label{eq:hhprimedecomposition}
CF^\bullet(L,K;H')=CF^\bullet(L,K;H)\oplus(\text{additional generators}).
\end{equation}
Now the component of the differential on $CF^\bullet(L,K;H')$ mapping $CF^\bullet(L,K;H)$ to itself coincides with the differential on $CF^\bullet(L,K;H)$ (say we fix an almost complex structure $J:S^1\to\J(X)$ achieving transversality), since the proof of Proposition \ref{wcompactness} provides that holomorphic curves between intersection points $\Phi_HL\cap K$ cannot escape an \emph{a priori} determined compact subset of $X$, and we may require that $\Phi_HL=\Phi_{H'}L$ over this subset.
By the same reasoning (and using the same almost complex structure, or rather the induced $\RR$-invariant family $\RR\times S^1\to\J(X)$), the component of both (chain level) continuation maps \eqref{Bopcontinuation} and \eqref{elementcontinuation} mapping into the first direct summand is the identity map (only constant disks contribute).

Now we say a pair $(H,H')$ is ``nice'' iff (in addition to the condition above) $H\leq H'$ everywhere and the additional generators in the decomposition \eqref{eq:hhprimedecomposition} have action strictly greater than the generators of $CF^\bullet(L,K; H)$.
Recall that the action functional on generators of $CF^\bullet(\Phi_HL,K)=CF^\bullet(L,K;H)$ is given by
\begin{align}
a(x) = f_{\Phi_HL}(x)-f_K(x)&=f_L(\gamma(0))-f_K(\gamma(1))+\int_0^1\gamma^\ast\lambda-H(\gamma(t))\,dt\\
\label{isotopyactionintegral}&=f_L(\gamma(0))-f_K(\gamma(1))+\int_0^1(ZH-H)(\gamma(t))\,dt
\end{align}
where $x\in\Phi_HL\cap K$ corresponds to the Hamiltonian chord (of $H$) $\gamma:[0,1]\to X$ from $L$ to $K$, and the functions $f$ are the chosen primitives for $\lambda$ restricted to the Lagrangians.
To make sense of this equation, we should declare that as $(L,f_L)$ undergoes Hamiltonian isotopy, the primitive changes according to the formula $\frac{\partial f_L}{\partial t}=ZH-H$, so $f_{\Phi_HL}$ is determined by $f_L$. 

It is straightforward to prove that \eqref{Bopcontinuation} and \eqref{elementcontinuation} coincide when $(H,H')$ is ``nice''; first, the fact that the additional generators have greater action and the fact that the differential decreases action together imply that the direct sum decomposition \eqref{eq:hhprimedecomposition} exhibits $CF^\bullet(L,K;H)$ as a subcomplex of $CF^\bullet(L,K;H')$.
The additional generators having strictly greater action also implies that the continuation maps \eqref{Bopcontinuation} and \eqref{elementcontinuation} are both simply the tautological inclusion of this subcomplex, provided we show that these maps also both weakly decrease action.
For the continuation map \eqref{Bopcontinuation}, since $H\leq H'$ everywhere, we may choose a dissipative family $\bar H:\RR\times S^1\to\H(X)$ from $H$ to $H'$ which satisfies $\partial_s\bar H\leq 0$, and hence conclude that this continuation map weakly decreases action since the geometric and topological energies are related as in \eqref{poswrappinggood}, and Stokes' theorem implies that the topological energy of a trajectory from $x$ to $y$ is the difference $a(x) - a(y)$.
For the continuation map \eqref{elementcontinuation}, first observe that it may be defined using the moving Lagrangian boundary conditions given by the isotopy $\Phi_{(1-a)H+aH'}L$ (compactness is justified as in the proof of Lemma \ref{wrappingunits}, which requires us to assume that our almost complex structure $J$ is of contact type over the locus swept out by the non-negatively moving Lagrangian boundary conditions).
Now this isotopy $\Phi_{(1-a)H+aH'}L$ is given by a non-negative Hamiltonian since $H\leq H'$, and thus the identity \eqref{geoenergymoving} implies in a similar fashion that this continuation map also weakly decreases action.
Hence, both maps \eqref{Bopcontinuation} and \eqref{elementcontinuation} are given by the identity map onto the first factor of the decomposition \eqref{eq:hhprimedecomposition}; in particular they coincide as desired for ``nice'' $(H,H')$.

To finish the proof, it remains (as noted in the first paragraph) to argue that, for a general pair $(H,H'')$, we can always find a ``nice'' $(H,H')$ with $H' = H''$ at infinity.
We define $H'$ as $H+\varphi(s-s_0)(H''-H)$ for sufficiently large $s_0<\infty$, where we have fixed a smooth function
\begin{equation}\label{varphicutoffdefn}
\varphi(s):=\begin{cases}0&s\leq 0,\\\in(0,1)&0<s<N,\\1&s\geq N,\end{cases}\qquad\varphi(s)\in(0,1)\implies\varphi'(s)>0,
\end{equation}
and we have fixed symplectization coordinates $(\RR_s\times\partial_\infty X,e^s\alpha)$ on $X$ near infinity.
Clearly $H\leq H'$ since the difference $H''-H$ is non-negative at infinity, and by taking $s_0\to\infty$ we have $H=H'$ over arbitrarily large compact subsets of $X$.
It thus remains to check that the ``additional generators'' in the decomposition \eqref{eq:hhprimedecomposition} have strictly greater action.

For each of these additional generators, the last term of \eqref{isotopyactionintegral} scales exponentially in $s_0$ (since varying $s_0$ simply translates any additional generators by the Liouville vector field); hence it is enough to show that this integral
\begin{equation}
\int_0^1\varphi'(s-s_0)(H''-H)(\gamma(t))\,dt
\end{equation}
is positive for every additional generator $\gamma$ (and then take $s_0$ sufficiently large).
The integrand is $\geq 0$, so we just need to exclude the possibility that an additional generator $\gamma$ maps entirely to the region where the integrand vanishes.
Suppose for sake of contradiction that $\gamma$ is such an additional generator, i.e.\ $\gamma$ maps to the region where either $s\leq 0$, $s\geq N$, or $H=H''$.
Note that at any point where $H=H''$, their Hamiltonian vector fields are also equal (because of the inequality $H\leq H''$); thus whenever $\gamma$ lies in the region $\{s\in(0,N)\}$, it is a Hamiltonian trajectory of both $H$ and $H''$.
Now by taking $N$ sufficiently large, we may ensure that no Hamiltonian trajectory of $H$ or $H''$ of length $\leq 1$ has $s$ varying by more than $N$.
It thus follows that $\gamma$ is contained in one of the regions $\{s\leq N\}$ or $\{s\geq 0\}$, and hence that it is a Hamiltonian chord of $H$ or $H''$ (respectively) from $\partial_\infty L$ to $\partial_\infty K$, which does not exist by assumption on $H$ and $H''$.
\end{proof}

Equipped with the identification $HF^\bullet(L,K;H)=HF^\bullet(\Phi_HL,K)$ in \eqref{Bhomologyiso} and the knowledge that the continuation maps between these groups in \eqref{Bdiagram} coincide with the usual continuation maps from \S\ref{secwrapped}, we may now deduce the following results/properties about the homotopy colimit $\B$.

\begin{lemma}\label{BisWonH}
For $L,K\in\OO$, there is a natural isomorphism $H^\bullet\B(L,K)=HW^\bullet(L,K)$.
Moreover, these isomorphisms intertwine the $H^\bullet\OO$-bimodule structure with the product on $HW^\bullet$ under the natural maps $H^\bullet\OO(L,K)=HF^\bullet(L,K)\to HW^\bullet(L,K)$.
In particular, $\B$ is $C$-local on both sides, so the localization maps $\B\to{}_{C^{-1}}\B$ and $\B\to\B_{C^{-1}}$ are both quasi-isomorphisms.
\end{lemma}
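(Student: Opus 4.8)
The plan is to establish the three assertions in order, since each feeds the next. First I would prove the natural isomorphism $H^\bullet\B(L,K)=HW^\bullet(L,K)$. Recall that $\B(L,K)=\hocolim_{\HJ_\bullet^{\B,\reg}}CF^\bullet(L,K;-)$, and since $\HJ_\bullet^{\B,\reg}$ is a filtered $\infty$-category, its cohomology is the ordinary direct limit $\varinjlim_H HF^\bullet(L,K;H)$ over any cofinal collection of $H$. By \eqref{Bhomologyiso} we have $HF^\bullet(L,K;H)=HF^\bullet(\Phi_HL,K)$. The key point, already verified in the preceding paragraphs of \S\ref{Bprop}, is that under this identification the continuation maps in the bimodule diagram \eqref{Bdiagram} agree with multiplication by the continuation elements of \S\ref{continuationsection} associated to the positive isotopies $\Phi_{(1-a)H+aH'}L$. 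So it remains to check that the isotopies $L\to\Phi_HL$, as $H$ ranges over a cofinal family of linear-at-infinity Hamiltonians vanishing near $\partial X$, are cofinal in the positive wrapping category $(L\to-)^+$ of \S\ref{wrappedhfsection}; this follows from Lemma \ref{wrappingcatfiltered} (or Lemma \ref{cofinalintegrationcriterion}) since such flows wrap positively over the interior of $\partial_\infty X$ and the cutoff dynamics near $\partial X$ (Proposition \ref{cutofflemma}) cause no obstruction. Hence $\varinjlim_H HF^\bullet(\Phi_HL,K)=HW^\bullet(L,K)$, and the isomorphism is natural.

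Next I would check the bimodule-structure compatibility. This is essentially bookkeeping: the $H^\bullet\OO$-action on $H^\bullet\B$ is induced by the operations \eqref{Bdiagramoperations} with $k=1$ or $\ell=1$ and no $a_i$ marked points, and under \eqref{Bhomologyiso} these were already identified (in the ``less tautological'' paragraph was only for continuation; the genuinely tautological change-of-coordinates paragraph handles the composition operations $HF^\bullet(L,K;H)\otimes HF^\bullet(K,K')\to HF^\bullet(L,K';H)$ and $HF^\bullet(L',L)\otimes HF^\bullet(\Phi_HL,K)\to HF^\bullet(L',K;H)$) with the usual triangle-product operations in Lagrangian Floer theory. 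Passing to the direct limit over $H$, these become precisely the maps $H^\bullet\OO(L,K)=HF^\bullet(L,K)\to HW^\bullet(L,K)$ and the product on $HW^\bullet$ from \S\ref{wrappedhfsection}. So the isomorphism of the first part is an isomorphism of $H^\bullet\OO$-bimodules as claimed.

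Finally, $C$-locality of $\B$ on both sides: an element $c\in C$ is a continuation morphism $M^{(j+1)}\to M^{(j)}$ in $H^0\OO$, and we must show that acting by $c$ induces isomorphisms $H^\bullet\B(M^{(j)},K)\to H^\bullet\B(M^{(j+1)},K)$ and $H^\bullet\B(L,M^{(j)})\to H^\bullet\B(L,M^{(j+1)})$. But under the identification just established, the left action of $c$ on $H^\bullet\B(-,K)=HW^\bullet(-,K)$ is multiplication by a continuation element, and $HW^\bullet(M^{(j)},K)\to HW^\bullet(M^{(j+1)},K)$ is an isomorphism because wrapping $M^{(j)}$ forward to $M^{(j+1)}$ is a cofinal move in the wrapping category (this is exactly the input to Lemma \ref{wrappingcalculatesW}); the right-module case is identical using that $HW^\bullet(L,K)$ may be computed by wrapping the second factor backwards. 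Therefore every $c\in C$ acts invertibly on $H^\bullet\B$, i.e.\ $\B$ is $C$-local on both sides, and the localization maps $\B\to{}_{C^{-1}}\B$ and $\B\to\B_{C^{-1}}$ are quasi-isomorphisms by Lemma \ref{localgood}. The main obstacle is the first step — specifically, making airtight the claim that the continuation maps produced via dissipative Floer data in the bimodule diagram really coincide with the geometric continuation elements, and that the relevant Hamiltonian flows are cofinal in the wrapping category — but both of these are already carried out in the body of \S\ref{Bprop} and in \S\ref{wrappedhfsection} respectively, so what remains here is to assemble them.
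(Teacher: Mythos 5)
Your proposal is correct and follows essentially the same route as the paper: compute $H^\bullet\B$ as a filtered direct limit of $HF^\bullet(\Phi_HL,K)$, invoke the identifications of composition and continuation operations established just above in \S\ref{Bprop} together with cofinality of the wrappings $\Phi_HL$, and then deduce $C$-locality on homology via Lemma \ref{localgood}. The only difference is that you spell out the cofinality and the invertibility of continuation elements on $HW^\bullet$ a bit more explicitly than the paper does, which is harmless.
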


\begin{proof}
As $H$ varies over vertices of $\HJ_\bullet^{\B,\reg}$, the wrapped Lagrangians $\Phi_HL$ are cofinal in the wrapping category of $L$ inside $X$ (or rather $X^-$ as fixed in \S\ref{ocprep}, though this difference matters little at this point).
Moreover, the natural map $HF^\bullet(\Phi_HL,K)\to HW^\bullet(L,K)$ is compatible with multiplication by $HF^\bullet(K,K')$ on the right and multiplication by $HF^\bullet(L',L)$ on the left.
It is therefore enough to use the fact (proved just above) that the continuation maps \eqref{Bopcontinuation} involved in defining $\B$ agree with the continuation maps involved in defining $HW^\bullet$ (namely multiplication by continuation elements).

To prove the last assertion, note that the property of being $C$-local can be checked at the level of homology, and apply Lemma \ref{localgood}.
\end{proof}

\subsection{Quasi-isomorphism \texorpdfstring{$\B=\W$}{B = W}}

We now upgrade the isomorphism $H^\bullet\B_{\tau;\sigma_0,\ldots,\sigma_r}=H^\bullet\W_{\sigma_0}$ of $H^\bullet\OO_\tau$-bimodules from Lemma \ref{BisWonH} to a quasi-isomorphism of $\OO_\tau$-bimodules $\B_{\tau;\sigma_0,\ldots,\sigma_r}$ and $\W_{\sigma_0}$ (recall that the localization functor $\OO_{\sigma_0}\to\W_{\sigma_0}$ allows one to consider $\W_{\sigma_0}(-,-)$ as an $\OO_{\sigma_0}$-bimodule, which we may further restrict to $\OO_\tau\subseteq\OO_{\sigma_0}$).
The essential point is to define a (continuation) map of $\OO_\tau$-bimodules $\OO_\tau^-\to\B_{\tau;\sigma_0,\ldots,\sigma_r}$ inducing the usual map $HF^\bullet\to HW^\bullet$ on homology, where $\OO_\tau^-$ denotes the $\OO_\tau$-bimodule given by
\begin{equation}
\OO_\tau^-(L,K)=\begin{cases}\OO_\tau(L,K)&L>K\\0&\text{else}\end{cases}
\end{equation}
i.e.\ $\OO_\tau^-(L,K):=\OO_\tau(L,K)$ except for $\OO_\tau^-(L,L):=0$.

Floer data for the continuation map $\OO_\tau^-\to\B_{\tau;\sigma_0,\ldots,\sigma_r}$ is encoded in an enlarged simplicial set $\HJ_\bullet^{\OO\B}(\tau;\sigma_0,\ldots,\sigma_r)$ defined as follows.
An $n$-simplex of $\HJ_\bullet^{\OO\B}$ is defined identically to an $n$-simplex of $\HJ_\bullet^\B$, except for the following modifications:
\begin{itemize}
\item An integer $-1\leq e\leq n$ is specified.  Vertices $0,\ldots,e$ of $\Delta^n$ are called ``$\OO$-vertices'' and vertices $e+1,\ldots,n$ of $\Delta^n$ are called ``$\B$-vertices'' (so $e=-1$ means all vertices are $\B$-vertices, and in this case the simplex in question is simply a simplex of $\HJ_\bullet^\B$).
\item For tuples $0\leq v_0<\cdots<v_m\leq n$ with $v_0$ an $\OO$-vertex, in each of \eqref{BchoicescoordsI}--\eqref{BchoicesH} we replace $m$ with $m-f$ where $v_f\leq e<v_{f+1}$.
In other words, whereas before the marked points $a_i$ for $1\leq i\leq m$ corresponded to the edges $v_{i-1}\to v_i$, now we only use marked points $a_i$ for the edges $v_{i-1}\to v_i$ in which $v_i$ is a $\B$-vertex.
\item For tuples $0\leq v_0<\cdots<v_m\leq n$ with $v_0$ an $\OO$-vertex, we only consider chains $K_k>\cdots>K_0\in\OO_{\sigma_r}$ and $L_0>\cdots>L_\ell\in\OO_{\sigma_r}$ with $K_0>L_0$.
\item We additionally choose strip-like coordinates at $s=+\infty$ when $v_0$ is an $\OO$-vertex and at $s=-\infty$ when $v_m$ is an $\OO$-vertex.
\item For tuples $0\leq v_0<\cdots<v_m\leq n$ with $v_m$ an $\OO$-vertex, the target of \eqref{Bchoicesacs} is $\J(X_{\sigma_{K_k}}^-)$ (recalling that $\sigma_L$ denotes the unique minimal element of $\Sigma$ such that $L\in\OO_{\sigma_L}$).
\item Over the maximal $\OO$-simplex $\Delta^{0\cdots e}\subseteq\Delta^n$, we require that $H\equiv 0$ and that the strip-like coordinates $\xi$ and almost complex structures $J$ coincide with those specified for defining $\OO_{\sigma_r}$, namely \eqref{ocWcoordsI}--\eqref{ocWacs}, under the identifications $\Cbar^\B_{k,\ell,0}=\Sbar_{k+1+\ell,1}$ (observe that $\xi$ and $J$ defined in this way are indeed compatible as required).
Furthermore, we require $H$ to vanish for $s\leq a_i$ whenever the corresponding edge $v_{i-1}\to v_i$ has $v_{i-1}$ an $\OO$-vertex, and dissipation data is chosen only over facets with at least one $\B$-vertex and only to cover the locus $s\leq a_i$ in $\Cbar^{SC}_m$.
\item The transversality condition $\Phi_{H^v}K\pitchfork L$ for $K,L\in\OO_\tau$ (part of dissipativity of $H$) is imposed only for $\B$-vertices $v$ (of course, for $\OO$-vertices $v$ we have $\Phi_{H^v}K\pitchfork L$ for $K>L\in\OO_\tau$ by the definition of $\OO_\tau$ since $H^v\equiv 0$).
\end{itemize}
There is a tautological inclusion $\HJ_\bullet^\B\subseteq\HJ_\bullet^{\OO\B}$ as the set of simplices all of whose vertices are $\B$-vertices (i.e.\ those for which $e=-1$).
We denote by $\HJ_\bullet^\OO\subseteq\HJ_\bullet^{\OO\B}$ the set of simplices all of whose vertices are $\OO$-vertices (it is a consequence of the definition that $\HJ_\bullet^\OO$ has a unique $n$-simplex for every $n\geq 0$, i.e.\ $\HJ_\bullet^\OO=\Delta^0$).

For any $n$-simplex $(H,J,\xi)$ of $\HJ_\bullet^{\OO\B}$, we consider the same moduli spaces \eqref{Bmodulispaces}, subject to the requirement that if $v=0$ is an $\OO$-vertex then $K_0>L_0$ (the marked points $a_1,\cdots,a_e$ are still present, they just play no role in determining $\xi$, $J$, $H$).
We denote by $\HJ_\bullet^{\OO\B,\reg}\subseteq\HJ_\bullet^{\OO\B}$ the set of Floer data for which all such moduli spaces are transverse.
As in \S\ref{Bdiagramsec}, we obtain a diagram
\begin{equation}
\HJ_\bullet^{\OO\B,\reg}(\tau;\sigma_0,\ldots,\sigma_r)\to\Ndg{[\OO_\tau,\OO_\tau]}
\end{equation}
which to $\B$-vertices associates $CF^\bullet(L,K;H)$ and to the unique $\OO$-vertex associates $CF^\bullet(L,K)$ for $L>K$ and zero for all other pairs.
The inclusion $\HJ_\bullet^{\B,\reg}\subseteq\HJ_\bullet^{\OO\B,\reg}$ is covered by a tautological identification of diagrams.
The $\OO$-vertex is regular and the $\OO_\tau$-bimodule over it is canonically identified with $\OO_\tau^-$ (simply because the moduli spaces under consideration are exactly the same as those used to define the $\ainf$ operations for $\OO_\tau$).

The usual reasoning shows that $\HJ_\bullet^{\OO\B}$ is a filtered $\infty$-category, $\HJ_\bullet^{\OO\B,\reg}$ is a filtered $\infty$-category, and the inclusions $\HJ_\bullet^{\B,\reg}\subseteq\HJ_\bullet^{\OO\B,\reg}\subseteq\HJ_\bullet^{\OO\B}$ are cofinal.

We now consider the following diagram of $\OO_\tau$-bimodules
\begin{equation}\label{OtoB}
\hspace{-1in}
\begin{tikzcd}[column sep = small]
{}&\vphantom{C}\hocolim\limits_{\HJ_\bullet^{\OO}(\tau;\sigma_0,\ldots,\sigma_r)}CF^\bullet(-,-;-)\ar{r}\ar{d}{\sim}&\vphantom{C}\hocolim\limits_{\HJ_\bullet^{\OO\B,\reg}(\tau;\sigma_0,\ldots,\sigma_r)}CF^\bullet(-,-;-) \\
\OO_\tau&\ar{l}\OO_\tau^-&\vphantom{C}\hocolim\limits_{\HJ_\bullet^{\B,\reg}(\tau;\sigma_0,\ldots,\sigma_r)}CF^\bullet(-,-;-)\ar[equals]{r}\ar{u}[swap]{\sim}&\B_{\tau;\sigma_0,\ldots,\sigma_r}
\end{tikzcd}
\hspace{-1in}
\end{equation}
where the leftmost vertical map is the quotient map collapsing the $\hocolim$ (noting that $\HJ_\bullet^\OO$ is a single vertex).
On the level of cohomology, the three rightmost bimodules are $HW^\bullet_{\sigma_0}$ by Lemma \ref{BisWonH}, and we have by definition $H^\bullet\OO_\tau^-=HF^\bullet$ for $L>K$ and zero otherwise (similarly for $H^\bullet\OO_\tau$).
We also see from Lemma \ref{BisWonH} that the maps between these groups and their bimodule structure are, on the cohomology level, the natural ones.
Using this knowledge, we may now prove the main result of this subsection.

\begin{lemma}\label{OtoBlocalquasiiso}
For $\tau=\sigma_0$, the maps in \eqref{OtoB} all become quasi-isomorphisms after localization on the left at $C_\tau=C_{\sigma_0}$.
\end{lemma}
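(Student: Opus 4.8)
\textbf{Proof plan for Lemma \ref{OtoBlocalquasiiso}.}

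The plan is to analyze the four arrows in \eqref{OtoB} one at a time, in each case reducing to a statement that is already available. Two of the arrows (the two marked $\sim$) are quasi-isomorphisms before any localization, so nothing needs to be done for those: the left vertical arrow $\hocolim_{\HJ_\bullet^\OO}CF^\bullet \to \OO_\tau^-$ is a quasi-isomorphism because $\HJ_\bullet^\OO = \Delta^0$ is a point, so the homotopy colimit is computed by its single term and the quotient map collapsing it is an isomorphism on the nose; and the arrow $\hocolim_{\HJ_\bullet^{\B,\reg}}CF^\bullet \to \hocolim_{\HJ_\bullet^{\OO\B,\reg}}CF^\bullet$ induced by the cofinal inclusion $\HJ_\bullet^{\B,\reg}\subseteq\HJ_\bullet^{\OO\B,\reg}$ is a quasi-isomorphism by Lemma \ref{cofinalityI} (both are filtered $\infty$-categories by the discussion following Proposition \ref{bcompactness}, and cofinality is recorded there). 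The rightmost equality $\hocolim_{\HJ_\bullet^{\B,\reg}}CF^\bullet = \B_{\tau;\sigma_0,\ldots,\sigma_r}$ is the definition. So the real content is the top horizontal arrow $\OO_\tau^- \simeq \hocolim_{\HJ_\bullet^\OO}CF^\bullet \to \hocolim_{\HJ_\bullet^{\OO\B,\reg}}CF^\bullet$ and the bottom-left horizontal arrow $\OO_\tau^- \to \OO_\tau$, and the claim is that \emph{after} applying $(-)_{C_\tau^{-1}}$ on the left (i.e.\ quotienting by $\cones(C_\tau)$ on the left) these become quasi-isomorphisms.

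For the bottom-left arrow $\OO_\tau^- \to \OO_\tau$: the cone of this bimodule map is the diagonal-supported bimodule $L\mapsto \OO_\tau(L,L) = \ZZ\cdot\1_L$, i.e.\ (a direct sum of shifts of) the "identity part". After localization on the left at $C_\tau$, I would argue this cone becomes acyclic. Concretely, localizing the $\OO_\tau$-bimodule $\M$ on the left means forming ${}_{C_\tau^{-1}}\M = \cones(C_\tau)\backslash\M$, and by Lemma \ref{localgood} (module version) the natural map $\M(L) \to ({}_{C_\tau^{-1}}\M)(L)$ is a quasi-isomorphism provided $\M$ is acyclic on $\cones(C_\tau)$; but here the point is the reverse — I want to see that $\cones(C_\tau)\backslash(\text{diagonal bimodule})$ is acyclic. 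This is exactly the statement that the cones of continuation morphisms are zero objects, which follows since each continuation element $c\in C_\tau$ becomes an isomorphism in $H^0\W_\tau = H^0(\OO_\tau)_{C_\tau^{-1}}$, so its cone is a zero object there; applying Lemma \ref{quotientannihilatesA}/\ref{localgood} to the diagonal bimodule gives acyclicity after left localization. Thus ${}_{C_\tau^{-1}}\OO_\tau^- \to {}_{C_\tau^{-1}}\OO_\tau$ is a quasi-isomorphism.

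For the top arrow, the strategy is to compute cohomology and invoke Lemma \ref{BisWonH}. Before localization, $H^\bullet(\hocolim_{\HJ_\bullet^{\OO\B,\reg}}CF^\bullet)(L,K) = HW^\bullet(L,K)$ by the analog of Lemma \ref{BisWonH} (the filtered index category lets one compute cohomology as an ordinary direct limit of $HF^\bullet(\Phi_H L, K)$, and by the matching of continuation maps established in \S\ref{Bprop} this is $HW^\bullet(L,K)$), while $H^\bullet(\OO_\tau^-)(L,K) = HF^\bullet(L,K)$ for $L>K$ and $0$ otherwise. The top arrow realizes, on cohomology, the natural map $HF^\bullet(L,K) \to HW^\bullet(L,K)$ for $L>K$ — this is precisely the content of the $\OO$-vertex of $\HJ_\bullet^{\OO\B,\reg}$ mapping into a $\B$-vertex, and the identification of that map with the standard $HF\to HW$ comparison follows by the same coordinate-change and energy arguments as in \S\ref{Bprop}. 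Now, this map of $\OO_\tau$-bimodules is not a quasi-isomorphism (it is only a "localization" of one), but after applying $(-)_{C_\tau^{-1}}$ on the left it becomes one: by Corollary \ref{wrappingcalculatesanylocalM} (applied to the module $\OO_\tau(L^{(i)},-)$ and to $\B(L^{(i)},-)$, or rather to the relevant left $\OO_\tau$-modules obtained by fixing the right variable), the left localization of $\OO_\tau^-$ computes $\W_\tau$ up to a zero object, and the left localization of $\hocolim_{\HJ_\bullet^{\OO\B,\reg}}CF^\bullet$ computes the same thing, with the map between them being an isomorphism on $H^\bullet$. The main obstacle I anticipate is bookkeeping: verifying that the $\OO$-vertex of the diagram $\HJ_\bullet^{\OO\B,\reg} \to \Ndg[\OO_\tau,\OO_\tau]$ really is the bimodule $\OO_\tau^-$ with its standard structure maps (this is asserted in the text but rests on the moduli spaces over the $\OO$-simplex being literally those defining $\OO_\tau$), and carefully checking that left localization commutes with the homotopy colimit over $\HJ_\bullet^{\OO\B,\reg}$ so that one may compute $H^\bullet$ of the localized complex as a direct limit and then apply Lemma \ref{BisWonH} and Corollary \ref{wrappingcalculatesanylocalM} termwise. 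Once those two compatibilities are in hand, the four arrows collapse to the claimed quasi-isomorphisms and the lemma follows.
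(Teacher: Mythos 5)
Your plan is, at its core, the paper's own argument: the paper proves the lemma in one stroke by applying $\E\mapsto\varinjlim_i H^\bullet\E(L^{(i)},K)$ to the entire diagram \eqref{OtoB} --- every group becomes $HW^\bullet(L,K)$ (via Lemma \ref{BisWonH} and the cohomology identifications established just before the lemma) and every map becomes the identity --- and then invokes Corollary \ref{wrappingcalculatesanylocalM} to identify this direct limit with left localization at $C_\tau$; your arrow-by-arrow version uses exactly these two ingredients for the top arrow and for the already-marked quasi-isomorphisms, so the decomposition buys nothing essentially new. The one place your write-up slips is the arrow $\OO_\tau^-\to\OO_\tau$: its cone is the diagonal-units bimodule, and the acyclicity of its left localization does \emph{not} follow from Lemma \ref{quotientannihilatesA} or Lemma \ref{localgood} as cited (those are single-object locality statements, and ``cones of continuation morphisms are zero objects'' is about a different bimodule); what you need is the pro-object statement, i.e.\ Lemma \ref{localgoodpro}/Corollary \ref{wrappingcalculatesanylocalM}, together with the observation that $\varinjlim_iH^\bullet(\OO_\tau/\OO_\tau^-)(L^{(i)},K)=0$ because $L^{(i)}\neq K$ for $i$ large and the continuation elements carry the unit class into a vanishing group. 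Since you already invoke Corollary \ref{wrappingcalculatesanylocalM} for the main arrow, the cleanest fix is simply to treat all five maps uniformly as the paper does, which also removes the need to check separately that left localization preserves the two unlocalized quasi-isomorphisms.
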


\begin{proof}
Applying $\E\mapsto\varinjlim_i\E(L^{(i)},K)$ to \eqref{OtoB} results (on cohomology) in all groups being $HW^\bullet_\tau=HW^\bullet_{\sigma_0}$ and all maps being the identity map; in particular all the maps are quasi-isomorphisms.
The result now follows from Corollary \ref{wrappingcalculatesanylocalM} (which says that the direct limit over $L^{(i)}$ calculates localization on the left at $C_\tau=C_{\sigma_0}$).
\end{proof}

Lemma \ref{OtoBlocalquasiiso} shows that $\W_{\sigma_0} = {}_{C_{\sigma_0}^{-1}}(\OO_{\sigma_0})$ and ${}_{C_{\sigma_0}^{-1}}(\B_{\sigma_0;\sigma_0,\ldots,\sigma_r})$ are quasi-isomorphic.
The latter is quasi-isomorphic to $\B_{\sigma_0;\sigma_0,\ldots,\sigma_r}$ by Lemma \ref{BisWonH} and Lemma \ref{localgood}.
Using the quasi-isomorphism \eqref{BmodforgetII}, we conclude that $\B_{\tau;\sigma_0,\ldots,\sigma_r}$ is quasi-isomorphic to $\W_{\sigma_0}$ as $\OO_\tau$-bimodules.
(By ``are quasi-isomorphic'', we mean ``are connected by a zig-zag of quasi-isomorphisms''; we do not wish to discuss the question of whether quasi-isomorphisms of $\ainf$-bimodules over $\ZZ$ are invertible under our cofibrancy assumptions.)

\subsection{Hochschild homology of \texorpdfstring{$\B$}{B}}

Following Abouzaid--Ganatra \cite{abouzaidganatra}, we would like to take $CC_\bullet(\OO,\B)$ in place of $CC_\bullet(\W)$ as the domain of the open-closed map.
It follows immediately from  Lemmas \ref{OtoBlocalquasiiso}, \ref{BisWonH}, \ref{HHquotientisomorphism} above that these two complexes are quasi-isomorphic.

\begin{corollary}[Abouzaid--Ganatra \cite{abouzaidganatra}]\label{corollaryCCOB}
There is a canonical zig-zag of quasi-isomorphisms between $CC_\bullet(\OO_{\sigma_0}, \B_{\sigma_0;\sigma_0,\ldots,\sigma_r})$ and $CC_\bullet(\W_{\sigma_0})$.\qed
\end{corollary}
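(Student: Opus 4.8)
\textbf{Proof plan for Corollary \ref{corollaryCCOB}.}
The plan is to chain together the quasi-isomorphisms already established in this section, taking Hochschild chains throughout. First I would recall that by the zig-zag of bimodule quasi-isomorphisms between $\B_{\tau;\sigma_0,\ldots,\sigma_r}$ and $\W_\tau$ (viewed as an $\OO_\tau$-bimodule via the localization functor $\OO_\tau\to\W_\tau$), constructed just above via diagram \eqref{OtoB} and Lemma \ref{OtoBlocalquasiiso}, and using the fact that $CC_\bullet(\OO_\tau,-)$ sends bimodule quasi-isomorphisms to quasi-isomorphisms (a standard filtration argument on the length of the Hochschild tensor chain, using cofibrancy of all morphism and bimodule complexes so that the tensor products are derived), we obtain a canonical zig-zag of quasi-isomorphisms
\begin{equation}
CC_\bullet(\OO_\tau,\B_{\tau;\sigma_0,\ldots,\sigma_r})\simeq CC_\bullet(\OO_\tau,\W_\tau).
\end{equation}
Here the right-hand side means $CC_\bullet(\OO_\tau, j^\ast\W_\tau)$ for $j:\OO_\tau\to\W_\tau$ the localization functor, in the abuse-of-notation sense recalled at the start of \S\ref{secopenclosed}.

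Next I would invoke Lemma \ref{HHquotientisomorphism} (the Abouzaid--Ganatra result that $HH_\bullet(\C,\M)\to HH_\bullet(\C/\A,\M)$ is a quasi-isomorphism for any $(\C/\A)$-bimodule $\M$). Since $\W_\tau={}_{C_\tau^{-1}}(\OO_\tau)=\OO_\tau/\cones(C_\tau)$ is by definition a quotient $\ainf$-category and $\W_\tau$ is a $\W_\tau$-bimodule, Lemma \ref{HHquotientisomorphism} applies with $\C=\OO_\tau$, $\A=\cones(C_\tau)\subseteq\Tw\OO_\tau$ (after passing to $\Tw\OO_\tau$, whose Hochschild homology agrees with that of $\OO_\tau$ — or one can apply the lemma directly in the form stated, replacing $\OO_\tau$ by the intermediate category generated by $\OO_\tau$ and the relevant cones), giving a quasi-isomorphism $CC_\bullet(\OO_\tau,\W_\tau)\xrightarrow{\sim}CC_\bullet(\W_\tau,\W_\tau)=CC_\bullet(\W_\tau)$. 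Composing the two zig-zags yields the desired canonical zig-zag of quasi-isomorphisms between $CC_\bullet(\OO_\tau,\B_{\tau;\sigma_0,\ldots,\sigma_r})$ and $CC_\bullet(\W_\tau)$.

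The main obstacle I anticipate is bookkeeping rather than conceptual: one must check that the composite zig-zag is genuinely canonical (independent of auxiliary choices up to the appropriate notion), which requires tracking that each arrow in \eqref{OtoB}, together with the arrows supplied by Lemmas \ref{OtoBlocalquasiiso}, \ref{BisWonH}, and \ref{HHquotientisomorphism}, is defined without unnecessary choices and that applying $CC_\bullet(\OO_\tau,-)$ (or the relevant functor on Hochschild chains) is itself functorial in the bimodule. A secondary technical point to verify carefully is that $CC_\bullet(\OO_\tau,-)$ preserves quasi-isomorphisms of bimodules: this is where cofibrancy enters, since the Hochschild complex involves iterated tensor products $\M(X_p,X_0)\otimes\OO_\tau(X_0,X_1)[1]\otimes\cdots\otimes\OO_\tau(X_{p-1},X_p)[1]$, and one argues by the length filtration that a bimodule quasi-isomorphism $\M\to\M'$ induces quasi-isomorphisms on each associated graded piece (each of which is $\M(X_p,X_0)$ or $\M'(X_p,X_0)$ tensored with a fixed cofibrant, hence K-flat, complex), whence a quasi-isomorphism on the total complex. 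Everything else is a direct application of results already proven in the excerpt.
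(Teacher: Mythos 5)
Your argument is correct and is essentially the paper's own: the corollary is deduced there precisely from Lemmas \ref{OtoBlocalquasiiso}, \ref{BisWonH}, and \ref{HHquotientisomorphism}, i.e.\ the localized zig-zag \eqref{OtoB} identifying $\B_{\tau;\sigma_0,\ldots,\sigma_r}$ with $\W_\tau$ as $\OO_\tau$-bimodules, followed by applying $CC_\bullet(\OO_\tau,-)$ and the quotient-invariance of Hochschild homology. The extra points you flag (that $CC_\bullet(\OO_\tau,-)$ preserves bimodule quasi-isomorphisms via the length filtration and cofibrancy, and the bookkeeping needed to apply Lemma \ref{HHquotientisomorphism} to the localization defined through $\Tw\OO_\tau$) are exactly the implicit steps the paper leaves to the reader.
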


We should now specify a particular chain model for $CC_\bullet(\OO,\B)$ which is functorial in $\sigma$.
Unfortunately, $CC_\bullet(\OO_\sigma,\B_{\sigma;\sigma})$ is not functorial in $\sigma$, rather for $\sigma\leq\sigma'$ there are only maps
\begin{equation}
\begin{tikzcd}[column sep = tiny]
CC_\bullet(\OO_\sigma,\B_{\sigma;\sigma,\sigma'})
\ar{d}{\sim}\ar{dr}&
CC_\bullet(\OO_\sigma,\B_{\sigma';\sigma'}|_{\OO_\sigma})
\ar{d}{\sim}\ar{dr}\\
CC_\bullet(\OO_\sigma,\B_{\sigma;\sigma})&CC_\bullet(\OO_\sigma,\B_{\sigma;\sigma'})&CC_\bullet(\OO_{\sigma'},\B_{\sigma';\sigma'})
\end{tikzcd}
\end{equation}
which act as desired on homology.
As in \eqref{scsimplefunctoriality}, there are two ``problems'' preventing the existence of a naive pushforward map, namely we must consider the correspondence $\HJ_\bullet^\B(\sigma;\sigma)\leftarrow\HJ_\bullet^\B(\sigma;\sigma,\sigma')\to\HJ_\bullet^\B(\sigma;\sigma')\leftarrow\HJ_\bullet^\B(\sigma';\sigma')$ due to the ``problem'' of extending Floer data from $X_\sigma$ to $X_{\sigma'}$ and the ``problem'' of the restricting to Floer data making the moduli spaces regular for the larger category $\OO_{\sigma'}$.
In contrast to the case of \eqref{scsimplefunctoriality}, these problems cannot be dealt with simultaneously by defining ``$\HJ_\bullet^\B(\sigma';\sigma,\sigma')$'': indeed, this simplicial set is empty outside of trivial cases, since no Floer data for $X_\sigma\subseteq X_{\sigma'}$ can ensure $\Phi_HL\pitchfork L$ for all $L\in\OO_{\sigma'}$ and simultaneously satisfy $H\equiv 0$ near $\partial X_\sigma^-$.
Thus to solve these two problems, we need two additional homotopy colimits (as opposed to a single additional homotopy colimit as in \eqref{scchains}).

We are thus led to take as the domain of the open-closed map the complex
\begin{equation}\label{hochschildcolimit}
\hocolim_{\sigma_0\leq\cdots\leq\sigma_r\leq\sigma}\hocolim_{\tau_0\leq\cdots\leq\tau_s\leq\sigma_0}CC_\bullet(\OO_{\tau_0},\B_{\tau_s,\sigma_0,\ldots,\sigma_r}|_{\OO_{\tau_0}}),
\end{equation}
which is strictly functorial in $\sigma\in\Sigma$.
The inner $\hocolim$ is taken over the subposet of $\Sigma$ consisting of elements $\leq\sigma_0$, using the structure maps
\begin{equation}
CC_\bullet(\OO_{\tau_0},\B_{\tau_s,\sigma_0,\ldots,\sigma_r}|_{\OO_{\tau_0}})\to CC_\bullet(\OO_{\tau_0'},\B_{\tau_s',\sigma_0,\ldots,\sigma_r}|_{\OO_{\tau_0'}})
\end{equation}
for $\tau_0\leq\tau_0'\leq\tau_s'\leq\tau_s$ (imagining $(\tau_0',\ldots,\tau_s')$ is obtained from $(\tau_0,\ldots,\tau_s)$ by forgetting some $\tau_i$); the notation $\hocolim$ is justified since \eqref{BmodforgetII} is a quasi-isomorphism.
The outer $\hocolim$ is over the subposet of $\Sigma$ consisting of elements $\leq\sigma$; the notation $\hocolim$ is justified since \eqref{BmodforgetI} is a quasi-isomorphism for $i>0$.

By construction, \eqref{hochschildcolimit} is strictly functorial in $\sigma\in\Sigma$.
The inclusion of the subcomplex $CC_\bullet(\OO_\sigma,\B_{\sigma;\sigma})$ (corresponding to $r=s=0$ and $\tau_0=\sigma_0=\sigma$) into \eqref{hochschildcolimit} is a quasi-isomorphism by two applications of Lemma \ref{finalobjectII}.

We will use \eqref{hochschildcolimit} as the domain of our open-closed map.
The main result of this subsection is:

\begin{proposition}
There is a canonical zig-zag of quasi-isomorphisms (of diagrams $\Sigma\to\Ch$) between $CC_\bullet(\W_\sigma)$ and \eqref{hochschildcolimit}.
\end{proposition}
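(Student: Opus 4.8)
The plan is to assemble the desired zig-zag out of quasi-isomorphisms that have already been established in the preceding subsections, together with two applications of Lemma \ref{finalobjectII} to collapse the two extra homotopy colimits. First I would recall Corollary \ref{corollaryCCOB}, which provides, for each $\sigma\in\Sigma$, a canonical zig-zag of quasi-isomorphisms between $CC_\bullet(\OO_\sigma,\B_{\sigma;\sigma})$ and $CC_\bullet(\W_\sigma)$. The point is that this zig-zag is natural in $\sigma$: it is built from Lemma \ref{OtoBlocalquasiiso} (the diagram \eqref{OtoB} localizes to quasi-isomorphisms), Lemma \ref{BisWonH} (the localization map $\B_{\tau;\sigma_0,\ldots,\sigma_r}\to{}_{C_\tau^{-1}}(\B_{\tau;\sigma_0,\ldots,\sigma_r})$ is a quasi-isomorphism), and Lemma \ref{HHquotientisomorphism} (Hochschild homology is insensitive to localization quotients), and each of these constructions was carried out for the general $\OO_\tau$-bimodule $\B_{\tau;\sigma_0,\ldots,\sigma_r}$ rather than just $\B_{\sigma;\sigma}$, so they produce diagrams over $\Sigma$ (or over the relevant index posets), not just single complexes. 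So the first key step is to verify that the zig-zag of Corollary \ref{corollaryCCOB} upgrades to a zig-zag of quasi-isomorphisms of diagrams $\Sigma\to\Ch$, with $\sigma\mapsto CC_\bullet(\W_\sigma)$ on one end; functoriality of $\W_\sigma$ in $\sigma$ comes from \S\ref{wrappedfulldiagram} and functoriality of the $\B$'s from the forgetful maps \eqref{BmodforgetI}--\eqref{BmodforgetII}.

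The second step is to connect $CC_\bullet(\OO_\sigma,\B_{\sigma;\sigma})$ to the complex \eqref{hochschildcolimit}. This is exactly the content of the sentence immediately preceding the proposition: the inclusion of the subcomplex $CC_\bullet(\OO_\sigma,\B_{\sigma;\sigma})$ (corresponding to $r=s=0$ and $\tau_0=\sigma_0=\sigma$) into \eqref{hochschildcolimit} is a quasi-isomorphism by two applications of Lemma \ref{finalobjectII}. Concretely, one first fixes the outer index chain at its terminal value $\sigma_0=\cdots=\sigma_r=\sigma$ and observes that, by Lemma \ref{finalobjectII} applied to the poset $\{\,\underline\tau\le\sigma\,\}^\vartriangleright$ (the inner hocolim with $\sigma$ adjoined as final object), the inner $\hocolim_{\tau_0\le\cdots\le\tau_s\le\sigma_0}$ is quasi-isomorphic to its value at the terminal vertex, namely $CC_\bullet(\OO_{\sigma},\B_{\sigma;\sigma})$; then one applies Lemma \ref{finalobjectII} again to the outer hocolim (the poset $\{\sigma_\bullet\le\sigma\}^\vartriangleright$) to collapse it to its terminal value. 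The only things to check here are (a) that the hypotheses of the ``generalized'' homotopy colimit \eqref{hocolimbarycentricabuse} are met, i.e.\ that forgetting an intermediate index is a quasi-isomorphism — but this is precisely the content of the fact that \eqref{BmodforgetI} is a quasi-isomorphism for the outer hocolim and \eqref{BmodforgetII} is a quasi-isomorphism for the inner hocolim, both recorded in \S\ref{Bdiagramsec} — and (b) that this inclusion is compatible with the maps in $\sigma$, which is immediate since \eqref{hochschildcolimit} is strictly functorial in $\sigma$ and the subcomplex inclusion is natural.

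Concatenating the two steps gives the desired zig-zag: $CC_\bullet(\W_\sigma)$ is quasi-isomorphic (as diagrams over $\Sigma$) to $CC_\bullet(\OO_\sigma,\B_{\sigma;\sigma})$ by Corollary \ref{corollaryCCOB}, which in turn includes quasi-isomorphically into \eqref{hochschildcolimit} by the two applications of Lemma \ref{finalobjectII}. Since every map in the zig-zag is either natural in $\sigma$ by construction or is a quasi-isomorphism of diagrams established earlier, the composite is a zig-zag of quasi-isomorphisms of diagrams $\Sigma\to\Ch$. The one place where genuine care is needed — and what I expect to be the main obstacle — is bookkeeping the naturality in $\sigma$ throughout: the zig-zag of Corollary \ref{corollaryCCOB} passes through the intermediate bimodules $\B_{\sigma;\sigma,\sigma'}$, $\B_{\sigma';\sigma'}|_{\OO_\sigma}$, etc., and one must check that the various quasi-isomorphisms \eqref{BmodforgetI}--\eqref{BmodforgetII}, together with the localization maps of Lemma \ref{BisWonH} and the quotient-invariance of Lemma \ref{HHquotientisomorphism}, all fit together into a commuting (up to coherent homotopy, hence as a diagram of $\infty$-categories / of $\Ndg\Ch$) ladder indexed by $\Sigma$. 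Since all the underlying complexes are cofibrant and every map in sight is a genuine chain map (not merely a zig-zag at the level of a single $\sigma$), this coherence is formal, but it is the step that requires the most explicit verification; everything else is a direct appeal to the cited lemmas.
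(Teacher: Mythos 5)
There is a genuine gap, and it sits exactly at the point you flag as ``bookkeeping'': the pivot object of your zig-zag, $CC_\bullet(\OO_\sigma,\B_{\sigma;\sigma})$, is \emph{not} a diagram $\Sigma\to\Ch$. The paper states this explicitly just before the proposition: for $\sigma\leq\sigma'$ there is no chain-level map $CC_\bullet(\OO_\sigma,\B_{\sigma;\sigma})\to CC_\bullet(\OO_{\sigma'},\B_{\sigma';\sigma'})$, only the wrong-way correspondences through $CC_\bullet(\OO_\sigma,\B_{\sigma;\sigma,\sigma'})$ and $CC_\bullet(\OO_\sigma,\B_{\sigma';\sigma'}|_{\OO_\sigma})$ -- and no single simplicial set of Floer data can repair this, since ``$\HJ_\bullet^\B(\sigma';\sigma,\sigma')$'' is empty outside trivial cases (one cannot have $H\equiv 0$ near $\partial X_\sigma^-$ and simultaneously $\Phi_H L\pitchfork L$ for all $L\in\OO_{\sigma'}$). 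This is precisely \emph{why} the double homotopy colimit \eqref{hochschildcolimit} was introduced. Consequently your step (b) (``the inclusion is compatible with the maps in $\sigma$, which is immediate'') does not parse: the source of the inclusion $CC_\bullet(\OO_\sigma,\B_{\sigma;\sigma})\hookrightarrow\eqref{hochschildcolimit}$ carries no structure maps in $\sigma$ to be compatible with, and the subcomplex indexed by $(\tau_0=\sigma_0=\sigma,\,r=s=0)$ is not carried by the map $\eqref{hochschildcolimit}_\sigma\to\eqref{hochschildcolimit}_{\sigma'}$ into the subcomplex $CC_\bullet(\OO_{\sigma'},\B_{\sigma';\sigma'})$. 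For the same reason, Corollary \ref{corollaryCCOB} does not ``upgrade'' to a zig-zag of $\Sigma$-diagrams: its intermediate terms involve these non-functorial pivots. Each of your quasi-isomorphisms is correct for a fixed $\sigma$, but they do not assemble into maps of diagrams, which is the whole content of the proposition.

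The paper's proof avoids this by collapsing the homotopy colimits at the \emph{other} end of the zig-zag, where the coefficients are honestly functorial. Namely, for every chain $\tau_0\leq\cdots\leq\tau_s\leq\sigma_0\leq\cdots\leq\sigma_r$ one runs the bimodule zig-zag \eqref{hocolimbimodulediagram} (a mild generalization of \eqref{OtoB}, using Lemma \ref{OtoBlocalquasiiso}, Lemma \ref{BisWonH}, and localization at $C_{\tau_0}$) relating $\B_{\tau_s;\sigma_0,\ldots,\sigma_r}|_{\OO_{\tau_0}}$ to $\W_{\tau_0}$; applying $CC_\bullet(\OO_{\tau_0},-)$ and taking the double hocolim \emph{termwise} keeps every stage strictly functorial in $\sigma$, because the hocolim indexing absorbs exactly the wrong-way maps \eqref{BmodforgetI}--\eqref{BmodforgetII}. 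This produces a zig-zag of $\Sigma$-diagrams from \eqref{hochschildcolimit} to $\hocolim_{\sigma_0\leq\cdots\leq\sigma_r\leq\sigma}\hocolim_{\tau_0\leq\cdots\leq\tau_s\leq\sigma_0}CC_\bullet(\OO_{\tau_0},\W_{\tau_0})$. Only now does one collapse, by two applications of Lemma \ref{finalobjectII}, onto $CC_\bullet(\OO_\sigma,\W_\sigma)$ -- this collapse \emph{is} natural in $\sigma$ because $\OO_{\tau_0}\to\OO_\sigma$ and $\W_{\tau_0}\to\W_\sigma$ are genuine functors/bimodule maps -- and finishes with Lemma \ref{HHquotientisomorphism} to reach $CC_\bullet(\W_\sigma)$. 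So the repair of your argument is not additional coherence bookkeeping but reordering it: keep both hocolims on through the entire $\B\rightsquigarrow\W$ comparison, and collapse them only once the coefficients have become $\W_{\tau_0}$.
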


\begin{proof}
We consider the following quasi-isomorphisms functorial in $\sigma$:
\begin{align}
&\hocolim_{\sigma_0\leq\cdots\leq\sigma_r\leq\sigma}\hocolim_{\tau_0\leq\cdots\leq\tau_s\leq\sigma_0}CC_\bullet(\OO_{\tau_0},\B_{\tau_s;\sigma_0,\ldots,\sigma_r}|_{\OO_{\tau_0}})\\
&\qquad\qquad=\nonumber\\
&\label{hoIB}\hocolim_{\sigma_0\leq\cdots\leq\sigma_r\leq\sigma}\hocolim_{\tau_0\leq\cdots\leq\tau_s\leq\sigma_0}CC_\bullet(\OO_{\tau_0},\hocolim_{\HJ_\bullet^{\B,\reg}(\tau_s;\sigma_0,\ldots,\sigma_r)}CF^\bullet(-,-;-))\\
&\qquad\qquad\downarrow\sim\nonumber\\
&\label{hoIIBlocal}\hocolim_{\sigma_0\leq\cdots\leq\sigma_r\leq\sigma}\hocolim_{\tau_0\leq\cdots\leq\tau_s\leq\sigma_0}CC_\bullet(\OO_{\tau_0},\Bigr._{C_{\tau_0}^{-1}}\Bigl(\hocolim_{\HJ_\bullet^{\B,\reg}(\tau_s;\sigma_0,\ldots,\sigma_r)}CF^\bullet(-,-;-)\Bigr))\\
&\qquad\qquad\downarrow\sim\nonumber\\
&\label{hoIIIOB}\hocolim_{\sigma_0\leq\cdots\leq\sigma_r\leq\sigma}\hocolim_{\tau_0\leq\cdots\leq\tau_s\leq\sigma_0}CC_\bullet(\OO_{\tau_0},\Bigr._{C_{\tau_0}^{-1}}\Bigl(\hocolim_{\HJ_\bullet^{\OO\B,\reg}(\tau_s;\sigma_0,\ldots,\sigma_r)}CF^\bullet(-,-;-)\Bigr))\\
&\qquad\qquad\uparrow\sim\nonumber\\
&\label{hoIVO}\hocolim_{\sigma_0\leq\cdots\leq\sigma_r\leq\sigma}\hocolim_{\tau_0\leq\cdots\leq\tau_s\leq\sigma_0}CC_\bullet(\OO_{\tau_0},\Bigr._{C_{\tau_0}^{-1}}\Bigl(\hocolim_{\HJ_\bullet^\OO(\tau_s;\sigma_0,\ldots,\sigma_r)}CF^\bullet(-,-;-)\Bigr))\\
&\qquad\qquad\downarrow\sim\nonumber\\
&\hocolim_{\sigma_0\leq\cdots\leq\sigma_r\leq\sigma}\hocolim_{\tau_0\leq\cdots\leq\tau_s\leq\sigma_0}CC_\bullet(\OO_{\tau_0},{}_{C_{\tau_0}^{-1}}(\OO_{\tau_0}^-))\\
&\qquad\qquad\downarrow\sim\nonumber\\
&\hocolim_{\sigma_0\leq\cdots\leq\sigma_r\leq\sigma}\hocolim_{\tau_0\leq\cdots\leq\tau_s\leq\sigma_0}CC_\bullet(\OO_{\tau_0},\W_{\tau_0}).
\end{align}
The map $\eqref{hoIB}\to\eqref{hoIIBlocal}$ is a quasi-isomorphism by Lemma \ref{BisWonH} and Lemma \ref{localgood}.
The map $\eqref{hoIIBlocal}\to\eqref{hoIIIOB}$ is a quasi-isomorphism by cofinality.
The map $\eqref{hoIVO}\to\eqref{hoIIIOB}$ is a quasi-isomorphism by Lemma \ref{finalobjectII} and since
\begin{equation}
\Bigr._{C_{\sigma_0}^{-1}}\Bigl(\hocolim_{\HJ_\bullet^\OO(\sigma_0;\sigma_0,\ldots,\sigma_r)}CF^\bullet(-,-;-)\Bigr)\to\Bigr._{C_{\sigma_0}^{-1}}\Bigl(\hocolim_{\HJ_\bullet^{\OO\B,\reg}(\sigma_0;\sigma_0,\ldots,\sigma_r)}CF^\bullet(-,-;-)\Bigr)
\end{equation}
is a quasi-isomorphism by Lemma \ref{OtoBlocalquasiiso}.
The remaining quasi-isomorphisms are clear.

Finally, note that there are quasi-isomorphisms functorial in $\sigma$:
\begin{multline}
\hocolim_{\sigma_0\leq\cdots\leq\sigma_r\leq\sigma}\hocolim_{\tau_0\leq\cdots\leq\tau_s\leq\sigma_0}CC_\bullet(\OO_{\tau_0},\W_{\tau_0})\xrightarrow\sim\hocolim_{\sigma_0\leq\cdots\leq\sigma_r\leq\sigma}CC_\bullet(\OO_{\sigma_0},\W_{\sigma_0})\xrightarrow\sim CC_\bullet(\OO_\sigma,\W_\sigma)\\
\xrightarrow\sim CC_\bullet(\W_\sigma,\W_\sigma).
\end{multline}
Each of the first two maps is given by collapsing the relevant $\hocolim$, i.e.\ they are given by the obvious pushforward on the vertices of the indexing simplicial set and by zero on all positive dimensional simplices (note that $CC_\bullet(\OO_\sigma,\W_\sigma)$ is strictly functorial in $\sigma$).
These are quasi-isomorphisms by Lemma \ref{finalobjectII}.
The last map is a quasi-isomorphism by Lemma \ref{HHquotientisomorphism}.
We have thus defined the desired quasi-isomorphism between \eqref{hochschildcolimit} and $CC_\bullet(\W_\sigma)$ as diagrams $\Sigma\to\Ch$.
\end{proof}

\subsection{Moduli spaces of domains for \texorpdfstring{$\OC$}{OC}}\label{ocdomainsec}

\begin{figure}[hbt]
\centering
\includegraphics{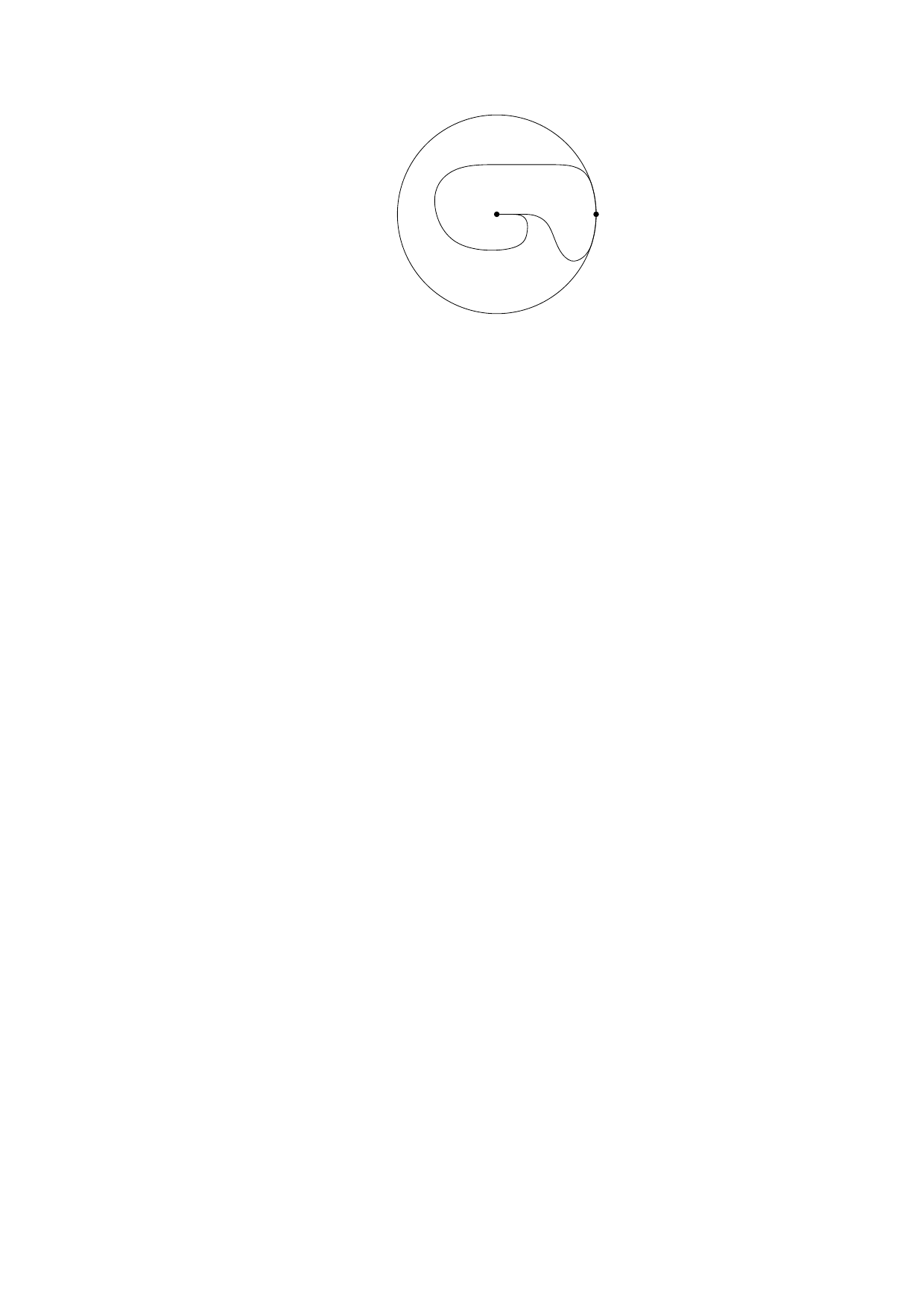}
\caption{Embedding $\RR\times[0,1]\to D^2$.}\label{embedmap}
\end{figure}

Let $D^2\subseteq\CC$ denote the unit disk.
We fix once and for all a map (illustrated in Figure \ref{embedmap})
\begin{equation}
\label{diskstrip}\RR\times[0,1]\to D^2
\end{equation}
whose restriction to $s\gg 0$ gives positive strip-like coordinates at $1\in D^2$, and whose restriction to $s\ll 0$ gives negative cylindrical coordinates at $0\in D^2$ (via the map $[0,1]\to S^1$ given by $t\mapsto 2\pi t$).
Note that these coordinates give a way of gluing strips $\RR\times[0,1]$ and cylinders $\RR\times S^1$ at $1\in D^2$ and $0\in D^2$, respectively, and identifying the result back with $D^2$, in such a way which is the ``identity map'' on points of the original $D^2$ away from $0$ and $1$.

We consider the compactified moduli space of decorations of $D^2$ with $p+1$ boundary marked points including $1\in\partial D^2$, an interior marked point $0\in D^2$, and points $a_1\geq\cdots\geq a_{n_0}\geq a_{n_0+1}=0\geq a_{n_0+1+1}\geq\cdots\geq a_{n_0+1+n_1}\in\RR$ (regarded as the image of $\RR\times\{\frac 12\}$ under the map \eqref{diskstrip}).
The marked points $a_i$ are allowed to collide with each other, though bubbles are formed at $0,1\in D^2$ if they approach $\pm\infty$ (a bubble at $0\in D^2$ is thus an object parameterized by $\Mbar^{SC}$, and a bubble at $1\in D^2$ is an object parameterized by $\Mbar^\B$).
We denote this moduli space and its universal family by $\Cbar^\OC_{p,n_0,n_1}\to\Mbar^\OC_{p,n_0,n_1}$.
By the observation at the end of the previous paragraph (and the extension property \eqref{ocWcoordsIII}), gluing at $0,1\in D^2$ (which should be regarded as punctures) via the coordinates \eqref{diskstrip} and via any strip-like coordinates at the remaining $p$ boundary punctures gives boundary collars for $\Mbar^\OC_{p,n_0,n_1}$.

Also fix once and for all a family of maps
\begin{equation}\label{diskcalibration}
\varphi_s:S^1\to S^1
\end{equation}
parameterized by $s\in\RR$, satisfying $\varphi_s'(t)\geq 0$, independent of $s$ for $\left|s\right|\gg 0$, such that $\varphi_\infty(2\pi t)=\varphi(t)$ for the function $\varphi$ fixed in \eqref{ocvarphi}, and such that $\varphi_{-\infty}(t)=t$.
In fact, let us assume that $\varphi_s=\varphi_\infty$ as long as $s$ is above the region over which \eqref{diskstrip} descends to a smooth embedding $\RR\times S^1\to D^2$.
The forgetful map $\Mbar^\OC_{p,n_0,n_1}\to\Mbar^{SC}_{n_0+1+n_1}$ (remembering just the points $a_i$) is covered by a map $\Cbar^\OC_{p,n_0,n_1}\to\Cbar^{SC}_{n_0+1+n_1}$ (defined over the image of \eqref{diskstrip}) given by $(s,t)\mapsto(s,\varphi_s(2\pi t))$.
This map will be used to define the Hamiltonian terms in the Floer equations we are about to consider.

\subsection{Floer data for \texorpdfstring{$\OC$}{OC}}

The Floer data for defining the open-closed map $\OC$ is organized into simplicial sets
\begin{equation}
\HJ_\bullet^\OC(\tau;\sigma_0,\ldots,\sigma_r;\sigma_0',\ldots,\sigma_{r'}')
\end{equation}
for every chain $\tau\leq\sigma_0\leq\cdots\leq\sigma_r\leq\sigma_0'\leq\cdots\leq\sigma_{r'}'\in\Sigma$, each fitting into a fiber diagram (aka pullback square):
\begin{equation}\label{hjocfiber}
\hspace{-.1in}
\begin{tikzcd}[column sep = tiny]
\HJ_\bullet^\B(\tau;\sigma_0,\ldots,\sigma_r;\sigma_0',\ldots,\sigma_{r'}')\sqcup\HJ_\bullet^{SC}(\sigma_0',\ldots,\sigma_{r'}')\ar{r}\ar{d}&\HJ_\bullet^\OC(\tau;\sigma_0,\ldots,\sigma_r;\sigma_0',\ldots,\sigma_{r'}')\ar{d}\\
\Delta^0\sqcup\Delta^0\ar{r}&\Delta^1
\end{tikzcd}
\hspace{-.1in}
\end{equation}
where $\HJ_\bullet^\B$ maps to the initial vertex of $\Delta^1$ and $\HJ_\bullet^{SC}$ maps to the final vertex of $\Delta^1$.
There is a forgetful map
\begin{equation}\label{hjbforgetforoc}
\HJ_\bullet^\B(\tau;\sigma_0,\ldots,\sigma_r;\sigma_0',\ldots,\sigma_{r'}')\to\HJ_\bullet^\B(\tau;\sigma_0,\ldots,\sigma_r)
\end{equation}
which will also be explained below.

To motivate this setup, in particular as compared with the simpler cases of $\HJ_\bullet^{SC}$ and $\HJ_\bullet^\B$, note that, whereas $\HJ_\bullet^{SC}$ and $\HJ_\bullet^\B$ parameterize Floer data needed to define certain \emph{Floer cohomology groups} (together with higher coherences), the simplicial sets $\HJ_\bullet^\OC$ are supposed to parameterize Floer data to define a \emph{map between Floer cohomology groups} (together with higher coherences).
In particular, this explains why $\HJ_\bullet^\OC$ has no more vertices other than those already in $\HJ_\bullet^{SC}\sqcup\HJ_\bullet^\B$ (this follows from the fiber diagram \eqref{hjocfiber}), as there are no new groups, just new maps.
The smallest simplices in $\HJ_\bullet^\OC$ which are not contained in $\HJ_\bullet^{SC}\sqcup\HJ_\bullet^\B$ are $1$-simplices mapping surjectively to $\Delta^1$.
These encode Floer data for defining a single open-closed map, and higher-dimensional simplices mapping surjectively to $\Delta^1$ encode Floer data for defining higher homotopies between open-closed maps and their compositions with the various continuation maps for $SC^\bullet$ and $\B$.
The final result will be a diagram $\HJ_\bullet^\OC\to\Ndg\Ch$, which over $\HJ_\bullet^{SC}\subseteq\HJ_\bullet^\OC$ coincides with the diagram of symplectic cochains constructed in \S\ref{secsympcochaindiagrams} and which over $\HJ_\bullet^\B\subseteq\HJ_\bullet^\OC$ is given by the Hochschild chains of the diagram of bimodules \eqref{Bdiagram} over $\HJ_\bullet^\B$.
The most interesting part of this diagram is, of course, the simplices of $\HJ_\bullet^\OC$ which are not contained in either $\HJ_\bullet^{SC}$ or $\HJ_\bullet^\B$ (i.e.\ those which map surjectively to $\Delta^1$), which are the ones encoding the open-closed map.
Given this diagram over $\HJ_\bullet^\OC$, we will eventually take an appropriate ``fiberwise homotopy colimit'' over $\Delta^1$ to obtain a single map (which is the open-closed map).

\begin{remark}
One might have expected that we would define $\HJ_\bullet^\OC$ so that
there is a diagram
$\HJ_\bullet^\OC\to\operatorname{Fun}(\Delta^1,\Ndg\Ch)$, which by
definition is just a diagram $\HJ_\bullet^\OC\times\Delta^1\to\Ndg\Ch$; we
see no problem with this approach, though we found the setup above to be much more convenient.
\end{remark}

Let us now define $\HJ_\bullet^\OC$ (by the fiber diagram \eqref{hjocfiber}, this also defines the variant of $\HJ_\bullet^\B$).
An $n$-simplex of $\HJ_\bullet^\OC$ consists first of a map $r:\Delta^n\to\Delta^1$, corresponding to the map $\HJ_\bullet^\OC\to\Delta^1$.
The map $r$ induces an isomorphism $\Delta^n=\Delta^{n_0}\ast\Delta^{n_1}$ (the simplicial join), where $\Delta^{n_i}=r^{-1}(\{i\})$ (define $\Delta^{-1}:=\varnothing$) and $n=n_0+1+n_1$.
The remaining data specifying an $n$-simplex of $\HJ_\bullet^\OC(\tau;\sigma_0,\ldots,\sigma_r;\sigma_0',\ldots,\sigma_{r'}')$ is the following:
\begin{itemize}
\item An $n_0$-simplex of $\HJ_\bullet^\B(\tau;\sigma_0,\ldots,\sigma_r)$, namely choices of \eqref{BchoicescoordsI}--\eqref{BchoicesH}, except that the Hamiltonians $H$ are now specified on $X_{\sigma_{r'}'}$ and (in addition to the requirements of $\HJ_\bullet^\B$) must be adapted to $\partial X_{\sigma_i'}$ in the sense of Definition \ref{adapted} and must be admissible with respect to a specified chain \eqref{scchainforoc} with $\mu_1^{(0)}\geq\sigma_r$ at each vertex in the sense of Definition \ref{admissible}.
\item An $n_1$-simplex of $\HJ_\bullet^{SC}(\sigma_0',\ldots,\sigma_{r'}')$, namely choices of \eqref{Hsc}--\eqref{Jsc} and chains \eqref{scchainforoc} (with the usual deletion relation between such chains for pairs of vertices $0\leq v<v+1\leq n$).
\item Choices of strip-like coordinates, almost complex structures, and Hamiltonians
\begin{align}
\label{OCchoicescoordsI}\xi_{L_0,\ldots,L_p;j}^{v_0\cdots v_{m_0},v_0'\cdots v_{m_1}'}:\RR_{\geq 0}\times[0,1]\times\Mbar^\OC_{p,m_0,m_1}&\to\Cbar^\OC_{p,m_0,m_1}\quad j=1,\ldots,p\\
\label{OCchoicesacs}J_{L_0,\ldots,L_p}^{v_0\cdots v_{m_0},v_0'\cdots v_{m_1}'}:\Cbar^\OC_{p,m_0,m_1}&\to\J(X_{\sigma_{r'}'})\\
\label{OCchoicesH}H^{v_0\cdots v_{m_0},v_0'\cdots v_{m_1}'}:\Cbar^{SC}_{m_0+1+m_1}&\to\H(X_{\sigma_{r'}'})
\end{align}
for $L_0>\cdots>L_p\in\OO_{\sigma_r}$, and $0\leq v_0<\cdots<v_{m_0}\leq n_0<n_0+1\leq v_0'<\cdots<v_{m_1}'\leq n_0+1+n_1$.
\end{itemize}
satisfying the following properties:
\begin{itemize}
\item The strip-like coordinates $\xi$, almost complex structures $J$, and Hamiltonians $H$ must be \emph{compatible with gluing and forgetting vertices} in the usual sense.
\item The almost complex structures $J$ and Hamiltonians $H$ must be \emph{adapted to $\partial X_{\sigma_i'}$} in the sense of Definition \ref{adapted}.
\item The Hamiltonians $H$ must be \emph{dissipative} with specified dissipation data in the sense of Definition \ref{dissipative} (modified over $\Delta^{n_0}$ as in $\HJ_\bullet^\B$).
\end{itemize}
This completes the definition of $\HJ_\bullet^\OC(\tau;\sigma_0,\ldots,\sigma_r;\sigma_0',\ldots,\sigma_{r'}')$.

Note the forgetful maps on $\HJ_\bullet^\OC$, namely decreasing $\tau$ and forgetting any $\sigma_i$ or $\sigma_i'$, as well as the forgetful map \eqref{hjbforgetforoc}.

The proof of Proposition \ref{hjcontractible} applies to $\HJ_\bullet^\OC$ and $\HJ_\bullet^\B$ (as defined above) to show that they are filtered $\infty$-categories.
It is easy to see that the forgetful map \eqref{hjbforgetforoc} is cofinal, and it is obvious from the fiber diagram \eqref{hjocfiber} that $\HJ_\bullet^{SC}\to\HJ_\bullet^\OC$ is cofinal.

\subsection{Holomorphic curves for \texorpdfstring{$\OC$}{OC}}

\begin{figure}[hbt]
\centering
\includegraphics{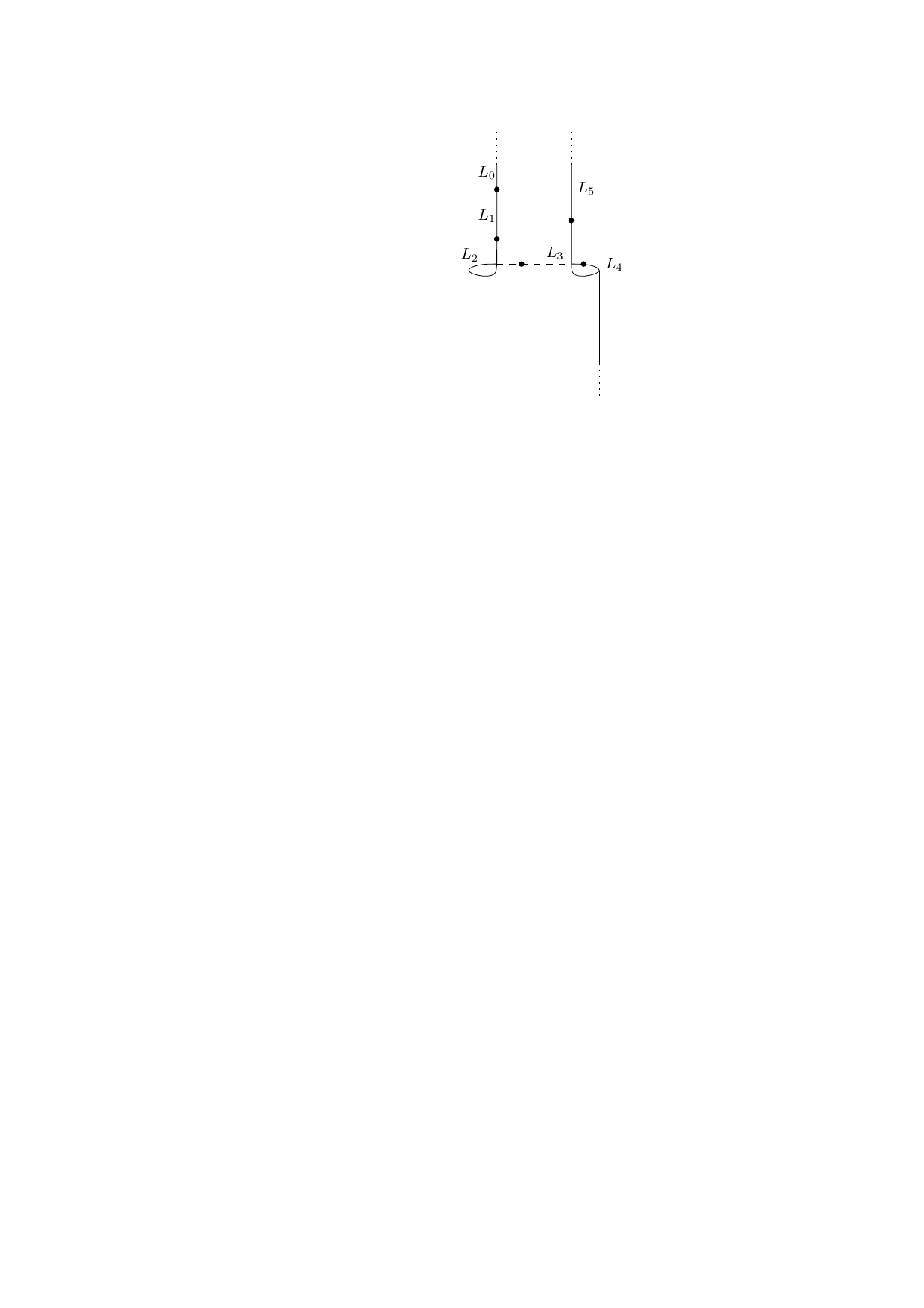}
\caption{The Riemann surface for the open-closed map.}\label{openclosed}
\end{figure}

Given an $n$-simplex of $\HJ^\OC_\bullet(\tau;\sigma_0,\ldots,\sigma_r;\sigma_0',\ldots,\sigma_{r'}')$, we consider the moduli space of maps $u:D^2\setminus\{0,1\}\to X_{\sigma_{r'}'}$ along with marked points $a_i$ as in \S\ref{ocdomainsec}, where $u$ has boundary conditions $L_i\in\OO_\tau$ as in Figure \ref{openclosed} and satisfies
\begin{equation}\label{openclosedeqn}
(du-X_{H(s,\varphi_s(2\pi t))}\otimes d(\varphi_s(2\pi t)))^{0,1}_J=0,
\end{equation}
where $(s,\varphi_s(2\pi t))$ denotes the partially defined map of universal curves $\Cbar^\OC_{p,n_0,n_1}\to\Cbar^{SC}_{n_0+1+n_1}$ from the end of \S\ref{ocdomainsec} (the Hamiltonian term is declared to be zero outside the image of \eqref{diskstrip}).
The argument of Lemma \ref{scconfinecurves} shows that all such trajectories $u$ are contained in $X_{\sigma_0'}$.
We denote by
\begin{equation}\label{OCmodulispaces}
\Mbar^\OC_{p,n_0,n_1}(H,J,\xi;x,y_1,\ldots,y_p)
\end{equation}
the associated compactified moduli space (i.e.\ including all stable broken trajectories), where $(H,J,\xi)$ stands for an $n$-simplex of $\HJ_\bullet^\OC(\tau;\sigma_0,\ldots,\sigma_r;\sigma_0',\ldots,\sigma_{r'}')$, and $x\in\Phi_{H(0)}L_p\cap L_0$, $y_i\in L_{i-1}\cap L_i$.

\begin{proposition}\label{occompactness}
The moduli spaces \eqref{OCmodulispaces} are compact.
\end{proposition}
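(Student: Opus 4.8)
The plan is to follow the template of Propositions \ref{wcompactness}, \ref{compactness}, and \ref{bcompactness}: by the usual Gromov compactness machinery, it suffices to show that all stable broken trajectories in \eqref{OCmodulispaces} are contained \emph{a priori} in a fixed compact subset of $X_{\sigma_0'}$ (the curves are already confined to $X_{\sigma_0'}\setminus\bigcup_i\partial X_{\sigma_i'}$ by the argument of Lemma \ref{scconfinecurves}, cited in the paragraph preceding the Proposition, which uses that $H$ is adapted to each $\partial X_{\sigma_i'}$ and that $\pi_{\sigma_i'}$ is holomorphic over the relevant barrier). Thus the only issue is escape to infinity within $X_{\sigma_0'}$, and we handle it via monotonicity exactly as in \S\ref{inftyescape}. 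First one records that the topological energy $E^\top(u)$ is fixed (it equals the action difference $a(x)-\sum a(y_i)$ plus the actions of the asymptotic orbit/chords, which are determined by the combinatorial input), and that positivity of wrapping in the sense of Definition \ref{dissipative}, together with \eqref{poswrappinggood}, bounds the geometric energy $E^\geo(u)$ above by $E^\top(u)$ plus a constant depending only on the Floer data.

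Next I would decompose the domain $D^2$ (with its marked points) into finitely many pieces of two kinds, as in the proof of Proposition \ref{compactness}: ``thick'' pieces of \emph{a priori} bounded conformal size, over which the Floer data has bounded dissipation data, and ``thin'' pieces, namely the ends $[0,\infty)\times[0,1]$ at the $p$ boundary punctures, the strip-like neighborhood of $1\in D^2$, the cylindrical neighborhood of $0\in D^2$, and the long necks near nodes (here one uses that $\Cbar^\OC_{p,n_0,n_1}$ is glued from $\Cbar^\B$, $\Cbar^{SC}$, and $\Rbar_{k,1}$ pieces via the fixed coordinates \eqref{diskstrip} and strip-like coordinates at the remaining punctures, so the thin parts genuinely look like standard strips/cylinders). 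Over the thick pieces one applies the monotonicity argument for domain-dependent almost complex structures (Lemma \ref{domainacsbddgeo}, or rather Lemma \ref{bddgeoforsh} for the gauge-transformed $\bar J$), using dissipativity to bound how many ``shells'' $U_{ij}^-\setminus K_{ij}^+$ the gauge-transformed map can cross, exactly as in \eqref{csdistanceintegral} and the surrounding discussion in the proof of Proposition \ref{compactness}. Over the thin strip parts, one runs the Cauchy--Schwarz/displacement estimate \eqref{wcsdistanceintegral} using that the Lagrangian boundary conditions $L_i\in\OO_\tau$ are cylindrical and uniformly separated at infinity; over the thin cylinder part near $0\in D^2$, one runs the estimate \eqref{csdistanceintegral} using the ``no fixed points at infinity'' condition $d(x,\Phi_{H_v}(x))>\varepsilon$ at the $\B$-vertices --- this is where one needs the Hamiltonians at those vertices to be admissible and adapted (so that the gauge transformation $\Phi_H^{0\to t}$ distorts lengths by a bounded amount, cf.\ the argument justifying $\asymp$ in \eqref{csdistanceintegral}).

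The main new wrinkle, and the step I expect to require the most care, is the interaction between the ``wrapped'' side (the boundary punctures and the neighborhood of $1\in D^2$, where $H$ vanishes near $\partial X^-$ and the relevant $C^0$-control comes from cylindrical almost complex structures and Lagrangian boundary conditions) and the ``closed'' side (the neighborhood of $0\in D^2$, where $H$ is an honest admissible dissipative Hamiltonian and $C^0$-control comes from the displacement/dissipation estimates). Concretely, one must check that the transition region --- parameterized by $s$ in the range where the family $\varphi_s$ in \eqref{diskcalibration} interpolates between $\varphi_{-\infty}=\mathrm{id}$ and $\varphi_\infty=\varphi$, and where the embedding \eqref{diskstrip} is transitioning from a strip to a cylinder --- is a \emph{thick} piece of bounded conformal size over which $H$ has bounded dissipation data; since $\varphi_s$ is $s$-independent for $|s|\gg 0$ and the interpolation happens in a fixed compact region of the $s$-line, this region is automatically of bounded size, so the thick-part monotonicity argument applies there verbatim. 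One then concludes as in the proof of Proposition \ref{compactness}: in every sufficiently long subinterval of each thin part there is a point mapped into a fixed compact set $K$, whence the thick-part monotonicity bound (applied at that point) controls $u$ on a definite neighborhood, and patching these bounds together --- using that every thick component is adjacent to a thin part --- yields the desired global \emph{a priori} bound, completing the proof.
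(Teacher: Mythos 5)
Your argument is correct and is essentially the paper's own (very terse) proof spelled out: the paper likewise disposes of $\partial X_{\sigma_0'}$ via the barrier/Lemma \ref{scconfinecurves} discussion preceding the statement and then simply asserts that the monotonicity-plus-dissipativity arguments of Propositions \ref{wcompactness} and \ref{compactness} adapt to the non-compact directions, which is exactly the thick/thin decomposition you carry out. One minor attribution slip: the displacement estimate on the thin cylinder near $0\in D^2$ uses the ``no fixed points at infinity'' condition of the SC-vertex Hamiltonians (the asymptotic orbit lives at a vertex of the $\Delta^{n_1}$ part), not the $\B$-vertices, but this does not affect the argument.
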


\begin{proof}
    As in Proposition \ref{bcompactness}, the discussion above shows
    trajectories avoid $\partial X_{\sigma_0'}$; in the non-compact directions
    the proofs of Propositions \ref{wcompactness} and \ref{compactness} adapt
    without any trouble.
\end{proof}

Let $\HJ_\bullet^{\OC,\reg}\subseteq\HJ_\bullet^\OC$ denote the subset where all moduli spaces \eqref{OCmodulispaces}, \eqref{Bmodulispaces}, \eqref{mbarscmoduli} are cut out transversely.
Note that there is again (by definition) a fiber diagram
\begin{equation}
\begin{tikzcd}
\HJ_\bullet^{\B,\reg}(\tau;\sigma_0,\ldots,\sigma_r;\sigma_0',\ldots,\sigma_{r'}')\sqcup\HJ_\bullet^{SC,\reg}(\sigma_0',\ldots,\sigma_{r'}')\ar{r}\ar{d}&\HJ_\bullet^{\OC,\reg}(\tau;\sigma_0,\ldots,\sigma_r;\sigma_0',\ldots,\sigma_{r'}')\ar{d}\\
\Delta^0\sqcup\Delta^0\ar{r}&\Delta^1
\end{tikzcd}
\end{equation}
and that the usual perturbation argument shows that $\HJ_\bullet^{\OC,\reg}$ is also filtered $\infty$-category, cofinal inside $\HJ_\bullet^\OC$ (and the same for $\HJ_\bullet^{\B,\reg}\subseteq\HJ_\bullet^\B$).

\subsection{Diagram \texorpdfstring{$\OC$}{OC}}

We now argue that counting the moduli spaces \eqref{OCmodulispaces} defines a diagram
\begin{equation}\label{OCdiagram}
\HJ_\bullet^{\OC,\reg}(\tau;\sigma_0,\ldots,\sigma_r;\sigma_0',\ldots,\sigma_{r'}')\to\Ndg\Ch
\end{equation}
extending the diagrams $CC_\bullet(\OO_\tau,CF^\bullet(-,-;-))$ over $\HJ_\bullet^{\B,\reg}(\tau;\sigma_0,\ldots,\sigma_r;\sigma_0',\ldots,\sigma_{r'}')$ (pulled back from $\HJ_\bullet^{\B,\reg}(\tau;\sigma_0,\ldots,\sigma_r)$) and $CF^{\bullet+n}(X_{\sigma_0'};-)$ over $\HJ_\bullet^{SC,\reg}(\sigma_0',\ldots,\sigma_{r'}')$ defined earlier (more precisely, the former diagram is obtained from the diagram of bimodules \eqref{Bdiagram} by composing with the dg-functor $CC_\bullet(\OO_\tau,-):[\OO_\tau,\OO_\tau]\to\Ch$).
This diagram \eqref{OCdiagram} encodes the various operations 
\begin{equation}\label{ocoperations}
CF^\bullet(L_p,L_0;H(0))\otimes\OO(L_0,L_1)\otimes\cdots\otimes\OO(L_{p-1},L_p)\to CF^\bullet(X_{\sigma_0'};H(n))[{\textstyle\frac 12}\dim X-p-n_0-n_1]
\end{equation}
defined by counting the moduli spaces \eqref{OCmodulispaces} for $L_0>\cdots>L_p\in\OO_\tau$ and an $n$-simplex $(H,J,\xi)$ of $\HJ_\bullet^{\OC,\reg}$ with $n_0,n_1\geq 0$, i.e.\ which projects surjectively onto $\Delta^1$ (set to vanish for $L_i=L_{i+1}$) and the identities they satisfy with \eqref{muoperation}, \eqref{SChomotopyoperation}, \eqref{Bdiagramoperations}.
The map \eqref{ocoperations} involves trivializing the Fredholm orientation line of the $\bar\partial$-operator on $D^2$ with $\Spin$ Lagrangian boundary conditions.

To a $1$-simplex of $\HJ_\bullet^{\OC,\reg}$ mapping surjectively to $\Delta^1$ (namely with $n_0=n_1=0$), the operations \eqref{ocoperations} (with signs as in \cite[eq (5.24)]{abouzaidcriterion}) define a chain map
\begin{equation}
CC_\bullet(\OO_\tau;CF^\bullet(-,-;H(0)))\to CF^{\bullet+n}(X_{\sigma_0'};H(n)),
\end{equation}
and more generally the same argument associates to any $n$-simplex mapping surjectively to $\Delta^1$ a chain map
\begin{equation}
CC_\bullet(\OO_\tau;CF^\bullet(-,-;H(0)))\otimes C_{-\bullet}(\F(\Delta^n))\to CF^{\bullet+n}(X_{\sigma_0'};H(n)).
\end{equation}
These maps are compatible in the natural way under face and degeneracy maps and thus define the desired diagram.

\subsection{Map \texorpdfstring{$\OC$}{OC}}\label{mapocsec}

The diagram \eqref{OCdiagram} allows us to define the open-closed map $\eqref{hochschildcolimit}\to\eqref{scmodelforoc}$ as follows.
Namely, we consider the following maps:
\begin{align}
\eqref{hochschildcolimit}=&\hocolim_{\sigma_0\leq\cdots\leq\sigma_r\leq\sigma}\hocolim_{\tau_0\leq\cdots\leq\tau_s\leq\sigma_0}CC_\bullet(\OO_{\tau_0},\B_{\tau_s;\sigma_0,\ldots,\sigma_r})\\
&\qquad\uparrow\sim\nonumber\\
\hocolim_{\sigma_0'\leq\cdots\leq\sigma_{r'}'\leq\sigma}&\hocolim_{\sigma_0\leq\cdots\leq\sigma_r\leq\sigma_0'}\hocolim_{\tau_0\leq\cdots\leq\tau_s\leq\sigma_0}CC_\bullet(\OO_{\tau_0},\B_{\tau_s;\sigma_0,\ldots,\sigma_r})\\
&\qquad=\nonumber\\
\hocolim_{\sigma_0'\leq\cdots\leq\sigma_{r'}'\leq\sigma}&\hocolim_{\sigma_0\leq\cdots\leq\sigma_r\leq\sigma_0'}\hocolim_{\tau_0\leq\cdots\leq\tau_s\leq\sigma_0}\hocolim_{\HJ_\bullet^{\B,\reg}(\tau_s;\sigma_0,\ldots,\sigma_r)}CC_\bullet(\OO_{\tau_0},CF^\bullet(-,-;-))\\
&\qquad\uparrow\sim\nonumber\\
\hocolim_{\sigma_0'\leq\cdots\leq\sigma_{r'}'\leq\sigma}&\hocolim_{\sigma_0\leq\cdots\leq\sigma_r\leq\sigma_0'}\hocolim_{\tau_0\leq\cdots\leq\tau_s\leq\sigma_0}\hocolim_{\HJ_\bullet^{\B,\reg}(\tau_s;\sigma_0,\ldots,\sigma_r;\sigma_0',\ldots,\sigma_{r'}')}CC_\bullet(\OO_{\tau_0},CF^\bullet(-,-;-))\\
&\qquad\downarrow\nonumber\\
\label{ochocolim}\hspace{-0.5in}\hocolim_{\sigma_0'\leq\cdots\leq\sigma_{r'}'\leq\sigma}&\hocolim_{\sigma_0\leq\cdots\leq\sigma_r\leq\sigma_0'}\hocolim_{\tau_0\leq\cdots\leq\tau_s\leq\sigma_0}\hocolim_{\HJ_\bullet^{\OC,\reg}(\tau_s;\sigma_0,\ldots,\sigma_r;\sigma_0',\ldots,\sigma_{r'}')}\begin{cases}CC_\bullet(\OO_{\tau_0},CF^\bullet(-,-;-))\\CF^{\bullet+n}(X_{\sigma_0'};-)\end{cases}\hspace{-0.5in}\\
&\qquad\uparrow\sim\nonumber\\
\hocolim_{\sigma_0'\leq\cdots\leq\sigma_{r'}'\leq\sigma}&\hocolim_{\sigma_0\leq\cdots\leq\sigma_r\leq\sigma_0'}\hocolim_{\tau_0\leq\cdots\leq\tau_s\leq\sigma_0}\hocolim_{\HJ_\bullet^{SC,\reg}(\sigma_0',\ldots,\sigma_{r'}')}CF^{\bullet+n}(X_{\sigma_0'};-)\\
&\qquad\downarrow\sim\nonumber\\
\hocolim_{\sigma_0'\leq\cdots\leq\sigma_{r'}'\leq\sigma}&\hocolim_{\HJ_\bullet^{SC,\reg}(\sigma_0',\ldots,\sigma_{r'}')}CF^{\bullet+n}(X_{\sigma_0'};-)=\eqref{scmodelforoc}
\end{align}
This is the desired open-closed map (to explain the funny notation in \eqref{ochocolim}, recall that vertices of $\HJ_\bullet^\OC$ are just vertices of $\HJ_\bullet^{SC}$ or $\HJ_\bullet^\B$).
Note that the second arrow from the bottom is a quasi-isomorphism since the inclusion of $\HJ_\bullet^{SC,\reg}$ into $\HJ_\bullet^{\OC,\reg}$ is cofinal.

Note that $HH_\bullet(\W(X))$ as well as $HH_\bullet(\F)$ for any collection $\F$ of isotopy classes of Lagrangians are both well-defined (i.e.\ independent of the choices made in their construction), as is the open-closed map on homology $HH_\bullet(\W(X))\to SH^\bullet(X,\partial X)$.
This follows from Lemma \ref{hochschildobviousiso}, the inductive nature of the choices involved in the construction of $\W$ and $\OC$, and the compatibility with pushforward under inclusions.

\begin{remark}
Most of the algebraic complication of the above definition is due to the fact that we insist on constructing chain level diagrams, which require many ``higher coherences'' to define completely.
On the other hand, the result of applying the open-closed map as defined above to any particular Hochschild cycle may be calculated as usual by considering only the (small number of) relevant moduli spaces.
\end{remark}

\subsection{Proof of Theorem \ref{localtoglobalnondegenerate}}\label{finalproof}

\begin{proof}[Proof of Theorem \ref{localtoglobalnondegenerate}]
Since every homology hypercover has a finite homology subhypercover, we may assume that $\Sigma$ is finite.
We consider the diagram of Liouville sectors over $\Sigma^\vartriangleright$ obtained from the given diagram over $\Sigma$ by adjoining $X$ as a final object.
We apply the construction of \S\ref{secwrapped} to this diagram of Liouville sectors and the given collections of Lagrangians to obtain categories $\OO_\sigma$ and $\F_\sigma:=\OO_\sigma[C_\sigma^{-1}]$.
We apply the construction of this section to obtain a diagram of open-closed maps over $\Sigma^\vartriangleright$.
Taking homotopy colimits over $\Sigma$, we obtain the left half of the key diagram \eqref{keydiagram}.
Since the local open-closed maps are isomorphisms by hypothesis, the top left horizontal arrow in \eqref{keydiagram} is a quasi-isomorphism.
Hence, we conclude that the image of the map $HH_{\bullet-n}(\bigcup_{\sigma\in\Sigma}\F_\sigma)\to SH^\bullet(X)$ contains the image of the map $\hocolim_{\sigma\in\Sigma}SC^\bullet(X_\sigma,\partial X_\sigma)\to SC^\bullet(X)$.
This map in turn hits the unit in $SH^\bullet(X)$ by Corollary \ref{schomologyhypercover} since $\{X_\sigma\}_{\sigma\in\Sigma}$ is a homology hypercover of $X$.
\end{proof}

\subsection{Compatibility with Morse theory}

The following compatibility result is well-known for Liouville manifolds.

\begin{proposition}\label{ocmorse}
For any Liouville sector $X$ and Lagrangian $i:L\to X$, the following diagram commutes:
\begin{equation}\label{ocmorseeq}
\begin{tikzcd}[column sep = large]
H^\bullet(L)\ar{r}{\text{Rmk \ref{hfmorse}}}\ar{d}{i_!}&HW^\bullet(L,L)\ar{d}{\OC}\\
H^{\bullet+n}(X,\partial X)\ar{r}{\text{Prop \ref{shpss}}}&SH^{\bullet+n}(X,\partial X).
\end{tikzcd}
\end{equation}
\end{proposition}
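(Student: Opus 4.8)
The plan is to prove the statement by exhibiting a common geometric moduli space which simultaneously computes both composites in the square \eqref{ocmorseeq}, using the identity $\OC(\1_L)=i_![L]$ from Proposition \ref{ocmorse} as the organizing principle but proving it directly via Morse--Bott / Morse--Floer degeneration rather than assuming it. First I would recall (Remark \ref{hfmorse}) that the unit $\1_L\in HW^\bullet(L,L)$ is the image of $[L]\in H^\bullet(L)$, and more generally an arbitrary class $\alpha\in H^\bullet(L)$ is sent under the top horizontal map to a class in $HW^\bullet(L,L)$ represented by counting holomorphic disks with moving Lagrangian boundary conditions on $L$ (positive near infinity) and one interior or boundary point constraint lying on a Morse cycle representing $\alpha$. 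The left vertical map $i_!$ is Poincar\'e--Lefschetz dual to the restriction $H_\bullet(L)\to H_\bullet(X)$, and concretely (choosing a Morse function on $X$ adapted to one on $L$) $i_!\alpha$ is computed by flowing the Morse cycle for $\alpha$ out into $X$ along gradient trajectories, landing in $C^{\bullet+n}(X,\partial X)$. The bottom horizontal map \eqref{shpss} is, by Proposition \ref{morsefloer} and the construction following it, induced by the inclusion of Morse trajectories of $\delta H$ (with $(H,J)$ $S^1$-invariant) into Floer trajectories.

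The key step is then to set up a single one-parameter family of moduli spaces interpolating between the two composites. For the composite $\OC\circ(\text{Rmk }\ref{hfmorse})$, one feeds the representative of $\alpha$ (a Morse cycle on $L$) as the Hochschild input into the open-closed moduli spaces \eqref{OCmodulispaces} in the degenerate configuration where all the Lagrangian labels $L_0=\cdots=L_p=L$ coincide and $p=0$: this is a disk with one interior output puncture mapped to a $SC^\bullet$-orbit, boundary on $L$, and one boundary marked point constrained to the Morse cycle. I would shrink the Hamiltonian perturbation $H\to\delta H$ ($S^1$-invariant, Morse) and apply the Floer-to-Morse degeneration argument of Proposition \ref{morsefloer} to both the interior puncture and the disk: as $\delta\to 0$, the disk component with one boundary constraint and boundary on $L$ degenerates (in the Morse--Bott sense, cf. Fukaya--Oh \cite{fukayaoh}) to a configuration consisting of a gradient flowline of the Morse function on $L$ emanating from the critical point, attached to a half-infinite gradient flowline of the Morse function on $X$ (this is exactly the geometric model of $i_!$), which then feeds into the Morse-trajectory model of \eqref{shpss}. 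Running the compactness argument of Proposition \ref{occompactness} with $\delta$-uniform estimates (exactly as in the proof of Proposition \ref{morsefloer}, where the bound \eqref{lineintegralinequality} is made uniform) shows no curves escape to infinity or to $\partial X$ during the degeneration, so the count is preserved.

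The main obstacle I expect is the uniform-in-$\delta$ compactness for the open-closed moduli spaces together with the Morse--Bott gluing/degeneration analysis at the boundary constraint on $L$: one must show that as the Hamiltonian term is scaled down, the disk component carrying the $L$-boundary and the point constraint converges to the broken gradient-trajectory configuration modeling $i_!$, with matching orientations. This requires combining the argument surrounding \eqref{lineintegralinequality} (to rule out energy escape) with a local analysis near the constraint point of the type in Salamon--Zehnder \cite{salamonzehnder} and Fukaya--Oh \cite{fukayaoh}, and a careful comparison of the Fredholm orientation lines on the disk (which appears in \eqref{ocoperations}) with the Morse-theoretic orientation conventions for $i_!$. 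A cleaner alternative, which I would pursue in parallel, is to reduce to the case of Liouville manifolds: since $HF^\bullet$, $HW^\bullet$, $SH^\bullet$, and $\OC$ are all compatible with the inclusion $X\hookrightarrow\bar X$ into the horizontal completion (Lemma \ref{lemma:localityHF}, the functoriality of $\OC$, and the fact that $i_!$ and \eqref{shpss} are both natural for $X\hookrightarrow X'$), and since $i_!$ for $L\subseteq X$ agrees with $i_!$ for $L\subseteq\bar X$ composed with the natural map $H^{\bullet+n}(X,\partial X)\to H^{\bullet+n}(\bar X)$, the diagram for $X$ follows from the diagram for $\bar X$, which is the well-known statement for Liouville manifolds (Fukaya--Oh--Ohta--Ono \cite{foooI}, Abouzaid \cite{abouzaidcriterion}). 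This second route avoids redoing the hard analysis and is the one I would ultimately write up, using the first route only to the extent needed to verify that the known Liouville-manifold statement is set up compatibly with our chain-level conventions.
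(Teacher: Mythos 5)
Your fallback route (your ``route A'') is in the same spirit as the paper's actual argument --- interpolate between the two composites through a family of PSS/open-closed disk equations and then apply the Floer-to-Morse degeneration of Proposition \ref{morsefloer} --- but the route you say you would ultimately write up, the reduction to the horizontal completion $\bar X$, has a genuine gap. Naturality of $\OC$, of \eqref{shpss}, and of $i_!$ for the inclusion $X\hookrightarrow\bar X$ only tells you that the two composites $H^\bullet(L)\to SH^{\bullet+n}(X,\partial X)$ become equal \emph{after} applying the pushforward $SH^{\bullet+n}(X,\partial X)\to SH^{\bullet+n}(\bar X)$. To conclude that they were already equal in $SH^{\bullet+n}(X,\partial X)$ you would need that pushforward to be injective, and it is not: the horizontal completion of $T^\ast[0,1]$ is $\CC$, whose symplectic cohomology and wrapped Floer invariants vanish while those of $T^\ast[0,1]$ do not (the paper makes exactly this point in its remark on horizontal completions). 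There is also no ``restriction'' map $SH^\bullet(\bar X)\to SH^\bullet(X,\partial X)$ in this covariant framework with which to run the comparison in the other direction, so the known Liouville-manifold statement cannot simply be transported to the sector; the same problem afflicts $HW^\bullet(L,L)_X\to HW^\bullet(L,L)_{\bar X}$, which is likewise far from injective.

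The direct route is therefore unavoidable, and the real content --- which your sketch defers or replaces by harder machinery --- is what the paper supplies: (1) a re-description of the map of Remark \ref{hfmorse} by disks whose Hamiltonian term comes from a \emph{dissipative} family $H(s)$ interpolating from $0$ to a linear Hamiltonian, proved to agree with the moving-boundary/contact-type definition, so that the count can be spliced with the open-closed and symplectic-cohomology geometry where the maximum principle is unavailable and compactness rests on Groman-type monotonicity; together with the observation that one may modify $H$ near $\partial X$ to equal $\Re\pi$ on $\pi^{-1}(\CC_{\left|\Re\right|\leq\varepsilon})$ without changing the count, since $\pi$ blocks the disks; (2) a gluing and chain of domain deformations collapsing the two-component configuration into a single disk equation \eqref{finalequation} with one interior output; and (3) the $\delta\to 0$ limit via the argument of Proposition \ref{morsefloer}, under which \emph{all} solutions become $t$-invariant, i.e.\ Morse half-trajectories in $X$ with initial point constrained to a locally finite chain on $L$ --- which is already the standard Morse model for $i_!$, so no Fukaya--Oh-type adiabatic Morse--Bott degeneration along $L$ (which you propose, and which is substantially harder) is needed. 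Your uniform-in-$\delta$ compactness worry is legitimate and is exactly what the estimate \eqref{lineintegralinequality} addresses, but without steps (1)--(2) the interpolation you describe is not yet a well-posed family of Floer problems on a Liouville sector, and with them the orientation comparison you flag reduces to the standard one for the PSS-type disk.
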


\begin{proof}
The two compositions in \eqref{ocmorseeq} are defined by counting holomorphic maps as illustrated in Figure \ref{pssocfigure}.
This illustration makes apparent the obvious neck stretching / gluing argument which should show that \eqref{ocmorseeq} commutes.
To turn this into a proof, we just need specify the relevant Hamiltonian terms and show that compactness is maintained throughout the deformation.

\begin{figure}[hbt]
\centering
\includegraphics{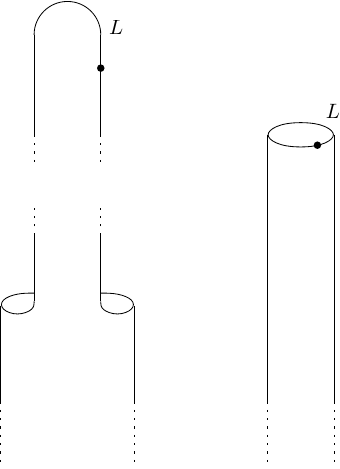}
\caption{The two compositions in \eqref{ocmorseeq}, namely $H^\bullet(L)\to HW^\bullet(L,L)\to SH^{\bullet+n}(X,\partial X)$ (left) and $H^\bullet(L)\to H^{\bullet+n}(X,\partial X)\to SH^{\bullet+n}(X,\partial X)$ (right).}\label{pssocfigure}
\end{figure}

We start by describing the moduli spaces defining the composition $H^\bullet(L)\to HW^\bullet(L,L)\to SH^{\bullet+n}(X,\partial X)$ (Figure \ref{pssocfigure} left), which we then slowly transform into moduli spaces which define the composition $H^\bullet(L)\to H^{\bullet+n}(X,\partial X)\to SH^{\bullet+n}(X,\partial X)$ (Figure \ref{pssocfigure} right).

To begin with, let us recall the definition of the map $H^\bullet(L)\to HF^\bullet(L,L)$ from Remark \ref{hfmorse}.
We choose an almost complex structure which is of contact type near infinity over $\Nbd^ZL$.
We then count holomorphic disks with one negative end (i.e.\ output) and moving Lagrangian boundary conditions following a sufficiently small positive at infinity Lagrangian isotopy $L\leadsto L^+$ (in the clockwise direction).
Energy/action considerations along with monotonicity imply that such holomorphic disks stay within the small cylindrical neighborhood of $L$ over which the almost complex structure is of contact type at infinity, and hence by the maximum principle Lemma \ref{maxprinciple} such disks are bounded \emph{a priori} away from infinity.
The count of such disks defines the unit $\1_L\in HF^\bullet(L,L)$, and adding a point constraint on $L$ defines the map $H^\bullet(L)\to HF^\bullet(L,L)$ (namely, this construction defines a map from an appropriate model of ``locally finite chains on $L$'', which calculates $H^\bullet(L)$ by Poincar\'e duality).

For the present purpose, we will need another description of the map $H^\bullet(L)\to HF^\bullet(L,L)$ using dissipative families of Hamiltonians, as we now describe.
Fix a linear Hamiltonian $H_1\in\H(X)$ vanishing near $\partial X$, positive over $L$ near infinity, such that the projection of $X_{H_1}$ to $TX/TL$ over $L$ is transverse to zero (equivalently, the restriction of $H_1$ to $L$ is Morse).
Now using the procedure of Proposition \ref{hjcontractible}, let $H:\RR_{\leq 0}\to\H(X)$ be a dissipative family interpolating between $H(s)=0$ near $s=0$ and $H(s)=H_1$ for $s\ll 0$, such that $\inf-\partial_sH>-\infty$ (wrapping bounded below).
Denoting by $\Phi_{H(s)}$ the integral of $X_{H(s)}$ over $S^1$, the isotopy of Lagrangians $\Phi_{H(s)}L$ for $s\in\RR_{\leq 0}$ (from $L$ to $\Phi_{H_1}L$) is generated by a family of Hamiltonians which are bounded below.
Now for any $a\in[0,1]$, any sufficiently small $\delta>0$, and any choice of negative strip-like coordinates $\RR_{\leq 0}\times[0,1]\to D^2\setminus\{1\}$ at the puncture, consider the moduli space of maps $u:D^2\setminus\{1\}\to X$ satisfying
\begin{align}
(du-X_{(1-a)\delta H(s)}\otimes d\varphi(t))^{0,1}_{J_a}&=0\\
u(s,1)&\in L\\
u(s,0)&\in\Phi_{a\delta H(s)}L
\end{align}
(away from the strip-like coordinates, there is no Hamiltonian term and the boundary condition is $L$), where $J_a:D^2\setminus\{1\}\to\J(X)$ for $a\in[0,1]$ satisfies $J_a(s,t)=(\Phi_{(1-a)\delta H_1}^{1-\varphi(t)})^\ast J(t)$ for $s\ll 0$, and $\varphi:[0,1]\to S^1$ is as fixed in \eqref{ocvarphi}.
For $a=0$, the boundary conditions are constant and the Hamiltonian term is supported away from the boundary, so the arguments from Propositions \ref{wcompactness} and \ref{compactness} apply to show these moduli spaces are compact.
In fact, this compactness argument applies for general $a\in[0,1]$; to see this, we just need to observe that dissipativity of $H$ means that the monotonicity part of the argument applies near the moving Lagrangian boundary condition (geometric energy is bounded by \eqref{geoenergymoving} and \eqref{poswrappinggood} since $\inf-\partial_sH>-\infty$).
Indeed, dissipativity of $H$ implies that, in a neighborhood of each point on the boundary of the domain curve, there is a sequence of shells whose widths diverge as in \eqref{dissipationshellsdistancesum} and inside which the Lagrangian boundary conditions are cylindrical and stationary, and hence the monotonicity arguments from Propositions \ref{wcompactness} and \ref{compactness} go through.
We thus obtain a map
\begin{equation}\label{Lpsssecond}
H^\bullet(L)\to HF^\bullet(\Phi_{\delta H_1}L,L)=HF^\bullet(L,L;\delta H_1)
\end{equation}
for every $a\in[0,1]$.
Notice that for $s\ll 0$, the $a$-dependence is just a change of gauge, so the moduli spaces are well-behaved as a family over $a\in[0,1]$.
It follows that this map \eqref{Lpsssecond} is independent of $a\in[0,1]$.

We now show that the two maps $H^\bullet(L)\to HF^\bullet(L,L)$ defined in the two paragraphs directly above coincide.
Consider the second construction at $a=1$, namely the holomorphic curve equation $(du)^{0,1}_J=0$ with moving Lagrangian boundary conditions $\Phi_{\delta H(s)}L$.
Now consider replacing $H$ with an alternative interpolation $\tilde H:\RR_{\leq 0}\to\H(X)$ between $\tilde H(s)=0$ near $s=0$ and $\tilde H(s)=H_1$ for $s\ll 0$ with wrapping bounded below $\inf-\partial_s\tilde H>-\infty$ (in fact, with the same lower bound as before) which coincides with $H$ over a large compact set and which near infinity equals $\psi(s)H_1$ for some cutoff function $\psi:\RR_{\leq 0}\to[0,1]$ with $\psi'(s)\leq 0$.
If we take $\tilde H=H$ over a sufficiently large compact set, then the proof of compactness applies equally well to $\tilde H$ as it did to $H$, and shows that solutions $u$ are confined to a compact set over which $H=\tilde H$ (both $H$ and $\tilde H$ enjoy the same upper bound on geometric energy coming from boundedness below of wrapping in view of \eqref{geoenergymoving}).
Hence, when we replace $H$ with $\tilde H$, the moduli spaces of solutions remain the same, and hence define the same map $H^\bullet(L)\to HF^\bullet(L,L)$.
On the other hand, $\Phi_{\delta\tilde H(s)}L$ is a cylindrical isotopy, non-negative at infinity, and thus defines the map from the first construction.
We conclude that the two maps $H^\bullet(L)\to HF^\bullet(L,L)$ defined above are in fact the same.

In view of the above discussion, we may regard the map $H^\bullet(L)\to HF^\bullet(L,L)$ as counting maps $u:D^2\setminus\{1\}\to X$ with boundary on $L$ (and a boundary point constraint) satisfying
\begin{equation}\label{lagpssfinalgood}
(du-X_{\delta H(s)}\otimes d\varphi(t))^{0,1}_J=0
\end{equation}
with respect to strip-like coordinates $\RR_{\leq 0}\times[0,1]\to D^2\setminus\{1\}$ and a dissipative family $H(s):\RR_{\leq 0}\to\H(X)$ from $H(0)=0$ to $H(-\infty)=H_1$ (linear and non-negative at infinity, vanishing near $\partial X$, and whose restriction to $L$ is positive at infinity and Morse) and any family $J:D^2\setminus\{1\}\to\J(X)$ which is $s$-invariant for $s\ll 0$ and achieves transversality.

\begin{figure}[hbt]
\centering
\includegraphics{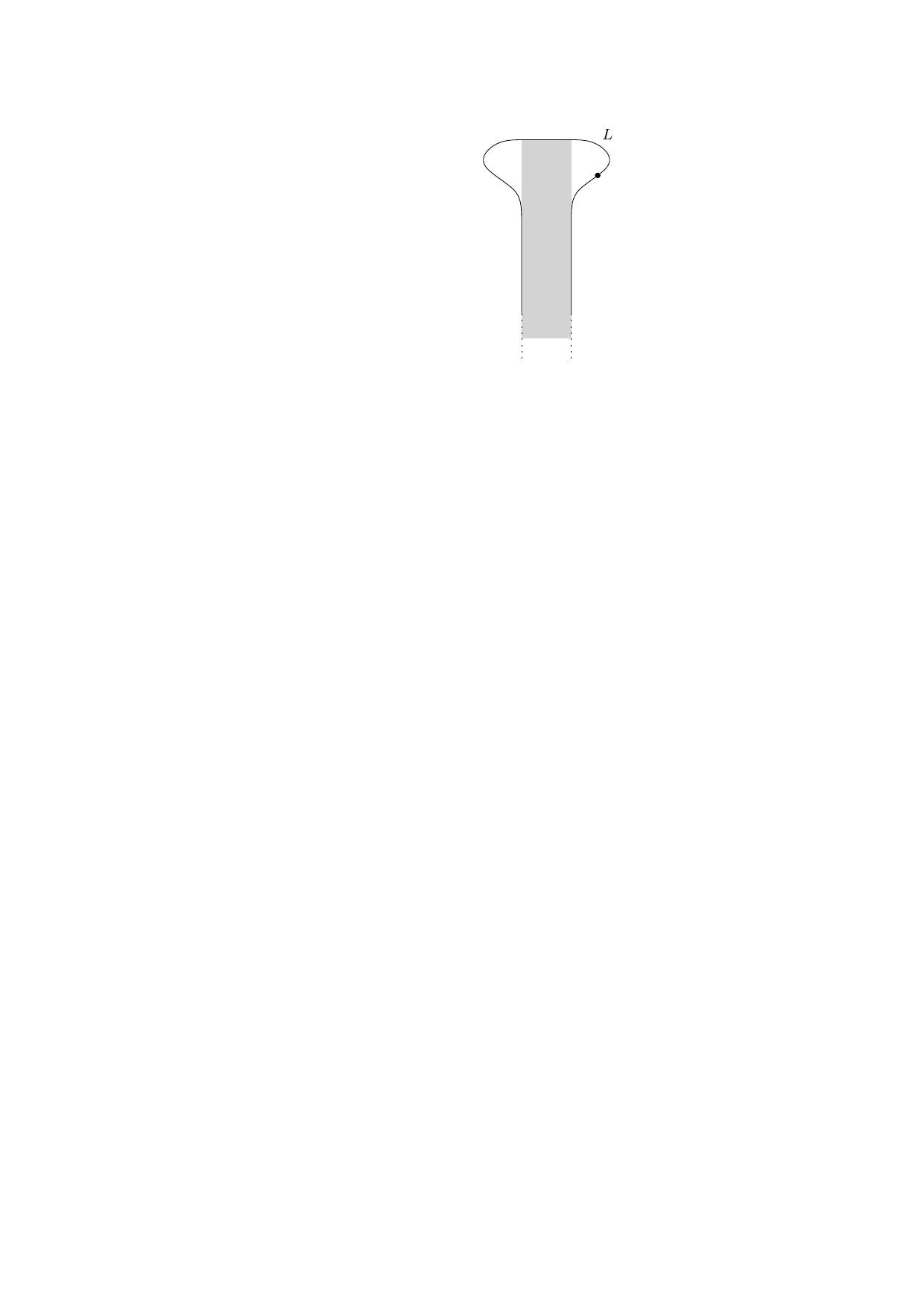}
\caption{Map $\RR_{\leq 0}\times[0,1]\to D^2\setminus\{1\}$.}\label{halfstripd}
\end{figure}

To facilitate the interaction of the moduli spaces of solutions to \eqref{lagpssfinalgood} with the open-closed map and symplectic cohomology, we modify $H$ over a small neighborhood of $\partial X$ so that it equals $\Re\pi$ (independent of $s$) over $\pi^{-1}(\CC_{\left|\Re\right|\leq\varepsilon})$.
Note that for the equation \eqref{lagpssfinalgood} to be well-behaved after this modification, we should, before modifying $H$, homotope the strip-like coordinates so that $\{0\}\times[0,1]\subseteq\partial D^2$ as in Figure \ref{halfstripd}.
For such choice of strip-like coordinates, the spaces of solutions of \eqref{lagpssfinalgood} with respect to the original and modified $H$ are the same, since the projection $\pi^-$ blocks disks from reaching the region where $H$ is modified (assume $H$ vanishes near $\partial X^-$ and $J$ makes $\pi^-$ holomorphic).

\begin{figure}[hbt]
\centering
\includegraphics{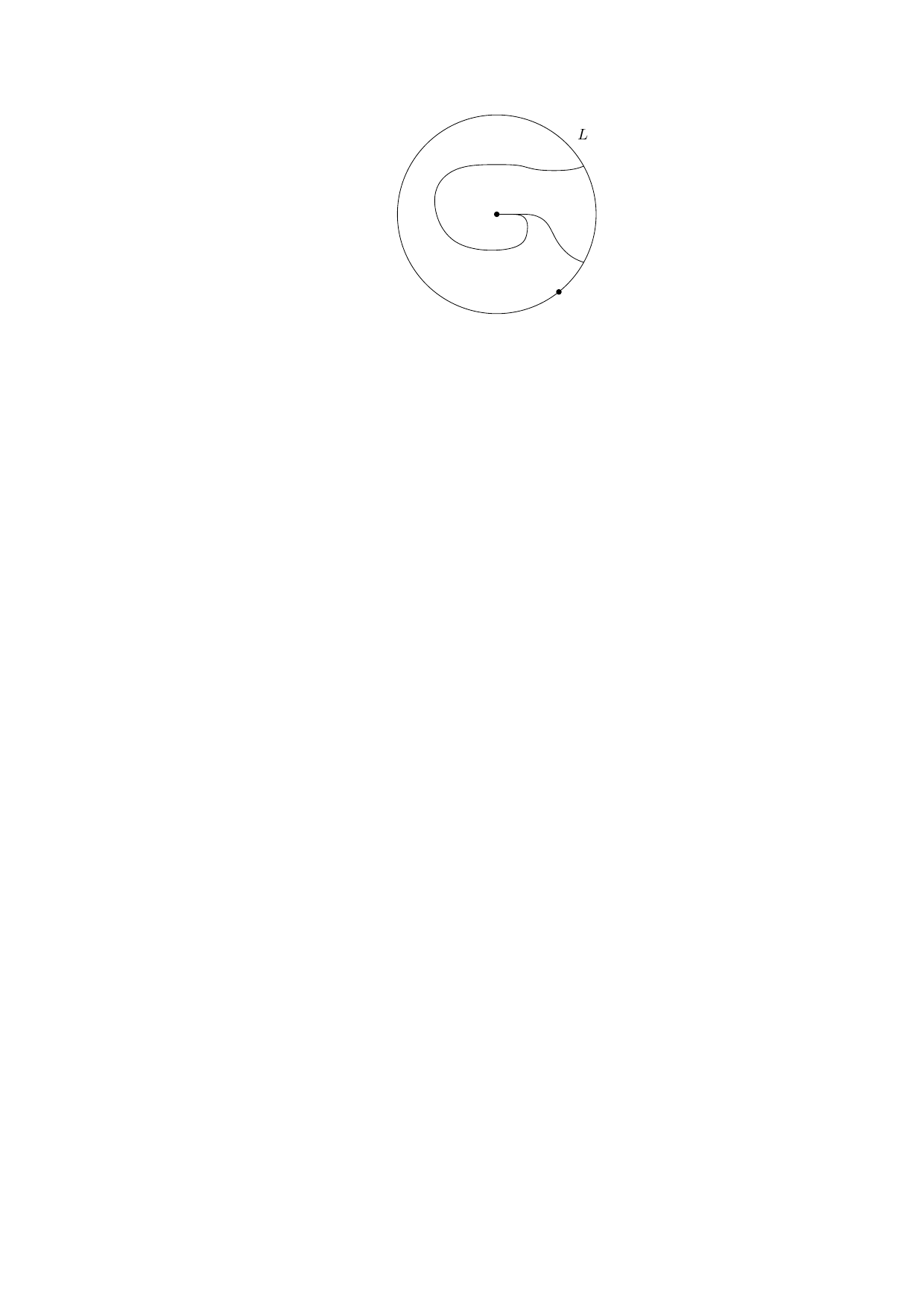}
\caption{Riemann surface defining composition $H^\bullet(L)\to HW^\bullet(L,L)\to SH^{\bullet+n}(X,\partial X)$.}\label{pssocflatham}
\end{figure}

Finally, let us compose the map $H^\bullet(L)\to HF^\bullet(L,L)$ with the open-closed map.
In other words, we count pairs consisting of a solution to \eqref{lagpssfinalgood} (with domain illustrated in Figure \ref{halfstripd}) and a solution to \eqref{openclosedeqn} (with domain illustrated in Figures \ref{embedmap} and \ref{openclosed}) asymptotic (at the negative/positive ends, respectively) to the same element of $\Phi_{\delta H_1}L\cap L$.
The resulting map $H^\bullet(L)\to SH^\bullet(X,\partial X)$ is unaffected by gluing together the positive/negative ends of these domains to form a finite length strip.
The result is that we count maps $u:D^2\setminus\{0\}\to X$ with boundary on $L$ satisfying
\begin{equation}\label{secondfinalequation}
(du-X_{\delta H(s)}\otimes d(\varphi_{s+s_0}(2\pi t)))^{0,1}_J=0.
\end{equation}
Here we use coordinates $\RR_{\leq 0}\times[0,1]\to D^2\setminus\{0\}$ which for $s\ll 0$ is strip-like coordinates at $0\in D^2$ (via the map $[0,1]\to S^1$ given by $t\mapsto 2\pi t$) and with $\{0\}\times[0,1]\subseteq\partial D^2$ (see Figure \ref{pssocflatham}), the map $\varphi_s:S^1\to S^1$ is from \eqref{diskcalibration}, $s_0\in\RR$ is a large positive number, $J:D^2\setminus\{0\}\to\J(X)$ is $s$-invariant for $s\ll 0$, and $H:\RR_{\leq 0}\to\H(X)$ (different from the $H$ used earlier) is dissipative and satisfies $H=\Re\pi$ over $\pi^{-1}(\CC_{\left|\Re\right|\leq\varepsilon})$, with $H=0$ near $s=0$ (except near $\partial X$) and $H(-\infty)$ admissible in the sense of Definition \ref{admissible}.
Compactness follows, using crucially the fact that $H(0)$ vanishes in a neighborhood of $L$.

We further deform the equation \eqref{secondfinalequation} as follows.
We first deform the coordinates $\RR_{\leq 0}\times[0,1]\to D^2\setminus\{0\}$ into the standard biholomorphism $\RR_{\leq 0}\times[0,1]/(s,0)\sim(s,1)\to D^2\setminus\{0\}$ given by $e^{s+2\pi it}$.
Next, we send $s_0$ to $-\infty$ so that $\varphi_{s+s_0}$ simply becomes the identity map.
We thus conclude that the map $H^\bullet(L)\to SH^\bullet(X,\partial X)$ is given by counting maps $u:D^2\setminus\{0\}\to X$ with boundary on $L$ satisfying
\begin{equation}\label{finalequation}
(du-X_{\delta H(s)}\otimes dt)^{0,1}_J=0
\end{equation}
with respect to the standard coordinates $z=e^{s+it}\in D^2$, where $H:\RR_{\leq 0}\to\H(X)$ is dissipative, interpolating from $H(0)=0$ (modified near $\partial X$ to equal $\Re\pi$ over $\pi^{-1}(\CC_{\left|\Re\right|\leq\varepsilon})$) to $H(-\infty)$ which is admissible in the sense of Definition \ref{admissible}.

We may choose $H(-\infty)$ to be Morse, and by Proposition \ref{morsefloer}, for $\delta>0$ sufficiently small, we may take $J$ to be $t$-invariant for $s\ll 0$.
In fact, the argument of Proposition \ref{morsefloer} shows that for $\delta>0$ sufficiently small, we may take $J$ to be $t$-invariant on all of $D^2\setminus\{0\}$ and all solutions $u$ are $t$-invariant.
We conclude that the map $H^\bullet(L)\to SH^\bullet(X,\partial X)$ factors as $H^\bullet(L)\to H^{\bullet+n}(X,\partial X)\to SH^{\bullet+n}(X,\partial X)$, where the first map counts Morse half-trajectories $\ell:\RR_{\leq 0}\to X$ with $\ell(0)\in L$ constrained to lie on a given locally finite chain on $L$.
This is the standard Morse model for the pushforward $i_!:H^\bullet(L)\to H^{\bullet+n}(X,\partial X)$, thus concluding the proof.
\end{proof}

\bibliographystyle{amsplain}
\bibliography{sectorsoc}
\addcontentsline{toc}{section}{References}

\end{document}